
\documentclass[reqno]{amsart}

\setcounter{tocdepth}{2}
\usepackage{amsthm}   
\usepackage{amsfonts, amsmath, amscd}
\usepackage{amssymb, latexsym}
\usepackage[all,cmtip]{xy}     
\usepackage{enumerate} 
\usepackage{color}
\usepackage[usenames,dvipsnames]{xcolor}
\usepackage{verbatim}
\usepackage{xcolor}

\usepackage{hyperref}
 \hypersetup{colorlinks=false}
 \usepackage{cite}
 \usepackage{ amssymb }

 \usepackage[OT2,T1]{fontenc}
\newcommand\textcyr[1]{{\fontencoding{OT2}\fontfamily{wncyr}\selectfont #1}}

 \usepackage{graphicx, psfrag}
 \usepackage{pstool}

    \usepackage{xspace}   
    \usepackage{tikz-cd}
\usepackage{tikz}
\usetikzlibrary{arrows}

\usepackage[titletoc]{appendix}
\usepackage[normalem]{ulem}     
\usepackage{amsthm}
\usepackage{amsmath}
\usepackage{amscd}
\usepackage[latin2]{inputenc}
\usepackage{t1enc}
\usepackage[mathscr]{eucal}
\usepackage{indentfirst}
\usepackage{graphicx}
\usepackage{graphics}
\usepackage{pict2e}
\usepackage{epic}

\usepackage{amssymb}
\usepackage{amsmath}
\usepackage{amsfonts}
\usepackage{amscd}
\usepackage{amsthm}
\usepackage{graphicx,psfrag}
\usepackage{wrapfig}
\usepackage{subeqnarray}
\usepackage{amssymb}
\usepackage{yfonts}
\usepackage[makeroom]{cancel}
\usepackage{trimclip}
\usepackage{cancel}
\newif\iflclip
\newif\ifbclip
\newif\ifrclip
\newif\iftclip
\def\CLIP{\dimexpr\fboxrule+.2pt\relax}
\def\nulclip{0pt}
\newcommand\partbox[2]{%
  \lclipfalse\bclipfalse\rclipfalse\tclipfalse%
  \let\lkern\relax\let\rkern\relax%
  \let\lclip\nulclip\let\bclip\nulclip\let\rclip\nulclip\let\tclip\nulclip%
  \parseclip#1\relax\relax%
  \iflclip\def\lkern{\kern\CLIP}\def\lclip{\CLIP}\fi
  \ifbclip\def\bclip{\CLIP}\fi
  \ifrclip\def\rkern{\kern\CLIP}\def\rclip{\CLIP}\fi
  \iftclip\def\tclip{\CLIP}\fi
  \lkern\clipbox{\lclip{} \bclip{} \rclip{} \tclip}{\fbox{#2}}\rkern%
}
\def\parseclip#1#2\relax{%
  \ifx l#1\lcliptrue\else
  \ifx b#1\bcliptrue\else
  \ifx r#1\rcliptrue\else
  \ifx t#1\tcliptrue\else
  \fi\fi\fi\fi
  \ifx\relax#2\relax\else\parseclip#2\relax\fi
}
\parskip 1ex

\usepackage{slashed} 

\setcounter{tocdepth}{1}

\pagestyle{plain} 
\oddsidemargin .0in
 \evensidemargin .0in
\marginparsep 0pt
 \topmargin -0.2in
 \marginparwidth 0pt 
\textwidth 6.3in 
\textheight 8.6in

\theoremstyle{definition}

\theoremstyle{remark}

\numberwithin{equation}{section}

\usepackage{amsmath}
\usepackage{amssymb}
\usepackage{amsfonts}
\usepackage{amsthm}
\usepackage{mathtools}
\usepackage{graphicx}
\usepackage{colonequals}
\usepackage{booktabs}
\usepackage{cancel}
\usepackage[numbers]{natbib}
\usepackage{hyperref}
\usepackage{fancyhdr}
\usepackage{multirow}
\usepackage{multirow}
\usepackage{braket}
\usepackage{tikz}
\usetikzlibrary{arrows,shapes,decorations.pathmorphing}
\usepackage{tikz-cd}
\usepackage[margin=2.9cm]{geometry}
\usepackage{mathabx}
\usepackage{ esint }
\usepackage{slashed}
\usepackage{mathrsfs}
\usepackage{amssymb}
\usepackage{blkarray}
\usepackage{xcolor}
\usepackage{pinlabel}   
\usepackage{scalerel,stackengine}
\stackMath
\newcommand\reallywidehat[1]{%
\savestack{\tmpbox}{\stretchto{%
  \scaleto{%
    \scalerel*[\widthof{\ensuremath{#1}}]{\kern-.6pt\bigwedge\kern-.6pt}%
    {\rule[-\textheight/2]{1ex}{\textheight}}
  }{\textheight}%
}{0.5ex}}%
\stackon[1pt]{#1}{\tmpbox}%
}
\parskip 0ex




\newcommand{\R}{\mathbb{R}}

\newcommand{\Z}{\mathbb{Z}}

\newcommand{\N}{\mathbb{N}}

\newcommand{\C}{\mathbb{C}}

\newcommand{\br}{\langle}
\newcommand{\kt}{\rangle}

\usepackage{graphicx}

\theoremstyle{plain}
\newtheorem{thm}{Theorem}[section]
\newtheorem{prop}[thm]{Proposition}
\newtheorem{lm}[thm]{Lemma}
\newtheorem{cor}[thm]{Corollary}

\theoremstyle{definition}
\newtheorem{defn}[thm]{Definition}

\newtheorem{rem}[thm]{Remark}
\newtheorem{eg}[thm]{Example}
\newtheorem{claim}{Claim}[thm]
\newtheorem{notation}[thm]{Notation}

\newtheorem{algorithm}[thm]{Algorithm}

\newtheorem*{hyp1*}{Hypothesis (I)}
\newtheorem*{hyp2*}{Hypothesis (II)}
\newtheorem*{hyp3*}{Hypothesis (III)}
\newtheorem*{impremark}{Conventions}
\newtheorem{hypothesis}{Hypothesis}[section]
\renewcommand{\thehypothesis}{\thesection.\Alph{hypothesis}}


\newtheoremstyle{exercise}{}{}{\itshape}{}{\bfseries}{:}{.5em}{\thmname{#1} \thmnumber{#2}\thmnote{(#3)}}
\theoremstyle{exercise}


\usepackage{amsfonts}
\DeclareFontFamily{U}{wncy}{}
\DeclareFontShape{U}{wncy}{m}{n}{<->wncyr10}{}
\DeclareSymbolFont{mcy}{U}{wncy}{m}{n}
\DeclareMathSymbol{\sha}{\mathord}{mcy}{"58}


\renewcommand{\d}[2]
{\frac{d#1}{d#2}}


\newcommand{\nc}{\newcommand}
\nc {\Prod}{(\underset{I}\ph\, r_I^{n_I})}
\nc {\wlw}{\wedge\ldots\wedge}
\nc {\olo}{\otimes\ldots\otimes}
\nc{\bea}{\begin{eqnarray*}}
\nc{\eea}{\end{eqnarray*}}
\nc {\e}{\varepsilon}
\nc{\de}{\delta}
\nc{\A}{\hat a}
\nc{\Ad}{\hat a^\dag}
\nc{\grad}{\nabla}
\nc{\nlim}{\underset{n\to\infty}\lim}
\nc{\ilim}{\underset{i\to\infty}\lim}
\nc{\isum}{\sum\limits_{i=1}^N}
\nc{\xx}{\underset{x\to x_0}\lim}
\nc{\Bnrm}{\Big |\Big|}
\nc{\sym}{\text{Sym}}
\nc{\inte}{\overset{\circ}}
\nc{\del}{\partial}
\nc{\plp}{+\ldots+}
\nc{\lre}{\longrightarrow}
\nc{\be}{\begin{equation}}
\nc{\ee}{\end{equation}}
\nc{\CP}{\mathbb{CP}}
\nc{\End}{\text{End}}
\nc{\mf}{\mathfrak}
\nc{\Hom}{\text{Hom}}
\nc{\spec}{\text{Spec}}
\nc{\sub}{\subseteq}
\nc{\weakto}{\rightharpoonup}
\nc{\ph}{\varphi}
\nc{\leqc}{\lesssim}
\nc{\delbar}{\overline \del}
\nc{\Tr}{\text{Tr}}
\nc{\Lhe}{\mathcal L_{(\Phi^{h_\e}, A^{h_\e})}}
\nc{\Yminus}{Y\mathord{-}}

\begin{document}


\title{Gluing $\Z_2$-Harmonic Spinors and Seiberg--Witten Monopoles on 3-Manifolds}

\author{Gregory J. Parker}
\address{Department of Mathematics, UC Berkeley}
\email{gjparker@berkeley.edu}

\begin{abstract}
Given a $\Z_2$-harmonic spinor satisfying some genericity assumptions, this article constructs a 1-parameter family of two-spinor Seiberg--Witten monopoles converging to it after renormalization. The proof is a gluing construction beginning with the model solutions from \cite{PartI}. The gluing is complicated by the presence of an infinite-dimensional obstruction bundle for the singular limiting linearized operator. This difficulty is overcome by introducing a generalization of Donaldson's alternating method in which a deformation of the $\Z_2$-harmonic spinor's singular set is chosen at each stage of the alternating iteration to cancel the obstruction components.

\end{abstract}


\maketitle
\tableofcontents

\section{Introduction}
\label{Introduction}

The Uhlenbeck compactification of the moduli space of anti-self-dual (ASD) Yang-Mills instantons on a compact 4-manifold exemplifies a philosophy for constructing natural compactifications of moduli spaces that is ubiquitous in modern differential geometry. The construction has two main steps:  first, K. Uhlenbeck's compactness theorem shows that any sequence $A_n$ of ASD instantons  subconverges either to another instanton, or to a limiting object consisting of a background instanton $A_\infty$ of lower charge and bubbling data $B$ \cite{Uhlenbeckcompactness,uhlenbeckremovable}.  Second, C. Taubes's gluing results reverse this process, showing that each pair $(A_\infty, B)$ is the limit of smooth ASD instantons \cite{taubesgluing}.  Together, these results allow the construction of boundary charts which endow the moduli space with the structure of a smoothly stratified manifold. This moduli space is the basis for the celebrated applications of Yang-Mills theory to 3 and 4-dimensional topology \cite{DK}.

 C. Taubes's more recent extension of  Uhlenbeck's compactness theorem to $\text{PSL}(2, \C)$ connections introduced a new type of non-compactness to gauge theory \cite{Taubes3dSL2C}, which has since been  shown to be quite general in 3 and 4 dimensional gauge theories. It is exhibited by most generalized Seiberg--Witten (SW) equations \cite{DWDeformations, WalpuskiNotes}, a class of equations that includes the Kapustin--Witten equations \cite{TaubesKWNahmPole, TaubesKW2}, the Vafa--Witten equations \cite{TaubesVW}, the complex ASD equations \cite{Taubes4dSL2C}, the Seiberg--Witten equations with multiple spinors \cite{HWCompactness,TaubesU1SW}, and the ADHM Seiberg--Witten equations \cite{WalpuskiZhangCompactness}. For these equations, a sequence of solutions need not converge but, after renormalization, subconverges either to another solution or to limiting data called a {\it Fueter section} -- a solution of a different elliptic PDE that is usually degenerate, and in many cases non-linear \cite{DoanThesis, Haydys2008nonlinear, TaubesNonlinearDirac}.

It is natural to ask whether this more subtle limiting process can be reversed by a gluing construction. An affirmative answer would provide an essential step in constructing compactifications of the moduli spaces of solutions to generalized Seiberg--Witten equations, which are expected to be necessary to study the conjectured
relations of these equations to the geometry of manifolds \cite{VWOriginalPaper, DonaldsonSegal, WittenFivebranesKnots, WittenKhovanovGaugeTheory, HaydysFSCategory, JoyceAssociatives, HaydysG2SW, DWAssociatives}. 
Such a gluing result would produce, from a given Fueter section $\Phi$, a family of solutions to the corresponding generalized Seiberg--Witten equation that converges to $\Phi$ after renormalization.

The purpose of this article is to prove a gluing result of this form in the case of the two-spinor Seiberg--Witten equations on a compact 3-manifold, where the corresponding Fueter sections are $\Z_2$-harmonic spinors. In most cases, $\Z_2$-harmonic spinors possess a codimension two singular set which is stable under perturbations, along which the relevant linearized operator degenerates. This degenerate operator carries an infinite-dimensional obstruction bundle, making the gluing problem considerably more challenging than most gluing problems in the literature. 

Despite the presence of the infinite-dimensional obstruction bundle, the gluing can still be accomplished for a set of parameters with {\it finite} codimension. Geometrically, this infinite-dimensional obstruction arises because the location of the singular set is a degree of freedom that varies during the limiting process. The same freedom plays an important role in previous work of the author \cite{PartII}, R. Takahashi \cite{RyosukeThesis}, and S. Donaldson \cite{DonaldsonMultivalued} on the deformation theory of $\Z_2$-harmonic spinors. 
To account for it here, deformations of the $\Z_2$-harmonic spinor's singular set are included as an infinite-dimensional gluing parameter. This leads to an infinite-dimensional family of Seiberg--Witten equations coupled to embeddings of the singular set; the first-order effect of deforming the singular set may then be calculated by differentiating this family with respect to the embedding. The crucial result that allows the gluing to succeed is that the linearized deformations of the singular set perfectly pair with the infinite-dimensional obstruction, allowing it to be cancelled.

 Once this analytic set-up is in place, the gluing is accomplished by adapting Donaldson's alternating method \cite{DonaldsonAlternating} to the semi-Fredholm setting. The starting point is an approximate solution constructed by splicing in the model solution constructed in \cite{PartI} on a neighborhood of the singular set. The gluing iteration is then a three-step cycle, which alternates (i) correcting the approximate solution near the singular set, (ii) away from it, and (iii) at the start of each new cycle, deforms the singular set to cancel the obstruction. This procedure is complicated by the fact that the linearized deformation operator displays a loss of regularity, which necessitates refining the approach to deforming the singular set in \cite{PartII,RyosukeThesis, DonaldsonMultivalued} by introducing  specially adapted families of smoothing operators. The end result of the iteration is a family of Seiberg--Witten solutions converging to a given $\Z_2$-harmonic spinor after renormalization. 
 The framework developed here may also be useful for addressing other geometric problems requiring the deformation of a singular set \cite{ DonaldsonMultivalued, SiqiSLag, yuanqiatiyahclass, donaldsonfibrations}.

\vspace{.01cm}
\subsection{The Seiberg--Witten Equations}
\label{section1.1}
Let $(Y,g)$ be a closed, oriented, Riemannian 3-manifold, and fix a $\text{Spin}^c$-structure with spinor bundle $S\to Y$. Choose a rank 2 complex vector bundle $E\to Y$ with trivial determinant, and fix a smooth background $SU(2)$-connection $B$ on $E$. The {\bf two-spinor Seiberg--Witten} equations are the following equations for a pair $(\Psi, A)\in \Gamma(S\otimes_\C E)\times \mathcal A_{U(1)}$ of an $E$-valued spinor and a $U(1)$ connection on $\text{det}(S)$: 
\begin{eqnarray}
\slashed D_{A}\Psi&=& 0\label{prelimSW1} \\
\star F_A +\tfrac{1}{2}\mu(\Psi, \Psi)&=&0,  \label{prelimSW2} 
\end{eqnarray}

\noindent where $\slashed D_{A}$ is the twisted Dirac operator formed using $B$ on $E$ and the Spin$^c$ connection induced by $A$ and the spin connection of $g$ on $S$, $F_A$ is the curvature of $A$, and  $\mu: S\otimes E\to \Omega^1(i\R)$ is a pointwise-quadratic map. The equations are invariant under $U(1)$-gauge transformations. General pairs $(\Psi,A)$ are called {\bf configurations}, and solutions of (\ref{prelimSW1}--\ref{prelimSW2}) are called {\bf monopoles}.

Unlike for the standard (one-spinor) Seiberg--Witten equations, sequences of solutions to 
(\ref{prelimSW1}--\ref{prelimSW2}) may lack subsequences where $\|\Psi\|_{L^2}$ remains bounded, thus no subsequences can converge. The renorm-

\noindent alization procedure alluded to above simply normalizes $\Psi$ in $L^2$ by setting $\Phi=\e \Psi$ where $\e =\tfrac{1}{\|\Psi\|_{L^2}}$. 
\noindent The re-normalized equations become
\begin{eqnarray}
\slashed D_{A}\Phi&=& 0 \label{buSW1} \\
 \star \e^2 F_A +\tfrac{1}{2}\mu(\Phi, \Phi)&=&0 \label{buSW2} \\
\|\Phi\|_{L^2}&=&1, \label{buSW3}
\end{eqnarray}
\noindent and diverging sequences are now described by the degenerating family of equations with parameter $\e\to 0$. A theorem of Haydys--Walpuski \cite{HWCompactness} (Theorem \ref{HWcompactness}) shows that sequences of solutions for which $\e\to 0$ must converge, in an appropriate sense, to a  solution of the $\e=0$ equations, i.e. to a pair $(\Phi, A)$ where $\Phi$ is a normalized harmonic spinor with pointwise values in $\mu^{-1}(0)$, which is a 5-dimensional cone. Up to gauge, such solutions are equivalent (via the Haydys Correspondence, Section \ref{section3.2}) to harmonic spinors valued in a vector bundle up to a sign ambiguity. The latter are $\Z_2$-harmonic spinors, which are the simplest non-trivial type of Fueter section.   

A key feature of convergence for a sequence $(\Phi_i, A_i, \e_i)$ is the concentration of curvature, which gives rise to a singular set. As $\e_i \to 0$, curvature may concentrate along a closed subset $\mathcal Z$ of Hausdorff codimension 2, so that the $L^p$-norm of $F_{A_i}$ diverges on any neighborhood of $\mathcal Z$ for $p>1$. In fact, \cite{HaydysBlowupsets} shows that $\mathcal Z$ represents the Poincar\'e dual of $-c_1(S)$ in $H_1(Y;\Z)$, hence it is necessarily non-empty if the $\text{Spin}^c$ structure is non-trivial. Away from $\mathcal Z$, the connections converge to a flat connection with holonomy in $\Z_2$, the data of which is equivalent to that of a real Euclidean line bundle $\ell \to Y-\mathcal Z$. This limiting connection and line bundle may have holonomy around $\mathcal Z$ that is the remnant of the curvature that has ``bubbled'' away. If this holonomy is non-trivial, the Dirac equation twisted by such a limiting connection is singular along $\mathcal Z$;  a $\Z_2$-harmonic spinor is, more accurately, a solution of such a singular equation on the complement of $\mathcal Z$.

\subsection{$\Z_2$-Harmonic spinors}

\label{section1.2}

Let $(Y,g)$ be as in Section \ref{section1.1}, and now fix a spin structure with spinor bundle $S_0\to Y$. Let $B_0$ be a zeroth order, $\R$-linear perturbation to the spin connection that commutes with Clifford multiplication. Given a closed submanifold $\mathcal Z\subset Y$ of codimension 2, choose a real Euclidean line bundle  $\ell \to Y-\mathcal Z$ and let $A$ be the unique flat connection with $\Z_2$-holonomy on $\ell$. The twisted spinor bundle $S_0 \otimes_\R \ell$ carries a Dirac operator  $\slashed D_A$ twisted by $A$ on $\ell$, and perturbed using $B_0$. A $\Z_2$-{\bf harmonic spinor} is a solution $\Phi \in \Gamma(S_0\otimes_\R \ell )$ of the twisted Dirac equation on $Y-\mathcal Z$ satisfying 
\be \slashed D_{A} \Phi =0 \hspace{1cm}\text{and} \hspace{1cm} \nabla_{A} \Phi \in L^2, \label{1.1} \ee

\noindent where the dependence on the metric $g$ and perturbation $B_0$ are kept implicit in the notation. $\mathcal Z$ is called the {\bf singular set} of the $\Z_2$-harmonic spinor. A $\Z_2$-harmonic spinor is denoted by the triple $(\mathcal Z, A, \Phi)$ where $\mathcal Z$ is the singular set, $A$ is the unique flat connection determined by the line bundle $\ell$, and $\Phi$ is the spinor itself.   

When  $\mathcal Z=\emptyset$ is empty, solutions of (\ref{1.1}) are classical harmonic spinors whose study goes back to the work of Lichnerowicz \cite{Lichnerowicz}, Atiyah-Singer \cite{AS1963}, and Hitchin \cite{HitchinHarmonicSpinors}. When the singular set is non-empty, the twisted Dirac operator is a type of degenerate elliptic operator known as an {\bf elliptic edge operator}. This class of operators has little precedent in gauge theory, but extensive tools for their study have been developed in microlocal analysis \cite{MazzeoEdgeOperators, MazzeoEdgeOperatorsII, grieser2001basics}. Doan--Walpuski established the existence and abundance of solutions with  $\mathcal Z\neq \emptyset$ on compact 3-manifolds in \cite{DWExistence}, and a stronger version of their result appears in \cite{GregSiqi}. Additional examples have been constructed in \cite{TaubesWu, SiqiZ3, MazzeoHaydysTakahashiExamples,Dashen1,Dashen2}.

The equations (\ref{1.1}) do not carry an action of $U(1)$-gauge transformations; there is only a residual action of $\Z_2$ by sign. In particular, (\ref{1.1}) is not a gauge theory. On the other hand (\ref{1.1}) is $\R$-linear, so admits a scaling action by $\R^+$ (note that the assumption that the perturbation $B_0$ is $\R$-linear means that the Dirac operator is also only $\R$-linear in general). $\Z_2$-harmonic spinors are considered as equivalence classes modulo these two actions; the scaling action is eliminated by fixing the normalization condition (\ref{buSW3}, leaving a residual $\Z_2$ action. Notice that (\ref{1.1}) defines $\Z_2$-harmonic spinors without reference to the Seiberg--Witten equations; it is therefore not {\it a priori} clear whether an arbitrary $\Z_2$-harmonic spinor (\ref{1.1}) should arise as a limit of (\ref{buSW1})--(\ref{buSW3}).
 
 \subsection{The Gluing Problem}
 \label{section1.3}

The gluing problem may now be stated more precisely. Fix a $\Z_2$-harmonic spinor $(\mathcal Z_0, A_0, \Phi_0)$.  The goal is to construct a family of solutions $(\Phi_\e, A_\e)$ to (\ref{buSW1}--\ref{buSW3}) for sufficiently small $\e>0$ such that 
 \be (\Phi_\e, A_\e)\lre (\mathcal Z_0, A_0, \Phi_0)\label{limitingspinor}\ee
 \noindent  in the sense of Haydys--Walpuski's compactness theorem (Theorem \ref{HWcompactness}). In particular, this requires reconstructing a smooth $E$-valued spinor $\Phi_\e$ (note $\Phi_0$ is a section of a different bundle of real rank 4 over $\Yminus \mathcal Z$), and re-introducing the highly concentrated curvature by smoothing the singular connection $A_0$. The latter implicitly requires recovering the $\text{Spin}^c$-structure, which is lost in the limit as $\e\to 0$.  

In the simplest case, when $\mathcal Z_0=\emptyset$ and standard elliptic theory applies, Doan--Walpuski \cite{DWDeformations} showed that all classical harmonic spinors arise as limits of a family as in  (\ref{limitingspinor}). Reversing the convergence by a gluing in the singular case $\mathcal Z_0\neq \emptyset$ is far more challenging, and requires new analytic tools for elliptic edge operators and their desingularizations. 
 The concentration of curvature along the singular set $\mathcal Z_0$ as $\e\to 0$ manifests by making  the linearization of the $\e=0$ version of (\ref{buSW1})--(\ref{buSW3}) a singular elliptic edge operator with an infinite-dimensional cokernel. This prevents the application of the standard Fredholm approaches that have historically been used in gluing problems \cite{ DK, KM, MSBig, DonaldsonGluing}. 
 
 The presence of the infinite-dimensional obstruction bundle does not mean that the gluing can only be accomplished for a subset of parameters of infinite codimension. Rather, this obstruction is an artifact arising from inadvertently fixing the singular set $\mathcal Z_0$. In fact, the location of the singular set is a degree of freedom that may also vary as $\e\to 0$. Indeed, work of the author \cite{PartII} and R. Takahashi \cite{RyosukeThesis} has shown that constructing families of $\Z_2$-harmonic spinors with respect to families of  metrics $g_s$ for $s\in \mathcal S$ requires allowing the singular set to depend on $s$. Because the gluing problem is a de-singularization of the same situation, one anticipates the same phenomenon will occur. It is therefore necessary to include space of all possible singular sets nearby $\mathcal Z_0$ as a parameter in the gluing construction. This approach has some precedent in the work of Pacard--Ritor\'e   \cite{PacardRitore} on gluing problems in minimal surface theory arising from the Allen-Cahn and Yang-Mills-Higgs equations \cite{YMHGluing}, though the singular nature of the operators in these situations is more tractable.

As explained above, the main idea of this paper is to show that the deformations of the singular set pair with the infinite-dimensional obstruction to create a Fredholm gluing theory.  Key aspects of this approach rely on the theory developed in \cite{PartI}, and \cite{PartII}, and this article is in some sense the sequel to and culmination of these. The first, \cite{PartII} develops the deformation theory for the singular set for $\Z_2$-harmonic spinors alone, without reference to Seiberg--Witten theory. The second, \cite{PartI} constructs model solutions near the singular set $\mathcal Z$, which are the starting point of the gluing construction. To keep this article self-contained, the relevant parts of \cite{PartII} and \cite{PartI} are reviewed in detail in Sections \ref{section4}--\ref{section6} and Section \ref{section7}, respectively.

\subsection{Main Results}
\label{section1.4}

To state the main result, we first describe our assumptions on the starting data. 

Let $Y$ be a compact, oriented three-manifold. The two-spinor Seiberg--Witten equations (\ref{prelimSW1}--\ref{prelimSW2}) depend on a smooth background parameter pair $p=(g,B)$ of a Riemannian metric and an auxiliary $SU(2)$-connection on $E$ in the space 
\be \mathcal P=\Big\{(g,B) \ | \ g\in \text{Met}(Y) \ , \ B\in \mathcal A_{SU(2)}(E)\Big \}\label{mathcalPdef}\ee
of all such choices. The definition of a $\Z_2$-harmonic spinor (\refeq{1.1}) makes reference to a similar parameter pair $(g,B)$, where $B$ is now a perturbation to the spin connection on $S_0$ that is inherited from the $SU(2)$ connection denoted by the same symbol. The gluing construction can be carried out beginning from a $\Z_2$-harmonic spinor satisfying several conditions; these conditions constrain the parameters to lie in the complement of the locus $\mathcal P'\subset \mathcal P$ admitting $\Z_2$-harmonic spinors with worse singular behavior.   

\medskip

\begin{defn} \label{regulardef}A $\Z_2$-harmonic spinor $(\mathcal{Z}_0, A_0, \Phi_0)$ with respect to a parameter pair $p_0=(g_0, B_0)$ is said to be {\bf regular} if it satisfies the following three conditions: 
\medskip 


\begin{enumerate}
\item[(i)] { {\underline{{\it {Smooth}}}} :} the singular set $\mathcal Z_0\subset Y$ is a smooth, embedded link, and the holonomy of $A_0$ is equal to $-1$ around the meridian of each component.
\smallskip 

\item[(ii)] { {\underline{\it{Isolated}}} :} $\Phi_0$ is the unique $\Z_2$-harmonic spinor for the pair $(\mathcal Z_0, A_0)$ with respect to $p_0=(g_0,B_0)$ up to normalization and sign.
\smallskip 

\item[(iii)]{{\underline{\it{Non-degenerate}}} :} $\Phi_0$ has non-vanishing leading-order, i.e. there is a constant $c>0$ such that $$|\Phi_0| \geq c\cdot \text{dist}(-,\mathcal Z_0)^{1/2}.$$
\end{enumerate}


\end{defn}

In Section \ref{section4}, we show that for a fixed singular set $\mathcal Z_0$, the twisted Dirac operator,
\be \slashed D_{A_0}: H^1(S_0\otimes_\R \ell ) \lre L^2(S_0\otimes_\R \ell)\label{prelimsingulardirac}\ee
\noindent is semi-Fredholm, where $H^1$ denotes the Sobolev space of sections whose covariant derivative is $L^2$ with appropriate weights, and $S_0$ is the spinor bundle of the spin structure hosting $\Phi_0$. For weights that imply the integrability requirement in (\ref{1.1}), this operator is left semi-Fredholm and has infinite-dimensional cokernel. This cokernel gives rise to the infinite-dimensional obstruction of the linearized Seiberg--Witten equations at $\e=0$.

As explained above, cancelling the obstruction requires deforming the singular set.  Any singular set $\mathcal Z$ near $\mathcal Z_0$ defines its own flat connection $A_\mathcal Z$ whose holonomy representation is equal to that of $A_0$ after selecting an isomorphism $\pi_1(\Yminus \mathcal Z_0)\simeq \pi_1(\Yminus \mathcal Z)$ induced by a  homotopy equivalence, thus it defines an accompanying real line bundle isomorphic to the original (up to homotopy). Allowing the singular set to vary over the space of embedded links $\mathcal Z$ gives an infinite-dimensional family of Dirac operators parameterized by embeddings, which we combine into a {\bf universal Dirac operator} 
\be \slashed {\mathbb D}(\mathcal Z,\Phi)=\slashed D_{A_{\mathcal Z}}\Phi, \label{universalDiracdef}\ee
\noindent which is (written appropriately) quasi-linear in the embedding and linear in the spinor. The theory of  the universal Dirac operator was developed in \cite{PartII} and is reviewed and refined in Section \ref{section6}.

The key idea is that the derivative of (\ref{universalDiracdef}) with respect to the embedding should cancel the cokernel of twisted Dirac operator. This was investigated in \cite{PartII} (see also the work of Takahashi \cite{RyosukeThesis} and Donaldson \cite{DonaldsonMultivalued}, which take a different approach). The first main result of \cite{PartII} is the following:
\medskip 

\begin{thm} (\cite[Thm 1.3]{PartII})\label{PartIImainb} Let $(\mathcal Z_0, A_0,\Phi_0)$ be a regular $\Z_2$-harmonic spinor, and let $\Pi_0$ denote the projection onto the cokernel of (\ref{prelimsingulardirac}). Then the cokernel component of the linearization of the universal Dirac operator with respect to deformations of $\mathcal Z_0$ 
\be \Pi_0\circ  (\text{d}_{\mathcal Z_0}\slashed{\mathbb D})_{(\mathcal Z_0,\Phi_0)}: L^{2,2}(\mathcal Z_0; N\mathcal Z_0) \lre \text{Coker}(\slashed D_{\mathcal Z_0})\label{mainamap}\ee

\noindent is (after an isomorphism of the codomain) an elliptic pseudo-differential operator and its Fredholm extension has index  $-1$.  \qed
\end{thm}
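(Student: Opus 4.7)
The plan is to verify the claim through a two-part analysis: first, a careful computation of the composition $\Pi_0 \circ (\text{d}_{\mathcal Z_0}\slashed{\mathbb D})_{(\mathcal Z_0,\Phi_0)}$ in a tubular neighborhood of $\mathcal Z_0$ using the asymptotic structure of the elliptic edge operator $\slashed D_{A_0}$, and second, an index computation for the resulting pseudo-differential operator on the circles making up $\mathcal Z_0$. The first step is to identify the cokernel concretely. Since $\mathcal Z_0$ is a smooth link with meridian holonomy $-1$ and $\Phi_0$ is non-degenerate, $\slashed D_{A_0}$ is an elliptic edge operator in the sense of Mazzeo. In a tubular neighborhood with coordinates $(z,r,\theta)$ ($z$ local arc-length along $\mathcal Z_0$, $(r,\theta)$ polar coordinates in the normal plane), the indicial roots at $r=0$ are the half-integers $\pm\tfrac12,\pm\tfrac32,\ldots$, and with the weight enforcing the integrability requirement $\nabla_{A_0}\Phi\in L^2$, every element of $\ker(\slashed D_{A_0}^*)$ has a leading asymptotic of the form $r^{-1/2}(\alpha^+(z)\,e^{i\theta/2}+\alpha^-(z)\,e^{-i\theta/2})+O(r^{1/2})$. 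This identifies $\mathrm{Coker}(\slashed D_{\mathcal Z_0})$ with sections of a real rank $2$ bundle $\mathcal K\to \mathcal Z_0$ via the leading-coefficient map $\alpha\mapsto (\alpha^+,\alpha^-)$.

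Next, I would compute the derivative of the flat connection with respect to normal deformations. For $\eta\in\Gamma(N\mathcal Z_0)$, the family $\mathcal Z_s=\exp_{\mathcal Z_0}(s\eta)$ induces a family of flat $\Z_2$-connections $A_{\mathcal Z_s}$ whose derivative $\dot A:=\partial_s A_{\mathcal Z_s}|_{s=0}$ is a closed $1$-form on $Y\setminus\mathcal Z_0$ with an explicit $1/r$ singularity along $\mathcal Z_0$. Clifford multiplication of this singular form with $\Phi_0$, whose leading order is $r^{1/2}$ with non-vanishing coefficient, produces a spinor of leading asymptotic order $r^{-1/2}$, precisely the scaling that pairs with $\mathcal K$. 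After projecting onto this leading coefficient, the composition $\Pi_0\circ(\text{d}_{\mathcal Z_0}\slashed{\mathbb D})$ becomes a linear operator $\Gamma(N\mathcal Z_0)\to\Gamma(\mathcal K)$. Its pseudo-differential character arises because the leading behavior of $\dot A\cdot\Phi_0$ depends on $\eta$ both algebraically and through tangential derivatives along $\mathcal Z_0$, while inverting the indicial operator of $\slashed D_{A_0}$ contributes a factor of order $1/2$ in the tangential Fourier variables. The non-degeneracy of $\Phi_0$ ensures invertibility of the resulting symbol away from $\xi=0$, hence ellipticity.

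For the Fredholm index, I use that on a disjoint union of circles, an elliptic pseudo-differential operator of positive order between real rank $2$ bundles has index equal to the winding number of its complexified principal symbol. Direct evaluation of this winding, using the explicit form of $\dot A$ and the leading coefficient of $\Phi_0$ derived above, yields index $-1$ for the Fredholm extension $L^{2,2}(\mathcal Z_0;N\mathcal Z_0)\to \mathrm{Coker}(\slashed D_{\mathcal Z_0})$. The main obstacle will be correctly identifying the principal symbol, which requires a delicate interplay of the indicial inversion for $\slashed D_{A_0}$, the angular mode structure of $\dot A$, and the Clifford action of the leading singularity on $\Phi_0$. The resulting symbol encodes a topological asymmetry between the modes $e^{+i\theta/2}$ and $e^{-i\theta/2}$ over each component of $\mathcal Z_0$, reflecting the half-integer character of the spinor bundle $S_0\otimes_\R\ell$ along the meridian, and this asymmetry is the ultimate source of the index $-1$.
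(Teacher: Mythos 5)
Your overall architecture (identify $\text{Coker}(\slashed D_{\mathcal Z_0})$ with sections of a bundle over $\mathcal Z_0$, compute the variation of the operator, reduce to a pseudo-differential operator on circles, and get the index from the symbol) matches the strategy of \cite{PartII} as reviewed in Sections \ref{section5}--\ref{section6}. But two steps as you describe them have genuine gaps. First, your identification of the cokernel via the leading-coefficient map $\alpha\mapsto(\alpha^+,\alpha^-)$ onto \emph{all} sections of a rank-two bundle is wrong: the full maximal domain $\mathcal D=\{u\in L^2 : \slashed D_{A_0}u\in L^2\}$ realizes arbitrary pairs of leading coefficients, but the $L^2$-kernel of the adjoint only realizes the ``half'' cut out by the Hilbert-transform constraint $d(t)=iH(c(t))$ (equivalently, the $\text{sgn}(\ell)$ asymmetry visible in the Euclidean basis (\ref{euclideancok}); see Proposition \ref{cokerpropertiesII} and the remark following Corollary \ref{solvingcokernel}). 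The cokernel is one complex function's worth of data per component, not two, and this spectral asymmetry is precisely what produces the operator $\mathcal T_{\Phi_0}(s)=H(cs)-\bar s d$ in (\ref{formofT}) and hence the index $-1$. A winding-number count against the unconstrained rank-two target, as you propose, is not computing the index of the right operator.

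Second, differentiating the flat connection $A_{\mathcal Z_s}$ directly is not well defined as stated: the bundle $S_0\otimes_\R\ell$, the domain $Y-\mathcal Z_s$, and the function spaces all move with $\mathcal Z_s$, and the naive derivative of the model connection $\tfrac{i}{2}d\theta_{\mathcal Z_s}$ is $O(r^{-2})$ (the variation of the angle \emph{function} is $O(r^{-1})$, but $\dot A$ is its differential), so $\gamma(\dot A)\Phi_0=O(r^{-3/2})$ fails to be in $L^2$ and the pairing with the cokernel diverges. This is exactly the difficulty that the diffeomorphism-pullback device (\ref{pullbacktrick}) is designed to resolve: after identifying nearby problems via $F_\eta$, the variation appears as the metric variation $\dot g_\eta=\mathcal L_{V_\eta}g_0$, a \emph{smooth} tensor, and Bourguignon--Gauduchon (Theorem \ref{BG}) gives $\mathcal B_{\Phi_0}(\eta)=O(r^{-1/2})\in L^2$. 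Any version of your computation must either reproduce this regularization or explain the cancellation of the $r^{-3/2}$ terms; as written it does not.
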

\medskip

\noindent More specifically, the parenthetical means the following. Section \ref{section5} shows that there is a complex line bundle $\mathcal C_0\to \mathcal Z_0$ and an isomorphism $\text{Coker}(\slashed D_{\mathcal Z_0})\simeq \Gamma(\mathcal Z_0; \mathcal C_0) \oplus \R$ with the space of sections, up to a 1-dimensional subspace. After composing with this isomorphism,  (\ref{mainamap}) is a map between spaces of sections of vectors bundles on $\mathcal Z_0$ (modulo a 1-dimensional subspace), and the assertion that it is an elliptic pseudo-differential operator then has the standard meaning. The order of this operator depends on the conventions in the choice of the isomorphism, and is order 1/2 in the conventions used below (see Remark  \ref{regrem}). In the statement of the theorem, the domain is to be interpreted as the tangent space at $\mathcal Z_0$ to the space of embedded singular sets of
Sobolev regularity $(2,2)$. 

Using the Fredholm property of (\refeq{mainamap}), we define

\begin{defn} A $\Z_2$-harmonic spinor is {\bf unobstructed} if the Fredholm extension of (\ref{mainamap}) has trivial kernel. \label{unobstructeddef}
\end{defn} 

\medskip

Since the Seiberg-Witten equations in dimension 3 have index 0, one does not expect 1-parameter families of solutions $(\Phi_\e, A_\e)$ for the fixed parameter $p_0=(g_0, B_0)$. Rather, one expects that, along a 1-dimensional path of parameters $p_\tau=(g_\tau, B_\tau)$ equal to $p_0$ at $\tau=0$, there is a converging family of solutions $(\Phi_\e, A_\e)$ for which $\tau$ depends on $\e$ (or vice versa). Once such a path is incorporated, the parameterized  version of (\refeq{mainamap}) has Fredholm index 0; the results of Section \ref{section6} imply that being unobstructed in the sense of Definition \ref{unobstructeddef} implies that the parameterized version is unobstructed in the standard sense (i.e. has trivial cokernel).

The final condition necessary for the gluing result is that $(\mathcal Z_0, A_0, \Phi_0)$ arises from a path of parameters $p_\tau$ with transverse spectral crossing. This makes sense because of the following theorem proved in \cite{PartII} using Theorem \ref{PartIImainb} and the Nash-Moser Implicit Function Theorem.

\medskip 
\begin{thm}\label{SpectralCrossing}(\cite{PartII} Corollary 1.5) Suppose that $p_\tau=(g_\tau, B_\tau)$ is a smooth path of parameters, and that $(\mathcal Z_0, A_0, \Phi_0)$ is a regular, unobstructed $\Z_2$-harmonic spinor with respect to $p_0$. Then, there is a $\tau_0>0$ such that for $\tau \in (-\tau_0, \tau_0)$, there exist $\Z_2$-eigenvectors $(\mathcal Z_\tau, A_\tau, \Phi_\tau)$ with eigenvalues $\Lambda_\tau\in \R$ satisfying
\smallskip
 \be \slashed D_{A_\tau}\Phi_\tau=\Lambda_\tau \Phi_\tau, \label{eigenvectors} \ee 

\smallskip 
\noindent where $\slashed D_{A_\tau}$ is the twisted Dirac operator as in (\refeq{prelimsingulardirac}). Moreover, $(\mathcal Z_\tau, A_\tau, \Phi_\tau)$ are regular and unobstructed for each $\tau\in (-\tau_0, \tau_0)$, and $(\mathcal Z_\tau, A_\tau, \Phi_\tau, \Lambda_\tau)$ depends smoothly on $\tau$.  

\end{thm}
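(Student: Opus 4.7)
The plan is to realize the family of eigenvectors as the zero set of a nonlinear map in which the singular set, spinor, and eigenvalue are treated jointly as unknowns, and then apply the Nash--Moser implicit function theorem at $\tau=0$. Define
\be \mathcal F(\mathcal Z, \Phi, \Lambda; \tau) = \Big(\, \slashed{\mathbb D}_{p_\tau}(\mathcal Z, \Phi) - \Lambda\, \Phi, \ \ \|\Phi\|_{L^2}^2 - 1 \,\Big), \ee
where $\slashed{\mathbb D}_{p_\tau}$ is the universal Dirac operator (\ref{universalDiracdef}) built from $p_\tau=(g_\tau, B_\tau)$, and $\mathcal Z$ ranges over a neighborhood of $\mathcal Z_0$ in the space of embedded links of a suitable Sobolev regularity. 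By hypothesis, $\mathcal F(\mathcal Z_0, \Phi_0, 0; 0)=0$; a smooth family $\tau\mapsto(\mathcal Z_\tau, \Phi_\tau, \Lambda_\tau)$ of zeros will provide the desired $\Z_2$-eigenvectors.

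The first step is to show that the linearization of $\mathcal F$ at the base point in the first three arguments,
\be L(\dot{\mathcal Z}, \dot\Phi, \dot\Lambda) = \Big(\, \slashed D_{A_0}\dot\Phi + (\text{d}_{\mathcal Z_0}\slashed{\mathbb D})(\dot{\mathcal Z}) - \dot\Lambda\, \Phi_0,\ \ 2\,\text{Re}\langle \Phi_0, \dot\Phi\rangle_{L^2} \,\Big), \ee
is invertible. Applying the cokernel projection $\Pi_0$ to the first component decouples $L=0$ into an equation in $\text{Coker}(\slashed D_{A_0})$, namely
\be \Pi_0\circ (\text{d}_{\mathcal Z_0}\slashed{\mathbb D})(\dot{\mathcal Z}) - \dot\Lambda\, \Pi_0\Phi_0 = \Pi_0\eta, \ee
together with an equation for $\dot\Phi$ within the range of $\slashed D_{A_0}$ and the normalization row. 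By Theorem~\ref{PartIImainb}, the Fredholm extension of $\Pi_0\circ(\text{d}_{\mathcal Z_0}\slashed{\mathbb D})$ has trivial kernel (unobstructedness) and $1$-dimensional cokernel (index $-1$). The free scalar $\dot\Lambda$ fills exactly this missing direction, provided $\Pi_0\Phi_0$ is not already in the range: under the identification $\text{Coker}(\slashed D_{\mathcal Z_0})\simeq\Gamma(\mathcal Z_0;\mathcal S)$, $\Pi_0\Phi_0$ is the leading-order expansion of $\Phi_0$ along $\mathcal Z_0$, which is nonzero by the non-degeneracy condition (iii), and which pairs nontrivially with the generator of the cokernel. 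With $\dot{\mathcal Z}$ and $\dot\Lambda$ determined, $\dot\Phi$ is recovered by inverting $\slashed D_{A_0}$ on the range, its $\Phi_0$-component being fixed by the normalization constraint.

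The main obstacle is that this formal inverse loses derivatives: because $\Pi_0\circ(\text{d}_{\mathcal Z_0}\slashed{\mathbb D})$ is a pseudodifferential operator of order $\tfrac12$, inverting it costs $\tfrac12$ of a Sobolev derivative on $\dot{\mathcal Z}$, so the standard Banach-space implicit function theorem is unavailable. The remedy is to invoke the Nash--Moser implicit function theorem in the category of tame Fr\'echet spaces. Concretely, one works with the $C^\infty$ scale of embeddings of $\mathcal Z_0$ into $Y$ and of weighted smooth sections of the twisted spinor bundle, verifies tame estimates for $\mathcal F$ and for the family of approximate inverses to $L$ (these follow from the pseudodifferential calculus underlying Theorem~\ref{PartIImainb} together with standard elliptic edge estimates for $\slashed D_{A_\mathcal Z}$ with varying $\mathcal Z$, as developed in \cite{PartII}), and applies Nash--Moser to produce the smooth family $(\mathcal Z_\tau, \Phi_\tau, \Lambda_\tau)$ defined implicitly in $\tau$.

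Finally, the regularity conditions and unobstructedness persist for small $\tau$. Smoothness and embeddedness of $\mathcal Z_\tau$ are built into the Nash--Moser construction in the $C^\infty$ category. Isolatedness, non-degeneracy, and unobstructedness are open conditions: the first follows from continuity of the spectrum of $\slashed D_{A_\tau}$, the second from continuity of the leading coefficient of $\Phi_\tau$ along $\mathcal Z_\tau$, and the third from continuous dependence on $\tau$ of the principal symbol of $\text{d}_{\mathcal Z_\tau}\slashed{\mathbb D}$, together with semicontinuity of Fredholm kernels. Shrinking $\tau_0$ if necessary, all three hold on $(-\tau_0, \tau_0)$, completing the proof.
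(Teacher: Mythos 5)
Your proposal is correct and follows essentially the same route as the paper: the result is quoted from the companion paper \cite{PartII}, and the text immediately preceding the statement says exactly that the proof combines the linearized result of Theorem \ref{PartIImainb} (index $-1$ plus unobstructedness, with the eigenvalue parameter $\dot\Lambda$ absorbing the one-dimensional cokernel spanned by $\Phi_0$) with the Nash--Moser implicit function theorem to handle the loss of regularity of order $\tfrac12$. The only small imprecision is your description of $\Pi_0\Phi_0$ as the leading-order expansion of $\Phi_0$: since $\slashed D_{A_0}$ is formally self-adjoint, $\Phi_0$ lies in the $L^2$-kernel and hence entirely in the obstruction space, so $\Pi_0\Phi_0=\Phi_0$ and the needed nonvanishing is automatic.
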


\medskip 
\begin{defn} \label{spectralcrossingdef}
The family of Dirac operators $\slashed D_{A_\tau}$ is said to have {\bf transverse spectral crossing} at $\tau=0$ if the family of $\Z_2$-eigenvectors $(\mathcal Z_\tau, A_\tau, \Phi_\tau,\Lambda_\tau)$ has $$\dot \Lambda(0)\neq 0,$$
\noindent where $\dot \  $ denotes the derivative with respect to $\tau$. 
\end{defn}

\medskip

We now state the main result, which establishes the existence of two-spinor Seiberg-Witten solutions converging to a $\Z_2$-harmonic spinor satisfying the above conditions. 
%
Here, $S_0$ denotes the spinor bundle of a Spin structure that hosts a $\Z_2$-harmonic spinor as in (\ref{1.1}), while $S$ is reserved for the spinor bundle of the $\text{Spin}^c$-structure in the Seiberg--Witten equations. 
\medskip    
\begin{thm}\label{maina} Suppose that $(\mathcal Z_0, A_0,\Phi_0)$ is a regular, unobstructed $\Z_2$-harmonic spinor with respect to a parameter $p_0=(g_0,B_0)$, and that $p_\tau=(g_\tau, B_\tau)$ is a path of parameters such that the corresponding family  (\ref{eigenvectors}) has transverse spectral crossing. 

Then, for each orientation of $\mathcal Z_0$, there is a unique $\text{Spin}^c$ structure with spinor bundle $S \to Y$, an $\e_0>0$, and a family of configurations $(\Psi_\e, A_\e)$ for $\e\in (0, \e_0)$ satisfying the following. 

\begin{itemize}
\item[(A)] The $\text{Spin}^c$ structure is characterized by $$c_1(S)=-\text{PD}[\mathcal Z_0] \hspace{1cm }\text{and }\hspace{1cm} S|_{Y-\mathcal Z_0}\simeq S_0\otimes_\R \ell.$$
\item[(B)] The configurations $(\Psi_\e, A_\e)  \in \Gamma(S_E)\times \mathcal A_{U(1)}$ solve the two-spinor Seiberg-Witten equations (\refeq{prelimSW1}--\refeq{prelimSW2})
\bea
\slashed D_{A_\e}\Psi_\e &=& 0 \\
\star F_{A_\e} + \tfrac12{\mu(\Psi_\e, \Psi_\e)}&=& 0 
\eea
\noindent on $Y$ with respect to $(g_\tau, B_\tau)$, where $\tau=\tau(\e)$ is defined implicitly as a function of  $\e\in (0, \e_0)$ and satisfies either $\tau(\e)>0$ or $\tau(\e)<0$.  
\item[(C)] The spinors have $L^2$-norm \be \|\Psi_\e\|_{L^2(Y)} =\frac{1}{\e},\label{normalization}\ee and after renormalizing by setting $\Phi_\e= \e \Psi_\e$, the pairs $(\Phi_\e,A_\e)$ converge to $(\mathcal Z_0, A_0,\Phi_0)$ in the sense of Theorem \ref{HWcompactness}, i.e. 
$$  \Phi_\e   \to  \Phi_0 \hspace{1cm}\text{and} \hspace{1cm}  A_\e \to  A_0 $$
\noindent in $C^\infty_{loc}(Y-\mathcal Z_0)$ after applying gauge transformations defined on $Y-\mathcal Z_0$, and $|\Phi_\e|\to |\Phi_0|$ in $C^{0,\alpha}(Y)$ for some $\alpha>0$. 
\end{itemize}

\end{thm}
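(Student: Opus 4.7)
The plan is to produce $(\Psi_\e, A_\e)$ as the limit of an alternating iteration scheme starting from an approximate solution assembled from the local models of \cite{PartI}. At each scale $\e$ the starting datum is a splice: in a tubular neighborhood $U_\rho$ of $\mathcal Z_0$ of radius $\rho \gg \e$ we use the family of $\e$-concentrated model monopoles constructed in \cite{PartI}, whose spinor is a rescaling of $\Phi_0$ to leading order and whose connection curvature concentrates with mass $-\mathrm{PD}[\mathcal Z_0]$; outside $U_\rho$ we transport $(\Phi_0, A_0)$ into the Seiberg--Witten bundles via a local isomorphism $S|_{Y\setminus \mathcal Z_0} \simeq S_0 \otimes_\R \ell$ fixed by an orientation of $\mathcal Z_0$. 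A partition of unity across an annulus produces an approximate configuration $(\Psi^{app}_\e, A^{app}_\e)$ of the renormalized equations (\ref{buSW1}--\ref{buSW3}) at $\tau=0$; its defect is supported in the annulus and is $O(\e^\delta)$ for some $\delta>0$ in suitable weighted norms tailored to the edge geometry of $\mathcal Z_0$.

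To correct $(\Psi^{app}_\e, A^{app}_\e)$ one needs to invert the linearization of (\ref{buSW1}--\ref{buSW3}) at this configuration. As $\e\to 0$ this linearization degenerates to the elliptic edge operator $\slashed D_{A_0}$ of (\ref{prelimsingulardirac}) coupled via $\mu$, which is only semi-Fredholm with infinite-dimensional cokernel identified in \cite{PartII} with $\Gamma(\mathcal Z_0; \mathcal S)$. Rather than invert directly, I enlarge the unknowns to include an embedding $\iota: \mathcal Z_0 \hookrightarrow Y$ of the singular set together with the spectral parameter $\tau$. The enlarged linearization at $\e=0$ is the block operator assembling the edge operator with the boundary map $\Pi_0\circ(d_{\mathcal Z_0}\slashed{\mathbb D})$ of Theorem \ref{PartIImainb} and a column for $\tau$ computed from $\dot\Lambda(0)$. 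The unobstructedness hypothesis of Definition \ref{unobstructeddef} implies triviality of the kernel on the $\mathcal Z_0$-direction, while transverse spectral crossing provides the missing direction needed to achieve index zero; together they make the enlarged operator a Fredholm isomorphism in the correct Sobolev scales on $\mathcal Z_0$. This is the analytic reason the gluing can succeed.

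The iteration itself is then a Donaldson-style alternating scheme adapted to the semi-Fredholm setting. Each cycle consists of three steps: \emph{(i)} using the uniformly bounded model Green's operator of \cite{PartI}, solve on $U_\rho$ for the correction concentrated near $\mathcal Z_0$ while ignoring the residual edge obstruction; \emph{(ii)} using ordinary elliptic theory for $\slashed D_{A_0}$ on $Y\setminus \mathcal Z_0$ with matching Dirichlet data on $\partial U_\rho$, solve on the complement; \emph{(iii)} project the residual error onto $\Gamma(\mathcal Z_0; \mathcal S)$ and cancel it by choosing an infinitesimal embedding deformation $\dot\iota$ and a $\tau$-shift via the inverse of the enlarged linearization, then re-splice onto the deformed singular set $\mathcal Z_1 = \exp_{\mathcal Z_0}(\dot\iota)$ and iterate. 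The delicate point is to show that, measured in appropriate $\e$-weighted norms, each cycle contracts the defect by a definite factor independent of how small $\e$ is chosen.

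The main obstacle is that $\Pi_0\circ(d_{\mathcal Z_0}\slashed{\mathbb D})$ is a pseudodifferential operator of order $\tfrac12$, so cancelling the obstruction in step \emph{(iii)} costs half a derivative on $\dot\iota$, and the naive iteration does not close. This must be handled by interleaving the alternating scheme with a Nash--Moser style smoothing: at iteration $n$ apply a smoothing operator $S_{\theta_n}$ with scale $\theta_n$ carefully chosen to balance the loss of regularity against the geometric rate of decay, exactly the refinement envisioned in \cite{PartII, RyosukeThesis, DonaldsonMultivalued} now lifted from the deformation theory of $\Z_2$-harmonic spinors alone to the coupled Seiberg--Witten system. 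Once convergence of the iteration is established in a suitable scale of weighted H\"older spaces, the limit yields a genuine solution $(\Phi_\e, A_\e, \mathcal Z_\e, \tau(\e))$ of (\ref{buSW1}--\ref{buSW3}) at parameter $\tau(\e)$, nonzero with definite sign by the transverse crossing assumption, whose overall sign is tied to the orientation of $\mathcal Z_0$ through the splicing isomorphism. Setting $\Psi_\e = \e^{-1}\Phi_\e$ undoes the normalization and gives (\ref{normalization}); the identification of the Spin$^c$ structure in item (1) is read off from the concentration of $F_{A_\e}$ as the current Poincar\'e dual to $-\mathcal Z_0$, and the $C^\infty_{loc}(Y\setminus \mathcal Z_0)$ and $C^{0,\alpha}$ convergence statements follow from the uniform weighted estimates together with standard elliptic regularity away from the singular set.
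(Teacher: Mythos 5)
Your overall architecture matches the paper's: splice the model solutions of \cite{PartI} onto a tubular neighborhood of $\mathcal Z_0$, run a three-step alternating cycle (solve inside, solve outside, deform the singular set to cancel the cokernel of the edge operator), and kill the residual one-dimensional obstruction along $\R\Phi_\tau$ at the end via transverse spectral crossing and a one-variable implicit function theorem. That skeleton is correct. However, there are two concrete gaps at precisely the points where the convergence of the iteration is actually decided.

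First, you assert that the model Green's operator on the inside region is uniformly bounded. It is not: the linearization at the concentrating model solutions converges to the degenerate edge operator with infinite-dimensional cokernel, so its inverse necessarily blows up, here like $\e^{-1/12-\gamma}$ (Theorem \ref{Insideinvertibility}), and even obtaining that rate requires the carefully crafted twisted APS boundary conditions of \cite{PartI} rather than generic APS theory. This negative power must be paid every cycle and compensated by the decay of solutions across the neck region; your accounting of the contraction factor omits this cost entirely, and with it the estimate ``each cycle contracts the defect by a definite factor'' is unsubstantiated.

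Second, and more seriously, your fix for the loss of regularity of $\Pi_0\circ(d_{\mathcal Z_0}\slashed{\mathbb D})$ --- interleaving Nash--Moser smoothing operators $S_{\theta_n}$ into the alternating scheme --- does not obviously close and is not what the argument requires. Nash--Moser smoothing is designed to be absorbed by the superlinear convergence of a Newton iteration; the alternating method converges only geometrically, with a ratio that must already absorb the $\e^{-1/12}$ loss above, so inserting smoothing operators that cost derivatives at every step leaves you with no mechanism to recover them. The actual resolution exploits structure you have not used: the cokernel basis elements concentrate at scale $1/|\ell|$ around $\mathcal Z_0$, so errors supported (even only ``effectively'' supported, since $\Pi_0$ is nonlocal) at distance $R$ from $\mathcal Z_0$ have rapidly decaying obstruction components beyond Fourier index $O(R^{-1})$; one then deforms the singular set by \emph{mode-dependent} diffeomorphisms whose action at radius $r$ sees only Fourier modes up to $O(r^{-1})$, cancels the low and medium modes with the deformation operator, and cancels the high modes using singular spinors in $H^1_e$ with weight $\nu=-1$ (which hit the obstruction but grow, and are therefore admissible only on the exponentially small high-mode tail). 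The loss of regularity then shows up only as controlled powers of $\e^{-1}$ that the per-cycle gain beats. Without this (or an equivalent quantitative mechanism), the convergence claim at the heart of your proof is unproven.
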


\medskip

\begin{rem}
It is expected that all $\Z_2$-harmonic spinors are regular and unobstructed for generic parameters among those admitting $\Z_2$-harmonic spinors (see Section \ref{section1.5}). We do not undertake the task of establishing the genericity results here. A partial result for the genericity of the non-degeneracy condition in Definition \ref{regulardef} is proved in \cite{SiqiZ3} in the situation of $\Z_2$-harmonic 1-forms. It is straightforward to show (see Remark \ref{remtransverse}) that a generic path $(g_\tau, B_\tau)$ has transverse spectral crossing. 
\end{rem}
\medskip 
\begin{rem}
By a simple diagonalization argument, Theorem \ref{maina} may be extended to the case of a $\Z_2$-harmonic spinor that is instead a limit of $\Z_2$-harmonic spinors satisfying the hypotheses of the theorem. In particular, the singular set of such a limiting $\Z_2$-harmonic spinor need not be an embedded submanifold.

The discussion in the upcoming Section \ref{section1.5} suggests it is likely that {\it every} isolated $\Z_2$-harmonic spinor arises as such a limit.  If this were the case, Theorem \ref{maina}  would imply that every isolated $\Z_2$-harmonic spinor on a compact 3-manifold arises as the limit of Seiberg--Witten monopoles (in fact in multiple ways---one for each $\text{Spin}^c$ structure whose first Chern class is Poincar\'e dual to {\it some} orientation of the singular set).

\end{rem}

\smallskip

\subsection{Wall-Crossing Formulas}
\label{section1.5}

The non-compactness of the moduli space $\mathcal M_{SW}$ of solutions to (\ref{buSW1})--(\ref{buSW3}) prevents the (signed) count of two-spinor Seiberg--Witten monopoles from being a topological invariant. In particular, the compactness theorem (Theorem \ref{HWcompactness}) suggests that along a path of parameters $p_\tau$ such that $p_0$ admits a $\Z_2$-harmonic spinor, a family of monopoles may diverge so that the signed count of solutions changes; in fact, Theorem \ref{maina} shows that this {\it necessarily} happens if the $\Z_2$-harmonic spinor is regular and unobstructed. 

Rather than being a topological invariant, it is conjectured that signed count $\# \mathcal M_{SW}$ is a chambered invariant with wall-crossing formulas. That is, it is conjectured that the subset $\mathcal W_{\Z_2}\subseteq \mathcal P$ of parameters admitting $\Z_2$-harmonic spinors has codimension 1, and it divides its complement in $\mathcal P$ into a collection of open chambers inside which the count is invariant, with a well-defined formula for how the count changes as it crosses the ``wall'' $\mathcal W_{\Z_2}$. This chambered invariant is conjectured to fit into a larger scheme of constructing invariants by summing chambered invariants with cancelling wall-crossing formulas (see \cite{DonaldsonSegal,JoyceAssociatives, HaydysG2SW, DWAssociatives} for details and examples).     

The main result of \cite{PartII} provides a step towards confirming this picture by proving that $\mathcal W_{\Z_2}$ indeed forms a ``wall'' near regular, unobstructed $\Z_2$-harmonic spinors.  

\medskip 
\begin{thm}(\cite[Thm 1.4]{PartII}) Suppose that $(\mathcal Z_0, A_0, \Phi_0)$ is a regular, unobstructed $\Z_2$-harmonic spinor with respect to $p_0\in \mathcal P$. Then there is an open neighborhood $\mathcal U$ of $p_0$ such that , $$\mathcal W_{\Z_2}\cap \mathcal U\subseteq \mathcal P$$
\noindent is a smooth Fr\'echet submanifold of codimension 1. 
\end{thm}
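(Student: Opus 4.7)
The plan is to construct a smooth real-valued ``least eigenvalue'' function $\Lambda:\mathcal U\to \R$ on a neighborhood $\mathcal U$ of $p_0$ in $\mathcal P$ whose zero locus is precisely $\mathcal W_{\Z_2}\cap \mathcal U$, and then to verify that $d\Lambda_{p_0}$ is surjective. The codimension $1$ Fr\'echet submanifold conclusion will then follow from the Nash--Moser implicit function theorem. The strategy therefore reduces to two tasks: a parametric version of Theorem \ref{SpectralCrossing} over the full parameter space $\mathcal P$, and a Kato-type calculation verifying that $d\Lambda_{p_0}\ne 0$.

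For the parametric step, I would apply the Nash--Moser IFT to the universal eigenvalue equation
\[
F(p,\mathcal Z,\Phi,\Lambda) \;=\; \bigl(\slashed{\mathbb D}(\mathcal Z,\Phi)-\Lambda\Phi,\; \|\Phi\|_{L^2}^2-1\bigr)
\]
at $(p_0,\mathcal Z_0,\Phi_0,0)$, solving for $(\mathcal Z,\Phi,\Lambda)$ as a tame-smooth function of $p$. The linearization in $(\mathcal Z,\Phi,\Lambda)$ decomposes into the singular Dirac operator \eqref{prelimsingulardirac}, the deformation operator $\Pi_0\circ(d_{\mathcal Z_0}\slashed{\mathbb D})_{(\mathcal Z_0,\Phi_0)}$ of Theorem \ref{PartIImainb}, and the scalar $\Lambda$-direction. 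Under the unobstructed hypothesis the combined kernel is trivial; the Fredholm index $-1$ of the deformation operator provides a $1$-dimensional cokernel, spanned by $\Phi_0$, which is exactly saturated by introducing $\Lambda$ as an unknown. The resulting linearization is a tame isomorphism, and Nash--Moser then produces a unique smooth family $(\mathcal Z(p),A(p),\Phi(p),\Lambda(p))$ reducing to $(\mathcal Z_0,A_0,\Phi_0,0)$ at $p_0$. The uniqueness clause, combined with the isolated assumption, identifies $\mathcal W_{\Z_2}\cap\mathcal U$ with $\Lambda^{-1}(0)$.

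A Kato-type computation then gives $d\Lambda_{p_0}$: differentiating $\slashed D_{A(p),B(p)}\Phi(p)=\Lambda(p)\Phi(p)$ at $p_0$ and pairing with $\Phi_0$, the term $\langle \slashed D_{A_0}\dot\Phi,\Phi_0\rangle$ vanishes after integration by parts---the boundary contribution at $\mathcal Z_0$ vanishes thanks to the non-degeneracy bound $|\Phi_0|\geq c\cdot \text{dist}(-,\mathcal Z_0)^{1/2}$---leaving an explicit linear functional in the infinitesimal change of $\slashed D$. Choosing $\dot p=(0,\dot B)$ so that $\dot B$ induces a positive scalar multiple of the identity as its zeroth-order perturbation of $\slashed D$ gives $d\Lambda_{p_0}[\dot p]=c\|\Phi_0\|_{L^2}^2>0$. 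Hence $d\Lambda_{p_0}$ is a nonzero, and so surjective, bounded linear functional on $T_{p_0}\mathcal P$, and the tame Fr\'echet IFT---with right-inverse supplied by this preferred perturbation direction---identifies $\Lambda^{-1}(0)\cap\mathcal U$ as a smooth Fr\'echet submanifold of codimension $1$. The main obstacle is analytic rather than geometric: the loss of derivatives caused by the order-$1/2$ deformation operator together with the singular edge behavior of $\slashed D_{A_0}$ precludes using the classical Banach IFT and forces one to use the tame estimates and Nash--Moser machinery developed in \cite{PartII}.
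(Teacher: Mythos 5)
Your overall strategy -- define an eigenvalue function $\Lambda(p)$ by applying the Nash--Moser theorem to the universal eigenvector equation, identify $\mathcal W_{\Z_2}\cap\mathcal U$ with $\Lambda^{-1}(0)$, and check $d\Lambda_{p_0}\neq 0$ -- is the approach taken in \cite{PartII} (this paper only quotes the result; the same machinery underlies Theorem \ref{SpectralCrossing} and Remark \ref{remtransverse}, where transversality of a path is exactly $\dot\Lambda\neq 0$). The bookkeeping in your parametric step is also right: the index $-1$ of (\ref{mainamap}) together with unobstructedness leaves a one-dimensional cokernel $\R\Phi_0$ that is absorbed by the unknown $\Lambda$.

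The genuine gap is in the submersion step. First, the perturbation you propose does not exist in $T_{p_0}\mathcal P$: by (\ref{mathcalPdef}) the only zeroth-order perturbations of $\slashed D_{AB}$ available from varying $B$ have the form $\sum_i\gamma(e^i)\otimes \dot b(e_i)$ with $\dot b(e_i)\in\mathfrak{su}(2)$, and since both $\gamma(e^i)$ and $\dot b(e_i)$ are trace-free, so is their tensor product; hence no admissible $\dot B$ induces a nonzero multiple of the identity, and the pointwise positivity argument giving $d\Lambda_{p_0}[\dot p]=c\|\Phi_0\|^2>0$ collapses. Second, even for an admissible $\dot p$, your Kato computation omits the feedback through the singular set: because $\mathcal Z(p)$ moves with $p$, implicit differentiation gives
\begin{equation*}
\dot\Lambda \;=\; \big\langle \dot{\slashed D}(\dot p)\Phi_0,\Phi_0\big\rangle \;+\; \big\langle \mathcal B_{\Phi_0}(\eta(\dot p)),\Phi_0\big\rangle,
\qquad
\eta(\dot p)=-T_{\Phi_0}^{-1}\Pi^{\mathrm{Ob}^\perp}\big(\dot{\slashed D}(\dot p)\Phi_0\big),
\end{equation*}
and the second, nonlocal term cannot simply be discarded; ruling out cancellation between the two terms is the actual content of the nonvanishing claim. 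Two smaller points: the vanishing of the boundary term in your integration by parts is controlled by the \emph{upper} bound $|\Phi_0|\le Cr^{1/2}$ from the polyhomogeneous expansion (Lemma \ref{asymptoticexpansion}) together with the $r^{-1/2}$ leading behavior of $\dot\Phi$ in the maximal domain -- the non-degeneracy \emph{lower} bound plays no role there; and identifying $\mathcal W_{\Z_2}\cap\mathcal U$ with $\Lambda^{-1}(0)$ requires a local compactness/uniqueness argument showing that every $\Z_2$-harmonic spinor for $p$ near $p_0$ lies in the Nash--Moser family, which the IFT uniqueness clause alone (uniqueness only within a small ball in the relevant function spaces) does not supply.
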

\medskip 
\noindent More generally, it is expected that $\mathcal W_{\Z_2}$ is a stratified space with the following global structure. The top stratum $\mathcal W{^\text{reg}}_{\Z_2}$ should consist of a disconnected Fr\'echet submanifold of codimension 1 whose components are labeled by isotopy classes of embedded links in $Y$, and where each parameter $p\in \mathcal W_{\Z_2}^\text{reg}$ admits a (unique) regular, unobstructed $\Z_2$-harmonic spinor. Confirming this expectation would, in particular, require confirming the prediction of Taubes that the singular set of a $\Z_2$-harmonic spinor is smooth for generic choices of smooth parameters \cite[pg. 9]{Taubes3dSL2C}. Deeper strata of higher finite codimension are expected to consist of the locus where the singular set has the structure of an embedded graph with increasingly complicated self-intersections, and the loci where the regular or unobstructed condition fails. There should also be strata of infinite codimension where wilder singular behavior can occur. The work of \cite{TaubesWu, DWExistence,MazzeoHaydysTakahashiExamples,  MazzeoHaydysTakahashi} support this picture.  

Theorem \ref{maina} shows that the count $\#\mathcal M_{SW}$ changes along a path of parameters crossing $\mathcal W^\text{reg}_{\Z_2}$ transversely, which provides a key step in confirming the conjectured wall-crossing formula. In particular, Theorem \ref{maina} constructs a subset of the parameterized moduli space over $p_\tau$ for either $\tau\geq 0$ or $\tau\leq 0$ that is homeomorphic to a half-open interval $[0,\e_0)$. A complete proof of a wall-crossing formula would additionally require investigating orientations to determine the sign of the crossing as in \cite{DWExistence}, and showing that the homeomorphism to $[0,\e_0)$ determines a boundary chart for the moduli space. The latter involves showing the {\it surjectivity of gluing}, i.e. that the family of monopoles in Theorem \ref{maina} is the unique family converging to $(\mathcal Z_0, A_0, \Phi_0)$. This problem will be addressed in future work.

\begin{rem}\label{remtransverse}
A path $p_\tau=(g_\tau, B_\tau)$ has transverse spectral crossing (Definition  \ref{spectralcrossingdef}) at a regular, unobstructed $\Z_2$-harmonic spinor $(\mathcal Z_0, A_0, \Phi_0)$ if and only if $p_\tau$ intersects $\mathcal W_{\Z_2}^\text{reg}$ transversally. The genericity of this condition follows easily from this characterization as transversality.  
\end{rem}

\subsection{Outline}
\label{section1.6}

The article has 13 sections, culminating in the proof of Theorem \ref{maina}. The proof is accomplished by a gluing iteration that extends Donaldson's alternating method to the semi-Fredholm setting. Briefly, the alternating method decomposes the manifold into two regions $$Y= Y^+\cup Y^-$$
\noindent each of which admits a model solution. These model solutions are spliced together to form a global approximate solution, which is then corrected to a true solution by alternating making corrections localized on $Y^+$ and $Y^-$ using the linearized equations. We take $Y^+=N(\mathcal Z_0)$ to be a tubular neighborhood of the singular set (the ``inner'' region), and $Y^-$ to be the complement of a slightly smaller tubular neighborhood (the ``outer'' region).  

Section \ref{section2} gives a self-contained introduction to the alternating method and explains this generalization. It also lays out the main steps of the proof of Theorem \ref{maina}. Section \ref{losssection}, in particular, explains the main technical challenges. Most important among these is that the operator \eqref{mainamap}  giving the linearized deformation of the singular set displays a {\it {loss of regularity}} \cite{HamiltonNashMoser}. Such a loss typically necessitates the use of Nash-Moser theory, the correct application of which can require delicate setup. Somewhat surprisingly, unlike in \cite{PartII, DonaldsonMultivalued}, the need for the full strength of Nash-Moser theory can be eliminated here, as explained in Section \ref{losssection}.

Section \ref{section3} covers background material on the two-spinor Seiberg--Witten equations, beginning with the compactness theorem for (\ref{buSW1})--(\ref{buSW3}) and the Haydys Correspondence. Sections \ref{section4}--\ref{section6} review and extend the analytic setup for the singular Dirac operator and the deformations of its singular studied in \cite{PartII}. This serves as the linear analysis on outer the region $Y^-$. Section \ref{section6.3} introduces the {``tangential smoothing gauge''}, an infinite-dimensional gauge choice relating to the deformations of the singular set that is the key to eliminating Nash-Moser theory. Section \ref{section7} reviews and extends the model solutions and analysis from \cite{PartI}, which become the linear analysis for the inner region $Y^+$.

Sections \ref{section8}--\ref{section11} then set up and carry out the alternating gluing iteration. Section \ref{section8} constructs the infinite-dimensional family of model solutions parameterized by deformations $\mathcal Z_0$ alluded to in the introduction. This family is used to define a universal version of the Seiberg--Witten equations analogous to Eq. (\ref{universalDiracdef}). Section \ref{section9} calculates the derivative of this family with respect to the deformations of the singular set. Sections \ref{section10}--\ref{section11} then combine this setup with the analysis of Sections \ref{section4}--\ref{section7} in the two regions to complete the gluing. The gluing iteration converges to a smooth two-parameter family of Seiberg--Witten ``eigenvectors'' solving
\be \text{SW}(\Phi, A)=\Big(\Lambda(\tau)+\mu({\e,\tau})\Big)\chi \frac{\Phi_\tau}{\e}\label{1dimobprelim}\ee
\noindent for every pair $(\e,\tau)$, where $\mu(\e,\tau)\in \R$, $\Phi_\tau$ and $\Lambda(\tau)$ are as in Theorem \ref{SpectralCrossing}, and $\chi$ is a smooth function on $Y$. Thus (\ref{1dimobprelim}) are solutions of the Seiberg--Witten equations precisely on the set of pairs $(\e,\tau)$ where $\Lambda(\tau)+\mu(\e,\tau)=0$. Finally, Section \ref{section12} uses a trick due to T. Walpuski \cite{WalpuskiG2Gluing} to show that this condition defines $\tau(\e)$ implicitly as a function of $\e$, completing the proof of Theorem \ref{maina}. 
\subsection*{Acknowledgements}
The author gratefully acknowledges the support and guidance of his Ph.D. advisor, Clifford Taubes throughout this project. This work also benefitted from the interest and expertise of many other people, including Aleksander Doan, Siqi He, Rafe Mazzeo, Tom Mrowka, and Thomas Walpuski. The author is supported by an NSF Mathematical Sciences Postdoctoral Research Fellowship (Award No. 2303102). 


\section{Gluing by the Alternating Method}
 \label{section2}

This section reviews Donaldson's Alternating method for gluing \cite[Sec. 4]{DonaldsonAlternating} and introduces the semi-Fredholm generalization used in the proof of Theorem \ref{maina}. 
These methods are introduced in a reasonably general framework to make clear the structure of the argument and to isolate the principal ingredients in the proof of Theorem~\ref{maina}. 
In order to avoid obfuscating the main ideas, this section defers certain technical details and presents mainly proof sketches. The full technical details in the case of $\Z_2$-harmonic spinors will be filled in by the remainder of the article, with the present section serving as a guide.

\subsection{The Structure of Gluing Problems}
\label{section2.1}

Let $Y$ be a compact manifold, and let $\mathfrak F: \mathcal X\to \mathcal Y$ be a non-linear elliptic PDE viewed as a continuous map between Banach spaces $\mathcal X, \mathcal Y$ of sections of vector bundles. Suppose that $Y$ is the union of two overlapping open regions $$Y= Y^+ \cup Y^-,$$

\noindent each of which hosts a solution $\Phi^\pm$ (called {\it model solutions}) of $\mathfrak F(\Phi^\pm)=0$ on $Y^\pm$ (or more generally, a near solution, $\mathfrak F(\Phi^\pm)\approx 0$).   

The associated {\it gluing problem} is to produce a global solution $\Phi$ of $\frak F(\Phi)=0$ on $Y$ beginning with the model solutions. This is done by splicing the model solutions together to form a global model solution
\be \Phi^{(1)}_\epsilon:= \Phi^+ \  \#_\epsilon  \ \Phi^-\label{initialmodel}\ee
 \noindent on $Y$, which is then corrected to a true solution. Here, $\#_\e$ denotes a splicing procedure, usually performed using a cutoff function, which is allowed to depend on a parameter $\epsilon$ called the {\it gluing parameter(s)}.  The true solution is obtained by a particular {\it gluing procedure}, which is method for correcting approximate solutions,  
 \vspace{-.5cm}
\begin{center}
\be
\label{picture1}
\tikzset{node distance=3.7cm, auto}
\begin{tikzpicture}[decoration=snake]
\node(A){$\Phi^{(N)}_\epsilon \ \ $};
\node(C)[right of=A]{$ \ \ \Phi^{(N+1)}_\epsilon:= \Phi^{(N)}_\epsilon + \delta \Phi^{(N)}_\epsilon$};
\draw[->, decorate] (A) to node {$ \text{correct}$} (C);
\end{tikzpicture}
\ee
\end{center}

\noindent that is applied iteratively to construct a sequence of successively improving approximations  $\Phi^{(1)}_\epsilon, \Phi^{(2)}_\epsilon,...$. The correction (\refeq{picture1}) is most often done by solving a version of the linearized equations, so that the gluing procedure is a variation of Newton's method. 

If the resulting sequence $\Phi^{(N)}_\epsilon \to \Phi_\epsilon$ converges in a sufficient function space for appropriate choices of the gluing parameter $\epsilon$, the limit is a global solution of $\frak F(\Phi_\epsilon)=0$ (or more generally, a family of global solutions parameterized by appropriate choices of $\epsilon$). Often, this procedure is packaged into a suitable version of the Implicit Function Theorem or as a contraction mapping. 

This framework  has been applied in dozens of other well-known gluing problems in geometric analysis. We refer the reader to \cite{DonaldsonGluing} for additional exposition of general approaches to gluing, and to \cite{KM, DK, MSBig, WalpuskiG2Gluing} for related gluing results. A prototypical example is Taubes's method for constructing ASD instantons on 4-manifolds.
\begin{eg}\label{egASD}
(Gluing ASD Instantons, \cite{taubesgluing}) On a compact $4$-manifold $X^4$, one seeks to construct an ASD instanton of charge $k+1$ by gluing two connections $A^\pm$. In this case, one takes $A^+$ to be the standard $k=1$ instanton with dilation parameter $\lambda$ on a ball $X^+=B_{\sqrt{\lambda}}(x_0)$ of radius $\sqrt{\lambda}$ around a bubbling point $x_0 \in X$, and $A^-$ to be an instanton of charge $k$ on the complement $X^-=X^4 - B_{\lambda}(x_0)$ of a smaller ball. Here, the dilation $\epsilon=\lambda$ of the standard instanton is the gluing parameter.  See  \cite{DK,FrU12, TomsNotes} for details. 
\end{eg}

\noindent Notice the following two features of Example \ref{egASD} that will be pertinent for the upcoming case of $\Z_2$-harmonic spinors. (1) the decomposition $X^4=X^+ \cup_\lambda X^-$ depends on the gluing parameter $\lambda$, with $X^+_\circ$ shrinking as $\lambda \to 0$. (2) While the problem has a natural ``invariant scale'' of radius $\lambda$, the two regions $X^\pm$ overlap in a ``neck region'' which extends from radius $\lambda$ to $\sqrt{\lambda}$, and the much of the gluing analysis occurs there.  

Returning now to the case of $\Z_2$-harmonic spinors, let $(Y,g_0)$ be a compact 3-manifold and $(\mathcal Z_0, A_0, \Phi_0)$ a $\Z_2$-harmonic spinor satisfying the hypotheses of Theorem \ref{maina}. We wish to construct a solution of equations (\refeq{buSW1}--\refeq{buSW3}) on $Y$. In this case, the spinor's $L^2$-norm (prior to renormalization) denoted by $\e$ in Eqns. (\refeq{buSW1}--\refeq{buSW3}), becomes one of the gluing parameters, and we take the regions $Y^\pm$ to be a tubular neighborhood of the singular set $\mathcal Z_0$ whose radius depends on the parameter $\e$, and its complement. More specifically, let $\lambda^\pm =\lambda^\pm(\e)$ be radii to be specified shortly (see Appendix  \ref{appendixofnotation}, Definition \ref{insideoutsidedef}), and set
\be Y^+=N_{\lambda^+}(\mathcal Z_0) \hspace{2cm} Y^-=Y-N_{\lambda^{-}}(\mathcal Z_0). \label{insideoutside}\ee
\noindent In this case, we tacitly refer to the region $Y^+$ including the singular set as the  {\bf inside} region, and $Y^-$ as the {\bf outside} region; the overlap $Y^+ \cap Y^-$ is referred to as the {\bf neck} region as in Example \ref{egASD}. The model solutions $(\Phi^+_\e, A^+_\e)$ on $Y^+$ are the concentrating local family of model solutions constructed in \cite{PartI} and reviewed in Section \ref{section7}, and the model solutions on $Y^-$ are simply the limiting $\Z_2$-harmonic eigenvectors $(\Phi_\tau, A_\tau)$ from Theorem \ref{SpectralCrossing}. The full gluing parameter in our situation, after adding deformations of the singular set, consists of triples $\epsilon=(\e, \tau, \mathcal Z)$ of the $L^2$-norm prior to renormalization, the parameter along the path $p_\tau$, and a deformation of the singular set, and there is, more precisely, a decomposition (\refeq{insideoutside}) for each such triple, defined precisely in Section \ref{section8}.  


 \subsection{The Alternating Method}
 \label{section2.2}
 The gluing procedure employed to prove Theorem \ref{maina} is a generalization of the ``alternating method''. This method iteratively corrects approximate solutions by alternating making corrections localized to the two regions $Y^\pm$. This method was first used by S. Donaldson \cite{DonaldsonAlternating} to give a different perspective on C. Taubes's gluing theorem for ASD instantons (Example \ref{egASD}), and is a non-linear analogue of Schwartz's original alternating method \cite{SchwartzAlternating}.

 In the alternating method, the successively approximations (\refeq{picture1}) at the $N^{\text{th}}$ stage have the form \be \Phi^{(N)}_\epsilon= \Phi^{(1)} _\epsilon+ \chi^+\ph^{(N)}_\epsilon + \chi^-\psi^{(N)}_\e\label{alternatingapproximation}\ee 

\noindent where $\ph^{(N)}_\epsilon, \psi^{(N)}_\epsilon$ are corrections supported in the regions $Y^+, Y^-$ respectively, and $\chi^\pm$ are cutoff function restricting them to their respective regions. One then passes from the $N^{\text{th}}$ stage to in two steps

\be \Phi^{(N)}_\epsilon \mapsto \Psi^{(N)}_\epsilon \mapsto \Phi^{(N+1)}_\epsilon \label{standardalternating1}\ee 

\noindent done roughly as follows. \begin{enumerate}
\item[(i)]  The $N^{\text{th}}$ approximation  solves the equation with an error of ${\mathfrak e}_N={\mathcal F}(\Phi^{(N)}_\epsilon)$.  Find a perturbation $\psi$ such that $\Phi^{(N)}_\epsilon+\psi$ solves the equation on $Y^-$.  Set 
$$
\Psi^{(N)}_\epsilon =\Phi^{(N)}_\epsilon+\chi^-\psi.
$$

\item[(ii)] Find a perturbation $\ph$ such that $\Psi^{(N)}_\epsilon+\ph$ solves the equation on $Y^+$, and set $\Phi^{(N+1)}_\epsilon=\Psi^{(N)}+\chi^+\ph$.

\end{enumerate}
The method is so named because with each successive correction, the support of the error term alternates between the regions $Y^+$ and $Y^-$.  The iteration converges to a solution if the errors
${\mathfrak e}_N\to 0$. In the notation of Eq. (\ref{picture1}), one has $\delta \Phi^{(N)}_\epsilon = \chi^+ \delta \ph^{(N)}_\epsilon + \chi^-\delta \psi^{(N)}_\epsilon$ where $\delta \ph^{(N)}_\epsilon=\ph^{(N)}_\epsilon-\ph^{(N-1)}_\epsilon$ and likewise for $\psi$.

 To explain further, let us give a more precise description of the steps of correcting the solution on $Y^\pm$. The equations at a small perturbation $\Phi + \ph$ of an approximate solution $\Phi$ may be written
\be \mathfrak F(\Phi + \ph) = \mathfrak F(\Phi) + \mathcal L_\Phi(\ph) + Q(\ph)\label{FLQ}\ee
\noindent where $\mathcal L_{\Phi}=\text{d}\mathfrak F_{\Phi}$ is the linearization of $\mathfrak F$ at $\Phi$, and $Q$ the higher-order terms. In order order for the alternating method to work, several hypotheses are required. 
\smallskip 

\begin{hypothesis}  \label{hyp1}There are extensions $Y^\pm\subseteq Y^\pm_\circ$ of the two regions, and extensions $\mathcal L^{\pm}_{\Phi}$ of the linearized operators that are uniformly invertible in the gluing parameter $\epsilon$ on appropriate function spaces. 
\end{hypothesis}

\smallskip

\noindent Which extensions and function spaces are appropriate depends strongly on the context. Most often, the extensions $Y^\pm_\circ$ of the two regions are obtained either by attaching a tubular ends to $\del Y^\pm$ or imposing boundary conditions.

In order for the iterates (\refeq{alternatingapproximation}) to converge to a solution, the following second hypothesis is also required. 

\begin{hypothesis} There is a $0<\delta<1$ such that if $\text{supp}(g^\pm)\subseteq \text{supp}(d\chi^\mp)$, then there is a constant $C$ such that the solutions of $\mathcal L^+_{\Phi} \ph=g^+$ and $\mathcal L^-_{\Phi}\psi=g^-$ obey 

 \be \| d\chi^+.\ph \| \leq C\delta \|g^+ \| \hspace{2cm} \| d\chi^-. \psi \| \leq C\delta \|g^- \| \label{deltadef1}\ee
 
 \smallskip 
 
\noindent where $d\chi^\pm. $ is shorthand for the application of the principal symbol $\sigma_{\mathcal L}(d\chi^\pm)$ of $\mathcal L_{\Phi}$.   
\label{hyp2}
\end{hypothesis}

\noindent This hypothesis ensures, provided $\delta$ is sufficiently small, that each successive error term in the alternation is smaller than the previous one. In Section \ref{section4.2}, we will use weighted Sobolev spaces to define the norm in (\refeq{deltadef1}). The inequalities (\refeq{deltadef1}) imply that solutions decay away from their support across the neck region as illustrated in Figure \ref{Fig1} below. 

Finally, as with all non-linear problems, one must assume appropriate control of the non-linearity: 

\begin{hypothesis} \label{hyp3} The non-linear term $Q$ in Eq. (\refeq{FLQ}) is sufficiently mild that the Implicit Function Theorem guarantees there are unique solutions to the non-linear equations 
$$(\mathcal L^{+}_{\Phi} + Q)\ph=\frak e_N|_{Y^+_\circ} \hspace{2cm} (\mathcal L^{-}_{\Phi} + Q)\psi=\frak e_N|_{Y^-_\circ}  $$
\noindent  where $\frak e_N:=\mathfrak F(\Phi^{(N)}_\epsilon) $ is the approximation at the $N^{\text{th}}$ stage and $\frak e_N|_{Y^\pm}$ its restriction to the two regions via appropriate cutoff functions.   Hypothesis \ref{hyp1} guarantees that Implicit Function Theorem can be applied. 

In addition, we assume that there is an $r_0$ such that $\|Q(\psi)\|\leq C\delta \|\psi\|$ for all $\|\psi\|\leq r_0$ where $\delta,C$ are the constant in Hypothesis \ref{hyp2}. 
\end{hypothesis}

\begin{figure}[h!]
\begin{center}
\begin{picture}(200,160)
\put(-60,0){\includegraphics[scale=.37]{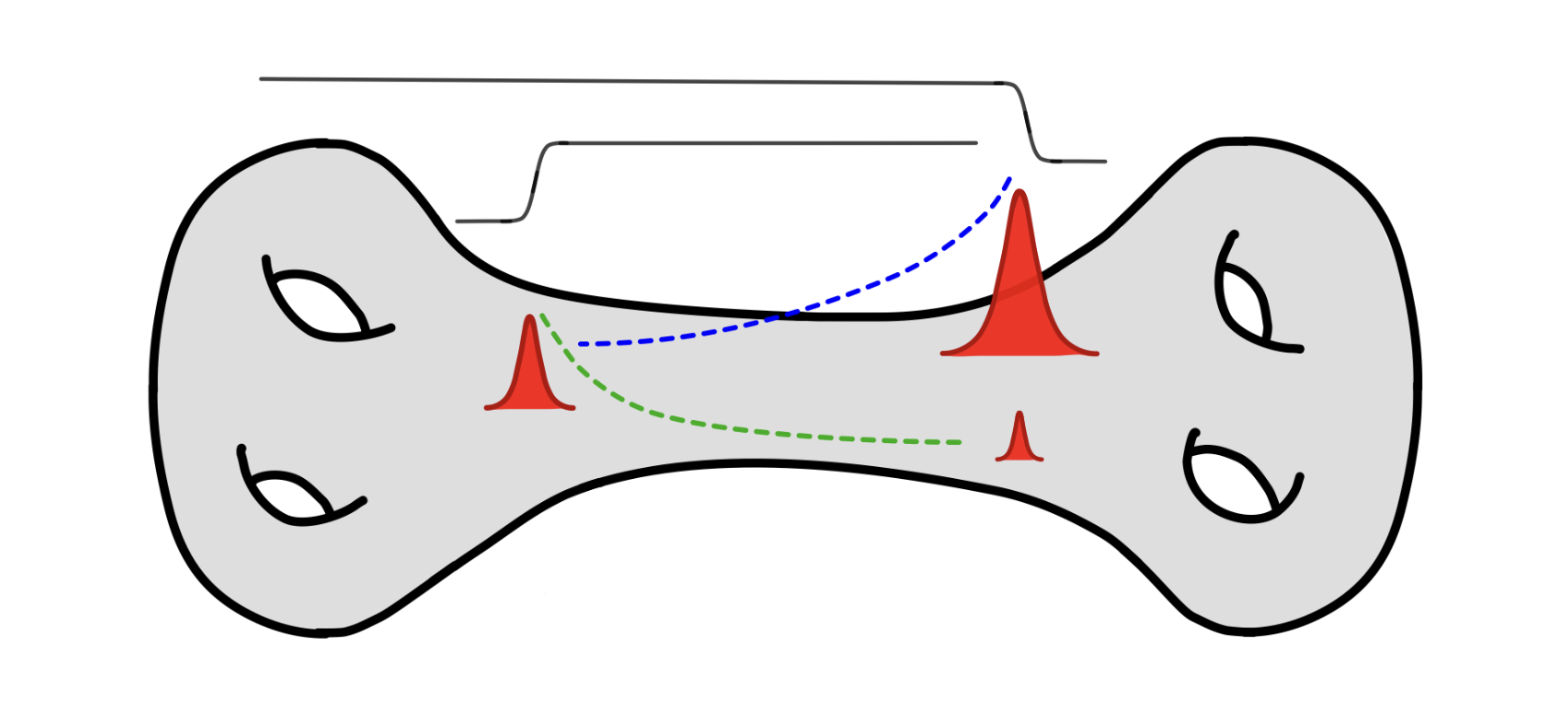}}
\put(133,132){ $\chi^+$}
\put(35,115){ $\chi^-$}
\put(-45,105){ \large $Y^+$}
\put(220,105){ \large $Y^-$}
\put(155,80){ \color{red}$\frak e_N$}
\put(24,68){ \color{red}$\frak e'_{N}$}
\put(148,55){ \color{red}$\frak e_{N+1}$}
\label{Fig1}
\end{picture}
\caption{
An illustration of the alternating iteration in Algorithm \ref{alternation}. (Top) The cutoff functions $\chi^\pm$, (red) the error terms with alternating support and decreasing norm, (blue/green) the pointwise decay of solutions across the neck region.}
\end{center}
\end{figure}

With these hypotheses, the alternating method is the following algorithm. 
 
 \medskip 
  \begin{algorithm} \label{alternation}Suppose that Hypotheses \ref{hyp1}--\ref{hyp3} holds, and that $(C\delta)<1$ in (\refeq{deltadef1}). Then the following iteration scheme converges to a global solution $\Phi$ of \be \mathfrak F(\Phi)=0\label{theeq1}\ee
 
 \noindent on $Y$. 
 
 Let $\frak e_1$ denote the initial error term in Hypothesis \ref{hyp3}, and assume $\text{supp}(\frak e_1)\subseteq \text{supp}(d\chi^+)$. The alternating method constructs the sequence in Eq. (\refeq{alternatingapproximation}) are inductively as follows, beginning from the approximate solution $\Phi^{(1)}_\epsilon$. 
 
 \medskip 
\begin{enumerate}
\item[(1)] Given an error term $\frak e_N$ with $\text{supp}(\frak e_N)\subseteq \text{supp}(d\chi^+)$, let $\psi$ be the unique solution of \be(\mathcal L^-_{\Phi^{(1)}} +Q) \psi = -\frak e_N\label{mathcalL-}\ee
\noindent on $Y^-$ whose existence is guaranteed by Hypotheses \ref{hyp1}, \ref{hyp3} and the Implicit Function Theorem. 
\medskip 
\item[(2)] Set $\psi^{(N+1)}_\epsilon=\psi^{(N)}_\epsilon+\psi$, and $\Psi^{(N)}_\epsilon=\Phi^{(1)}_\epsilon+ \chi^+\ph^{(N)}_\epsilon + \chi^-\psi^{(N+1)}_\epsilon$. This intermediate approximation satisfies $$\mathfrak F(\Psi^{(N)}_\epsilon)=\cancel{ \frak e_N - \frak e_N }+ d\chi^-. \psi + g_N  =:\frak e_N'$$
\noindent where $g_N= \chi^-Q(\psi) -Q(\chi^-\psi)$.
\medskip 
  
\item[(3)]  $\frak e_N'$ is supported where $d\chi^-\neq 0$, and $$\|\frak e_N'\| \leq  \|d\chi^-.\psi\| \ + \ \|g_N\| \leq C \delta \| \frak e_N\|,$$

\noindent by applying Hypotheses \ref{hyp2} and \ref{hyp3} on the two terms respectively. 
\medskip 
\item[(4)] Repeat steps (1)--(3) on $Y^+$ to obtain a correction $\ph$ so $\ph^{(N+1)}_\epsilon=\ph^{(N)}_\epsilon + \ph$, then set $$\Phi^{(N+1)}_\epsilon=\Phi^{(1)} _\epsilon+ \chi^+\ph^{(N+1)}_\epsilon +\chi^-\psi^{(N+1)}_\epsilon. $$
The resulting error, $\frak e_{N+1}$, then satisfies $\|\frak e_{N+1}\|\leq (C\delta)^2 \|\frak e_{N}\|$. 
\end{enumerate}

\noindent Since we assumme $(C\delta)<1$, the iterates $\Phi^{(N+1)}_\epsilon$ converge by geometric series, and the limit obeys (\refeq{theeq1}) by continuity. 
 \end{algorithm}

 The main advantage of the alternating method over other gluing procedures, and the reason it is suitable in our setting, is that it can effectively treat  asymmetry between the two regions $Y^\pm$: it only requires analysis of $\mathcal L^\pm$ in the two distinct regions separately,  and never the analysis of a global linearization uniformly invertible on $Y$. This allows the asymmetric character of the equation in the two regions to be isolated and analyzed separately. \cite{PartI} and \cite{PartII} should be viewed as manuals for the Seiberg--Witten theory on $Y^+$ and $Y^-$ respectively.

\begin{rem}\label{nonlinearsolvingremark}
A slight variation on Steps (1)--(4) in Algorithm \ref{alternation} above is to solve the strictly linear equation at each step. Thus Step (1) is replaced by solving $$\mathcal L^-_{\Phi^{(1)}} \psi = - \frak e_N + Q(\psi^{(N)}_\epsilon+ \ph^{(N)}_\epsilon)$$
\noindent where $Q$ denotes the higher-order errors from the correction at the previous stage. This formulation is equivalent, though it comes at the cost of disrupting the fact that the error terms are entirely supported where $d\chi^\pm\neq 0$.  
In the case of $\Z_2$-harmonic spinors, the higher-order terms are sufficiently mild that this variation of Algorithm \ref{alternation} is simpler.  \end{rem}

  \subsection{The Semi-Fredholm Alternating Method}
 \label{section2.4}
 
 The alternating method as implemented in Algorithm \ref{alternation} is not sufficient for the gluing problem for $\Z_2$-harmonic spinors. As explained in the introduction, the singular Dirac operator in Eq. (\refeq{1.1}) is an elliptic edge operator with infinite dimensional cokernel, thus Hypothesis \ref{hyp1} fails badly in the outside region $Y^-$. In order to accomplish the gluing required for Theorem \ref{maina}, we adapted the alternating method to the semi-Fredholm setting, with an additional infinite-dimensional gluing parameter (denoted by $\xi$ below) corresponding to deformations of the singular set, which is used to cancel the obstruction. 
 
 We replace Hypothesis \ref{hyp1} with the following, weaker version. 
 
 \begingroup
\renewcommand{\thehypothesis}{\thesection.A$'$}
\begin{hypothesis}\label{hyp1prime}
There are appropriate extensions $Y^\pm_\circ\subseteq Y^\pm$ of the two regions, and appropriate extensions $\mathcal L^{\pm}_{\Phi}$ of the linearized operators such that the following hold on appropriate function spaces: 
 
 \begin{enumerate}
 \item[(A)] $\mathcal L_{\Phi^{(1)}}^+$ is uniformly invertible in the gluing parameter $\epsilon$ (i.e. Hypothesis \ref{hyp1} holds on $Y^+$). 
 \smallskip 
 \item[(B)] $\mathcal L_{\Phi^{(1)}}^-$ is left semi-Fredholm, with left-inverses uniformly bounded in the gluing parameter $\epsilon$. 
 \end{enumerate}
 
 \noindent Here, the linearizations $\mathcal L_{\Phi^{(1)}}^\pm$ are taken at the initial approximate solution $\Phi^{(1)}_\epsilon$. 
 
\end{hypothesis}
\addtocounter{hypothesis}{-1} 
\endgroup

In the case of $\Z_2$-harmonic sinors, $\mathcal L_{\Phi}^+$ is the linearization at the model solution on $Y^+$, analyzed in detail in Section \ref{section7}, and $\mathcal L_{\Phi}^-$ is given in Eq. (\refeq{blockdiagonaldecomp}), and includes the singular Dirac operator (\refeq{1.1}) as a direct summand.  The latter is analyzed in detail in Sections \ref{section4} and Section \ref{section7.3}.

We assume in this semi-Fredholm case that Hypothesis \ref{hyp2}-\ref{hyp3} hold as before, where $g^-$ in Hypothesis \ref{hyp2} lies in the range of the semi-Fredholm linearization. Finally, we require the final hypothesis about the existence of an infinite-dimensional gluing parameter canceling the obstruction. 

\begin{hypothesis}\label{hyp4}There is an infinite-dimensional gluing parameter parameterized by an open neighborhood of the origin in a Banach space $\frak W$, and an accompanying family of operators $F_\xi$ for $\xi \in \frak W$ such that the ``universal'' PDE 
\be \mathbb F(\xi, \ph):= \mathfrak F(F_\xi (\Phi^{(1)}_\epsilon + \ph))\label{universalF}\ee

\noindent obeys the following.  
\begin{enumerate}
\item[(A)] The projection $T_{\Phi}:=\Pi_0\circ \text{d}_\xi \mathbb F_{\Phi}$ of the partial derivative with respect to the parameter $\xi$ extends to a bounded linear isomorphism 
\be T_{\Phi}: \frak W \lre \text{Coker}(\mathcal L^-_{\Phi}), \label{abstractdeformation}\ee
\noindent with uniformly bounded inverse (in both $\xi,\epsilon$) for any $\Phi$ sufficiently close to $\Phi^{(1)}_\epsilon$,  where $\Pi_0$ denotes the projection to the cokernel.  

\item[(B)] The component of the same derivative along the range of $\mathcal L_{\Phi}^-$ obeys
\be \|(1-\Pi_0)\circ\text{d}_\xi\mathbb F_{ \Phi} (\xi)\| \leq C_\epsilon\|\xi\|_{\frak W}, \label{Cepsilondef}  \ee

\noindent uniformly in $\xi$, again for any $\Phi$ sufficiently close to $\Phi^{(1)}_\epsilon$.   
\end{enumerate}

\noindent Finally, we require that all the corrections of the iteration process are sufficiently close to the original approximation that (A),(B) hold for all. 
\end{hypothesis}

 Note we do not require this bound in (B) to be uniform in $\epsilon$, and indeed it is not in the case of $\Z_2$-harmonic spinors. In the setting of $\Z_2$-harmonic spinors, the parameters $\xi$ are appropriately chosen sections of the normal bundle $N\mathcal Z_0$, and $F_\xi$ is a diffeomorphism on $Y$ that deforms the singular set to the graph of $\xi$. The analysis of the linearization with respect to this parameter, and of the projection map (\refeq{abstractdeformation}) occupy Sections \ref{section5}--\ref{section6.3}. The correct manifestation of Part (B) appears later, in Section \ref{section11pre}.

Given that the hypotheses in this semi-Fredholm setting are satisfied, the iteration now proceeds as follows. The approximate solutions in the sequence (\refeq{alternatingapproximation}) now also depends on the parameter $\xi$ that is adjusted in each step of the iteration, so that
 \be  F_{\xi^{(N)}_\epsilon} \left( \Phi^{(N)}_\epsilon\right)= F_{\xi^{(N)}_\epsilon}\Big(\Phi^{(1)}_\epsilon + \chi^+\ph^{(N)}_\epsilon + \chi^-\psi^{(N)}_\epsilon\Big),  \label{Fetadiffeo}\ee

\noindent are a sequence of successively improving approximate solutions. The alternating iteration accordingly becomes a three-stage cycle, with the $N^\text{th}$ stage updating both $\xi^{(N)}_\epsilon$ and $\Phi^{(N)}_\epsilon$ as follows. 

\bigskip
\medskip

 \begin{enumerate}
\item[{\bf (Deform)}]  The $N^{\text{th}}$ approximation  solves the equation with an error of ${\mathfrak e}_N=\mathbb F(, \xi^{(N)}_\epsilon,\Phi^{(N)}_\epsilon)$.  Let $\xi$ be such that $$\mathbb F(\xi^{(N)}_\epsilon+\xi , \Phi^{(N)}_\epsilon) \in \text{Range}(\mathcal L^{-}_{\Phi^{(1)}})$$

\noindent is in the range of the semi-Fredholm inverse on $Y^-$, i.e. update $\xi$ so that the cokernel component of $\frak e_N$ is cancelled. Set $\xi^{(N+1)}_\epsilon=\xi^{(N)}_\epsilon+\xi$. 

\bigskip
\item[{\bf (Outside)}] Let $\psi$ be such that $\Phi^{(N)}_\epsilon + \psi$ cancels the error in the outside region $Y^-$, as following Eq. \refeq{standardalternating1}). 
\bigskip
\item[{\bf (Inside)}]Let $\ph$ be such that $\Psi^{(N)}_\epsilon + \ph$ cancels the error in the inside region $Y^+$, as following Eq. \refeq{standardalternating1}). 

\end{enumerate}

\medskip

We combine these into the following semi-Fredholm gluing iteration, extending Algorithm \ref{alternation}. We once again only provide a proof sketch in this abstract setting.

 \begin{algorithm} \label{alternation2}Suppose that Hypotheses \ref{hyp1prime}, \ref{hyp2}, \ref{hyp3}, and \ref{hyp4} hold, and that $(C_\epsilon C\delta^2 )<1$, for the constants $C,\delta$ of (\refeq{deltadef1}) and $C_\epsilon$ of (\refeq{Cepsilondef}). Then the following iteration scheme converges to a global solution $\Phi$ of \be \mathfrak F(\Phi)=0\label{theeq}\ee
 
 \noindent on $Y$. 
 
 Let $\frak e_1$ denote the initial error term in Hypothesis \ref{hyp3}, and assume $\text{supp}(\frak e_1)\subseteq \text{supp}(d\chi^+)$. The alternating method constructs the sequence in Eq. (\refeq{alternatingapproximation}) are inductively as follows, beginning from the approximate solution $(0,\Phi^{(1)}_\epsilon)$.

 \medskip 
\begin{enumerate}
\item[(0)] By Hypothesis \ref{hyp1prime}, let $\xi$ denote the unique solution of \be T_{\Phi^{(1)}}(\xi)=- \Pi_0(\frak e_N),\label{solvingeta_N}\ee
where $\Pi_0$ is the projection to the cokernel of $\mathcal L^{-}_{\Phi^{(1)}}$, and set $\xi^{(N+1)}_\epsilon=\xi^{(N)}_\epsilon+\xi$. 

\item[(0')] Let $$\frak e_N':= (1-\Pi_0)\frak e_N + (1-\Pi_0)\text d\mathbb F_{\Phi^{(1)}}(\xi,0),$$

\noindent which obeys $\|\frak e_N'\|\leq C_\e \|\frak e_N\|$ by Part (B) of Hypothesis \ref{hyp4}. 
\item[(1)] By Hypotheses \ref{hyp1prime} and \ref{hyp3}, let $\psi$ be the unique solution of \be(\mathcal L^-_{\Phi^{(1)}} +\Pi_0\circ Q) \psi = -\frak e'_N\label{mathcalL-}\ee
\noindent on $Y^-$ given by the Inverse Function Theorem. Absorb $(1-\Pi_0)Q(\psi)$ into $\frak e_{N+1}$. 
\medskip 
\item[(2--4)] The remaining steps proceed precisely as in Algorithm \ref{alternation} and result in approximate solutions$$\Phi^{(N+1)}_\epsilon=F_{\xi^{(N+1)}_\epsilon}\Big(\Phi^{(1)}_\epsilon + \chi^+\ph^{(N+1)}_\epsilon +\chi^-\psi^{(N+1)}_\epsilon\Big). $$
The resulting error, $\frak e_{N+1}$, then satisfies $\|\frak e_{N+1}\|\leq (C_\epsilon C\delta)^2 \|\frak e_{N}\|$. 
\medskip 
  
\end{enumerate}

\noindent By the assumption that $(C_\epsilon C\delta^2)<1$, the iterates $\Phi^{(N+1)}_\epsilon$ and $\xi^{(N+1)}_\epsilon$ both converge by geometric series, and the limit obeys $\mathbb F(\xi_\epsilon, \Phi_\epsilon)=0$ by continuity. By the definition (\refeq{universalF}), this gives a corresponding solution $\frak F(F_{\xi_{\epsilon}}(\Phi_\epsilon))=0$ of (\refeq{theeq}). 
 \end{algorithm}

We make several remarks on the implementation of this iteration scheme to our gluing problem for $\Z_2$-harmonic spinors. First, in our case the gluing depends on two one-dimensional parameters $\epsilon=(\e,\tau)$ given by the $L^2$-norm in (\refeq{buSW1}--\refeq{buSW3}) and the parameter $\tau$ along the path in Theorem \ref{maina}, as well as the infinite-dimensional parameter $\xi$. Second, we use the equivalent but less succinctly phrased alternative scheme described in Remark \ref{nonlinearsolvingremark}, where only the linearization is solved. Third, in our setting, there is a slight asymmetry between the linear analysis of the regions $Y^\pm$, resulting in Hypothesis \ref{hyp2} being satisfied for two different decay rates $\delta^\pm$ respectively. Thus the convergence condition becomes \be (C C_\epsilon \delta^+ \delta^-) <1. \label{convergence}\ee

\noindent Finally, we observe that the assumptions of uniformity in $\e$ in Hypotheses \ref{hyp1prime} can be weakened, provided any non-uniformity can be lumped into constants $C_\e$ so that (\refeq{convergence}) still holds. Ultimately, we will find that for some $\gamma<<1$,

\be  C_\e = \e^{-1/12-\gamma} \hspace{1.5cm}\delta^+=\e^{1/24-\gamma} \hspace{1.5cm}\delta^-=\e^{1/12-\gamma},\label{deltapm}\ee

\noindent Thus (\refeq{convergence}) holds for $\e$ sufficiently small. We indicate lemmas and propositions that fill in pieces of the above algorithm as the later sections proceed.

\subsubsection{Alternation as a Contraction Mapping} 
\label{contractionsubspacessection}The alternating method can be rephrased using the language of parametrices and contraction mappings. This perspective is useful for establishing the uniqueness of the solution in given function spaces, and its smooth dependence on the gluing parameter $\epsilon$. It is not immediately clear in which function space the alternating method becomes a contraction. Indeed, there is an apparent infinite-dimensional ambiguity in the construction, coming from the fact that the function spaces on the extensions $Y^\pm$ overlap across the neck region. This ambiguity can be resolved by viewing the gluing as a non-linear analogue of the excision principle for elliptic operators, in which one also solves the ``virtual'' gluing problem on the neck region. This perspective on gluing is described in \cite{TomsNotes}. 

Careful scrutiny of Algorithm \ref{alternation2} shows that the correction terms all lie in the following function space. Let $1=\bold 1^+ + \bold 1^-$ be indicator functions of disjoint regions whose union is $Y$ such that $\text{supp}(d\chi^\pm)\subseteq \text{supp}(\bold 1^\pm)$. Fix Hilbert spaces $H^\pm(Y)$ so that for $\Phi=\Phi^{(1)}_\epsilon$, the operator $\mathcal L^+_{\Phi^{(1)}}: H^+(Y^+)\to L^2(Y^+)$ is invertible with inverse $P^+$, and $\mathcal L^-_{\Phi^{(1)}}: H^-(Y^-)\to L^2(Y^+)$ has left-inverse $P^-$ as in Hypothesis \ref{hyp4}. Set 
\smallskip
\be   \mathcal H=\left\{ \begin{pmatrix} P^+ (g\bold 1^+) \ \ \ \   \ \ \ \ \ \ \   \\ 
 P^-((1-\Pi_0)g\bold 1^-)\end{pmatrix}  \ \Big | \  g \in L^2(Y)\right\} \subseteq H^{+}(Y^+) \oplus H^-(Y^-). \ee 
\smallskip

\noindent (One might alternatively replace $L^2(Y)$ with a higher-regularity space and the indicator functions $\bold 1^\pm$ with a smooth partition of unity via bootstrapping). Sections in this space may be viewed as section on the closed manifold $Y$ via (\refeq{Fetadiffeo}) so that there is a $\xi$-parameterized family of maps 
\bea  \frak W \oplus \mathcal H & \to& H(Y) \\ 
(\xi, \ph, \psi) &\mapsto&  F_{\xi}\Big(\Phi^{(1)}_\epsilon + \chi^+\ph + \chi^-\psi \Big)
\eea

\noindent into a suitable global function space $H(Y)$ on $Y$. It is on the domain $\frak W\oplus \mathcal H$ of this map that the alternating iteration scheme constitutes a contraction mapping. 

This contraction may be written more precisely as follows. Let $P_\xi$ denote the inverse of the operator (\refeq{abstractdeformation}) guaranteed by Hypothesis \ref{hyp4}, and continue to denote the inverses above by $P^\pm$. The updates to the triples in the three stages of the cycle following Eq. (\refeq{Fetadiffeo}) can then be written 
\bea
(\xi^{(N+1)}_\epsilon, \ph^{(N)}_\epsilon, \psi^{(N)}_\epsilon)&=& \left(\text{Id} -  P_\xi \circ \mathbb F\right)\left(  \xi^{(N)}_\epsilon, \ph^{(N)}_\epsilon, \psi^{(N)}_\epsilon \right) \\
(\xi^{(N+1)}_\epsilon, \ph^{(N)}_\epsilon, \psi^{(N+1)}_\epsilon)&=& \left(\text{Id} -  P^- \circ \mathbb F\right) \left(\text{Id} -  P_\xi \circ \mathbb F\right)\left(  \xi^{(N)}_\epsilon, \ph^{(N)}_\epsilon, \psi^{(N)}_\epsilon \right) \\ 
(\xi^{(N+1)}_\epsilon, \ph^{(N+1)}_\epsilon, \psi^{(N+1)}_\epsilon)&=& \left(\text{Id} -  P^+ \circ  \mathbb F\right)  \left(\text{Id} -  P^- \circ  \mathbb F\right) \left(\text{Id} -  P_\xi \circ \mathbb  F \right)\left(  \xi^{(N)}_\epsilon, \ph^{(N)}_\epsilon, \psi^{(N)}_\epsilon \right)
\eea
\noindent where $ \mathbb F(\xi, \ph, \psi)=\mathfrak F( F_\xi(\Phi^{(1)}_\e + \chi^+ \ph + \chi^- \psi))$ is shorthand for evaluation on (\refeq{Fetadiffeo}). Therefore, the alternation constructs an approximate inverse  $\mathbb A: L^2(Y)\to \frak W\oplus \mathcal H$ defined by
 \begin{eqnarray} \mathbb A &= &P_\xi\  +  \  P^- (\text{Id} - \    \mathbb F\circ  P_\xi)  \  \ + \ \  P^+ \left(\text{Id} -  \  \mathbb F \circ   P_\xi  \ - \   \mathbb F\circ  P^- (\text{Id} \  - \  \mathbb F\circ   P_\xi) \right) \label{boldAprelimdef}
 \end{eqnarray} 
 
 \noindent so that a single application of
\begin{eqnarray}\mathbb T \ &:=& \  \text{Id}-\mathbb A\circ  \mathbb F \nonumber \\ & \   = &    \left(\text{Id} -  P^+ \circ \mathbb F\right)  \left(\text{Id} -  P^- \circ  \mathbb F\right) \left(\text{Id} -  P_\xi \circ   \mathbb  F\right) \label{nestedT}
\end{eqnarray}
as a map $\mathbb T: \frak W\oplus \mathcal H\to \frak W\oplus \mathcal H$ constitutes a full cycle of the alternating iteration. The proof of Theorem \ref{maina} shows that the appropriate version of (\refeq{nestedT}) defined in Section \ref{section11} is indeed a contraction. 
 
\subsection{Eliminating Nash-Moser Theory}
\label{losssection}

\label{section2.5}
As explained in the introduction, the deformation operator in Theorem \ref{PartIImainb} displays a loss of regularity. This phenomenon forces the proof of Theorem \ref{SpectralCrossing} in \cite{PartII} (and the related proof in \cite{DonaldsonMultivalued}) to proceed in the tame Fr\'echet category, using Nash-Moser theory. Because the operator $T_{\Phi}$ in Hypothesis \ref{hyp4} is closely related to (\refeq{mainamap}), one may at first fear that the full technical set-up of Nash-Moser theory is necessary in this setting as well. This would, in particular, complicate the alternating iteration scheme by requiring additional steps applying smoothing operators. This subsection explains how the gluing problem for $\Z_2$-harmonic spinors has hidden regularizing properties that eliminate the need for Nash-Moser theory. Nevertheless, the loss of regularity rears its head in other ways, and dealing with its remnant on the Sobolev spaces chosen still constitutes the main technical challenge in the proof of Theorem \ref{maina}.

\subsubsection{Smoothing Projection Operators} \label{smoothingprojections}The singular deformation problem for $\Z_2$-harmonic spinors (Theorem \ref{SpectralCrossing}) is, in some sense, the $\e=0$ limit of the gluing problem. In this limit, as explained in the introduction, there is an infinite-dimensional obstruction to solving the Dirac equation, and the derivative in Theorem \ref{PartIImainb} can be used to cancel this obstruction. More precisely, because of the loss of regularity, it can be used to cancel a dense subset of the obstruction given by the intersection with a higher regularity space, and smoothing operators are required to smooth each successive error term back into this dense subset. The primary reason the loss of regularity can be eliminated in the present setting is that in the gluing problem for $\e>0$, {\it the infinite-dimensional obstruction --- i.e. $\text{coker}(\slashed D_{A})$ --- is replaced by a large finite-dimensional obstruction whose dimension grows with $\e$}. In fact, with our choices, it has $\text{dim}=\e^{-1/2}$. This finite-dimensional subspace consists entirely of smooth elements, thus no smoothing operations on error terms are needed to return them to the dense, higher-regularity subspace which can be cancelled.

This reduction to a finite-dimensional space comes from a fundamental geometric property of semi-Fredholm edge operators, which was first described in \cite[Sec. 6]{PartII}. It is most easily understood for the Dirac operator (\refeq{1.1}) in the model setting that $Y=S^1\times \R^2$ with its flat product metric, $\mathcal Z=S^1 \times\{(0,0)\}$, and $\ell \to \Yminus\mathcal Z$ is the mobius bundle. In this product case, the infinite-dimensional obstruction is spanned in $L^2$ by linear combinations of the singular harmonic spinors 
\be \Psi_\ell^\circ= \sqrt{|\ell|}e^{i\ell t}e^{-|\ell| |z|}\begin{pmatrix}\tfrac{1} {\sqrt{z}} \\ \tfrac{\text{sgn}(\ell)}{ \sqrt{\overline z}}\end{pmatrix}\label{precokernel}\ee

\noindent where $(t,x,y)$ are cylindrical coordinates with $z=x+iy$, and $\ell \in \Z$ indexes the Fourier modes tangential to $\mathcal Z$. There is an isomorphism \be \text{ob}:L^2(S^1;\C)\to \text{coker}(\slashed D_{A_0})\label{2.22a}\ee given by the linear extension of $e^{i\ell t}\mapsto \Psi_\ell^\circ$. Elements of the obstruction have the fundamental geometric property that {\it the radial decay away from the singular set is linked to the tangential Fourier mode}, i.e. $\Psi_{\ell}^\circ$ decays exponentially with rate $|\ell|$. This is a general phenomena for elliptic edge operators, not unique to our setting.

The above decay implies that for error terms supported away from $\mathcal Z$, the projection to the obstruction (composed with $\text{ob}^{-1}$) is a high-order smoothing operator into $H^s(S^1;\C)$. The $L^2$-orthogonal projection $\Pi $ to $\text{coker}(\slashed D_A)$ may be written in Fourier modes via the isomorphism (\refeq{2.22a}) as $$\text{ob}^{-1}\circ \Pi(\frak e)=\sum_{\ell\in \Z}\br \frak e  , \  \Psi_\ell^\circ\kt_{L^2(Y)} \cdot e^{i\ell t}.$$
 
 \noindent Thus, if an error term $\frak e$ is compactly supported where $|z|\geq R_0$ for some $R_0>0$, the Fourier modes are exponentially suppressed for $|\ell| \geq R_0^{-1}$ as a result of the exponential decay in (\refeq{precokernel}), hence $\Pi(\frak e)\in C^\infty(S^1;\C)$, see Lemma \ref{fourierregimescokernel} below.  We emphasize that this is the case even if $\frak e$ has no square-integrable weak derivatives. In the language of pseudo-differential operators, the projection restricted to spinors compactly supported away from $\mathcal Z$ is an infinite-order smoothing operator. Section \ref{section5} shows that this discussion carries over to the case of a general Riemannian 3-manifold, with appropriate modifications.

Given an error term $\frak e$ as above with obstruction $\Psi=\text{ob}^{-1}\circ\Pi(\frak e)$,  we may split $\Psi=\Psi^\text{low} + \Psi^\text{high}$ into its high and low Fourier modes depending on whether $|\ell| \geq R_0^{-1}$ or not. The first error term lies in a finite-dimensional subspace of $C^\infty(S^1;\C)$, and the latter is exponentially small i.e. $O(e^{-R_0})$. Since the main error terms in the alternating iteration are compactly supported where $d\chi^+\neq 0$, the above applies with $R_0=O(\e^{-1/2})$, splitting the obstruction component into its low and high mode pieces (for technical reasons, a middle range is also required, see Definition \ref{Fourierregimes}).  These two components of the obstruction are cancelled in two different ways, which we now describe.
\subsubsection{Singular Spinors} The exponentially small tail-end of the obstruction, $\Psi^\text{high}$ in the above notation, can be cancelled without relying on deformations of the singular set. When the integrability condition in Eq. (\refeq{1.1}) is relaxed to allow spinors that become singular along $\mathcal Z$, the singular Dirac operator becomes surjective. In Section \ref{section5.2}, we define a subspace $\mathcal X \subseteq \mathcal D^\text{max}$ of the $L^2$-maximal domain such that the adjoint problem 

$$\slashed D^\star_{A_0}: \mathcal X\to \text{Range}(\slashed D_{A_0})^\perp$$

\noindent is surjective. This provides a second way of cancelling the obstruction, in addition to deforming the singular set. This method cannot be used to fully replace the deformations of the singular set to cancel the full obstruction, because spinors $u\in \mathcal X$ blow-up rather than decay across the neck region, which prevents the alternating iteration from converging --- specifically, Hypothesis \ref{hyp2} fails. Nevertheless, for the tail-end of the obstruction, the exponential suppression described above sufficiently overcomes the (polynomial) blow-up of the alternation in these modes. 

To cancel the remaining low Fourier modes $\Psi^\text{low}$, we introduce a space of deformations of the singular as explained in the introduction (see Section \ref{section6}). Then, in Section \ref{section10}, we combine these two ways of canceling the obstruction into an infinite-dimensional gluing parameter given by a subspace  $$\frak W\subseteq L^{2}(\mathcal Z_\tau; N\mathcal Z_\tau) \oplus \mathcal X$$

\noindent consisting of linearized deformations of $\mathcal Z_\tau$ to cancel the ``low'' Fourier modes of the obstruction, and singular spinors to cancel the ``high'' Fourier modes. By the elliptic regularity of (\refeq{mainamap}), a linearized deformation cancelling a (smooth) low Fourier mode obstruction is also smooth, thus in fact $\frak W\subseteq C^\infty(\mathcal Z_\tau; N\mathcal Z_\tau) \oplus \mathcal X$, and the gluing argument only ever needs to consider smooth deformations. 

\subsubsection{Where Does the Loss of Regularity Go?} A loss of regularity means that certain terms require bounds in Sobolev norms of regularity $s$ than is {\it higher} than the regularity $s'$ obtained by inverting the operator (\refeq{mainamap}). While the (effective) reduction of the obstruction to a finite-dimensional space of smooth obstructions spanned by low Fourier modes means that any two Sobolev norms on this subspace are equivalent, they are not {\it uniformly} equivalent in $\e$. For instance, on the space spanned by the lowest $\e^{-\alpha}$ Fourier modes, one has  \be \|\xi\|_{s+k}\leq C \e^{-k\alpha}\|\xi\|_{s}.\label{badpower}\ee

 \noindent {\it This leads to unfavorable powers of $\e^{-1}$ accumulating in many crucial bounds, which disrupt the convergence of the gluing iteration without careful estimates}. With our setup specifically, certain estimates during the gluing iteration require control of the $H^2$ norm of a linearized deformation $\xi\in \Gamma(\mathcal Z_\tau; N\mathcal Z_\tau)$, while the inverse of the deformation operator $T_{\Phi_0}=\Pi_0\circ \text{d}\slashed{\mathbb D}_{\Phi_0}(\_, 0)$ from (\refeq{mainamap}) with the loss of regularity\footnote{This is also he relevant manifestation of the operator in Part(A) of Hypothesis \ref{hyp4}} only controls the $H^{1/2}$ norm. Because the range of Fourier modes from the obstruction $\Psi^\text{low}$ (i.e. modes up to $|\ell|=O(\e^{-1/2})$) effectively carries over to solution of $T_{\Phi_0}(\xi)=\Psi^\text{low}$ (see Section \ref{section6.2} for a precise statement), this results in an unfavorable factor of $\e^{-3/4}$.

Regardless of choices, this exponent is larger than the decay factors in Eq. (\refeq{deltapm}) picked up in each cycle of the iteration; thus it disrupts the convergence of the iteration.  The main offending term is the off-diagonal component in the linearization of the universal Dirac operator (\refeq{universalDiracdef}) indicated by a box below.   
 \be 
 \text{d}\slashed{\mathbb D}_{\Phi_0}(\xi,\psi)=
 \label{boxedtermmatrix}
 \begin{pmatrix} T_{\Phi_0} & 0  \\ \boxed{(1-\Pi_0) \text{d}\slashed{\mathbb D}_{\Phi_0}}& \slashed D_{A_0} \end{pmatrix}\begin{pmatrix} \xi \\ \psi \end{pmatrix},
 \ee 

\noindent The block-diagonal decomposition is explained precisely in Section \ref{section6.2}, and the boxed term is precisely our version of the term in Hypothesis \ref{hyp4}(B). In particular, this term must be bounded by a constant $C_\epsilon =C(\e)$ as in that statement, such that $C_\e$ contains a sufficiently mild power of $\e^{-1}$ that (\refeq{convergence}) holds. Thus, for the proof of Theorem \ref{maina}, controlling the loss of regularity for the linearization amounts to showing a sufficiently strong version of  Hypothesis \ref{hyp4}(B), which is done in Proposition \ref{effectivesupportdD}.

To achieve this, we leverage a specific (infinite-dimensional) gauge freedom in the gluing construction. With a judicious choice of gauge, the unfavorable powers of $\e$ appearing in the bounds (\refeq{badpower}) can be tamed without Nash-Moser theory.

\subsubsection{The Tangential Smoothing Gauge} The singular set $\mathcal Z_0$ of a $\Z_2$-harmonic spinor may be deformed in the direction of a vector field $\xi \in \Gamma(\mathcal Z_0; N\mathcal Z_0)$ by choosing family of diffeomorphisms $F_\xi:Y\to Y$ such that $F_\xi(\mathcal Z_0)=\text{graph}(\xi)$ for each $\xi$.  One natural choice --- the one taken in \cite{PartII, RyosukeThesis} --- is to let $F_\xi$ be a constant translation by $\xi$ in the normal directions to $\mathcal Z_0$ and zero away from a neighborhood of $\mathcal Z_0$. This is not the only choice, however: any other choice of a family of diffeomorphisms $F_\xi'$ such that $F_\xi'(\mathcal Z_0)=F_\xi(\mathcal Z_0)$ is an equally valid choice. Thus the group $\text{Diff}(Y;\mathcal Z_0)$ of diffeomorphisms fixing $\mathcal Z_0$ acts as an infinite-dimensional gauge group on the deformation setup.

Section \ref{section6.3} introduces a more optimal choice of this infinite-dimensional gauge than the ones taken in  \cite{PartII, DonaldsonMultivalued, RyosukeThesis}, dubbed the {\bf Tangential Smoothing Gauge}. This gauge choice is formed from families of diffeomorphisms that temper the high Fourier modes of the extension of $\xi$ by a radially dependent family of smoothing operators in the direction tangential to $\mathcal Z_0$, hence the name. The motivation for this choice comes from elliptic edge theory, and is discussed in greater detail in that section. The estimates proved in Section \ref{section6.3} show that this tangential smoothing is sufficient to tame the unfavorable powers in the estimate (\refeq{badpower}) enough that the iteration converges. 
The tangential smoothing gauge can be regarded as the analogue of the Coulomb gauge in
classical gauge theory. Coulomb gauge allows stronger estimates than other gauges by making
the elliptic character of the equations manifest, and it is used to establish results that are
independent of gauge choice. In a similar way, using the tangential smoothing gauge leads to
stronger estimates in the proof of Theorem \ref{maina}, but the result is independent of this infinite-dimensional gauge choice.

 \subsection*{Appendix of Gluing Parameters}
 \label{appendixofnotation}
 This preliminary appendix provides some preliminary notation and conventions for the remainder of the article. The remainder of the notation is collected in the Glossary of Notation \ref{glossary}.

 (1) Gluing Decomposition Parameters

\begin{itemize}
\item $\e\in (0, \e_0)$ the $L^2$-norm parameter. 
\item $\tau \in (-\tau_0, \tau_0)$ the coordinate along the parameter path $p_\tau=(g_\tau, B_\tau)$. Assumed to satisfy (\ref{spectralcrossingdef}). 
\item $\delta = \e^{1/48}$ the convergence factor from a single cycle of the alternating iteration. 
\item $\gamma^+, \gamma^-, \underline \gamma<<1$ fixed small numbers, say $10^{-6}$. 
\item $\gamma <<1$ indeterminate small numbers on the same order $O(10^{-6})$, see Conventions below. 
 \item $\nu^+=\tfrac14 -10^{-6}$ the inside weight. 
\item $\nu^-=\tfrac12-10^{-6}$ the outside weight. 
\item $\lambda^+=\e^{1/2}$ the radius of the inside region
\item $\lambda^-=\e^{2/3-\gamma^-}$ the radius of (the complement of) the outside region. 
\end{itemize} 

(2) Regions and cutoffs
\begin{itemize}
\item $Y^+_{\e,\tau}=N_{\lambda^+}(\mathcal Z_\tau)$ the inside region: a tubular neighborhood of radius $\lambda^+$ of $\mathcal Z_\tau$. 
\item $Y^-_{\e,\tau}\!=\Yminus N_{\lambda^-}(\mathcal Z_\tau)$ the outside region: the complement of a neighborhood of radius $\lambda^-$ of $\mathcal Z_\tau$.
\item $\chi^+$ a  cutoff function equal to $1$ for $r\leq \lambda^+/4$ and vanishing for $r\geq \lambda^+/2$. 
\item $\chi^-$ a cutoff function equal to $1$ for $r\geq \lambda^-/2$ and vanishing for $r\leq \lambda^-/4$.
\item $\bold 1^+$ the indicator function of the set $r \leq \e^{2/3-\gamma^-}$. 
\item $\bold 1^-=1-\bold 1^+$ the indicator function of the complementary region. 
\end{itemize}

 \begin{impremark} \label{gammarem} We adopt the following conventions. 
 
 \begin{enumerate}
 \item[(i)] The small numbers $\e_0, \tau_0$ are allowed to decrease a finite number of times between successive appearances over the course of the proof of Theorem \ref{maina}. 
 \smallskip 
 
 \item[(ii)] Likewise, the constant $\gamma<<1$ is allowed to increase a finite number of times between successive appearances over the course of the proof. In each successive appearance, $\gamma$ is updated to a linear combination of the previous value of $\gamma$ and the fixed values $\gamma^\pm, \underline{\gamma}$ and related values. The coefficients of these linear combination are $O(1)$ universal constants, bounded independent of all other parameters. Over the course of the proof of Theorem \ref{maina}, a factor of $\e^{\pm \gamma}$ appears in many estimates; Section \ref{section11} collects the accumulated powers of $\e^\gamma$ in the final proof. Any choices of $\gamma$ for which the accumulated end value of $\gamma$ is less than $\frac{1}{48}$ are valid. With more careful bookkeeping, one could replace these factors with $(\log \e^{-1})^N$ for $N$ sufficiently large. 
 
 We emphasize that all values of $\gamma$ including deocirations, e.g. $\gamma^\pm, \underline \gamma$ remain fixed throughout. 
 
 \smallskip
 
 \item[(iii)] We adopt the convention on Sobolev spaces that the notation $H^{s}(Y;V)$ is used (with various decorations) to denote the space of sections of a vector bundle $V\to Y$ over the 3-dimensional manifold, while the notation $L^{s,2}(\mathcal Z; W)$ is used to denote the space of sections a vector bundle $W$ over the 1-dimensional singular sets. This convention is adopted to aid in visually distinguishing the many Sobolev spaces that appear.  
 \smallskip 
 \item[(iv)] Sub-subscripts and superscripts indicating the gluing parameters are often dropped to avoid excessively cluttered notation. Thus, for example, we often write $\mathcal L_{\Phi^{(1)}}$ rather than $\mathcal L_{\Phi^{(1)}_{\e,\tau}}$ to denote the linearization at the model solution $\Phi^{(1)}_{\e,\tau}$. It is understood in these cases that the dependence on $(\e,\tau)$ is retained. 
 \end{enumerate}
 \end{impremark}

 \section{$\Z_2$-Harmonic Spinors and Compactness}
 \label{section3}

 This section reviews the compactness properties of the two-spinor Seiberg--Witten equations from \cite{HWCompactness}, and begins the set-up of the gluing analysis.  More detailed expositions of the same material may be found in \cite{PartI, DWExistence, WalpuskiNotes, HWCompactness, TaubesU1SW}.

 \subsection{Compactness Theorem}
\label{section3.1}

Let $Y$ be a compact, oriented, 3-manifold. With $S,E\to Y$ as in Section \ref{section1.1} and $p=(g,B)$ as in (\refeq{mathcalPdef}), set $S_E:= S\otimes_\C E$. Clifford multiplication on $S$ induces a Clifford multiplication $\gamma:  T^*Y\to \text{End}(S_E)$ which acts by the identity on $E$. Define the {\bf moment map} $\mu: S_E\to \Omega^1(i\R)$ by 
 \be \frac{1}{2}\mu(\Psi, \Psi)= \sum_{j=1}^3 \frac{i}{2} \br i\gamma(e^j)\Psi, \Psi \kt e^j. \label{momentmap}\ee

\noindent where $\{e^j\}$ is a local orthonormal frame of $T^*Y$. Unlike for the single-spinor Seiberg--Witten equations, there are non-zero sections $\Psi\in \Gamma(S_E)$ with $\mu(\Psi,\Psi)=0$.

 In order for the two-spinor Seiberg--Witten equations (\refeq{prelimSW1}--\refeq{prelimSW2}) to be an elliptic system on $Y$, an auxiliary $0$-form is required.  To incorporate this 0-form, we extend Clifford multiplication to a a map $\gamma: (\Omega^0 \oplus \Omega^1)(i\R)\to \text{End}(S_E)$, revise our notation to upgrade $A$ to a pair $A= (a_0, A_1)\in \Omega^0(i\R)\oplus \mathcal A_{U(1)}$. For such a pair $A$, we denote
\be \slashed D_A = \slashed D_{A_1} -i \gamma(a_0)\hspace{2cm}\star F_A=\star F_{A_1} -d a_0,  \label{extended} \ee
\noindent where $\slashed D_{A_1}$ is the Dirac operator on $S_E$ formed using the spin connection of $g$, the connection $A_1$, and the background connection $B$ on $E$, and $F_{A_1}$ is the curvature of $A_1$.     
\begin{defn} \label{SWdef}The {\bf (extended) two-spinor Seiberg-Witten Equations} for the parameter $p=(g,B)$ are the following equations for configurations $(\Psi, A)\in \Gamma(S_E)\times (\Omega^0(i\R)\times \mathcal A_{U(1)})$ 
\begin{eqnarray}
\slashed D_{A}\Psi&=&0 \label{SW1}\\ 
\star F_A +\tfrac{1}{2}\mu(\Psi, \Psi)& =& 0,
\label{SW2}
\end{eqnarray}

\noindent where $\star F_A$ and $\slashed D_A$ are as in (\refeq{extended}). These equations are invariant under the action of the {\bf gauge group} $\mathcal G= \text{Maps}(Y; U(1))$.   
\end{defn}

\noindent Note that the dependence of the equations on $p=(g,B)$ is kept implicit in the notation. Note also for comparison, many authors include a minus sign in the definition (\refeq{momentmap}) of $\mu$, and reverse the sign in Eq. (\refeq{SW2}) \cite{MorganSW, KM, DWDeformations}. If $(\Psi, A)$ solves (\refeq{SW1}--\refeq{SW2}) and $\Psi\neq 0$, then integration by parts shows that $a_0=0$. For the purposes of Theorem \ref{maina}, it therefore suffices to solve the extended equations. {\it From here onward, we work exclusively with the extended equations.}

Standard elliptic theory shows that a sequence of solutions to (\refeq{SW1}--\refeq{SW2}) with a uniform bound on the spinors' $L^2$-norm admits a convergent subsequence \cite{MorganSW, KM, HWCompactness}. In the case of the standard (one-spinor) Seiberg-Witten equations, a simple argument using the Weitzenb\"ock formula, the maximum principle, and a pointwise identity for $\mu$ gives such a bound, namely $\|\Psi\|_{L^2}\le \frac12 \sup |s|$, where $s$ is the scalar curvature of $g$. In the case of two-spinors, the corresponding pointwise identity for $\mu$ fails, and there may be sequences of solutions $(\Psi_i, A_i)$ such that $\|\Psi_i\|_{L^2}\to \infty$ which therefore admit no convergent subsequence.  

The behavior of such sequences can be understood by renormalizing the spinor to have unit $L^2$-norm. Thus, with $\e = \frac{1}{\|\Psi\|_{L^2}}$ we define renormalized spinors \be \boxed{\Phi := \e \Psi}\label{epsilondef}\ee

\noindent so that $\|\Phi\|_{L^2(Y)}=1$. As in the introduction, the re-normalized or {\bf blown-up} Seiberg-Witten equations for a triple $(\Phi, A,\e)$ become (\refeq{buSW1}--\refeq{buSW3}), where $A=(a_0, A_1)$ is now as in (\refeq{extended}) . The following theorem of Haydys--Walpuski describes the convergence behavior of sequences of solutions to the blown-up equations.  The theorem states that sequences of solutions along which $\e\to 0$ converge to solution  of the $\e=0$-version of  (\refeq{buSW1}--\refeq{buSW3})
away from a singular set $\mathcal Z_0$. 
\medskip 

\begin{thm} ({\bf  Haydys--Walpuski} \cite{HWCompactness}, {\bf Zhang} \cite{ZhangRectifiability},  {\bf Taubes}\cite{TaubesZeroLoci}, {\bf Parker}\cite{ConcentratingDirac})
\label{HWcompactness} 
Suppose that $(\Phi_i, A_i, \e_i)$ is a sequence of solutions to (\refeq{buSW1}--\refeq{buSW3}) with respect to a sequence of parameters $p_i\to p_0=(g_0, B_0)$ converging in $C^\infty$ such that $\e_i \to 0$. Then there exists a triple $(\mathcal Z_0, A_0, \Phi_0)$  where

\begin{itemize}
\item $\mathcal Z_0\subset Y$ is a closed, rectifiable subset of Hausdorff codimension 2, 
\item $A_0$ is a flat $U(1)$-connection on $Y-\mathcal Z_0$ with holonomy in $\Z_2$,
\item $\Phi_0$ is a spinor on $\Yminus \mathcal Z_0$ satisfying \be \slashed D_{A_0}\Phi_0 = 0 \hspace{1.5cm} \mu(\Phi_0, \Phi_0)=0 \hspace{1.5cm} \|\Phi_0\|_{L^2}=1,\label{limitingconfigs}\ee
and $|\Phi_0|$ extends continuously to $Y$ with $\mathcal Z_0=|\Phi_0|^{-1}(0)$, 
\end{itemize}
and, after passing to a subsequence, $\Phi_i \to \Phi_0$, and  $A_i \to A_0$ in $C^\infty_{loc}(\Yminus \mathcal Z_0)$ modulo gauge transformations, and $|\Phi_i|\to |\Phi_0|$ in $C^{0,\alpha}(Y)$ for some $\alpha>0$.    
\end{thm}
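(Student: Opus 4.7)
The plan is to proceed by a sequence of a priori estimates followed by a subsequential extraction argument, modelled on the standard Uhlenbeck-type compactness scheme adapted to the degeneration $\e_i \to 0$.

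\textbf{Step 1: Uniform $C^0$ bound for $|\Phi_i|$.} First I would derive a pointwise estimate by applying the Weitzenb\"ock formula to $\slashed D_{A_i}\Phi_i = 0$. Combined with the curvature equation $\e_i^2 \star F_{A_i} = -\tfrac12 \mu(\Phi_i,\Phi_i)$ and the pointwise identity $\langle \gamma(\mu(\Phi,\Phi))\Phi,\Phi\rangle = |\mu(\Phi,\Phi)|^2$, one obtains an inequality of Kato type
\[
\Delta |\Phi_i|^2 + \frac{2}{\e_i^2}|\mu(\Phi_i,\Phi_i)|^2 \leq C(p_i)\,|\Phi_i|^2,
\]
where $C(p_i)$ depends only on the geometry. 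The maximum principle, combined with the normalization $\|\Phi_i\|_{L^2}=1$, yields a uniform bound $\|\Phi_i\|_{C^0} \leq K$ independent of $i$.

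\textbf{Step 2: The singular set and $C^{0,\alpha}$-convergence of $|\Phi_i|$.} Setting $u_i = |\Phi_i|$, the inequality in Step 1 gives a uniform bound on $\Delta u_i^2$ in $L^\infty$, and a standard De~Giorgi/Moser iteration gives uniform $C^{0,\alpha}$ bounds. By Arzel\`a--Ascoli, after passing to a subsequence $|\Phi_i|\to u_\infty$ in $C^{0,\alpha}(Y)$. Define
\[
\mathcal Z_0 := u_\infty^{-1}(0).
\]
The rectifiability and Hausdorff codimension-$2$ statement is the deepest analytic input; I would invoke the results of \cite{TaubesZeroLoci,ZhangRectifiability,HaydysBlowupsets} which apply to general Fueter-type blowup sets through a frequency-function/monotonicity argument analogous to Almgren's.

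\textbf{Step 3: Interior convergence on $Y-\mathcal Z_0$.} Fix a compact $K \Subset Y - \mathcal Z_0$. Then $u_\infty \geq c_K > 0$ on $K$, and hence $|\Phi_i| \geq c_K/2$ on $K$ for $i$ large. The curvature equation gives
\[
\e_i^2 \|F_{A_i}\|_{C^0(K)} \leq C\|\Phi_i\|_{C^0}^2,
\]
so $\|F_{A_i}\|_{L^p(K)}$ is uniformly bounded for every $p$ once we show $\mu(\Phi_i,\Phi_i) = O(\e_i^2)$ on $K$. This last fact follows from a second Weitzenb\"ock computation: testing $\Delta u_i^2$ against $1$ and using the integrated bound $\int_Y \frac{1}{\e_i^2}|\mu(\Phi_i,\Phi_i)|^2 \leq C$, together with the lower bound for $|\Phi_i|$ on $K$, forces the $\mu$-norm to decay at rate $\e_i^2$ pointwise on interior subsets. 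Uhlenbeck's theorem then produces a subsequence and gauge transformations $g_i$ on $K$ such that $g_i\cdot A_i \to A_0$ in $C^\infty_{\mathrm{loc}}$ on $Y-\mathcal Z_0$ (via a standard exhaustion and diagonal argument). Passing to the limit in (\refeq{buSW2}) gives $F_{A_0} = 0$ on $Y-\mathcal Z_0$, so $A_0$ is flat.

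\textbf{Step 4: $\Z_2$-holonomy and limiting spinor.} Pass (\refeq{buSW1}) to the limit in $C^\infty_{\mathrm{loc}}$ to obtain $\slashed D_{A_0}\Phi_0 = 0$ and, from (\refeq{buSW2}) with $\e_i\to 0$, the algebraic condition $\mu(\Phi_0,\Phi_0)=0$. The $\Z_2$-holonomy statement is then purely algebraic: the stabilizer of any nonzero $\Phi \in \mu^{-1}(0)$ under the $U(1)$-action on $S_E$ is precisely $\{\pm 1\}$ (the content of the Haydys correspondence, which the paper addresses in Section~\ref{section3.2}). Since $A_0$ is flat and preserves $\Phi_0$ along the parallel transport induced by the Dirac equation on $Y-\mathcal Z_0$, its holonomy representation takes values in this stabilizer $\Z_2 \subset U(1)$.

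\textbf{Main obstacle.} The step I expect to be hardest is the rectifiability and codimension-$2$ structure of $\mathcal Z_0$ in Step 2. Upgrading from the soft statement ``$\mathcal Z_0$ is closed with $|\Phi_0|=0$'' to a rectifiable set of Hausdorff codimension $2$ requires a frequency monotonicity formula for the limiting equation and the use of geometric measure theory tools (tangent-cone analysis, rectifiability criteria of Naber--Valtorta type), rather than the PDE arguments that suffice for Steps 1, 3, and 4. The remaining steps are comparatively routine once uniform $C^0$ control on $|\Phi_i|$ and the defect $\mu(\Phi_i,\Phi_i)/\e_i^2$ is in hand.
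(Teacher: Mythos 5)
You should first note that the paper does not prove this theorem: it is quoted as a black box from \cite{HWCompactness}, with the rectifiability, the $C^\infty_{loc}$ upgrade, and the $C^{0,\alpha}$ statement attributed to \cite{TaubesZeroLoci}, \cite{ZhangRectifiability}, and \cite{ConcentratingDirac} in the remark that follows it. Your overall architecture (sup bound from a subsolution inequality plus the $L^2$ normalization, $\mathcal Z_0$ as the zero set of the limit of $|\Phi_i|$, interior gauge fixing and bootstrapping, $\Z_2$ from the stabilizer of a point of $\mu^{-1}(0)\setminus\{0\}$) does match those references, and deferring rectifiability to Taubes--Zhang is legitimate. But two of the steps you label ``comparatively routine'' fail as stated, and one of them is in fact the heart of the Haydys--Walpuski argument.

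First, in Step 2 you claim a uniform $L^\infty$ bound on $\Delta|\Phi_i|^2$ and invoke De~Giorgi--Moser. The Weitzenb\"ock identity gives
\[
\tfrac12\Delta|\Phi_i|^2 \;=\; -|\nabla_{A_i}\Phi_i|^2 \;-\; \tfrac{1}{2\e_i^2}\,|\mu(\Phi_i,\Phi_i)|^2 \;+\; O(|\Phi_i|^2),
\]
so only the one-sided bound $\Delta|\Phi_i|^2\leq C|\Phi_i|^2$ is uniform; the terms $|\nabla_{A_i}\Phi_i|^2$ and $\e_i^{-2}|\mu(\Phi_i,\Phi_i)|^2$ are controlled only in $L^1(Y)$ and blow up like $\mathrm{dist}(\cdot,\mathcal Z_0)^{-1}$ near the singular set. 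An $L^1$ right-hand side does not give H\"older regularity in dimension 3, so uniform $C^{0,\alpha}$ control of $|\Phi_i|$ is not elliptic routine: in \cite{HWCompactness} it is obtained from a frequency function and its almost-monotonicity (adapted from Taubes's $\mathrm{PSL}(2,\C)$ work), which yield doubling estimates and thence the H\"older bound. You have assigned all the difficulty to rectifiability, but this step is at least as central. Second, in Step 3 the pointwise bound $|\mu(\Phi_i,\Phi_i)|=O(\e_i^2)$ on $K\Subset Y-\mathcal Z_0$ does not follow from ``testing $\Delta u_i^2$ against $1$'': integrating the identity only gives $\|\mu(\Phi_i,\Phi_i)\|_{L^2}=O(\e_i)$, hence $\|F_{A_i}\|_{L^2}=O(\e_i^{-1})$, which diverges. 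The actual decay of the moment map (indeed exponential in $\e_i^{-1}$, cf.\ Lemma \ref{exponentialdecay} and \cite{ConcentratingDirac}) on sets where $|\Phi_i|\geq c$ comes from the elliptic system satisfied by the ``imaginary'' components of $(\Phi_i,A_i)$ after the Haydys decomposition of Section \ref{section3.2}, not from the global energy bound alone. Until those two points are supplied, the sketch does not close.
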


\begin{rem}
The above statement combines the original result of Haydys--Walpuski with refinements proved by Taubes \cite{TaubesZeroLoci}, Zhang \cite{ZhangRectifiability} and the author \cite{ConcentratingDirac}. In \cite{TaubesZeroLoci}, Taubes showed that the singular set $\mathcal Z_0$ has finite 1-dimensional Hausdorff content; building on this Zhang showed  in \cite{ZhangRectifiability} that the singular set is rectifiable. In \cite{ConcentratingDirac}, the author improved the convergence to the limit from weak $L^{2,2}_{loc}$ for the spinor and weak $L^{1,2}_{loc}$ for the connection to $C^\infty_{loc}$ for both. Taubes also proved a four-dimensional version of  Theorem \ref{HWcompactness} in \cite{TaubesU1SW}, to which the same refinements apply.  

\end{rem}
\begin{rem}
 Theorem  \ref{HWcompactness} i is one of a family of similar compactness results for generalized Seiberg--Witten equations stemming from C. Taubes's generalization of Uhlenbeck Compactness to $\text{PSL}(2,\C)$  connections. Other such compactness theorems can be found in \cite{TaubesKWNahmPole,TaubesVW, TaubesU1SW, Taubes4dSL2C, WalpuskiZhangCompactness, WalpuskiNotes, MinhCompactness}.
\end{rem}

\subsection{The Haydys Corresondence}

\label{section3.2}
The limiting configurations in Theorem \ref{HWcompactness} are equivalent to $\Z_2$-harmonic spinors as defined in the Section \ref{section1.2}. This equivalence, which we now describe, is a particular instance of the Haydys Correspondence  \cite{HaydysCorrespondence, DWDeformations}.

A limiting configuration $(\mathcal Z_0, A_0, \Phi_0)$ as in Theorem \ref{HWcompactness} induces a decomposition of the restriction of the two-spinor bundle $S_E$ to $Y-\mathcal Z_0$ as follows. Since $A_0$ is flat with holonomy in $\Z_2$, $\det( S)|_{Y-\mathcal Z_0}\simeq \underline{\C}$ is trivial, and $S|_{Y-\mathcal Z_0}$ admits a reduction of structure to $SU(2)$. Thus, there is a ``charge conjugation'' map $J \in \text{End}(S |_{Y-\mathcal Z_0})$ such that $J^2=-\text{Id}$; since $E$ is an $SU(2)$-bundle it admits a similar map, denoted $j$. The product $\sigma=J\otimes_\C j$ satisfies $\sigma^2=\text{Id}$, i.e. it is a real structure on $S_E |_{Y-\mathcal Z_0}$.  Consequently, there is a decomposition 

\be  S_E |_{Y-\mathcal Z_0} = S^\text{Re} \oplus S^\text{Im}\label{realimdecomp}\ee

\noindent where $$ S^\text{Re}= \Big\{ \tfrac12 (\Psi + \sigma \Psi) \ | \ \Psi \in \Gamma(S_E|_{Y-\mathcal Z_0})\Big\} \hspace{2cm}S^\text{Im}= \Big\{ \tfrac12 (\Psi - \sigma \Psi) \ | \ \Psi \in \Gamma(S_E|_{Y-\mathcal Z_0})\Big\}$$
are the ``real'' and ``imaginary'' subbundles respectively. 

These subbundles have the following useful characterization, which is proved in \cite[Sec. 2]{PartI}, and  \cite[Sec. 3]{DWExistence}. 

\begin{lm} \label{subbundles}Let $(\mathcal Z_0, A_0, \Phi_0)$ be a limiting configuration as in Theorem \ref{HWcompactness}. The decomposition (\refeq{realimdecomp}) satisfies the following: 

\begin{enumerate}
\item[(A)] The decomposition is parallel with respect to the connection $\nabla_{A_0}$ induced by $A_0$. 
\item[(B)] Clifford multiplication by $\R$-valued forms preserves the decomposition, i.e. $$\gamma : (\Omega^0 \oplus \Omega^1)(\R) \times S^\text{Re}\to S^\text{Re} $$
and likewise for $S^\text{Im}$. Conversely, Clifford multiplication by $i\R$-valued forms reverses it. 
\item[(C)] $\Phi_0 \in \Gamma(S^\text{Re})$ is a section of the first summand, and there exists a spin structure on $Y$ with spinor bundle $S_0$ and a real Euclidean line bundle $\ell\to Y-\mathcal Z_0$ such that $$S^\text{Re}\simeq S_0\otimes_\R \ell $$
on $Y-\mathcal Z_0$. Moreover, under this isomorphism, $\nabla_{A_0}$ is taken to the connection formed from the spin connection on $S_0$ and the unique flat connection on $\ell$, with an $\R$-linear perturbation commuting with $\gamma$ arising from $B_0$. \qed
\end{enumerate}
\end{lm}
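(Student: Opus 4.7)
My plan is to dispatch items (1) and (2) by direct computation, and to devote the bulk of the proof to the structural assertion in (3).

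For (1), since the holonomy of $A_0$ on $\det(S)|_{Y-\mathcal Z_0}\simeq \underline{\C}$ lies in $\{\pm 1\}\subset U(1)$, the induced connection on $\det(S)$ reduces to the trivial connection under the $SU(2)$-reduction of $S|_{Y-\mathcal Z_0}$. Consequently, the Spin$^c$ connection preserves the $SU(2)$-structure, which is equivalent to the charge conjugation $J$ being parallel. The quaternionic structure $j$ on $E$ is parallel with respect to the $SU(2)$-connection $B$ by construction. Thus $\sigma = J\otimes_\C j$ is parallel, and its $\pm 1$-eigenbundles are parallel subbundles. For (2), I would use that $J$ satisfies $J\gamma(v)=\gamma(v)J$ for real $v\in T^*Y$ (the standard Spin-equivariance of charge conjugation on a quaternionic spin representation), while $j$ acts trivially on $T^*Y$. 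Combining, $\sigma\gamma(v)=\gamma(v)\sigma$ for real $v$. For imaginary forms, using $\gamma(iv)=i\gamma(v)$ together with the $\C$-antilinearity of $\sigma$, one computes $\sigma\gamma(iv)=-\gamma(iv)\sigma$, which yields both statements about the Clifford action on the eigenspace decomposition.

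Part (3) is where the main work lies. The starting point is a pointwise algebraic fact about the hyperk\"ahler moment map: a nonzero $\Phi\in S_{E,y}$ satisfies $\mu(\Phi,\Phi)=0$ if and only if there exist a unit $u\in E_y$ and $\phi\in S_y$, uniquely up to simultaneous sign change $(\phi,u)\mapsto(-\phi,-u)$, such that $\Phi=\phi\otimes u$. This can be established by a direct calculation in a local frame; see e.g.\ \cite[Sec.\ 3]{DWExistence} and \cite[Sec.\ 2]{PartI}. Applying this fiberwise to $\Phi_0$ on $Y-\mathcal Z_0$, where $\Phi_0$ is nonvanishing and $\mu(\Phi_0,\Phi_0)=0$, produces at each point a real line $\R\cdot u\subset E_y$ well-defined up to sign; the associated real line bundle is $\ell\to Y-\mathcal Z_0$. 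Once $u$ is chosen locally, the real-linear map $\phi\mapsto \tfrac{1}{\sqrt{2}}(\phi\otimes u+J\phi\otimes ju)$ is $\sigma$-invariant (by $\sigma^2=\mathrm{Id}$) and identifies the underlying real rank $4$ bundle of $S$ with $S^\text{Re}$. These local identifications are equivariant under the sign flip on $u$, so they patch into a global isomorphism $S^\text{Re}\simeq S_0\otimes_\R\ell$, where $S_0$ is the real rank $4$ bundle underlying the complex rank $2$ spin bundle on $Y-\mathcal Z_0$ obtained by combining the trivialization of $\det(S)$ with the $\Z_2$-bundle of sign choices of $u$ to lift the Spin$^c$ frame bundle to a Spin$(3)=SU(2)$ frame bundle.

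The connection statement is then automatic from (1) and (2): $\nabla_{A_0}$ preserves the parallel splitting and, under the identification above, corresponds to the tensor product of the spin connection on $S_0$ with the unique flat $\Z_2$-connection on $\ell$; the zeroth-order perturbation induced by $B_0$ commutes with $\gamma$ and preserves $S^\text{Re}$ by (2). The main obstacle I anticipate is making precise the global extension of $S_0$ to a spin structure on all of $Y$, rather than only on $Y-\mathcal Z_0$. This reduces to verifying that the transition cocycle of the local spin lifts, twisted by $\ell$, extends across $\mathcal Z_0$; the relevant topological identity relating the monodromy of $\ell$ around the meridians of $\mathcal Z_0$ to $\mathrm{PD}[\mathcal Z_0]=-c_1(S) \pmod{2}$ is provided by \cite{HaydysBlowupsets}, and ensures the required extension exists. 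Since $Y$ is an orientable $3$-manifold, it carries spin structures, and this extension singles out the one compatible with the construction.
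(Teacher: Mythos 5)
The paper does not actually prove this lemma in the text---it defers to \cite[Sec.\ 2]{PartI} and \cite[Sec.\ 3]{DWExistence}---so your argument has to stand on its own. Parts (1) and (2) are essentially right, though in (1) you should acknowledge that the $SU(2)$-reduction is only \emph{local}: when the $\Z_2$-holonomy of $A_0$ is nontrivial, $\det(S)|_{Y-\mathcal Z_0}$ has no global parallel trivialization, so $J$ (hence $\sigma$) is a priori only defined up to sign on overlaps, and one must check that this ambiguity does not disturb the eigenspace decomposition.

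The serious problem is the ``pointwise algebraic fact'' underlying your proof of (3): it is false that $\mu(\Phi,\Phi)=0$ with $\Phi\neq 0$ forces $\Phi=\phi\otimes u$ to be a simple tensor. Since Clifford multiplication acts trivially on the $E$-factor, $\mu(\phi\otimes u,\phi\otimes u)=|u|^2\,\mu(\phi,\phi)$, and the standard single-spinor identity $|\mu(\phi,\phi)|=|\phi|^2$ shows this is nonzero whenever $\phi,u\neq 0$; the paper stresses exactly this point in Section \ref{section3.1} (the single-spinor moment map has no nontrivial zeros, which is why the one-spinor theory is compact). The correct fiberwise statement is that $\mu^{-1}(0)\setminus\{0\}$ is the cone over $U(2)$, i.e.\ every nonzero element is of the form $e^{i\gamma}\bigl(\phi\otimes u+J\phi\otimes ju\bigr)$---this is why the cone is $5$-dimensional and why $S^{\text{Re}}$ consists of such sums, as in the local formula of Lemma \ref{Fermitrivialization}. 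Your construction of $\ell$ therefore fails twice: first, no $u$ with $\Phi_0=\phi\otimes u$ exists; second, even in the corrected decomposition the pair $(\phi,u)$ is only determined up to $(\phi,u)\mapsto(\phi\bar q,\,qu)$ for a unit quaternion $q$, so \emph{every} unit $u\in E_y$ occurs and ``$\R\cdot u$'' carries no information about $\Phi_0$. The line bundle $\ell$ is not a subbundle of $E$; it is the sign bundle of the $\Z_2=\ker(SU(2)\to SO(3))$ ambiguity in lifting $\Phi_0$ (equivalently, it is determined by the $\Z_2$-holonomy of $A_0$), and the isomorphism $S^{\text{Re}}\simeq S_0\otimes_\R\ell$ is assembled from the local maps $\phi\mapsto\phi\otimes u+J\phi\otimes ju$ that you do write down, whose transition functions are $\pm 1$. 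The second half of your argument for (3) is salvageable along these lines, but the mechanism you propose for producing $\ell$ is not.
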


\medskip As a consequence of items (A) and (B) above, the Dirac operator on $S_E$  restricts to a Dirac operator

\be \slashed D_{A_0}^\text{Re} : \Gamma(S^\text{Re})\to \Gamma(S^\text{Re}), \label{DA0preserves}\ee
\noindent and likewise for the imaginary part. When the subbundle in question is evident, we will omit the superscript from the notation. The isomorphism in Item (C) intertwines (\refeq{DA0preserves}) and the Dirac operator on $S_0\otimes_\R \ell$ formed using the connection in Item (C). This leads to the following equivalence. 

\begin{cor}\label{givesZ2harmonic}
Suppose that $\mathcal Z_0\subset Y$ is a smooth, embedded link. Then the data of a  limiting configuration satisfying (\refeq{limitingconfigs}) as in Theorem \ref{HWcompactness} is equivalent to a $\Z_2$-harmonic spinor $(\mathcal Z_0, A_0, \Phi_0)$ on $S^\text{Re}\simeq S_0\otimes_\R \ell$ satisfying 

$$\slashed D_{A_0}\Phi_0 =0  \hspace{2cm}\nabla_{A_0}\Phi_0\in L^2$$

\noindent with respect to $(g_0, B_0)$.    
\end{cor}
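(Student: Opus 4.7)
The plan is to apply Lemma \ref{subbundles} as the structural backbone in both directions. The forward direction requires one nontrivial analytic input, namely an $L^2$-bound on $\nabla_{A_0}\Phi_0$ obtained by passing to the limit in a Weitzenb\"ock estimate on the approximating sequence. The reverse direction is essentially algebraic, with the moment map vanishing following from the symmetry properties of $S^{\text{Re}}$.

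For the forward direction, suppose $(\Phi_0, A_0)$ is a limiting configuration satisfying (\ref{limitingconfigs}) arising from a sequence $(\Phi_i, A_i, \e_i)$ as in Theorem \ref{HWcompactness}. Items (1)--(2) of Lemma \ref{subbundles} imply that $\slashed D_{A_0}$ preserves the decomposition $S_E|_{Y-\mathcal Z_0} = S^{\text{Re}} \oplus S^{\text{Im}}$; item (3) guarantees $\Phi_0 \in \Gamma(S^{\text{Re}})$ and provides an isomorphism $S^{\text{Re}} \simeq S_0 \otimes_\R \ell$ intertwining $\slashed D_{A_0}^{\text{Re}}$ with the twisted Dirac operator perturbed by $B_0$. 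Transporting $\Phi_0$ through this isomorphism immediately gives a section of $S_0 \otimes_\R \ell$ solving the $\Z_2$-Dirac equation on $Y-\mathcal Z_0$, and the normalization is preserved.

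The remaining step is $\nabla_{A_0}\Phi_0 \in L^2$. For this I would apply the Weitzenb\"ock formula $\slashed D_{A_i}^2 \Phi_i = \nabla_{A_i}^*\nabla_{A_i}\Phi_i + \tfrac{s}{4}\Phi_i + \tfrac{1}{2}\gamma(F_{A_i})\Phi_i$ to the approximating sequence. Pairing with $\Phi_i$ and integrating, the curvature term can be rewritten using (\ref{buSW2}), which yields $\star F_{A_i} = -\tfrac{1}{2\e_i^2}\mu(\Phi_i,\Phi_i)$, together with the pointwise moment map identity $\br \gamma(\star\mu(\Psi,\Psi))\Psi,\Psi\kt = c\,|\mu(\Psi,\Psi)|^2$ with $c>0$. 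Using $\slashed D_{A_i}\Phi_i = 0$ and the $L^\infty$-bound on the scalar curvature, this produces a uniform estimate of the form $\|\nabla_{A_i}\Phi_i\|_{L^2}^2 + \tfrac{c}{\e_i^2}\|\mu(\Phi_i,\Phi_i)\|_{L^2}^2 \leq C\|\Phi_i\|_{L^2}^2 = C$ independent of $\e_i$. Combining this uniform bound with the $C^\infty_{loc}(Y-\mathcal Z_0)$ convergence of Theorem \ref{HWcompactness} and applying Fatou's lemma then gives $\nabla_{A_0}\Phi_0 \in L^2$.

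For the reverse direction, given a $\Z_2$-harmonic spinor $(\mathcal Z_0, A_0, \Phi_0)$ on $S_0 \otimes_\R \ell$, the data of the spin structure together with $\ell$ determines the $\text{Spin}^c$-structure with spinor bundle $S$ on $Y-\mathcal Z_0$, and the flat $\Z_2$-connection on $\ell$ corresponds to a flat $U(1)$-connection on $\det(S)|_{Y-\mathcal Z_0}$ with $\Z_2$-holonomy, still denoted $A_0$. Reversing the isomorphism of Lemma \ref{subbundles}(3) embeds $\Phi_0$ as a section of $S^{\text{Re}} \subset S_E|_{Y-\mathcal Z_0}$ and lifts the twisted Dirac equation to $\slashed D_{A_0}\Phi_0 = 0$. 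The vanishing $\mu(\Phi_0,\Phi_0) = 0$ is then purely algebraic: by Lemma \ref{subbundles}(2), $\gamma(e^j)\Phi_0 \in S^{\text{Re}}$ for each $j$, and the Hermitian inner product $\br \gamma(e^j)\Phi_0, \Phi_0\kt$ must simultaneously be purely imaginary (since $\gamma(e^j)$ is skew-Hermitian) and purely real (since the Hermitian product of two elements of the real form $S^{\text{Re}}$ is real-valued, using that the real structure $\sigma$ is an anti-unitary involution fixing $S^{\text{Re}}$), hence vanishes identically. The main obstacle I anticipate is only the Weitzenb\"ock step in the forward direction; the remainder reduces to careful bookkeeping using the already-established Lemma \ref{subbundles}.
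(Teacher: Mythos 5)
Your argument is correct in its main computations, but it takes a genuinely different route from the paper on the one nontrivial point, and that route comes with a loss of generality worth flagging. The paper's own proof disposes of everything except the integrability condition by citing item (3) of Lemma \ref{subbundles}, exactly as you do; but for $\nabla_{A_0}\Phi_0\in L^2$ it invokes the polyhomogeneous expansion of Lemma \ref{asymptoticexpansion} (elliptic edge regularity along $\mathcal Z_0$), which shows that for a solution of the singular Dirac equation the condition $\nabla\Phi_0\in L^2$ is \emph{equivalent} to $|\Phi_0|$ extending continuously over $\mathcal Z_0$ with $\mathcal Z_0\subset|\Phi_0|^{-1}(0)$ --- and the latter is part of the defining data of a limiting configuration. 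Your Weitzenb\"ock argument is sound as a computation: the identity $\br\gamma(\mu(\Psi,\Psi))\Psi,\Psi\kt=c|\mu(\Psi,\Psi)|^2$ with $c>0$ does hold (the identity that \emph{fails} for two spinors is $|\mu(\Psi,\Psi)|\sim|\Psi|^2$, which you do not need), the signs work out with the paper's conventions, gauge-invariance of $|\nabla_{A_i}\Phi_i|$ makes the Fatou step legitimate, and indeed this uniform bound is the first a priori estimate in the Haydys--Walpuski compactness proof. The difference is what each approach buys: your argument requires an actual approximating sequence of monopoles $(\Phi_i,A_i,\e_i)$, so it establishes the forward direction only for limiting configurations that genuinely arise as limits, whereas the corollary is stated for the abstract data of a triple satisfying (\refeq{limitingconfigs}) together with the bullet-point properties of Theorem \ref{HWcompactness}, with no sequence in sight. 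Since the whole point of the paper is to start from a $\Z_2$-harmonic spinor \emph{not} known in advance to be such a limit, the paper's intrinsic route through edge regularity is the one that is actually needed downstream. Your reverse direction, including the anti-unitarity argument that $\br\gamma(e^j)\Phi_0,\Phi_0\kt$ is simultaneously real and purely imaginary and hence zero, is correct and is essentially the content the paper delegates to Lemma \ref{subbundles} and Section \ref{section3.3}.
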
 
\begin{proof} Except for the integrability condition, the corollary follows directly from isomorphism of item (3) in Lemma \ref{subbundles}. The fact that $\nabla \Phi_0 \in L^2$ will follow from Lemma \ref{asymptoticexpansion} in Section \ref{section4}, which shows requirement that $\nabla \Phi_0\in L^2$ is equivalent (for regular $\Z_2$-harmonic spinors) to requiring that $|\Phi_0|$ extend continuously over $\mathcal Z_0$ with $\mathcal Z_0\subset |\Phi_0|^{-1}(0)$.  
\end{proof}

Corollary \ref{givesZ2harmonic} is a manifestation of the Haydys Correspondence in the setting of $\Z_2$-harmonic spinors. Moving through the Hayds correspondence allows one to take advantage of the gauge freedom to temper the singular nature of the limiting equations. To explain this further, the limiting configurations in Theorem \ref{HWcompactness} are considered up to $U(1)$ gauge transformations and solve the globally degenerate system of equations (\refeq{limitingconfigs}). Here, the degeneracy arises because the symbol of the curvature equation (\refeq{buSW2}) vanishes as $\e\to 0$, hence one loses ellipticity everywhere on $Y$. On the other side of the Haydys correspondence, $\Z_2$-harmonic spinors are considered only up to the action of $\Z_2$ (acting by $\pm 1$ on $S_0$), and solve the Dirac equation on $S_0\otimes_\R \ell$ which is a singular elliptic equation whose symbol degenerates only locally along $\mathcal Z_0$ (as will follow from the local description in Section \ref{section4.1}). While the first type of degeneracy appears to at first be rather intractable, the latter description places the problem in the well-studied class of elliptic edge problems \cite{MazzeoEdgeOperators, MazzeoEdgeOperatorsII}. 

\medskip

\begin{rem}
Items (A) and (B) of Lemma \ref{subbundles} show that $S^\text{Re}$ is a 4-dimensional real Clifford module on $Y-\mathcal Z_0$. The isomorphism in Item (C) of Lemma \ref{subbundles} endows it with a complex structure, but not canonically so. In particular, the induced Dirac operator (\refeq{DA0preserves}) is only $\R$-linear if the $SU(2)$-connection $B$ is non-trivial (as it must be for condition (2) of Definition \refeq{regulardef} to be met). 
\end{rem}

\subsection{Recovering $\text{Spin}^c$ Structures}
\label{section3.3}

By Corollary \ref{givesZ2harmonic}, a limiting configuration $(\mathcal Z_0, A_0, \Phi_0)$ gives rise to a $\Z_2$-harmonic spinor. It is not immediately clear how to reverse this process because, in contrast to the Seiberg--Witten equations, the definition of a $\Z_2$-harmonic spinor makes no references to a $\text{Spin}^c$ structure. The topological information of the $\text{Spin}^c$ structure is lost in the limiting process of Theorem \ref{HWcompactness} and must be reconstructed before the gluing analysis begins. 

Specifically, we seek a $\text{Spin}^c$ structure with spinor bundle $S$ such that $S^\text{Re}$ as defined by (\refeq{realimdecomp}) satisfies the isomorphism of Lemma  \ref{subbundles}(C) for the twisted spinor bundle $S_0\otimes_\R \ell$ that hosts $\Phi_0$. Given such an $S$,  Corollary \ref{givesZ2harmonic} implies that $(\mathcal Z_0, A_0, \Phi_0)$ may be viewed as a (non-smooth along $\mathcal Z_0$) configuration on the subbundle $S^\text{Re}\subset S_E$ of two-spinor bundle formed from $S$, and the gluing analysis begins from there.
  
 The following lemma reconstructs the correct $\text{Spin}^c$-structure for the gluing, given an orientation of $\mathcal Z_0$. The proof may be found in Section 3 of \cite{PartI}; see also \cite{HaydysBlowupsets} for more results in this direction. 
\begin{lm}\label{reconstructingspinc}
 Let $(\mathcal Z_0, A_0, \Phi_0)$ be a regular $\Z_2$-harmonic spinor on $S_0\otimes_\R \ell$. An orientation of $\mathcal Z_0$ determines a unique $\text{Spin}^c$-structure with spinor bundle $S\to Y$ satisfying the following criteria. 
 
 \begin{enumerate}
  \item[(1)] The first Chern class satisfies $$c_1(S)= - \text{PD}[\mathcal Z_0]$$
 \noindent with the specified orientation of $\mathcal Z_0$.  
 \item[(2)]  $S$ extends $S_0\otimes_\R \ell$ in the sense that $S|_{Y-\mathcal Z_0}\simeq S_0\otimes_\R \ell$. Moreover, there is an isomorphism $$S_0\otimes_\R \ell \simeq S^\text{Re} \subset S_E$$
 where $S_E=S\otimes_\C E$, which makes $\Phi_0$ a smooth section of $S^\text{Re}\to \Yminus \mathcal Z_0$.  
 \end{enumerate}
\end{lm}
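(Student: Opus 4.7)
The plan is topological: construct the $\text{Spin}^c$ spinor bundle $S$ as an extension of the rank-$2$ complex bundle $V := S_0 \otimes_\R \ell$ (with complex structure inherited from $S_0$) across the singular link $\mathcal Z_0$, using the orientation of $\mathcal Z_0$ to select a canonical extension.

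First I will verify existence. Because $Y$ is a spin $3$-manifold, $\text{Spin}^c$ structures on $Y$ correspond to complex line bundles $L$ via $S = S_0 \otimes L$, so the characteristic condition becomes $c_1(\det S) = 2\,c_1(L) \in 2H^2(Y;\Z)$. To see that $-\text{PD}[\mathcal Z_0]$ satisfies this, I will trace the class $w_1(\ell) \in H^1(Y - \mathcal Z_0; \Z/2)$ through the long exact sequence of the pair $(Y, Y - \mathcal Z_0)$:
\[ H^1(Y - \mathcal Z_0;\Z/2) \xrightarrow{\delta} H^2(Y, Y - \mathcal Z_0;\Z/2) \to H^2(Y;\Z/2). \]
Via the Thom isomorphism $H^2(Y, Y - \mathcal Z_0;\Z/2) \cong H^0(\mathcal Z_0;\Z/2)$, the hypothesis that $\ell$ has holonomy $-1$ on each meridian says $\delta(w_1(\ell))$ is the class $(1,1,\dots,1)$; exactness then forces the mod-$2$ reduction of $\text{PD}[\mathcal Z_0]$ to vanish in $H^2(Y;\Z/2)$, so $-\text{PD}[\mathcal Z_0] \in 2H^2(Y;\Z)$ and a line bundle $L$ with $2\,c_1(L) = -\text{PD}[\mathcal Z_0]$ exists.

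Second I will build the canonical extension. The orientations on $\mathcal Z_0$ and $Y$ orient the normal bundle $N \to \mathcal Z_0$ and give it a complex structure; fix a tubular neighborhood $U \cong N$ with complex fiber coordinate $z = re^{i\theta}$. On $U - \mathcal Z_0$ a local section of $\ell$ is proportional to $\cos(\theta/2)$, so both $\ell_\C := \ell \otimes_\R \C$ and the formal square root $\mathcal O_U(-\mathcal Z_0)^{1/2}$ are flat complex line bundles with monodromy $-1$ around each meridian. Their ratio $\ell_\C \otimes \mathcal O_U(-\mathcal Z_0)^{-1/2}$ therefore has trivial monodromy and extends to a genuine line bundle on $U$. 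Although $\mathcal O_U(-\mathcal Z_0)^{1/2}$ itself is not a line bundle on $U$, the product $S_0 \otimes_\C \mathcal O_U(-\mathcal Z_0)^{1/2}$ is a bona fide rank-$2$ complex bundle on $U$: it is the spinor bundle of the $\text{Spin}^c$ structure on $U$ with determinant $\mathcal O_U(-\mathcal Z_0)$, which exists since $U$ retracts to $\mathcal Z_0$. Combining these ingredients yields a local extension of $V$ across $\mathcal Z_0$, and gluing it to $V$ on $Y - \mathcal Z_0$ along $U - \mathcal Z_0$ produces the rank-$2$ complex bundle $S \to Y$. A direct determinantal computation in $U$ gives $c_1(\det S) = 2\,c_1(L) = -\text{PD}[\mathcal Z_0]$ with the specified orientation, establishing (1); condition (2) holds by construction, and the further identification $S_0 \otimes_\R \ell \simeq S^{\text{Re}} \subset S_E$ then follows from Lemma \ref{subbundles}, with $\Phi_0$ remaining smooth on $Y - \mathcal Z_0$. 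Finally, for uniqueness any $S'$ satisfying (1) and (2) satisfies $S' \cong S \otimes K$ for a line bundle $K$; (2) forces $K|_{Y-\mathcal Z_0}$ trivial, so $c_1(K) \in \ker(H^2(Y;\Z) \to H^2(Y-\mathcal Z_0;\Z))$, which by Lefschetz duality is spanned by the Thom classes of the components of $\mathcal Z_0$, while (1) forces $2\,c_1(K) = 0$; together these pin $c_1(K)$ to zero, so $K$ is trivial and $S' \cong S$.

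The main obstacle is the second step: rigorously implementing the ``half-integer line bundle'' heuristic for the local extension near $\mathcal Z_0$ and keeping track of how the orientation of $\mathcal Z_0$ fixes the sign in $c_1(S) = -\text{PD}[\mathcal Z_0]$. The clean way to do this is to define the local extension abstractly as the spinor bundle of the $\text{Spin}^c$ structure on $U$ with determinant $\mathcal O_U(-\mathcal Z_0)$, and verify that on $U - \mathcal Z_0$ its canonical flat structure matches $\ell_\C$ via the orientation-induced complex structure on the normal bundle; the sign in $c_1(S)$ then follows from the sign of the Thom class of $(N,\text{or})$.
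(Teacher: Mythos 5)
Your construction follows the same route as the proof the paper defers to (\cite[Sec.~3]{PartI}; see also \cite{HaydysBlowupsets}): the mod-$2$ long-exact-sequence argument showing that the meridional holonomy condition forces $\text{PD}[\mathcal Z_0]$ to be characteristic, followed by an explicit extension of $S_0\otimes_\R\ell$ across $\mathcal Z_0$ in which the orientation enters through the induced complex structure on $N\mathcal Z_0$ and fixes the sign in $c_1(S)=-\text{PD}[\mathcal Z_0]$. The existence half of your argument, including the identification of the local model with the $\text{Spin}^c$ structure on the tubular neighborhood with determinant $\mathcal O_U(-\mathcal Z_0)$ and the matching of flat monodromies on $U-\mathcal Z_0$, is correct.

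The uniqueness step is the one place the argument does not close. From $S'\cong S\otimes K$ you correctly deduce that $c_1(K)$ lies in $\ker\big(H^2(Y;\Z)\to H^2(Y-\mathcal Z_0;\Z)\big)$, i.e.\ in the subgroup $G$ generated by the classes $\text{PD}[\mathcal Z_j]$ of the components, and that $2\,c_1(K)=0$. But these two constraints pin $c_1(K)$ to zero only when $G$ has no $2$-torsion, and $G$ is a quotient of $\Z^k$ that can have $2$-torsion: for instance $Y=L(4,1)$ with $\mathcal Z_0$ a knot whose dual class is the element of order $2$ in $H^2(Y;\Z)\cong\Z/4$ gives $G\cong\Z/2$, entirely $2$-torsion. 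In such a case $S$ and $S\otimes K$ are distinct $\text{Spin}^c$ structures that both satisfy (1) and the bundle-level reading of (2), so uniqueness cannot be extracted from $c_1$ together with triviality of $K|_{Y-\mathcal Z_0}$ alone. What actually pins the structure down is the finer requirement that the isomorphism in (2) be compatible with the specified extension near $\mathcal Z_0$ — equivalently, that the ratio line bundle in your gluing extend as the \emph{trivial} bundle on $U$ — which is exactly what your construction produces but what your uniqueness argument never invokes. You should either prove uniqueness against that stronger characterization, or state the hypothesis that the subgroup generated by the $\text{PD}[\mathcal Z_j]$ is $2$-torsion-free, under which your weaker argument suffices.
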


\noindent Notice that we do {\it not} assume $\mathcal Z_0$ is connected, thus there are $2^k$ possible choices of orientation when $\mathcal Z_0$ has $k$ components.

Given Lemma \ref{reconstructingspinc}, the data of a regular $\Z_2$-harmonic spinor (with an orientation of $\mathcal Z_0$) is equivalent to that of a limiting configuration $(\mathcal Z_0, A_0, \Phi_0)$ using the induced $\text{Spin}^c$-structure on $Y$. From here on, we therefore cease to distinguish between a regular (oriented) $\Z_2$-harmonic spinor $(\mathcal Z_0, A_0,\Phi_0)$ and the corresponding limiting configuration denoted (purposefully) by the same triple.

\subsection{Adapted Coordinates}
\label{section3.4}
In order to describe Seiberg--Witten configurations in a neighborhood of $\mathcal Z_0$, we use adapted coordinate systems, constructed as follows. 

Fix a component $\mathcal Z_j$ of $\mathcal Z_0$ with length $|\mathcal Z_j|$ and an arclength parameterization $ p: S^1\to \mathcal Z_j$. Choose an orthonormal frame $\{n_1, n_2\}$ for the pullback $ p^* N\mathcal Z_0$ of the normal bundle, ordered so that $\{\dot { p}, n_1, n_2\}$ is an oriented frame of $p^*TY$ along $\mathcal Z_j$. Let $N_r(\mathcal Z_0)$ denote the tubular neighborhood of radius $r$ around $\mathcal Z_0$, measured in the geodesic distance of $g_0$. 

\begin{defn}
\label{Fermicoords}

A system of {\bf Fermi coordinates}  $(t,x,y)$ of radius $r_0 < r_{\text{inj}}$ where $r_{\text{inj}}$ is the injectivity radius of $(Y,g_0)$ is the diffeomorphism $\bigsqcup_j S^1\times D_{r_0}\simeq N_{r_0}(\mathcal Z_0)$ defined by $$(t,x,y)\mapsto \text{Exp}_{ p(t)}(xn_1 + yn_2).$$

\noindent on each component of $\mathcal Z_0$, where $t\in [0, |\mathcal Z_j|)$ is the normalized coordinate in the $S^1$ direction. In these coordinates the  metric $g_0$ has the form \be g_0=dt^2 + dx^2 + dy^2 \  +\  O(r).\label{metriccoords}\ee

\medskip 

\noindent We denote the corresponding cylindrical coordinates by $(t,r,\theta)$. Given a smooth family $(g_\tau, \mathcal Z_\tau)$ as in Theorem \ref{SpectralCrossing} it may be arranged that the Fermi coordinate systems depend smoothly on $\tau$.  \end{defn}

A system of Fermi coordinates induces a trivialization $\{dt,dx,dy\} $ of $T^*Y$, and thus one of $(\Omega^0\oplus \Omega^1)(i\R)$. The next lemma, which is proved in Section 3 of \cite{PartI}, shows that a choice of Fermi coordinates {also} induces trivialization of the two-spinor bundle $S_E$.

\begin{lm}\label{Fermitrivialization}
In the neighborhood $N=N_{r_0}(\mathcal Z_j)$ of each component of $\mathcal Z_j$, there is a local trivialization 
\be (S\otimes_\C E)|_{N} \simeq N\times (\C^2\otimes_\C \mathbb H)\label{trivializationeq}\ee

\noindent with the following properties. 

\begin{itemize}
\item[(1)] The connection $A_0$ has the form $$A_0:= \text{d}  \ + \ \frac{i}{2}d\theta \  + \  O(1),$$

\noindent where $O(1)$ denotes a smooth term whose derivatives are bounded in terms of those of the background data $p=(g,B)$. 

\item[(2)] There is an $\epsilon_j \in \{0,1\}$ such that the restriction $S^\text{Re}|_N$ is given in the trivialization (\refeq{trivializationeq}) by

$$S^\text{Re}\big |_{N_{r_0}(\mathcal Z_j)}= \left\{\begin{pmatrix} \alpha \\ \beta \end{pmatrix}\otimes 1 \  + \ e^{-i\theta}e^{-i\epsilon_j t} \begin{pmatrix} -\overline \beta \\ \overline \alpha \end{pmatrix}\otimes j \ \Big | \ \alpha,\beta: N\to \C\right\}. $$
\end{itemize}

\noindent  Again, for the $\tau$-parameterized family of Theorem \ref{SpectralCrossing}, and it may be assumed that this family of trivializations depends smoothly on $\tau$ (using $\mathcal Z_\tau$ and $A_\tau$ respectively). 
\end{lm}

Henceforth, we fix, once and for all, a smooth family of Fermi coordinates and accompanying trivializations (\refeq{trivializationeq}) for $\tau \in (-\tau_0, \tau_0)$. We allow all universal constants to depend on this choice.

 \section{The Singular Linearization}
 \label{section4}

 The linearized Seiberg--Witten equations play an essential role in carrying out the alternating iteration outlined in Section \ref{section2.4}. This section introduces the linearized equations, first at a smooth solution and then at a $\Z_2$-harmonic spinor.  The key observation is that the linearization at a  $\Z_2$-harmonic spinor is a singular elliptic system in which some components are elliptic edge operators  \cite{MazzeoEdgeOperators, MazzeoEdgeOperatorsII, grieser2001basics}. 
 
 \subsection{Singular Linearization}
\label{section4.1}

Differentiating, (\refeq{SW1}--\refeq{SW2}), the linearized (extended) Seiberg--Witten equations at a general smooth (renormalized) configuration $(\Phi, A)$ acting on a linearized deformation $(\ph,a)$ are  
\begin{eqnarray}
\slashed D_{A}\ph + \gamma(a)\tfrac{\Phi}{\e}&=& 0 \label{LSW1} \\
(\star d, -d) a + \tfrac{\mu(\ph, \Phi)}{\e} &=&0,  \label{LSW2}
\end{eqnarray}

\noindent where $\e$ is as in (\refeq{epsilondef}).

To make (\refeq{LSW1}--\refeq{LSW2}) into an elliptic system, we impose the $\Omega^0(i\R)$-valued gauge-fixing condition 
\be \label{gaugefixing}
 -d^\star a - \frac{i\br i \ph, \Phi \kt}{\e}=0,
\ee

\noindent where $d^\star $ denotes the adjoint of the exterior derivative. Then the polarization of $\mu$  is extended to a bilinear map $\mu: S_E\otimes S_E\to (\Omega^0\oplus \Omega^1)(i\R)$ by
\bea
\mu(\ph, \psi)&:=& (-i \br i \ph, \psi \kt , \mu_1(\ph, \psi)),
\eea

\noindent where $\mu_1$ is what were previously denoted by $\mu$. 

\medskip 

\begin{lm}\label{linearizedequations} Suppose that $(\Phi, A)$ is a smooth configuration on $Y$. Then the linearization of the (extended, gauge-fixed) Seiberg--Witten equations at $(\Phi,A)$ is the operator $\mathcal L_{(\Phi,A)}$ defined on linearized deformations $(\ph,a)\in \Gamma(S_E)\oplus (\Omega^0\oplus \Omega^1)(i\R)$ by
\be \mathcal L_{(\Phi, A)}(\ph,a)=\begin{pmatrix}\slashed D_A & \gamma(\_)\tfrac{\Phi}{\e} \\ \tfrac{\mu(\_,\Phi)}{\e} & \bold d \end{pmatrix}\begin{pmatrix}\ph \\ a \end{pmatrix},\label{linearization}\ee
\noindent where $\bold da= \begin{pmatrix} 0 & -d^\star \\ -d  & \star d \end{pmatrix}\begin{pmatrix} a_0 \\ a_1 \end{pmatrix}$. Moreover, $\mathcal L_{(\Phi, A)}$ is a self-adjoint elliptic operator. \qed
\end{lm}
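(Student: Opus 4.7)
The plan is to derive the matrix form by direct linearization of the extended Seiberg--Witten equations together with the gauge-fixing condition, and then verify self-adjointness and ellipticity block-by-block.

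First, I would linearize \eqref{SW1}--\eqref{SW2} at the renormalized configuration $(\e^{-1}\Phi, A)$ in the variation $(\ph, a)$ with $a = (a_0, a_1) \in (\Omega^0 \oplus \Omega^1)(i\R)$. Using \eqref{extended}, differentiating $\slashed D_A \Psi$ produces $\slashed D_A \ph + \gamma(a)\Phi/\e$, which is the top row of the matrix. Differentiating the curvature equation yields the $\Omega^1$-valued relation
\[
\star d a_1 - d a_0 + \tfrac{1}{\e}\mu_1(\ph, \Phi)=0,
\]
while the gauge-fixing \eqref{gaugefixing} supplies the missing $\Omega^0$-valued relation $-d^\star a_1 + \tfrac{1}{\e}\mu_0(\ph, \Phi) = 0$, where $\mu_0(\ph, \Phi) := i\langle i\ph, \Phi\rangle$ is the $0$-form component of the extended moment map. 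Assembling these three relations using the extended $\mu = (\mu_0, \mu_1)$ produces exactly \eqref{linearization}, with $\bold d$ as stated.

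Second, for self-adjointness with respect to the natural $L^2$ pairing on $\Gamma(S_E) \oplus (\Omega^0 \oplus \Omega^1)(i\R)$, the two diagonal blocks are standard: $\slashed D_A$ is the Dirac operator of a unitary connection, and $\bold d$ is symmetric because $d^\star$ is the formal adjoint of $d$, and $\star d$ on $1$-forms of a $3$-manifold is self-adjoint. The substantive point is that the two off-diagonal blocks are mutual adjoints, which reduces to the pointwise identity
\[
\langle \gamma(a)\Phi, \ph\rangle_{S_E} \;=\; \langle a, \mu(\ph, \Phi)\rangle_{\Omega^0\oplus \Omega^1}
\]
for all $a, \ph, \Phi$. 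This is precisely the assertion that $\mu$ is the moment map for the $U(1)$-action dual to Clifford multiplication $\gamma : \underline{\R}\oplus T^*Y \to \End(S_E)$, and it follows directly from the definition \eqref{momentmap} and its $0$-form extension.

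Finally, ellipticity is a symbol calculation. The off-diagonal blocks are algebraic (they depend on $(\ph,a)$ only through multiplication by $\Phi/\e$) and so contribute nothing to the principal symbol; hence $\sigma_\xi(\mathcal L_{(\Phi, A)})$ is block-diagonal with entries $\sigma_\xi(\slashed D_A) = \gamma(\xi)$ and $\sigma_\xi(\bold d)$. Clifford multiplication by a non-zero covector is invertible on $S_E$, and a direct computation shows $\sigma_\xi(\bold d)^2 = |\xi|^2 \,\mathrm{Id}_{\Omega^0 \oplus \Omega^1}$; equivalently, $\bold d$ is isomorphic to the restriction of $d + d^\star$ to $\Omega^0 \oplus \Omega^1$ after identifying $\Omega^1 \simeq \Omega^2$ via $\star$, whose ellipticity is classical. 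The only non-mechanical step in the whole proof is the pointwise moment-map identity used for the off-diagonal adjointness; everything else is bookkeeping, so I expect the paper to dispose of this lemma briefly.
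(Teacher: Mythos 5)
Your proof is correct and is exactly the standard computation the paper has in mind (the lemma is stated with a terminal \qed and no written proof): direct linearization plus gauge-fixing for the matrix form, the pointwise adjunction $\br \gamma(a)\Phi,\ph\kt = \br a,\mu(\ph,\Phi)\kt$ between Clifford multiplication and the moment map for the off-diagonal symmetry, and a block-diagonal symbol computation for ellipticity. No gaps.
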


\noindent Notice that the parameter $\e$ is kept implicit in the notation $\mathcal L_{(\Phi, A)}$ 

A $\Z_2$-harmonic spinor $(\Z_0,A_0,\Phi_0)$ (or eigenvector) is {\it not} smooth on $Y$, and the corresponding  linearization ${\mathcal L}_{(\Phi_0,A_0)}$  is not elliptic.  Nevertheless,  the linearization acts on sections of bundles defined on $Y-\Z_0$ and, because of the decomposition (3.8),  it admits the following block decomposition.

\begin{lm}\label{linearizedequationsdef} The (extended, gauge-fixed) linearized Seiberg--Witten equations at $(\mathcal Z_0, A_0, \Phi_0)$ take the following form on a linearized deformation $(\ph_1, \ph_2, a ) \in \Gamma(S^\text{Re}) \oplus \Gamma(S^\text{Im})\oplus (\Omega^0 \oplus \Omega^1)(i\R)$:

\be \mathcal L_{(\Phi_0, A_0)} (\ph_1, \ph_2, a)  = \begin{pmatrix} \slashed D_{A_0} & 0 & 0 \\ 0 & \slashed D_{A_0} & \gamma(\_)\frac{\Phi_0}{\e} \\ 0 & \frac{\mu(\_, \Phi_0)}{\e}. & \bold d \end{pmatrix} \begin{pmatrix}\ph_1 \\ \ph_2 \\ a \end{pmatrix}\label{blockdiagonaldecomp}\ee

\noindent The same applies at an eigenvector $(\mathcal Z_\tau, A_\tau, \Phi_\tau)$. 
\end{lm}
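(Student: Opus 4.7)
The plan is to derive (4.6) directly from the general formula (4.3) of Lemma \ref{linearizedequations} by tracking how each entry of the matrix interacts with the decomposition $S_E|_{Y-\mathcal Z_0}=S^{\text{Re}}\oplus S^{\text{Im}}$. Since $\Phi_0\in\Gamma(S^{\text{Re}})$ by Item~(3) of Lemma \ref{subbundles}, the content is simply to determine which summand the output of $\slashed D_{A_0}$, $\gamma(\_)\Phi_0$, and $\mu(\_,\Phi_0)$ lands in for each summand of the input, and to extract the vanishing entries.

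For the upper-left $2\times 2$ Dirac block: Item~(1) of Lemma \ref{subbundles} asserts that $\nabla_{A_0}$ preserves the decomposition, and the first half of Item~(2) asserts that Clifford multiplication by real-valued forms does as well. Since $\slashed D_{A_0}$ is assembled from these two ingredients, it restricts to self-maps of $S^{\text{Re}}$ and $S^{\text{Im}}$, producing the diagonal pattern $(\slashed D_{A_0},\slashed D_{A_0})$ with vanishing off-diagonal $(1,2)$ and $(2,1)$ entries. For the spinor--form coupling: any $a\in (\Omega^0\oplus\Omega^1)(i\R)$ is purely imaginary, so by the second half of Item~(2), $\gamma(a)$ \emph{reverses} the decomposition, and therefore $\gamma(a)\Phi_0\in\Gamma(S^{\text{Im}})$. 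This places the coupling term in the second row only: the $(1,3)$ entry vanishes and the $(2,3)$ entry equals $\gamma(\_)\Phi_0/\e$.

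The only non-formal step is to show that the $(3,1)$ entry vanishes, i.e.\ $\mu(\ph_1,\Phi_0)=0$ for every $\ph_1\in\Gamma(S^{\text{Re}})$. I would deduce this by polarization from the fibrewise identity $\mu(\Phi,\Phi)\equiv 0$ on $\Gamma(S^{\text{Re}})$, which is the essential content of the Haydys correspondence at the level of fibres: the subbundle $S^{\text{Re}}$ is characterized in Section \ref{section3.2} precisely so that $\Phi_0\in\Gamma(S^{\text{Re}})$ obeys the moment-map constraint $\mu(\Phi_0,\Phi_0)=0$ of (\ref{limitingconfigs}). Since $\ph_1\in S^{\text{Re}}$ implies $\Phi_0+t\ph_1\in S^{\text{Re}}$ for all $t\in\R$, differentiating $\mu(\Phi_0+t\ph_1,\Phi_0+t\ph_1)=0$ at $t=0$ gives $\mu(\ph_1,\Phi_0)=0$ as required. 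The $(3,2)$ entry is then $\mu(\_,\Phi_0)/\e$ (it does not vanish because $\mu$ pairs $S^{\text{Im}}$ with $S^{\text{Re}}$ non-degenerately, as is also visible from the Fermi trivialization of Lemma \ref{Fermitrivialization}), and the $(3,3)$ entry is the first-order form operator $\bold d$, which is unchanged from Lemma \ref{linearizedequations} because it acts only on the form sector.

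The eigenvector case is identical: the family produced by Theorem \ref{SpectralCrossing} has $\Phi_\tau\in\Gamma(S^{\text{Re}})$ because $\Phi_\tau$ is obtained by continuation from $\Phi_0$ while preserving the real structure underlying the Haydys decomposition, so the $\tau$-dependent family of decompositions $S_E|_{Y-\mathcal Z_\tau}=S^{\text{Re}}\oplus S^{\text{Im}}$ still contains $\Phi_\tau$ in its real summand. Consequently the polarization argument applies verbatim with $\Phi_\tau$ in place of $\Phi_0$. I expect the only subtle point in the proof to be the polarization identity of the third paragraph, which rests on the geometric interpretation of $S^{\text{Re}}$ as the real subbundle cut out by $\mu^{-1}(0)$; every other entry of (\ref{blockdiagonaldecomp}) follows immediately from the parity statements (1)--(3) of Lemma \ref{subbundles}.
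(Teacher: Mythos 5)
Your proof is correct and is essentially the paper's own argument: the paper's proof is the one-line statement that the lemma is an immediate consequence of Lemma \ref{subbundles}, and your write-up simply makes the parity bookkeeping explicit. The one step you flag as non-formal, the vanishing of $\mu(\ph_1,\Phi_0)$, also follows even more directly from Item (2) of Lemma \ref{subbundles}: $i\gamma(e^j)\ph_1$ lies in $S^{\text{Im}}$, which is pointwise orthogonal to $S^{\text{Re}}\ni\Phi_0$, so no appeal to the fibrewise identity $\mu|_{S^{\text{Re}}}\equiv 0$ and polarization is needed (though that route is also valid).
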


\begin{proof}The proof is an immediate consequence of Lemma \ref{linearizedequations} and Eq. (\refeq{DA0preserves}). 
\end{proof}

\subsection{The Singular Dirac Operator}

 \label{section4.2} 
 
 As explained in the introduction, the Dirac operator $\slashed D_{A_0}$ at the singular connection $A_0$ is a degenerate {\bf elliptic edge operator} \cite{MazzeoEdgeOperators}. 
In the local coordinates and trivializations of Lemma \ref{Fermitrivialization}, the degenerate nature becomes manifest. Near $\mathcal Z_0$, it has the form \be \slashed D_{A_0}= \begin{pmatrix} i\del_t & -2\del \\ 2\delbar & -i\del_t\end{pmatrix} \ + \ \frac{1}{4r}\begin{pmatrix} 0 & e^{-i\theta} \\ -e^{i\theta}  &0 \end{pmatrix} \ + \ \frak d_1 \ + \ \frak d_0 \label{localform}\ee

\noindent where $\frak d_1=O(r)\nabla$ is a first order operator vanishing along $\mathcal Z_0$, and $\frak d_0$ is a bounded zeroth order operator. In particular, the second term, which arises from the non-trivial holonomy of $A_0$, is unbounded on $L^2$. Equivalently, $r\slashed D_{A_0}$ is a elliptic operator with $L^2$-bounded terms whose symbol degenerates along $\mathcal Z_0$, i.e. an elliptic edge operator\footnote{In general, an elliptic edge operator is an elliptic combination of the derivatives $r\del_r, \del_\theta, r\del_t$ in Fermi coordinates; technically speaking, $r\slashed D_{A_0}$ is the edge operator in question, but the factor of $r$ only shifts the weight.  } with ``edge'' $\mathcal Z_0$. Standard elliptic theory fails for elliptic edge operators, and specialized Sobolev spaces, notions of elliptic regularity, and Fredholm theory are required in this setting.

The remainder of Sections \ref{section4}--\ref{section5} develop the analysis of (\refeq{localform}) from the perspective of elliptic edge theory. The lower $2\times 2$ block in (\refeq{blockdiagonaldecomp}) is analyzed in Section \ref{section7.3}, using a trick that reduces it to standard elliptic theory. More generally, Lemmas \ref{subbundles} and \ref{linearizedequationsdef}  and the expression (\refeq{localform}) apply for any $\tau\in (-\tau_0,\tau_0)$, and we consider the family of operators   \be\slashed D_{A_\tau}: \Gamma(\Yminus \mathcal Z_\tau; S^\text{Re})\lre \Gamma(\Yminus \mathcal Z_\tau; S^\text{Re})\label{singulardirac},\ee

\noindent  where the dependence of $S^\text{Re}$ on $\tau$ is suppressed in the notation. More detailed discussion and proofs of the statements in this subsection may be found in Sections 2--4 of \cite{PartII}, and a discussion from the perspective of the microlocal analysis of elliptic edge operators is contained in \cite{MazzeoHaydysTakahashi}.

 To begin, we define function spaces adapted for edge operators. Let $r_\tau$ be a smooth positive weight function such that \be r_\tau(y)=\begin{cases}\text{dist}(y,\mathcal Z_\tau)  \hspace{1cm} y\in N_{r_0/2}(\mathcal Z_\tau) \\ \text{const.} \hspace{1.7cm}  y\in \Yminus N_{ r_0}(\mathcal Z_\tau) \end{cases}\label{weightfunctionr}\ee where the distance is measured using the metric $g_\tau$ (though we omit this from the notation), and $r_0$ is as in Definition \ref{Fermicoords}.

 \begin{defn}
\label{rH1edef}
For a constant $\nu \in \R$,  the weighted {\bf edge Sobolev spaces} (of regularity $m=0,1$) are defined by 
\begin{eqnarray}
r^{1+\nu}H^1_e(\Yminus \mathcal Z_\tau; S^\text{Re})&: =& \Big \{ \ u \ \ \Big | \  \  \int_{\Yminus \mathcal Z_\tau}\left( |\nabla u |^2 + \frac{|u|^2}{r_\tau^2} \  \right)r_\tau^{-2\nu} d V  \ < \ \infty \Big \}\\
r^{\nu}L^2(\Yminus \mathcal Z_\tau; S^\text{Re})&: =& \Big \{ \ v \ \  \Big | \  \ \int_{\Yminus \mathcal Z_\tau} {|v |^2}\ r_\tau^{-2\nu}  d V\ < \ \infty \Big \}\label{L2nunorm}
\end{eqnarray}
\noindent where $\nabla$ denotes the covariant derivative on $S^\text{Re}$ formed from $A_\tau$ and the background pair $(g_\tau,B_\tau)$.  These spaces are equipped with the norms given by the (positive) square root of the integrals required to be finite, and the Hilbert space structures arising from their polarizations. In a slight abuse of notation, we drop the subscript and write simply $r$ for the weight function, which coincides with the radial distance in Fermi coordinates.

\end{defn}
 \bigskip

The expression (\refeq{localform}) shows that $\slashed D_{A_\tau}$ extends to a bounded linear operator  \be \slashed D_{A_\tau}: r^{1+\nu}H^1_e(\Yminus \mathcal Z_\tau; S^\text{Re})\lre r^{\nu}L^2(\Yminus \mathcal Z_\tau; S^\text{Re}).\label{semifredholm}\ee 

\noindent The results of \cite[Prop 2.4]{PartII}, and \cite[Thm 6.1]{MazzeoEdgeOperators} show the following. 
\begin{lm}\label{mappingpropertiesI}
For $-\tfrac12 < \nu < \tfrac12$, 
\begin{enumerate} 
\item[(A)] (\refeq{semifredholm}) is left semi-Fredholm (i.e. $\ker(\slashed D_{A_\tau})$ is finite-dimensional, and $\text{Range}(\slashed D_{A_\tau})$ is closed.)
\smallskip
\item[(B)] The``semi-elliptic'' estimate \be
\|u\|_{r^{1+\nu}H^1_e} \leq C_\nu \Big( \|\slashed D_{A_\tau}u \|_{r^\nu L^2} \ + \ \| \widetilde \pi_\tau (u)\|_{r^\nu L^2}\Big)\hspace{1.5cm} \text{ for all } \ \ u \in r^{1+\nu}H^1_e\label{semiellipticest}
\ee

holds, where $\widetilde \pi_\tau$ is the $L^2$-orthogonal projection onto the finite-dimensional kernel. 
\smallskip
\item[(C)]When the assumptions of Theorem \ref{maina} hold, \eqref{semiellipticest} holds uniformly for $\tau\in(\tau_0, \tau_0)$ when $\pi_\nu$ is replaced by the projection to the 1-dimensional eigenspace spanned by $\Phi_\tau$. 
\end{enumerate}
\end{lm}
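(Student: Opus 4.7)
The plan is to leverage the elliptic edge calculus of Mazzeo together with the explicit local form (4.4) of $\slashed D_{A_\tau}$ near $\mathcal Z_\tau$. The operator $r\slashed D_{A_\tau}$ is an elliptic edge operator in the sense of Mazzeo: in Fermi coordinates $(t,r,\theta)$, it is a smooth combination of the edge vector fields $r\partial_r$, $\partial_\theta$, and $r\partial_t$ with coefficients continuous down to $r=0$. The weighted spaces $r^{1+\nu}H^1_e$ and $r^\nu L^2$ are precisely the natural edge Sobolev spaces for which $\slashed D_{A_\tau}$ maps boundedly. So the problem reduces to identifying the weights for which the edge theory produces a semi-Fredholm statement.

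To that end, I would first compute the indicial roots of $\slashed D_{A_\tau}$ at $\mathcal Z_\tau$. Freezing the coefficients along $\mathcal Z_\tau$, dropping the lower-order terms $\mathfrak d_0+\mathfrak d_1$, and Fourier-decomposing in the angular variable $\theta$ using the trivialization of Lemma \ref{Fermitrivialization}, the indicial operator becomes a family of Bessel-type ODEs in $r$. The twist $e^{\pm i\theta}$ coming from the $\mathbb Z_2$-holonomy shifts the Fourier modes by a half-integer, with the upshot that the indicial roots are precisely the half-integers $\{n+\tfrac12 : n\in\mathbb Z\}$. The range $-\tfrac12<\nu<\tfrac12$ corresponds to weights $r^{1+\nu}\in(r^{1/2},r^{3/2})$ that lie strictly between consecutive indicial roots, which is exactly the hypothesis needed for the edge parametrix construction.

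With the indicial roots identified, I would invoke the edge parametrix construction of \cite{MazzeoEdgeOperators} to produce a right parametrix $Q_\nu$ for $\slashed D_{A_\tau}$ modulo a smoothing operator that is compact from $r^\nu L^2$ to itself. Standard functional analysis then yields that $\slashed D_{A_\tau}$ has closed range and finite-dimensional kernel (part (A)), together with the semi-elliptic estimate \eqref{semiellipticest} where $\pi_\nu$ is the $r^\nu L^2$-orthogonal projection onto the (finite-dimensional) kernel. Since the cokernel pairs with kernel elements of the formal adjoint in the dual weight (outside the admissible range), no cokernel projection is needed on the right-hand side; this is the origin of the infinite-dimensional obstruction that motivates the deformation strategy of the paper. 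The main technical point here is the indicial/parametrix analysis, but this is largely black-boxed from Mazzeo's work and the companion paper \cite{PartII}.

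For part (C), the key additional inputs are the regularity and non-degeneracy hypotheses from Definition \ref{regulardef} and the existence of the smooth family $(\mathcal Z_\tau, A_\tau, \Phi_\tau)$ from Theorem \ref{SpectralCrossing}. By uniqueness up to sign and normalization (condition (ii)), the kernel of $\slashed D_{A_\tau}$ in $r^{1+\nu}H^1_e$ is exactly the line spanned by $\Phi_\tau$ for each $\tau$, so replacing $\pi_\nu$ by the orthogonal projection onto $\mathrm{span}(\Phi_\tau)$ is legitimate. Uniformity in $\tau$ then follows because the coefficients of $\slashed D_{A_\tau}$ (in Fermi coordinates adapted to $\mathcal Z_\tau$, which by Definition \ref{Fermicoords} can be chosen to depend smoothly on $\tau$) vary continuously in $\tau$, so the parametrix construction and the constants in the semi-elliptic estimate depend continuously on $\tau$; compactness of $(-\tau_0,\tau_0)$ after shrinking gives a uniform $C_\nu$. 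The expected main obstacle is bookkeeping the dependence of the edge parametrix on $\tau$—in particular ensuring that the smoothing remainder is controlled uniformly—but this is standard once the indicial roots are shown to be $\tau$-independent half-integers, which they are since the $\mathbb Z_2$-holonomy is topological.
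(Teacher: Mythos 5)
For parts (A) and (B) your route is the same as the paper's, which simply cites Mazzeo's edge theory (\cite[Thm 6.1]{MazzeoEdgeOperators}) and \cite[Prop 2.4]{PartII}: identify $r\slashed D_{A_\tau}$ as an elliptic edge operator, compute that the indicial roots are $\Z+\tfrac12$ (the paper records exactly this later, in (\ref{indicialroots})), and note that $-\tfrac12<\nu<\tfrac12$ avoids them and sits on the left semi-Fredholm side of the critical weight. One slip: to get finite-dimensional kernel, closed range, and the estimate \eqref{semiellipticest} you need a \emph{left} parametrix $Q_\nu\slashed D_{A_\tau}=\mathrm{Id}-K$ with $K$ compact, not a right parametrix; a right parametrix modulo compacts controls the cokernel, which here is infinite-dimensional, so no such right parametrix exists on these weights. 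Mazzeo's construction does supply the left parametrix in this weight range, so this is a wording error rather than a fatal one.

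Part (C) contains a genuine gap. You justify replacing $\pi_\nu$ by the projection onto $\mathrm{span}(\Phi_\tau)$ by asserting that $\ker(\slashed D_{A_\tau})=\mathrm{span}(\Phi_\tau)$ for each $\tau$. This is false for $\tau\neq 0$: by Theorem \ref{SpectralCrossing} the family satisfies $\slashed D_{A_\tau}\Phi_\tau=\Lambda_\tau\Phi_\tau$ with $\dot\Lambda(0)\neq 0$, so $\Phi_\tau$ is \emph{not} harmonic for $\tau\neq 0$ and the kernel there is generically trivial. If the kernel is trivial, the naive estimate $\|u\|\leq C_\tau\|\slashed D_{A_\tau}u\|$ holds but with a constant that blows up like $|\Lambda_\tau|^{-1}$ in the $\Phi_\tau$-direction as $\tau\to 0$; the whole point of (C) is that reinstating the term $\|\pi(u)\|$ for the \emph{eigen}projection restores a $\tau$-uniform constant. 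The correct argument is a perturbation of the $\tau=0$ estimate: decompose $u=c\,\Phi_\tau+u^\perp$, apply the $\tau$-continuous version of (B) (with the $\tau=0$ kernel replaced by its continuation, namely the eigenline), and absorb the $O(|\Lambda_\tau|)$ error $\slashed D_{A_\tau}(c\Phi_\tau)=c\Lambda_\tau\Phi_\tau$ using the explicit $\|\pi(u)\|$ term on the right. Your continuity-of-coefficients argument is the right mechanism for uniformity of the constants in the parametrix, but the identification of what is being projected out needs to be corrected as above.
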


\medskip

\noindent For $\nu=0$, the finite-dimensional kernel is, by definition, the space of $\Z_2$-harmonic spinors. The upcoming Lemma \ref{asymptoticexpansion} implies that this space is independent of $\nu$ in the range $-\tfrac{1}{2}<\nu<\tfrac{1}{2}$. 
\medskip

Notice that the estimate (\refeq{semiellipticest}) assumes {\it a priori} that $u\in r^{1+\nu}H^1_e$ and does not imply that an $r^\nu L^2$-solution can be bootstrapped to $u\in r^{1+\nu}H^1_e$. Thus elliptic bootstrapping in the standard sense fails for $\slashed D_{A_\tau}$. Consequently, even for $\nu=0$, the kernel and cokernel of (\refeq{semifredholm}) need not coincide, despite the fact that $\slashed D_{A_\tau}$ is formally self-adjoint. 

Fix a choice of Fermi coordinates near $\mathcal Z_\tau$ as in Definition \ref{Fermicoords}.  The results of \cite[Sec. 7]{MazzeoEdgeOperators}  imply that $\Z_2$-harmonic spinors have the following {\bf polyhomogeneous expansions} along the singular set $\mathcal Z_\tau$. These expansions serve as the appropriate substitute for standard elliptic regularity in the edge setting (see also \cite[App. A]{SiqiSLag}, \cite[Section 3.3]{PartII}, and \cite{grieser2001basics} for more general exposition). 

\begin{lm} \label{asymptoticexpansion}Suppose that $\Phi \in r^{1+\nu}H^1_e(Y-\mathcal Z_\tau; S^\text{Re})$ is a $\Z_2$-harmonic spinor or eigenvector. Then $\Phi$ admits a local polyhomogenous expansion of the following form: 

\begin{eqnarray} \Phi &\sim &\begin{pmatrix} c(t)  \ \ \  \ \  \\ d(t)e^{-i\theta} \end{pmatrix}r^{1/2}  \ + \  \sum_{ n\geq 1  }\sum_{k= -2n }^{2n+1} \sum_{p=0}^{n-1} \begin{pmatrix}  \ \ c_{n,k,p}(t)  e^{ik\theta}  \ \ \ \ \   \\  \ \ d_{n,k,p}(t) e^{ik\theta}e^{-i\theta} \ \ \end{pmatrix} r^{n+1/2}(\log r)^p \label{Phipoly} \end{eqnarray}   
 
\noindent where $c(t), d(t),c_{k,m,n}(t), d_{k,m,n}(t) \in C^\infty(S^1;\C)$. Here, $\sim$ denotes convergence in the following sense: for every $N\in \N$, the partial sums $\Phi_N$ given by the truncation of (\refeq{Phipoly}) at $n=N$ satisfy the pointwise bounds 
\begin{eqnarray} |\Phi-\Phi_N| &\leq& C_{N} r^{N+1+\tfrac14}\label{polyhom1}  \hspace{2cm} |\nabla_t^{\alpha}\nabla^{\beta}(\Phi-\Phi_N) |\leq C_{N,\alpha,\beta} r^{N+1+\tfrac14-|\beta|}\label{polyhom2}\end{eqnarray}

\noindent for constants $C_{N,\alpha,\beta}$ determined by the background data and choice of local coordinates and trivialization. Here, $\beta$ is a multi-index of derivatives in the directions normal to $\mathcal Z_\tau$. 

Moreover, if $\Phi_\tau$ is a family of $\Z_2$-harmonic spinors or eigenvectors,  then all the coefficients $c(t), d(t)$, $c_{n,p,t}(t), d_{n,k,p}(t)$ depend smoothly on $\tau$, and the bounds (\refeq{polyhom2}) are uniform for $\tau$ in a compact set. \qed

\end{lm}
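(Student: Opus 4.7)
The strategy is to invoke the polyhomogeneous regularity theory for elliptic edge operators developed in \cite{MazzeoEdgeOperators}. The essential preliminary computation is the indicial family of $\slashed D_{A_\tau}$ along $\mathcal Z_\tau$. Using the local form \eqref{localform}, the indicial operator is the $r^{-1}$-homogeneous part
\[
I(\slashed D_{A_\tau}) = \begin{pmatrix} 0 & -2\del \\ 2\delbar & 0 \end{pmatrix} + \frac{1}{4r}\begin{pmatrix} 0 & e^{-i\theta} \\ -e^{i\theta} & 0 \end{pmatrix},
\]
obtained by freezing coefficients at $\mathcal Z_\tau$ and dropping $\del_t$. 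Substituting the ansatz $u = r^\zeta \sigma(\theta)$ with Fourier expansion $\sigma = \sum_k (\alpha_k, \beta_k e^{-i\theta}) e^{ik\theta}$ reduces the indicial equation $I(\slashed D_{A_\tau})u=0$ to a decoupled two-term recursion on each Fourier pair. Solving this recursion, the indicial roots are found to be $\zeta_n = n + \tfrac12$ for $n\in\Z$, and at the leading root $\zeta_0 = \tfrac12$ the kernel of the indicial operator is the 2-dimensional space spanned by $(1,0)$ and $(0,e^{-i\theta})$, which accounts for the specific shape of the leading term in \eqref{Phipoly}.

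The main step is then Mazzeo's edge regularity theorem: any $u \in r^{1+\nu}H^1_e$ with $\slashed D_{A_\tau} u = 0$ is polyhomogeneous with index set determined by the indicial roots in the half-plane $\{\mathrm{Re}(\zeta) > \nu\}$. For $\nu \in (-\tfrac12, \tfrac12)$ this selects the indicial roots $\zeta_n = n+\tfrac12$ with $n \geq 0$. I would construct the expansion iteratively: given a partial sum $\Phi_N$ with the correct leading structure, the defect $\slashed D_{A_\tau}\Phi_N$ arises from the action of the perturbation terms $\frak d_1, \frak d_0$ and of $i\del_t$ on the expansion, and is itself polyhomogeneous at higher order. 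Formally inverting the indicial operator on this source produces the next coefficient in the expansion. Logarithmic powers $(\log r)^p$ appear precisely when the source term at exponent $r^{n+1/2}$ resonates with the kernel of the indicial operator $I_{n+1/2}$; tracking these resonances by induction on $n$ shows that $p$ grows by at most one per stage, yielding the bound $p \leq n - 1$.

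The range of Fourier modes $-2n \leq k \leq 2n+1$ is established by a parallel induction. The perturbation $\tfrac{1}{4r}\bigl(\begin{smallmatrix} 0 & e^{-i\theta} \\ -e^{i\theta} & 0 \end{smallmatrix}\bigr)$ together with $\del$ and $\delbar$ shifts $\theta$-modes by $\pm 1$, while $\del_t$ preserves them; combined with the specified leading mode content $\{0,1\}$, each further order in $r$ widens the allowed band by at most one on either side. Finally, the pointwise remainder bounds \eqref{polyhom1} follow from rescaling. By the semi-elliptic estimate of Lemma \ref{mappingpropertiesI} and the polyhomogeneity already established, $\Phi - \Phi_N$ lies in $r^{N+1+1/4}H^1_e$. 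Standard interior Schauder estimates applied on dyadic annuli $\{r \sim 2^{-j}\}$, combined with the near-invariance of $r\slashed D_{A_\tau}$ under the dilation $(t,x,y) \mapsto (t, \lambda x, \lambda y)$, upgrade this weighted $L^2$ bound to pointwise decay with the expected loss of one power of $r$ per normal derivative.

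The main obstacle is the careful bookkeeping of logarithmic resonances and Fourier mode shifts across all orders. In particular, one must verify that the bands $-2n \leq k \leq 2n+1$ and the logarithmic exponents $p \leq n-1$ are saturated in the worst case but never exceeded, which requires a precise analysis of the action of $\frak d_1, \frak d_0$ and $i\del_t$ on the expansion at each inductive stage and of which indicial roots become resonant under iteration.
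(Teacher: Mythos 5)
Your proposal follows the same route as the paper, which states this lemma without proof and justifies it by citing the polyhomogeneous regularity theory for elliptic edge operators in \cite[Sec. 7]{MazzeoEdgeOperators} (together with \cite{SiqiSLag}, \cite{PartII}, and \cite{grieser2001basics}) — i.e., exactly the indicial-root computation (consistent with \eqref{indicialroots}), the edge regularity theorem, and the iterative inversion of the indicial operator that you describe. One bookkeeping slip worth noting: your claim that each order in $r$ widens the Fourier band ``by at most one on either side'' would produce a band $|k|\lesssim n$ rather than the stated $-2n\leq k\leq 2n+1$; the additional growth comes from the Taylor coefficients of $\frak d_1,\frak d_0$ (and of the non-product part of the metric), whose degree-$n$ parts themselves carry $\theta$-modes up to $|k|=n$, contributing a second unit of mode-shift per power of $r$ on top of the one coming from $\del,\delbar$ and the holonomy term.
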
 

\medskip 

\noindent Notice that the non-degeneracy condition of Definition \ref{regulardef} is equivalent to the statement that the leading coefficients satisfy $|c(t)|^2 + |d(t)|^2>0$ for all $t\in \mathcal Z_\tau$. Note also that the existence of the expansion (\refeq{Phipoly}) implies that condition $\nabla \Phi\in L^2$ of \eqref{1.1} is equivalent to the requirement that $|\Phi_0|$ extends continuously to $Y$ with $\mathcal Z_0=|\Phi_0|^{-1}(0)$ as in Theorem \ref{HWcompactness}.

\section{The Obstruction Bundle}
\label{section5}

This section describes the infinite-dimensional cokernel of (\refeq{semifredholm}) for the weight $\nu=0$ more precisely. As explained in the introduction, this cokernel obstructs solving the Dirac equation during the gluing iteration, necessitating the introduction of deformations of the singular set as a gluing parameter.

\subsection{The Obstruction Basis}
\label{section5.1}
This first subsection constructs an isomorphism of the cokernel of the operator $\slashed D_{A_\tau}$ from (\refeq{semifredholm}) with sections of a complex line bundle over $\mathcal Z_\tau$ (see Proposition \ref{cokerpropertiesII} below).  The cokernel of (\refeq{semifredholm}) is canonically identified with the $L^2$-solutions of the formal adjoint operator, which for the weight $\nu=0$ is $\slashed D_{A_\tau}$ itself, now understood in a weak sense with domain $L^2$, see Section 2.2 of \cite{PartII}). These weak $L^2$ solution have a singularity of order $r^{-1/2}$ along $\mathcal Z_\tau$; we refer to them as {\bf singular harmonic spinors}. The isomorphism we construct associates the space of such singular harmonic spinors with an appropriate space of boundary data. 

Recall the elementary fact that the space of harmonic functions on a bounded domain with smooth boundary is isomorphic to a Hardy space of functions on the boundary via the Poisson extension operator. The isomorphism in Proposition \ref{cokerpropertiesII} is analogous, with $\mathcal Z_\tau$ playing the role of the boundary, and the leading terms of a (weak) expansion of $\Psi$ playing the role of the boundary values. The theory of such boundary value problems was developed extensively in \cite{MazzeoEdgeOperatorsII, BW25}, and we content ourselves here with only the minimal exposition necessary for our purposes (see also \cite[Sec. 4]{PartII}).

Consider a compact manifold $Y$ with parameter $p_\tau=(g_\tau,B_\tau)$ and $S^\text{Re}, \slashed D_{A_\tau}$ as before. If $\ph  \in  \mathcal D^\text{max}:=\{ u \in L^2(\Yminus \mathcal Z_\tau; S^\text{Re}) \ | \ \slashed D_{A_\tau}u \in L^2\}$ is in the $L^2$-maximal domain, \cite{BW25} shows that it admits a partial {\bf weak polyhomogeneous expansion} of the form 
\be \ph=\frac{1}{2}\left[\begin{pmatrix} \frac{a(t)}{\sqrt{z}} \\ \frac{b(t)}{\sqrt{\overline {z}}}\end{pmatrix}e^{-i\theta/2} \ + \ u\right] +  \frac{1}{2}\sigma\Bigg[ \ \ \ldots \ \ \ \Bigg] \hspace{2cm} u \in rH^1_e(\Yminus \mathcal Z_\tau; S_E)\label{maximaldom}\ee

\noindent in Fermi coordinates and the induced trivialization of Lemma \ref{Fermitrivialization}, where $a(t)\in \Gamma(\mathcal Z_\tau ; N\mathcal Z_\tau^{\otimes k_a})$ and  $b(t)\in \Gamma(\mathcal Z_\tau ; N\mathcal Z_\tau^{\otimes k_b})$ transform as sections of an appropriate power of the normal bundle under changes of Fermi coordinates, and $\sigma$ symmetrizes so that $\ph \in S^\text{Re}$ as in (\refeq{realimdecomp}). These expansions may be viewed as a weaker version of the expansions in Lemma \ref{asymptoticexpansion}, where $u$ collects all the higher-order terms. In fact, \cite[Cor. 3.9]{PartII} shows that in this case, $k_a=k_b=0$. The edge calculus analogue of the boundary trace is the map 
$$\text{tr}_e:\mathcal D^\text{max} \to L^{-1/2,2}\left(\mathcal Z_\tau ; \mathcal C_\tau \oplus \mathcal C_\tau^{-1}\right),$$

\noindent which extracts the leading coefficients $a(t),b(t)$ above, where $\mathcal C_\tau \to \mathcal Z_\tau$ is a trivial complex line bundle with a fixed trivialization induced by {\it any} choice of Fermi coordinates. We refer to $\mathcal C_\tau$ as the {\bf Calder\'on bundle}. The {\bf Calder\'on subspace} $\Lambda_\tau^\text{Cald}$ is the image of the space of singular harmonic spinors under  $\text{tr}_e$, namely 
\be \Lambda_\tau^\text{Cald}:= \text{tr}_e\left(\ker \slashed D_{A_\tau}^\text{Re}\big|_{L^2}\right) \subseteq L^{-1/2,2}(\Yminus \mathcal Z_\tau; \mathcal C_\tau \oplus \mathcal C_\tau^{-1}). \label{calderonbunddef}\ee

\noindent In the general theory, this is viewed as the inclusion of a closed Lagrangian subspace with respect to an appropriate symplectic form, \cite{BW25}.

\begin{eg} \label{Poissoneg}
To motivate the statement of Proposition \ref{cokerpropertiesII}, first consider the model operator $\slashed D_{A_\circ}$ on $Y^\circ=S^1\times \R^2$ equipped with the product metric where $\mathcal Z_\circ=S^1\times \{0\}$ has length $2\pi$, and $E=\underline \C^2$ is the trivial bundle. This model operator has the form  (\refeq{localform}) with $\frak d_1,\frak d_0=0$, and (\refeq{realimdecomp}) shows that $S^\text{Re}\subset \underline \C^2 \otimes E$ consists of elements of the form $\tfrac{1}{2}(\Psi + \sigma \Psi)$ where $\Psi\in \Gamma(S_E)$. Direct computation via separation of variables (see \cite[Sec. 3]{PartII}, \cite{RyosukeThesis}) shows that the $L^2$-kernel of $\slashed D_{A_\circ}^\text{Re}$ is the $L^2$ span of

\be \Psi_\ell^\circ:=\frac{1}{2} \left[\sqrt{|\ell|}e^{i\ell t}e^{-|\ell|r}\begin{pmatrix}\tfrac{1} {\sqrt{z}} \\ \tfrac{\text{sgn}(\ell)}{ \sqrt{\overline z}}\end{pmatrix} e^{-i\theta/2} \right]  \ \ + \ \  \frac{1}{2}\sigma\Bigg[ \ \ \ \ \ldots \ \ \ \Bigg] \label{Psilmodel}\ee 

\noindent for $\ell \in \Z$, where the second term symmetrizes so that $\Psi_\ell^\circ\in S^\text{Re}$. Notice that the leading coefficient can be written $(e^{i\ell t}, H e^{i\ell t})$ as functions of $t$, where $H$ is the zeroth-order pseudodifferential operator with symbol $\text{sgn}(\ell)$\footnote{Here, we are glossing over the fact that the $\ell=0$ modes are not in $L^2$ on the non-compact $Y^\circ$; compactness of $Y$ ameliorates this, see Section 4.3 of \cite{PartII}.} . Thus, in the model case, $\zeta(t)\mapsto (\zeta(t), H \zeta(t))$ defines an isomorphism 

\be L^{-1/2,2}(\mathcal Z_\circ; \mathcal C_\circ)]\overset{\cong}\lre \Lambda_\circ^\text{Cald} \label{Caldiso}\ee
to the model Calder\'on subspace, where $\mathcal C_\circ$ is the model Calder\'on bundle. There is also a Poisson extension operator $\frak P_\circ: \Lambda_\circ^\text{Cald}\to \ker\left(\slashed D_{A_\circ}^\text{Re}|_{L^2}\right)$ given by

\bea
\frak P_0 \begin{pmatrix} \zeta(t) \\  H(\zeta(t))\end{pmatrix}  &:=& \sum_{\ell} \zeta_\ell \Psi_\ell^\circ 
\eea 
where $\zeta_\ell$ are the Fourier coefficients of $\zeta(t)$ in a trivialization of $\mathcal C_0$ and $\Psi_\ell^\circ$ is as in (\refeq{Psilmodel}. This Poisson operator is an isomorphism with inverse $\text{tr}_e$. 
\end{eg}

Returning to the general case, given a link $\mathcal Z_\tau\subseteq Y$ with a Fermi coordinate neighborhood of each component, we may extend the model Poisson operator $\frak P_\circ$  in these coordinates and the trivializations of Lemma \ref{Fermitrivialization} to all of $Y$ using a radial cut-off function $\chi$. Composing the resulting operator $\chi \frak P_\circ$ with the $L^2$-projection to the kernel of $\slashed D_{A_\tau}$ gives linear maps 
    \begin{center}\be 
\tikzset{node distance=3.5cm, auto}
\begin{tikzpicture}
\node(A){$L^{-1/2,2}(\mathcal Z_\tau; \mathcal C_\tau)$};
\node(C)[right of=A]{$L^2(\Yminus \mathcal Z_\tau; S^\text{Re})$};
\node(C')[right of=C, yshift=0cm]{$\ker\left(\slashed D_{A_\tau}^\text{Re}\Big |_{L^2}\right). $};
\draw[->] (A) to node {$\chi \frak P_\circ$} (C);
\draw[->] (C) to node {$  \text{proj}_{L^2} $} (C');
\end{tikzpicture}
\label{compositionpoisson}
\ee \end{center}

\noindent Proposition \ref{cokerpropertiesII} below essentially says that (\refeq{compositionpoisson}) is an isomorphism (see \cite[Sec. 4]{PartII} for details). Thus (\refeq{compositionpoisson}) gives a way of identifying the space of singular harmonic spinors (the right hand side of (\refeq{compositionpoisson})) with the model Calder\'on subspace $\Lambda_\circ^\text{Cald}$, which in turn, is identified with $L^{-1/2,2}(\mathcal Z_\tau; \mathcal C_\tau)$ via (\refeq{Caldiso}). This approach has the advantage that it identifies the space of singular harmonic spinors with a space of sections of a bundle over $\mathcal Z_\tau$, rather than the more abstract subspace $\Lambda^\text{Cald}_\tau$. In particular, this allows the interpretation of (\refeq{mainamap}) in Theorem \ref{PartIImainb} as a pseudodifferential operator.

Before stating the proposition, there is one subtlety to address. The space of singular harmonic spinors need not form a vector bundle over the parameter space $\tau\in (-\tau_0, \tau_0)$. In the classical elliptic setting of operators with a finite-dimensional kernel, this fails because the dimension of the kernel may jump; here there may be similar discontinuities, although the dimension is always infinite. To ameliorate this, we employ the standard trick of ``thickening'' the space by low eigenvectors. Here, because $\Z_2$-harmonic spinor $\Phi_0$ is assumed to be isolated (Definition \ref{regulardef}), we need only consider a 1-dimensional thickening. Let \be r H^1_\perp(\Yminus\mathcal Z_\tau: S^\text{Re}):= \Big\{\ph \in rH^1_e \ | \ \br \ph, \Phi_\tau\kt_{L^2}=0\Big\}\label{rH1perp}\ee 

\noindent where $\Phi_\tau$ is the $\Z_2$-harmonic eigenvector from Definition \ref{spectralcrossingdef}.

\begin{defn}\label{Obdefn}
The {\bf obstruction space} associated to the data $(\mathcal Z_\tau, g_\tau, B_\tau)$ is defined as the $L^2$-orthogonal complement of the range of $\slashed D_{A_\tau}$ on the spaces (\refeq{rH1perp}). Thus for every  $\tau\in (-\tau_0, \tau_0)$, there is an orthonormal decomposition

$$L^2(\Yminus \mathcal Z_\tau; S^\text{Re})=: \text{\bf Ob}(\mathcal Z_\tau)\oplus  \text{Range}\left(\slashed D_{A_\tau}^\text{Re}\Big |_{rH^1_\perp}\right)^\perp.$$

\noindent We denote the projections to the first and second factors by $\Pi_\tau, \Pi_\tau^\perp$ respectively. We also define the {\bf obstruction bundle} $$\text{\bf Ob}:=\bigsqcup_{\tau}\text{\bf Ob}(\mathcal Z_\tau) \to (-\tau_0,\tau_0)$$
 as the parameterized family of obstruction spaces
\end{defn}

\medskip

\noindent At this point, the obstruction bundle is simply a family of Hilbert spaces -- we will show in Section \ref{section6} that it is indeed a Hilbert vector bundle. By construction, $\ker(\slashed D^\text{Re}|_{L^2})\subseteq \text{\bf Ob}(\mathcal Z_\tau)$. This is an equality at $\tau=0$, and an inclusion complemented by the span of $\Phi_\tau$ for $\tau \neq 0$ (note that $\Phi_\tau$ ).

The following proposition is proved in \cite{PartII}. It formalizes the construction (\refeq{compositionpoisson}), and provides regularity estimates needed in later sections. Compared to  (\refeq{compositionpoisson}), the proposition includes a shift by $1/2$ a degree of regularity, see Remark \ref{regrem} below.

\begin{prop}{\bf (\cite[Prop 4.4]{PartII}}\label{cokerpropertiesII} 
For $\tau \in (-\tau_0,\tau_0)$, there is a bounded linear isomorphism 

\be \text{\bf ob}_\tau : L^2(\mathcal Z_\tau; \mathcal C_\tau)\oplus \R \to \text{\bf Ob}(\mathcal Z_\tau)\ee

\noindent whose components on the summands are denoted $\text{\bf ob}_\tau=(\text{ob}_\tau, \iota_\tau)$ where $\iota_\tau(a)=a\Phi_\tau$ and such that the following properties hold uniformly for $\tau\in (-\tau_0, \tau_0)$. 

\begin{enumerate}
\item[(A)] When $\slashed D_{A_\tau}$ is complex linear and $\mathcal Z_\tau$ has a single component, the inverse $\text{\bf ob}_\tau^{-1}$ is given in terms of the spinors $\Psi_\ell:= \text{ ob}_\tau(e^{i\ell t})$ for $\ell \in 2\pi \mathbb Z/ |\mathcal Z_\tau|$ as follows. For a spinor $\psi\in L^2$, the inverses of $(\text{\bf ob}_\tau, \iota_\tau)$ are given respectively by 
\be
 \text{ob}_\tau^{-1} (\Pi_\tau \psi)= \sum_{\ell} \br\psi,  \Psi^{}_\ell  \kt_{{\C}} \cdot  e^{i\ell t}\label{innerprods} \hspace{3cm} \iota^{-1}(\Pi_\tau\psi)= \br  \psi, \Phi_\tau\kt_{L^2}\cdot  \Phi_\tau,\ee

\noindent where $\br -,-\kt_\C$ is the hermitian inner product. Here, $e^{i\ell t}$ are Fourier modes on $\mathcal Z_\tau$ in the trivialization of $\mathcal C_\tau$ induced by a choice of orientation.

\smallskip 

\item[(B)] The basis vectors $\Psi_\ell$ as in part (A) each admit a decomposition 
\be \Psi_\ell = \chi \Psi_\ell^\circ + \zeta_\ell + \xi_\ell \ee

\noindent wherein 
\begin{itemize}
\item[{\it (i)}] $\Psi_\ell^\circ$ are the model singular harmonic spinors  in (\refeq{Psilmodel}) in Fermi coordinates, and $\chi$ is a fixed radial cutoff function.  
\item[{\it (ii)}] There are constants $c_1<1$ and $C>1$ such that the linear extension of $Z(e^{i\ell t})=\zeta_\ell$ obeys the bounds

\be \hspace{3cm}\|Z( \nabla_t \varrho(t))\|_{L^2(\Yminus\mathcal Z_\tau)} \leq C\|\varrho(t)\|_{L^2(\mathcal Z_\tau)} \hspace{1cm} \| e^{c_1 r\sqrt{\Delta_t}} Z(\varrho(t))\|_{L^2(\Yminus \mathcal Z_\tau)}\leq C\|\varrho(t)\|_{L^2(\mathcal Z_\tau)}\ee

for $ \varrho(t)\in L^2(\mathcal Z_\tau, \mathcal C_\tau)$. Moreover, $Z$ is supported in the Fermi coordinate chart of $\mathcal Z_\tau$ and respects Fourier modes in the $t$-direction in the sense that $Z(e^{i\ell t})$ is supported in Fourier modes between $\ell-\frac{|\ell|}{2}$ and $\ell+ \frac{|\ell|}{2}$. 
\item[{\it (iii)}] For $N\in \N$, there are constants $C_N>1$ such that the linear extension of $X(e^{i\ell t})=\xi_\ell$ obeys the bounds 
\bea \hspace{3cm} \|X(\nabla^N_t \varrho(t))\|_{L^2(\Yminus\mathcal Z_\tau)}& \leq& C_N \|\varrho(t)\|_{L^2(\mathcal Z_\tau)} \\
\|  \nabla_b^\alpha\nabla_t^\beta X( \varrho(t))\|_{L^2(\Yminus\mathcal Z_\tau)}& \leq &  C_{\alpha\beta} \|X(\nabla^\beta_t \varrho(t))\|_{L^2(\Yminus\mathcal Z_\tau)}   \eea

for $ \varrho(t)\in L^2(\mathcal Z_\tau, \mathcal C_\tau)$, and $\nabla_b=r\nabla_r$ or $\nabla_\theta$ in Fermi coordinates. The constants $C_N$ depend on up to the $C^{N+3}$-norm of $(g_\tau, B_\tau)$. 
\noindent 
\end{itemize}

\bigskip

 \item[(C)] $\text{\bf ob}_\tau$ respects regularity in the sense that it restricts to a bounded linear isomorphism 
 
 \be \text{\bf ob}_\tau : L^{s,2}(\mathcal Z_\tau; \mathcal C_\tau)\oplus \R \to \text{\bf Ob}(\mathcal Z_\tau) \cap H^s_b(\Yminus\mathcal Z_\tau; S^{\text{Re}})\ee

\noindent \noindent for any $s\in \N$ (see below for the definition of $H^s_b$).
\bigskip

\item[(D)] In the case that $\slashed D_{A_\tau}$ is only $\R$-linear and $\mathcal Z_\tau$ has multiple components, then (A) --(B) continue to hold when we define  $$\text{ ob}_\tau(e^{i\ell t_j}):=\Psi_{\ell,j}^{\text{re}} \hspace{2cm}\text{ob}_\tau(ie^{i\ell t_j}):= \Psi_{\ell,j}^{\text{im}}$$ for $j$ indexing the components of $\mathcal Z_\tau$, and (\refeq{innerprods}) uses the inner product $\br \psi, \Psi_{\ell,j}^{}\kt_\C= \br \psi, \Psi_{\ell,j}^{\text{re},{}}\kt \ + \ i \br \psi, \Psi_{\ell,j}^{\text{im},{}}\kt.$ Each of these also admits a decomposition $\Psi_{\ell,j}^\text{re}=\chi_j \Psi_{\ell}^\circ + \zeta_{\ell,j}^\text{re}+\xi_{\ell,j}^\text{re}$ where the bounds of (B) hold uniformly in $j$, and likewise for $\Psi_{\ell,j}^\text{im}$.

\end{enumerate}\qed

\end{prop}

To clarify the notation used in Item (B.ii), recall that the pseudodifferential operator $e^{c_1 r \sqrt{\Delta t}}$ is defined via functional calculus to act on each Fourier mode by $e^{c_1 r \sqrt{\Delta t}}(e^{i\ell t})=e^{c_1|\ell| r}e^{i\ell t}$. In particular, Item (B.ii) shows that each $\zeta_\ell$ enjoys exponential decay similar to (\refeq{Psilmodel}) with $1/e$ length of size $O(1/|\ell|)$ in each mode. In the same item, note that $Z(e^{i\ell t})$ is a spinor on $\Yminus\mathcal Z_\tau$, so its Fourier mode decomposition in the $t$-direction depends parametrically on the normal variables $(x,y)$, and satisfies the assertion for every pair $(x,y)$ individually.  Finally, for part (C), $H^s_b(\Yminus \mathcal Z_\tau; S^\text{Re})$ is defined as the space of spinors $\Psi$ such that 
$$\nabla^{\alpha_1}\nabla^{\alpha_2}\ldots \nabla^{\alpha_k} \Psi \in L^2$$

\noindent for all multi-indices $\alpha=(\alpha_1,\ldots, \alpha_k)$ of weight $s$, where each index corresponds to one of the ``boundary'' derivatives $\nabla_t, r\nabla_r, \nabla_\theta$ in Fermi coordinates and standard covariant derivatives away from a neighborhood of $\mathcal Z_\tau$.

\begin{rem} \label{regrem}The choice of the regularity convention on the domain of $\text{\bf ob}_\tau$ is a convention, since this isomorphism can be precomposed with $(1+\Delta_t)^{s/2}$ for any power $s$. There are two natural choices: the ``boundary regularity'' convention, where the domain is $L^{-1/2,2}(\mathcal Z_\tau, \mathcal C_\tau)$, and the ``ambient regularity'' convention so that the map in Item (C) has order $0$. In Proposition \ref{cokerpropertiesII}  (and henceforth), we adopt the latter convention. 
\end{rem}

\subsection{Conormal Regularity}
\label{conormalsection}

A fundamental aspect of elliptic edge calculus is the intertwining of tangential regularity along the singular set $\mathcal Z_\tau$ with the rate of growth of spinors in the radial direction \cite{MazzeoEdgeOperators, MazzeoEdgeOperatorsII}. One manifestation of this phenomenon appears when considering the regularity properties of the projection $\Pi_\tau: L^2 \to \text{\bf Ob}(\mathcal Z_\tau)$. This regularity relationship is fundamental in \cite{PartII} and in the proof of Theorem \ref{maina}, as it gives rise to both the problematic loss of regularity and also the solution thereto. This section, specifically Lemma \ref{fourierregimescokernel} below, formalizes the discussion in Subsection \ref{smoothingprojections}.

For a simple example, consider again the model problem on $Y^\circ=S^1\times \R^2$, where the obstruction is spanned by square summable linear combinations of (\refeq{Psilmodel}). For a spinor $\psi\in L^2(\Yminus \mathcal Z_\tau)$ supported where $r\geq R_0$ for some $R_0$ the projection $\text{ob}^{-1}\Pi_\tau (\psi)$ can be calculated in Fourier modes by direct integration as

$$  \text{ob}^{-1}\Pi_\tau(\psi)=\sum_{\ell \in \Z} \br \psi, \Psi_\ell^\circ\kt_{L^2(r\geq R_0)}\cdot e^{i\ell t} = \sum_{\ell \in \Z} c_\ell e^{i\ell t} \hspace{1cm}  \text{ where }\hspace{1cm}|c_\ell|\leq \sqrt{|\ell|} e^{-|\ell| R_0}\cdot  \|\psi\|_{L^2}.$$

\noindent by Cauchy-Schwartz. In particular, $\text{ob}^{-1}\Pi_\tau(\psi)\in L^{s,2}$ for every $s\geq 0$, even if $\psi$ has no weak derivatives in $L^2$. Proposition \ref{cokerpropertiesII}(A,B) shows that on a closed $Y$, the regularity of this projection remains a question of how fast the sequence of inner products (\refeq{innerprods}) decays as $|\ell|\to \infty$.

More precisely (see \cite[Sec. 6.1]{PartII} for additional details), we say that a spinor $\psi$ has obstruction component of regularity $s$ if $\Pi_\tau(\Psi)\in \text{\bf Ob}(\mathcal Z_\tau) \cap H^s_b$ for some $s\geq 0$. Equivalently, by of Proposition \ref{cokerpropertiesII}(C), this means that $\text{ob}_\tau^{-1}\circ \Pi_\tau(\Psi)\in L^{s,2}(\mathcal Z_\tau; \mathcal C_\tau) \subseteq L^2(\mathcal Z_\tau; \mathcal C_\tau)$. Given $L_0\in \N$, we also define Fourier projections  
  $${ \pi}_{L_0}(e^{i\ell t})=\begin{cases} e^{i\ell t} \hspace{1cm}|\ell|\leq L_0 \\ 0 \  \  \hspace{.95cm} \  |\ell| > L_0\end{cases}  $$
   \noindent to modes less than $|L_0|$ in $L^2(\mathcal Z_\tau;\mathcal C_\tau)$. The following is a fundamental lemma for the gluing iteration, as explained in Subsection \ref{smoothingprojections}: it says that for spinors compactly supported away from $\mathcal Z_\tau$, the projection to the obstruction is smooth norms bounded by the distance from $\mathcal Z_\tau$.

   \begin{lm}\label{fourierregimescokernel}
Fix $0<\gamma<<1$. Then for any $N\in \N$,   there are $C_N,R_N>0$ such that the following holds. If $\Psi \in L^2(Y-\mathcal Z_\tau; S^\text{Re})$ is a spinor and $\text{dist}(\text{supp}(\Psi),\mathcal Z_\tau)
\geq R_N$. 

$$\| (1-\pi_{L_0})\circ \text{ob}^{-1}_\tau \circ \Pi_\tau(\Psi)\|_{L^2} \leq \frac{C_N}{(L_0)^N} \|\Psi\|_{L^2}$$

\noindent holds for any $L_0>R_N^{-\tfrac{1}{1-\gamma}}$. In particular, $\text{ob}_\tau^{-1}\circ \Pi_\tau(\Psi)\in C^\infty(\mathcal Z_\tau; \mathcal C_\tau)$.

  \end{lm}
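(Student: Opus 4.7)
The plan is to expand $\text{ob}_\tau^{-1}\circ\Pi_\tau(\Psi)\in L^2(\mathcal Z_\tau;\mathcal S_\tau)$ in the Fourier basis $\{\phi_\ell\}$ using Proposition \ref{cokerpropertiesII} and then estimate each Fourier coefficient $\langle \Psi,\Psi_\ell\rangle_\C$ by exploiting that $\Psi$ is supported where $r\geq R$ while the basis element $\Psi_\ell$ concentrates near $\mathcal Z_\tau$ on a scale $|\ell|^{-1}$. The whole argument then reduces to the elementary observation that when $L_0\geq R^{-1/(1-\gamma)}$, any $|\ell|>L_0$ satisfies $|\ell|R\geq L_0^\gamma$, so the exponential factor $e^{-|\ell|R}$ beats any polynomial in $L_0$.

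More concretely, I would proceed in the following steps. First, using Proposition \ref{cokerpropertiesII}(A) (the $\R$-linear case from part (B) is handled identically with an extra factor of $2$), decompose
\[
\Psi_\ell=\chi\Psi_\ell^\circ+\zeta_\ell+\xi_\ell
\]
and bound $|\langle\Psi,\Psi_\ell\rangle|$ by $\|\Psi\|_{L^2}$ times the restriction of $\|\Psi_\ell\|_{L^2}$ to $\{r\geq R\}$, by the Cauchy--Schwarz inequality and the support hypothesis on $\Psi$. Next, using the explicit formula \eqref{euclideancok} and the Fermi coordinates of Definition~\ref{Fermicoords}, compute
\[
\|\chi\Psi_\ell^\circ\|_{L^2(r\geq R)}^2\leq C\int_{\mathcal Z_\tau}\!\!\int_R^{r_0}\frac{|\ell|\,e^{-2|\ell|r}}{r}\,r\,dr\,dt\leq C e^{-2|\ell|R},
\]
for $R<R_N$ small (the metric corrections from \eqref{metriccoords} contribute a bounded multiplicative factor). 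For the exponentially decaying correction $\zeta_\ell$, the same computation applies with a constant $c$ in place of $|\ell|$ in the exponent, and the prefactor $O(1/|\ell|)$ absorbs the difference. For the tail $\xi_\ell$, which decays only polynomially in $|\ell|$ but has $L^2$-norm bounded by $|\ell|^{-N'}$ for any $N'$, one instead uses $\|\xi_\ell\|_{L^2(r\geq R)}\leq\|\xi_\ell\|_{L^2}\leq C_{N'}|\ell|^{-N'}$ directly. Combining these gives
\[
|\langle\Psi,\Psi_\ell\rangle|\leq C\|\Psi\|_{L^2}\bigl(e^{-|\ell|R}+C_{N'}|\ell|^{-N'}\bigr).
\]

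Finally, summing over $|\ell|>L_0$ and invoking the threshold hypothesis $L_0>R^{-1/(1-\gamma)}$, which is equivalent to $L_0R\geq L_0^\gamma$, yields
\[
\sum_{|\ell|>L_0}|\langle\Psi,\Psi_\ell\rangle|^2\leq C\|\Psi\|_{L^2}^2\Bigl(\sum_{|\ell|>L_0}e^{-2L_0^\gamma}\,e^{-2(|\ell|-L_0)R}+C_{N'}\sum_{|\ell|>L_0}|\ell|^{-2N'}\Bigr).
\]
The first sum is bounded by $C\,e^{-2L_0^\gamma}/R\leq C_N L_0^{-2N}$ since $e^{-L_0^\gamma}$ decays faster than any polynomial and $1/R\leq L_0^{1-\gamma}$; the second is bounded by $C L_0^{1-2N'}$, and choosing $N'$ large enough gives the required $L_0^{-2N}$ decay. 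Taking the square root proves the stated estimate, and the smoothness of $\text{ob}_\tau^{-1}\circ\Pi_\tau(\Psi)\in C^\infty(\mathcal Z_\tau;\mathcal S_\tau)$ follows since its Fourier coefficients decay faster than any polynomial.

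The main potential obstacle is bookkeeping the perturbation terms $\zeta_\ell,\xi_\ell$ uniformly in $\ell$, since $\xi_\ell$ has only polynomial decay in $|\ell|$ rather than the exponential decay enjoyed by $\chi\Psi_\ell^\circ$ and $\zeta_\ell$. However, because one is free to take $N'$ as large as desired in the bound $\|\xi_\ell\|_{L^2}=O(|\ell|^{-N'})$, the polynomial tail is harmless: no interaction between the support of $\Psi$ and the profile of $\xi_\ell$ is needed, only a crude global $L^2$-bound. This is the reason the hypothesis $L_0>R^{-1/(1-\gamma)}$ rather than the naive $L_0>R^{-1}$ suffices; a small amount of slack $\gamma$ is what converts the exponential factor $e^{-L_0 R}$ into super-polynomial decay in $L_0$ alone.
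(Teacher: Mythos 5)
Your proposal is correct and follows essentially the same route as the paper's proof: expand in the basis of Proposition \ref{cokerpropertiesII}, use Cauchy--Schwarz together with the support restriction $r\geq R$ to get $O(e^{-|\ell|R})$ for the $\chi\Psi_\ell^\circ$ and $\zeta_\ell$ contributions (with $|\ell|R>L_0^\gamma$ beating any polynomial), and use the $O(|\ell|^{-N'})$ bound on $\xi_\ell$ with $N'$ large for the remainder. The only caution is your phrasing for $\zeta_\ell$ (``a constant $c$ in place of $|\ell|$ in the exponent''): the decay rate must remain proportional to $|\ell|$ (i.e.\ $e^{-c|\ell|r}$, not $e^{-cr}$) for the threshold $L_0>R^{-1/(1-\gamma)}$ to produce super-polynomial decay, which is how the paper uses it.
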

  
  \begin{proof}
By Proposition \ref{cokerpropertiesII}, the projection $(1-\pi^\text{low})\circ \text{ob}_\tau^{-1} \circ \Pi_\tau (\Psi)$ is given by 

\be \sum_{|\ell|> L_0} \br  \Psi_\ell , \Psi \kt_{\C} \cdot e^{i\ell t}. \label{innerproductseq}\ee

\noindent The result then follows from using the decomposition $\Psi_\ell= \chi\Psi^\circ_\ell + \zeta^{}_{\ell}+ \xi^{}_{\ell}$. Using Items (B.i) and (B.ii) in Proposition \ref{cokerpropertiesII}, the first two terms --- $\Psi^\circ_\ell$ and $\xi_\ell$ --- have $L^2$-norm at most $O(\text{Exp}(-|\ell|R_N/c_1)$. In particular, for $|\ell|\geq L_0$, at most $O(\text{Exp}(-L_0R_N/c_1))$. The assumption on $L_0$ implies that $|\ell| R>|\ell|^\gamma$ once $|\ell|\geq L_0$, thus the $L^2$-norm of the first two term in the region $r\geq R$ is dominated by $O(\text{Exp}(-|\ell|^\gamma/c_1))$. For $R_N$ sufficiently small (thus $|\ell|$ sufficiently large), this is smaller than $|\ell|^{-N-2}$. Using Cauchy-Schwartz on $\xi_\ell$ and the third bullet point of Item (A) with $N'=N+2$ then gives the desired bound. Then, summing over $|\ell|$ one has 

$$\sum_{|\ell|> L_0} \br  \Psi_\ell , \Psi \kt_{\C} \cdot e^{i\ell t} \leq \sum_{|\ell|\geq L_0} \frac{C_N}{|\ell|^{N+2}} \|\Psi\|_{L^2}\leq \frac{C_N}{|L_0|^N} \|\Psi\|_{L^2}$$

\noindent as desired. Since $N$ was arbitrary, and the projection $(1-\pi_{L_0})$ excludes finitely many low Fourier modes, the fact that the projection is $C^\infty$ follows.

\end{proof}

\subsection{The Surjective Weights}
\label{section5.2}
The fact that the singular Dirac equation has an obstruction to solving in Lemma \ref{mappingpropertiesI} is a factor of the weights on the function space. In general, one cannot ensure {\it both} that there is a solution of $\slashed Du=f$ {\it and} that the solution $u$ vanishes along $\mathcal Z_0$; if one allows $u$ to become singular along $\mathcal Z_0$ then the problem has only a finite-dimensional obstruction. As explained in Section \ref{section2.5}, these singular spinors will be used to cancel the obstruction in high Fourier modes. Sections \ref{section9}--\ref{section10} show that for a particular choice of $L_0$, Lemma \ref{fourierregimescokernel} ensures the Fourier modes above a sufficient choice of  $L_0$ are sufficiently small that canceling them this way does not disrupt the convergence of the iteration scheme, despite the growth of these solutions across the neck region.

In the present section, we establish the semi-elliptic estimates that allow the singular Dirac equation to solved, provided solutions with singularities along $\mathcal Z_\tau$ are permitted. By the general theory of elliptic edge operators (\cite[Thm 6.1]{MazzeoEdgeOperators}), a first order elliptic edge operator $L$$$L:r^{1+\nu}H^1_e \to r^{\nu} L^2$$

\noindent  is semi-Fredholm provided that the weight $\nu$ lies outside the discrete set $I(L)$ of indicial roots. More specifically, there are critical weights $ \underline \nu<\overline \nu,$ such that (i) $L$ is left semi-Fredholm  (finite-dimensional kernel and closed range) for  $\overline \nu<\nu \notin I(L)$, and (ii) is right semi-Fredholm (finite-dimensional cokernel) for   $\underline \nu>\nu \notin I(L)$. For the singular Dirac operator $\slashed D_{A_\tau}$, the critical weights (by \cite[Prop 3.9]{MazzeoHaydysTakahashi}) are:
\be
I(r\slashed D_{A_\tau})= \Z + \tfrac{1}{2}\hspace{2.5cm}
\overline \nu=
\underline \nu =- \tfrac{1}{2}\label{indicialroots}.
\ee

\noindent Thus when the weight $\nu$ decreases past the critical weight $-\tfrac12$, $\slashed D_{A_\tau}$ flips from being left semi-Fredholm to being right semi-Fredholm. 

Lemma \ref{mappingpropertiesI} shows \eqref{semifredholm}  is left semi-Fredholm for $\nu\in (-\tfrac{1}{2},\tfrac12)$. The next lemma gives a precise statement in the right semi-Frehdolm, for the weight $\nu=-1$. We use the notation $H^1_e$ for the space $r^0H^1_e$ as in Definition \ref{rH1edef} for this weight. 

\begin{lm}\label{hittingob} For $\nu=-1$,  \be \slashed D_{A_\tau}: H^1_e(\Yminus \mathcal Z_\tau; S^\text{Re})\lre r^{-1}L^2(\Yminus \mathcal Z_\tau ; S^\text{Re})\label{weightedmapping}\ee
has a finite-dimensional cokernel. Moreover, if $f\perp_{L^2}\text{Span}(\Phi_\tau)$, then there is a unique solution of $\slashed D_{A_\tau}u=f$ such that $u$ is $L^2$-orthogonal to  $\text{\bf Ob}(\mathcal Z_\tau)\cap H^1_e$ and obeys the elliptic estimate
\be \|u\|_{H^1_e}\leq C \|f \|_{r^{-1}L^2} .\label{surjectiveest}\ee

\noindent uniformly in $\tau$. 
\end{lm}
\begin{proof}The (weakly defined) second order operator 

$$\slashed D_{A_\tau}^\star\slashed D_{A_\tau}: rH^1_e(\Yminus \mathcal Z_\tau; S^\text{Re})\lre r^{-1}H^{-1}_e(\Yminus\mathcal Z_\tau; S^\text{Re})$$

\noindent is surjective for $|\tau|>0$ and has 1-dimensional cokernel spanned by $\Phi_0$ at $\tau=0$ via integration by parts. The weakly-defined adjoint operator $\slashed D^\star_{A_\tau}:\{u \in L^2\ | \ u \perp \text{\bf Ob}(\mathcal Z_\tau)\}\to r^{-1}H^{-1}_e$ is therefore surjective onto the codomain modulo the span of $\Phi_\tau$ with uniform elliptic estimates (see \cite[Sec. 2]{PartII} for details). The bootstrapping results of \cite[Thm 6.1]{MazzeoEdgeOperators} and \cite[Thm 3.18]{MazzeoHaydysTakahashi} show that if $f \in r^{-1}L^2 \cap r^{-1}H^{-1}_e$, then the solution is in $H^1_e$, and (\refeq{surjectiveest}) holds. 
\end{proof}

The following corollary is a direct application of the previous lemma. In it, we denote 

$$\text{\bf Ob}^\perp(\mathcal Z_\tau):= \{ \psi \in \text{\bf Ob}(\mathcal Z_\tau) \ | \ \br \psi, \Phi_\tau\kt_{L^2}=0\}.$$

\noindent Note that the $L^2$ norm dominates the $r^{-1}L^2$ norm, so this is a closed subspace in $r^{-1}L^2$. 
\begin{cor}\label{solvingcokernel} There is a closed subspace $\mathcal X_\tau\subseteq H^1_e$ such that $$\slashed D_{A_\tau}: \mathcal X_\tau\to \text{\bf Ob}^\perp(\mathcal Z_\tau)$$ is an isomorphism. In particular, there is a $C>0$ such that if $\Psi \in \text{\bf Ob}^\perp(\mathcal Z_\tau)$, then there exists a unique $u_\Psi \in \mathcal X_\tau$  satisfying  $$\slashed D_{A_\tau} u_\Psi = \Psi, \hspace{2cm}\|u_\Psi\|_{H^1_e}\leq C \|\Psi\|_{L^2},$$
\noindent where $C$ is uniform in $\tau$. 
\end{cor}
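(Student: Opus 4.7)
The plan is to invert the right semi-Fredholm map of Lemma~\ref{hittingob} and then restrict to the preimage of $\text{\bf Ob}^\perp(\mathcal Z_\tau)$. A cheap first observation: since $r$ is bounded on $Y$, the inclusion $L^2 \hookrightarrow r^{-1}L^2$ is continuous with norm at most $\sup r$; in particular $\text{\bf Ob}^\perp(\mathcal Z_\tau) \subset L^2 \subset r^{-1}L^2$ uniformly in $\tau$, so the hypothesis $\Psi \in \text{\bf Ob}^\perp(\mathcal Z_\tau)$ supplies a legitimate right-hand side for \eqref{weightedmapping}.

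The substantive step is to show that $\text{\bf Ob}^\perp(\mathcal Z_\tau)$ actually lies in the range of \eqref{weightedmapping}. I would identify the cokernel via the duality pairing between $r^{-1}L^2$ and its dual $rL^2$ (unweighted $L^2$ pairing): formal self-adjointness of $\slashed D_{A_\tau}$ and integration by parts show that the cokernel consists of $v \in rL^2$ with $\slashed D_{A_\tau} v = 0$ distributionally. Because an element of $rL^2$ has leading polyhomogeneous order $r^{1/2}$ or higher near $\mathcal Z_\tau$ (Lemma~\ref{asymptoticexpansion}), such $v$ are regular $\Z_2$-harmonic spinors, i.e., lie in $\ker(\slashed D_{A_\tau}|_{rH^1_e})$. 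At $\tau=0$ the uniqueness clause of Definition~\ref{regulardef} gives that this cokernel equals $\R\Phi_0$; for small $\tau \neq 0$, the eigenvector $\Phi_\tau$ has $\Lambda_\tau \neq 0$ (Theorem~\ref{SpectralCrossing}), and by upper semi-continuity of cokernel dimension, together with uniqueness of the nearby eigenvector family, the cokernel must be trivial there. In either case the cokernel sits inside $\R\Phi_\tau \subseteq \text{\bf Ob}(\mathcal Z_\tau)$, and the orthogonal decomposition of Definition~\ref{Obdefn} places $\text{\bf Ob}^\perp(\mathcal Z_\tau)$ in its $L^2$-orthogonal complement, giving the desired inclusion into the range.

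From here I would simply set
\[
\mathcal X_\tau \ :=\ \ker(\slashed D_{A_\tau}|_{H^1_e})^{\perp_{L^2}} \ \cap \ \slashed D_{A_\tau}^{-1}\bigl(\text{\bf Ob}^\perp(\mathcal Z_\tau)\bigr),
\]
which is closed in $H^1_e$ as the intersection of two closed subspaces. Surjectivity of $\slashed D_{A_\tau}:\mathcal X_\tau \to \text{\bf Ob}^\perp(\mathcal Z_\tau)$ is the preceding paragraph; injectivity is automatic since any $u\in\mathcal X_\tau$ with $\slashed D_{A_\tau}u=0$ lies in both the kernel and its $L^2$-complement. The uniform estimate $\|u_\Psi\|_{H^1_e}\leq C\|\Psi\|_{L^2}$ with $C$ independent of $\tau$ then falls out by chaining Lemma~\ref{hittingob}(B) with the embedding $L^2\hookrightarrow r^{-1}L^2$.

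The principal obstacle is the cokernel identification: while the dual-pairing computation is formal, one must confirm that the ``ghost'' cokernel direction present at $\tau=0$ (carried by $\Phi_0$) deforms consistently into the family $\R\Phi_\tau$ rather than producing extra directions for small $\tau$. This is precisely what the thickened definition of $\text{\bf Ob}$ in Definition~\ref{Obdefn}, combined with the regular/unobstructed propagation of Theorem~\ref{SpectralCrossing}, is designed to accommodate, so the corollary follows without further incident.
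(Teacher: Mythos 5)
Your argument follows the same route as the paper's: invert the right semi-Fredholm map of Lemma~\ref{hittingob}, identify the annihilator of its range with the $\Z_2$-harmonic spinors via formal self-adjointness, and observe that these are $L^2$-orthogonal to $\text{\bf Ob}^\perp(\mathcal Z_\tau)$ by the very definition of the obstruction space. You fill in more detail than the paper does (the $rL^2$/$r^{-1}L^2$ duality, the polyhomogeneity argument forcing leading order $r^{1/2}$, and the case distinction between $\tau=0$ and $\tau\neq 0$), and all of that is sound; the operative content of the corollary --- unique solvability of $\slashed D_{A_\tau}u=\Psi$ in $H^1_e$ with the uniform bound $\|u_\Psi\|_{H^1_e}\leq C\|\Psi\|_{L^2}$ --- is correctly established.

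The one step that does not hold as written is the closedness justification. You claim $\slashed D_{A_\tau}^{-1}\bigl(\text{\bf Ob}^\perp(\mathcal Z_\tau)\bigr)$ is closed in $H^1_e$ as the preimage of a closed set, but $\slashed D_{A_\tau}$ is continuous from $H^1_e$ only into $r^{-1}L^2$ (not into $L^2$: the $H^1_e$-norm controls $\|r\nabla u\|_{L^2}$, not $\|\nabla u\|_{L^2}$), and $\text{\bf Ob}^\perp(\mathcal Z_\tau)$, while closed in $L^2$, is \emph{not} closed in $r^{-1}L^2$. Concretely, the normalized cokernel elements of Proposition~\ref{cokerpropertiesII} concentrate at scale $|\ell|^{-1}$, so $\|\Psi_\ell\|_{r^{-1}L^2}\sim|\ell|^{-1}\to 0$ while $\|\Psi_\ell\|_{L^2}\sim 1$; taking partial sums of $\sum c_\ell\Psi_\ell$ with $\sum|c_\ell|^2|\ell|^{-2}<\infty$ but $\sum|c_\ell|^2=\infty$, the corresponding $u_{\Psi^{(N)}}$ are Cauchy in $H^1_e$ by Lemma~\ref{hittingob} yet their limit $u$ has $\slashed D_{A_\tau}u\notin L^2$, so your $\mathcal X_\tau$ is not closed. (The same computation shows $\slashed D_{A_\tau}$ cannot be norm-bounded from any such subspace of $H^1_e$ onto $(\text{\bf Ob}^\perp,\|\cdot\|_{L^2})$, so the ``closed subspace / isomorphism'' phrasing of the corollary itself should be read loosely; what is actually used downstream, e.g.\ in Definition~\ref{frakWdef} and \eqref{Diracucancelling}, is only the canonical bounded right inverse $\Psi\mapsto u_\Psi$ that you have constructed.) So: flag the preimage argument as incorrect, but the substance of your proof matches the paper's.
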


\begin{proof} The conclusion then follows directly from Lemma \ref{hittingob}, taking $f=\Psi \in \text{\bf Ob}(\mathcal Z_\tau)$. \end{proof}

\begin{rem}In fact, \cite[Thm. 7.14]{MazzeoEdgeOperators} describes the form of $u_\Psi$ more precisely. Recall elements of the $L^2$-maximal domain of $\mathcal D_{A_\tau}$ may be written in the form (\refeq{maximaldom}). Proposition \ref{cokerpropertiesII} implies that $\text{\bf Ob}^\perp(\mathcal Z_\tau)$ consists of $L^2$-spinors whose leading coefficients (modulo a compact operator) lie in the subspace where $b(t)=H(a(t))\in L^{-1/2,2}$. The space $\mathcal X_\tau$, modulo another compact operator, has boundary values which fill out the complementary subspace where $b(t)=-H(a(t))$.

\end{rem}

 \section{Deformations of Singular Sets}
 \label{section6}
  
  This section develops the theory of the singular Dirac operator in the case that the singular set may vary. We begin by carefully constructing charts and trivializations for the relevant Banach manifolds and vector bundles. Then, we define the universal Dirac operator $\slashed{\mathbb D}$ originally introduced in (\refeq{universalDiracdef}) more precisely as a map on these bundles. The main result of this section is the calculation of the partial linearization  $\text{d}\slashed{\mathbb D}$ with respect to deformations of the singular set given in Theorem \ref{deformationsmaina},  which is a more precise version of Theorem \ref{PartIImainb}.

\subsection{Deformation Families}
\label{embeddings}
For each fixed $\tau\in (-\tau_0, \tau_0)$, the metric $g_\tau$ determines an exponential map $\text{exp}^\tau: N\mathcal Z_\tau \to Y$, where $N\mathcal Z_\tau$ is the normal bundle to $\mathcal Z_\tau$. For $r_0$ sufficiently small, the restriction of this map to the $r_0$ neighborhood of the zero-section is a diffeomorphism onto its image (in particular, we may assume $r_0$ is sufficiently small that these images are disjoint for the components of $\mathcal Z_\tau$). Via this identification, a choice of framing of $N\mathcal Z_\tau$ induces a choice of Fermi coordinates as in Definition \ref{Fermicoords}, in which $\mathcal Z_\tau$ is identified with the zero-section in $N\mathcal Z_\tau$. 

Now fix an $r_\mathcal E>0$ and set 
\begin{eqnarray}\label{6.DefDef}
\mathcal E_\tau =\ \Big\{ \eta\in L^{2,2}(\mathcal Z_\tau, N\mathcal Z_\tau)\, \Big|\, \|\eta\|_{2,2}<r_\mathcal E\, \Big\}.
\end{eqnarray}
  By the Sobolev embedding $L^{2,2}\hookrightarrow C^1$ in dimension~1, we can choose $r_\mathcal E$ small enough that the image of each $\eta$ in \eqref{6.DefDef}, regarded as a subset of $N\mathcal Z_\tau$,   is $C^1$ close to the zero section and hence defines a link in $Y$ that is a perturbation of $\mathcal Z_\tau$.  Conversely, each $C^1$ small perturbation of $\mathcal Z_\tau$, also regarded as a subset of $N\mathcal Z_\tau$,  is transverse to the fibers of $N\mathcal Z_\tau$, so is the image $\eta(\mathcal Z_\tau)$ of a section $\eta$.  Accordingly, for such a choice of $r_\mathcal E$,  we take \eqref{6.DefDef} as our space of deformations of $\mathcal Z_\tau$ in $Y$.  This is an open set in a Hilbert space, and hence a Hilbert manifold. Elements of $\mathcal E_\tau$ are identified with the corresponding set of embedded links $\text{Emb}(Y)$  by 
  \begin{eqnarray}
\text{Exp}_\tau: \mathcal E_\tau &\to& \text{Emb}(Y) \label{Expdef} \\
\eta &\mapsto &\text{exp}^\tau(\eta) \nonumber
\end{eqnarray}

\noindent where $\text{exp}^\tau: T\mathcal Z_\tau \to Y$ is the exponential map, and $\text{exp}^\tau(\eta)$ is regarded as a subset of $Y$.

In this context, the conclusion of Theorem \ref{SpectralCrossing} that each $\mathcal Z_\tau$ is regular (in particular smooth by Definition \ref{regulardef}(i)), and depend smoothly on $\tau$ means the family $\{\mathcal Z_\tau\}$ is given in $\mathcal E_0$ by the image of a smooth map \be\iota_{\mathcal Z}: (-\tau_0, \tau_0)\to \mathcal E_0 \cap C^\infty(\mathcal Z_0; N\mathcal Z_0).\label{iotzembedding}\ee Since the proof of Theorem \ref{maina} requires perturbing the link for each parameter pair $(\e,\tau)$ independently, we work  with a family of exponential charts centered at $\mathcal Z_\tau$ for each $\tau$. To this end, consider the bundle
\begin{eqnarray}\label{6.defE1}
\begin{tikzcd}
{\mathcal E}  \arrow[d, "p_2"] &  \hspace{-16mm}  =  \Big\{(\eta,\tau)\, |\, \eta\in {\mathcal E}_\tau \Big\}  \\
  (-\tau_0, \tau_0) &
\end{tikzcd}
\end{eqnarray}
The original family $\{\mathcal Z_\tau\}$ then corresponds to the zero section of ${\mathcal E}$, and the perturbations of this family that will be considered in Sections~\ref{section6.3}--\ref{section12} are defined by sections of ${\mathcal E}$. 

To endow $\mathcal E$ with the structure of a smooth fiber bundle, let $i :(-\tau_0, \tau_0) \times  \bigsqcup S^1 \to (-\tau_0, \tau_0)\times Y$ be the parameterized family of embeddings induced by (\refeq{iotzembedding}), so that $\iota(\tau, -)$ restricts to a parameterization of $\mathcal Z_\tau$ with constant velocity for every fixed $\tau$. Parallel transport in the $\tau$ direction using $d\tau^2 + g_\tau$ along with a trivialization of $N\mathcal Z_0$ at $\tau=0$ yields a trivialization of the pullback bundle $i^*(N\mathcal Z_\tau)$, and thus of the bundle $L^{2,2}(\mathcal Z_\tau; N\mathcal Z_\tau)\to (-\tau_0,\tau_0)$. $\mathcal E$ inherits a trivialization as open subsets of these fibers. We may assume, in this trivialization, that $r_\mathcal E$ is chosen uniformly in $\tau$, and that the family of framings used to define Fermi coordinates is parallel.

\subsection{The Gauge Choice of Diffeomorphisms}
\label{gaugefreedomsection}

In this subsection, we explain the gauge freedom coming from diffeomorphisms in more detail. For the purposes of exposition, the motivating geometric picture is described here in some detail, although the constructions in the subsequent section \ref{admissiblefamiliessection} have no explicit logical dependence on this picture.

Over this space $\mathcal E$ of embedded links there is a universal bundle of 3-manifolds  \begin{eqnarray}\label{6.family2}
\begin{tikzcd}
  {\mathbb Y} \arrow[d, "p"] &  \hspace{-10mm}  =\mathcal E \times  Y  \\
\mathcal E& &
\end{tikzcd}
\end{eqnarray}
 where $p$ is the projection onto the first factor, and we give the fiber over $(\eta,\tau)$ the metric $g_\tau$. There is a universal link $\mathcal Z^\text{univ}:=\{(\eta, \text{exp}^\tau(\eta(t))) \ | \ \eta \in \mathcal E_\tau \ , \ \tau \in (-\tau_0, \tau_0) \ , \ t\in \mathcal Z_\tau\} \subseteq \mathbb Y$, whose intersection with the fiber over $\eta$ is precisely the link $\mathcal Z_{\eta,\tau}=\text{Exp}_\tau(\eta)$.

There are ``universal'' vector bundles over $\mathbb Y, \mathbb \Yminus \mathcal Z^\text{univ}$, whose restriction to fibers $p^{-1}(\eta,\tau)=(Y,g_\tau)$  or are given as follows. 
 
 \begin{itemize}
\item[(1)] $S, E\to (Y,g_\tau)$ the restrictions of the product bundles $\mathcal E\times S$ and $\mathcal E\times E$, and $S_E$ the restriction of $\mathcal E\times S_E$. 
\smallskip
 
\item[(2)] $S^\text{Re}_{\eta,\tau}\to (\Yminus \mathcal Z_{\eta,\tau}, g_\tau)$, the corresponding real spinor bundle as in Eq. (\refeq{realimdecomp})\footnote{Since $S_E |_{\Yminus \mathcal Z_{\eta,\tau}}$ has vanishing first Chern class for every $\eta,\tau$, one may choose a continuous family of charge-conjugations $J$, thus of real structures $\sigma$ as in (\refeq{realimdecomp}).}. It is isomorphic, via Lemma \ref{subbundles}, to $S_0\otimes \ell_{\eta,\tau}$ where $\ell_{\eta,\tau}\to \Yminus \mathcal Z_{\eta,\tau}$ is a real line bundle twisted around $\mathcal Z_{\eta,\tau}$ equipped with its unique a flat connection ${A_{\eta,\tau}}$ with holonomy in $\Z_2$.  
\end{itemize}

\noindent Each line bundle $\ell_{\eta,\tau}$ is isomorphic to the original $\ell_0 \to \Yminus\mathcal Z_0$ after a homotopy of the link. Integration of sections over the fiber of $p$ with respect to the $rH^1_e$ or $L^2$ norm gives $\mathcal E$-parameterized families of Hilbert spaces whose restrictions to fibers $\mathcal E_\tau \subseteq \mathcal E$ are given by
\begin{eqnarray}
\mathbb H^1_e(\mathcal E_\tau) \to \mathcal E_\tau&:= &\{(\eta,u) \ | \ \mathcal Z \in \mathcal E_\tau \ , \ u \in rH^1_e(\Yminus \mathcal Z_{\eta,\tau}; S^\text{Re}_{\eta,\tau}) \} \label{Hilbertfamilies}\\
\mathbb L^2(\mathcal E_\tau) \to \mathcal E_\tau&:= &\{(\eta,v) \ | \ \mathcal Z \in \mathcal E_\tau \ , \ v \in L^2(\Yminus \mathcal Z_{\eta,\tau}; S^\text{Re}_{\eta,\tau}) \} \nonumber.
\end{eqnarray}

\noindent where $r=r_\eta$ now denotes a weight function given by the Riemannian distance to the marked curve $\mathcal Z_{\eta,\tau}$ in $g_\tau$. The gluing analysis takes place in trivializations of these (and other related) families of Hilbert vector bundles. Rather than precisely constructing the universal bundles (1) and (2) above, we will directly construct local trivializations of the families (\refeq{Hilbertfamilies}),  which is done in Lemma \ref{trivializationcompatible} in the next subsection.

There is an infinite-dimensional gauge freedom arising from the choice of these trivializations, which we describe in more detail before proceeding. While the universal family of 3-manifolds \eqref{6.DefDef} is a product, the universal link $\mathcal Z^{\text{univ}}\subseteq \mathbb Y$ is not. A trivialization of $\mathbb Y$ respecting the universal family of links is required to trivialize the Hilbert bundles $\mathbb H^1_e, \mathbb L^2$. Such a trivialization $\Upsilon_{\mathbb Y}$ is map

  \begin{eqnarray}\label{6.trivialization1}
  \begin{tikzcd}
\mathcal E  \times Y \arrow[rr,"\Upsilon_{\mathbb Y}"] \arrow[dr,"p'"] & &  {\mathbb Y} \arrow[dl,"p"] \\
& \mathcal E&
\end{tikzcd}
\end{eqnarray}
under which the product link $\mathcal E_\tau \times \{\mathcal Z_\tau\}$ is sent to $\mathcal Z^\text{univ} \cap p^{-1}(-,\tau)$ for each fixed $\tau$. Such a trivialization is determined by the choice of a family of diffeomorphisms \be \mathbb F: \mathcal E\to \text{Diff}(Y)\label{diffeospre}\ee

\noindent that associates to each $(\eta,\tau)\in \mathcal E$ a diffeomorphism ${\Bbb F}_\tau(\eta)=F_{\eta,\tau}:Y\to Y$ that preserves orientation and spin structure, and restricts to $\mathcal Z_\tau$ so that $F_{\eta,\tau}[\mathcal Z_\tau]=\text{Exp}_\tau(\eta)$ as subsets.  The induced trivialization \eqref{6.trivialization1} of the universal family of $3$-manifolds $\mathbb Y$ is then
  \begin{eqnarray}\label{6.trivialization3}
\Upsilon_{\mathbb Y}(\eta,\tau, y) = \big(\eta, F_{\eta,\tau}(y)\big).
\end{eqnarray}

\noindent The trivializations of the families of Hilbert spaces (\refeq{Hilbertfamilies}) are then constructed from a trivialization of $\mathbb Y$ via parallel transport maps (see Section  \ref{admissiblefamiliessection} for details).

For each fixed $\tau \in (-\tau_0, \tau_0)$, the choice of family (\refeq{diffeospre}) is far from unique. In fact, setting 

\smallskip 

\be \text{Diff}(Y; \mathcal Z_\tau):=\{F: Y\to Y  \ | \ F \text{ is a $C^\infty$ diffeomorphism and } F|_{\mathcal Z_\tau}=\text{Id}\}, \label{subdiffeos}\ee 

\smallskip

\noindent then pre-composing $\Upsilon_{\mathbb Y}$ with any family of diffeomorphisms $\mathbb G_\tau : \mathcal E_\tau \to \text{Diff}(Y; \mathcal Z_\tau)$ yields another trivialization $\Upsilon_{\mathbb Y'}=\mathbb F_\tau \circ \mathbb G_\tau$ for each fixed $\tau$. Thus (\refeq{subdiffeos}) acts as a group of gauge transformations on the Hilbert bundles $r\mathbb H^1_e, \mathbb L^2$, and the choice of the family (\refeq{diffeospre}) determines a choice of gauge. This is the gauge freedom alluded to in Section \ref{section2.5}, see also \cite[Sec. 4.1]{DonaldsonMultivalued} for a related discussion.

Although the relevant properties of the universal Dirac operator $\slashed {\mathbb D}$ (defined precisely in Definition \refeq{universalDirac} below), such as the Fredholmness of (\refeq{mainamap}) are independent of the choice of gauge, the concrete expressions for the derivative $\text{d}\slashed{\mathbb D}$ depend on this choice. Consequently, certain choices of gauge reveal different analytic properties of the operator. In Section \ref{section6.3}, we introduce a particular choice of gauge --- called the {\bf tangential smoothing gauge}  --- that behaves particularly nicely with respect to the elliptic edge theory of the Dirac operator. This gauge choice can be regarded as the analogue in the present setting of the Coulomb gauge in standard gauge theory, where the Coulomb gauge reveals the ellipticity of the gauge-fixed system of equations. Here, the tangential smoothing gauge reveals particularly nice properties relating regularity in the tangential directions along $\mathcal Z_\tau$ to the radial growth rate of spinors. Before introducing the tangential smoothing gauge in Section \ref{section6.3}, we review results from  \cite{PartII} using the standard choice of family $\mathbb F$ taken there.

\subsection{Admissible Families of Diffeomorphisms}
\label{admissiblefamiliessection}

This subsection gives a precise description of the allowable families of diffeomorphisms (\refeq{diffeospre}) and  describes the resulting trivializations of (\refeq{Hilbertfamilies}) over ${\mathcal E}$.  Since the gluing problem is local, we must only ever make a single choice of chart and accompanying trivialization.  For each $s\ge 2$, let
 $$
 \mbox{Diff}^s(Y) :=  \mbox{Diff}^{s,2}(Y) \cap  \mbox{Diff}^{C^1}(Y) 
 $$
be the set of diffeomorphisms that are both $C^1$ and are given by collections of $L^{s,2}$ functions in local coordinate charts. This space,  endowed with the topology generated by open sets in $C^1$ and open sets in $L^{s,2}$ of the coordinate functions, is a smooth Banach manifold (see e.g.\cite[Sec. 3]{ebin1970manifold} and \cite{Palais}). (We will never need to compose these diffeomorphisms, so do not need a Lie group structure.) 

We impose the following constraint on the family (\refeq{diffeospre}). Let $\mathcal E,{\mathcal E_\tau}$ and $\mbox{Exp}_\tau$ be as in Section \ref{embeddings}.

\begin{defn}{\rm
 An {\bf admissible family of diffeomorphisms} is a  smooth map
\begin{eqnarray}
{\Bbb F}: {\mathcal E} &\to&  \mbox{Diff}^s(Y) \nonumber \\
(\eta,\tau)&\mapsto& F_{\eta,\tau}: Y\to Y  \label{Fdiffeofamiliy}
\end{eqnarray}
that satisfies the following properties:
\begin{enumerate}
\item  $F_{0,\tau}=\mbox{Id}$ for all $\tau$, and each $F_{\eta,\tau}$ is the identify on $\Yminus N_{r_0}(\mathcal Z_\tau)$.
\item $F_{\eta,\tau}[\mathcal Z_\tau]= \mbox{Exp}_\tau(\eta)$ as subsets of $Y$ for all $(\eta,\tau) \in{\mathcal E}$.
\item ${\Bbb F}$  restricts to a smooth map ${\Bbb F}: L^{s,2}(\mathcal Z_\tau;N\mathcal Z_\tau) \to  \mbox{Diff}^s(Y)$ for every $s\ge 2$.
\item The bound $\|g_\tau- F_{\eta,\tau}^*g_\tau\|_{L^{1,2}(Y)}\le C \| \eta\|_{ L^{s,2}(\Z_\tau)}$ holds uniformly in $\eta,\tau$. 
\end{enumerate}
}
\noindent Here, smoothness of $\mathbb F$ is defined using the trivialization of $\mathcal E$ following (\refeq{6.defE1}).
\label{admissibledef}
\end{defn}

\begin{eg} \label{standarddeformations} \cite{PartII} uses the following natural choice of admissible family (\refeq{Fdiffeofamiliy}). Fermi coordinates induce a trivailization $N\mathcal Z_\tau\simeq \underline \C$, in which we can write $\eta(t)=\eta_x(t)+ i\eta_y(y)$. The diffeomorphism corresponding to $\eta\in L^{2,2}(\mathcal Z_\tau; N\mathcal Z_\tau)$ is then defined by 

\be F^\circ_\eta(t, z):=(t, z + \chi(|z|)\eta(t))\label{standardfamily}\ee

\noindent in Fermi coordinates, where $z=x+iy$, and $\chi(|z|)$ is a smooth radially dependent cutoff function equal to $1$ for $|z|\leq r_1/2$ and vanishing for $|z|\geq r_1$ for some $r_1<r_0$ smaller than the radius $r_0$ of the Fermi coordinates. $F_\eta$ is extended by the identity to the remainder of $Y$. \cite[Lem. 5.3]{PartII} shows that $F_\eta$ is indeed a diffeomorphism for $r_\mathcal E$ as in Eq. (\refeq{6.DefDef}) sufficiently small. 
\end{eg}

\begin{notation} \label{Notation6.1} Extending the notation in Section \ref{embeddings}, we have the following for objects associated to an admissible family $\mathbb F$. 

\begin{enumerate}
\item In a mild abuse of notation, we use $\mathcal E_\tau$ to denote both the open ball (\refeq{6.DefDef}) and its image under (\refeq{Expdef}). 
\item The family for fixed $\tau$ is denoted $\mathbb F_\tau$ and the subscript $\tau$ is dropped when it is clear from context, so that we write $F_\eta=\mathbb F_\tau(\eta)$. 
\item $\mathcal Z_{\eta}:=F_\eta[\mathcal Z_\tau]$ denotes the link corresponding to $\eta \in \mathcal E_\tau$ ($\mathcal Z_{\eta,\tau}$ when ambiguity may arise) 
\item $g_{\eta}:=F_\eta^*(g_\tau)$ denotes the pullback metric (or $g_{\eta,\tau}$ when ambiguity may arise). 
\end{enumerate}
\end{notation} 
\medskip

A choice of admissible family $\mathbb F_\tau$ induces a chart on the space of embeddings via $\eta \mapsto F_\eta[\mathcal Z_\tau]$. Item (2) in Definition \ref{admissibledef} ensures that this induced chart  coincides with (\refeq{Expdef}), and that the associated pullback metrics vary smoothly with parameters.  
\begin{lm}
\label{Fchart}
For an admissible family of diffeomorphisms $\mathbb F_\tau$, the  induced chart \begin{eqnarray}
\mathcal E_\tau &\to& \text{Emb}(\mathcal Z_\tau; Y)\label{Expchi1}\\
\eta&\mapsto& \mathcal Z_{\eta,\tau}:= F_{\eta}[\mathcal Z_\tau]. \label{Expchi2}
\end{eqnarray}
\noindent coincides with $\text{Exp}_\tau$ as defined in (\refeq{Expdef}) for each $\tau\in (-\tau_0,\tau_0)$. Moreover, the family of pullback metrics $g_{\eta,\tau} \in L^{1,2}(Y; \text{Sym}^2(T^*Y))$ depends smoothly on $(\tau,\eta)$. 
\end{lm}

\begin{proof} The first statement follows directly from Item (2) of Definition \ref{admissibledef} and the definition (\refeq{Expdef}). 

For smoothness, first observe that since $F_\eta \in L^{2,2}$ for each $\eta$, the pullback metrics lie in \newline $L^{1,2}(Y; \text{Sym}^2(T^*Y))$ which is a Banach space is the standard way using the smooth reference metric $g_0$. Item (1) in Definition \ref{admissibledef} ensures that the coordinate functions of $F_\eta$, and thus the entries $g_{\tau,\eta}$ vary smoothly in $(\tau,\eta)$ in the family of Fermi coordinate charts $(-\tau_0, \tau_0)\times N_{r_0}(\mathcal Z_0)$ used to define smoothness following Definition \ref{admissibledef}. These charts are formed using the smoothly parameterized parallel transport and exponential maps of the metrics $g_\tau$, thus the same smoothness properties hold on $Y$.  
\end{proof}

We now proceed to define the families (\refeq{Hilbertfamilies}) more precisely, and show that a choice of admissible family induces a trivialization. For each $\mathcal Z_\eta\in \mathcal E_\tau$, there is an associated spinor bundle $S^\text{Re}_{\eta}$ defined as in Eq. (\refeq{realimdecomp}) using the metric $g_\tau$, but with $\mathcal Z_\eta$ in place of $\mathcal Z_0$. More precisely, since any such $\mathcal Z_\eta$ is homotopic to $\mathcal Z_\tau$, the determinant $\text{det}(S)$ restricts trivially to $\Yminus \mathcal Z_\eta$. Thus, there is a $U(1)$-connection $A_\eta$ on $\text{det}(S|_{\Yminus \mathcal Z_\eta})$ unique up to the action of $U(1)$-gauge transformations, such that $A_\eta$ is flat with the same holonomy representation as $A_0$ (after a homotopy inducing an isomorphism $\pi_1(\Yminus\mathcal Z_\eta)\simeq \pi_1(\Yminus \mathcal Z_0)$). Such a connection induces an $SU(2)$ structure on $S|_{\Yminus \mathcal Z_\eta}$, and $S^\text{Re}_{\eta,\tau}$ is defined, mutatis mutandis, as in (\refeq{realimdecomp}), with $\mathcal Z_\eta$.

We next show how the choice of an admissible family of diffeomorphisms induces  trivializations of the families (\refeq{Hilbertfamilies}) for a fixed $\tau$; these are the maps $\Upsilon_{\mathbb F}$ in Lemma~ \ref{trivializationcompatible} below.  For this, we follow the construction of \cite[Sec. 5.1]{PartII}, which is based on a method for associating spinor bundles of different metrics, originally due to \cite{Bourguignon} (see also \cite{DiracVariationNonlinear}).

To start,  fix a spin structure  on $(Y,g_\tau$) with
associated spinor bundle $S_{g_\tau}$ and a complex line bundle $L$ so that the $\text{Spin}^c$ structure is given by $S=S_{g_\tau}\otimes L$. We may assume that the spin structure is that of  Lemma \ref{subbundles}(3). Then, for each fixed $\eta$, consider the cylinder with Riemannian metric
 \be
 X= \big([0,1]\times Y, \, ds^2 +   g_{s\eta}\big)\label{metriccylinder}
 \ee
\noindent where $s$ is the coordinate on $[0,1]$ and $g_{s\eta}=F_{s\eta}^*(g_\tau)$.  Let $W^\pm_\eta$ be the positive and negative spinor bundles of the pullback spin structure on $X$. For $s=0$, the positive spinor bundle $W_\eta^+\to X$ is isomorphic to the spinor bundle of $Y$ with the metric $g_\tau$, while for $s=1$ it is isomorphic to that with the metric $g_\eta$.  Let
\be
(\frak T_S)_{\tau}^{{\eta}}:  S_{g_\eta}  \longrightarrow S_{g_\tau}\label{spinorparalleltrans}
\ee
 denote the isomorphism between the two spinor bundles for $g_\tau$ and $g_\eta$ obtained by parallel transport along rays $\{y\}\times [0,1]$ using the spin connection on $W^+$.

In a similar fashion, for each fixed $\eta$, there are vector bundles $E_\eta, L_\eta \to X-([0,1]\times \mathcal Z_\tau)$ given by the pullbacks of $E, L\to X$ via the map {
$ X\to Y$ by $(s,y)\mapsto F_{s\eta}(y)$,} i.e. the restriction of $E_\eta$ to $\{s\}\times Y$ is $F_{s\eta}^*(E)$ and likewise for $L$. These bundles carry the pullback connections, denoted $B^X_\eta$ and  $A^X_\eta$, the latter defined over $X\mathrm{-}([0,1]\times \mathcal Z_\tau)$\footnote{We emphasize here that the 1-parameter family of connections $A_{s\eta}$ define a connection (in temporal gauge) over $X-\bigcup \mathcal Z_{s\eta}$. Since $F_{s\eta}[\mathcal Z_\tau]=\mathcal Z_{s\eta}$, the pullback connections $A_\eta^*=F_{s\eta}^*A_\eta$ then define a connection over $X$ with the {\it product} singular set $[0,1]\times \mathcal Z_\tau$ excised. Thus parallel transport along rays is indeed well-defined.  }. Denote by  

$$
\frak T_E:  F_\eta^*(L\otimes E)|_{\Yminus \mathcal Z_\tau} \to L\otimes E|_{\Yminus \mathcal Z_\tau}
$$
the map defined by parallel transport along rays $\{y\}\times [0,1]$. Then define 
 \be \mathfrak T_{g_\tau}^{g_\eta}=(\mathfrak T_S)_{g_\tau}^{g_\eta}\otimes \mathfrak T_E \label{paralleltransport}\ee on the tensor product $W^+_\eta \otimes L_\eta \otimes E_\eta$.

\begin{defn}\label{trivializationdef}
Define the {\bf trivialization induced by an admissible family $\mathbb F_\tau$} as follows. For each fixed $\tau\in (-\tau_0, \tau_0)$ and $\eta \in \mathcal E_\tau$, let $v_\eta$ be the following composition:  
\begin{center}
\tikzset{node distance=3.7cm, auto}
\begin{tikzpicture}
\node(C){$S_{g_\tau}\otimes  L \otimes E $};
\node(D)[left of=C]{$S_{g_\eta}\otimes   F_{\eta}^*(  L \otimes E) $};
\node(E)[left of=D]{$  F_{\eta}^*(S_{g_\tau}\otimes  L \otimes E)$};
\node(F)[left of=E]{$S_{g_\tau}\otimes L \otimes E$};
\draw[<-][swap] (C) to node {$\frak T_{g_\eta}^{g_\tau}$} (D);
\draw[<-][swap] (D) to node {$\mathfrak S_{\eta}$} (E);
\draw[<-][swap] (E) to node {$ F_{\eta}^*$} (F);
\end{tikzpicture}
\end{center}
\noindent where 
\begin{itemize}
\item[(i)] $ F_\eta^*$ is the pullback by the diffeomorphism $ F_\eta$. 
\item[(ii)] $\frak S_\eta$ is the canonical isomorphism $ F_\eta^*(S_{g_\tau})\simeq S_{g_\eta}$ on the first factor, and $\text{Id}$ on $F^*_\eta  (L\otimes E)$. 
\item[(iii)] $\frak T_{g_\eta}^{g_\tau}$ is the parallel transport map Eq. (\refeq{paralleltransport}). 
\end{itemize}
\noindent 
Note that $v_\eta$ is a fiberwise linear isomorphism that covers $F_\eta$. Then trivialization induced by $\mathbb F_\tau$, denoted $\Upsilon_{\mathbb F}$ is the map on sections 
 \bea \Upsilon_{\mathbb F}: rH^1_e(\Yminus \mathcal Z_\eta; S^\text{Re}_{\eta}) &\to& rH^1_e(\Yminus \mathcal Z_\tau; S^\text{Re}_\eta) \\ \psi &\mapsto& v_\eta^{-1}\circ \psi, \eea

 \noindent and equivalently for $L^2$. Lemma \ref{trivializationcompatible} below ensures that this map preserves $S^\text{Re}$ and regularity, thus a choice of admissible family $\mathbb F_\tau$ endows the families (\refeq{Hilbertfamilies}) with the structure of a locally trivial Hilbert vector bundle. 
 
\end{defn}

In the following lemma, we set $\mathcal E_\tau^s:= L^{s,2}(\mathcal Z_\tau ; N\mathcal Z_\tau) \cap \mathcal E_\tau$ for any $s\geq 2$.  
\begin{lm} \label{trivializationcompatible} Let $\mathbb F_\tau$ be an admissible family of diffeomorphisms. Then, for $s\geq 5$, the restriction of the induced trivializations $\Upsilon_{\mathbb F}$ to $\mathcal E_\tau^s$ 
\bea
\Upsilon_{\mathbb F_\tau}:\mathbb H^1(\mathcal E_\tau^s)&\simeq& \mathcal E_\tau^s \times rH^1_e(\Yminus \mathcal Z_\tau ;S^\text{Re}_\eta) \\
\Upsilon_{\mathbb F_\tau}: \mathbb L^2(\mathcal E_\tau^s)&\simeq& \mathcal E_\tau^s \times L^2(\Yminus \mathcal Z_\tau ;S^\text{Re}_\eta) 
\eea 
\noindent is a fiberwise bounded linear isomorphism. Moreover, for different choices of admissible family, these trivializations are compatible for different choices of $\mathbb F$ in the sense that $\mathbb F_\tau, \mathbb F_\tau'$ is a fiberwise bounded linear isomorphism depending continuously on $\eta \in \mathcal E^s_\tau$. Finally, these trivialization depend smoothly on $\tau\in (-\tau_0, \tau_0)$. 
\end{lm}

\begin{proof} The flat connection $A_\eta^X$ used to form (\refeq{paralleltransport}) induces an $SU(2)$ structure on $S_E|_{X\mathrm{-} ([0,1]\times \mathcal Z_\tau)}$, which defines a real structure as in Eq. (\refeq{realimdecomp}) that restricts to those defining $S^\text{Re}$ and $S^\text{Re}_\eta$ on the two ends. The connection on $X$ formed from the spin connection, $A_\eta^X, B_\eta^X$ is compatible with this real structure, thus the parallel transport map (\refeq{paralleltransport}) preserves the real subbundle. Since $F_\eta, \mathfrak G_\eta$ obviously preserve the real structure, it follows that $\psi \mapsto v_\eta^{-1}\circ \psi$ restricts to a map of the real subbundles. 

It remains to show that this map carries $rH^1_e$ sections defined by the connections and metric associated to $\eta$ to $rH^1_e$ sections defined by the data associated to $(g_\tau, B_\tau)$. Since $\eta\in L^{5,2}(\mathcal Z_\tau; N\mathcal Z_\tau)$, the pullback metric is $L^{4,2}(Y; \text{Sym}^2(T^*Y))$, and in particular is $C^2$ by the Sobolev embedding. The different volume form and Christoffel symbols of the spin connection are therefore induce a bounded equivalence of the $rH^1_e$ norms. Finally, by the same argument, for two different choice of admissible familes $\mathbb F_\tau, \mathbb F_\tau'$, the composition $\Upsilon_{\mathbb F'}\circ \Upsilon_{\mathbb F}^{-1}$ is fiberwise a bounded linear isomorphism. Moreover, since $\mathbb F_\tau, \mathbb F_\tau'$ are assumed to depend continuously on $\eta$ and the constructions of Definition \ref{trivializationdef} are natural (thus e.g. the pullback metrics also depend continuously on $\eta$), these fiberwise bounded linear isomorphisms depend continuously on $\eta$. 

\end{proof}

\begin{rem}  \label{regularityrem}Lemma \ref{trivializationcompatible} is true for $s=2$ as well. The proof relies on the mixed-regularity Sobolev inequalities in \cite[Sec. 5.1]{PartII}. Here we omit these sharper regularity statements, since the proof of Theorem \ref{maina} only requires dealing with smooth deformations $\eta$. 
\end{rem}

With these trivialization in place, we can now tie up a loose end from Definition \ref{Obdefn}. Until now, the obstruction bundle $\text{\bf Ob}\to (-\tau_0, \tau_0)$ is simply a parameterized family of Banach spaces. The next lemma shows that

\begin{prop} \label{cokerpropertiesI}
There is a family of bounded linear isomorphisms $$\Upsilon_{\text{\bf Ob}}: \text{\bf Ob}\to (-\tau_0,\tau_0)\times (L^2(\mathcal Z_0 ;\mathcal C_0 )\oplus \R)$$

\noindent that endow the obstruction bundle with the structure of a smooth Hilbert vector bundle over $(-\tau_0, \tau_0)$. This bundle structure makes the natural inclusion $\text{\bf Ob}\to \mathbb L^2|_{(-\tau_0,\tau_0)}$ a continuous inclusion of Hilbert vector bundles, where the restriction means to the 1-parameter family of embeddings $\mathcal Z_\tau$ for $\tau\in (-\tau_0, \tau_0)$. Finally, the map $\text{\bf ob}_\tau $ from Proposition \ref{cokerpropertiesII} is a continuous map of vector bundles. \end{prop}
\begin{proof} With $rH^1_\perp$ as defined in (\refeq{rH1perp}), the family of spaces $\{ rH^1_\perp(\Yminus \mathcal Z_{\tau}) \ | \ \tau \in (-\tau_0, \tau_0))\}\subseteq \mathbb H^1|_{(-\tau_0, \tau_0)}$ form a vector subbundle of codimension 1 by Lemma \ref{trivializationcompatible}, where the restriction is again to the 1-dimensional submanifold  $\{\mathcal Z_\tau  \ | \ \tau \in (-\tau_0 ,\tau_0) \subseteq \mathcal E^s_0$. Since $\slashed{\mathbb D}$ is a fiberwise injective bundle map on this subbundle, its image is a subbundle of $\mathbb L^2|_{(-\tau_0,\tau_0)}$, and $\text{\bf Ob}$ is its orthogonal complement by Definition \ref{Obdefn}, hence also a Hilbert vector bundle. See Propositions 4.2 and 8.5 in \cite{PartII} for further details.  
\end{proof}

\subsection{The Universal Dirac Operator}
\label{universalDirac}
In this subsection, we define the universal Dirac operator and calculate its derivative. The universal Dirac operator is the ({\it a priori} discontinuous) section defined as follows, which gives a more precise meaning to the operator defined in (\refeq{universalDiracdef}).

\begin{defn}\label{universalDirac}
The {\bf Universal Dirac Operator} is the section $\slashed{\mathbb D}$ defined by

\begin{center}
\tikzset{node distance=1.6cm, auto}
\begin{tikzpicture}
\node(C)[yshift=-1.7cm]{$\mathbb H^1_e(\mathcal E_\tau).$};
\node(D)[above of=C]{$p_{1}^*\mathbb L^2(\mathcal E_\tau)$};
\node(D')[xshift=2.2cm, yshift=-0.8cm]{$\slashed{\mathbb D}(\mathcal Z, u):=\slashed D_{\mathcal Z}u$};
\draw[<-] (C) to node {$$} (D);
\draw[->] (C) edge[bend right=45]node[below] {$$} (D);
\end{tikzpicture}
\end{center}
\end{defn}

\noindent Morally speaking, $\slashed{\mathbb D}$ is a smooth section, though this is not strictly true on these low-regularity Sobolev spaces. Working with higher regularity would require many tedious bootstrapping arguments later in the proof of Theorem \ref{maina}, so we instead retain this low regularity and specify the continuity of $\slashed{\mathbb D}$ and the boundedness of its derivative more precisely in Lemma \ref{boundedUniversalDirac} below. Ultimately, as in Remark  \ref{regularityrem} (cf. Section \ref{losssection}), only smooth deformations $\eta$ are needed, so imposing extra regularity on $\eta$ causes no issues.

The formula for $\slashed{\mathbb D}$ in the local trivializations of Definition \ref{trivializationdef} depends on the formula for the spin Dirac operator with respect to an arbitrary metric, originally due to Bourguignon-Gauducho \cite{Bourguignon}. To state Bourguignon-Gauduchon's formula, let $p_s=(g_s,B_s)$ be a 1-parameter family of metric and perturbation pairs on $Y$ for $s\in [0,1]$. Here, we may view $B$ as a perturbation to the spin connection on $S^\text{Re}$ under the isomorphism in Item (2) of Lemma \ref{reconstructingspinc}. Let $S_{g_s}$ be the spinor bundle of the metric $g_s$, and $\frak T_{g_0}^{g_s}: S^\text{Re}_{g_s}\to S_{g_0}^\text{Re}$ be the parallel transport map as in (\refeq{spinorparalleltrans}). Then the conjugated operators 

\be \left( \frak T_{g_\tau}^{g_{s}}\circ \slashed D_{p({s})} \circ (\frak T_{g_0}^{g_{s}})^{-1}\right) : \Gamma(Y; S^\text{Re}_{g_0})\lre \Gamma(Y;S^\text{Re}_{g_0}) \label{conjugatedDirac}\ee

\noindent form an $s$-dependent family of first order differential operators on sections of the spinor bundle with the fixed metric $g_0$. Define $a_{g_0}^g(s), \frak a(s)\in \text{End}(TY)$ be defined respectively by $$g_s(V,W)=g_0(a_{g_0}^{g(s)}(s) V, W) \hspace{3cm} \frak a(s)=(a_{g_0}^g(s))^{-1/2}$$  
where $V,W\in \Gamma(TY)$ and the latter is understood via the eigenvalues of $(a_{g_0}^g)^\star a_{g_0}^g$, which are non-zero for $g_s$ sufficiently close to $g_0$.

\begin{thm} {\bf (Bourguignon-Gauduchon, \cite{Bourguignon})}The following expressions hold for the family of conjugated Dirac operators (\refeq{conjugatedDirac}) acting on a spinor $\Psi \in \Gamma(Y;S_{g_0}^\text{Re})$

\begin{itemize}
\item[(A)] The Dirac operator $\slashed D_{p(s)}$ is given by
 \be \left(\frak T_{g_0}^{g_s}\circ \slashed D_{p(s)} \circ (\frak T_{g_0}^{g_s})^{-1} \right)\Psi=\left( \sum_{i} e^i. \nabla^{p(s)}_{\frak a(e_i)} \  + \  \frac{1}{4}\sum_{ij}e^i e^j.   \left(\frak a^{-1}(\nabla^{g_0}_{\frak a(e_i)} \frak a) e^j + \frak a^{-1}(\nabla^{g(s)}-\nabla^{g_0})_{\frak a(e_i)} \frak a(e^j) \right).\right) \Psi  \label{NLBG}\ee 
 where $e^i$ and $.$ are an orthonormal basis and Clifford multiplication for $g_0$, and $\nabla^{g(s)}$ denotes the unperturbed spin connection of the metric $g$ and likewise for $g_0$. Here we use the shorthand $\frak a=\frak a(s)$. 
 
 \medskip 
 
 \item[(B)] Denoting the $s$-derivative of $g_s$ by $\dot g_s$, the derivative of the family of Dirac operator with respect to $s$ at $s=0$ is given by 

\begin{eqnarray}\left(\d{}{s}\Big |_{s=0} \frak T_{g_0}^{g_s}\circ \slashed D_{p(s)} \circ (\frak T_{g_0}^{g_s})^{-1}\right)\Psi&=& {\Bigg(}-\frac{1}{2}\sum_{ij} \dot g_s(e_i,e_j) e^i . \nabla^{g_0}_j  + \frac{1}{2} d \text{Tr}_{g_0}(\dot g_s). \nonumber \\ & &  +\frac{1}{2} \text{div}_{g_0}(\dot g_s). +  \mathcal R(B_0, \dot g_s). \Bigg)\Psi\label{bourguignon}\end{eqnarray}
where $\mathcal R(B_0,\dot g(s))$ is a smooth term involving up to first derivatives of $B_0$, and $e_i, e^i, \ . \ , \text{div}_{g_0}, \nabla^{g_0}$ are respectively an orthonormal frame and co-frame, Clifford multiplication, the divergence of a symmetric tensor, and the spin connection of the metric $g_0$.  
 \end{itemize}
 \label{nonlinearBG}
 \end{thm}
 
 \begin{proof} \cite{Bourguignon} derives both formulas for the case of the spin Dirac operator (see also \cite{DiracVariationNonlinear}). The case of a perturbed spin connection appears in \cite[Cor. 5.19]{PartII}, and differs only in the appearance of the term $\mathcal R(B_0, \dot g(s))$. Using the isomorphism in Item (2) of Lemma \ref{reconstructingspinc}, the case for the Dirac operator on the real spinor bundle is identical. 
 \end{proof}
 
\noindent To give intuition for (\refeq{bourguignon}) briefly, the first term arises from differentiating the symbol/Clifford multiplication of the Dirac operator, while the next two terms arise from differentiating the spin connection, and the last from differentiating the perturbation $B$. This final term, for our purposes, is a lower order term in a meaning that will be made precise in the upcoming sections.

We now verify the smoothness of the universal Dirac operator for spaces of higher regularity.

\begin{lm} \label{boundedUniversalDirac}The universal Dirac operator in Definition \ref{universalDirac} satisfies the following. 
\begin{itemize}
\item[(A)] For $s\geq 5$, the universal Dirac operator 

$$\slashed{\mathbb D}: \mathbb H^1(\mathcal E_\tau^s)\to p_1^*\mathbb L^2(\mathcal E_\tau^s)$$

\noindent is a smooth section over the total space of the bundle $\mathbb H^1$ restricted to the higher regularity locus $\mathcal E^s_\tau=\mathcal E_\tau\cap L^{s,2}(\mathcal Z_\tau; N\mathcal Z_\tau)$, and depends smoothly on $\tau \in (-\tau,\tau)$. 

\smallskip 

\item[(B)] Provided $\Psi\in rH^1_e\cap C^0$, the linearization at $(0, \Psi)\in \mathbb H^1(\mathcal E_\tau^s)$ extends to a bounded linear map 

\be \text{d}\slashed {\mathbb D}_{(\mathcal Z_\tau,\Psi)}: L^{2,2}(\mathcal Z_\tau; N\mathcal Z_\tau)\oplus  rH^1_e(\Yminus \mathcal Z_\tau; S^\text{Re}) \ \lre \ L^2(\Yminus \mathcal Z_\tau; S^\text{Re})\label{universalderivmap}\ee
on the lower regularity tangent spaces, where the domain is decomposed in the splitting $T_{(\eta,\Phi)}\mathbb H^1\simeq L^{2,2}(\mathcal Z_\tau; N\mathcal Z_\tau)\oplus rH^1_e(\Yminus \mathcal Z_\tau; S^\text{Re})$ is that induced by the trivialization $\Upsilon_{\mathbb F}$. This derivative also and depends smoothly on $\tau \in (-\tau,\tau)$. 

\noindent 
\end{itemize}
\end{lm}

\begin{proof}
(A) By Theorem \ref{nonlinearBG}, $\slashed{\mathbb D}(\eta,\Phi)$ is given in coordinates by (\refeq{NLBG}), where $\frak a$ is formed using the pullback metric $g_\eta$. By the admissibility of $\mathbb F_\eta$, the diffeomorphisms $F_\eta \in \text{Diff}^{s,2}(Y)$ for all $\eta \in \mathcal E^s_\tau$, thus the pullback metrics $$g_\eta:=F_\eta^*g_\tau \in L^{s-1,2}(Y;\text{Sym}^2(T^*Y))$$

\noindent lie in the multiplicative range of Sobolev regularity in dimension $3$ for $n\geq 4$. The algebraic operators $a_{g_0}^{g_\eta}, \frak a$ have entries consisting of smooth combinations of sums, products, and compositions of the components of $g_\eta$ (see \cite[Sec. 5.3 and 8.3]{PartII} for precise expressions). Differentiating these smooth combinations with respect to $\eta$ shows the smoothness of $\slashed{\mathbb D}$ with respect to this variable, and the operator is linear as a function of the spinor (so {\it a fortiori} smooth). By Lemma \ref{Fchart} the pullback metrics depends smoothly on $\tau$, and smoothness as a function of $\tau\in (-\tau_0, \tau_0)$ follows. 

(B) By Theorem \ref{nonlinearBG}, the linearization of the universal Dirac operator at $(0, \Psi)$ is given by (\refeq{bourguignon}) where $g_s=g_{s\eta}$. This is a bounded map into $L^2$ for $\eta \in L^{s,2}(\mathcal Z_\tau; N\mathcal Z_\tau)$ by part (A) above. For $\eta \in L^{2,2}$ only, boundedness is concluded from the following. Inspection of (\refeq{bourguignon}) shows that it consists of terms schematically having the form $( \dot g_\eta)\nabla \Psi$ and $(\nabla \dot g_\eta)\Psi$. Since $F_\eta \in L^{2,2}$ for all $\eta \in L^{2,2}$, then the pullback metrics $g_{s\eta}$ are $L^{1,2}$ and are bounded by $s\|\eta\|_{L^{2,2}}$ by Item (4) of Definition \ref{admissibledef}, thus $\|\dot g_\eta\|_{L^{1,2}}\leq C\|\eta\|_{L^{2,2}}$. Both types of terms are therefore $L^2$, with norms bounded in terms of $\|\eta\|_{2,2}$. Smooth dependence on $\tau$ follows from Lemma \ref{Fchart} as before.

\end{proof}

\subsection{The Deformation Operator}
\label{section6.2}  This section calculates the projection of the derivative (\refeq{bourguignon}) of $\text{d}\slashed{\mathbb D}$ at $\Z_2$-harmonic eigenvector to the obstruction bundle (see  Definition \ref{Obdefn}).

Let  $\pi_\tau=\br \Phi_\tau, - \kt\Phi_\tau$ be the $L^2$-orthogonal projection onto the span of the eigenvector $\Phi_\tau$.  We denote the derivative (\refeq{bourguignon}) along the family of pullback metrics $g_{s\eta}$ for $\eta \in L^{2,2}(\mathcal Z_\tau; N\mathcal Z_\tau)$ by  \begin{eqnarray} \mathcal B_{\Phi_\tau}(\eta)&:=& \left(\d{}{s}\Big |_{s=0} \frak T_{g_0}^{g_{s\eta}}\circ \slashed D_{p(s)} \circ (\frak T_{g_0}^{g_{s\eta}})^{-1}\right)\Phi_\tau.\label{calBdef} \\  &=&   {\Bigg(}-\frac{1}{2}\sum_{ij} \dot g_{s\eta}(e_i,e_j) e^i . \nabla^{g_0}_j  + \frac{1}{2} d \text{Tr}_{g_0}(\dot g_{s\eta}).  +\frac{1}{2} \text{div}_{g_0}(\dot g_{s\eta}). +  \mathcal R(B_0, \dot g_{s\eta}). \Bigg)\Phi_\tau. \nonumber \end{eqnarray} 
Then, using the orthogonal splitting $$L^2(\Yminus\mathcal Z_\tau)\simeq \text{\bf Ob}(\mathcal Z_\tau) \oplus \text{Range}^\perp_\tau$$

\noindent from Definition \ref{Obdefn}, the derivative (\refeq{universalderivmap}) can be written as a block matrix: 

\be \text{d}\slashed {\mathbb D}_{(\mathcal Z_\tau,\Phi_\tau)} = \begin{pmatrix}  \Pi_\tau \mathcal B_{\Phi_\tau} & \Lambda(\tau) \pi_\tau \\  \\  (1-\Pi_\tau)\mathcal B_{ \Phi_\tau}& \slashed D_{A_\tau}\end{pmatrix} \ : \  \begin{matrix} L^{2,2}(\mathcal Z_\tau; N\mathcal Z_\tau)\\ \oplus \\ r H^1_\perp \end{matrix} \ \  \lre \ \   \begin{matrix}  \text{\bf Ob}(\mathcal Z_0)\\ \oplus \\ \text{Range}_\tau^\perp.\end{matrix}\label{blockdecomp}\ee

\noindent where the top right entry has rank $1$. Recall that $rH^1_\perp$ was defined in (\refeq{rH1perp}). 

Composing with the isomorphism $\text{ob}^{-1}_\tau\oplus \iota: \text{\bf Ob}(\mathcal Z_\tau)\to  L^2(\mathcal Z_\tau;\mathcal C_{\tau})\oplus \R$ from Proposition \ref{cokerpropertiesII} where $\mathcal C_\tau$ is the Calderon bundle (see Definition \ref{calderonbunddef}), the top left block of (\refeq{blockdecomp}) can be written as $(T_{\mathbb F,\Phi_\tau}, \pi_\tau)$ where $T_{\mathbb F,\Phi_\tau}$ is the composition:

    \begin{center}
\tikzset{node distance=3.2cm, auto}
\begin{tikzpicture}
\node(A){$L^{2,2}(\mathcal Z_\tau;N\mathcal Z_\tau)$};
\node(C)[right of=A]{$\text{\bf Ob}(\mathcal Z_\tau)$};
\node(C')[right of=C, yshift=0cm]{$L^{2}(\mathcal Z_\tau;\mathcal C_{\tau} ). $};
\draw[->] (A) to node {$ \Pi_\tau\mathcal B_{\Phi_\tau}$} (C);
\draw[->] (C) to node {$ \text{ob}_\tau^{-1}$} (C');
\draw[->,>=stealth] (A) edge[bend right=20]node[below]{$T_{\mathbb F, \Phi_{\tau}}$} (C');
\end{tikzpicture}\end{center}

\noindent The operator $T_{\mathbb F,\Phi_\tau}$ depends on the choice of local trivialization in Definition (\ref{trivializationdef}), and thus in particular on the choice of an admissible family $\mathbb F_\tau$. Different choice of admissible family lead to different expressions for  $\mathcal B_{\Phi_\tau}$ and thus for $T_{\mathbb F,\Phi_\tau}$.  Note that $T_{\mathbb F,\Phi_\tau}$ is an operator on sections of vector bundles over the fixed curve $\mathcal Z_\tau$. We refer to it as the {\bf deformation operator}.

In \cite[Sec. 6]{PartII}, an explicit expression for $T_{\mathbb F,\Phi_\tau}$ is calculated, using the choice of admissible family $\mathbb F_\tau$ described in Example (\ref{standarddeformations}). For this particular family, we denote the operator simply by $T_{\Phi_\tau}$ unadorned by $\mathbb F_\tau$. The expression results from explicitly computing the sequence of inner products 
\be T_{ \Phi_\tau}(\eta(t)):=\sum_{\ell \in \Z}\br \mathcal B_{ \Phi_\tau}(\eta(t)) , \Psi_\ell \kt_\C \cdot e^{i\ell t}\label{deformation}\ee

\noindent which gives an expression for the deformation operator in terms of Fourier modes, via Item (A) of Proposition \ref{cokerpropertiesII}. The expression also involves the zeroth-order pseudodifferential operator defined as follows
\begin{eqnarray}
\mathcal T_{\Phi_\tau}:\Gamma(\mathcal Z_\tau; N\mathcal Z_\tau)&\to& \Gamma(\mathcal Z_\tau; \mathcal C_\tau) \label{calTdef}\\
 s(t)&\mapsto& H(c_\tau(t)s(t))-\overline s(t)d_\tau(t) \nonumber, 
\end{eqnarray}

\noindent where $-iH$ is the Hilbert transform in the trivialization $\mathcal C_\tau\simeq \underline \C$ below (\refeq{Psilmodel}), and $c(t), d(t)$ are the leading coefficients of $\Phi_\tau$ from the expansion of Lemma \ref{asymptoticexpansion}. By \cite{PartII}[Cor. 3.9], these coefficients transform as sections $c_\tau(t)\in \Gamma(N\mathcal Z_\tau^{-1}), d(t)\in \Gamma(N\mathcal Z_\tau)$ under changes of Fermi coordinates, so that the pointwise multiplication of both terms is well-defined as a map into the trivial $\underline \C$-bundle. By Theorem  \ref{SpectralCrossing} and Lemma \ref{asymptoticexpansion}, $\mathcal T_{\Phi_\tau}$ depends smoothly on $\tau$.

\cite[Sec. 6.2]{PartII} proves: 

\begin{thm}{\label{deformationsmaina}}  {\bf (\cite{PartII})}. 
For the family of admissible diffeomorphisms $\mathbb F_\tau$ in Example \ref{standarddeformations}, the operator $T_{\Phi_\tau}$ is given by
 \be
 T_{\Phi_\tau}(\eta(t))=\left( -\tfrac{3|\mathcal Z_\tau|}{2} (\Delta+1)^{-\tfrac34}  \ \circ \ \mathcal T_{\Phi_\tau}\circ \d{}{t^2}\right)\eta(t) + K_\tau(\eta(t))
\label{formofT}\ee

\noindent where $|\mathcal Z_\tau|$ is the length of $\mathcal Z_\tau$, $\Delta$ is the Laplacian on $\mathcal C_\tau$, $\d{}{t^2}$ is the second covariant derivative on $\Gamma(N\mathcal Z_\tau)$ induced by the Levi-Civita connection of $g_\tau$, and $K_\tau$ is a pseudo-differential operator of order at most $\tfrac14$ depending smoothly on $\tau$. 

In particular, $T_{\Phi_\tau}$ is an elliptic pseudo-differential operator of order $\frac{1}{2}$, and its Fredholm extension
\smallskip 
\be T_{\Phi_\tau}: L^{1/2,2}(\mathcal Z_\tau;N\mathcal Z_\tau)\lre L^{2}(\mathcal Z_\tau;\mathcal C_{\tau})\label{TPhifredholmness}\ee has index 0.  \qed
 
\end{thm}

  \bigskip 
  
  The unobstructed condition in Definition  \ref{unobstructeddef} can be restated in terms of the operator $T_{\Phi_\tau}$. 
\begin{cor}
If $\Z_2$-harmonic spinor $(\mathcal Z_0, A_0, \Phi_0)$ has unobstructed deformations, then $T_{\Phi_\tau}$ is invertible for $\tau$ sufficiently small. 
\end{cor}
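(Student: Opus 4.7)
The plan is to combine the Fredholm statement of Theorem \ref{deformationsmaina} with the definition of unobstructedness and standard stability of invertibility under small perturbation. By Theorem \ref{deformationsmaina}, the Fredholm extension $T_{\Phi_\tau}: L^{1/2,2}(\mathcal Z_\tau; N\mathcal Z_\tau) \to L^2(\mathcal Z_\tau; \mathcal S_\tau)$ has index zero for every $\tau \in (-\tau_0,\tau_0)$. In particular, invertibility is equivalent to injectivity. At $\tau=0$, Definition \ref{unobstructeddef} asserts precisely that the kernel of (\ref{mainamap}) is trivial; since (\ref{mainamap}) differs from $T_{\Phi_0}$ only by post-composition with the isomorphism $\text{ob}_0$ of Proposition \ref{cokerpropertiesII}, the unobstructed hypothesis is exactly $\ker T_{\Phi_0}=0$. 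Combined with the vanishing index, this yields that $T_{\Phi_0}$ is invertible with a bounded inverse.

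To promote this to an open statement in $\tau$, I would identify the varying domains and codomains of $T_{\Phi_\tau}$ with the fixed spaces $L^{1/2,2}(\mathcal Z_0;N\mathcal Z_0)$ and $L^2(\mathcal Z_0;\mathcal S_0)$. Using the smooth family of parameterizations $\mathcal Z_\tau$ furnished by Theorem \ref{SpectralCrossing}, transport along the arclength parameterizations produces bundle isomorphisms $N\mathcal Z_\tau \simeq N\mathcal Z_0$ and $\mathcal S_\tau \simeq \mathcal S_0$ that depend smoothly on $\tau$. Conjugating $T_{\Phi_\tau}$ by these isomorphisms gives a one-parameter family of order-$\tfrac12$ pseudo-differential operators $\widetilde T_\tau$ on the single pair of spaces $(L^{1/2,2}(\mathcal Z_0;N\mathcal Z_0), L^2(\mathcal Z_0;\mathcal S_0))$.

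The next step is to verify that $\tau \mapsto \widetilde T_\tau$ is continuous in operator norm. This follows from the explicit formula (\ref{formofT}): the length $|\mathcal Z_\tau|$, the Laplacian $\Delta_\tau$ on $\mathcal S_\tau$, the connection Laplacian $\partial_{t^2}$ on $N\mathcal Z_\tau$, and the leading-coefficient operator $\mathcal T_{\Phi_\tau}$ (which depends smoothly on $\tau$ via $c_\tau,d_\tau$ as asserted in Theorem \ref{deformationsmaina}) all depend smoothly on $\tau$ after transport; the remaining piece $K$ is of order $\tfrac38$ and likewise smooth in $\tau$. Hence $\widetilde T_\tau \to \widetilde T_0$ in $\mathcal B(L^{1/2,2},L^2)$ as $\tau\to 0$.

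Finally, the set of invertible bounded operators between two Banach spaces is open, so since $\widetilde T_0$ is invertible there exists $\tau_0'>0$ with $\widetilde T_\tau$ invertible for $|\tau|<\tau_0'$. Undoing the identification, $T_{\Phi_\tau}$ is invertible for all such $\tau$, which is the claim. The main thing to be careful about is the identification of the varying function spaces; once that bookkeeping is done, the openness of invertibility finishes the argument immediately.
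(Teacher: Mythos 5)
Your proof is correct and follows essentially the same route as the paper: index zero from Theorem \ref{deformationsmaina} plus triviality of the kernel (the unobstructed condition) gives invertibility at $\tau=0$, and the extension to small $\tau$ is openness of invertibility for a norm-continuous family. The paper simply outsources that last continuity-in-$\tau$ step to Lemma 8.17 of \cite{PartII}, whereas you supply the identification of the varying function spaces and the operator-norm continuity directly.
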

\begin{proof}
Definition \ref{unobstructeddef} means that $(\mathcal Z_0, A_0, \Phi_0)$ is unobstructed if and only $T_{\Phi_0}$ is injective. Since it is index 0, injectivity implies (\refeq{TPhifredholmness}) is invertible. It follows from smoothness that $T_{\Phi_\tau}$ is invertible for $\tau$ sufficiently small (see also \cite[Lem. 8.17]{PartII}).  
\end{proof}

\noindent

As a consequence of Theorem \ref{deformationsmaina}, the following version of standard elliptic estimates hold.  They are proved by repeated differentiation (or integration by parts for $m<2$). 
\begin{cor}\label{Cor4.6}
For any $m\geq 0$, the extension $$T_{\Phi_\tau}: L^{m+1/2,2}(\mathcal Z_\tau;\mathcal S_\tau)\to L^{m,2}(\mathcal Z_\tau;\mathcal S_\tau)$$

\noindent is Fredholm of index 0 and there are constants $C_m$ such that it satisfies
\be
\|\eta\|_{m+1/2,2} \leq C_m \ (\|T_{\Phi_\tau}(\eta)\|_{{m,2}} +  \|\eta\|_{m+1/4,2} ). \label{ellipticestimatesTPhi}
\ee
Moreover, in the case that $(\mathcal Z_0, A_0, \Phi_0)$ has unobstructed deformations, the $\|\eta\|_{m+1/4,2}$ term is not needed for $\tau$ sufficiently small. 
\end{cor}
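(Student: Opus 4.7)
The plan is to bootstrap from Theorem \ref{deformationsmaina}---which already provides the ellipticity of $T_{\Phi_\tau}$ as an order-$1/2$ pseudo-differential operator on the closed one-manifold $\mathcal Z_\tau$ together with the Fredholm property and index $0$ at $m=0$---up to all $m\geq 0$ by standard pseudo-differential calculus on closed manifolds. The content is essentially formal once Theorem \ref{deformationsmaina} is granted, which is why this appears as a corollary rather than a theorem.

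For the a priori estimate \eqref{ellipticestimatesTPhi}, I would use a parametrix. Ellipticity of $T_{\Phi_\tau}$ provides a pseudo-differential operator $Q$ of order $-1/2$ with $Q T_{\Phi_\tau}=\mathrm{Id}+R$ and $R$ smoothing. Since $Q: L^{m,2}\to L^{m+1/2,2}$ is bounded and $R$ maps every Sobolev space into every other boundedly, applying $Q$ to $T_{\Phi_\tau}\eta$ and using the triangle inequality yields
$$\|\eta\|_{m+1/2,2}\leq \|Q T_{\Phi_\tau}\eta\|_{m+1/2,2}+\|R\eta\|_{m+1/2,2}\leq C_m\bigl(\|T_{\Phi_\tau}\eta\|_{m,2}+\|\eta\|_{L^2}\bigr),$$
and since $\|\eta\|_{L^2}\leq\|\eta\|_{m+1/4,2}$ for $m\geq 0$, estimate \eqref{ellipticestimatesTPhi} follows. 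Following the hint, for integer $m$ the same bound can be reached without invoking the full parametrix: applying $\del_t^k$ to $T_{\Phi_\tau}\eta=f$, the commutator $[\del_t^k,T_{\Phi_\tau}]$ remains of order $1/2$, so an induction based on the $m=0$ Fredholm estimate from Theorem \ref{deformationsmaina} delivers the result, with the commutator step reducing to straightforward integration by parts when $m<2$.

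For the Fredholm property and the index claim, the same parametrix also satisfies $T_{\Phi_\tau} Q=\mathrm{Id}+R'$ with $R'$ smoothing, so $T_{\Phi_\tau}: L^{m+1/2,2}\to L^{m,2}$ differs from a two-sided inverse by compact operators and is therefore Fredholm. Elliptic regularity forces $\ker T_{\Phi_\tau}\subset C^\infty(\mathcal Z_\tau; N\mathcal Z_\tau)$, which is independent of $m$, and the same argument applied to the formal $L^2$-adjoint $T_{\Phi_\tau}^*$ (itself elliptic of order $1/2$) shows the cokernel is $m$-independent as well. Hence $\mathrm{ind}(T_{\Phi_\tau})$ is constant in $m$, and Theorem \ref{deformationsmaina} fixes its value as $0$ at $m=0$. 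The main point to watch---really the only substantive one---is ensuring the constants $C_m$ can be chosen uniformly in $\tau\in(-\tau_0,\tau_0)$, which follows from the smooth $\tau$-dependence of the principal symbol and the compact term $K$ in \eqref{formofT}, granted by the smooth $\tau$-dependence of $c_\tau, d_\tau, g_\tau$.
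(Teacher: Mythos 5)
Your proposal is correct and matches the paper's (essentially omitted) proof in substance: the paper disposes of this corollary with the remark that it follows from Theorem \ref{deformationsmaina} "by repeated differentiation (or integration by parts for $m<2$)," which is exactly the commutator route you sketch, and your parametrix packaging of the same ellipticity is an equivalent standard argument that in fact yields the slightly stronger bound with $\|\eta\|_{L^2}$ in place of $\|\eta\|_{m+1/4,2}$. Your observations that the index is $m$-independent by smoothness of the kernel and cokernel, and that uniformity of $C_m$ in $\tau$ comes from the smooth $\tau$-dependence of the symbol and of $K$ in \eqref{formofT}, are the right points to check.
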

\begin{proof} Composing the parametrix of $\mathcal T_{\Phi_\tau}$ from \cite[Lem. 6.11]{PartII} with the appropriate multiple of $(\Delta+1)^{3/4}$ yields a parametrix for the first term of (\refeq{formofT}). The estimates then follow from the boundedness of this parametrix in the standard way, with $(m+1/4,2)$-norm used to bound the compact error term $K_{\tau}$ of order $1/4$. Eliminating the lower order term in the invertible case follows in the standard way. 
\end{proof}

A more quantitative version of these elliptic estimates will also be needed, which is given in the next proposition. To motivate these estimates, we offer the spoiler that the gluing problem only requires solving the equation \be T_{\Phi_\tau}(\eta)=\psi\label{solvingtruncated}\ee

\noindent where $\psi$ is supported in the lowest $\e^{-1/2}$ Fourier modes in $L^2(\mathcal Z_\tau; \mathcal C_\tau)$ (cf. Section \ref{losssection}). This is ultimately a consequence of Lemma \ref{fourierregimescokernel}, since error terms are mostly supported in the region where $r=O(\e^{-1/2})$, and this matter is dealt with precisely in Sections \ref{section9}--\ref{section11}. For now, notice that if the metric were a product, and $\Phi_\tau$ had only constant Fourier modes, then solving (\refeq{solvingtruncated}) would preserve Fourier modes and $\eta$ would likewise have support in the lowest $\e^{-1/2}$ modes. The below proposition extends the elliptic estimates to ensure that, for sufficiently smooth metrics and eigenspinors $\Phi_\tau$, the norms of the solution of (\refeq{solvingtruncated}) grow {\it as if} they were supported in these same Fourier modes as the right-hand side.

In the statement of the proposition,  $g_\circ$ is used to denote the product metric in Fermi coordinates on $N_{ r_0}(\mathcal Z_\tau)$ defined using $g_\tau$. As in Definition \ref{Fermicoords}, $g_\circ$ differs from $g_\tau$ by a symmetric tensor of size $O(r)$.  $B_\tau$ continues to denote the perturbation coming from the background connection on $SU(2)$ (here the difference from the product connection). 
\begin{prop} \label{quantitativehigherorder}Suppose that there is an $M>1$ such that for each $m\in \N$ the bounds 
\bea  |\del^m_t (g_\tau - g_\circ)|  &\leq & M^{m} \|g_\tau - g_\circ \|_{C^3(Y)}   \hspace{1.5cm} |\del^m_t\Phi_\tau |\leq  M^{m} \|\Phi_\tau \|_{C^1(Y)} \\  |\del_t^m B_\tau|& \leq& M^m \|B_\tau\|_{C^2} \eea 
\noindent hold on $N_{r_0}(\mathcal Z_\tau)$, and that $(\mathcal Z_0, A_0, \Phi_0)$ has unobstructed deformation. 

 Then there is a constant $C_m$ independent of $M$ such that if $T_{\Phi_\tau}(\eta)=\psi$, then the following estimate holds for every  $m\geq 0$:
\be
\|\eta\|_{m+1/2,2} \leq C_m   \| \psi\|_{m,2}   \ + \  C_mM^m  \| \psi \|_{2}\label{strongellipticest}. 
\ee
\end{prop}

\begin{proof}
Differentiating the elliptic estimate of Corollary \ref{Cor4.6} for $m=0$ and using commutators and interpolation inequalities leads to the following tame estimate (cf. \cite[Lem. 8.17]{PartII}): 

\be
\|\eta\|_{m+1/2,2} \leq C_m \left(\|\psi\|_{m,2} \ + \ \|(g_\tau,  B_\tau, \Phi_\tau)\|_{C^{m+r}_{\text{tan}}}\|\psi\|_{L^2}\right) \label{tameelliptic}
\ee

\noindent for some natural number $r\geq 1$.  Here, $C^{m+l}_\text{tan}$ denotes the mixed regularity space with with $l$ continuous derivatives on $Y$ and up to $m$ additional derivatives {\it only in the directions tangential to $\mathcal Z_\tau$}, i.e. multi-indices in $x,y,t$ with at most $l$ instances of $x,y$. Since $\eta$ depends only on $t$, only these tangential derivatives appear when differentiating the elliptic estimates (see, e.g. the equation below (8.41) in \cite{PartII}). Since $g_\circ$ is constant in Fermi coordinates, the same estimate holds replacing $g_\tau$ by $g_\tau-g_\circ$. The proof of \cite[Lem 8.17]{PartII} shows that, in fact, $l=3,2,1$ suffices for the three components respectively. The result then follows from substituting the assumptions directly into (\refeq{tameelliptic}). 
\noindent 
\end{proof}

\section{The Tangential Smoothing Gauge}
\label{section6.3}

This section introduces a particular choice of gauge in the sense of Section \ref{gaugefreedomsection} by specifying a judicious choice of an admissible family $\mathbb F_\tau$ of diffeomorphisms. This particular choice of gauge is the Tangential Smoothing Gauge described in Section \ref{losssection}, so named because the definition of the admissible family involves smoothing operators in the tangential directions. This gauge choice (which depends on a fixed choice of Fermi coordinates in Definition \ref{Fermicoords}) provides stronger estimates for many terms in the expressions for $\slashed{\mathbb D}, \text{d}\slashed{\mathbb D}$ than the choice in Example \ref{standarddeformations}. The presence of these {\it tangential} smoothing operators should be viewed as the suitable replacement of the full Nash-Moser machinery in this setting.

\subsection{Radially Dependent Smoothing Operators}
To motivate the construction, recall (cf. Section \ref{conormalsection}) that the intertwining of radial growth rate and tangential regularity is a fundamental property of the edge calculus. This relationship appears very concretely in the expressions for the singular harmonic spinors (\refeq{Psilmodel}): these decay exponentially, with $1/e$ length $1/|\ell|$ where $\ell$ is the tangential Fourier mode. The key idea of the tangential smoothing gauge is that better estimates can be obtained in a gauge for which {\it we make a choice of admissible family that imposes, by hand, a similar relationship between the radial distance and tangential Fourier modes}.

The construction of the admissible diffeomorphisms relies on families of pseudo-differential operators in the tangential directions, parameterized by the radial distance. To begin, we introduce the following notation. Recall that $r_0$ denotes the radius of the Fermi coordinate chart around $\mathcal Z_\tau$, chosen uniformly in $\tau$. Given a family of smooth function $f_\ell: [0, r_0)\to \R$ indexed by $\ell\in \Z$ such that $|f_\ell(r)|\leq C$ are bounded uniformly in $r,\ell$, we let $\underline f$ denote the operator
\begin{eqnarray}
\underline f: L^{2}(\mathcal Z_\tau; \C) &\lre&  L^2(N_{ r_0}(\mathcal Z_\tau); \C)\label{underlinedef} \\ 
\underline f[\eta]&:= & \sum_{p \in \Z} f_p(r) \eta_p e^{ipt}. 
\end{eqnarray} 

\noindent where the bundle $N\mathcal Z_\tau \simeq \underline\C$ is a trivialization induced by a fixed choice of Fermi coordinates, and $\eta_p$ are the Fourier coefficients of $\eta(t)$. $\underline f$ is a $[0,r_0)$-parameterized family of pseudo-differential operators on $L^2(\mathcal Z_\tau; \C)$ whose Fourier multiplier is given by $\{f_\ell(r)\}_{\ell \in \Z}$ for each fixed $r$.

We now make a particular choice of such a family $f_\ell$. Let $\chi_\circ: [0, \infty)\to \R$ be a smooth cutoff function equal to $1$ for $r\leq 1$ and supported where $r\leq 2$. Next, let $R_0>0$ be a large positive number to be specified shortly, and denote by $\chi(r):=\chi_\circ(r/R_0)$ the dilated cutoff function supported where $r\leq 2R_0$. There is a constant $C$ independent of $R_0$ such that

 \be  |d\chi|\leq \frac{C}{R_0}\label{R0def}\ee
 
 \noindent holds. Additionally, let $\chi_{r_0}$ denote a second smooth cutoff function equal to 1 for $r\leq r_0/2$ and supported in $N_{r_0}(\mathcal Z_\tau)$.  Here $r_0$ is the radius of the Fermi coordinate charts (chosen uniformly in $\tau$). Then, for each $\ell \in \Z$, set \be \chi_\ell(r):= \chi_\circ(|\ell| r) \chi_{r_0}(r).\label{chiderivfamily}\ee

\noindent The family $\chi_\ell$ gives rise an operator $\underline \chi$ as in (\refeq{underlinedef}). Note that $|\nabla ^k\chi_\ell| \leq \tfrac{C}{R_0 |\ell|^k}$ by the Chain rule.

 \begin{defn}  \label{underlineFdef}The {\bf tangentially smoothing admissible family} $\underline{\mathbb F}_\tau$ is the family of diffeomorphisms 
 \begin{eqnarray}\underline{ \mathbb F}_\tau: \mathcal E_\tau \subseteq L^{2,2}(\mathcal Z_\tau; N\mathcal Z_\tau)&\to& \text{Diff}^{2,2}(Y) \label{Fdiffeofamiliyunderline} \\ \eta &\mapsto& \underline{ F}_{\eta}: Y\to Y\end{eqnarray}
 given by 
 \be \underline F_\eta (t, z) := \left(t, z+ \underline \chi[\eta]\right)\label{diffeosmd}\ee
 
 \noindent in Fermi coordinates $(t,x,y)$ with $z=x+iy$, and each is extended by the identity outside $N_{r_0}(\mathcal Z_\tau)$. 
 \end{defn}
 
 In order to justify this definition, we must show: 
 
 \begin{claim} For $r_\mathcal E$ as in (\refeq{6.DefDef}) sufficiently small, $\underline F_\eta$ is a diffeomorphism for every $\eta \in \mathcal E_\tau$, and $\mathbb F_\tau$ collectively form an admissible family. 
 \end{claim}
 
 \begin{proof}
To verify that the map (\refeq{diffeosmd}) is a diffeomorphism, a quick calculation in Fermi coordinates (see Eq. (\refeq{pullbackmetric}) in the upcoming proof of Lemma \ref{universalDnonlinear}) $\text{d}\underline F_\eta=\text{Id} + O(\|\eta\|_{C^1})$. Since $L^{2,2}\hookrightarrow C^1$ in dimension 1, we may choose $r_\mathcal E$ sufficiently small that $\text{d}\underline F_\eta$ is invertible everywhere. The Inverse Function Theorem then implies that $\underline F_\eta$ is a local $C^1$-diffeomorphism, thus a covering map (since its image is both open and closed). Since $\underline F_\eta=\text{Id}$ outside of $N_{r_0}(\mathcal Z_\tau)$, we conclude $\underline F_\eta$ is a degree 1 covering map, and so a bijection, and thus a global $C^1$-diffeomorphism.

To show $\mathbb F_\tau$ form an admissible family, note that, as above, $\underline F_0=\text{Id}$, and that ${\underline F_\eta}=\text{Id}$ outside $N_{r_0}(\mathcal Z_\tau)$. Thus $\underline {\mathbb F}_\tau$ obeys the two preliminary requirements of the family following (\ref{Fdiffeofamiliy}). We now show (1)--(4) in Definition \ref{admissibledef}. (2) is immediate, because $\chi_\ell(0)=1$ for every $\ell$, thus $F_\eta |_{\mathcal Z_\tau}=F_\eta|_{\mathcal Z_\tau}$ has the same restriction as the family in \ref{standarddeformations}. (3) Restricted to the annulus $|z|=r$, the diffeomorphisms are $\underline F_\eta(t,z)=(t, z+ \chi_{r_0}\widetilde \eta(t))$ where $\widetilde \eta(t)$ is the smooth truncation of $\widetilde \eta$ to the lowest $|\ell| \leq 2R_0/r$ Fourier modes. In particular, $\underline F_\eta$ is at least as smooth as $\eta$ all local coordinates, and is globally $C^1$ by the Sobolev embedding $L^{2,2}(S^1)\hookrightarrow C^1(S^1)$. (1) By the same observation, coordinate functions of $\underline F_\eta$ vary smoothly as a function $\eta$ in local coordinates, and are constant are functions of $\tau$ in the Fermi coordinate chart following Definition \ref{admissibledef}. Finally, (4) follows easily from calculating the pullback metric explicitly, which is done in the proof of the upcoming Lemma \ref{universalDnonlinear}. 
\end{proof}

\begin{notation} \label{underlinenotation}The tangentially smoothing admissible family $\underline {\mathbb F}_\tau$ induces its own versions of the trivializations and operators from Sections \ref{admissiblefamiliessection} -- \ref{section6.2}. We denote the corresponding version of each construction with an underline; in particular, $\underline{\mathbb F}_\tau$ induces, 

\begin{enumerate}
\item[(1)] $\underline \Upsilon_{\mathbb F_\tau}$  the associated trivializations as in Lemma \ref{trivializationcompatible}, 
\item[(2)] $\underline g_{\eta,\tau}:=\underline F_\eta^*(g_\tau)$ the associated family of pullback metrics, 
\item[(3)] $\underline {\mathcal B}_{\Phi_\tau}$ the partial derivative in the deformation direction as in  (\refeq{blockdecomp}) formed using $\underline g_{\eta,\tau}, \underline \Upsilon_{\mathbb F_\tau}$. 
\item[(4)] $\underline T_{\Phi_\tau}$ the deformation operator in the trivialization $\underline \Upsilon_{\mathbb F_\tau}$ as in Eq. (\refeq{deformation}).
\end{enumerate}
\noindent We emphasize that the chart $\text{Exp}$ in (\refeq{Expdef}) is independent of the choice of admissible family. 
\end{notation}
 
  The main results of Section \ref{section6.2} carry over to the version of the deformation operator $\underline T_{\Phi_\tau}$, provided $R_0$ in \refeq{R0def} is chosen sufficiently large.

  \begin{prop}
\label{deformationsmainamd} For $(\e,\tau)\in (0, \e_0)\times (-\tau_0, \tau_0)$, the operator $\underline T_{\Phi_\tau}$ is given by 
$$\underline T_{\Phi_\tau}=T_{\Phi_\tau} + T_{R_0}$$
\noindent where $T_{R_0}$ is a pseudo-differential operator of order $1/2$, and for some $K\in \N$ and $M>10$, there is a constant $C=C_{M}$ so that it satisfies $$ \|T_{R_0}\|\leq C R_0^K \text{Exp}(-R_0/c) \ + \ C R_0^{-M}$$ uniformly for $(\e,\tau)\in (0, \e_0)\times (-\tau_0, \tau_0)$. In particular, if $(\mathcal Z_0,A_0, \Phi_0)$ has unobstructed deformations then for $R_0$ sufficiently large, $\underline T_{\Phi_\tau}$ is invertible and the results of Corollaries \ref{Cor4.6} and \ref{quantitativehigherorder} continue to hold uniformly in $\e,\tau$.  

\end{prop}

The proof is an extension of the proof of Theorem \ref{deformationsmaina} in \cite[Sec. 6]{PartII}. Appendix \ref{appendixA} provides details.

\subsection{Tangential Smoothing Estimates}
In this section, we establish key estimates for terms of $\slashed{\mathbb D}$ and its linearization in the tangential smoothing gauge. A straightforward calculation of the pullback metric $g_{\eta,\tau}=F^*_\eta(g_\tau)$ (see \cite[Sec. 5.3]{PartII} and Lemma \ref{universalDnonlinear} below) shows that with the standard choice of gauge without tangential smoothing, i.e. the choice of admissible family from Example \ref{standardfamily}, $\slashed {\mathbb D}(\mathcal Z_\eta, \Phi)$ is a sum of terms of the form 

  \be  M_\Phi(\eta):=\left(\underline{\del^{m} \chi} \left[\eta^{(n)} \right] \right)\cdot \sigma_j \nabla^{k}\Phi. \label{typeA1}\ee

\noindent for various integers $m,n,k\geq 0$, where $\eta^{(n)}=\left(\d{}{t}\right)^n\eta$ and $\sigma_j=\gamma(e^j)$ is Clifford multiplication by a basis vector in an orthonormal frame.  When $\Phi$ is polyhomogeneous all its derivatives are bounded, the derivatives of $\eta$ are the harder terms to bound. In general, one cannot obtain any bound better than \be \|M_{\Phi}\|_{L^2}\leq C \|\eta\|_{n},\label{badbound}\ee where $\|\eta\|_{n}$ denotes the $L^{n,2}(\mathcal Z_\tau; N\mathcal Z_\tau)$-norm of $\eta$ for the highest value of $n$ that appears. Theorem \ref{nonlinearBG} shows that this value is $n=2$ for the terms appearing in $\slashed {\mathbb D}$.

When these terms are considered instead in the tangential smoothing gauge -- which has the effect of replacing $\chi$ by $\underline \chi$ in the expression (\refeq{typeA1}) -- the intertwining of the growth rate and the tangential regularity intrinsic to this gauge reveal that {\it $M_\Phi$ is secretly a smoothing operator}; thus in this gauge it is bounded $M_{\Phi}: L^{n,2}(\mathcal Z_\tau; N\mathcal Z_\tau)\to H^{s}(\Yminus \mathcal Z; S^\text{Re})$ for some $s>0$! With these better bounds, the need to explicitly smooth the tangential configuration $\eta$ during the gluing iteration, i.e. the use of the standard Nash-Moser framework, is eliminated. 

This smoothing property is ultimately a consequence of the interaction between the family of smoothing operators (\refeq{chiderivfamily}) and real-valued functions of fix growth rate. The following lemma, which is completely independent of anything relating to the Dirac operator, captures this property.

\begin{lm}\label{Cvaluedsmoothing} Let $\eta \in C^\infty(\mathcal Z_\tau; N\mathcal Z_\tau)$ be a deformation. Then the following bounds hold for weights $\beta \geq -\tfrac12$. 

\begin{itemize}
\item[(A)] If $G: N_{r_0}(\mathcal Z_\tau)\to \C$ is a complex-valued function with Fourier coefficients $\{\eta_p\}_{p\in \Z}$ such that $|G| \leq Cr^{\alpha}$ holds pointwise, then,
\smallskip  
$$\| r^\beta \cdot  \left(\underline{\del^{m} \chi} \left[\eta^{(n)} \right] \right) \cdot G \|_{L^2(Y)} \leq C \|\eta\|_{s_A},$$
\smallskip
\noindent where $s_A= m+n - (1+\alpha +\beta)$ under the assumption that $\alpha+\beta>-1$.  
\item[(B)] If $u: N_{r_0}(\mathcal Z_\tau)\to \C$ is a complex-valued function such that $u \in L^2(N_{r_0}(\mathcal Z_\tau))$, then 
\smallskip
$$\| r^\beta \cdot \left(\underline{\del^{m} \chi} \left[\eta^{(n)} \right] \right) \cdot u \|_{L^2(Y)} \leq C \|\eta\|_{s_B},$$

where $s_B= m+n + \tfrac12 + \underline \gamma - \beta$, with $\underline \gamma=10^{-6}$. 
\end{itemize}
 \noindent In these expressions, $\underline {\del^m \chi}$ denotes the operator (\ref{underlinedef}) formed analogously to (\ref{chiderivfamily}) but using $\del^m\chi$ for a a multi-index of order $m$ in $x,y$. 

\end{lm}

\begin{proof} In Fermi coordinates, $\eta: \mathcal Z_\tau \to \C$ becomes a complex-valued function. 

(A) We begin with the case that $m=0$. On each circle $S^1\times \{(x,y)\} \subseteq N_{ r_0}(\mathcal Z_\tau)$ the multiplication map $C^{0}(S^1)\times L^2(S^1)\to L^2(S^1)$
is bounded, and the bound can be taken uniform for $|(x,y)|< r_0,$ the radius of the Fermi coordinates. Applying this to the product $G(t,x,y) \underline \chi[\eta'']$ for each pair $(x,y)$, 
\bea
 \Big \|r^\beta  \left(\underline{\chi} \left[\eta^{(n)} \right] \right)G(t,x,y)\Big \|^2_{L^2(N_{r_0})}&=& \int_{D_{\bold r_0}} \int_{S^1} \Big |\underline \chi \left[\eta^{(n)} \right]  G\Big |^2 dt \ r^{2\beta +1 }d\theta dr \\
 &\leq &C \int_{D_{ r_0}}  r^{2\alpha} \cdot  \int_{S^1}  \Big | \ \underline \chi \left[\eta^{(n)} \right]\Big |^2 dt \ r^{2\beta+1}d\theta dr \\
 &\leq &  C\int_{D_{ r_0}}  \Big|\sum_{p\in \Z}\eta_p (ip)^{n} e^{ipt} \chi_p(r)\Big |^2 r^{2\alpha+2\beta+1} d\theta dr dt, \eea
\noindent where we have replaced $L^2(S^1)$-norm over with the square sum over Fourier modes via Parseval's Theorem, for each fixed $(x,y)\in D_{r_0}$. Since the integrand is dominated by a large $C^k$-norm of $\eta$ times, $r^{2\alpha+2\beta+1}$ which lies in the integrable range by assumption,  the Dominated Convergence Theorem shows the sum may be pulled through the integral, which yields:
 \begin{eqnarray}
 \phantom{\Big \| \underline{\chi}[\eta'']  \sigma_j \psi \Big \|^2_{L^2(N_{ r_0})}}&\leq& C \sum_{p\in \Z} |\eta_p|^2 |p|^{2n} \int_{D_{r_0}}  |\chi_p(r)|^2 r^{2\alpha+2\beta +1} d\theta dr dt \label{evaluatingintegral}\\
  &\leq &C \sum_{p\in \Z} |\eta_p|^{} |p|^{2n} \Big[r^{2\alpha+2\beta+2}\Big ]_{r=0}^{2R_0/|p|} \label{711}\\
  &\leq &C\sum_{p\in \Z} |\eta_p|^2 |p|^{2n-2\alpha-2\beta-2}=C \|\eta\|_{n-(1+\alpha+\beta)} \label{712},
\end{eqnarray}

\noindent because $\chi_p(r)$ is supported where $r\leq 2R_0/|p|$. When $m\neq 0$, we have that $\underline {\del^m \chi}$ is supported in the same region, but ${\del^m \chi}_p \leq C|p|^m$. In this case the same calculation shows the result where (\refeq{711}--\refeq{712}) have an additional factor of $|p|^{2m}$. 

(B) The proof in this case follows the same outline, with minor modifications due to the fact that $u$ no longer necessarily obeys pointwise bounds. First, divide $D_{ r_0}$ into the sequence of annuli $$A_j:= \{\tfrac{2R_0}{n+1} \leq r\leq \tfrac{2R_0}{j}\}$$

\noindent for $n\geq 1$. Recall here that $R_0$ is the constant from (\refeq{R0def}), chosen sufficiently large so that the conclusions of Proposition \ref{deformationsmainamd} hold.  Then, beginning again with the case that $m=0$,  
 \begin{eqnarray} \Big \|  r^\beta \underline{\chi}[\eta^{(n)}] \cdot u \Big \|^2_{L^2(N_{ r_0})}&=& \sum_{n\geq 1}\int_{A_j}  |\underline \chi[\eta^{(n)}] \cdot u|^2 r^{2\beta}  \ r dr dt d\theta \nonumber \\
 &\leq & \sum_{j\geq 1} \sup_{A_j} \|\underline\chi(\eta^{(n)})\|_{C^0(S^1)}\int_{A_j} |u|^2 r^{2\beta} \ rdr dt d\theta \nonumber  \\
 &\leq&   \sum_{n\geq 1}\sup_{A_j}  \left[\|\underline\chi(\eta^{(n)})\|^2_{C^0(S^1)} \cdot r^{2\beta} \right] \|u \|^2_{L^2(A_j)}.  \label{713} \end{eqnarray}

\noindent Now, since $\chi_p=0$ on $A_j$ for $p\geq 2(j+1)$, we see that $$\sup_{A_j}\|\underline \chi(\eta^{(n)})\|^2_{C^0} \leq C\|\pi^{2(j+1)} \eta^{(n)}\|^2_{C^0}\leq C\|\pi^{2(j+1)} \eta\|^2_{{n+1/2+\underline \gamma}}$$ 
where $\pi^{2(n+1)}$ denotes the projection to Fourier modes $|p|\leq 2(n+1)$, and we have used the continuous Sobolev embedding $C^0\hookrightarrow L^{1/2+\underline{\gamma}, 2}(\mathcal Z_\tau; \C)$ in dimension 1, since $\underline \gamma=10^{-6}>0$. Next, by the definition of $A_j$,  $\sup_{A_j} r^{2\beta}\leq \tfrac{C}{j^{2\beta}}$, hence the restriction on Fourier modes implies 

$$\sup_{A_j}  \left[\|\underline\chi(\eta^{(n)})\|^2_{C^0(S^1)} \right]\leq  C |j|^{-2\beta}\|\pi^{2(j+1)} \eta\|^2_{{n+1/2+\underline \gamma}} \leq C\|\eta\|^2_{n+1/2+ \underline \gamma -\beta}.$$

\noindent Thus in total, (\refeq{713}) is bounded by

\bea
&\leq & C \sum_{j=1}^\infty \|\eta\|_{{n+1/2+\underline \gamma-\beta}} \|u\|_{L^2(A_j)}^2 \leq C \|\eta\|^2_{{n+1/2+\underline \gamma-\beta}} \cdot \|u\|_{L^2}^2
\eea 
as desired. The modifications for $m\neq 0$ are identical to the equivalent modifications for part (A), i.e. the weight in the above sum is adjusted to $|j|^{-2\beta +2m}$. 
\end{proof}

We now extend these bounds from complex-valued functions to spinors. There are two specific classes of terms that appear in the expression for $\slashed{\mathbb D}$, which are controlled by the two parts of Lemma \ref{Cvaluedsmoothing} respectively. 

\begin{defn} \label{typesAB} We say that a linear operator $M: \Gamma(\mathcal Z_\tau; N\mathcal Z_\tau) \to \Gamma(\Yminus \mathcal Z_\tau; S_E)$ has {\bf deformation types (A) and (B)} respectively if has the following forms. 
\begin{itemize}
\item[(A)] Let $\Phi$ a spinor with a polyhomogeneous expansion as in Lemma \ref{asymptoticexpansion}. Terms of Type (A) have the form 

  \be  M_\Phi(\eta):=\left(\underline{\del^{m} \chi} \left[\eta^{(n)} \right] \right)\cdot \sigma_j \nabla^{k}\Phi. \label{typeA}\ee

\noindent using the notation of (\refeq{typeA1}), where $m,n\geq 0$ are integers, and $k=0,1$. 

\item[(B)] Let $\psi \in rH^1_e(\Yminus\mathcal Z; S_E)$ be a spinor and $k=0,1$. Terms of type (B) have the form
 
 \be M_\psi(\eta):=\left(\underline{\del^{m} \chi} \left[\eta^{(n)} \right] \right)\cdot \sigma_j \nabla^{k}\psi \label{typeB}\ee
 
 \noindent where $m,n\geq 0$ are integers, and $k=0,1$. 
\end{itemize} 

\noindent In the two cases respectively, we define the {\bf weight} of the term by 
$$w_A \ , \ w_B=m+n+k.$$
\end{defn}

\medskip
\begin{lm}\label{deformationbounds}
Let $\eta \in C^\infty(\mathcal Z_\tau;N\mathcal Z_\tau)$ be a deformation, and $\beta \geq -\tfrac{1}{2}$.   

\begin{enumerate}
\item[(A)] Suppose that $M_\Phi$ is a term of Type A as in (\ref{typeA}) having weight $w_A$, and that $\beta \in \R$. Then 
$$\|r^\beta M_\Phi(\eta)\|_{L^2(N_{r_0})} \leq C \|\eta\|_{w_A-(3/2+\beta)}$$

\noindent where $\|\eta\|_s$ denotes the $L^{s,2}$-norm on $\mathcal Z_\tau$. 
\smallskip 
\item[(B)] Suppose that $M_\psi(\eta)$ is a term of Type B as in (\ref{typeB}) having weight $w_B$. Then, for some $\underline \gamma$ as in Lemma \ref{Cvaluedsmoothing},   
$$\| r^\beta  M_\psi(\eta)\|_{L^2(N_{r_0})} \leq C \|\eta\|_{w_B+ \underline \gamma -(1/2+\beta)} \|\psi\|_{rH^1_e}.$$
\end{enumerate}

\end{lm}
  \begin{proof}
 The proof follows directly from various cases of Lemma \ref{Cvaluedsmoothing}. (A) For $k=0$, one has
 
$$  \Big \| r^\beta \cdot  \underline{\del^m\chi}[\eta^{(n)}]  \cdot (\sigma_j \Phi)\Big \|^2_{L^2(N_{r_0})}= \int_{D_{\bold r_0}} \int_{S^1} |\underline {\del^m\chi}[\eta^{(n)}] \cdot  \sigma_j \Phi|^2  r^{2\beta+1} \ dt d\theta dr $$

\noindent and we may apply Lemma \ref{Cvaluedsmoothing}(A) with  $G=|\sigma_j \Phi|$, which obeys $G\leq C r^{\alpha}$ with $\alpha=1/2$ by Lemma \ref{asymptoticexpansion}. For $k=0$, and $\alpha=1/2$, then $w_A=m+n -(1+\alpha+\beta)=s_A$. For $k=1$, the same applies with $G=|\sigma_j \nabla \Phi|$, and with $\alpha=-1/2$. 

(B) For $k=0$, one has that $\psi\in rH^1_e\Rightarrow \frac{\psi}{r}\in L^2$. Applying Lemma \ref{Cvaluedsmoothing}(B) to $g=|\frac{\psi}{r}|$ with $\beta'=\beta+1$ implies the desired bound. The same holds for $k=1$ applying Lemma \ref{Cvaluedsmoothing}(B) to $g=|\sigma_j \nabla \psi|$ (with $\beta'=\beta$). 
 \end{proof}

\smallskip 

 \begin{eg}
 \label{smoothingestexample}
 To digest the lemma briefly, observe that the term in the formula (\refeq{bourguignon}) for the linearization $\text{d}\slashed{\mathbb D}$ containing $\text{div}(\dot g_\eta)$ is of Type (A) with $m=0, n=2, k=0$, thus has weight $w_A=2$. In this case, the conclusion of Lemma \ref{deformationbounds} shows that 

$$\|M_{\Phi_\tau}(\eta)\|_{L^2}\leq C \|\eta\|_{1/2}.$$

\noindent This is a stronger estimate than that in Eq. (\refeq{badbound}), by a factor of $3/2$ regularity. In other words, although the term contains two derivatives of $\eta$, multiplication by the eigenspinor $\Phi_\tau$ with $r^{1/2}$ asymptotics behaves as an order $3/2$ smoothing operator in the tangential smoothing gauge. 

More generally, Theorem \ref{nonlinearBG}(B) shows that $(\text{d}\slashed{\mathbb D})_{(\mathcal Z_\tau, \Phi_\tau)}$ consists entirely of terms of weight $w_A=2$. Thus in particular, for the polyhomogeneous spinor $\Phi_\tau$ and an arbitrary $\psi\in rH^1_e$, one has 
\bea
\|r^\beta{\underline{\mathcal B}}_{\Phi_\tau}(\eta)\|_{L^2(N_{r_0}(\mathcal Z_\tau))} &\leq &C\|\eta\|_{1/2-\beta} \\
\|r^\beta{\underline{\mathcal B}}_{\psi}(\eta)\|_{L^2(N_{r_0}(\mathcal Z_\tau))} &\leq &C\|\eta\|_{3/2+\underline \gamma-\beta} \cdot \|\psi\|_{rH^1_e}
\eea
 \noindent by applying Lemma \ref{deformationbounds}(A) and (B) respectively. Here, $\underline{\mathcal B}_{\Phi_\tau}$ is as in Notation \ref{underlinenotation}, and $\underline{\mathcal B}_{\psi}$ is the equivalent, substituting $\psi$ for $\Phi_\tau$ in (the underlined version of) Eq. (\refeq{calBdef}). 
 \end{eg}\

 We return now to the context of the universal Dirac operator. Theorem \ref{nonlinearBG} shows that $\slashed{\mathbb D}$  
 is a quasi-linear function of the deformation $\eta$ when written in a local chart and the trivialization of Definition \ref{trivializationdef}. The non-linear terms in $\eta$ have a similar form to the linear terms of types (A) and (B) from Definition \ref{typesAB}. We conclude this section by extending Lemma \ref{deformationbounds} to an appropriate bilinear analogue used to bound these non-linear terms in the gluing iteration.

Given a configuration $h_0=(\mathcal Z, \Phi) \in \mathbb H^1_e$, 

$$\slashed {\mathbb D}(h_0 + (\eta,\psi))= \slashed {\mathbb D}h_0+ \text{d}\slashed{\mathbb D}_{h_0}(\eta,\psi) + Q_{h_0}(\eta,\psi)$$ 

\smallskip 

\noindent where $\text{d}_{h_0}\slashed{\mathbb D}$ is the linearization (\refeq{blockdecomp}) and $Q_{h_0}$ is the non-linear term. Here, $\mathcal Z + \eta$ means $\mathcal Z_{\eta}$ as in (\refeq{Expchi2}) using the chart centered at $\mathcal Z$. The following lemma characterizes the non-linear term $Q_{h_0}$. 

\begin{lm}

\label{universalDnonlinear}

The non-linear term $Q$  has  the following form: 

\bea Q_{h_0}(\eta,\psi)&=&\underline{ \mathcal B}_\psi(\eta) \ \ + \ \ \mathfrak m_\Phi(\eta, \eta)   \ \ + \ \  \frak m_\psi (\eta,\eta)  \ \ + \ \ F_{\Phi + \psi}(\eta) 
\eea
where \begin{itemize} 
\item[(Q')] $\underline {\mathcal B}_{\psi}(\eta)$ is as in (\refeq{bourguignon}) with $\psi$ in place of $\Phi_\tau$. 

\item[(A')] $\frak m_\Phi(\eta, \eta)$ is a term quadratic in $\eta$ and linear in $\Phi$ which is a finite sum of terms of type (A'), defined to have the form   
\smallskip 
$$m(y)\cdot a_1(\underline \chi[\eta])\cdot M_\Phi(\eta)$$
\smallskip

\noindent where $m(y) \in  C^\infty(Y; \text{End}(S^\text{Re}))$,  $M_{\Phi}(\eta)$ is a linear term of type (A) in the sense of Definition \ref{typesAB}  with weight $w_A=2$, and $a_1(\underline \chi[\eta])$ is a linear combination of $\underline{\chi} [\eta'], \underline{\del _a\chi }[\eta]$, and $\underline{\chi}[ \eta]$. 
\item[(B')] $\frak m_\psi(\eta,\eta)$ is a sum of terms of terms of type (B'), defined identically to (A') but with the term $M_\psi(\eta)$ of type (A) replaced by one of type (B) with weight $w_B=2$, in the sense of Definition \ref{typesAB}.
\item[(C')] $F_{\Phi+\psi}(\eta)$ is comprised of sums of terms of type (C') which have higher-order dependence on $\eta$, $\eta'$, so that it satisfies a bound 
$$|F_{\Phi+\psi}(\eta)| \leq C \|\eta\|_{C^1} \Big(\frak m_\Phi'(\eta, \eta) + \frak m_\psi'(\eta, \eta)\Big) $$
\noindent where $\frak m_\Phi', \frak m_\psi'$ are finite sums of terms of types (A') and (B') respectively. 
\end{itemize}
\end{lm}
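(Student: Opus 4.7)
The plan is to derive the expansion of $\slashed{\mathbb D}$ around $h_0 = (\mathcal Z, \Phi)$ via the same naturality/pullback trick used in Proposition \ref{Diracderivformula} to compute the linearization. Since $\slashed{\mathbb D}$ is linear in its spinor argument, all of the non-linearity comes from the dependence on the embedding, entering through the mode-dependent diffeomorphism $\underline F_\eta$ from (\ref{diffeosmd}). First I would pull back $\slashed D_{\mathcal Z_\eta}(\Phi+\psi)$ by $\underline F_\eta$ via the spinor bundle identification $\mathfrak T_{g_\tau}^{\underline g_\eta}$ of the proof of Proposition \ref{Diracderivformula} to obtain a family of operators $\underline{\mathsf D}(\eta)$ on the fixed bundle $S^{\text{Re}}$, associated to the pullback metric $\underline g_\eta = \underline F_\eta^* g_\tau$ and pullback perturbation $B_\eta = \underline F_\eta^* B_\tau$, and compute
\begin{equation*}
Q_{h_0}(\eta,\psi) \;=\; \bigl(\underline{\mathsf D}(\eta) - \slashed D_{A_\tau} - \partial_\eta\underline{\mathsf D}(0)\cdot\eta\bigr)\Phi \;+\; \bigl(\underline{\mathsf D}(\eta) - \slashed D_{A_\tau}\bigr)\psi.
\end{equation*}

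Next I would Taylor-expand $\underline{\mathsf D}(\eta)$ to second order using Theorem \ref{BG}. The crucial observation is that the $\eta$-dependence of the components of $\underline g_\eta$, its inverse, its Christoffel symbols, and the pulled-back connection $B_\eta$ all enter as analytic functions (on the ball $\|\eta\|_{C^1}\ll 1$) of the expressions $\underline\chi[\eta]$ and $\underline{\partial_a\chi}[\eta]$; differentiating in $\eta$ therefore produces $\underline\chi[\eta'], \underline\chi[\eta''], \underline{\partial_a\chi}[\eta'], \underline{\partial_{ab}\chi}[\eta]$, multiplied by smooth $\text{End}(S^{\text{Re}})$-valued coefficients $m(y)$. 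The linear-in-$\psi$, linear-in-$\eta$ cross term is precisely $\partial_\eta\underline{\mathsf D}(0)\cdot\eta$ applied to $\psi$, which by the same computation as Proposition \ref{Diracderivformula} equals $\underline{\mathcal B}_\psi(\eta)$. The purely quadratic-in-$\eta$ contribution acting on $\Phi$ consists of sums of products of two factors of the form $\underline{\chi}[\eta'], \underline{\partial_a\chi}[\eta], \underline{\chi}[\eta]$ times $\Phi$ or $\nabla\Phi$; each such summand manifestly has the form $m(y)\cdot a_1(\underline\chi[\eta])\cdot M_\Phi(\eta)$ with $M_\Phi(\eta)$ of Type A and weight $w_A = 2$ (using $\alpha = 1/2$ for $\Phi$ via Lemma \ref{asymptoticexpansion}), so they assemble into $M_\Phi(\eta,\eta)$. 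The same expansion applied to $\psi$ in place of $\Phi$ produces $M_\psi(\eta,\eta)$ with $M_\psi(\eta)$ of Type B and weight $w_B = 2$.

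Finally, $F_{\Phi+\psi}(\eta)$ is defined as the remainder of the Taylor series beyond second order. Each subsequent term of the series picks up an extra factor of $\underline\chi[\eta]$ or $\underline{\partial_a\chi}[\eta]$ relative to the quadratic terms, while preserving the Type A/B structure of the remaining factors; summing the resulting geometric-type series in the smooth functional calculus of the (pullback) metric coefficients produces a term bounded by $C\|\eta\|_{C^1}$ times sums of terms of the same form as $M_\Phi(\eta,\eta)$ and $M_\psi(\eta,\eta)$, yielding the last bullet.

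The main obstacle is bookkeeping: one must verify that (a) the parallel-transport identification $\mathfrak T_{g_\tau}^{\underline g_\eta}$ itself, when Taylor expanded, contributes only zeroth-order-in-$\nabla\Phi$ corrections of Type A/B form, and (b) the terms coming from differentiating the pulled-back perturbation $B_\eta$ contribute only lower-order remainders, analogous to the $\mathcal R(B_\tau,\eta)$-term in (\ref{firstlinedD}). Both reductions follow from smoothness of the spin connection data on the metric cylinder (\ref{metriccylinder}), but pinning down all indices and coefficients occupies most of the actual write-up.
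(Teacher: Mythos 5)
Your proposal is correct and follows the same route as the paper: the paper's proof consists of substituting the formula for the pullback metric $\underline F_\eta^* g_\tau$ into the non-linear version of Bourguignon--Gauduchon's formula (Theorem \ref{BG}) and sorting the resulting Taylor expansion in $\eta$ by order, deferring the detailed bookkeeping to Section 8.3 of \cite{PartII}. Your identification of the cross term as $\underline{\mathcal B}_\psi(\eta)$, of the quadratic terms as $M_\Phi(\eta,\eta)$ and $M_\psi(\eta,\eta)$, and of the higher-order Taylor remainder as $F_{\Phi+\psi}(\eta)$ is exactly the intended argument.
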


\begin{proof}
This formula is derived by substituting the formula for the pullback metric $g_\eta = F_{\underline \chi}(\eta)^* g$ into the non-linear version of Bourguignon-Gauduchon's formula in Theorem \ref{nonlinearBG}. See Section 8.3 of  \cite{PartII}. 

To explain succinctly, the full pullback metric may be written in Fermi coordinates around $\mathcal Z_\tau$ as 

\begin{eqnarray} \underline g_{\xi}&=& (\text{d} \underline {F}_{\xi})^T  g_\circ (  \text{d} \underline F_{\xi}) \ + \ (\text{d} \underline {F}_{\xi})^T  \cdot h_\tau(t,z+ \underline \chi[\eta])  \cdot (  \text{d} \underline F_{\xi})\\  \nonumber  \\ 
\text{d}\underline F_{\xi}&= &  \text{Id} \ + \  \begin{pmatrix}  0 & \underline \chi[\eta'_x] & \underline \chi[\eta'_y] \\  \underline \chi[\eta'_x] &   2\underline{\del_x \chi}[\eta_x] & \underline{\del_x \chi}[\eta_y] + \underline{\del_y\chi}[\eta_x] \\  \underline \chi[\eta'_y] &   \underline{\del_x \chi}[\eta_y] + \underline{\del_y\chi}[\eta_x]& 2\underline{\del_y \chi}[\eta_y]  \label{pullbackmetric} \end{pmatrix} 
\end{eqnarray}

\noindent where $g_\circ$ is the product metric and $g_\tau=g_\circ + h_\tau$ with $h_\tau=O(r)$ in Fermi coordinates, and $\eta=(\eta_x, \eta_y)$. Each entry is of the matrix is a family of operators as in (\refeq{underlinedef}), formed using $\chi$ or its derivatives as indicated, and applied to $\eta$ or $\eta'=\d{}{t}\eta$.

The result now follows from plugging the formula for the metric into Theorem \ref{nonlinearBG} and collecting terms of various types. For terms arising form the matrix product with $g_\circ$, the metric terms consist of quadratic combinations of $\underline \chi[\eta'], \underline{\del \chi}[\eta]$, and the operations in the Bourguignon-Gauduchon formula (\refeq{NLBG}) involve at most a single derivative.  Subtracting off the terms linear in $\eta$, which yield the operator $\underline{\mathcal B}_{\psi}(\eta)$, this contributes the terms of Types (A') and (B') (denoted by $\frak m_{\Phi}, \frak m_{\psi}$).  

The remaining terms, denoted collectively by $F_{\Phi_\ph}$ arise from at most quadratic combinations of $\underline \chi[\eta'], \underline{\del \chi}[\eta]$ and factors of $h(t,z+\underline \chi[\eta])$, where $h$ is the smooth error from the product metric in Fermi coordinates as above.  \cite[Ch 13. Prop 3.9]{TaylorPDEsIII} shows that for such compositions, 

\be \|h(t,z+\underline \chi[\eta])\|_{H^s(N(\mathcal Z_\tau))}\leq C_0 + C_1 \|\eta\|_{C^0(\mathcal Z_\tau)}\Big(1 + \|\eta\|_{L^{s,2}(\mathcal Z_\tau)}\Big),\label{TaylorPDEs}\ee

\smallskip 

\noindent which can be derived by applying the result therein to the difference $h(t,z+\underline \chi[\eta])- h(t,z)$. The constant term $h(t,z)$ is combined with the quadratic combinations into terms of $\frak m_{\Phi}, \frak m_{\ph}$, while the terms arising from the remainder $h(t,z+\underline \chi[\eta])- h(t,z)$ are collected into $F_{\Phi+\ph}$, and the bound in (C') follows from  (\refeq{TaylorPDEs}) above with $s=2$, since we may assume that $\|\eta\|_{L^{2,2}}\leq 1$. 
\end{proof}

To bound the non-linear terms later, we have the following bilinear analogue of Lemma \ref{deformationbounds}. In the statement of the lemma, we tacitly use $\underline {\chi'}[\eta]$ to denote a term having one derivative, i.e. a linear combination of $\underline \chi[\eta']$ and $\underline{\del_a \chi}[\eta]$. $\underline \chi''[\eta]$ denotes the same but with up to second derivatives. 

\begin{lm}\label{nonlineardeformationbounds}
Let $\Phi, \psi$ be spinors as in Definition \ref{typesAB}. Retaining the notation from Lemma \ref{deformationbounds}, we have the following bounds on bilinear terms in a pair of deformations $\eta,\xi \in C^\infty(\mathcal Z_\tau; N\mathcal Z_\tau)$, where $\underline \gamma<<1$ is a fixed positive constant. 
\begin{enumerate}
\item[(A')] There are bounds
\bea \Big \| \underline {\chi' }[\eta]\underline{\chi'}[\xi]\nabla \Phi \Big \|_{L^2(N_{ r_0})} &\leq& C\|\eta\|_{3/2+\underline\gamma}\cdot \|\xi\|_{1-\alpha} \\ 
\Big \| \underline {\chi'' }[\eta]\underline{\chi'}[\xi] \Phi \Big \|_{L^2(N_{ r_0})}+\Big \| \underline {\chi' }[\eta]\underline{\chi''}[\xi] \Phi \Big \|_{L^2(N_{ r_0})} &\leq& C( \|\eta\|_{3/2+\underline\gamma}\cdot \|\xi\|_{1-\alpha}+ \|\eta\|_{1-\alpha}\cdot \|\xi\|_{3/2+\underline\gamma } )  \\ \eea
\item[(B')] Likewise, 
\bea \Big \| \underline {\chi' }[\eta]\underline{\chi'}[\xi]r^\beta\nabla \psi \Big \|_{L^2(N_{ r_0})} &\leq& C \|\eta\|_{3/2+\underline\gamma - \beta} \|\xi\|_{3/2+\underline\gamma} \|\tfrac{\psi}{r}\|_{L^2} \\ 
\Big \| \underline {\chi'' }[\eta]\underline{\chi'}[\xi]r^\beta \psi \Big \|_{L^2(N_{ r_0})}+\Big \| \underline {\chi' }[\eta]\underline{\chi''}[\xi]r^\beta \psi \Big \|_{L^2(N_{ r_0})} &\leq& C( \|\xi\|_{3/2+\underline\gamma}\|\eta\|_{3/2+\underline\gamma - \beta} + \|\eta\|_{3/2+\underline\gamma}\|\xi\|_{3/2+\underline\gamma - \beta} ) \|\tfrac{\psi}{r}\|_{L^2}  \\   \eea
\end{enumerate}
\end{lm}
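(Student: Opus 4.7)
The plan is to reduce both (A') and (B') to the linear bounds already established in Lemma~\ref{deformationbounds} by extracting one of the two mode-dependent operators as a pointwise multiplier. The basic mechanism is the same dyadic decomposition of $N_{r_0}(\mathcal Z_\tau)$ into annuli $A_n=\{2R_0/(n+1)\leq r\leq 2R_0/n\}$ used in the proof of Lemma~\ref{deformationbounds}, combined with a $1$-dimensional Sobolev embedding that converts a Fourier-truncated $L^2$ bound on $\mathcal Z_\tau$ into a pointwise bound on $S^1\times A_n$.

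First I would establish the key pointwise estimate: because $\chi_p(r)$ is supported where $|p|r\leq 2R_0$, only modes with $|p|\leq 2(n+1)$ contribute to $\underline{\chi'}[\xi]$ on $A_n$, and each single derivative on $\chi$ or on $\xi$ produces at most an extra factor of $|p|\leq 2(n+1)$. The Sobolev embedding $L^{1/2+\underline\gamma,2}(S^1)\hookrightarrow C^0(S^1)$, applied to the one-derivative object $\underline{\chi'}[\xi](\cdot,x,y)$ at each $(x,y)\in A_n$, then yields the uniform bound
\[
\sup_{(t,x,y)\in S^1\times A_n} \bigl| \underline{\chi'}[\xi](t,x,y)\bigr|
\ \leq \ C\,\|\pi^{\leq 2(n+1)}\xi\|_{L^{3/2+\underline\gamma,2}(\mathcal Z_\tau)} \ \leq \ C\,\|\xi\|_{3/2+\underline\gamma},
\]
independently of $n$. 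The essential point is that the spectral truncation $\pi^{\leq 2(n+1)}$ only decreases $L^{s,2}$-norms, so the Sobolev constant is uniform in $n$.

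With this multiplier estimate in hand, (A') is proved term by term. For the first bullet I would pull $\underline{\chi'}[\xi]$ out in $L^\infty$, leaving $\underline{\chi'}[\eta]\,\nabla\Phi$, which is a Type A term of weight $w_A=2$ with $\beta=0$, and apply Lemma~\ref{deformationbounds}(A) to get $C\|\eta\|_{1-\alpha}$; the product gives the claimed $\|\eta\|_{3/2+\underline\gamma}\|\xi\|_{1-\alpha}$ (the symmetric counterpart is not needed here because the $\nabla\Phi$ is not paired with any $\chi''$). For the second bullet I would write $\underline{\chi''}[\eta]\underline{\chi'}[\xi]\Phi$ and extract $\underline{\chi'}[\xi]$ as above, leaving $\underline{\chi''}[\eta]\Phi$, a Type A term of weight $w_A=2$ bounded by $C\|\eta\|_{1-\alpha}$; the symmetric term $\underline{\chi'}[\eta]\underline{\chi''}[\xi]\Phi$ is handled by swapping the roles of $\eta$ and $\xi$. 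Summing gives the symmetric right-hand side of (A'). The argument for (B') is identical with Lemma~\ref{deformationbounds}(B) replacing (A), accounting for the $r^\beta$ weight and producing the $\underline\gamma$-shifted index $3/2+\underline\gamma-\beta$; the $\psi/r$ versus $\nabla\psi$ dichotomy follows the same split as in Lemma~\ref{deformationbounds}(B).

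The only step requiring care --- and the one I would treat as the main technical point --- is verifying that whichever factor is extracted as the $L^\infty$ multiplier carries at most one derivative of $\chi$ and at most one derivative of the deformation. This is why the lemma only asserts bounds for products of the forms $\underline{\chi'}\cdot\underline{\chi'}$ and $\underline{\chi''}\cdot\underline{\chi'}$, and not $\underline{\chi''}\cdot\underline{\chi''}$: in each product at least one factor must be only singly differentiated, since a doubly differentiated factor would require the $L^{5/2+\underline\gamma,2}$-Sobolev bound in $C^0$, disturbing the symmetric $3/2+\underline\gamma$ index that is crucial for controlling the loss of regularity in Section~\ref{section11}. Once this bookkeeping is set up, the estimates follow routinely from the two-ingredient recipe above.
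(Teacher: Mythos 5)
Your proposal is correct and follows essentially the same route as the paper: the paper's proof also pulls out the singly-differentiated factor in $C^0$ via the Sobolev embedding $\|\underline{\chi'}[\eta]\|_{C^0}\leq C\|\eta\|_{3/2+\underline\gamma}$ (using $|\del_a\chi_p|\leq C|p|$ so that a derivative of the cut-off costs one derivative of the deformation) and then reduces to Lemma \ref{deformationbounds}. The only quibble is a labeling slip in your first bullet of (A'): extracting $\underline{\chi'}[\xi]$ in $L^\infty$ yields $\|\xi\|_{3/2+\underline\gamma}\|\eta\|_{1-\alpha}$ rather than the stated $\|\eta\|_{3/2+\underline\gamma}\|\xi\|_{1-\alpha}$, but since the expression is symmetric in $\eta\leftrightarrow\xi$ one simply extracts the other factor.
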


\begin{proof}
Note that each term contains at most one instance of a second derivative (this is a reflection of the quasi-linearity of $\slashed {\mathbb D})$. The bound $d\chi_p \leq C|p|$ for each $p\in \Z$ implies that $\|\underline{\del_a\chi}[\eta]\|_{L^2}\lesssim \|\eta\|_{1,2}$. In each case above, the factor of $\eta, \xi$ with only a single derivative can therefore be pulled out using the Sobolev embedding $\|\chi'[\eta]\|_{C^0}\leq C \|\eta\|_{3/2+\underline\gamma}$ or equivalently for $\xi$,  after which the proofs proceed as in Lemma \refeq{deformationbounds}. 
\end{proof}

 \section{Concentrating Local Solutions}
 \label{section7}
 
This section introduces the model solutions that give the initial approximation in the gluing problem. These model solutions are defined on tubular neighborhoods of each singular set $\mathcal Z_\tau$. These $(\e,\tau)$-parameterized tubular neighborhoods which host the model solutions shrink in diameter as $\e\to0$ for each fixed $\tau$. More specifically, we recall the following from Appendix  \ref{appendixofnotation}.

\begin{defn}
\label{insideoutsidedef}
Let the {\bf inside} and {\bf outside} regions respectively be defined by 
\begin{eqnarray} Y^+_{\e,\tau}:=N_{\lambda^+(\e)}(\mathcal Z_\tau)  \hspace{1.0cm} & \text{ where }& \hspace{1.0cm}\lambda^+(\e)=\e^{1/2}\label{lambdadef} \\ \
{Y^-_{\e,\tau}=\Yminus N_{\lambda^-}(\mathcal Z_\tau).}  \hspace{1.0cm} & \text{ where } &  \hspace{1.0cm} \lambda^{-}(\e)=\e^{2/3-\gamma^-} \end{eqnarray} 

\noindent for $\e\in (0,\e_0)$, where $\gamma^-=10^{-6}$ and $N_\lambda(\mathcal Z_\tau)$ is the tubular neighborhood of the singular set for $\tau\in (-\tau_0,\tau_0)$ of radius $\lambda$. The overlap $Y^+_{\e,\tau} \cap Y^-_{\e,\tau}$ is called the {\bf neck region}. Both $Y^\pm_{\e,\tau}$ are equipped with Fermi coordinates as in Definition \ref{Fermicoords} using the metric $g_\tau$. Unless confusion can easily arise, we omit the specific dependence of the region on $\e,\tau$ and simply write $Y^\pm$.    
\end{defn}
 
Families of model solutions on $Y^+$ parameterized by $(\e,\tau)$ were constructed in \cite{PartI}. This section introduces these solutions and their relevant properties, as well discusses the linear analysis of the linearized equations at these model solutions. References to specific sections of \cite{PartI} will be indicated, where the details of the construction may be found. These model solutions are the concrete version of the model solutions (\refeq{initialmodel}) from the outline in Section \ref{section2};  the main result of this section, Theorem \ref{PartImain} below, gives a precise version of Hypothesis \ref{hyp1prime}(A).

\subsection{The Desingularized Configurations}
The construction of the model solutions has two steps, the first of which is to construct preliminary ``de-singularized'' configurations. These de-singularized solutions are formed from a radially-symmetric ODE solution on the normal planes, which de-singularize the $\e=0$ connection $A_\tau$ into a smooth connection with highly concentrated curvature. These de-singularized configurations have an error term, coming from the derivative in the tangential directions, which is $L^2$ bounded uniformly in $\e$. The second step of constructing model solutions is to show the linearization at the de-singularized configurations is sufficiently invertible, and use it to correct this tangential error.

  The de-singularized configurations arise from solving the ODE for the dimensionally-reduced equations in the normal directions to $\mathcal Z_\tau$, with a radially symmetric ansatz. The dimensional reduction of the Seiberg--Witten equations to the normal disks $D_\lambda(t_0):=\{(t_0, x, y) \in Y^+ \ | \ x^2 + y^2 \leq \lambda\}$ for fixed $t_0$ read 
  
  \be
  \begin{pmatrix} 0 &-2 \del_A \\  2\delbar_A & 0 
  \end{pmatrix}  \hspace{2cm} \mu_\C(\Phi)=0 \hspace{2cm} F_A + \frac{\mu_R(\Phi)}{\e^2}=0
  \label{dimred} 
  \ee

  \noindent where $\Phi=(\alpha,\beta)$ is a pair of $E$-valued sections, and $A=ia_x dx + i a_ydy$ has no $dt$ component (consequently its curvature is has only $dx\wedge dy$ component, and the moment map $\mu$ is split accordingly).  These equations are a vortex-type system on the Riemann surface with boundary $D_\lambda(t_0)$, now depending parametrically on $(\e,\tau)\in (0,\e_0)\times (-\tau_0, \tau_0)$ and the tangential coordinate $t_0 \in \mathcal Z_\tau$. The first two of these vortex-type equations are invariant under the action of complex gauge transformations.

 The vortex system admits a model solution on $\C$ with its flat metric. To simplify further, we work with the leader-order terms of $(\Phi_\tau, A_\tau)$ in $r$, defined by 
   \be \Phi^\bullet_\tau=\frac{1}{2}\begin{pmatrix} c(t) \\ d(t)e^{-i\theta}\end{pmatrix}  r^{1/2}  + \frac{1}{2}\sigma\begin{pmatrix} \ \\ \ldots \end{pmatrix}\hspace{2cm} A^\bullet_\tau=\frac{1}{4}\left(\frac{dz}{z}-\frac{d\overline z}{\overline z}\right) \label{bulletedversion}\ee
  
\noindent in Fermi coordinates, where $\sigma$ symmetrizes so that $\Phi_\tau\in \Gamma(S^\text{Re})$. To solve Eqns. (\refeq{dimred}), we take the ansatz that there is a radially symmetric, complex-valued gauge transformation $h_\e(r): D_\lambda(t_0)\to \C$ such that 
  $$(\Phi^{h_\e}, A^{h_\e}):= e^{h_\e}\cdot (\Phi^\bullet_\tau,A^\bullet_\tau)$$
  
  \noindent where $\cdot$ denotes the complex gauge action $e^h\cdot (\alpha, \beta)=(e^h\alpha, e^{-\overline h}\beta)$ and $e^h\cdot A=A + \del \overline h - \delbar h$. Since the first two equations are invariant under such gauge transformations and are solved by $(\Phi^\bullet_\tau,A^\bullet_\tau)$, the system (\refeq{dimred}) reduces to a single degenerate second-order ODE of Painlev\'e III type for $h_\e(r)$ arising from the curvature equation. We allow solutions that become singular at $r=0$. This ODE was solved in \cite{MWWW} in the context of a related gluing problem for Hitchin's equations; adapting their work, \cite[Sec. 4.1--4.2]{PartI} proves the following lemma. 
    
  \begin{lm} \label{fiducialbounds} For each $(\e,\tau)\in (0,\e_0)\times (-\tau_0,\tau_0)$ and $t_0 \in \mathcal Z_\tau$, there is a unique radially symmetric, complex valued gauge transformation $h_\e(r; \tau, t_0): D_\lambda(t_0)\to \C$ such that 
  
    $$(\ph^{h_\e}_{\tau, t_0}, a^{h_\e}_{\tau, t_0}):= e^{h_\e(r; \tau, t_0)}\cdot (\Phi^\bullet_\tau,A^\bullet_\tau)$$

\noindent are smooth configurations obeying (\refeq{dimred}) on $D_\lambda(t_0)$ and the following hold. 

\begin{enumerate}
\item[(A)] $h_\e(r; \tau, t_0)$ is a smooth function of $r$ for $r>0$ with $h \sim O(-\log(r \e^{-1/3}))$ as $r\to 0$, and depends smoothly on the parameters $\e, \tau, t_0$. 
\item[(B)] There is a constants $c_1, C$ uniform in $\e,\tau, t_0$ such that 

$$\Big \| \left(\ph^{h_\e}_{\tau, t_0}, a^{h_\e}_{\tau, t_0}\right)-\left(\Phi^\bullet_\tau, A^\bullet_\tau\right) \Big \|_{C^0(r\geq c_1\e^{2/3})} \leq C \text{Exp}\left(-\frac{r^{3/2}}{\e}\right).$$

\noindent holds in the region where $r\geq c_1 \e^{2/3}$. The function $\|e^{h_\e(r;\tau,t_0)}-1\|_{C^0(r\geq c_1\e^{2/3})}$ obeys the same bound. 
\item[(C)] For $r\leq \lambda^+=\e^{1/2}$, $|\ph_{\tau,t_0}^{h_\e}|$ is a monotonically decreasing function of $r$, and there is a constant $c_2$ uniform in $\e,\tau, t_0$ such that 

$$|\ph_{\tau,t_0}^{h_\e}| \geq c_2 \e^{1/3}$$

\noindent holds in $D_\lambda(t_0)$.  
\end{enumerate}
  \end{lm}
  
  \begin{proof}
  See \cite[Prop. 4.4]{PartI} and the references to the analysis of \cite{MWWW} therein. 
  \end{proof}

  Given the parameterized family on the normal planes, we define 
  
  \begin{defn}The {\bf de-singularized configurations} corresponding to the eigenvector $(\mathcal Z_\tau, A_\tau, \Phi_\tau)$ are defined as 
  \bea \Phi^{h_\e}_{\e,\tau}(t, r, \theta)&:=&e^{h_\e(r; \tau, t_0)}\cdot \Phi_\tau\\
  A^{h_\e}_{\e,\tau}(t, r, \theta)&:=& e^{h_\e(r; \tau, t_0)}\cdot A_\tau
  \eea
 and are defined over $Y^+=N_\lambda(\mathcal Z_\tau)$. Note that the right hand side uses the {\it unbulleted} versions of the eigenspinor and connection, thus includes the higher order terms, in contrast to  \eqref{bulletedversion}. 
  \label{desingularized}
  \end{defn}

 By the previous lemma, these are smooth configurations on the tubular neighborhood, and continue to obey the bounds in Items (B)--(C). The next subsection begins the analysis of their linearization. A cartoon (copied from \cite[Fig. 1]{PartI}) depicting the radial profiles of the desingularized solutions and of the corresponding curvature $F_{A^{h_\e}}$ for $\tau=0$ is depicted below. 
\begin{figure}[h!]
\begin{center}
\begin{picture}(200,150)
\put(-120,-1.3){\includegraphics[scale=1.6]{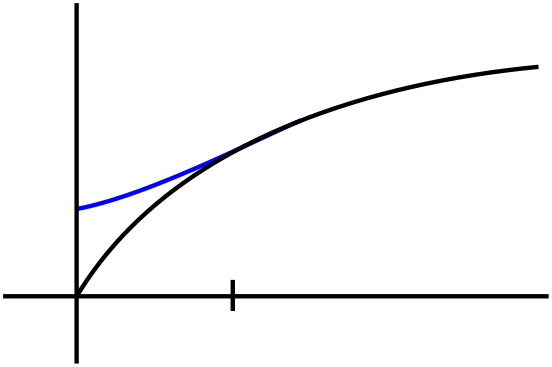}}
\put(115,0){\includegraphics[scale=1.6]{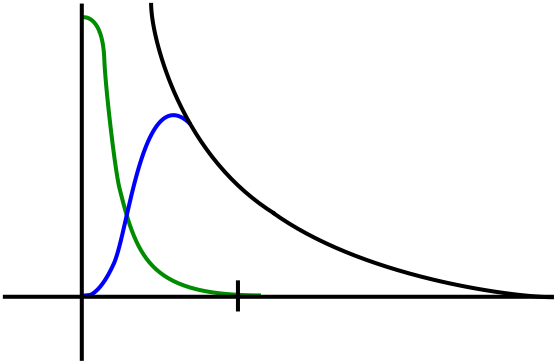}}
\put(20,85){\Large $|\Phi_0|$}
\put(215,69){\Large $|A_0|$}
\put(143,148){\color{OliveGreen}\large $|F_{A^{h_\e}}|$}
\put(170,60){\color{blue}\Large $|{A^{h_\e}}|$}
\put(-85,75){\color{blue}\Large $|\Phi^{h_\e}|$}
\put(-45,8){$\large O(\e^{2/3})$}
\put(190,8){$\large O(\e^{2/3})$}
\put(-125,58){$\large O(\e^{\tfrac{1}{3}})$}
\put(112,90){$\large O(\e^{-\tfrac{2}{3}})$}
\put(112,132){$\large O(\e^{-\tfrac{4}{3}})$}
\put(70,12){$r\to \infty$}
\put(300,12){$r\to \infty$}
\label{Fig1}
\end{picture}
\caption{The radial profiles of the de-singularized configurations compared to those of the limiting $\Z_2$-harmonic spinor. }
\end{center}
\end{figure}
  \noindent

  \begin{rem}
The de-singularization process used to obtain $(\Phi^{h_\e}_{\e,\tau}, A^{h_\e}_{\e,\tau})$ smoothes the $\Z_2$-harmonic spinor and the accompanying singular connection $A_\tau$. This smoothing process also re-introduces a highly concentrated ``bubble'' of curvature near $\mathcal Z_\tau$ so that the $-1$ holonomy around meridians is preserved for sufficiently small $\e$ up to small error. The curvature $F_{A^{h_\e}}$ is smooth with $C^0$-norm of size $O(\e^{-4/3})$, and $L^2$-norm of size $O(\e^{-2/3})$. This curvature reintroduces the curvature that bubbles away in the limit $\e\to 0$, as discussed in Section \ref{section3.3}. 
\end{rem}

\subsection{Hilbert Spaces and Boundary Conditions}
\label{section7.1}

This subsection begins the linear analysis of the linearization at the desingularized solutions from Definition \ref{desingularized} by defining Sobolev spaces with weights and boundary conditions on the tubular neighborhoods $Y^+=N_\lambda(\mathcal Z_\tau)$. These norms are specifically adapted so that the elliptic theory of the linearization at the desingularized solutions is (as close as possible) to being uniform in $(\e,\tau)$. The norms are defined in terms of the following weight function.  

Denote by $r_0$ the radius of the Fermi coordinate chart on $Y^+$. With $r=\text{dist}(-,\mathcal Z_\tau)$, and \be \kappa_\tau(t):=\sqrt{|c_\tau(t)|^2 +|d_\tau(t)|^2}\label{kappatau}\ee

\noindent where $c_\tau(t), d_\tau(t)$ are the (smooth) leading coefficients in the polyhomogeneous expansion of $\Phi_\tau$ in Lemma \ref{asymptoticexpansion}. Since $\Phi_0$ is regular -- in particular non-degenerate -- and these coefficients depend smoothly on $\tau$, $|\kappa_\tau(t)|>\kappa_0$ is bounded below uniformly for $\tau\in (-\tau_0, \tau_0)$ and $t\in \mathcal Z_\tau$. Let $R_\e$ be a smooth function such that $$R_{\e,\tau}(r) =\begin{cases} \sqrt{\kappa_0 \e^{4/3} + r^2} \hspace{1.2cm} r\leq r_0/2 \\ \text{const} \hspace{2.5cm} r\geq r_0. \end{cases}$$

\noindent Note that the weight function $R_{\e_\tau}$ is approximately equal to $r$ on the tubular neighborhood $\{r\leq r_0/2\}$, but levels off to be constant for $r\leq O(\e^{2/3})$. In particular, there is a uniform  (in $\tau$ and $t$) lower bound $R_\e \geq c_1 \e^{2/3}$.  The norms also use the norm  $|\Phi_{\e,\tau}^{h_\e}|$ as a weight. By Lemma \ref{fiducialbounds}, Item (B), this weight is exponentially close to $|\Phi_\tau|\sim r^{1/2}$ for $r\geq c_1\e^{2/3}$, thus this latter weight function is commensurate with $|\Phi_\tau^{h_\e}|\sim \sqrt{R_{\e,\tau}}$. It is used in place of $\sqrt{R_{\e,\tau}}$ simply because it naturally appears in the Weitzenb\"ock formula for the linearized operator.  
\begin{defn}
\label{insidenorms}
Let $\nu \in \R$ be a weight, and $\e\in (0, \e_0)$. The ``inside'' Sobolev norms are defined by 
 \begin{eqnarray}
\|(\ph,a)\|_{H^{1,+}_{\e,\nu}}&:=& \left(\int_{Y^+} \left(| \nabla \ph|^2 + |\nabla a|^2 + \frac{|\ph|^2}{R_\e^2}+ \frac{|\mu(\ph, \Phi^{h_\e}_\e)|^2}{\e^2} + \frac{|a|^2 |\Phi^{h_\e}_\e|^2}{\e^2}\right) R_\e^{2\nu} \ dV \right)^{1/2}\label{H1innorm}\\
\|(\ph,a)\|_{L^{2,+}_{\e,\nu}}&:=& \left(\int_{Y^+}\left( |\ph|^2 + |a|^2\right) R_\e^{2\nu} \ dV \right)^{1/2}\label{L2innorm}
\end{eqnarray}

\noindent where the dependence of $\Phi^{h_\e},dV$, and $R_\e$ on $\tau$ is suppressed in the notation, and $\nabla$ is formed using $A^{h_\e}_{\e,\tau}$, the Levi-Civita connection of $g_\tau$ and $B_\tau$. Both norms give rise to inner products via their polarizations. Because $N_\lambda(\mathcal Z_\tau)$ is compact, these norms are equivalent to the standard $L^{1,2}$ and $L^2$ norms respectively  (though not uniformly in $\e,\nu$).  \end{defn}

\medskip 

Note that we do not yet define the spaces $H^{1,+}_{\e,\nu}$ as the closures of smooth sections with respect to the above norm and likewise for $L^{2,+}_{\e,\nu}$. We instead define these spaces below as the subspaces with finite norm and subject to certain Atiyah-Patodi-Singer (APS) type boundary conditions. Since the above norms are equivalent to the standard Sobolev norms, there is a well-defined boundary trace 

$$\text{tr}: \left\{(\ph,a)  \ \big | \  \|(\ph,a)\|_{H^{1,+}_{\e,\nu}}<\infty \right\} \to L^{1/2,2}(\del Y^+; S_E \oplus (\Omega^0\oplus \Omega^1)(i\R)),$$

\noindent which we also denote by $\text{tr}(\ph)=\ph|_{\del Y^+}$. By standard theory (e.g. \cite{APS1, KM}), an index 0 APS boundary condition for the first-order elliptic (gauge-fixed) linearized Seiberg--Witten equations at a smooth configuration is a closed subspace 

$$\Lambda_0 \subseteq L^{1/2,2}\big(\del Y^+ ; S_E \oplus (\Omega^0\oplus \Omega^1)(i\R)\big),$$

\noindent that is Lagrangian with respect to the boundary pairing \be \Omega_{\del Y^+}(\phi, \psi):= \br \slashed D_{A}\ph, \psi \kt_{L^2} -  \br \ph, \slashed D_{A}\psi \kt_{L^2},\label{symplecticdY}\ee

\noindent  for any smooth connection $A$. Notice that this definition is invariant under changing $A\mapsto A+a$.  The pairing (\refeq{symplecticdY}) gives a symplectic form (see \cite[Sec. 2]{BW25} and Appendix \ref{appendixB}). The associated boundary-value problem 

\be
\begin{cases}
\mathcal L_{(\Phi,A)}(\ph,a)=(\psi,b)\\
\Pi_{\Lambda_0}(\ph|_{\del Y^+}, a|_{\del Y^+})=0,  
\end{cases}
\ee

\noindent is then Fredholm of index zero as a map $\mathcal L_{(\Phi, A)}:\{(\ph,a)\in L^{1,2} \ | \ \Pi_{\Lambda_0}(\ph|_{\del Y^+}, a|_{\del Y^+})=0\}\to L^2$. This follows from a simple integration by parts argument and the Weitzenb\"ock formula (see \cite[Sec. 7.1]{PartI}). Note that different choices of a smooth configuration $(\Phi, A)$ alter the linearization by a compact operator, thus the above discussion is insensitive to the choice of smooth configuration $(\Phi, A)$.

More generally, given such a Lagrangian, then any pair of $\Lambda_{-1}, \Lambda_1$ of closed isotropic and coisotropic subspaces respectively such that

$$\Lambda_{-1}  \subseteq \Lambda_0\subseteq  \Lambda_1$$

\noindent where all the inclusions have finite codimension define an APS boundary conditions such that the linearization 

$$\mathcal L_{(\Phi, A)}: \left\{(\ph,a)  \ \Big   |  \  \|(\ph,a)\|_{H^{1,+}_{\e,\nu}}<\infty  \ , \  \Pi_{\Lambda_i}(\ph,a)|_{\del Y^+}=0  \right\}  \to L^2(Y^+; S_E \oplus (\Omega^0 \oplus \Omega^1)(i\R))  $$

\noindent is Fredholm. Here $\Pi_{\Lambda_i}$ are the $L^2$-orthogonal projections to the subspaces. In the isotropic case, the operator has index $\text{ind}(\mathcal L_{(\Phi,A)})=\text{dim}(\Lambda_0 / \Lambda_{-1})\geq 0$, and in the coisotropic case it has negative index   $\text{ind}(\mathcal L_{(\Phi,A)})=-\text{dim}(\Lambda_1 / \Lambda_{0})$.

\begin{defn}  \label{mixedbddef}A {\bf mixed APS boundary and orthogonality condition} defined by a choice of closed Lagrangian subspace $\Lambda_0$, a coisotropic subspace $\Lambda_{-1}\subseteq \Lambda_0$ and a finite-dimensional subspace of sections $V\subseteq L^{1,2}(Y^+; S_E \oplus (\Omega^0\oplus \Omega^1)(i\R))$, is the condition that 
\bea
\Pi_{\Lambda_{-1}}(\ph |_{\del Y^+},a |_{\del Y^+})&=&0 \\ 
 \Pi_{V}(\ph,a)&=&0,
\eea 

\noindent where $\Pi_{\Lambda_{-1}}, \Pi_V$ are again the $L^2$-orthogonal projections. Associated to any choice of data $(\Lambda_0, \Lambda_{-1}, V)$ there is a closed subspace 

\be H_{\Lambda_{-1}, \Lambda_0, V}:= \left\{(\ph, a)  \ \ \Big | \ \  \Pi_{\Lambda_{-1}}(\ph |_{\del Y^+},a |_{\del Y^+})=0  \ \ , \ \   \\ 
 \Pi_{V}(\ph,a)=0 \ \right\} \subseteq L^{1,2}(Y^+), \label{H+bdortho}\ee
 
 \noindent and the gauge-fixed linearization $\mathcal L_{(\Phi, A)}: H_{\Lambda_{-1}, \Lambda_0, V}\to L^2$ is Fredholm for any smooth configuration $(\Phi, A)$ with $\text{ind}(\mathcal L_{(\Phi, A)})= \text{dim}(\Lambda_0/\Lambda_{-1}) - \text{dim}(V)$.

\end{defn}

\bigskip

 Although the linearized equations $\mathcal L_{(\Phi,A)}$ on $Y^+$ is Fredholm (thus has elliptic estimates) for any choice of mixed APS and orthogonality condition, this alone is not sufficient for the purpose of the gluing. The gluing requires that (1) the operator is invertible, and that (2) the elliptic estimates are nearly uniform. The main result of \cite{PartI} is the construction of a particular choice of mixed APS boundary and orthogonality conditions such that this is the case. In the below theorem, we use $\mathcal L^{h_\e}_{\e,\tau}$ to denote the gauge-fixed linearized Seiberg--Witten equations at the de-singularized configurations from Definition  \ref{desingularized}.
 
\begin{thm}\label{Insideinvertibility}(\cite{PartI}, Theorems 1.4 \& 7.1) For each $\e \in (0, \e_0)$ and $\tau \in (-\tau_0, \tau_0)$, there exists a choice of mixed APS boundary and orthogonality conditions defined by subspaces $(\Lambda_{-1}^+,\Lambda_0^+, V^+)$ such that 
\be \mathcal L^{h_\e}_{\e,\tau}: H_{\Lambda_{-1}^+, \Lambda_0^+, V^+}\lre L^{2,+}_{\e,\nu}(Y^+)\label{7.7}\ee

\noindent is Fredholm of index 0, where the domain is as in (\refeq{H+bdortho}), and the following hold, provided $\e_0, \tau_0$ are sufficiently small. 

\begin{enumerate}
\item[(A)] For any $\nu \in \left[0, \tfrac{1}{4}\right)$, the operator \refeq{7.7} is invertible, and the elliptic estimate

\be  \|(\ph,a)\|_{H^{1,+}_{\e,\nu}}\leq \frac{C_\nu}{\e^{1/12 + \gamma_\mathcal L}} \| \mathcal L^{h_\e}_{\e,\tau}(\ph,a)\|_{L^{2,+}_{\e,\nu}}\label{inversebound}\ee

\noindent holds uniformly for $\tau \in (-\tau_0, \tau_0)$ where $\gamma_\mathcal L=\tfrac{2}{3}\left(\tfrac{1}{4}-\nu\right)+ \gamma^+ \nu$. 
\item[(B)] The derivative $\del_\tau \mathcal L^{h_\e}_{\e,\tau}$ is uniformly bounded on the spaces (\refeq{7.7}).
\end{enumerate}
 \qed
\end{thm}
Recall that $\gamma^+$ is a fixed suitable choice of small positive number, say $\gamma^+=10^{-6}$. For the weight $\nu^+=\tfrac14-10^{-6}$ is as in Appendix \ref{appendixofnotation}, then $\gamma_\mathcal L<<1$.

\bigskip 

\begin{defn}\label{insidespaces}
For a weight $\nu \in \R$, define the ``inside'' Hilbert spaces by 
\bea
H^{1,+}_{\e,\nu}(Y^+; S_E\oplus (\Omega^0\oplus \Omega^1)(i\R))&:=& \Big\{ (\ph, a) \ \big | \ \|(\ph, a)\|_{H^{1,+}_{\e,\nu}}<\infty \  \ , \ \  \Pi_{\Lambda_{-1}^+}(\ph |_{\del Y^+},a |_{\del Y^+})=0  \ \ , \ \   \ 
 \Pi_{V^+}(\ph,a)=0 \Big\} \\ 
L^{2,+}_{\e,\nu}(Y^+; S_E\oplus (\Omega^0\oplus \Omega^1)(i\R))&:=& \Big\{ (\ph, a) \ \big | \ \|(\ph, a)\|_{L^{2,+}_{\e,\nu}}<\infty\Big\}, 
\eea

\noindent where the projections are those for the subspaces $\Lambda_{-1}^+,V^+$ for which the conclusions of Theorem \ref{Insideinvertibility} apply. These spaces are equipped with the norms from Definition \ref{insidenorms}, and the inner products arising from their polarizations. They are defined over the domain $Y^+=N_\lambda(\mathcal Z_\tau)$, and depend implicitly on $\tau$. When no confusion will arise, the domain $Y^+$ and the vector bundles are omitted from the notation. 
\end{defn}

\begin{rem} Because the desingularized configurations converge to $(\Phi_\tau, A_\tau)$ in $C^\infty_{loc}(\Yminus\mathcal Z_\tau)$, one expects that the linearizations 
$$ \mathcal L^{h_\e}_{\e,\tau} \to \mathcal L_{(\Phi_\tau, A_\tau)}$$

\noindent converge (in a sense we do not attempt to make precise) to the singular linearization from Eq. (\refeq{blockdiagonaldecomp}). Because the latter is semi-Fredholm with infinite-dimensional cokernel, one expects that there is a growing subspace of the codomain on which the elliptic estimates of the family $\mathcal L^{h_\e}$ blow up. Identifying this subspace and projecting away from it -- via the correct choice of the subspace $V$ in Definition \ref{mixedbddef} is crucial for obtaining elliptic estimates as in Theorem \ref{Insideinvertibility}(B) with sufficiently mild powers of $\e^{-1}$, and is the main challenge of \cite{PartI}. The precise definitions of $(\Lambda_{-1}^+, \Lambda_0^+, V^+)$ (which depend on $\e,\tau$) are described in \cite[Sec. 7]{PartI} and are not essential for our purposes here. Additional detail is provided in Appendix \ref{appendixB}. 
\end{rem}

\subsection{Model Solutions} The de-singularized configurations are defined over the ``inside'' neighborhood $Y^+=N_\lambda(\mathcal Z_\tau)$. In this subsection, we extend them to all of $Y$ using the cutoff functions $\chi^\pm$ and perform the first stage of the alternating iteration. 

Given the de-singularized solutions on $Y^+$, we define a spliced configuration on the closed manifold $Y$ as follows. Recall that $\chi^\pm$ are the cutoff functions defined in Appendix \ref{appendixofnotation}. Let 

\be (\Phi^{(0)}_{\e,\tau}, A^{(0)}_{\e,\tau}):= \chi^+\cdot  (\Phi^{h_\e}_{\e,\tau}, A^{h_\e}_{\e,\tau}) + (1-\chi^+) \cdot (\Phi_\tau, A_\tau).\label{approximate1}\ee
 
 \noindent These configurations have an error term that decays exponentially where $r\geq \e^{2/3-\gamma}$, by Lemma \ref{fiducialbounds}, Item (B). Theorem \ref{PartImain} below shows that one may correct for this error term, and define a corrected configuration

\be (\Phi^{(1)}_{\e,\tau}, A^{(1)}_{\e,\tau}):= (\Phi^{(0)}_{\e,\tau}, A^{(0)}_{\e,\tau})+ \chi^+\cdot  (\ph^{(1)}_{\e,\tau}, a^{(1)}_{\e,\tau}). \label{approximate1}\ee

\noindent This is the first stage of the alternating iteration -- the first ``inside'' correction.

\begin{defn} \label{defpreglued} \label{notationhn}
We define the {\bf pre-glued configurations} and the {\bf model solutions} by 
\bea
h_0&:=&   (\Phi^{(0)}_{\e,\tau}, A^{(0)}_{\e,\tau}) \\
h_1&:=&  (\Phi^{(1)}_{\e,\tau}, A^{(1)}_{\e,\tau})
\eea

\noindent respectively, where  $(\Phi^{h_\e}_{\e,\tau}, A^{h_\e}_{\e,\tau}) $ are the desingularized configurations from Definition \ref{desingularized}.
\end{defn}  

\medskip 
The following theorem characterizes the model solutions precisely. 

\begin{thm}\label{PartImain} (\cite{PartI}, Theorem 1.2 ) 
Suppose that $(\mathcal Z_\tau, A_\tau, \Phi_\tau)$ for $\tau \in (-\tau_0, \tau_0)$ are a family of $\Z_2$-harmonic eigenvectors satisfying the hypotheses of Theorem \ref{maina} with eigenvalues $\Lambda_\tau$. Then, for $\e<\e_0$ with the latter sufficiently small, there exist approximate solutions $ (\Phi^{(1)}_{\e,\tau}, A^{(1)}_{\e,\tau})$ smoothly parameterized by $(\e,\tau)$ and constructed as in (\refeq{approximate1}) with the following properties. 

\begin{enumerate}
\item [(A)] 
They satisfy $$\text{SW}\left( \frac{\Phi^{(1)}_{\e,\tau}}{\e}, A^{(1)}_{\e,\tau}\right)=\frac{\chi^- \Lambda_\tau \Phi_\tau}{\e} + e_1+f_1 $$
\noindent where $\text{SW}$ denotes the extended, gauge-fixed Seiberg--Witten equations with respect to $(g_\tau,B_\tau)$, and $e_1,f_1$ are error terms obeying

\begin{itemize}
\item[(1)] $e_1 \in \Gamma(S^\text{Re})$, while $f_1\in \Gamma(S^\text{Im}\oplus (\Omega^0\oplus \Omega^1)(i\R))$. Both have $$\text{supp}(e_1)\ , \ \text{supp}(f_1)\subseteq \text{supp}(d\chi^+).$$ 
\item[(2)] $\|e_1\|_{L^2(Y)}\leq C \e^{-1/24 + \gamma}. $
\item[(3)] $ \|f_1\|_{L^2(Y)} \leq C \e^{M}$ for $M>10$.
\end{itemize} 
\noindent for $\gamma<<1$. Moreover, the derivatives $\del_\tau e_1,\del_\tau f_1$ also satisfy (1)--(3). 
\item[(B)] There is a constant $C$ independent of $\e,\tau$ such that correction terms $ (\ph^{(1)}_{\e,\tau}, a^{(1)}_{\e,\tau}) \in H^{1,+}_{\e,0}(Y^+; S_E \oplus (\Omega^0\oplus \Omega^1)(i\R))$ in Eq. (\refeq{approximate1}) obey
 \be \| (\ph^{(1)}_{\e,\tau}, a^{(1)}_{\e,\tau})\|_{H^{1,+}_{\e,0}} \ + \  \|\del_\tau(\ph^{(1)}_{\e,\tau}, a^{(1)}_{\e,\tau})\|_{H^{1,+}_{\e,0}}\leq C\e^{-1/12-\gamma_\mathcal L}.\label{fordecayofinitial}\ee
 
 \noindent where $H^{1,+}_{\e,0}$ is the $\nu=0$ weight Sobolev norm from Definition \ref{insidenorms}, and $\gamma_\mathcal L$ is as in Theorem \ref{Insideinvertibility}.  
 \noindent 
\item[(C)] The $L^2$-norm satisfies $$\Big\|\frac{\Phi^{(1)}_{\e,\tau}}{\e}\Big\|_{L^2(Y)}= \frac{1}{\e} \ + \ o(1).$$ 

\noindent uniformly in $\e,\tau$. 

\item[(D)] The restriction of the linearization $$\mathcal L^+_{h_1}:=\mathcal L_{(\Phi^{(1)}_{\e,\tau},(A^{(1)}_{\e,\tau})}$$

\noindent to $Y^+$ subject to the mixed boundary and orthogonality conditions on $H^{1,+}_{\e,\nu}(Y^+)$ from Theorem \ref{Insideinvertibility} continues to obey the conclusions of Theorem \ref{Insideinvertibility}. 

\end{enumerate}

\qed 

\end{thm}

\begin{proof}
This theorem is a combination of Theorems 1.2, 7.1, and the error term estimates in Section 4.3 of \cite{PartI}. When the quantity $\frac{\chi^- \Lambda(\tau)\Phi_\tau}{\e}$ is subtracted from the initial error $\text{SW}\left( \e^{-1}{\Phi^{(0)}_{\e,\tau}}, A^{(0)}_{\e,\tau}\right)$, the error for $\tau \neq 0$ obeys the same bounds as that for $\tau=0$ covered in Section 4.3 of \cite{PartI}. Smoothness in $\tau$ and the derivative bound follow from the fact that the operators and mixed APS boundary and orthogonality conditions in Theorem \ref{Insideinvertibility} depend smoothly on $p_\tau=(g_\tau, B_\tau)$ and on $\Phi_\tau$. Item (D) follows from Theorem \ref{Insideinvertibility} and Item (B) via Neumann series. 
\end{proof}

\begin{rem} Note that in Item (A) of Theorem \ref{PartImain}, the components of the configuration are split into the $S^\text{Re}$ component $e_1$ and the $S^\text{Im}\oplus \Omega^0(i\R)\oplus \Omega^1(i\R)$ components $f_1$, which obey different bounds. Throughout the gluing iteration, the $S^\text{Re}$ components of the error term dominate, while the remaining components enjoy exponential or arbitrarily large polynomial decay in $\e$. Keeping track of the bounds on these individually provides stronger bounds on certain quadratic non-linearities later on, which are bilinear pairings between the two subbundles. More general results on the exponential decay properties for the $S^\text{Im}\oplus \Omega^0(i\R)\oplus \Omega^1(i\R)$  components are the subject of \cite{ConcentratingDirac}. 
\end{rem}

\subsection{The Outside Linearization}
\label{section7.3}

Theorem \ref{PartImain} Part (D) shows that the linearized Seiberg--Witten equations at the pre-glued configurations are invertible. This allows us to solve the linearized problem on the ``inside'' region $Y^+$ to make iterative corrections during the gluing procedure, giving a suitable version of Hypothesis \ref{hyp1prime}(A). This subsection shows an equivalent statement on the solvability of the linearized equations in the {\it outside} region $Y^-$, offering a suitable version of Hypothesis \ref{hyp1prime}(B).  

 Recall from Lemma \ref{linearizedequationsdef} that the linearized Seiberg--Witten equations at the limiting $\Z_2$-harmonic spinor take the form 

\be \mathcal L_{(\Phi_\tau, A_\tau)} (\ph_1, \ph_2,  a)  = \begin{pmatrix} \slashed D_{A_\tau} & 0 & 0 \\ 0 & \slashed D_{A_\tau} & \gamma(\_)\frac{\Phi_\tau}{\e} \\ 0 & \frac{\mu(\_, \Phi_\tau)}{\e}. & \bold d \end{pmatrix} \begin{pmatrix}\ph^\text{re} \\ \ph^\text{im}\\ a \end{pmatrix}\label{blockdiagonaldecomp2}.\ee

\noindent where $\ph=(\ph_1, \ph_2)\in S^\text{Re}\oplus S^\text{Im}$, and $\bold d$ is as defined in Lemma \ref{linearizedequations}.  In particular, the real component decouples from the imaginary and form components. The top left block is the operator that was studied in Section \ref{section4}; the bottom block is a copy of the standard (i.e. single spinor) Seiberg--Witten equations, can be reduced to standard elliptic theory by viewing it as a boundary-value problem using an adaptation of the boundary conditions defined in Section \ref{section7.1}.  

We consider the following Sobolev norms in the outside region. 
\begin{defn}
\label{outsidenorms}
 Let $\nu \in \R$ be a weight. The ``outside'' Sobolev norms are defined by 
 \begin{eqnarray*}
\|(\ph^\text{re},  \ph^\text{im},a)\|_{H^{1,-}_{\e,\nu}}&:=& \left( \|\ph^\text{re}\|_{r^{1-\nu}H^1_e(\Yminus \mathcal Z_\tau)}^2 \ + \ \int_{Y^-} \left(| \nabla \ph^\text{im}|^2 + |\nabla a|^2 + \frac{|\ph^\text{im}|^2|\Phi_\tau|^2}{\e^2} + \frac{|a|^2 |\Phi_\tau|^2}{\e^2}\right) R_\e^{2\nu} \ dV \right)^{1/2}\label{H1outnorm}\\
\|(\ph^\text{re},\ph^\text{im},a)\|_{L^{2,-}_{\e,\nu}}&:=& \left( \|\ph^\text{re}\|^2_{r^{-\nu}L^2(\Yminus \mathcal Z_\tau)} \ + \ \int_{Y^-}\left( |\ph^\text{im}|^2 + |a|^2\right) R_\e^{2\nu} \ dV \right)^{1/2}\label{L2outnorm}
\end{eqnarray*}

\noindent where the dependence of $dV$, and $R_\e$ on $\tau$ is suppressed in the notation. Here, $\nabla$ is formed using $A_\tau$, the Levi-Civita connection of $g_\tau$, and $B_\tau$.   \end{defn}

\medskip 

Notice that the norm for the section $\ph^\text{re}\in \Gamma(S^\text{Re})$ is defined by integration over $\Yminus \mathcal Z_\tau$, whereas those for $(\ph^\text{im},a)\in \Gamma(S^\text{Im}\oplus \Omega^0\oplus \Omega^1)$ only integrate over $Y^-$. We note also the relationship of the sign convention for the weight to the convention in Section \ref{section4} (which adopts the conventions standard for edge operators): in our notation $r^{\nu}L^2= L^2_{\e,-\nu}$. Finally, observe that $|\ph^\text{im}||\Phi_\tau|^2 = |\mu(\ph, \Phi_\tau)|^2$ since $\Phi\in S^\text{Re}$ and that $R_\e \sim r$ on $Y^-$; thus this norm is equivalent to the $H^{1,+}_{\e,\nu}$-norm on $Y^+\cap Y^-$ (since $\Phi_\tau- \Phi_{\e,\tau}^{h_\e}$ is exponentially small there).

\medskip 

The following lemma asserts the existence of a mixed APS boundary and orthogonality condition on $Y^-$, in the sense of Definition \ref{mixedbddef} (note this definition applies equally well with $Y^-$ in place of $Y^+$). In this case, the orthogonality condition is trivial. This boundary condition is applied to the operator 

\be \mathcal L^\text{Im}_{(\Phi_\tau, A_\tau)}(\ph^\text{im}, a):= \begin{pmatrix}   \slashed D_{A_\tau} & \gamma(\_)\frac{\Phi_\tau}{\e} \\  \frac{\mu(\_, \Phi_\tau)}{\e}. & \bold d \end{pmatrix} \begin{pmatrix}\ph^\text{im}\\ a \end{pmatrix}, \label{Limdef} \ee

\noindent which constitutes the bottom $2\times 2$ block of (\refeq{blockdiagonaldecomp2}), where $\ph^\text{im}\in \Gamma(S^\text{Im}), a\in \Gamma(\Omega^0\oplus \Omega^1)$. Here, the Lagrangian and co-isotropic subspaces (which coincide in the lemma) are closed subspaces 

$$ \Lambda_0\subseteq L^{1/2,2}(Y^-: S^\text{Im}\oplus (\Omega^0\oplus \Omega^1)(i\R)),$$

\noindent Lagrangian with respect to the (equivalent for $S^\text{Im}$ and $Y^-$) of the symplectic form (\refeq{symplecticdY}).

\begin{prop}\label{outboundaryconditionsdef}
For each $\e \in (0, \e_0)$ and $\tau \in (-\tau_0, \tau_0)$, there exists a choice of mixed APS boundary and orthogonality conditions defined by subspaces $(\Lambda_{0},\Lambda_0, \{0\})$ such that 

\be \mathcal L^\text{Im}_{(\Phi_\tau, A_\tau)}: H_{\Lambda_{0}^-}\lre L^{2,+}_{\e,\nu}(Y^-; S^\text{Im}\oplus (\Omega^0\oplus \Omega^1)(i\R))\label{outsidefredholm}\ee

\smallskip 

\noindent obeys the following, where $H_{\Lambda_0}= \{(\ph, a)  \ | \  \Pi_{\Lambda_{0}}(\ph |_{\del Y^-},a |_{\del Y^-})=0 \ \} \subseteq L^{1,2}(Y^+)$ is the subspace obeying the boundary conditions. 

\begin{enumerate}
\item[(A)]  The operator (\refeq{outsidefredholm}) is $L^{2,+}_{\e,\nu}$ self-adjoint in the sense that, 

\be \br \mathcal L^\text{Im}_{(\Phi_\tau, A_\tau)}(\ph, a) \ , \ (\psi,b) \kt_{L^2}= \br (\ph, a) \ , \   \mathcal L^\text{Im}_{(\Phi_\tau, A_\tau)}(\psi,b) \kt_{L^2} \ +\br K_\del(\ph,a),(\psi,b) \kt_{L^2} 
\nonumber \ee

\noindent for $(\ph,a), (\psi,b)\in C^\infty(Y^-; S^\text{Im}\oplus (\Omega^0\oplus \Omega^1)(i\R))$ obeying the boundary conditions. That is, there are no boundary terms when integrating by parts, up to an error term $K_\del$ supporting on the boundary obeying $\|K_\del(\psi,b)\|_{L^{2}}\leq C\e\|(\psi,b)\|_{H^{1,-}_{\e,\nu}}$ uniformly in $\e,\tau$.

\medskip 

\item[(B)]The operator (\refeq{outsidefredholm}) is Fredholm of Index 0. 
\end{enumerate}
\noindent

 \end{prop}
\begin{proof}See Apprendix \ref{appendixB}. 
\end{proof}

\bigskip 

Using the boundary condition of Proposition \ref{outboundaryconditionsdef}, 
\begin{defn}\label{outsidespaces}For a weight $\nu \in \R$, define the ``outside'' Hilbert spaces by 
\bea
H^{1,-}_{\e,\nu}(Y^-; S_E\oplus (\Omega^0\oplus \Omega^1)(i\R))&:=& \Big\{ (\ph^\text{re},\ph^\text{im}, a) \ \big | \ \|(\ph^\text{re},\ph^\text{im}, a)\|_{H^{1,-}_{\e,\nu}}<\infty \  \ , \ \  \Pi_{\Lambda_{0}^-}(\ph^\text{im} |_{\del Y^-},a |_{\del Y^-})=0  ,  \\  & & \ \ \text{ and } \ \   \br \ph^\text{Re}, \Phi_\tau\kt_{L^2}=0 \Big\} \\
L^{2,-}_{\e,\nu}(Y^-; S_E\oplus (\Omega^0\oplus \Omega^1)(i\R))&:=& \Big\{ (\ph^\text{re},\ph^\text{im}, a) \ \big | \ \|(\ph^\text{re},\ph^\text{im}, a)\|_{L^{2,-}_{\e,\nu}}<\infty\Big\}, 
\eea

\noindent where the projections are those for the subspace $\Lambda_{0}^-$ for which the conclusions of Proposition \ref{outboundaryconditionsdef}. These spaces are equipped with the norms from Definition \ref{insidenorms}, and the inner products arising from their polarizations. When no confusion will arise, the domain $Y^+$ and the vector bundles are omitted from the notation. 

{\it Notice:} the boundary conditions are only applied on the latter two components $(\ph^\text{im},a)$. The 1-dimensional projection is equivalent to requiring $\ph^\text{Re}\in rH^1_\perp$ as in Eq.  (\refeq{rH1perp}).
\end{defn}

\medskip 

The following lemma extends Proposition \ref{outboundaryconditionsdef} to show that the operator (\refeq{outsidefredholm}) is actually invertible, and obeys uniform elliptic estimates. Together with Lemma \ref{mappingpropertiesI}, it completes the linear elliptic analysis of (\refeq{blockdiagonaldecomp2}) required for the gluing iteration.

\begin{lm}
\label{solvingoutsideim}
For $-\tfrac12 < \nu < \tfrac12$, the boundary-value problem \be \mathcal L^\text{Im}_{(\Phi_\tau, A_\tau)}: H^{1, -}_{\e,\nu}(Y^-; S^\text{Im}\oplus (\Omega^0 \oplus \Omega^1)(i\R)) \lre L^{2,-}_{\e,\nu}(Y^-; S^\text{Im}\oplus (\Omega^0 \oplus \Omega^1)(i\R))\label{mathcalLIm}\ee

\noindent is invertible for $\e_0$ sufficiently small. Moreover, the estimate 

$$\|(\ph^\text{im}, a)\|_{H^{1,-}_{\e,\nu}} \leq C \|\mathcal L_{(\Phi_\tau, A_\tau)}^\text{Im} (\ph\text{im},a)\|_{L^{2,-}_{\e,\nu}} $$

\noindent holds uniformly in $\e, \tau$, and $\del_\tau  \mathcal L^\text{Im}_{(\Phi_\tau, A_\tau)}$ is uniformly bounded on the spaces (\refeq{mathcalLIm}). \qed

\end{lm}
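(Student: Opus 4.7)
The plan is to establish Fredholmness from Lemma~\ref{boundaryconditionsdef} and then extract invertibility with a uniform inverse bound from a weighted Weitzenb\"ock estimate in which the zeroth-order coupling $|\Phi_\tau|^2/\e^2$, which is very large on $Y^-$, provides coercivity. First, because $(\Phi_\tau, A_\tau)$ is a smooth configuration on $Y^-$ (as $\mathcal Z_\tau\subset Y^+$) and $Y^-$ is a compact manifold-with-boundary on which $R_\e$ is bounded between positive constants for each fixed $\e$, Lemma~\ref{boundaryconditionsdef} directly implies that \eqref{mathcalLIm} is Fredholm of index $0$ for every $\nu\in(-\tfrac12,\tfrac12)$.

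For injectivity, I would expand $\|\mathcal L^{\text{Im}}_{(\Phi_\tau,A_\tau)}(\ph_2,a)\|^2_{L^{2,-}_{\e,\nu}}$ as a sum of squares and integrate by parts. The diagonal pieces produce $|\slashed D_{A_\tau}\ph_2|^2 + |\bold d a|^2$ together with $(|\Phi_\tau|^2/\e^2)(|\ph_2|^2+|a|^2)$, the latter arising from $|\gamma(a)\Phi_\tau/\e|^2 + |\mu(\ph_2,\Phi_\tau)/\e|^2$ via the pointwise moment-map identity (since $\ph_2\in S^{\text{Im}}$ and $\Phi_\tau\in S^{\text{Re}}$). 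Standard Lichnerowicz and Hodge formulas convert the first two pieces into $|\nabla\ph_2|^2 + |\nabla a|^2$ modulo bounded curvature remainders. The cross terms $2\mathrm{Re}\langle\slashed D\ph_2,\gamma(a)\Phi_\tau\rangle/\e + 2\mathrm{Re}\langle\bold d a,\mu(\ph_2,\Phi_\tau)\rangle/\e$ are treated by integration by parts, using that $\gamma(\,\cdot\,)\Phi_\tau$ and $\mu(\,\cdot\,,\Phi_\tau)$ are mutual adjoints and that $\Pi^-$ is chosen precisely so that $\mathcal L^{\text{Im}}$ is self-adjoint on its domain---so no boundary contributions appear. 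The cross terms then reduce to a bounded zeroth-order operator acting on $(\ph_2,a)$ with coefficient $\nabla\Phi_\tau/\e$.

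The final step is coercivity. By Definition~\ref{regulardef}(iii), on $Y^-$ one has $|\Phi_\tau|^2 \geq c\,r \geq c\,\lambda^- = c\,\e^{2/3-\gamma}$, so $|\Phi_\tau|^2/\e^2 \geq c\,\e^{-4/3-\gamma}$, which blows up as $\e\to 0$. The cross-term coefficient obeys $|\nabla\Phi_\tau|^2 \leq C\,|\Phi_\tau|^2/r$ from the polyhomogeneous expansion of Lemma~\ref{asymptoticexpansion}, and this bound lets Cauchy--Schwarz absorb the cross terms into the main $|\Phi_\tau|^2/\e^2$ contribution with a bounded multiplicative constant. Bounded zeroth-order curvature remainders are dominated once $\e_0$ is small. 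Putting these together gives
\begin{equation*}
\|\mathcal L^{\text{Im}}_{(\Phi_\tau,A_\tau)}(\ph_2,a)\|_{L^{2,-}_{\e,\nu}}\;\geq\; c\,\|(\ph_2,a)\|_{H^{1,-}_{\e,\nu}},
\end{equation*}
which, combined with Fredholm index $0$, yields invertibility and the claimed uniform bound. The uniformity in $\tau$ and the bound on $\del_\tau\mathcal L^{\text{Im}}_{(\Phi_\tau,A_\tau)}$ are consequences of the smooth $\tau$-dependence of $(g_\tau,B_\tau,\mathcal Z_\tau,\Phi_\tau)$ from Theorem~\ref{SpectralCrossing}: on $Y^-$, which stays at distance at least $\lambda^-$ from each $\mathcal Z_\tau$, the coefficients of the operator and their $\tau$-derivatives are smooth and uniformly controlled.

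The main obstacle I anticipate is bookkeeping. In particular one must verify that the derivatives of the weight $R_\e^{2\nu}$ produced when integrating the cross terms by parts do not introduce competing negative contributions at the same order in $\e$; since $R_\e$ is essentially constant on $Y^-$ away from the collar $r\sim\lambda^-$, this should be comfortably controllable, but the powers of $\e$ must be tracked explicitly across the full range $\nu\in(-\tfrac12,\tfrac12)$ rather than only at the specific $\nu^-$ used later in the gluing. Once these estimates are verified, the argument is a standard application of the Weitzenb\"ock method to the one-spinor Seiberg--Witten linearization in the large-coupling regime.
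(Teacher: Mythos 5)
Your proposal follows essentially the same route as the paper's proof: a Weitzenb\"ock/integration-by-parts identity for $\mathcal L^{\text{Im}}$, absorption of the $\e^{-1}\nabla\Phi_\tau$ cross terms into the $\e^{-2}|\Phi_\tau|^2$ weight using $r\geq\e^{2/3-\gamma}$ on $Y^-$, and invertibility from index $0$ plus the resulting coercive estimate. The only caveats are that the vanishing of the boundary terms, which you dispatch by asserting self-adjointness of the domain under $\Pi^-$, is precisely the step the paper flags as non-trivial (because of the $e^{-i\theta}$ factor in the local expansion of $\Phi_\tau$), and your pointwise bound should read $|\nabla\Phi_\tau|^2\leq C|\Phi_\tau|^2/r^2$ rather than $/r$ --- with this correction the absorption is borderline and genuinely uses $r^3\geq\e^{2-3\gamma}$ on $Y^-$.
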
 

\begin{proof} The Weitzenb\"ock formula (\cite[Prop 2.13]{PartI}) shows that 
\be \mathcal L^\text{Im}\mathcal L^\text{Im}\begin{pmatrix}\ph^\text{im} \\ a\end{pmatrix}
=\begin{pmatrix} \slashed D_{A_\tau}\slashed D_{A_\tau}\ph^\text{im} \\  \bold d^\star \bold d a \end{pmatrix} + \frac{1}{\e^2}\begin{pmatrix}\gamma(\mu(\ph^\text{im},\Phi_\tau))\Phi_\tau \\ \mu(\gamma(a)\Phi_\tau, \Phi_\tau))\end{pmatrix} + \frac{1}{\e}\frak B(\ph^\text{im},a)\label{weitzenbock}\ee
\noindent where $\bold d$ is as in  Lemma \ref{linearizedequations} and $\bold d^\star =\bold c$ is its adjoint, and $\frak B(\ph^\text{im} ,a)$ schematically has terms of the form $a\cdot \nabla_{A_\tau}\Phi_\tau$ and $\ph\cdot \nabla_{A_\tau}\Phi_\tau$. Taking the inner product of (\refeq{weitzenbock}) with $(\ph_2,a)$ shows that 
$$\|\mathcal L^\text{Im}(\ph^\text{im} ,a)\|^2_{L^2}=\|(\ph^\text{im} ,a)\|^2_{H^{1,-}_\e}  \ + \  \frac{1}{\e}\br (\ph^\text{im} ,a), \frak B(\ph^\text{im} , a)  \kt \ + \ \text{b.d. terms}. $$

\noindent Proposition  \ref{outboundaryconditionsdef}(A) shows, in fact, that the boundary terms vanish. Using that $\Phi_\tau\sim r^{1/2}$ while $\nabla\Phi_\tau, A_\tau \sim r^{-1/2}$ on $Y^-$, the terms involving $\frak B$ and $A_\tau$ are dominated by the $\e^{-2}|\Phi_\tau|^2$ weight in  (\ref{outsidenorms}) on $Y^-$ where $r\geq \e^{2/3-\gamma}$, and may be absorbed. The additional term $K_\del$ in Part (A) of Lemma \ref{outboundaryconditionsdef} can likewise be absorbed. 
\end{proof}

\subsection{Decay across the Neck Region}
\label{section7.4}

As explained in Section \ref{section2}, the alternating gluing iteration requires the corrections on each region to decay over the neck region, else the iteration will not converge. Here, the neck region is 

$$Y^-\cap Y^+=\Big\{ \e^{2/3-\gamma^-} \leq r \leq \e^{1/2}\Big\}$$

\noindent where $r$ is the radial variable in Fermi coordinates of $g_\tau$ around $\mathcal Z_\tau$. With the analysis of the linearizations in hand (Lemma \ref{mappingpropertiesI}, Theorem \ref{PartImain}, and Lemma \ref{solvingoutsideim}), this subsection establishes the necessary decay results. These results constitute a precise version of Hypothesis \ref{hyp2}. 

 There are two regimes of decay across the neck, depending on the different bundles. For the $S^\text{Im}\oplus (\Omega^0\oplus \Omega^1)$ components, the very positive off-diagonal terms in the bottom block in (\refeq{blockdiagonaldecomp2}) give strong exponential decay results where $r\geq c\e^{2/3}$, generalizing (Item (B) of Lemma \ref{fiducialbounds}). These exponential decay bounds are proved using the concentration techniques established in \cite{ConcentratingDirac}.  For the $S^\text{Re}$-components, there is weaker polynomial decay like the leading asymptotics $r^{1/2}$ of $\Z_2$-harmonic spinors. There are two approaches to proving such estimates, as explained in Section \ref{section2} (cf. (II) in  \refeq{deltadef1}): analysis of the Green's function, and exploitation of weighted spaces. The latter is much simpler and suffices for our purposes.

\subsubsection{Decay on the ``Inside'' Region}. 
Let $g^+ \in L^{2,+}_{\e,\nu}(Y^+)$, and let $(\ph,a)$ be the unique solution of  

\begin{eqnarray} \mathcal L_{h_j}^+(\ph,a) \ &=&g^+ \label{decay1}\end{eqnarray}

\noindent guaranteed by Theorem \ref{PartImain} Item(D), where $h_j=h_0, h_1$ as in Definition \ref{notationhn}. 

This next lemma address the polynomial decay. 

\begin{lm}\label{insidepolynomialdecay}Suppose $(\ph,a)$ is the unique solution of (\refeq{decay1}).  If $\text{supp}(g^+)\subseteq \{r \leq c\e^{2/3-\gamma}\}$, then 
 $$\|d\chi^+.(\ph,a)\|_{L^2} \leq C  \e^{-1/24-\gamma} \|g^+\|_{L^2}, $$
 
 \noindent where $\chi^+$ is the cutoff function equal to $1$ where where $r\leq \e^{1/2}/4$ and vanishing for $r\geq \e^{1/2}/2$, where $\gamma <<1$ is a small number given by a linear combination of $\gamma_\mathcal L, \gamma^\pm$.  
\end{lm}

\begin{proof}
For the duration of the proof, we set $\nu=\nu^+$. By Item (D) of Theorem \ref{PartImain},  the estimate 

\be  \|(\ph,a)\|_{H^{1,+}_{\e,\nu}}\leq \frac{C_\nu}{\e^{1/12 + \gamma_\mathcal L}} \| \mathcal L_{h_j}(\ph,a)\|_{L^{2,+}_{\e,\nu}}\label{inversebound2}\ee

\noindent holds, for $\nu=\nu^+=\tfrac14 -10^{-6}$, where $\gamma_\mathcal L$ is as in Theorem \ref{Insideinvertibility}. Since $g^+$ is supported where $R_\e\leq \e^{2/3-\gamma^+}$, one has 
$$ \|g^+\|_{L^2_{\e,\nu}}=\left(\int_{Y+} |g^+|^2 R^{2\nu}_\e dV_{\tau}\right)^{1/2} \leq C\e^{\nu^+\left(\frac{2}{3}-\gamma^+\right)}\|g\|_{L^2}.$$

\noindent And $\nu^+\left(\frac{2}{3}-\gamma^+\right)=\frac{2}{12}-\gamma'$ where $\gamma'= \frac{2}{3}\gamma^+-\nu^+\gamma^+$. Consequently, (\refeq{inversebound2}) shows 

\bea  
 \|(\ph,a)\|_{H^{1,+}_{\e,\nu}}\leq C\e^{\left(\tfrac{2}{12}-\gamma'\right) -\left(\tfrac{1}{12}+\gamma_\mathcal L\right)}\|g\|_{L^2} =C\e^{\left(\frac{1}{12}-\gamma''\right)}\|g\|_{L^2}
\eea
\noindent where $C=C_{\nu^+}$, and $\gamma''=\gamma' + \gamma_{\mathcal L}$. Then $d\chi\sim R_\e^{-1}$, so $\|d\chi^+(\ph,a)\|_{L^{2,+}_{\e,\nu}}\leq  \|(\ph,a)\|_{H^{1,+}_{\e,\nu}}$. And since $d\chi^+$ is supported where $R_\e\sim \e^{1/2}$, one has

$$\|d\chi^+(\ph,a)\|_{L^2}\leq C \e^{-\frac12\nu^+)}\e^{\left(\frac{1}{12}-\gamma''\right)}\|g\|_{L^2}.$$

\noindent Since $\nu^+=\tfrac14-\gamma^+$, then the power of $\e$ is $-\tfrac{1}{8}-\tfrac{\gamma^+}{2} + \tfrac{1}{12}-\gamma''=-\frac{1}{24}-\gamma'''$, thus the conclusion holds with $\gamma=\gamma'''$. 

\end{proof}

\medskip

This proof demonstrates the line-by-line increase of $\gamma$ by linear combination pertaining to Remark \ref{gammarem}. After this subsection, we will often omit the exact arithmetic of the values of $\gamma$ and simply verify that the value is increase by a (uniform) linear combination of the previous value and the fixed $\gamma^\pm, \gamma_\mathcal L$ at each line.

\begin{rem}
 One could (perhaps justifiably) gripe that Lemma \ref{insidepolynomialdecay} does not, at first glance, constitute a  ``decay'' result because the decay factor diverges as $\e\to 0$. The point, however, is that this result still tempers that growth compared to the factor expected from (\refeq{inversebound}), and does so sufficiently much that when combined with Lemma \ref{polynomialdecayreal} below, the power of $\e$ for a full cycle of the alternating iteration is positive, (cf. Eq. (\refeq{deltapm})). 

\end{rem}

The exponential decay of the $S^\text{Im}\oplus (\Omega^0\oplus \Omega^1)$ components follows from the interpretation of generalized Seiberg--Witten equations a non-linear concentrating Dirac operators with degeneracy, developed extensively in \cite{ConcentratingDirac}. The following lemma specializes these results to the present case; it is proved in Appendix A of \cite{ConcentratingDirac}.

 For the statement of the lemma, let $K_\e \Subset Y^+-\mathcal Z_\tau$ denote a family of compact subsets of the complement of the singular set. Set $r_K:=\text{dist}(K_\e, \mathcal Z_\tau)$. Let $K'_\e$ be a slightly larger family of compact sets so $Y^+-N_{r_K/2}(\mathcal Z_\tau) \subseteq K_\e'$.

\begin{lm}\label{exponentialdecay} Let $(\ph,a)$ be the unique solution to (\refeq{decay1}), and write $(\ph,a)=(\ph_1, \ph_2, a)$ on $Y-\mathcal Z_\tau$. There exist constants $C,c$ such that if $\text{supp}(g)\subseteq (K_\e')^c$, then $S^\text{Im}\oplus \Omega$-components satisfy 

\be \|(\ph^\text{im} , a)\|_{C^m(K_\e)} \leq \frac{C}{\e^{2m+1} r_K^{3/2} } \text{Exp} \left(-\frac{c r_K^{3/2}}{\e}\right) \|(\ph,a)\|_{H^{1,+}_{\e,0}} \ee

\noindent uniformly for $\tau \in (-\tau_0, \tau_0)$. 

\end{lm}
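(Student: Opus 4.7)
The plan is to prove this exponential decay result via an Agmon-type estimate adapted to the two-spinor Seiberg--Witten linearization, followed by elliptic bootstrapping to upgrade the $L^2$-bound to a $C^m$-bound. The fundamental reason the $(\varphi_2,a)$-components decay so much faster than $\varphi_1$ is that they satisfy a coupled system that, after one application of the operator, carries a mass-like term of size $|\Phi_1|^2/\varepsilon^2$, which dominates on $Y^+ \setminus N_{c\varepsilon^{2/3}}(\mathcal Z_\tau)$ where $|\Phi^{h_\varepsilon}|^2 \sim r$. Since the effective mass is $\sim r^{1/2}/\varepsilon$, the Agmon distance from a point at distance $r$ from $\mathcal Z_\tau$ to the support of $g$ is of order $r^{3/2}/\varepsilon$, which is precisely the exponent appearing in the lemma.

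First I would establish the $L^2$-decay on an intermediate set. Enlarge $K_\varepsilon$ slightly to $K_\varepsilon''$ so that $K_\varepsilon \subset K_\varepsilon'' \subset K_\varepsilon'$, and choose an Agmon weight $w = e^{\rho}$ where $\rho(y)$ is a smooth nondecreasing function of $r(y) = \operatorname{dist}(y,\mathcal Z_\tau)$ satisfying $\rho = 0$ outside $K_\varepsilon'$, $\rho(y) \geq c \, r_K^{3/2}/\varepsilon$ on $K_\varepsilon''$, and pointwise $|\nabla \rho|^2 \leq (1-\delta)\, |\Phi_1|^2/\varepsilon^2$ for some small $\delta > 0$. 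Such a $\rho$ exists because $|\Phi^{h_\varepsilon}|^2 \geq c^2 R_\varepsilon \geq c^2 r$ on $Y^+$, making the integral $\int_{r_K/2}^{r_K} |\Phi_1|/\varepsilon\, dr \gtrsim r_K^{3/2}/\varepsilon$ the natural Agmon distance. Pair $\mathcal L^{\mathrm{Im}}\mathcal L^{\mathrm{Im}}(\varphi_2,a)$ with $w^2(\varphi_2,a)$ in $L^2$. By the Weitzenb\"ock identity (\ref{weitzenbock}), the resulting identity is
\begin{equation*}
\bigl\| w\, \mathcal L^{\mathrm{Im}}(\varphi_2,a)\bigr\|_{L^2}^2 \ = \ \bigl\| w\, \nabla(\varphi_2,a)\bigr\|_{L^2}^2 \ + \ \tfrac{1}{\varepsilon^2}\bigl\| w\,|\Phi_1|(\varphi_2,a)\bigr\|_{L^2}^2 \ - \ \bigl\| |\nabla \rho|\, w\, (\varphi_2,a)\bigr\|_{L^2}^2 \ + \ \mathrm{error},
\end{equation*}
where the error collects the commutator terms with $w$, the $\mathfrak B$-terms, and the boundary contributions. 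The pointwise condition $|\nabla\rho|^2 \leq (1-\delta)|\Phi_1|^2/\varepsilon^2$ lets the third term be absorbed into the second, leaving a positive mass-like term. The $\mathfrak B$-errors, which schematically look like $\varepsilon^{-1}(\varphi_2 + a)\cdot \nabla \Phi_1$ with $|\nabla \Phi_1|\lesssim r^{-1/2}$, are dominated by the same mass term on $Y^+ \cap \{r \geq c\varepsilon^{2/3}\}$ after Cauchy--Schwarz. The boundary terms are controlled by the choice of $\Pi^+$ through Lemma \ref{insideboundaryconditionsdef}, or alternatively handled by cutting off $\rho$ near $\partial Y^+$. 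Since $g = \mathcal L(\varphi,a)$ is supported outside $\operatorname{supp}(\rho)$, the left side reduces to $\|g\|_{L^2}^2$, yielding
\begin{equation*}
\bigl\| w\, (\varphi_2,a)\bigr\|_{L^2(K_\varepsilon'')} \ \leq \ C\varepsilon\, \|g\|_{L^2} \ \leq \ C\varepsilon\,\|(\varphi,a)\|_{H^{1,+}_{\varepsilon,0}},
\end{equation*}
and combined with $w \geq e^{c r_K^{3/2}/\varepsilon}$ on $K_\varepsilon''$ this gives the $L^2$-form of the claimed exponential bound.

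Next I would upgrade to $C^m$ by local elliptic regularity. The natural scale inside $K_\varepsilon$ on which $\mathcal L^{\mathrm{Im}}$ looks like a standard elliptic operator with bounded zeroth-order coefficients is the ball of radius $\sim \varepsilon$, since rescaling by $\varepsilon$ at a point where $|\Phi_1|\sim r^{1/2}$ makes the mass term $|\Phi_1|^2/\varepsilon^2$ uniformly bounded on a $\varepsilon$-ball. On such balls in $K_\varepsilon$ (which lie in $K_\varepsilon''$ after shrinking $K_\varepsilon$ slightly), iterated interior Schauder estimates for the rescaled elliptic system give $C^m$-control in terms of the $L^2$-norm on the enclosing ball, at the cost of powers of $\varepsilon^{-1}$. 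Each derivative loses one factor of $\varepsilon^{-1}$ from the rescaling and one additional factor from differentiating the coefficients of $\mathcal L^{\mathrm{Im}}$ (which involve $\Phi_1$ of size $\sim r^{1/2}$ and derivatives bounded by $r^{-1/2}$, the latter absorbed by the $r_K^{-3/2}$-prefactor). The accumulated constant is of order $\varepsilon^{-(2m+1)} r_K^{-3/2}$, matching the prefactor in the statement.

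The main obstacle is handling the error terms that couple the $(\varphi_2,a)$-components to $\varphi_1$ and to the cross terms in $\mathfrak B$ in a way that remains consistent with the Agmon weight. Since the real component $\varphi_1$ only obeys polynomial decay and the Weitzenb\"ock identity is not block-diagonal, one must verify that all such couplings are of lower order with respect to the mass term and do not obstruct absorption. This is exactly the content of Appendix A of \cite{ConcentratingDirac}, where the interpretation of the two-spinor linearization as a concentrating Dirac operator with degeneracy gives a systematic framework for isolating the imaginary/form sector and performing the Agmon estimate only on that sector; after this identification the argument above goes through, and the uniformity in $\tau\in(-\tau_0,\tau_0)$ follows from the smoothness of the family $(\Phi_\tau,A_\tau)$ together with the uniform constants in Theorem \ref{PartImain}.
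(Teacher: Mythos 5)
The paper's own ``proof'' of this lemma is a one-line citation to Corollary A.2 of \cite{ConcentratingDirac}, so there is no detailed argument in the paper to compare against; your proposal is a reconstruction of what that cited result must contain, and the mechanism you identify is the right one. The Agmon weight calibrated to the effective mass $|\Phi_1|/\e \sim r^{1/2}/\e$ explains the exponent $r_K^{3/2}/\e$, and the rescaled interior estimates at scale $\e$ account for the $\e^{-(2m+1)}$ prefactor, so the shape of the estimate is correctly derived. The one point to be honest about is that the genuinely delicate step --- the linearization at $(\Phi_1,A_1)$ is \emph{not} block-diagonal in $S^{\text{Re}}\oplus S^{\text{Im}}\oplus\Omega$ (that splitting is only parallel at the singular limit), so the Agmon estimate on the $(\ph_2,a)$-sector picks up couplings to $\ph_1$, which decays only polynomially --- is acknowledged but not resolved in your sketch; you defer it to the same appendix the paper cites. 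As a standalone proof this is therefore incomplete at exactly the crux, but as an account of the argument consistent with what the paper actually does (namely, cite \cite{ConcentratingDirac}), it is a faithful and correctly structured expansion.
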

\begin{proof} Both statements follow directly from Corollary A.2 of \cite{ConcentratingDirac}, which includes a similar conclusion for solutions of non-linear equations of the form in Theorem \ref{PartImain}.  
\end{proof}

In particular, we have the following corollary.
\begin{cor}  \label{corollaryexpdecay}Retaining the assumptions of Lemma \ref{exponentialdecay}, in the case that 
$$K_\e=\{r\leq 2\e^{2/3-\gamma^-}\} \hspace{2cm} K_\e'=\{r\leq 2\e^{2/3-\gamma^-}\},$$ 

\noindent Then there are constants $c,C$ independent of $\e,\tau$ such that

\be \|(\ph^\text{im} , a)\|_{C^m(K_\e)} \leq \frac{C}{\e^{2m+2+\gamma^-} } \text{Exp} \left(- c \e^{-\gamma^-}\right) \|(\ph,a)\|_{H^{1,+}_{\e,0}}. \ee

\noindent This applies, in particular, to the configurations $(\ph^{(1)}_{\e,\tau},a^{(1)}_{\e,\tau})$ from Item(B) of Theorem \ref{PartImain} which there obey the above bound. 
\end{cor}

\noindent Observe that for sufficiently small $\e$, exponential term overcomes any order of polynomial growth in $\e$.

\subsubsection{Decay on the ``Outside'' Region} We now turn to decay estimates on $Y^-$. It turns out that the exponential decay of Lemma \ref{exponentialdecay} is sufficiently strong to eliminate the need for decay results on the $(\ph^\text{im}, a)$ components, thus we focus on the $S^\text{Re}$-components.

 There is a subtlety here, however. While it is straightforward to show the analogue of Lemma \ref{insidepolynomialdecay} in this region, for error terms supported where $r\sim O(\e^{1/2})$, this is not sufficient. Indeed, solving of the $S^\text{Re}$ components can only occur after using deformations of the singular set to cancel the obstruction. The projection to the obstruction bundle, however, is a highly non-local operator on $Y$, and cancelling the obstruction disrupts the property that the error is supported where $d\chi^+\neq 0$. The following generalization of support is needed to address this.  
\medskip

\begin{defn}\label{effectivesupport}A spinor $g \in L^2(Y-\mathcal Z_\tau; S^\text{Re})$ is said to have { $\e^\beta$-{\bf effective support}} if there is a constant $C$ such that for all $-\nu \in (-\tfrac{1}{2}, 0],$ 

\be \|g\|_{r^{\nu }L^2}\leq C\e^{-\gamma} \e^{-{\nu\beta}}\|g \|_{L^2}. \label{effectivesupportest}\ee
 
 \noindent holds, for $\gamma=10^{-6}$. 
\end{defn}

\noindent  This definition may be interpreted as follows. A configuration has effective support in a region if its norms in different weighted spaces scale {\it as if they were supported in that region}. For example, if a configuration is supported where $r\sim O(\e^{1/2})$, one would expect changing the weight by $R^{\nu}$ would result in a change in norm of $\e^{\nu/2}$.  In particular, effective support generalizes the standard notion of support, but as we will see, the projections away from the obstruction bundle obey effective support bounds despite the fact that the projection delocalizes the true support.

The following lemma provides the relevant decay property for solutions of the Dirac equation in the outside region. In the statement, $\chi^-, \nu^-$ are in Section \ref{appendixofnotation}. Recall that restricted to $rH^1_\perp$ as defined in (\refeq{rH1perp}), the singular Dirac operator obeys uniform elliptic estimates by Lemma \ref{mappingpropertiesI},

\begin{lm} \label{polynomialdecayreal}Suppose that $g\in L^2\cap \text{Range}(\slashed D_{A_\tau}|_{rH^1_\perp})$ has $\e^{1/2}$-effective support, and let $\psi \in rH^1_e(Y-\mathcal Z_\tau; S^\text{Re})$ be the unique $\Phi_\tau$-perpendicular solution to $$\slashed D_{A_\tau}\psi=g.$$ 

\noindent Then \be \|d\chi^-.\psi\|_{L^2} \ + \ \|(1-\chi^-) g\|_{L^2}\leq C  \e^{1/12-\gamma} \|g\|_{L^2}, \label{insidedecayest}\ee

\noindent where $\gamma<<1$ is small (cf. Conventions in Appendix \ref{gammarem}). 
\end{lm}

The proof is almost identical to that of Lemma \ref{insidepolynomialdecay}. 
\begin{proof} For the duration of the proof set,  $\nu = \nu^-=\tfrac{1}{2}-10^{-6}$, and $\chi=\chi^-$, the cutoff function that localizes support to $Y^-$ (recall Appendix \ref{appendixofnotation}). The elliptic estimate (\refeq{semiellipticest}) from Lemma \ref{mappingpropertiesI} (restricted to $rH^1_\perp$), applies to show that 
$$
\|\psi\|_{r^{1+{\nu}}H^1_e} \ \leq  \ C \|g \|_{r^{\nu} L^2} \ \leq  \ C\e^{-\gamma} \e^{-\tfrac{\nu}{2}}\|g \|_{L^2} = C \e^{-\tfrac{1}{4}-\gamma'}\|g \|_{L^2},$$

\noindent where $\gamma'$ is a linear combination of $\nu-\frac12$ and the $\gamma$ from Definition \ref{effectivesupport} on the effective support bounds. 

Next, because $\chi$ is a cutoff function obeying $|d\chi. \psi| \leq \frac{C}{r}|\psi|$, and is supported where $r\sim \e^{2/3
-\gamma^-}$ (recall $\gamma^-=10^{-6}$ is fixed), one has 

\bea \|d\chi.\psi\|_{L^2} \leq  C \left(\int_{d\chi\neq 0}  \frac{|\psi|^2}{r^2} \frac{r^{2\nu}}{r^{2\nu}}\ dV\right)^{\tfrac12}& \leq & C \e^{\nu(\tfrac{2}{3}-\gamma^-)} \|\psi\|_{r^{1+\nu}H^1_e} \\& \leq&  C \e^{\nu(\tfrac{2}{3}-\gamma^-)} \e^{-\tfrac{1}{4}-\gamma'}\|g \|_{L^2} \eea 

\noindent In the first line, the factor of $r^{2\nu}\sim (\e^{2/3}-\gamma^-)^{2\nu}$ in the numerator has been pulled out of the integral, while the factor in the denominator has been used to form the weighted norm. The second line substitutes the assumption of effective support on $g$. Relabeling $\gamma \mapsto \gamma' + \nu\gamma$ and combining power of $\e$ gives the bound on the first term of (\refeq{insidedecayest}). 

For the second term, observe that because $\chi\neq 1$ only where $r\sim \e^{2/3-\gamma^-}$, the same manipulation as above writing $1=r^{2\nu}r^{-2\nu}$ shows that

\bea
\|(1-\chi)g\|_{L^2} &\leq & C \e^{\nu(\tfrac{2}{3}-\gamma^-)}  \|(1-\chi)g\|_{r^{1+\nu}L^2} \leq  C \e^{\nu(\tfrac{2}{3}-\gamma^-)} \|\psi\|_{r^{1+\nu}H^1_e}, 
\eea

\noindent from which point the proof proceeds exactly as in the first term. 
\end{proof}

 \section{Universal Seiberg--Witten Equations}
 
 \label{section8}
 
  This section uses the concentrating local solutions defined in Section \ref{section7} to construct an infinite-dimensional family of model solutions parameterized by deformations of the singular sets $\mathcal Z_\tau$. This family is used to define a universal version of the Seiberg--Witten equations akin to the universal Dirac operator (\refeq{universalDiracdef}). 
\subsection{Hilbert Bundles}
\label{section8.1}

This subsection extends the construction of Hilbert vector bundles in Section \ref{admissiblefamiliessection} to Hilbert bundles for full Seiberg--Witten configurations. Recalling the notation briefly from Notation \ref{underlinenotation}, $\mathcal E_\tau \subseteq L^{2,2}(\mathcal Z_\tau; N\mathcal Z_\tau)$ denotes an open neighborhood of the origin on which the chart $\text{Exp}_\tau$ is defined. The admissible family $\underline {\mathbb F}_\tau$ for the tangential smoothing gauge in Definition \ref{underlineFdef} give rise to to diffeomorphisms $\underline F_\eta$, pullback metrics $\underline g_{\eta,\tau}$ and trivializations $\underline \Upsilon_{\mathbb F_\tau}$ of the bundles of spinors, defined in Notation \ref{underlinenotation}. 

We first extend the trivialization $\underline \Upsilon_{\mathbb F_\tau}$ defined on the spinor bundle $S_E$ to one defined on the bundle $S_E\oplus (\Omega^0\oplus \Omega^1)(i\R)$ hosting the Seiberg--Witten configurations.

\begin{defn}\label{SWtrivializationdef}
Define the {\bf trivializations of Seiberg--Witten configurations} (induced by the admissible family $\underline {\mathbb F}_\tau$ defining the tangential smooth gauge) as the map

 \bea \underline \Upsilon_\eta=\left(\underline \Upsilon_{\mathbb F}, \underline \Upsilon_\Omega \right):  \Gamma(Y;  S_E \oplus (\Omega^0\oplus \Omega^1)) &\to& \Gamma(Y;  S_E \oplus (\Omega^0\oplus \Omega^1)) \\ (\psi,b) &\mapsto& \left(v_\eta^{-1}\circ \psi \ , \  w_\eta^{-1}\circ b\right)\eea
 
 \noindent for fixed $\eta \in \mathcal E_\tau$. That is, it is the map induced on sections by $v_\eta$ as in Definition \ref{trivializationdef}, and $w_\eta$ given by the composition

\begin{center}
\tikzset{node distance=3.7cm, auto}
\begin{tikzpicture}
\node(C){$(\Omega^0\oplus \Omega^1) $};
\node(D)[left of=C]{$(\Omega^0\oplus \Omega^1)$};
\node(E)[left of=D]{$(\Omega^0\oplus \Omega^1)$};
\draw[<-][swap] (C) to node {$\frak T_{\Omega}$} (D);
\draw[<-][swap] (D) to node {$\underline F_\eta$} (E);
\end{tikzpicture}
\end{center}
\noindent where $\underline F_\eta$ is the pullback, and $\frak T_\Omega$ is parallel transport by the Levi-Civita connection on the metric cylinder defined in (\refeq{metriccylinder}). Note that this composition $w_\eta$ is simpler than $v_\eta$ because the bundle of $i\R$-valued forms is canonically associated for different metrics, unlike the spinor bundle. Lemma \ref{SWtrivializations} below extends Lemma \ref{trivializationcompatible} to show that these maps indeed define trivialization of Hilbert vector bundles, justifying the name.
 
\end{defn}

\medskip

We now define families of Hilbert spaces of configuration. These families give versions of the spaces of Definition \ref{insidespaces} and \ref{outsidespaces} shifted so that the weights are centered along the curve defined by a deformation $\xi$. For each $\xi \in \mathcal E_\tau$ and  $\mathcal Z_{\xi,\tau}=\text{Exp}_\tau(\xi)=\underline F_\xi(\mathcal Z_\tau)$,  define  
$$Y^+_{\e, \tau ,\xi}:= \underline F_\xi[N_\lambda(\mathcal Z_\tau)] \hspace{2cm} Y^-_{\e,\tau,\xi}:= Y- \underline F_\xi [N_{\lambda^-}(\mathcal Z_\tau)]$$

\noindent where $\lambda, \lambda^-$ are as in (\refeq{lambdadef}) and Definition \ref{outsidenorms} respectively. For each triple $(\e, \tau, \xi)$ there are weight functions defined analogously to those in Section \ref{section7.1}: 
\begin{eqnarray}
R_{\e,\tau, \xi}&:=& R_{\e} \circ \underline F^{-1}_\xi\label{Rxiweight} \\
\Phi_{\e,\tau,\xi}^{h_\e}&:=& \underline{\Upsilon}^{-1}_{\mathbb F_\tau} (\xi,\Phi^{h_\e}_{\e,\tau}) \label{Phixiweight}
\end{eqnarray}

\noindent where $R_{\e,\tau}$ is the weight in Definition \ref{insidenorms}. Likewise, there are $\xi$-parameterized families of mixed APS boundary and orthogonality conditions defined via the pushforward of those in Definitions \ref{insidespaces} and \ref{outsidespaces}.  More precisely, there are subspaces 
\bea H_{(\e,\tau,\xi)}^+&:=& \left\{(\ph, a) \in L^{1,2}(Y^+_{
\xi,\e,\tau}) \ \ \Big | \ \  \Pi_{\Lambda_{-1}^-}( \underline{\Upsilon}(\ph, a) |_{\del Y^+})=0  \ \ , \ \   \Pi_{V^+}(\underline\Upsilon(\ph,a))=0 \ \right\} \\ 
H_{(\e,\tau,\xi)}^-&:=& \Big\{(\ph^\text{re}, \ph^\text{im}, a)  \ \ \Big | \ \  \ph^\text{re}\in rH^1_e(\Yminus \mathcal Z_{\xi,\tau}) \ , \ (\ph^\text{im}, a) \in L^{1,2}(Y^-_{\e,\tau,\xi})  \ , \ \\ 
& & \hspace{2.5cm} \text{ and }\hspace{.5cm}  \Pi_{\Lambda_{0}^-}( \underline{\Upsilon}(\ph, a) |_{\del Y^-})=0 \Big\}\eea

\noindent where the subspaces $\Lambda_{-1}^+,V^+, \Lambda_0^-$ are as in Definition \ref{insidespaces} and Definition \ref{outsidespaces} respectively. 

\begin{defn}\label{hilbertspaces}
Let $\nu \in \R$ be a weight. For each $(\e,\tau) \in (0, \e_0) \times (-\tau_0, \tau_0)$ and each $\xi\in \mathcal E_\tau$, define Hilbert spaces
\bea
H^{1,\pm}_{\e,\nu}(Y^\pm_{\e,\tau,\xi})&:=& \Big\{  \ \  \ \ \ (\ph, a) \in H^\pm_{\e,\tau,\xi}(Y^\pm)  \ \ \ \big |  \ \ \ \ \ \ \  \|(\ph,a)\|_{H^{1,\pm}_{\e,\nu,\xi}}<\infty \ \   \ \Big\}  \\ 
L^{2,\pm}_{\e,\nu}(Y^\pm_{\e,\tau,\xi})&:=& \Big\{  \ \  \ \ \ (\ph, a) \in L^2(Y^\pm) \ \ \  \ \ \ \ \big |  \ \ \ \ \ \ \  \|(\ph,a)\|_{L^{2,\pm}_{\e,\nu,\xi}}<\infty \ \ \ \ \Big\}
\eea

\noindent where the norms for parameters $(\e,\tau,\xi)$ are defined analogously to Definitions \ref{insidenorms} and \ref{outsidespaces} using the domains indicated and the weights (\refeq{Rxiweight}--\refeq{Phixiweight}), the metric $g_\tau$, and the pullback of the connections $A^{h_\e}_{\e,\tau},B_\tau$ and $A_\tau,B_\tau$  in the two cases $\pm$ respectively (cf. Remark \ref{pullbackconnection} below).

For each $(\e,\tau)$, denote the $\mathcal E_\tau$-parameterized families by 
\begin{eqnarray}
\mathbb H^{1,\pm}_{\e,\nu}(\mathcal E_\tau)&:= &\Big\{(\mathcal Z_{\xi,\tau}, \ph,a)  \ \  \ \hspace{.4cm}  |  \ \ \xi \in \mathcal E_\tau \ , \ (\ph,a) \in H^{1,\pm}_{\e,\nu}(Y^\pm_{\e,\tau,\xi})  \ \   \Big\}\\
\mathbb L^{2,\pm}_{\e,\nu}(\mathcal E_\tau)&:= &\Big\{(\mathcal Z_{\xi,\tau},  \ph,a   )\ \ \ \hspace{.4cm}  |  \ \ \xi \in \mathcal E_\tau \ , \ (\ph,a) \in L^{2,\pm}_{\e,\nu}(Y^\pm_{\e,\tau,\xi})  \ \ \  \Big\}.
\end{eqnarray} 

\noindent Finally, define global spaces by 
\be  \mathbb H^1_{\e,\nu}:= \mathbb H^{1,+}_{\e,\nu}(\mathcal E_\tau)\oplus \mathbb H^{1,-}_{\e,\nu}(\mathcal E_\tau) \hspace{2cm} \mathbb L^2_{\e,\nu}:=\Big \{ (\mathcal Z_{\xi,\tau},\ph, a) \ | \  \xi \in \mathcal E_\tau  \ , \ \|(\ph,a)\|_{L^2_{\e,\nu}(Y)} < \infty \Big \} \label{combinedH1def}\ee

\noindent where the $L^2_{\e,\nu}(Y)$-norm is defined identically to the $L^{2,+}_{\e,\nu}(Y^+)$-norm in Definition \ref{insidenorms}, but integrated over all $Y$ using the global geodesic distance in place of the local radial coordinate $r$. 
\end{defn}

We have the following generalization of Lemma \ref{trivializationcompatible}. Recall from that lemma that $\mathcal E^{s}_\tau:=\mathcal E_\tau \cap L^{s,2}(\mathcal Z_\tau; N\mathcal Z_\tau)$ denotes the subspace of higher-regularity embeddings. 
\begin{lm} \label{SWtrivializations}For $s\geq 5$, the restriction of the induced trivializations $\Upsilon_{\mathbb F}$ to $\mathcal E_\tau^s$ \begin{eqnarray}
\underline \Upsilon:  \mathbb H^{1,\pm}_{\e,\nu}(\mathcal E^s_\tau)&\lre&\mathcal E_\tau \ \times \  H^{1,\pm}_{\e,\nu}(Y^\pm_{\e,\tau,0})\\ 
\underline \Upsilon:  \mathbb L^{2,\pm}_{\e,\nu}(\mathcal E^s_\tau)&\lre&\mathcal E_\tau \ \times \  L^{2,\pm}_{\e,\nu}(Y^\pm_{\e,\tau,0})
\end{eqnarray}
\noindent which thus endow the spaces on the left with the structure of smooth Hilbert vector bundles. The same applies to $\mathbb L^2_{\e,\nu}, \mathbb H^1_{\e,\nu}$. 

\end{lm}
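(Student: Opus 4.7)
The plan is to define the trivialization $\underline\Upsilon$ explicitly as a pullback operation via $\underline F_\xi$, verify that it is a fiberwise Hilbert isomorphism with norm equivalence uniform for $\xi$ in bounded subsets of $\mathcal E_\tau$, and then conclude smoothness of the bundle structure from the smooth dependence of $\underline F_\xi$ on $\xi$. This will closely parallel the proof of Lemma 5.1 of \cite{PartII} (which covers the Dirac-operator side in Lemma \ref{Diractrivializations}), with the additional bookkeeping needed for the $\Omega$-components and the boundary/interior constraints.

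\textbf{Step 1 (Definition of $\underline\Upsilon$).} For each $\xi \in \mathcal E_\tau$, the diffeomorphism $\underline F_\xi:Y\to Y$ from (\ref{diffeosmd}) carries $\mathcal Z_\tau$ to $\mathcal Z_{\xi,\tau}$ and $Y^\pm_{\e,\tau,0}$ to $Y^\pm_{\e,\tau,\xi}$. Pulling back the metric $g_\tau$ gives a family $g_\xi := \underline F_\xi^* g_\tau$, and parallel transport along the ray interpolation on the metric cylinder (as in the proof of Proposition \ref{Diracderivformula} and Theorem \ref{BG}) produces an isomorphism $\frak T^{g_\tau}_{g_\xi}$ of the associated spinor bundles over $Y-\mathcal Z_\tau$. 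This isomorphism is $\R$-linear and intertwines Clifford multiplication, so it preserves the real/imaginary splitting of Lemma \ref{subbundles}. For the $\Omega$-components, pullback by $\underline F_\xi$ acts directly. Define
\[
\underline\Upsilon(\xi,\ph,a) := \bigl(\xi,\ \frak T^{g_\tau}_{g_\xi}\circ \underline F_\xi^*\ph,\ \underline F_\xi^* a\bigr),
\]
and similarly in the $L^2$ and global cases.

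\textbf{Step 2 (Norm equivalence).} By construction in Section \ref{section6.3}, $\underline F_\xi$ satisfies uniform $C^1$ bilipschitz bounds $C^{-1}\leq|d\underline F_\xi|\leq C$ for all $\xi\in\mathcal E_\tau$, where $C$ depends only on $\rho_0$ and the $L^{2,2}\hookrightarrow C^1$ Sobolev embedding on $\mathcal Z_\tau$. The weights (\refeq{Rxiweight})--(\refeq{Phixiweight}) are defined \emph{precisely} so that they transform covariantly under $\underline\Upsilon$: $R_{\e,\tau,\xi}\circ\underline F_\xi = R_\e$ and $\underline\Upsilon^{-1}(\xi,\Phi^{h_\e}_\tau) = \Phi^{h_\e}_{\tau,\xi}$. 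Each term appearing in Definitions \ref{insidenorms} and \ref{outsidenorms} is therefore multiplied by a bounded positive factor under $\underline\Upsilon$, giving
\[
C^{-1}\|(\ph,a)\|_{H^{1,\pm}_{\e,\nu}(Y^\pm_{\e,\tau,\xi})} \ \leq\  \|\underline\Upsilon(\xi,\ph,a)\|_{H^{1,\pm}_{\e,\nu}(Y^\pm_{\e,\tau,0})}\ \leq\ C\|(\ph,a)\|_{H^{1,\pm}_{\e,\nu}(Y^\pm_{\e,\tau,\xi})},
\]
and analogously for the $L^2$ spaces and the global spaces, with $C$ uniform over $\xi \in \mathcal E_\tau$.

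\textbf{Step 3 (Boundary and interior constraints).} Since $\underline F_\xi$ sends $\del Y^\pm_{\e,\tau,0}$ to $\del Y^\pm_{\e,\tau,\xi}$ by a bilipschitz diffeomorphism varying smoothly in $\xi$, the boundary projections $\Pi^\pm$ from Lemmas \ref{insideboundaryconditionsdef} and \ref{boundaryconditionsdef} are transported by $\underline\Upsilon$ to bounded projections on $H^{1,\pm}_{\e,\nu}(Y^\pm_{\e,\tau,0})$. Likewise the finite-dimensional interior subspace $V$ of Lemma \ref{insideboundaryconditionsdef} is carried by $\underline\Upsilon^{-1}$ to a smoothly varying family of finite-dimensional subspaces, so the orthogonality constraints defining the Hilbert subspaces in Definitions \ref{insidespaces} and \ref{outsidespaces} are preserved fiberwise.

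\textbf{Step 4 (Smooth bundle structure).} Smoothness of $\underline\Upsilon$ in $\xi$ follows from smoothness of $\underline F_\xi$ in $\xi$ (which is built into the definition \refeq{diffeosmd} via the continuous-in-$\xi$ choice of Fourier modes) together with the classical fact that parallel transport $\frak T$ depends smoothly on the underlying metric in $C^\infty$-families. Thus $\underline\Upsilon$ realizes the left-hand sides as globally trivialized smooth Hilbert vector bundles over $\mathcal E_\tau$.

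The main technical point is Step 2 applied to the weight $|\Phi^{h_\e}_\tau|$ appearing in (\refeq{H1innorm}) on the tiny scale $r\lesssim\e^{2/3}$, because there $|\Phi^{h_\e}_\tau|\geq c_2 \e^{1/3}$ is only a lower bound that must survive pullback by $\underline F_\xi$ uniformly in $\xi$. This is ensured by the mode-dependent structure of (\refeq{diffeosmd}): on $\{r\leq\e^{2/3}\}$ only Fourier modes $|\ell|\lesssim \e^{-2/3}/R_0$ of $\xi$ can act, keeping the displacement strictly smaller than the scale on which $|\Phi^{h_\e}_\tau|$ varies, so the lower bound is preserved with a uniform constant. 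The remaining terms are routine applications of the chain rule and change-of-variables.
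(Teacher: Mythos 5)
Your proposal is correct and follows essentially the same route as the paper: the trivialization is defined as the composition of pullback by $\underline F_\xi$, the canonical identification of the pulled-back spinor bundle with that of the pullback metric, and the parallel-transport map $\frak T$ on the metric cylinder, with the forms component handled by pullback alone; the norm equivalence and smoothness then follow as you describe. Your closing worry about the lower bound on $|\Phi^{h_\e}_\tau|$ is actually moot, since the weight on the $\xi$-fiber is \emph{defined} as the image of $|\Phi^{h_\e}_\tau|$ under $\underline\Upsilon^{-1}$ in (\ref{Phixiweight}), so it transforms tautologically.
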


\begin{proof} This lemma is almost a definition of the bundle structure --- the only thing to verify is that the pointwise maps $v_\eta, w_\eta$ provide bounded linear isomorphisms between $H^{1,\pm}_{\e,\nu}(Y^\pm_{\e,\tau,\xi})$ and $H^{1,\pm}_{\e,\nu}(Y^\pm_{\e,\tau,0})$ and likewise for the $L^2$-versions. Since these spaces are equivalent to $L^{1,2}, rH^1_e$ and $L^2$ for every fixed $(\tau,\e,\xi)$, this follows exactly as in Lemma \ref{trivializationcompatible} (in fact, this bounded linear isomorphism is an isometry except for the volume form due to our choice of the pullback weights in Eqs. (\ref{Rxiweight})--(\ref{Phixiweight})). 
\end{proof}

\subsection{Concentrating Local Families}
\label{section8.2}
This subsection defines the universal Seiberg--Witten equations as sections of the vector bundles from Section \ref{section8.1}. The equations are viewed as deformation equations around a universal family of model solutions constructed from Definition \ref{notationhn}. 

\medskip 

\begin{defn} \label{universalfamilydef} For $(\e,\tau) \in (0,\e_0)\times (-\tau_0, \tau_0)$ and $\xi \in \mathcal E_\tau$, define the following.
\begin{enumerate}

\item[(A)] The {\bf universal family of concentrating local solutions} by

\be \left(\frac{\Phi^{(1)}_{\e,\tau,\xi}}{\e}, A^{(1)}_{\e,\tau,\xi}\right):= \underline \Upsilon^{-1}\left(\xi, \left(\frac{\Phi^{(1)}_{\e,\tau}}{\e}, A^{(1)}_{\e,\tau}\right))\right)\label{modelfamily}\ee

\noindent i.e. as the pullback by $\underline \Upsilon$ of the constant section at the model solutions $(\Phi^{(1)}_{\e,\tau}, A^{(1)}_{\e,\tau})$ of Theorem \ref{PartImain}. 
\smallskip 

\item[(B)] The universal families of de-singularized configurations and $\Z_2$-eigenvectors by
\be \left(\Phi^{(0)}_{\e,\tau,\xi}, A^{(0)}_{\e,\tau,\xi}\right):=\underline \Upsilon^{-1}\left(\xi, (\Phi^{(0)}_{\e,\tau}, A^{(0)}_{\e,\tau})\right) \hspace{1.5cm}  \left(\Phi^{}_{\tau,\xi}, A^{}_{\tau,\xi}\right):=\underline \Upsilon^{-1}\left(\xi, (\Phi^{}_\tau, A^{}_\tau)\right) \label{desingularizedfamilies}\ee 
using the pre-glued configurations $(\Phi^{(0)}_{\e,\tau}, A^{(0)}_{\e,\tau})$ (defined in Eq. \refeq{approximate1}) and the $\Z_2$-eigenvectors $(\Phi_\tau, A_\tau)$ respectively.  
\end{enumerate} 
\end{defn}

\medskip 

\begin{rem}\label{pullbackconnection}. It is easy to verify, via the definition of $\underline \Upsilon$ in the proof of Lemma \ref{SWtrivializations} that a connection can be pulled back equally well. By construction, the connections in (\refeq{modelfamily}) have curvature that is highly concentrated around the deformed curves $\mathcal Z_{\xi,\tau}$. 

\end{rem}

For each triple $(\e,\tau,\xi)$, there is an accompanying deformation equation of the Seiberg--Witten equations centered at the approximate solution (\refeq{modelfamily}). To define these, we define cutoff functions $\chi^\pm_{\e,\tau,\xi}:=\chi^\pm \circ  \underline F_\xi$ as the pullbacks of the cutoffs $\chi^\pm$ used for the central curves $\xi=0$ defined in Appendix \ref{appendixofnotation}. To simplify notation, the latter two subscripts are omitted when they are clear in context. The deformation equations become:

\be \text{SW}_{\e,\tau,\xi}(\ph^+,a^+, \psi^-,b^-):=  \text{SW}_\tau\left( \left(\frac{\Phi^{(1)}_{ \e,\tau,\xi}}{\e}, A^{(1)}_{\e,\tau,\xi}\right) \  + \   \chi^{+}_\e\cdot (\ph^+,a^+)   \ +  \   \chi^{-}_\e\cdot (\psi^-,b^-)\right) \label{universalSWpre}\ee

\noindent where $(\ph^+,a^+)\in H^{1,+}_{\e,\tau,\xi}(Y^+)$ and  $(\psi^-,b^-)\in H^{1,-}_{\e,\tau,\xi}(Y^-)$ are inside and outside configurations in in the fibers over $\xi\in \mathcal E_\tau$ respectively. Here $\text{SW}_\tau$ denotes the (extended, gauge-fixed) standard Seiberg--Witten equations (i.e. Definition \ref{SWdef} with gauge-fixing), using the parameter $p_\tau=(g_\tau, B_\tau)$. Note that the argument of $\text{SW}_\tau$ on the right side in (\refeq{universalSWpre}) is a {\it globally} $L^{1,2}$ configuration on $Y$ once the local pieces from $Y^\pm$ are pasted in using the cutoff functions. It therefore makes sense to require that this pasted configuration is an $L^{1,2}$-solution of the equations. 

To precise about the gauge-fixing in (\refeq{universalSWpre}),  $\text{SW}_\tau$ includes the gauge-fixing condition 
\be -d^\star \alpha   - i\frac{ \br i \ph^\#,  {\Phi_{\e,\tau,\xi}^{(0)}} \kt}{\e}=0, \label{familiesgauge}\ee
\noindent where $$(\ph^\#,\alpha)=\chi^{+}_\e\cdot (\ph^+,a^+)    +     \chi^{-}_\e\cdot (\psi^-,b^-) + \chi^+(\ph^{(1)}_{\e,\tau,\xi},  a^{(1)}_{\e,\tau,\xi}).$$ \noindent  Here,  $\chi^+(\ph^{(1)}_{\e,\tau,\xi},  a^{(1)}_{\e,\tau,\xi}) = (\Phi^{(1)}_{\e,\tau,\xi}, A^{(1)}_{\e,\tau,\xi})- (\Phi^{(0)}_{\e,\tau,\xi}, A^{(0)}_{\e,\tau,\xi})$ are the analogues of the correction terms from Theorem \ref{PartImain} for each $\xi,\tau$. For $\xi=0$, this is the gauge-fixing condition coincides with the one used in the proof of Theorem \ref{PartImain}. Notice that for all subsequent corrections, this gauge condition remains constant, i.e. it is {\it linear}, rather than updating the spinor $\Phi^{(0)}_{\e,\tau,\xi}$ in the second term as the iteration proceeds (which would constitute another non-linear term). 

 We can combine the $\xi$-parameterized family of deformation equations into a single universal equation. In the following, $p:\mathbb H^{1}_{\e,\nu}\to  \mathcal E_\tau$ denote the projection of the universal vector bundle from (\refeq{combinedH1def}) in Definition \ref{hilbertspaces}.

 \begin{defn}\label{universalSWdef}
For each $(\e,\tau)\in (0, \e_0)\times  (-\tau_0,\tau_0)$, the {\bf universal Seiberg--Witten equations} (resp. eigenvector equation) are the sections 

\begin{center}
\tikzset{node distance=2.0cm, auto}
\begin{tikzpicture}
\node(C)[yshift=-1.7cm]{$\mathbb H^1_{\e,\nu}( {\mathcal E_\tau})$};
\node(D)[above of=C]{$p^*\mathbb L^2_{\e,\nu}({\mathcal E_\tau})$};
\node(D')[xshift=1.4cm, yshift=-.5cm]{$\mathbb {SW}, \overline{\mathbb {SW}}$};
\draw[<-] (C) to node {$$} (D);
\draw[->] (C) edge[bend right=45]node[below] {$$} (D);
\end{tikzpicture}
\end{center}
defined by
\begin{eqnarray}
\mathbb {SW}(\xi, \ph^+,a^+, \psi^-,b^-) \ \ &:=& \text{SW}_{\e,\tau,\xi}(\ph^+, a^+,\psi^-,b^-) \label{universalSWdef1}\\
\overline{\mathbb {SW}}(\xi, \ph^+,a^+, \psi^-,b^-,\mu)&:=& \text{SW}_{\e,\tau,\xi}(\ph^+, a^+,\psi^-,b^-) -\mu \chi^-_\e\frac{\Phi_{\tau,\xi}}{\e}
\label{universalSWdef2}
\end{eqnarray}
\noindent where the latter version upgrades the domain $\mathbb H^1_{\e,\nu}\oplus \underline \R \to \mathcal E_\tau$ to also include a trivial summand whose component is denoted by $\mu \in \underline \R$. The extra term (\refeq{universalSWdef2}) compared to (\refeq{universalSWdef1}) is a cutoff version of the universal family of eigenvectors (defined on the right in \refeq{desingularizedfamilies}).
 
\end{defn}

We remark that, analogously to Lemma \ref{boundedUniversalDirac}, on the vector bundles over the restricted domain $\mathcal E^s_\tau$ for some higher-regularity $s\geq 5$, $\mathbb{SW}, \overline{\mathbb {SW}}$ are smooth sections. This follows from the same considerations as the proof of that lemma, after generalizing the expressions of Theorem \ref{nonlinearBG} to the Seiberg--Witten case. The latter is done in the upcoming Section \ref{section9}.

\begin{rem}
The purpose of universal eigenvector equations is to deal with the 1-dimensional obstruction to the linearized equations coming from the span of $\Phi_\tau$ itself (recall the discussion preceding Theorem \ref{SpectralCrossing}). The cutoff term pairs non-trivially with this obstruction in a bounded way, while still allowing freedom to correct the solution in a small neighborhood of $\mathcal Z_\tau$. Ultimately, the value of $\mu$ for the solution is used to define $\tau$ implicitly as a function of $\e$, giving the 1-parameter family of true solutions. This approach to gluing with such an obstruction is due to T. Walpuski in \cite{WalpuskiG2Gluing}. 
\end{rem}

To solve the Seiberg--Witten equations on $Y$, it suffices to solve the universal version for any fixed smooth parameter $\xi$. 

 \begin{cor}\label{solvinguniversalSW}
 Suppose that $\xi \in \mathcal E_\tau\cap C^\infty(\mathcal Z_\tau;N\mathcal Z_\tau)$ and that $f\in C^\infty(Y)$. Then, $${\mathbb {SW}_\tau}(\xi, \ph^+,a^+, \psi^-,b^-)=f \ \ \ \ \ \Leftrightarrow \ \ \ \ \ \text{SW}_\tau (\Phi, A)=f$$ \noindent where 
$ (\Phi, A)$ is the configuration on the right side of (\refeq{universalSWpre}), and $\text{SW}_\tau$ is the Seiberg--Witten equation using the parameters $(g_\tau, B_\tau)$. In particular, if $f=0$, then $(\Phi, A)$ is smooth solution of the Seiberg--Witten equations.

The equivalent statement holds for the eigenvector equations $\overline{\mathbb{SW}}$. 

 \end{cor}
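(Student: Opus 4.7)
The equivalence is essentially built into Definition \ref{universalSWdef}. Unpacking (\ref{universalSWdef1}) and (\ref{universalSWpre}), the identity
\[
\mathbb{SW}(\xi,\ph,a,\psi,b) \;=\; \text{SW}_\tau\!\left(\!\left(\tfrac{\Phi_1(\e,\tau,\xi)}{\e},A_1(\e,\tau,\xi)\right) + \chi^+(\ph,a) + \chi^-(\psi,b)\right)
\]
holds by construction, so setting $(\Phi,A)$ equal to the configuration on the right yields the ``$\Leftrightarrow$'' claim tautologically. The only nontrivial content is that $(\Phi,A)$ is actually smooth on $Y$ given the a priori regularity from the Hilbert bundles of Definition \ref{hilbertspaces}.

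For the smoothness claim, I would proceed in two steps. First, observe that since $\xi\in C^\infty(\mathcal Z_\tau;N\mathcal Z_\tau)$, the diffeomorphism $\underline F_\xi$ of (\ref{diffeosmd}) is smooth (the Fourier multiplier $\underline{\chi}[\xi]$ is smooth because $\xi$ has rapidly decaying Fourier coefficients and the cut-off family $\{\chi_\ell\}$ is smooth on any compact annulus avoiding $\mathcal Z_\tau$; at $r=0$ the action is simply translation by $\xi$). Consequently, the pulled-back approximate solution $(\Phi_1(\e,\tau,\xi)/\e, A_1(\e,\tau,\xi))$ in (\ref{modelfamily}) and the background data are smooth, and the splicing by $\chi^\pm_\e$ defines a configuration $(\Phi,A)$ on $Y$ that a priori lies in $L^{1,2}_{\text{loc}}(Y-\mathcal Z_{\xi,\tau})$ with matching regularity on $Y$ near $\mathcal Z_{\xi,\tau}$ because only the $\chi^+$-term contributes in the inside region where $\Phi_1/\e$ is the non-vanishing desingularized spinor.

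Second, I would invoke elliptic bootstrapping. By Lemma \ref{linearizedequations}, the gauge-fixed Seiberg--Witten equations form an elliptic (nonlinear) system at any smooth configuration with $\Phi\neq 0$; near $\mathcal Z_{\xi,\tau}$, the bound $|\Phi_1|\geq c\e^{1/3}$ from property (2b) of Section \ref{section7.1} (together with smallness of the $H^{1,+}_{\e,\nu}$-correction relative to this lower bound on any compact subset of the inside region) ensures $\Phi$ does not vanish there, so ellipticity holds uniformly in a neighborhood of $\mathcal Z_{\xi,\tau}$. Standard local elliptic regularity for the first-order quasilinear system $\text{SW}_\tau(\Phi,A)=g$ with smooth right-hand side and smooth coefficients (the metric $g_\tau$ and background $B_\tau$) then upgrades $(\Phi,A)$ from $L^{1,2}_{\text{loc}}$ to $C^\infty$; away from $\mathcal Z_{\xi,\tau}$ the same conclusion follows on $Y^-$ using that $(\psi,b)\in H^{1,-}_{\e,\nu}$ is Sobolev regular there.

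The assertion for the eigenvector equation $\overline{\mathbb{SW}}$ follows by the same argument: the additional term $-\mu\chi^-_\e \Phi_{\tau,\xi}/\e$ appearing in (\ref{universalSWdef2}) is smooth because $\Phi_{\tau,\xi}$ is the pullback by the smooth $\underline F_\xi$ of the smooth eigenvector $\Phi_\tau$ away from $\mathcal Z_{\xi,\tau}$, and it is supported where $\chi^-_\e\neq 0$, i.e.\ away from $\mathcal Z_{\xi,\tau}$. The only potential obstacle would be loss of ellipticity at a zero of $\Phi$, but this is ruled out by the inside lower bound from Theorem \ref{PartImain}; thus no new ingredients are required beyond routine elliptic bootstrapping.
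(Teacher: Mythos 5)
Your proposal is correct and follows essentially the same route as the paper: the equivalence is immediate from Definition \ref{universalSWdef}, membership in the Hilbert spaces gives $(\Phi,A)\in L^{1,2}(Y)$, and elliptic bootstrapping for the gauge-fixed system with smooth background $(g_\tau,B_\tau)$ yields smoothness, with the eigenvector case handled by noting the extra term is smooth since $\chi^-$ vanishes near $\mathcal Z_{\xi,\tau}$. One small remark: the detour through the lower bound $|\Phi_1|\geq c\e^{1/3}$ is unnecessary, since by Lemma \ref{linearizedequations} the extended, gauge-fixed equations at fixed $\e>0$ are elliptic at \emph{any} smooth configuration regardless of whether $\Phi$ vanishes — the degeneracy discussed in the paper only arises in the $\e\to 0$ limit.
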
 \begin{proof} The first statement is simply the Definition \ref{universalSWdef} of the left hand side. By Definition \ref{hilbertspaces}, $(\ph^+,a^+) \in H^{1,+}_{\e,\nu,\xi}(Y+)$ and $(\psi^-,b^-)\in H^{1,-}_{\e,\nu,\xi}(Y^-)$ imply that $(\Phi, A)\in L^{1,2}(Y)$, because the weighted norms are strictly larger. Since the background parameter $(g_\tau, B_\tau)$ is smooth, elliptic bootstrapping shows that $(\Phi, A)$ is also smooth. 
 
In the case of the eigenvector equation, $\chi^-\Phi_\tau\in C^\infty$ because $\Phi_\tau$ is smooth away from the singular set $\mathcal Z_\tau$ where $\chi^-=0$. Since $\xi$ is smooth, $\chi^-\Phi_{\tau,\xi}$ is smooth as well and elliptic bootstrapping applies again.  
 \end{proof}

\medskip

\subsection{Universal Linearization}
\label{section8.3} This section calculates of the derivative of the universal Seiberg--Witten equations with respect to the deformation parameter $\xi$. This follows from applying a version of Bourguignon-Gauduchon's formula (Theorem \ref{nonlinearBG}) to both the two-spinor Dirac operator, and the Hodge-de Rham operator $\bold d$. 

We first update our notation from Definition \ref{notationhn}. Using the bundle notation of Sections \ref{section8.1}--\ref{section8.2}, we now denote 
\begin{eqnarray}
h_0&=&(\mathcal Z_\tau, (-\ph^{(1)}_{\e,\tau},-a^{(1)}_{\e,\tau}), (0,0))  \label{h0seconddef}\\ 
h_1&=&(\mathcal Z_\tau, (0,0), (0,0)) \label{h1seconddef} 
\end{eqnarray}

\noindent where the latter four arguments are the $\mathbb H^{1,\pm}_{\e,\tau,\xi}$ components in order. Notice that, by Definition \ref{universalSWpre}, the universal Seiberg--Witten equations at the central curve $\mathcal Z_\tau$ with vanishing fiber components corresponds exactly to the model solutions (\refeq{approximate1}). Thus this notation is consistent with what was originally called $h_1$ in Notation \ref{notationhn}, and likewise for $h_0$ subtracting the corrections in (\refeq{approximate1}).

We consider the derivative at  $h_1=(\mathcal Z_\tau, 0,0)\in \mathbb H^1_{\e,\nu}$. Provided it is indeed differentiable, and the regularity works as expected, the (fiberwise component of) the linearization would extend to a map

\be \text{d}{\mathbb {SW}}_{h_1}: L^{2,2}(\mathcal Z_\tau ; \mathcal Z_\tau) \ \oplus \  H^{1,+}_{\e,\nu}(Y^+) \  \oplus  \ H^{1,-}_{\e,\nu}(Y^-) \  \lre \  L^2_{\e,\nu}(Y),\label{universalderiv2} \ee

\smallskip

\noindent where we have used the canonical splitting of $T_{h_1}\mathbb H^1_{\e,\nu}$ along the zero-section to write the domain in its horizontal and fiberwise tangent spaces. The next proposition shows that $\mathbb{SW}$ is indeed differentiable at $h_1$, and calculates the derivative in terms of derivatives of various operators. The subsequent Proposition \ref{deformationUSW} provides concrete expressions for these terms.

For the statement of the proposition, recall that 
\be  \underline g_{\xi,\tau} := \underline F_\xi^*g_\tau \hspace{2.5cm}\underline B_{\xi,\tau}=\underline F_\xi^* B_\tau \label{underlinedpullbacks}\ee

\noindent denote the pullbacks of the metric and connection in the parameter $p_\tau=(g_\tau, B_\tau)$. In the subsequent Proposition \ref{deformationUSW}, we will also use the derivatives 
\be \dot {\underline g}_{\xi,\tau} = \d{}{s}\Big |_{s=0} \underline {F}_{s\xi}^* g_\tau \hspace{2cm} \dot {\underline B}_{\xi,\tau} = \d{}{s}\Big |_{s=0} \underline {F}_{s\xi}^* B_\tau.\ee
\noindent along 1-parameter rays. Finally, the propositions and their proofs use the combined parallel transport map 
\be \underline {\frak T}_{\xi}=(\frak T_{g_\tau}^{\underline g_{\xi,\tau}}, \frak T_\Omega), \label{combinedparallel}\ee

\noindent formed as the direct sum of the parallel transport maps defined in Eq. (\refeq{paralleltransport}) using the metric (\refeq{underlinedpullbacks}), and $\frak T_\Omega$ from Definition \ref{SWtrivializationdef}. 

\begin{prop}\label{SWderivabstract} For $s\geq 5$, the universal Seiberg--Witten equations (Definition \ref{universalSWdef}) are differentiable at $h_1=(\mathcal Z_\tau, 0,0)\in \mathbb H^1_{\e,\nu}(\mathcal E^s_\tau)$. Moreover, in the local trivializations of Lemma \ref{SWtrivializations}, the derivative (\refeq{universalderiv2}) is given by 

\be \text{d}{\mathbb {SW}}_{h_1} (\xi, \ph^+, a^+, \psi^-,b^-)= \underline{\mathfrak B}^\text{SW}_{h_1}(\xi)  \ + \  \mathcal L_{h_1}\left( \chi_\e^{+}(\ph^+,a^+) \ + \ \chi_\e^{-}(\psi^-,b^-)\right)\label{6.6tocalculate}\ee

\noindent where 

\begin{itemize}

 \item  $\underline{\mathfrak B}^\text{SW}_{h_1}$ is defined by
   \be  \underline{\mathfrak B}^\text{SW}_{h_1}(\xi) = \d{}{s}\Big |_{s=0}\left(( \underline{\frak T}_{\xi}(s))^{-1}\circ \text{SW}_{p_\xi(s)}  \circ \underline{\frak T}_{\xi}(s)\right) \left(\frac{\Phi^{(1)}_{\e,\tau}}{\e}, A^{(1)}_{\e,\tau}\right) \label{621} \ee
 \noindent where $\underline {\frak T}_\xi(s)$ is the parallel transport map (\refeq{combinedparallel}), and $\text{SW}_{p_{\xi(s)}}$ denotes the Seiberg-Witten equations with parameter $p_\xi(s)= (g_{s\xi}, B_{s\xi})$.   
\item $\mathcal L_{h_1}$ is the linearized Seiberg-Witten equations at $(\Phi^{(1)}_{\e,\tau}, A^{(1)}_{\e,\tau})$ as in Lemma \ref{linearizedequations}. 
\noindent

\end{itemize}
\label{abstractfirstvariation}
\end{prop}

\begin{proof} (See also \cite[Prop. 5.5]{PartII}) For the duration of the proof, we suppress the dependence on $\tau,\e,\nu$ from the notation. Choose a path 
 \bea \gamma: (-s_0, s_0) &\to& \mathbb H^{1}_{}(\mathcal E)
 \\  s &\mapsto &(\mathcal Z_{\zeta(s)}, \frak y (s))\eea
\noindent  such that $\gamma(0)=h_1$, where $\zeta(s)=s\xi + O(s^2)$.  We may denote the combined vertical components in the bundle by $\frak y(s)=\underline{ \Upsilon}_{\zeta(s)}^{-1}\frak q(s)$, where  $\frak q(s)=(\ph^+_s, a^+_s, \psi^-_s, b^-_s)  \in H^{1,+} \oplus H^{1,-}$. Using Definition \ref{universalSWdef} and (\refeq{universalSWpre}), and then substituting Definition \ref{modelfamily}, the derivative (\refeq{6.6tocalculate}) is then given by 
\begin{eqnarray}  \d{}{s}\Big |_{s=0} \underline\Upsilon_{\zeta(s)} \circ {\mathbb {SW} }(\mathcal Z_{\zeta(s)}, \frak p(s))&=& \d{}{s}\Big |_{s=0}   \underline\Upsilon_{\zeta(s)}\circ \text{SW}\left(\left(\frac{\Phi^{(1)}_{\zeta(s)}}{\e}, A^{(1)}_{\zeta(s)}\right)+ \chi^\pm \frak y(s)\right)\nonumber \\ 
&=&  \d{}{s}\Big |_{s=0}  \underline \Upsilon_{\zeta(s)}\circ \text{SW}\circ \underline \Upsilon_{\zeta(s)}^{-1}\left(\left(\frac{\Phi^{(1)}}{\e}, A^{(1)}\right)+ \chi^{+}(\ph^+_s,a^+_s)+ \chi^{-}(\psi^-_s, b^-_s)\right)\nonumber \\ & & \label{tocalculate6.6}
\end{eqnarray}
\noindent where $ \underline\Upsilon$ is used to denote the trivialization of both $\mathbb H^1$ and $\mathbb L^2$.

(\refeq{tocalculate6.6}) appears as the rightmost vertical arrow in the commuting diagram below. The diagram decomposes $\underline{\Upsilon}=\underline{ \frak T}_{\zeta} \circ \mathfrak S \circ \underline {F}_\zeta^*$ as in Definitions \ref{trivializationdef} and \ref{SWtrivializationdef}. 
It also abbreviates $H^1=H^{1,+}\oplus H^{1,-}$,  $\Omega =( \Omega^0\oplus \Omega^1)(i\R)$, and $p_\zeta=(g_{\zeta(s)}, B_{\zeta(s)})$ and uses $S_E^h$ to denote the spinor bundle formed using the metric $h$.

\begin{center}
\tikzset{node distance=5.8cm, auto}
\begin{tikzpicture}
\node(A){$H^{1}_{}( S_E^{g_\zeta}\oplus \Omega)$};
\node(B)[above of=A,yshift=-3.5cm]{$L^2( S_E^{g_\zeta}\oplus \Omega)$};
\node(C)[right of=A]{$H^{1}(S_E^{g}\oplus \Omega)$};
\node(D)[right of=B]{$L^2(S_E^{g}\oplus \Omega)$};
\node(C')[left of=A]{$H^{1}_{}(S_E^{g}\oplus \Omega)$};
\node(D')[above of=C',yshift=-3.5cm]{$L^2(S_E^{g}\oplus \Omega)$};

\node(E')[below of=A,yshift=4.8cm]{$\begin{pmatrix}\text{varying }\underline  g_\zeta\\ \text{fixed }\mathcal Z\end{pmatrix}$};
\node(F')[left of=E']{$\begin{pmatrix}\text{varying }\mathcal Z_{\zeta}\\ \text{fixed }g\end{pmatrix}$};
\node(F')[right of=E']{$\begin{pmatrix}\text{fixed } S^{g}\\ \text{fixed }\mathcal Z\end{pmatrix}$};
\draw[->] (A) to node {$\text{SW}_{p_\zeta}$} (B);
\draw[<-] (A) to node {$(\underline{\frak T}_{\zeta})^{-1}$} (C);
\draw[<-] [swap](A) to node {$\frak S_{\zeta}^{}\circ \underline { F}_\zeta^*$} (C');
\draw[->] (C') to node {$\mathbb{SW} $} (D');
\draw[->] (D') to node {$\frak S_{\zeta}^{}\circ \underline {F}_\zeta^*$} (B);
\draw[<-] (B) to node {$(\underline{\frak T}_{\zeta})^{-1}$} (D);
\draw[->] [swap](C) to node {$\Upsilon_\zeta \text{SW}_\tau \Upsilon_\zeta^{-1} $} (D);
\end{tikzpicture}\end{center}

Expressing (\refeq{tocalculate6.6}) as a composition using the vertical middle arrow in the diagram, and writing the Seiberg-Witten equations near a configuration $(\Phi,A)$ as \be \text{SW}\left((\Phi,A)+(\ph,a)\right)= \text{SW}(\Phi,A)+\mathcal L_{(\Phi,A)}(\ph,a)+ Q(\ph,a),\label{deformationeq}\ee
the derivative is given by
\begin{eqnarray}
&=& \d{}{s}\Big |_{s=0} \Upsilon_{\zeta(s)}\circ \text{SW}\circ \Upsilon_{\zeta(s)}^{-1}\left(\left(\frac{\Phi^{(1)}}{\e}, A^{(1)}\right)+ \chi^{+}(\ph^+_s, a^+_s) + \chi^{-}(\psi^-_s,b^-_s)\right)\nonumber  \\ &=&\d{}{s}\Big |_{s=0} (\underline{\frak T}_{\zeta}(s))^{-1}\circ \text{SW}_{p_\zeta(s)}\circ (\underline{\frak T}_{\zeta}(s))\left(\left(\frac{\Phi^{(1)}}{\e}, A^{(1)}\right)+ \chi^{+}(\ph^+_s, a^+_s) + \chi^{-}(\psi^-_s,b^-_s)\right) \nonumber \\ 
&=& \d{}{s}\Big |_{s=0} (\underline{\frak T}_{\zeta}(s))^{-1}\circ \text{SW}_{p_\zeta(s)}\circ (\underline{\frak T}_{\zeta}(s))\left(\frac{\Phi^{(1)}}{\e}, A^{(1)}\right) \label{617} \\ 
& &+ \  \d{}{s}\Big |_{s=0}   (\underline{\frak T}_{\zeta}(s))^{-1}\circ (\mathcal L^{p_\zeta(s)}_{h_1(s)}+Q^{p_{\zeta}(s)})\circ (\underline{\frak T}_{\zeta}(s))\left( \chi^{+}(\ph^+_s, a^+_s) + \chi^{-}(\psi^-_s,b^-_s)\right) \label{618},
\end{eqnarray}

\noindent where the last equality is an instance of (\refeq{deformationeq}). Here, $\mathcal L_{h_1(s)}^{p_{\zeta(s)}}$ is the linearization of the Seiberg--Witten equations at $h_1(s)=\underline {\frak T}_\zeta(h_1)$ using the parameter $p_{\zeta(s)}$. 
 
Since $\zeta(s)=s\xi + O(s^2)$, (\refeq{617}) is by definition $\underline{\mathfrak B}_{h_1}(\xi)$ after dropping $O(s^2)$ terms. Differentiating (\refeq{618}) using the product rule, all terms vanish except those differentiating $(\ph^+_s, a^+_s,\psi^-_s,b^-_s)$ since $\frak y(0)=0$.  Because $\underline{\frak T}_{\zeta}(0)=\text{Id}$ and $Q$ is quadratic, what remains is simply $\mathcal L_{h_1}^{p_0}(\dot \ph, \dot a, \dot \psi, \dot b)$, giving the second bullet point.  

Finally, the next proposition, together with the arguments of Lemma (\refeq{boundedUniversalDirac}) show that this linearization extends to a bounded map into $L^2$ for $\xi \in \mathcal E^s_\tau$ for $s\geq 5$. 
\end{proof}

\medskip 

The next proposition gives a concrete formula for the term $\frak B^\text{SW}_{h_1}(\xi)$ using Theorem \ref{nonlinearBG}. 

\begin{prop}\label{deformationUSW} The term $\underline{\frak B}^\text{SW}_{h_1}(\xi)$ in (\refeq{6.6tocalculate}) is given by

\be  \underline{\mathfrak B}^\text{SW}_{h_1}(\xi)=\begin{pmatrix}\underline{ \mathcal B}_{\Phi^{(1)}}(\xi) \\ \underline{\mathfrak b}_{A^{(1)}}(\xi)+ \underline  \mu_{\Phi^{(1)}}(\xi)\end{pmatrix}\label{SWdeformationcomponents}\ee

\noindent where, abbreviating $\underline g_\xi=\underline g_{\xi,\tau}$, and $\dot {\underline g}_\xi=\d{}{s}\big |_{s=0} \underline g_{s\xi}$,

\begin{enumerate}
\item[(A)] $\underline{\mathcal B}_{\Phi^{(1)}}(\xi)$ is the metric variation of the Dirac operator from Theorem \ref{nonlinearBG}, 
 \be \underline{ \mathcal B}_{\Phi^{(1)}}( \xi)  =\left(-\frac{1}{2}\sum_{ij} \dot g_{\xi}(e_i,e_j) e^i .\nabla_j + \frac{1}{2} d \text{Tr}_{g_\tau}(\dot g_{\xi}).  +\frac{1}{2} \text{div}_{g_\tau}(\dot g_{\xi}). + \mathcal R(B_\tau, \xi). \right)\frac{\Phi^{(1)}_{\e,\tau} }{\e}\label{BPhi1eta}\ee
 \noindent where $\Phi^{(1)}=\Phi^{(1)}_{\e,\tau}$ are the model solutions as in (\refeq{modelfamily}), $.$ denotes the Clifford multiplication of $g_\tau$, and $\nabla_j$ is the covariant derivative on $S_E$ formed using the spin connection of $g_\tau$, $B_\tau$, and the model solution connection  $A^{(1)}_{\e,\tau}$.  \medskip

 \item[(B)] $\underline{\mathfrak b}_{A^{(1)}}(\xi)$ is the metric variation of the de-Rham operator $\bold d$ given by  
\be  {\bf \underline {\mathfrak  {b}} }_{A^{(1)}}(\xi)=  \left(-\frac{1}{2}\sum_{ij} \dot g_{\xi}(e_i,e_j)  \bold{c}(e^i)  \nabla^{\text{LC}}_j  + \frac{1}{2} \bold{c}( d \text{Tr}_{g_\tau}(\dot g_{\xi})) +\frac{1}{2} \bold{c}(\text{div}_{g_\tau}(\dot g_{\xi}))\label{deformingboldd}\right) A^{(1)}_{\e,\tau} \ee
\noindent where  $A^{(1)}_{\e,\tau}$ is the connection form of the model solutions (\refeq{modelfamily}) in the trivialization of Lemma \ref{Fermitrivialization}, $\bold c$ is the symbol of $\bold d$, and $\nabla^\text{LC}$ is the Levi-Civitas connection of $g_\tau$. 

\item[(C)] $ \underline \mu_{\Phi^{(1)}}(\xi)$ is the metric variation of the moment map given by  
 
 \be \frac{1}{2}\underline \mu_{\Phi^{(1)}}(\xi)= \left(  0 \ , \  -\frac{1}{2}  \sum_{jk} \  \frac{\br i \dot {\underline g}_\xi(e_j, e_k)e^j. \Phi^{(1)}_{\e,\tau} \ ,  \  \Phi^{(1)}_{\e,\tau}\kt}{\e^2}i e^k\right)\label{underlinemu}\ee
 
\noindent where $.$ and $\Phi^{(1)}_{\e,\tau}$ are as in Item (1). Here $j,k=1,2,3$ are indices and $i=\sqrt{-1}$. The zero is the vanishing $S_E$-component for this term.  
\end{enumerate}
\end{prop}

\begin{proof} (1) The metric variation formula of Bourguignon-Gauduchon (Theorem \ref{nonlinearBG}) applies equally well to twisted Dirac operators, provided the connection on the twisting bundle remains fixed.  The $U(1)$-connection in (\refeq{621}) (i.e. in the middle arrow of the diagram in the proof of Proposition \ref{SWderivabstract}) is the fixed connection $A^{(1)}_{\e,\tau}$ by Definition \ref{modelfamily} (since for $\xi\neq 0$ these connection are defined as the push-forward of this). The connection on $E$ differs from the fixed connection $B_\tau$ by a zeroth order (in both $\xi$ and $\Phi^{(1)}_{\e,\tau}$) term
$$B_\tau - \dot { \underline{B}}_{\xi,\tau}:= \mathcal R(B_\tau,\xi).$$ 
\noindent In fact, a quick calculation shows $\mathcal R(B_\tau, \xi)=\iota_{\xi} F_{B_\tau} $ is the contraction of the curvature with $\d{}{s}\Big |_{s=0} \underline {F}_{s\xi}$.  

(2) Let $L$ be the complex line bundle in Definition \ref{trivializationdef}. Let $A_\circ$ denote a smooth connection on $L$ that extends the product connection in the trivialization of Lemma \ref{Fermitrivialization}. Then  

$$\star_\xi F_{A^{(1)}}=\star_\xi F_{A_\circ} + \star_\xi d (A_\circ - A^{(1)}_{\e,\tau})$$

\noindent where $\star_\xi$ is the Hodge star of $\underline g_{\xi}$. Since $\underline g_{\xi}=g_\tau$ outside the neighborhood $N_{r_0}(\mathcal Z_\tau)$ because $\underline {F}_\xi=\text{Id}$ in that region, the derivative of the first term $\star_\xi F_{A_\circ}$ is zero in this region. On the other hand, $F_{A_\circ}=0$ inside $N_{r_0}(\mathcal Z_\tau)$, so this term vanishes there as well.  Consequently, the variation of the curvature (when supplemented with the $\Omega^0(i\R)$ component and gauge-fixing) reduces to the metric variation of $\bold d$. The variation of this Dirac-type operator follows equally well from (Theorem \ref{nonlinearBG}), with the additional simplification that the form bundle does not depend on the metric. 

(3) Let $\frak a_\xi$ be as in Theorem \ref{nonlinearBG} such that $\underline g_\xi=g_\tau( \frak a_\xi X,Y)$, then $\widetilde e_j=\frak a_\xi ^{-1/2}e_j$ is an orthonormal frame for $\underline g_{\xi}$ where $\{e_j\}$ is one of $g_\tau$. Expanding  the square root in Taylor series in the frame of $g_\tau$ and differentiating yields $\dot a_\xi(s)=-\tfrac{1}{2}\dot {\underline g}_{\xi,\tau}$, just as in the symbol term of (\refeq{nonlinearBG}). The variation formula then follows from the definition of $\mu$ in Eq. (\refeq{momentmap}). 
\end{proof}

\subsection{Non-Linear Terms}
\label{section8.4}

This section characterizes the non-linear terms in the universal Seiberg-Witten equations. The equations have quadratic non-linearities in fiber directions of $H^{1}_{\e,\nu}$---these simply being the non-linearities of the original Seiberg-Witten equations, but are quasi-linear in the deformation parameter $\xi$. There are also mixed terms quasi-linear in $\xi$ and linear or quadratic in the fiber directions. 

The universal Seiberg-Witten equations at a configuration $h\in  \mathbb H^1_\e \oplus \underline \R$ may be written 
\begin{eqnarray}
\mathbb {SW}\left( h \right)&=&\text{SW}\left(\frac{\Phi^{(1)}_{\e,\tau}}{\e}, A^{(1)}_{\e,\tau}\right)  \ + \  \text{d}{ \mathbb{SW}}_{h_1} (h)     \ +  \ \mathbb Q_{h_1}(h)\label{boldSWnonlinear}
\end{eqnarray}

\noindent where $\mathbb Q_{h_1}$ consists of the non-linear terms. The proposition uses the shorthand $h=(\xi,\ph,a)$, where $(\ph,a)=\chi^+(\ph^+,a^+)+\chi^-(\psi^-,b^-)$
for $(\ph^+, a^+,\psi,b)\in H^{1,+}_\e \oplus H^{1,-}_\e$ to simplify notation.

\begin{prop}
\label{nonlinearterms}
The non-linear term has the form 

\be
\mathbb Q_{h_1}(h) = Q_{\text{SW}}(\ph,a)  \ + \ Q_{\Phi^{(1)}} (\xi,\ph)  \ + \  Q_{A^{(1)}}(\xi, a)\ + \ Q_{a.\ph}(\xi, \ph,a) \ + \  Q_\mu(\xi, \ph,\ph) \label{nonlinearparade}
\ee

\noindent where 
\begin{enumerate}
\item $Q_{\text{SW}}$ is the standard non-linearity of the Seiberg--Witten equations given by $$Q_\text{SW}(a,\ph):=(a.\ph, \mu(\ph,\ph))$$ using Clifford multiplication with respect to $g_\tau$.  
\medskip 

\item $Q_{\Phi^{(1)}}$ is the non-linear portion of the metric variation of the Dirac operator $\slashed D_{A^{(1)}}$ on $S_E$ as in Lemma \ref{universalDnonlinear}. That is, 
\be Q_{\Phi^{(1)}}(\xi,\ph)=\underline{ \mathcal B}_\ph(\xi) \ \ + \ \ \mathfrak m_\Phi(\xi, \xi)   \ \ + \ \  \frak m_\ph (\xi,\xi)  \ \ + \ \ F_{\Phi + \ph}(\xi)  \ee

 \noindent taking $\Phi=\e^{-1}\Phi^{(1)}_{\e,\tau}$ and $\ph=\psi$ in the notation of that lemma.
 \medskip 
 
\item $Q_{A^{(1)}}$ is the non-linear portion of the metric variation of the de-Rham operator $\bold d$ exactly as in Item (2) above, but now taking $\Phi=A^{(1)}$ and $\psi=a$ and appropriately substituting ${\bf {cl}}$ for Clifford multiplication. 

\medskip 

\item $Q_{a.\ph}$ is the non-linearity arising from the metric variation on the first component of $Q_{\text{SW}}$, which has the form $$Q_{a.\ph}=F(\underline \chi'[\xi]) \cdot M_{a.\ph}(\xi)   $$
\noindent where $M_{a.\ph}(\xi)$ is a term of Type B in the sense of \eqref{typeB} taking $\psi=a.\ph$ with weight $w_B=1$, and $F$ is a $C^\infty(Y)$-linear combination of  $1,\underline{\chi}[\xi'],\underline{\del_a\chi}[\xi],$ and terms vanishing at least quadratically in these these. 

\medskip 

\item $Q_{\mu}$ is the non-linearity arising from the metric variation of the moment map. Schematically, it has the form 

$$-\frac{1}{2}\frac{\br  \dot  {\underline g}_\xi(e^j, e^k)ie^j (\Phi^{(1)}_{\e,\tau}+ \e \ph), \e\ph \kt}{\e^2} i e^k + \frac{\br q_{jk}(\xi)ie^j (\Phi^{(1)}_{\e,\tau}+ \e \ph), \Phi^{(1)}_{\e,\tau}+\e\ph \kt}{\e^2} i e^k  $$

\noindent where $q_{jk}$ vanishes at least quadratically in $\underline{\chi}[\xi'], \underline{\del_a\chi}[\xi]$.

\end{enumerate}
\end{prop}

\begin{proof}
(1) is immediate. (2)--(3) follow precisely as in Lemma \ref{universalDnonlinear} with the appropriate substitutions. (4)--(5) follow from using the expression for the pullback metric  (\refeq{pullbackmetric}) derived in the proof of Lemma \ref{universalDnonlinear}, and noting that Clifford multiplication and the $\mu$ involve only algebraic combinations of the components of the pullback metric used to form the tensors $\frak a, a$ preceding Theorem \ref{nonlinearBG}.

\end{proof}

 \section{Relating Deformation Operators}
 \label{section9}
 Section \ref{section6}, specifically Theorem \ref{deformationsmaina}, showed that the linearized operator $\text{d}\slashed{\mathbb D}$ including deformations of the singular set can sufficiently cancel the obstructions in the case of $\Z_2$-harmonic spinors (i.e. in the $\e=0$ setting). The goal of this section is to show that the same holds for the linearized operator $\text{d}{\mathbb{SW}}$ in the Seiberg--Witten setting. Specifically it is shown that, on the ``outside'' region $Y^-$, $\text{d}\mathbb{SW}$ (as given by Propositions \ref{SWderivabstract} and \ref{deformationUSW}) is a ``small'' perturbation of $\text{d}\slashed{\mathbb D}$ (defined in Theorem \ref{nonlinearBG} and Eq. \refeq{blockdecomp}), where ``small'' is given a more precise meaning in Definition  \ref{gluingpermissible} below.

 First, there is the obvious mismatch in these operators coming from scaling: the spinor in $\text{d}\mathbb{SW}$ is scaled by $\e^{-1}$, while the spinor in $\text{d}\slashed{\mathbb D}$ is the {\it normalized} eigenvector $\Phi_\tau$. To account for this, we introduce the following inverse normalization for the deformation. For $\eta \in L^{2,2}(\mathcal Z_\tau; N\mathcal Z_\tau)$, define $\xi$ by 
\be \boxed{\xi(t):=\e \eta(t).} \label{etaxi}\ee  
 
\noindent  With this normalization, we split the operator into the two regions $Y^\pm$ using the indicator functions $\bold 1^\pm$ as defined in the Appendix of Gluing Parameters in Section \ref{section2}. Thus, for $h_1=(\mathcal Z_\tau, 0,0)$ as in (\refeq{h1seconddef}) (cf. Notation \ref{notationhn}),  we write 
 
 \begin{eqnarray} \text{d}\mathbb {SW}_{h_1}(\xi , 0,0) &=&  \text{d}\mathbb {SW}_{h_1}(\xi , 0,0) \bold 1^+   \  + \   \text{d}\mathbb {SW}_{h_1}(\xi , 0,0) \bold 1^- \label{Xidef} \\
 &=: &  \left[ \text{d}\slashed{\mathbb D}_{h_\circ} (\eta, 0,0)\bold 1^+ \ + \  \Xi^+(\eta) \right] \ + \ \left[ \text{d} \slashed{\mathbb D}_{h_\circ} (\eta, 0,0)\bold 1^- \ + \  \Xi^-(\eta) \right], \label{Xipmdef}\end{eqnarray}
 
 \noindent where the second line is taken to be the definition of $\Xi^\pm$, where these are supported in the same regions as $\bold 1^\pm$. Here, $\text{d}_{h_\circ} \slashed{\mathbb D}$  denotes the linearization at $h_\circ=(\mathcal Z_\tau, A_\tau, \Phi_\tau)$. Note this is not a an element of the bundle $\mathbb H^1_{\e,\nu}$, but the configuration still makes sense as a non-smooth section of $S_E\to \Yminus \mathcal Z_\tau$ for a given $\eta$ (it {\it is} smooth on the support of $\bold 1^-$, just not at $\mathcal Z_\tau$). Observe that in progressing from (\refeq{Xidef}) to (\refeq{Xipmdef}) we have switched normalizations to replace $\xi$ by $\eta$ as defined above. 
 
  The goal of the present section, more specifically, is to provide precise bounds on $\Xi^\pm$ and their non-linear analogues. 

 \subsection{Two Deformation Operators}
 \label{section9.1}
 The operators $\Xi^\pm$ consist of a parade of terms formed from various functions of $\underline \chi[\eta']$ and its derivatives, given by Proposition \ref{deformationUSW}. Each such term will be bounded in terms of the $L^{s,2}$ norm of $\eta$ for some $s$, and certain powers of $\e^\beta$. We will ultimately show that each of these terms, for the various values of $s,\beta$ that occur obeys the following criteria. 
 
 \begin{defn} \label{gluingpermissible} An $\e$-parameterized family of linear operators $ M_\e: L^{s,2}(\mathcal Z_\tau; N\mathcal Z_\tau) \to L^2(N_{r_0}(\mathcal Z_0) ; S_E \oplus (\Omega^0\oplus \Omega^1))$ is said to be $\bold L_0$-{\bf permissible} for $L_0\in \N$ if it obeys a bound 
 
 $$\|M_\e(\eta)\|_{L^2(Y)} \leq C \e^{\beta} \|\eta\|_{s} $$
 
 \noindent with $\beta \geq \left(s-\tfrac12\right) \log(L_0)/  \log(\e^{-1}).$ 
 
 The family $M_\e$ is said to be {\bf gluing permissible} if it is $L_0$-permissible for $L_0=\e^{-1/2-2\gamma^+}$, and $\e^{1/3}M_\e$ is $L_0$-permissible for $L_0=\e^{-2/3}$.   
 
 \end{defn}
 
 In the above and what follows, the $L^{s,2}$ norm of $\eta$ is denoted simply by $\|\eta\|_s$. A quick unraveling of the definition shows that if $\eta$ is supported in Fourier modes with $|\ell|\leq L_0$, then being $L_0$ permissible means that 
 
 $$ C \e^{\beta} \|\eta\|_{L^{s,2}(\mathcal Z_\tau)} \leq C\e^{\beta} (L_0)^{s-1/2}\|\eta\|_{1/2} \leq C \e^{\beta -\left(s-\tfrac12\right) \log(L_0)/ \log(\e^{-1}) }\|\eta\|_{1/2}\leq C \|\eta\|_{1/2},$$
 
 \noindent uniformly in $\e$. In particular, if $L_0=\e^{-\alpha}$, then the constraint is $\beta \geq (s-\tfrac12)\alpha$. The criterion for being gluing permissible are chosen in hindsight, precisely because it is  ultimately this condition that is needed for the alternating iteration to converge.

Because of the exponential decay in Lemma \ref{exponentialdecay}, the (re-normalized) approximate solution $(\Phi_1, A_1)$ is a small perturbation of the original $\Z_2$-eigenvector $(\Phi_\tau, A_\tau)$ outside the invariant scale of $r=\e^{2/3}$. This leads to the following constraint on $\Xi^-$. 
\begin{lm}\label{relatingvariationsoutside}
Let  $\eta(t)=\e^{-1}\xi(t) \in C^\infty(\mathcal Z_\tau; N\mathcal Z_\tau)$ be a linearized deformation. There exists constants $C$ and $\gamma<<1$ such that
\be \|\Xi^-(\eta)\|_{L^2_{\e,\nu}}\leq C \e^{5/6-\gamma}  \|M^-_\e(\eta)\|_{L^2(Y)}  \label{correctiondeformation}\ee
\noindent where $M_\e^-(\eta)$  is a gluing-permissible term. More specifically (and slightly stronger), one has

\be \|\Xi^-(\eta)\|_{L^2_{\e,\nu}}\leq C \e^{11/12-\gamma}\|\eta\|_{3/2 + \underline\gamma-\nu} \label{correctiondeformation2}\ee
\noindent with $\underline \gamma$ as in Lemma \ref{deformationbounds}.    
\end{lm}

\begin{proof} By Theorem \ref{nonlinearBG} and Eq. (\ref{calBdef}), $\text{d}\slashed{\mathbb D}_{h_\circ}(\eta, 0)= \underline{ \mathcal B}_{\Phi_\tau}(\eta)$ and $\tfrac{1}{\e}\underline{\mathcal B}_{\Phi^{(1)}}(\xi)=\underline{\mathcal B}_{\Phi^{(1)}}(\eta)$. By Proposition \ref{deformationUSW}, we can therefore write $$ \Xi(\eta)=\text{d}{\mathbb {SW}}_{h_1}(\xi,0,0)-\text{d}\slashed{\mathbb D}_{h_\circ}(\eta, 0)=\begin{pmatrix}\underline {\mathcal B}_{\Phi^{(1)}}(\eta)-\underline {\mathcal B}_{\Phi_{\tau}}(\eta)  \\  \underline {\mathfrak b}_{A^{(1)}}(\eta) \end{pmatrix} \ + \  \underline\mu_{\Phi^{(1)}}(\eta).$$  
We first compare the spinor components. Recall $\ph^{(1)}_{\e,\tau}$ denotes the difference 
\be \frac{\Phi^{(1)}_{\e,\tau}}{\e}= \frac{\Phi_\tau}{\e} + \ph^{(1)}_{\e,\tau}, \label{perturbationbound} \ee

\noindent  which satisfies $\|\ph^{(1)}_{\e,\tau}\|_{H^{1,+}_{\e}}\leq \e^{-1/12-\gamma}$ on the support of $\bold 1^-$ by Theorem \ref{PartImain}(B) and the exponential bound on $\Phi^{(0)}_{\e,\tau}-\Phi_\tau$ from Lemma \ref{fiducialbounds}(B) (cf. the definition (\refeq{defpreglued})).

Investigating the first component, 
\begin{eqnarray}
 \mathcal B_{\Phi^{(1)}}(\xi) - \mathcal B_{\Phi_\tau}(\eta) & =& \e \mathcal B_{\ph^{(1)}}(\eta)\nonumber \\
&=& \e  \left(-\frac{1}{2}\sum_{ij} \dot g_{\eta}(e_i,e_j) e^i .\nabla_j + \frac{1}{2} d \text{Tr}_{g_\tau}(\dot g_{\eta}).  +\frac{1}{2} \text{div}_{g_\tau}(\dot g_{\eta}). + \mathcal R(B_\tau, \eta). \right)\ph^{(1)}_{\e,\tau}\label{Bphieta}
\end{eqnarray}

\noindent Each term in (\refeq{Bphieta}) is of Type B in the sense of Lemma \ref{deformationbounds}, each with weight $w_B \leq 2$. Applying Item (B) of that lemma with $\beta=\nu$ shows that 
\begin{eqnarray} \| \e \mathcal B_{\ph^{(1)}}(\eta )\bold 1^- \|_{L^{2,-}_{\e,\nu}}  &\leq&  C\e \|\eta\|_{3/2+\underline\gamma-\nu} \Big( \|\ph_1\|_{rH^1_e(Y^-)}\Big)  \\ &\leq &C\e^{11/12-\gamma} \|\eta\|_{3/2+\underline \gamma -\nu} \label{epBetabound}. \end{eqnarray}

\noindent as desired. To be completely precise, we note that the {\it proof} of that lemma shows that when considering the left-hand side only on the support of $\bold 1^-$, the spinor's norm on the right is only needed in the same region, since the estimate is local. In the final inequality, we have used that the $rH^1_e$ and $H^{1,+}_{\e}$ norms of section supported where $r\geq \e^{2/3}$ are comparable (uniformly). This establishes the desired bound for the spinor components.

For the $(\Omega^0\oplus \Omega^1)$ and $\mu_{\Phi^{(1)}}$ components, note that that the difference from the (flat) limiting connection $A_\tau$ and from $\Phi^{(1)}$ are exponentially small on $\text{supp}(\bold 1^-)$ i.e. 
 $$|A^{(1)}_{\e,\tau} - A_\tau|  \ + \ |\Phi^{(1)}_{\e,\tau}-\Phi_\tau| \leq C\e^{-3}\text{Exp}(-\tfrac{1}{\e^{\gamma}})$$

\noindent for $\gamma<<1$ by Corollary  \ref{corollaryexpdecay}.  Since $\underline{\mathfrak b}_{A_\tau}$ depends only on the $\Omega^0\oplus\Omega^1$-component,  this component satisfies (\refeq{correctiondeformation}) with an exponential factor in place of $\e^{11/12}$, which may be absorbed once $\e$ is sufficiently small.  For the $\mu$-term, note that Item(C) of Proposition \ref{deformationUSW} and the fact that $\mu$ is an off-diagonal pairing between $S^\text{Re}, S^\text{Im}$ by Item (2) of Lemma  \ref{subbundles} means that the $\mu$-term is bounded by the the product of the real and imaginary terms of $\Phi^{(1)}_{\e,\tau}$, the latter of which is exponentially small by Corollary \ref{corollaryexpdecay}, and Item (B) of Lemma \ref{fiducialbounds}, thus the same applies for this term. 
\end{proof}

\medskip 

The next lemma gives an analogous bound for the inside term $\Xi^+$. The situation on $Y^+$  stands in contrast to that of Lemma \ref{relatingvariationsoutside}: here, the two deformation operators bear no meaningful relation. However, since the inside region shrinks as $\e\to 0$, the norm of $\Xi^+$ shrinks sufficiently rapidly to be gluing permissible. Since the weight function $R_\e$ is almost constant (up to a factor of $\e^{-\gamma}$) on $\text{supp}(\bold 1^+)$, the lemma considers only the unweighted norms.

The proof utilizes the following re-scaling, which plays an essential role in the proof of Theorem \ref{PartImain} (see \cite[Sec. 5.3]{PartI}). There is a re-scaled coordinate $\rho=\kappa(t)\e^{2./3}$ where $\kappa(t)$ is the smooth function (\refeq{kappatau}), such that de-singularized solutions $(\Phi^{h_\e}_{\e,\tau}, A^{h_\e}_{\e,\tau})$ as in Definnition  \ref{desingularized} are given by 
\be (\Phi^{h_\e}_{\e,\tau}, A^{h_\e}_{\e,\tau})= (\e^{1/3}\Phi^H_\tau(\rho), A^H_\tau(\rho))\label{scaleinvariantdesingularized}\ee

\noindent where $\Phi^H_\tau, A^H_\tau$ are  smooth, $\e$-independent functions on $\mathcal Z_\tau \times \R^2$ in Fermi coordinates (\refeq{Fermicoords}). Moreover, $\Phi^H\sim \rho^{1/2}$ for $\rho>>1$, and $A^H=f(\rho)\left(\tfrac{dz}{z}-\tfrac{d\overline z}{\overline z}\right)$, where $f(\rho)$ vanishes to second order at $\rho=0$. See Section 4 of \cite{PartI} for detailed proofs.

 \begin{lm}\label{relatingvariationsinside}
Let  $\eta(t)=\e^{-1}\xi(t) \in C^\infty(\mathcal Z_\tau; N\mathcal Z_\tau)$ be a linearized deformation. There exist constants $C$ and $\gamma<<1$ such that
\begin{eqnarray} 
\|\Xi^+(\eta)\|_{L^2} &\leq &  C\e^{1/12-\gamma}\| M^+(\eta)\|_{L^2}
\end{eqnarray}
\noindent where $M_\e^+(\eta)$  is a gluing-permissible term. More specifically, one has 

\begin{eqnarray} 
\|\Xi^+(\eta)\|_{L^2} &\leq &  C \e^{-\gamma} \Big( \e^{1/3} \|\eta\|_{1,2} \ + \   \e^{11/12} \|\eta\|_{3/2+\underline \gamma ,2} \ + \  \ \e \|\eta\|_{2,2}  \ + \ \e^{19/12} \|\eta\|_{5/2 + \underline\gamma,2} \label{9.10} \Big).
\end{eqnarray}

\noindent  with $\underline \gamma$ as in Lemma \ref{deformationbounds}.    
\end{lm}

\begin{proof} 

By Proposition \ref{deformationUSW} and the triangle inequality, one has 

\be  \|\Xi^+(\eta)\|_{L^2}\leq  \|\text{d}\mathbb {SW}_{h_1}(\xi) \bold 1^+ \|_{L^2} \ + \ \|\text{d}\slashed {\mathbb {D}}_{h_\circ}(\xi) \bold 1^+ \|_{L^2} \label{dSWdD}\ee

\noindent and

\bea
\|\text{d}\mathbb {SW}_{h_1}(\xi) \bold 1^+ \|_{L^2} \leq \|\underbrace{ \underline{ \mathcal B}_{\Phi_{1}}(\eta)}_{\text{(I)}} \|_{L^2} \ + \  \e\|\underbrace{  \underline{\mathfrak b}_{A_{1}}(\eta)}_{\text{(II)}} \|_{L^2} \ + \  \e\|\underbrace{  \underline \mu_{\Phi_{1}}(\eta)}_{\text{(III)}} \|_{L^2}.
\eea

\noindent We begin with $\text{d}\mathbb {SW}_{h_1}(\xi)$, and proceed by bounding each term (I)--(III) separately. 

Beginning with (I), write  $(\Phi^{(1)}_{\e,\tau}, A^{(1)}_{\e,\tau})= (\Phi^{h_\e}_{\e,\tau}, A^{h_\e}_{\e,\tau}) + ( \ph^{(1)}_{\e,\tau}, a^{(1)}_{\e,\tau}) + O(\text{Exp}(c\e^{-\gamma}))$ where $( \ph^{(1)}_{\e,\tau}, a^{(1)}_{\e,\tau})$ are as in Item (B) of Theorem \ref{PartImain}. The exponentially small term arises from the difference between $\Phi^{h_\e}$ and $\Phi^{(0)}_{\e,\tau}$ as in Definition \ref{defpreglued}.   The term (I) is comprised of four subterms (Ia)--(Id) as in (\refeq{BPhi1eta}); for each of these there is a leading order part coming from $(\Phi^{h_\e}, A^{h_\e})$ and a perturbation coming from $(\ph^{(1)}, a^{(1)})$, both of which are denoted without the subscripts for the remainder of the proof. We begin with the leading order part of (Ia). Omitting indices and subscripts and writing $N^+=\text{supp}(\bold 1^+)$ for clarity, 
\bea
\|\dot {\underline g}_\eta . \nabla \Phi^{h_\e}\|^2_{L^2} &\leq &C \int_{N^+} \left(| \underline \chi(\eta')|^2 + | \underline {d\chi}(\eta)|^2  \right) |\nabla \Phi^{h_\e}|^2 r dr d\theta dt \\
&\leq&C \int_{N^+}  \left(| \underline \chi(\eta')|^2 + | \underline {d\chi}(\eta)|^2  \right) |\nabla_{\rho} \Phi^{h_\e}|^2 \rho d\rho d\theta dt\\
 &\leq& C \e^{2/3} \int_{\mathcal Z_\tau}  \left(| \underline \chi(\eta')|^2 + | \underline {d\chi}(\eta)|^2  \right)  \int_{\rho \leq \e^{-\gamma}}|\nabla_{\rho} \Phi^{H}|^2 \rho d\rho d\theta dt \\ &  \leq&  C\e^{2/3-\gamma} \|\eta\|_{1,2}^2 
 \eea  
 
 \noindent where we have changed variables to the rescaled coordinate $\rho$ (in both the volume and $\nabla$) and then substituted (\refeq{scaleinvariantdesingularized}). The last inequality follows from the fact that $\|\underline{d\chi}(\eta)\|_{L^2(S^1)} \leq C \|\eta'\|_{L^2(S^1)}$ (as in the proof of Lemma \ref{nonlineardeformationbounds}), and the fact that $\Phi^H\sim \rho^{1/2}$ for $\rho \geq \text{const}$. The same argument applies to the terms (Ib)--(Id), except there is an additional factor of $\e^{2/3}$ because there is no derivative to rescale, but the norm $\|\eta\|_{L^{2,2}}$ is needed. These give rise to the third term in (\refeq{9.10}). 
 
 Turning now to the leading order of term (II), which is again comprised of three sub-terms (IIa)--(IIc) as in (\ref{deformingboldd}), a similar rescaling argument applies to show 
  \bea
 \e^2 \|\dot {\underline g}_\eta  \nabla A^{h_\e}\|_{L^2}^2 &\leq & \e^2 \int_{N^+}  \left(| \underline \chi(\eta')|^2 + | \underline {d\chi}(\eta)|^2  \right) |\nabla_\rho f(\rho)\left(\tfrac{dz}{z}-\tfrac{d\overline z}{\overline z}\right)|^2 \rho d\rho d\theta dt\\
 &=& \e^2 \e^{-4/3} \int_{N^+}  \left(| \underline \chi(\eta')|^2 + | \underline {d\chi}(\eta)|^2  \right) |G(\rho)|^2 \rho d\rho d\theta dt \leq \e^{2/3}\|\eta\|^2_{1,2}
 \eea
 \noindent where we have substituted $\tfrac{1}{z}=\tfrac{\e^{2/3}}{\rho e^{i\theta}}$ (and likewise for $\overline z$), then used the fact that $f(\rho)$ vanishes to second order at $\rho=0$ to combine these into a smooth bounded function $G(\rho)$. The last inequality follows just as in term (I). Terms (IIb)--(IIc) proceed just as for (Ib)--(Id), making the same alterations. Thus term (II) is bounded by the first and third terms on the right in (\refeq{9.10}). 
 
 For the perturbation terms of  (I)--(II) arising from $(\ph^{(1)}, a^{(1)})$, note that for a perturbation by $\xi=\e \eta$ these perturbation terms pick up an extra factor of $\e$, precisely as in (\refeq{Bphieta}). These terms are then bounded by naively pulling out $C^0$ bound and using the $H^{1,+}$ norm to integrate. For instance, for terms (Ia) and (IIa), a factor of $\|\eta\|_{3/2+\underline \gamma,2}$ can be pulled out, reducing the integral to the $H^{1,+}_{\e}$-norm; for the other terms, the same applies to (Ib--Id) and (IIb)--(IIc) with $\|\eta\|_{5/2+\underline \gamma,2}$ (and the $R_\e$ weight in the norm gives an extra factor of $\e^{2/3-\gamma}$ for these).
 
 Finally, we turn to the term (III) arising from $\underline \mu$ as in (\refeq{underlinemu}). This term consists of terms 
 
 \be
\text{ (III)}= \e \cdot   \Big\br ig_\eta. \frac{\Phi^{h_\e}}{\e},\frac{\Phi^{h_\e}}{\e} \Big \kt \ + \ \e \cdot   \Big\br ig_\eta. \frac{\Phi^{h_\e}}{\e}, \ph^{(1)}\Big \kt + \e \cdot   \Big\br ig_\eta.  \ph^{(1)}, \ph^{(1)}\Big \kt \ + \ O(\text{Exp}(c\e^{-\gamma}))
 \ee
 
 \noindent where the exponentially small terms come from the difference from $|\Phi_\tau - \Phi^{(0)}|$. For the first term, pull out $\e \|\eta\|_{3/2+\underline\gamma}$ to leave the $L^2$ norm of $\e^{-1}\Phi^{h_\e}$, which the rescaling (\refeq{scaleinvariantdesingularized}) shows is bounded by $\e^{-\gamma}$. For the second, pull out the same factor, and then note that $\e^{-1}\Phi^{h_\e} \sim \e^{-2/3} \sim R_\e^{-1}$ in this region, so the remaining term inside the integral is bounded by the $H^{1,+}_{\e}$ norm of $\phi^{(1)}$. Finally, for the quadratic term in $\ph^{(1)}$, we have the interpolation inequality 

\be \|u \|_{L^4} \leq C \|u\|_{L^2}^{1/4} \|u\|_{L^{1,2}}^{3/4}.  \label{interpolation} \ee
 
 \noindent Since the $H^{1,+}_\e$ norm dominates both pieces, the bound by $\e^{11/12}\|\eta\|_{3/2+\underline\gamma}$ follows from the bound on the  $H^{1,+}_{\e}$ norm of $\ph^{(1)}$ from Theorem \ref{PartImain} (in fact, here with an additional factor of $\e^{1/6}\sim R_\e^{1/4}$ from the weight on the $L^2$ portion).

 It remains to show that the $\text{d}\slashed{\mathbb D}$ term of (\refeq{dSWdD}) obeys the same bounds. Recalling the expression (\refeq{calBdef}) (cf. Notation \ref{underlinenotation}), the triangle inequality yields
\smallskip 
$$
\|\text{d}\slashed{\mathbb {D}}_{h_\circ}(\eta , 0) \bold 1^{+} \|_{L^2} \leq \| (\dot {\underline g}_\eta)_{ij}e^i. \nabla \Phi_\tau \|_{L^2} + \|d \text{Tr}_{g_\tau}(\dot {\underline g}_\eta).\Phi_\tau\|_{L^2}+   \|\text{div}_{g_\tau}(\dot {\underline g}_\eta). \Phi_\tau\|_{L^2}+ \|\mathcal R(B_\tau,\eta).\Phi_\tau\|_{L^2}.
$$

\smallskip 
\noindent where summation is implicit in the first term. Since $\Phi_\tau$ is polyhomogeneous with leading order $r^{1/2}$ by Lemma \ref{asymptoticexpansion}, each of these terms is of Type A in the sense of Lemma \ref{deformationbounds}, with weight $w_A\leq 2$. Copying the proof of Item (A) of Lemma \ref{deformationbounds}, but now only integrating over the support of $\bold 1^+$ shows that 
\smallskip
$$ \| (\dot {\underline g}_\eta)_{ij}e^i. \nabla \Phi_\tau \|_{L^2} \leq  C\e^{1/3-\gamma}\|\eta\|_{1,2} \hspace{1.5cm}   \|d \text{Tr}_{g_\tau}(\dot {\underline g}_\eta).\Phi_\tau\|_{L^2}+   \|\text{div}_{g_\tau}(\dot {\underline g}_\eta). \Phi_\tau\|_{L^2}\leq  C\e^{1-\gamma}\|\eta\|_{2,2}.$$

\smallskip
\noindent as desired. The term $\mathcal R(\mathcal B_\tau,\eta)$ is subsumed by these because it has weight $w_A=1$. This shows the second term of (\refeq{dSWdD}) satisfies the desired bound.

\end{proof}

 \subsection{Non-Linear Bounds}
 \label{section9.3}
 This subsection bounds the non-linear terms in Proposition \ref{nonlinearterms}. There are once again a large number of terms, but similarly to Definition \ref{gluingpermissible}, all that ultimately matters is that the scaling of the powers of $\e$ is sufficient to counteract the higher-norms for certain values of $L_0$. We say a term $Q_\e$ is {\bf quadratically permissible} if it obeys 
 
 $$\|Q_\e(\eta,\ph,a)\| \leq C \e^{1/12-\gamma} M_e(\eta) \Big(\|\eta\|_{1/2,2} \ + \ \|(\ph,a)\|_{H^1_\e} \ + \ \|(\ph,a)\|^2_{H^1_\e}\Big)$$
 
 \noindent for some gluing permissible term $M_e$ (in the sense of Definition \ref{gluingpermissible}). We leave it to the reader to verify the arithmetic that each bound in the upcoming lemma states that the corresponding term is quadratically permissible.

The statement of the next lemma involves an auxiliary partition of unity defined as follows. With $\gamma<<1$ being the constant such that $\bold 1^+$ is the indicator function of $\{r\leq \e^{2/3-\gamma}\}$, let  $\zeta^\pm$ be a partition of unity consisting of two cutoff functions such that $|d\zeta^\pm|\leq Cr^{-1}$, and $\zeta^+=1$ where $\{r\leq \e^{2/3-2\gamma}\}$ while $\text{supp}(\zeta^+)\subseteq \{r\leq 2\e^{2/3-\gamma}\}$. The purpose of introducing $\zeta^+$ is that it is equal to $1$ on a neighborhood larger than the support of $\bold 1^+$ by a factor of $\e^{\gamma}$; this extra buffer zone allows the exponential decay of Lemma \ref{exponentialdecay} to take effect on $\text{supp}(\zeta^-)$ for configurations that decay away from the support of $\bold 1^+$ (see the proof of Corollary \ref{nonlinearboundscor} in Section \ref{section10}). 
\begin{lm} \label{nonlinearbounds} Let  $\mathbb Q=Q_\text{SW}+ Q_{\Phi^{(1)}}+ Q_{A^{(1)}}+Q_{a.\ph}+Q_\mu$ be the non-linear terms from Proposition \ref{nonlinearterms}. Then these satisfy the following bounds for constants $C$ and $\gamma<<1$ independent of $\e,\tau$. 
\smallskip 

\begin{enumerate}
\item[(A)] $Q_\text{SW}$ satisfies 
\bea
 \ \ \ \ \ \|Q_\text{SW}(\ph,a)\zeta^+\|\leq C \e^{1/3-\gamma} \|(\ph,a)\|_{H^1_\e}^2 \hspace{1.5cm} & &  \|Q^\text{Re}_\text{SW}(\ph,a)\zeta^-\|_{L^2}\leq C \e^{1/4} \|(\ph^\text{Im},a)\|^2_{H^1_\e}\\
 & & \|Q^\text{Im}_\text{SW}(\ph,a)\zeta^-\|_{L^2}\leq C \e^{1/4}\|\ph^\text{Re}\|_{H^1_\e} \|(\ph^\text{Im},a)\|_{H^1_\e}.
\eea
\smallskip

\item[(B)] Provided $\|\xi\|_{3/2+\underline \gamma,2}\leq 1$, then $ Q_{\Phi^{(1)}}, Q_{A^{(1)}}$ satisfy 
\bea \|Q_{\Phi^{(1)}}(\eta,\ph)\bold 1^+\|_{L^2}& \leq& C \e^{1-\gamma} \|\eta\|_{3/2+\underline\gamma,2}\left( \|\ph\|_{H^1_\e} \ + \ \e^{1/3} \|\eta\|_{1,2} + \e \|\eta\|_{2,2}  \right) + C \e^{5/3-\gamma}\|\eta\|_{5/2+\underline\gamma,2} \|\ph\|_{H^1_\e}  \\ \|Q_{\Phi^{(1)}}(\eta,\ph)\bold 1^-\|_{L^2} & \leq & C \left( \e\|\eta\|_{3/2+\underline \gamma,2} \|\ph\|_{H^1_\e} \ + \ \e \|\eta\|_{1/2,2} \|\eta\|_{3/2+\underline\gamma,2}  \right),\eea

\noindent and identically for $Q_{A^{(1)}}$ with $\|a\|_{H^1_\e}$ in place of $\|\ph\|_{H^1_\e}$. 

\smallskip
\item[(C)] Provided $\|\xi\|_{3/2+\underline \gamma,2}\leq 1$, then 
\bea
\|Q_{a.\ph}(\eta, a, \ph)\|_{L^2}&\leq &C \e\|\eta\|_{3/2+\underline\gamma,2} \|Q_\text{SW}(\ph,a)\|_{L^2}\\
\|Q_{\mu}(\eta,\ph)\|_{L^2} &\leq &C \e\|\eta\|_{3/2+\underline\gamma,2} \left(\|\ph\|_{H^1_e} + \|Q_\text{SW}(\ph,a)\|_{L^2} \right). 
\eea

\end{enumerate}
\end{lm}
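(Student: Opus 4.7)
The proof will treat each of the five families of nonlinear terms separately, invoking the structural description from Proposition \ref{nonlinearterms} together with the weighted Sobolev/interpolation machinery and the exponential decay of the model solutions. The overall strategy is to pull a factor involving a ``good'' norm of $\eta$ (or of the spinor component that enjoys exponential decay) out through a Sobolev embedding, leaving behind integrals that are controlled by the defining $H^{1,\pm}_{\e,\nu}$-norms.

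For Item (I) on $Q_\text{SW}(\ph,a)=(a.\ph,\mu(\ph,\ph))$, the plan is to bound each quadratic expression in $L^2$ via the $L^4\times L^4\hookrightarrow L^2$ product and the weighted interpolation inequality \cite[Lem.~8.2]{PartI}, which yields a gain of $\e^{2/3\nu}$ when trading one $L^4$ factor for the $H^{1,+}_{\e,0}$-norm at weight $\nu\in[0,1/4)$. On $\mathrm{supp}(\zeta^+)$, where $|\Phi^{h_\e}|\gtrsim \e^{1/3}$, this produces the claimed $\e^{1/3-\gamma}$; on $\mathrm{supp}(\zeta^-)$, the point is that $\zeta^-=0$ where $\zeta^+=1$, so by Lemma \ref{exponentialdecay} the $S^\text{Im}\oplus \Omega$ components of the relevant perturbations are exponentially small on $\mathrm{supp}(\zeta^-)$ (relative to the buffer $\e^{-\gamma}$ between the supports of $\zeta^\pm$). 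Since the $S^\text{Re}$ component of $Q_\text{SW}$ is $a.\ph$ and the $S^\text{Im}$ component is $\mu(\ph,\ph)$, and since $\mu$ pairs $S^\text{Re}$ with $S^\text{Im}$ by Lemma \ref{subbundles}(2), at most one real factor can appear in each product, and the other must be either $a$ or $\ph^\text{Im}$. Bounding the non-exponentially-decaying factor in $L^4$ against the other in $L^4$, and using $\|u\|_{L^4(Y^-)}\lesssim\e^{-\gamma}\|u\|_{H^{1,-}_{\e,0}}$ on the outside, delivers the $\e^{1/4}$ with $\gamma$ accumulated.

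For Item (II) on $Q_{\Phi_1},Q_{A_1}$, observe that by Proposition \ref{nonlinearterms}(2)--(3) these are precisely the nonlinear metric-variation terms identified abstractly in Lemma \ref{universalDnonlinear}, applied with $\Phi=\e^{-1}\Phi_1,\ \psi=\ph$ and $\Phi=A_1,\ \psi=a$ respectively. On $\mathrm{supp}(\bold 1^-)$, Lemma \ref{exponentialdecay} and $(\ref{perturbationbound})$ let us replace $\Phi_1,A_1$ by the polyhomogeneous $\Phi_\tau,A_\tau$ up to exponentially small error, after which Lemma \ref{nonlineardeformationbounds} applied to the quadratic-in-$\eta$ Type-A/Type-B terms and to the higher-order $F_{\Phi+\psi}$ (using $\|\eta\|_{C^1}\leq C\|\eta\|_{3/2+\underline\gamma}$) gives the second of the two displayed bounds. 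On $\mathrm{supp}(\bold 1^+)$, the rescaling $\rho=r/\e^{2/3}$ from $(\ref{scaleinvariantdesingularized})$---as used in the proof of Lemma \ref{relatingvariationsinside}---reduces the weighted $L^2$-norm of each nonlinear term to an integral of a fixed smooth template function $\Phi^H$ (or $A^H$) tested against the $\C$-valued function $\underline\chi'[\eta]\cdot\underline\chi'[\eta]$ over $\mathcal Z_\tau$. The scaling accounts for the $\e^{1-\gamma},\e^{5/3-\gamma}$ prefactors (one $\e$ from the factor in $\e Q_{\Phi_1}$, together with $\e^{1/3}$ or $\e^{2/3}$ from the rescaled volume and derivative), while Lemma \ref{nonlineardeformationbounds}(A') and the Sobolev extraction $\|\underline\chi'[\eta]\|_{C^0}\lesssim\|\eta\|_{3/2+\underline\gamma}$ handle the remaining deformation norms.

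For Item (III) on $Q_{a.\ph}$ and $Q_\mu$, the structural descriptions in Proposition \ref{nonlinearterms}(4)--(5) show that each term factors as $F(\underline\chi'[\xi])$ times either $Q_\text{SW}(\ph,a)$ or $\ph\cdot\Phi_1/\e$ times a first derivative of $\underline\chi$ applied to $\xi$. Applying $\|F(\underline\chi'[\xi])\|_{C^0}\leq C\|\xi\|_{3/2+\underline\gamma}=C\e\|\eta\|_{3/2+\underline\gamma}$ (valid because $\|\xi\|_{3/2+\underline\gamma}\leq 1$) pulls the claimed $\e\|\eta\|_{3/2+\underline\gamma}$ out of the integral. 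For $Q_{a.\ph}$ what remains is exactly $\|Q_\text{SW}(\ph,a)\|_{L^2}$; for $Q_\mu$ the first grouping in its schematic expression likewise reduces to $Q_\text{SW}$, while the term $\e^{-2}\br i\dot g_\xi e^j(\Phi_1),\e\ph\kt$ reduces after one cancellation of $\e$ to a product controlled by the $\mu$-weight in the $H^1_\e$-norm, giving the extra $\|\ph\|_{H^1_\e}$.

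The main obstacle I expect is Item (II) on the inside: carefully tracking the powers of $\e$ after rescaling must yield prefactors that exactly match the advertised $\e^{1-\gamma}$ and $\e^{5/3-\gamma}$, which requires distinguishing terms carrying a derivative on $\Phi^{h_\e}$ (which rescales favorably) from zeroth-order terms that depend on the slower-decaying Sobolev norms $\|\eta\|_{2},\|\eta\|_{5/2+\underline\gamma}$. The interplay between the $\underline\chi_\ell$-filtering built into the mode-dependent deformations and the support of $\bold 1^+$ at scale $\e^{2/3-\gamma}$ is what ultimately prevents spurious positive powers of $\e^{-1}$ from appearing, and the bookkeeping there, together with absorbing the $C^1$-size condition in $F_{\Phi+\psi}$, will be the most delicate computation.
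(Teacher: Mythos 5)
Your treatment of Items (II) and (III) follows the paper's route: (II) is handled exactly as in Lemmas \ref{relatingvariationsoutside} and \ref{relatingvariationsinside} (rescaling to $\rho$ on $\mathrm{supp}(\bold 1^+)$, Lemma \ref{nonlineardeformationbounds} on $\mathrm{supp}(\bold 1^-)$), and (III) by extracting $\|\xi\|_{3/2+\underline\gamma}=\e\|\eta\|_{3/2+\underline\gamma}$ in $C^0$ and reducing to $Q_{\mathrm{SW}}$ or to terms controlled by the $\e^{-1}|\Phi_\tau||\ph|$ weight. The $\zeta^+$ half of Item (I) is also essentially the paper's argument (interpolation $\|u\|_{L^4}\leq C\|u\|_{L^2}^{1/4}\|u\|_{L^{1,2}}^{3/4}$ plus the $R_\e$-weight, with $\sup_{\mathrm{supp}(\zeta^+)}R_\e^{1/2}\leq C\e^{1/3-\gamma}$).

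The gap is in the $\zeta^-$ half of Item (I). You derive the $\e^{1/4}$ bounds by invoking Lemma \ref{exponentialdecay} to claim the $S^{\mathrm{Im}}\oplus\Omega$ components are exponentially small on $\mathrm{supp}(\zeta^-)$. That lemma applies only to solutions of $\mathcal L^+_{(\Phi_1,A_1)}(\ph,a)=g^+$ with $g^+$ supported away from the relevant compact set (or to the specific model perturbation $(\ph_1,a_1)$ of Theorem \ref{PartImain}); the configurations in Lemma \ref{nonlinearbounds} are \emph{arbitrary} elements of $H^{1,+}_\e\oplus H^{1,-}_\e$, for which no such decay holds. (Exponential decay of $(\ph,a)$ is legitimately used later, in Corollary \ref{nonlinearboundscor}, precisely because there the corrections lie in $\mathcal H^+$ and hence do solve such an equation.) Moreover, if exponential decay were available the resulting bound would be exponentially small, not $\e^{1/4}\|(\ph^{\mathrm{Im}},a)\|^2_{H^1_\e}$ — the polynomial factor and the fact that only the imaginary/form norms appear on the right-hand side are telling you the true mechanism: on $Y^-$ the $H^{1,-}_\e$-norm carries the weight $\e^{-2}|\Phi_\tau|^2$ on $\ph^{\mathrm{Im}}$ and $a$, so each such factor gains $(\e|\Phi_\tau|^{-1})^{1/4}$ through the interpolation inequality, while a real factor gains only $R_\e^{1/4}$. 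Since $(a.\ph)^{\mathrm{Re}}=a.\ph^{\mathrm{Im}}$ has two imaginary-type factors and $Q^{\mathrm{Im}}_{\mathrm{SW}}$ has exactly one, the products of these gains, together with $|\Phi_\tau|\gtrsim r^{1/2}\geq c\e^{1/3}$ on $Y^-$, give $\e^{1/2}|\Phi_\tau|^{-1/2}\leq\e^{1/4}$ and $\e^{1/4}|\Phi_\tau|^{-1/4}R_\e^{1/4}\leq\e^{1/4}$ respectively. Replace the appeal to exponential decay by this weight-counting and the proof closes.
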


\begin{proof}
(I) To bound $Q_{\text{SW}}$, we employ the interpolation inequality (\refeq{interpolation}), which holds on $Y$ independent of any of the context of Lemma \ref{relatingvariationsinside}. 
Since $|\zeta^\pm|^2 \leq |\zeta^\pm|\leq 1$, and $|d\zeta^\pm|\leq CR_\e^{-1}$ is bounded by the weight on the $L^2$-terms in the $H^1_\e$-norm, applying this shows, e.g. 
\bea
\|Q_{\text{SW}}(\zeta^+ \ph, \zeta^+a)\|_{L^2}&\leq &C \|\zeta^+ \ph\|_{L^2}^{1/4}   \ \|\zeta^+\ph\|^{3/4}_{L^{1,2}} \  \|\zeta^+ a\|^{1/4}_{L^2} \ \|  \zeta^+ a\|^{3/4}_{L^{1,2}} \\
&\leq & C \cdot \text{max} (\zeta^+R_\e^{1/2})\cdot \|(\ph,a)\|^2_{H^{1}_\e}\\
&\leq & C \e^{1/3-\gamma} \|(\ph,a)\|^2_{H^{1}_\e}. 
\eea 
\noindent because the $H^{1}_\e$-norm dominates the $L^2$-norm with an extra weight of $R_\e$. On the support of $\zeta^-$, the proof is the same but the fact that $Q_{\text{SW}}$ has at most one factor in $S^\text{Re}$ means the weight on the $H^{1,-}_\e$ norm compared to the $L^2$-norm gives an extra factor of $\e^{1/4}|\Phi_\tau|^{-1/4}R_\e^{1/4}\leq \e^{1/4}R_\e^{1/8}$ for  the $S^\text{Im}\oplus (\Omega^0\oplus \Omega^1)$ components, and an extra factor of $R_\e^{1/4}\leq C$ for the real components, since $R_\e\sim 1$ far from $\mathcal Z_\tau$. 

\medskip

(II) Follows from the same considerations as Lemmas \ref{relatingvariationsoutside} and  \ref{relatingvariationsinside} (with $(\ph,a)$ replacing $(\ph^{(1)}_{\e,\tau}, a^{(1)}_{\e,\tau})$ in  \ref{perturbationbound}), and employing Lemma \ref{nonlineardeformationbounds} in place of Lemma \ref{deformationbounds}, then invoking the assumption that $\|\xi\|_{3/2+\underline\gamma}<1$ at the end.

\medskip 

(III) Just as in the proof of Lemma \ref{universalDnonlinear}, terms of higher order than quadratic involve composition of the metric components with the diffeomorphism $\underline F_\xi$. Applying the bound (\refeq{TaylorPDEs}) for $s=3/2+\underline \gamma$ (in the case that assumption $\|\xi\|_{3/2+\underline \gamma,2}\leq 1$ holds) shows that these higher-order terms are bounded by a constant multiple of the quadratic terms in (II), and the proof then follows from the above. \end{proof}

 \section{Contraction Subspaces}
 \label{section10}

 With Sections \ref{section3}--\ref{section9} complete, the majority of the ingredients and estimates for the proof of Theorem \ref{maina} are in place. What remains is to combine these estimates to show a sufficient version of the alternating iteration scheme converges to the desired solutions. The next three sections set up and carry out a fixed-point argument that accomplishes this, with the goal of each section being as follows.
 
  (1) The current Section \ref{section10} defines Banach spaces $\mathcal H_{\e,\tau}, \mathfrak L_{\e,\tau}$ that serve as the global domain and codomain for the universal Seiberg--Witten equations. These are formed from various combinations of the spaces $\mathbb H^{1,\pm}_{\e,\nu}, \mathbb L^{2,\pm}_{\e,\nu}$ defined in Section \ref{section8}. (2) Section \ref{section11} constructs a non-linear approximate inverse $\mathbb A: \mathfrak L_{\e,\tau}\to \mathcal H_{\e,\tau}$  such that 
 
 \be \mathbb T= \text{Id}-\mathbb A\circ \overline{\mathbb{SW}}_\Lambda, \label{Tdef}\ee

 \noindent is a contraction on a neighborhood of the origin in $\mathcal H_{\e,\tau}$, where $\overline{\mathbb{SW}}_\Lambda=\overline{\mathbb{SW}}-\chi^- \e^{-1}{\Lambda(\tau)\Phi_\tau}$, . Specifically, $\mathbb A$ is formed as a composition of  three linear parametrices, to be denoted by $P_\xi, P^+, P^-$, one each for the three steps of the cyclic iteration described in Section \ref{section2.4}. The final Section \ref{section12} deals with the 1-dimensional obstruction coming from $\Phi_\tau$ and shows that (\refeq{Tdef}) has a fixed point which is the solutions sought in Theorem \ref{maina}.

 \subsection{The Support of Range and Obstruction Components}
 \label{section11pre}
Dealing with the loss of regularity of the deformation operator in Theorem \refeq{deformationsmaina} (which carries over to the Seiberg--Witten setting by Lemma \ref{relatingvariationsoutside}) requires careful exploitation of the link between tangential regularity and radial distance (recall the discussion in Section  \ref{section6.3}). The definition of the spaces $\mathcal H_{\e,\tau}$  is carefully crafted to incorporate deformations needed to cancel two types of obstruction terms that will appear in the gluing iteration. 

Recall that $\lambda^+=\e^{1/2}$ and that $\chi^+$ is a cutoff function whose derivative $d\chi^+$ is supported where $\lambda^+/4\leq r \leq \lambda^+/2$. Recall also that $S^\text{Re}\subseteq S_E$ is a well-defined subbundle on $\Yminus \mathcal Z_\tau$. 

\begin{defn} Let $\psi \in L^2(\Yminus \mathcal Z_\tau; S^\text{Re})$ be a spinor. We say 

\begin{enumerate}
\item[(A)] $\psi$ is {\bf supported outside} if $$\text{supp}(\psi) \subseteq \Yminus N_{\lambda^+/4}(\mathcal Z_\tau)$$

\noindent where $\lambda^+$ is as above. 
\item[(B)] $\psi$ is
 {\bf  supported on the neck} if $$\text{supp}(\psi) \subseteq Y^-_{\e,\tau}$$

\noindent where we recall that $Y^-_{\e,\tau}$ is defined by $r\geq \e^{2/3-\gamma^+}$. 

\end{enumerate}
\end{defn}

The following corollary restricts the Fourier modes in obstruction bundle $\text{\bf Ob}$ which spinors with such support project to. It is a direct application of Lemma \ref{fourierregimescokernel}. 

\begin{cor}  \label{obstructiondecaycor}Suppose that $\psi_1,\psi_2 \in L^2(\Yminus \mathcal Z_\tau; S^\text{Re})$ are supported outside and on the neck respectively. Then the projections $\text{ob}_\tau^{-1}\circ \Pi_\tau(\psi_j) \in L^2(\mathcal Z_\tau; \mathcal C_\tau)$ for $j=1,2$ obeys the following. 

\begin{enumerate}
\item[(A)] With $\gamma^+<<1$ as before, and $L_1=\e^{-1/2 -\gamma^+}$, then for any $M\in \N$ there exists a constant $C_M$ (uniform in $\e,\tau$) such that the projection of $\psi_1$ obeys

\begin{eqnarray} \|(1-\pi_{L_1})\circ \text{ob}_\tau^{-1} \circ \Pi_\tau (\psi_1)\|_{L^2(\mathcal Z_\tau)}& \leq &C_M \e^M \|\psi_1\|_{L^2(Y)} \label{7.1}
\end{eqnarray}

\noindent where $\pi_{L_1}$ is the projection to the Fourier modes with $|\ell|\leq L_1$. 
\item[(B)] With $L_2=\e^{-2/3}$, then for any $M\in \N$ there exists a constant $C_M$ (uniform in $\e,\tau$) such that the projection of $\psi_2$ obeys

\begin{eqnarray} \|(1-\pi_{L_2})\circ \text{ob}_\tau^{-1} \circ \Pi_\tau (\psi_2)\|_{L^2(\mathcal Z_\tau)}& \leq &C_M \e^M \|\psi_2\|_{L^2(Y)} \label{7.2}
\end{eqnarray}

\noindent where $\pi_{L_2}$ is the projection to the Fourier modes with $|\ell|\leq L_2$. 
\end{enumerate}
\end{cor}

\begin{proof} Let $\gamma_{\circ}$ be the value of $\gamma$ in the statement of Lemma \ref{fourierregimescokernel}. For (A), the assumptions of the lemma hold provided $$ \e^{\tfrac{1}{2}}\geq  \e^{\left(-\tfrac{1}{2}-\gamma^+\right)\big(\gamma_{\circ}-1\big)}=\e^{\left(\tfrac12 +\gamma^+ -\tfrac{\gamma_{\circ}}{2} -\gamma^+\gamma_{\circ}\right)}$$

\noindent which holds if we choose $\gamma_{\circ}=\gamma^+$, since the quadratic term is negligible. For (B), the same choice works. The conclusion then follows directly from that lemma. 
\end{proof}

\bigskip

Given the above, we define the following three Fourier regimes, which correspond to the three classes of obstruction vectors we will need to cancel during the gluing iteration.

\begin{defn}\label{Fourierregimes} For $\gamma^+<<1$ fixed as before, set \bea  L^\text{low}= \e^{-(1/2+\gamma^+)}  & \hspace{2cm} &L^\text{med}=\e^{-2/3}  \eea

\noindent and for $\ell \in \Z$ define 
\bea
{ \pi}^\text{low}(e^{i\ell t})&=&\begin{cases} e^{i\ell t} \hspace{.5cm}|\ell|\leq L^\text{low} \\ 0 \hspace{.6cm}  \ \  |\ell| > L^\text{low}\end{cases} \medskip   \\ \\  \pi^\text{med}(e^{i\ell t})&=&\begin{cases} e^{i\ell t} \hspace{.5cm}L^\text{low}<|\ell|\leq L^\text{med} \\ 0 \hspace{.8cm} |\ell| > L^\text{med}, |\ell|\leq L^\text{low}\end{cases}  \\  \\ \pi^\text{high}(e^{i\ell t})&=&(\text{Id}-\pi^\text{med}-\pi^\text{low})e^{i\ell t}.
\eea
\noindent 
We write e.g. $\Psi^\text{low}:= \pi^\text{low}\circ \text{ob}_\tau^{-1}(\Psi)$ as shorthand for the projections of an obstruction element $\Psi\in \text{\bf Ob}(\mathcal Z_\tau)$ to the corresponding Fourier regimes.  

\end{defn}
\bigskip

 When the linearization $\text{d}\slashed{\mathbb D}(\eta,0)$ in the direction of a deformation is used to cancel the obstruction components of an error term $\psi \in L^2(\Yminus \mathcal Z_\tau; S^\text{Re})$, the {\it range} components of the error term also grow. This is a consequence of the off-diagonal term in the block decomposition (\refeq{blockdecomp}). Thus solving the obstruction components updates the error term by 
 
 $$\psi \ \  \mapsto \ \  (1-\Pi_\tau)\psi + (1-\Pi_\tau)\text{d}\slashed{\mathbb D}(\eta,0).$$

\noindent One of the problem with achieving convergence of the gluing iteration is that the second term can {\it a priori} be much larger than the original error $\|\psi\|_{L^2}$. Moreover, as explained in Section  \ref{section7.4}, the projection $(1-\Pi_\tau)$ is non-local and disrupts the property that the error is cleanly supported where $d\chi^+\neq 0$. This, in turn, disrupts the property that there is decay across the neck region.

The following lemma addresses both these issues. First, it provides a key bound that, as applied in the gluing iteration, will show that the new term $(1-\Pi_\tau)\text{d}\slashed{\mathbb D}(\eta,0)$ does not meaningfully increase the total size of the error. Second, it shows that, while this term no longer has true support where $d\chi^+\neq 0$, it is still {\it effectively} supported in the same region, in the sense of Definition \ref{effectivesupport}. We emphasize here that the use of the tangential smoothing gauge is absolutely crucial in specifically this lemma -- for the gauge choice of Example  \ref{standarddeformations}, this estimate fails badly and prevents the iteration scheme from converging. 

Recall in the statement of the proposition that $h_\circ$ was defined in Eq. (\refeq{h0seconddef}) (cf. Notation \ref{notationhn}).

\begin{prop} \label{effectivesupportdD}Suppose that $\eta \in C^\infty(\mathcal Z_\tau, N\mathcal Z_\tau)$ satisfies the following property: there is an $M\in \R$ such that $\|\eta\|_{m+1/2, 2}\leq CM^{m}\|\eta\|_{1/2,2}$ for all $m>0$. Then deformation operator (\ref{calBdef}) of the universal Dirac operator satisfies 
\bea  \|  \text{d}\slashed{\mathbb D}_{h_\circ}(\eta,0)\|_{L^2_{0}}&\leq& C  \|\eta\|_{1/2,2} \\  \|  \text{d}\slashed{\mathbb D}_{h_\circ}(\eta,0)\|_{L^2_{-\nu}}&\leq& C  M^{\nu}\|\eta\|_{1/2,2} \eea
\noindent for any weights $\nu>0$.

 In particular, it is $\text{d}\slashed{\mathbb D}_{h_\circ}(\eta,0)$ effectively supported where $r=O(M^{-1})$ in the sense of Definition \ref{effectivesupport}. 
\end{prop}

\begin{proof}
By Lemma \ref{asymptoticexpansion}, each $\Phi_\tau$ is polyhomogeneous. After reducing $\tau_0$, we may assume (via the ``moreover'' statement in  Lemma \ref{asymptoticexpansion}), that the bounds (\refeq{polyhom2}) hold uniformly in $\tau$. Consequently, $\Phi_\tau$ obey the required bounds  (\refeq{typeA}) of Definition  \ref{typesAB} uniformly in $\tau$, so that each term (\refeq{calBdef}) of $\text{d}_{h_\circ}\slashed{\mathbb D}(\eta,0)$ is a term of Type A with weight $w_A\leq 2$ in the sense of Definition \ref{typesAB}. The conclusion follows directly from applying Item (A) of that Lemma \ref{deformationbounds} with $\beta=0$ and then $\beta=-\nu$ and invoking the assumption on $\eta$.  
\end{proof}

We remark that the property of being effectively supported is preserved under addition via the triangle inequality, and true support in a region is a particular instance of effective support. Thus if, modulo smaller error terms, we have $ \Pi_\tau \text{d}\slashed{\mathbb D}_{h_\circ}(\eta,0)=-\Pi_\tau(\psi)$ then 

$$ \psi + \text{d}\slashed{\mathbb D}_{h_\circ}(\eta,0)= (1-\Pi_\tau)\psi + (1-\Pi_\tau)\text{d}\slashed{\mathbb D}_{h_\circ}(\eta,0)$$

\noindent and the right hand side will obey an effective support bound if  $\psi$ and $\text{d}\slashed{\mathbb D}_{h_\circ}(\eta,0)$ do.

\subsection{Three Fourier Regimes}
\label{section10.1}

As explained in Section \ref{section2.5}, the gluing iteration combines two different methods of canceling the obstruction. 

These two methods were described in Sections \ref{section5.2} and \ref{section6.2} respectively. These defined two invertible elliptic operators 

\begin{eqnarray}
\underline T_{\Phi_\tau}: L^{1/2,2}(\mathcal Z_\tau; N\mathcal Z_\tau)&\lre& L^2(\mathcal Z_\tau; \mathcal C_\tau)\label{Tproxy}\\
\text{ob}_\tau^{-1}\slashed D_{A_\tau}  :  \ \ \ \   \ \ \ \ \ \  \ \mathcal X_\tau \  \ \ \ \ \ \ \ & \lre& L^2(\mathcal Z_\tau; \mathcal C_\tau)\label{Dproxy}.
\end{eqnarray}
\noindent both of which give a method of canceling the obstruction. (\refeq{Tproxy}) is an isomorphism by Proposition \ref{deformationsmainamd}, and (\refeq{Dproxy}) is by Corollary \ref{solvingcokernel}.  

Definition \ref{Fourierregimes} divides the codomain $L^2(\mathcal Z_\tau; \mathcal C_\tau)\simeq  \text{\bf Ob}_\tau^\perp$ (recall the latter is defined preceding Corollary \ref{solvingcokernel}) into three Fourier regimes; we now proceed to use the two operators above to define three similar regimes in the domains. These are combined into a single subspace 

$$\frak W_{\e,\tau}\subseteq L^{1/2,2}(\mathcal Z_\tau; N\mathcal Z_\tau) \oplus \mathcal X_\tau,$$

\noindent on which (a minor modification of) $\underline T_{\Phi_\tau}\oplus \text{ob}_\tau^{-1}\slashed D_{A_\tau}$ restricts to an isomorphism, and it is this subspace  $\frak W_{\e,\tau}$ that is ultimately incorporated into the space $\mathcal H_{\e,\tau}$ on which the contraction mapping  (\refeq{Tdef}) is defined.  In fact, we will see that $\frak W_{\e,\tau}\subseteq C^\infty(\mathcal Z_\tau; N\mathcal Z_\tau) \oplus \mathcal X_\tau$ consists only of smooth deformations. This justifies in hindsight our {\it a priori} assumptions of high regularity (in e.g.  the statements of  \ref{boundedUniversalDirac}  and Lemmas \ref{relatingvariationsoutside}, \ref{relatingvariationsinside}) on the linearized deformations of $\mathcal Z_\tau$.

\medskip 

The definition of $\frak W_{\e,\tau}$ first requires a small modification of the operator $\underline T_{\Phi_\tau}$. If this operator preserved Fourier modes, then the pre-images of the three Fourier regimes in Definition \ref{Fourierregimes} would also have Fourier modes supported in the same three regimes. Even when $\underline T_{\Phi_\tau}$ does not preserve Fourier modes, the tame estimates of Proposition \ref{quantitativehigherorder} (cf. Proposition \ref{deformationsmainamd}) show that solutions $\eta$ of $\underline T_{\Phi_\tau}(\eta)=f$ behave {\it as if} this were true\footnote{In the sense that adding $s$ derivatives increases the norm by at most a constant times $(L^\text{low})^s$, and likewise for $L^\text{med}$. }, provided the metric and spinor $\Phi_\tau$ obey the assumptions of that Proposition. We may force these assumptions to hold by truncating Fourier modes of the polyhomogeneous $\Phi_\tau$ and smooth parameter $(g_\tau, B_\tau)$.

In Fermi coordinates (Definition \ref{Fermicoords}) and the accompanying trivialization (Lemma \ref{Fermitrivialization}) on $N_{r_0}(\mathcal Z_\tau)$, smooth objects may be decomposed using Fourier modes in the $t$-direction, leading to families of Fourier series smoothly parameterized by the normal coordinates $(x,y)$. Since $\text{d}{\mathbb D}_{h_0}(\eta,0)$ is supported in $N_{r_0}(\mathcal Z_0)$, we may define a modified version of $\underline T_{\Phi_\tau}$ by restricting these Fourier modes. With $\pi^\text{low}$ the projection to the same range of Fourier modes as in Definition \ref{Fourierregimes}, we define

\smallskip 

\be  (g^\circ_\tau, B_\tau^\circ) =(\pi^\text{low}(g_\tau), \pi^\text{low}(B_\tau)) \hspace{2cm} \Phi_\tau^\circ := \pi^\text{low}(\Phi_\tau)\label{circdefinitions1}\ee

\noindent where $\pi^\text{low}$ applied for every fixed $(x,y)\in D_{r_0}$. These truncated structures give rise to a corresponding operator:

\begin{defn} \label{tametruncationdef} The {\bf tame truncation} of the deformation operator, denoted $\underline T_{\tau}^\circ$ is defined by 

$$\underline {T}_\tau^\circ := \text{ob}_\tau^{-1} \circ \Pi_\tau \circ \mathcal B_{\tau}^\circ$$ 

\noindent where

\be  \underline{ \mathcal B}_\tau^\circ:={\Bigg(}-\frac{1}{2}\sum_{ij}  {\dot{\underline g}_\xi^\circ}(e_i,e_j) e^i . \nabla^{g_0}_j  + \frac{1}{2} d \text{Tr}_{g_0}( {\dot{\underline g}_\xi^\circ}).  +\frac{1}{2} \text{div}_{g_0}( {\dot{\underline g}_\xi^\circ}). +  \mathcal R(B_0,  {\dot{\underline g}_\xi^\circ}). \Bigg)\Phi^\circ_\tau,  \label{truncatedB}\ee

\noindent with $\underline g_{\xi}^\circ:=\underline F_\xi^*(g_\tau^\circ)$, and $\dot{\underline g}_{\xi}^\circ:=\d{}{s}\big |_{s=0}\underline F_{s\xi}^*(g_\tau^\circ)$. We also define the {\bf tame truncation error} $\frak t^\circ_\tau$ as the difference

$$ \underline T_{\Phi_\tau}=:\underline T_\tau^\circ + \frak t^\circ_\tau$$

\noindent from the original operator.
\end{defn}

Note that this operator is precisely the analogue of $\underline T_{\Phi_\tau}$, but formed using the truncated data (\refeq{circdefinitions1}). (See also the expression \refeq{calBdef} and the discussion preceding Theorem {\ref{deformationsmaina}}). 

By construction, Corollary (\ref{quantitativehigherorder}) now applies to show the following. 

\begin{cor} \label{fraktcircbounds}For any $m\in \N$, there is a constant $C_m$ (uniform in $\e,\tau$) depending on $m$ such that the following hold. 

\begin{enumerate}
\item[(A)] The bound 
$$\|\frak t^\circ_\tau(\eta)\|_{2}\leq C_m \e^m \| \eta\|_{1/2,2}. $$ 

\noindent holds for $\frak t^\circ_\tau$. In particular, after possibly reducing $\e_0$, $\underline T_\tau^\circ$ is invertible.  

\item[(B)] The estimates (\refeq{strongellipticest}) hold uniformly in $\tau$. In particular, if 

$$\underline T_\tau^\circ(\eta^\text{low})=\Psi^\text{low} \hspace{1.5cm}\text{and}\hspace{1.5cm} \underline T_\tau^\circ(\eta^\text{med})=\Psi^\text{med}$$

\noindent are solutions for $\Psi^\text{low}, {\Psi}^\text{med}$ supported in the corresponding Fourier regime as in Definition \ref{Fourierregimes}, then 
\begin{eqnarray} \|\eta^\text{low}\|_{m+1/2,2} &\leq& C_m(L^\text{low})^{-m} \|\Psi^\text{low}\|_{L^2}   \\ \|\eta^{\text{med}}\|_{m+1/2,2} &\leq &C_m(L^\text{med})^{-m}\|\Psi^\text{med}\|_{L^2}\label{etalowmedest}\end{eqnarray}
\noindent uniformly in $\tau$ for any $m\geq 0$. 
\end{enumerate}

\smallskip

\end{cor}

\begin{proof}  Because $g_\tau, B_\tau, \Phi_\tau$ are smooth in the tangential directions, the Sobolev embeddings $C^3(S^1)\hookrightarrow H^4(S^1)$ applied for each fixed $(x,y)$ implies that 

$$\|\Phi_\tau - \Phi^\circ_\tau \|_{C^3}\leq C \e^{m + 2 } \|\Phi_\tau\|_{C^{2m + 10}} \hspace{2cm} \|g_\tau-g_\tau^\circ\|_{C^3} \ + \ \|B_\tau-B_\tau^\circ\|_{C^1}\leq C \e^{m+2} \|(g_\tau, B_\tau)\|_{C^{2m + 10}}$$

\noindent for any $m\in \N$. This follows because the difference $\Phi_\tau - \Phi^\circ_\tau$ is supported in Fourier modes above $L^\text{low}=\e^{-1/2-\gamma^+}$, thus each two derivatives brings out a factor of $\e^{1+2\gamma^+}\leq \e$. 

It is then easy to verify that  the $C^2$ norm of each term arising in the difference of the corresponding  pullback metrics ${\underline g}_{\xi,\tau}-{\underline g}_{\xi,\tau}^\circ= \underline F_\xi^*(g_\tau-g_\tau^\circ)$ obeys the same bounds (independent of $\xi \in \mathcal E_\tau$), and thus the $L^2$ norm of each term in the difference 

$$\underline {\mathcal B}_{\Phi_\tau}-\underline{\mathcal B}_\tau^\circ $$

\noindent does as well, where these are Notation \ref{underlinenotation} and (\refeq{truncatedB}) respectively. The bound on the operator norm of $\frak t_\tau^\circ: L^{1/2,2} \to L^2$ in Part (A) follows, since the projection $\Pi_\tau$ can only decrease the norm and $\text{ob}_{\tau}^{-1}$ is bounded. The second statement in Part (A) is then immediate from Neumann series.

For Part (B),  since $g_\tau, B_\tau, \Phi_\tau$ are smooth in the tangential directions, and $g_\tau^\circ, B_\tau^\circ, \Phi_\tau^\circ$ have only Fourier modes less than $L^\text{low}$, the hypotheses of Corollary \ref{strongellipticest} hold with $M=L^\text{low}$. 
\end{proof}

Using $\underline T_\tau^\circ$, we now make the following definition of the subspace $\mathfrak W_{\e,\tau}$.

\begin{defn} \label{frakWdef}Let $ \frak W_{\e,\tau}\subseteq L^{1/2,2}(\mathcal Z_\tau; N\mathcal Z_\tau) \ \oplus \  \mathcal X_\tau$ be the closed subspace given as the image of the following composition. 

    \begin{center}
\tikzset{node distance=5.6cm, auto}
\begin{tikzpicture}
\node(A){$L^{2}(\mathcal Z_\tau;\mathcal C_\tau)$};
\node(C)[right of=A]{$\begin{matrix} L^2(\mathcal Z_\tau; \mathcal C_\tau)\\ \oplus \\ L^2(\mathcal Z_\tau; \mathcal C_\tau)\end{matrix}$};
\node(C')[right of=C, yshift=0cm]{$\begin{matrix} L^{1/2,2}(\mathcal Z_\tau; N\mathcal Z_\tau)\\ \oplus \\ \mathcal X_\tau. \end{matrix} $};
\draw[->] (A) to node {$ \begin{pmatrix} \pi^\text{low} + \pi^\text{med} \\ \pi^\text{high}\end{pmatrix}$} (C);
\draw[->] (C) to node {$  \begin{pmatrix} \underline T_{\tau}^\circ \\ \text{ob}^{-1}_\tau \circ \slashed D_{A_\tau}\end{pmatrix}^{-1}$} (C');
\end{tikzpicture}\end{center}

\noindent  $\frak W_{\e,\tau}$ is equipped with the norm 

\smallskip 

\be \|(\eta, u)\|_{\frak W}:= \left( \|\eta^\text{low}\|_{1/2,2}^2  \ + \ \e^{-1/3}\|\eta^\text{med}\|_{1/2,2}^2  \ + \ \e^{-4/3} \|u\|_{H^1_e} \right)^{1/2}.\label{frakWnorm}\ee

\smallskip
\noindent where $\eta =\eta^\text{low}+ \eta^\text{med}$ is the decomposition of $\eta$ such that $T_\tau^\circ(\eta^\text{low}), T_\tau^\circ(\eta^\text{med})$ are the components of the image in the two corresponding Fourier regimes of the codomain of $T_\tau^\circ$ in Definition \ref{Fourierregimes}. Thus in this notation, $\eta^\text{low}\neq \pi^\text{low}(\eta)$ in general, since $T_\tau^\circ$ need not preserve Fourier modes.  
\end{defn}

Note several things about this definition: 

\begin{enumerate}
\item  By construction, $(\underline T_\tau^\circ, \text{ob}_\tau^{-1}\slashed D_{A_\tau}): \frak W_{\e,\tau}\to L^2(\mathcal Z_\tau; \mathcal C_\tau)\simeq \text{\bf Ob}_\tau^\perp$ is an isomorphism. Thus this map can be used to cancel obstruction elements by a joint combination of $(\eta,u) \in \frak W_{\e,\tau}$. 

\item Since the image in the top $L^2(\mathcal Z_\tau; \mathcal C_\tau)$ summand  in the middle of diagram consists of the span of Fourier modes $e^{i\ell t}$ with $|\ell|\leq L^\text{med}$, the intersection $\frak W_{\e,\tau}\cap L^{1/2,2}(\mathcal Z_\tau; N\mathcal Z_\tau)$ consists of solutions $\eta$ to 

$$\underline T_\tau^\circ(\eta)=\psi$$

\noindent with $\pi^\text{high}(\psi)=0$. By Item (B) of Corollary \ref{fraktcircbounds}, all such $\eta$ are smooth. Thus $$\frak W_{\e,\tau}\subseteq C^\infty(\mathcal Z_\tau; N\mathcal Z_\tau)\oplus \mathcal X_\tau$$

\noindent includes only smooth linearized deformations of $\mathcal Z_\tau$.

\item  The composition in the above diagram is {\it a priori} discontinuous in $\e$, since $\pi^\text{med},\pi^\text{high}$ jump when $\e^{-2/3}$ crosses an integer. We may amend these projections so that they are continuous in $\e$ by choosing an $\e$-dependent family of $U(2)$ matrices that interpolate between the two projections in the span of the $\ell$ and $\ell\pm 1$ Fourier modes for $ |\ell| + \tfrac14 \leq \e^{-1} \leq |\ell+1| - \tfrac14$ \footnote{adjusted with an appropriate factor of the length $|\mathcal Z_\tau|$.}.   

With this adjustment made, $\frak W_{\e,\tau}\to (0,\e_0)\times (-\tau_0, \tau_0)$ is a Banach vector bundle, because it is the image of a continuous, fiberwise bounded and injective map of vector bundles given by composition in the above diagram. The domain and codomain of this injection are vector bundles by Sections \ref{section5} -- \ref{section6}. 

\end{enumerate}

\bigskip

In light of the above constructions, we may extend our definition of the universal Seiberg--Witten equations to a map \be \overline{\mathbb{SW}}: \mathbb H^1_{\e,\nu}(\mathcal E_\tau \times \mathcal X_\tau)\to p_1^*\mathbb L^2_{\e,\nu}\label{SWwithmathcalX}\ee where $\mathcal X_\tau$ is the subspace from Lemma \ref{solvingcokernel}, and the bundle is the pullback of $\mathbb H^1_{\e,\nu}$ by the projection $\mathcal E_\tau \times \mathcal X_\tau \to \mathcal E_\tau$. The extended map is defined by replacing $\psi^-$ in (\refeq{universalSWpre}) by $\psi^- + u$ for $u \in \mathcal X_\tau$. This map factors through (\refeq{universalSWdef2}) since the inclusion $\mathcal X_\tau\hookrightarrow H^{1,-}_{\e,\nu}$ given by $u \mapsto \chi^- u$ is bounded (the weights are equivalent on $Y^-$ since it is a compact subset of $\Yminus \mathcal Z_\tau$). By restriction, we also have a map

$$\overline{\mathbb{SW}}: \mathbb H^1_{\e,\nu}( \frak W_{\e,\tau})\to p_1^*\mathbb L^2_{\e,\nu}$$

\noindent and bundles $\mathbb H^{1,\pm}_{\e,\nu}(\frak W_{\e,\tau}), \mathbb L^{2,\pm}_{\e,\nu}(\frak W_{\e,\tau})$ restricting those from Definition \ref{hilbertspaces}.

 \begin{rem}
 There is an important and delicate balance that must be struck in the choice of the three Fourier regimes, thus in the Definition \ref{frakWdef} of $\frak W_{\e,\tau}$. The higher the Fourier mode allow for the deformation $\eta$, the more extreme the loss of regularity of (\refeq{Tproxy}) becomes. There is an upper limit, above which the loss of regularity results in accumulating powers of $\e$ which cause the alternating iteration to fail to converge (in particular, the constant $M$ in Corollary  \ref{effectivesupportdD} becomes an uncontrollable power of $\e$).

    Conversely, there is a lower limit to the modes which may be solved for using (\refeq{Dproxy}). The solutions of (\refeq{Dproxy}) grow across the neck region, rather than decay, so their norms must be suppressed by an additional factor of precisely this growth. The estimates (\refeq{7.1}) and (\refeq{7.2}) provide such estimates, but only for spinors whose support is in the restricted regions therein. The radii of $Y^-, Y^+$ therefore place a lower bound on the obstruction modes that can be solved using the singular spinors in $\mathcal X_\tau$ \footnote{The inner radius of $Y^-$ is determined by the invariant scale of the model solutions, but the radius of $Y^+$ is ultimately an arbitrary choice. The tension between these two regimes of obstruction cancellation, however, cannot be eliminated simply by scaling these radii.}

    It should be regarded, perhaps, as a minor miracle that these upper and lower bounds can be satisfied simultaneously. Were the powers of $\e$ less fortuitous, restricting the ranges of Fourier modes solved by deformations and singular spinors respectively to the regimes that allowed convergence of the iteration would leave a gap in the spectrum of $\text{\bf Ob}_\tau$ that could not be cancelled. 
    
     \end{rem}
    
    \subsection{Contraction Subspaces}
\label{section10.2}

We now define the spaces $\mathcal H, \mathfrak L$ on which the approximate inverse $\mathbb A$ and the contraction $T$ as in (\refeq{Tdef}) are defined. 
    
These definitions involve attaching weights of various power of $\e$ to the norms of $\mathbb H^{1,\pm}_{\e,\nu}, \mathbb L^{2,\pm}_{\e,\nu}$. The reader is warned up front that, while there is some logic in choosing these weights (see Remark \ref{HLweightschoice}), they are ultimately chosen in hindsight, after some trial and error with the weights appearing in the gluing iteration. The goal is to construct these spaces so that the following two criteria are true: 

\begin{enumerate}
\item[(C1)] $\mathcal H\subseteq T\mathbb H^1_\e(\mathcal E_\tau\times\mathcal X_\tau)$ is a closed subspace (in a trivialization of the bundle) on which the linearization $\text{d}\overline{\mathbb {SW}}$ has Index 0 (cf. Subsection \ref{contractionsubspacessection}).
\item[(C2)] The map  (\refeq{Tdef}) is ultimately a contraction in this norm. 
\end{enumerate}

Recall that $\bold 1\pm$ denote the indicator functions of the regions $\{r \leq \e^{2/3-\gamma^+}\}, \{r \geq \e^{2/3-\gamma^+}\}$ respectively.

\begin{defn} \label{contractionspacesdef}Set \bea \mathcal H^+&:=& \left\{\mathcal L_{(\Phi^{(1)}, A^{(1)})}^{-1}(g\bold 1^+) \  \  \Big | \ \ g\in L^2(Y^+)\right\}\subseteq H^{1,+}_\e \\  \mathcal H^-&:= &\left\{\mathcal L_{(\Phi_\tau, A_\tau)}^{-1}((1-\Pi_\tau)g\bold 1^-) \  \  \Big | \ \ g\in L^2(Y^-)  \right\}\subseteq H^{1,-}_\e\eea
where $\mathcal H^-$ uses the solution in $H^{1,-}_\e$ that is $L^2$-orthgonal to $\Phi_\tau$ on $Y^-$. Equip these with the norms \begin{eqnarray} \|(\ph,a)\|_{\mathcal H^+}&=& \e^{-1/12-\gamma_{\mathcal L}} \|\mathcal L_{(\Phi^{(1)}, A^{(1)})}(\ph,a)\bold 1^+\|_{L^2} \label{H+norm} \\ \|(\psi,b)\|_{\mathcal H^-}&=&\left(\|\psi^\text{Re}\|^2_{rH^{1}_e} \ + \ \e^{\nu}\|\psi^\text{Re}\|^2_{rH^1_{-\nu}} \ + \ \e^{-1/2}\|(0,\psi^\text{Im}, b)\|_{H^{1,-}_\e} \right)^{1/2}. \label{H-norm}\end{eqnarray}

\noindent where $\nu=\nu^{-}=\tfrac12-\gamma^-$, and $\gamma_{\mathcal L}$ is as in Theorem \ref{Insideinvertibility}. 
\end{defn}

Note that in this definitions, $H^{1,\pm}_{\e}$ denotes the space with the weight $\nu=0$. Note also that the two linearization are taken at the model solutions $(\Phi^{(1)}_{\e,\tau}, A^{(1)}_{\e,\tau})$ from Theorem \ref{PartImain} on $Y^+$ and at the limiting eigenvector $(\Phi_\tau, A_\tau)$ on $Y^-$. These linearization are invertible by Theorem \ref{PartImain}(D) for $Y^+$, and by Lemma (\ref{solvingoutsideim}) and Definition (\ref{Obdefn}) for $Y^-$. Theorem \ref{PartImain}(D) implies that (\refeq{H+norm}) is equivalent to the $H^{1,+}_{\e}$ norm.

Using Lemma \ref{SWtrivializations}, there is a trivialization 

$$\underline{\Upsilon}:  \mathbb H^{1}_{\e,\nu}(\mathcal E_\tau \times \mathcal X_\tau) \simeq \Big (H^{1,+}_{\e,\nu} \oplus H^{1,-}_{\e,\nu} \Big )\times (\mathcal E_\tau \times \mathcal X_\tau).$$

\noindent In this trivialization, we define $\mathcal H_{\e,\tau}, \frak L_{\e,\tau}$ as follows. 
\begin{defn}\label{contractionspacesdefII}
Define
\bea \mathcal H_{\e,\tau}&:=& \mathcal H^+\oplus \mathcal H^- \oplus \frak W_{\e,\tau} \oplus \R \hspace{1.8cm} \frak L_{\e,\tau}:= L^2(Y)
\eea
\noindent and equip these with the norms
\begin{eqnarray}  \|(h^+, h^-, \eta,\mu)\|_{\mathcal H}&:=&\left( \|h^+\|_{\mathcal H^+}^2  \ + \  \|h^-\|_{\mathcal H^-}^2  \ + \  \|\eta\|_{\frak W}^2 \ + \  \e^{-2}|\mu|^2\right)^{1/2}\vspace{.5cm} \\ & & \nonumber \\ \medskip  \|\frak e\|_{\frak L}&:=&\Big(\e^{-2/12-2\gamma_{\mathcal L}}\|\frak e\bold 1^+\|^2_{L^2} \ +   \   \|\frak e^\text{Re}\bold 1^- \|^2_{L^2}  \ + \  \e^{\nu}  \|\frak e^\text{Re}\bold 1^-\|^2_{L^2_{-\nu}}\smallskip \nonumber \\ & & \   \   \  + \  \  \e^{-1/3} \|\frak e^\text{Im}\bold 1^-\|^2_{L^2} \  \ + \  \   \e^{-1/3}\|\pi^\text{med}\Pi_\tau(\frak e \bold 1^-)\|^2_{L^2} \Big)^{1/2} \label{frakLnorm}\end{eqnarray}
\noindent where $\frak e=(\frak e^\text{Re}, \frak e^\text{Im}) \in  L^2(S^\text{Re})\oplus L^2(S^\text{Im}\oplus \Omega^0\oplus \Omega^1)$.  Here $\nu=\nu^-=\tfrac12 - 10^{-6}$ is the outside weight. 
\end{defn}
\smallskip

These families of spaces form Banach vector bundles over pairs $(\e,\tau)$ for $\e,\tau$ sufficiently small (the norms on $\mathcal H^\pm$ are equivalent for different $\e,\tau$, though not uniformly, and $\frak W_{\e,\tau}$ is a vector bundle as following Definition \ref{frakWdef}).

\begin{rem}\label{HLweightschoice} As explained above, the definition of these norms requires some hindsight from attempting to do the gluing iteration. There are a few notable points, however. 

Weighted terms in the $\frak L$ norm dictate that the $\nu$-weighted term in the norms is larger (for norm $O(1)$) than the unweighted term by at most a factor of $\e^{-\nu/2}$, despite the fact that $\sup r^{-\nu}=\e^{-\nu(2/3-\gamma)}$ on $\text{supp}(\bold 1^-)$. This shows that configurations in $\mathfrak L$ that do not have $\e^{1/2}$-effective support receive a large penalty in their norm. In the same fashion, the powers of $\e^{-1/2}$ in (\refeq{H-norm}) impose a large penalty unless the $S^\text{Im}\oplus \Omega^0\oplus \Omega^1$ components are small compared to those in $S^\text{Re}$. This builds in the perspective that the imaginary and form components should be an almost negligible error in the iteration. 

In the same fashion, the factor of $\e^{-1/3}$ suppresses the Fourier modes in the medium range both in the domain (\refeq{frakWnorm}), and in the codomain (\refeq{frakLnorm})

\end{rem}

The following lemma shows the linearized Seiberg--Witten equations are bounded on the above spaces with operator norm independent of $\e$ (up to a factor of $\gamma)$. This assertion is non-trivial: it means that the new norms control the loss of regularity of the deformation operator. Indeed, if one simply uses the $L^{1/2,2}, H^{1,\pm}_\e$ and $L^2$-norms without the correct weights and powers of $\e$, the operator norm is only bounded by a constant times $\e^{-4/3}$ . Recall that $h_1$ is the model configuration defined in Eq. (\refeq{h1seconddef}).  
\begin{lm} \label{frakLbounded}
There is a constant $C$ independent of $\e,\tau$ such that the linearization $\text{d}\overline{\mathbb{SW}}_{h_1}: \mathcal H\to \frak L$ satisfies 
\be \|\text{d}\overline{\mathbb{SW}}_{h_1}(h)\|_{\frak L}\leq C\e^{-\gamma} \|h\|_{\mathcal H}\label{dSWbounded}\ee 
for some $\gamma<<1$. 
\end{lm}

\begin{proof}
By Proposition \ref{deformationUSW}, the linearization acting on $h=(\xi, \ph, a, \psi,b,\mu ,u)$ where $\xi=\e \eta$ may be written in the trivialization of Lemma \ref{SWtrivializations} as \begin{eqnarray}\text{d}\overline{\mathbb {SW}}_{h_1}(h)&=&\tfrac{1}{\e}\underline{\mathcal B}_{\Phi^{(1)}}(\xi)\  + \  \mathcal L_{(\Phi^{(1)}, A^{(1)})}(\frak p) \ + \ \mu \chi^- \frac{\Phi_{\tau}}{\e} \\
&=& \text{d}\slashed{\mathbb D}_{h_\circ}(\eta,0) + \Xi(\eta) + \mathcal L_{(\Phi^{(1)}, A^{(1)})}(\frak p) \ + \ \mu \chi^- \frac{\Phi_{\tau}}{\e} \label{fourterms}. \end{eqnarray}

\noindent where $\Xi$ is as in (\refeq{Xidef}), and $\frak p=\chi^+(\ph,a) + \chi^-(\psi+ u,b)$ where $(\ph,a)\in H^{1,+}_\e$, $(\psi,b)\in H^{1,-}_\e$ and $u^-\in \mathcal X_\tau$. We now bound four sub-terms independently. 
\medskip 

\begin{enumerate}

\item[(1)] {\underline {$H^{1,+}_\e $ Terms}}. Abbreviating $\mathcal L_{(\Phi_1, A_1)}=\mathcal L$, and using the definition (Definition \ref{contractionspacesdef}) of $\mathcal H^+$, 
\bea
\|\mathcal L(\chi^+(\ph,a))\|_\frak L&=& \e^{-2/12-2\gamma_{\mathcal L}}\|\mathcal L(\ph,a)\bold 1^+\|^2_{L^2}  +   \|d\chi^+\ph^\text{Re}\bold 1^-\|^2_{L^2} + \ \e^{\nu/2}\|d\chi^+\ph^\text{Re}\bold 1^-\|^2_{L^2_{-\nu}} \\   & &\ + \ \e^{-1/3}\|d\chi^+(\ph^\text{Im},a) \bold 1^-\|^2_{L^2} \ + \ \e^{-1/3} \|\pi^\text{med}\Pi_\tau(d\chi^+\ph^\text{Re}\bold 1^-)\|^2_{L^2} \label{outsideboundedL}\\  
&\overset{(?)}\leq & \|(\ph,a)\|_{\mathcal H^+} 
\eea
\noindent The first term of (\refeq{outsideboundedL}) is bounded by the definition (\refeq{H+norm}) of the $\mathcal H^+$-norm, the second is reduced to the case of the first by Lemma \ref{insidepolynomialdecay}; the third tern is identical to the second since $d\chi^+$ is supported where $r=O(\e^{1/2})$.

The fourth of  (\refeq{outsideboundedL}) is exponentially small by Corollary \ref{corollaryexpdecay} and Theorem \ref{PartImain} because the latter shows $\|-\|_{L^2}\leq \|-\|_{H^{1,+}_\e}\leq \|-\|_{\mathcal H^+}$. The fifth term of (\refeq{outsideboundedL}) is bounded by $O(\e^M)$ as a consequence of  Part (A) of Lemma \ref{obstructiondecaycor} again (note $\pi^\text{med}$ is the projection to {\it only} the modes in the medium range). 
\bigskip

\item[(2)] {\underline {$H^{1,-}_\e $ Terms}}.  Write $\mathcal L_{(\Phi^{(1)}, A^{(1)})}=\mathcal L_{(\Phi_\tau, A_\tau)} + K_1$. For the $(\psi,b)$ portion of $(\psi+u,b)$, the definition (\refeq{frakLnorm}) of the $\frak L$-norm shows

\begin{eqnarray}
\|\mathcal L_\tau(\chi^-(\psi,b))\|^2_\frak L&=& \e^{-2/12-2\gamma_{\mathcal L}} \|\slashed D_{A_\tau}(\chi^-\psi^\text{Re})\bold 1^+\|^2_{L^2}  +   \|\slashed D_{A_\tau}\psi^\text{Re}\bold \|^2_{L^2} + \ \e^{\nu}\|\slashed D_{A_\tau}\psi^\text{Re}\|^2_{L^2_{-\nu}} \\   & &\ + \ \e^{-1/3}\| \mathcal L^\text{Im}(\psi^\text{Im},b)\|^2_{L^2} \ + \ \e^{-1/3} \|\pi^\text{med}\Pi_\tau(\slashed D_{A_\tau}(\chi^-\psi^\text{Re})\bold 1^-)\|^2_{L^2} \label{outsideboundednessL}\\
&\overset{(?)}\leq &  C \|(\psi, b)\|^2_{\mathcal H^-} \nonumber
\end{eqnarray}

\medskip

\noindent where $\mathcal L^{\text{Im}}_{(\Phi_\tau, A_\tau)}$ is as in Lemma \ref{solvingoutsideim}. The second, third and fourth terms are bounded by the $\mathcal H^-$ norm simply by the boundedness of $\slashed D_{A_\tau}: r^{1+\nu}H^1_e\to r^\nu L^2$ and of $\mathcal L^\text{Im}$. 

Note that the first term of $\slashed D_{A_\tau}(\chi^-\psi^\text{Re})\bold 1^+$ is identically zero, because $\text{supp}(\chi^-)\cap \text{supp}(\bold 1^+)=\emptyset$.  For the projection term of (\refeq{outsideboundednessL}) involving $\pi^\text{med}$, note that since $\Pi_\tau(\slashed D_{A_\tau}\psi^\text{Re})=0$ by definition, then

\bea \Pi_\tau\Big([\chi^- \slashed D_{A_\tau}\psi^\text{Re} + d\chi^-\psi^\text{Re}] \bold 1^- \Big)&=& \Pi_\tau\Big( \chi^-\slashed D_{A_\tau}\psi^\text{Re} \ + \  (\chi^-\slashed D_{A_\tau}\psi^\text{Re})\bold 1^+\Big) \ + \ \Pi_\tau(d\chi^- \psi^\text{Re})\\
&=& \Pi_\tau\Big( -(1-\chi^-)\slashed D_{A_\tau}\psi^\text{Re} \ + \  (\chi^-\slashed D_{A_\tau}\psi^\text{Re})\bold 1^+\Big)  \ + \  \Pi_\tau(d\chi^- \psi^\text{Re}),
\eea

\noindent All these terms are exponentially small by Part (B) of Lemma \ref{obstructiondecaycor}. Finally, the additional weight of $\e^{-4/3}$ dominates the factor of $\e^{-1/3}$ on the $\pi^\text{med}$ term. This completes the bound, excluding the $K_1$ term. Lastly, $K_1$ is exponentially small on $\text{supp}(\chi^-)$ by Corollary  \ref{corollaryexpdecay}.

\bigskip 

\item[(3)] {\underline {$\mu$ Terms}}. The term $\chi^- \e^{-1}\mu \Phi_{\tau}$ is obviously bounded by $\|\mu\|_{\mathcal H}$ because the $\e^{-2}$ weight cancels the $\e$ in the denominator, and $\Phi_\tau\in L^2 \cap L^2_{-\nu}$ (note the power of $\e$ is {\it positive}, hence favorable, on the $L^{2}_{-\nu}$-term). By the $r^{1/2}$ asymptotics of $\Phi_\tau$, one has $|\Phi_\tau|\leq C \e^{1/3-\gamma}$ on $\text{supp}(\bold 1^+)\cap \text{supp}(\chi^-)$, which more than compensates for the $\e^{-1/12-\gamma_\mathcal L}$ weight in (\refeq{frakLnorm}). Finally, since $\pi^\text{med}(\Phi_\tau)=0$, $$\|\pi^\text{med}(\chi^-\Phi_\tau \bold 1^-)\|_{L^2}=\|\pi^\text{med}((1-\chi^-)\Phi_\tau)\|_{L^2} \leq C \e^{M}$$ 

\noindent using Part (B) of Lemma \ref{obstructiondecaycor} again. 
 
 \bigskip

\item[(4)] {\underline {Deformation Terms}}. Proceeding now to the terms involving $\eta=\e^{-1}\xi$, Corollary \ref{fraktcircbounds} shows that for $\eta \in \frak W$, 

\be \|\eta^\text{low}\|_{s+1/2,2}  \leq C\e^{s(-1/2-\gamma)}\|\eta^\text{low}\|_{1/2,2}\hspace{2cm}\|\eta^\text{med}\|_{s+1/2,2}  \leq C\e^{s(-2/3)}\|\eta^\text{med}\|_{1/2,2}.\label{cor116again}\ee

\noindent (Cf. Definition \ref{gluingpermissible}). We bound the terms on the supports of $\bold 1^\pm$ separately, beginning with the outside.

Substituting these into the conclusions of Proposition \ref{effectivesupportdD} applied with $\nu=0,\nu^-$ and Proposition \ref{relatingvariationsoutside} show that 
\begin{eqnarray} \|\text{d}\slashed{\mathbb D}_{h_\circ}(\eta,0) \bold 1^-\|_{L^2}  \  &\leq & C \|\eta\|_{1/2}  \label{unweighteddDbound}\\
 \|\text{d}\slashed{\mathbb D}_{h_\circ}(\eta,0)\bold 1^-\|_{L^2_{-\nu}}&\leq& C (\e^{-\nu(1/2+\gamma)}\|\eta^\text{low}\|_{1/2} + C \e^{-2\nu/3}  \|\eta^\text{med}\|_{1/2}) \leq C \e^{-\nu/2-\gamma}\|\eta\|_{\frak W} \label{weighteddDbound}\\
 \|\Xi^-(\eta)\|_{L^2}&\leq & C \e^{11/12-\gamma}( \e^{-1/2}\|\eta^{\text{low}}\|_{1/2} + \e^{-2/3} \|\eta^{\text{med}}\|_{1/2}) \leq C \e^{5/12-\gamma} \|\eta\|_{\frak W}. \label{Ximinusbound}\\
  \e^{\nu/2}\|\Xi^-(\eta)\|_{L^2_{-\nu}}&\leq& C \e^{4/12-\gamma}\|\eta\|_{\frak W}\label{weightedXiminus}.
 \end{eqnarray}
 
 \noindent The fourth of these bounds follows from the third, because for $\nu=\nu^-$,  $r^{-\nu}\leq \e^{-2\nu/3}\leq \e^{-\nu/2}\e^{-\nu/6}$ on $\text{supp}(\bold 1^-)$. The $\e^{-\nu/2}$ is moved to the left hand side, and $\tfrac{\nu}{6}=\frac{1}{12}-\gamma$, which yields (\refeq{weightedXiminus}).

 Next, for the inside terms, substituting the bounds (\refeq{cor116again}) into Proposition \ref{relatingvariationsinside} shows that 
 \begin{eqnarray}
 \|\text{d}\slashed{\mathbb D}_{h_0}(\eta,0)\bold 1^+ + \Xi^+(\eta)\|_{L^2}&\leq&  C \e^{-\gamma} \Big( \e^{1/3} \|\eta\|_{1} \ + \   \e^{11/12} \|\eta\|_{3/2+\underline \gamma } \ + \  \ \e \|\eta\|_{2}  \ + \ \e^{19/12} \|\eta\|_{5/2 + \underline\gamma} \Big)\nonumber \\
 &\leq & C \e^{1/12-\gamma}\|\eta^{\text{low}}\|_{1/2} + C\|\eta^\text{med}\|_{1/2} \\ &\leq& C \e^{1/12-\gamma} \|\eta\|_{\frak W} \label{Xiplusbound}. 
 \end{eqnarray}
 \noindent  Together, (\refeq{unweighteddDbound})--(\refeq{Xiplusbound}) show that all but the $\pi^\text{med}$-term in the $\frak L$-norm are bounded for $\text{d}{\mathbb{SW}}(\xi,0,0)=(\text{d}\slashed{\mathbb D}_{h_\circ}+\Xi)(\xi,0,0)$.

To complete the proof, we show the bound
 
 $$ \e^{-1/3}\|\pi^\text{med}\circ \Pi_\tau(\text{d}\slashed{\mathbb D}_{h_\circ}(\eta,0) \bold 1^- \ + \ \Xi^-(\eta) )\|^2_{L^2} \leq C \|\eta\|^2_{\frak W}$$

\noindent on the final projection term, which is slightly more involved. This is because $\eta$ contains modes in both the low and medium ranges, so the projection $\pi^\text{med}$ is not {\it a priori} small (e.g. we cannot apply Lemma \ref{obstructiondecaycor} as above). For the $\Xi^+$ term, the power of $\e$ in (\refeq{Ximinusbound}) is sufficient to overcome the additional factor of $\e^{-1/3}$. For the remaining term, we argue as follows.

Since $1=\bold 1^+ + \bold 1^-$, the triangle inequality shows 
\smallskip 
\be \hspace{2cm}  \left\|\pi^\text{med}\Pi_\tau \left(\text{d}\slashed {\mathbb D}_{h_\circ}(\eta,0)\bold 1^-\right)\right\|_{L^2}  \ \leq \  \left\|\pi^\text{med}\Pi_\tau\text{d}\slashed {\mathbb D}_{h_\circ}(\eta,0)\right\|_{L^2} \  + \  \left\|\pi^\text{med}\Pi_\tau \left(\text{d}\slashed {\mathbb D}_{h_\circ}(\eta,0)\bold 1^+\right)\right\|_{L^2}. \label{twodeformationterms}\ee
 \noindent For the first term of (\refeq{twodeformationterms}), 
 \bea \e^{-1/6}\|\pi^\text{med}\Pi_\tau\text{d}\slashed {\mathbb D}_{h_\circ}(\eta,0)\|_{L^2}&\leq&  \e^{-1/6}\| \pi^\text{med}\underline T_\tau^\circ (\eta)\|_{L^2} \ + \ \e^{-1/6}\|\frak t_\tau^\circ(\eta)\|_{L^2} \\ &\leq &C\|\eta^\text{med}\|_{\frak W} + O(\e^M)\|\eta\|_\frak W\eea
 by Definition (\refeq{frakWdef}) and Lemma \ref{fraktcircbounds}. For the second term of (\refeq{twodeformationterms}), Part (B) of Lemma \ref{obstructiondecaycor} applies once more to show this term is exponentially small.
\end{enumerate}
\end{proof}

 The following lemma translates the bounds on the nonlinear terms from Lemma \ref{nonlinearbounds} into bounds in the $\mathcal H, \frak L$-norms. Here, $h_N$ denotes an arbitrary point in $\mathcal H$ with certain hypotheses; eventually, this Corollary will be applied with $h_N$ being the approximate solution at the $N^{\text{th}}$ stage of the gluing iteration. 
 \begin{cor}\label{nonlinearboundscor}
 Suppose that $h_N\in \mathcal H$ satisfies $\|h_N\|_{\mathcal H}\leq C \e^{-1/20}$. Then there is a $C>0$ such that 
 
 \begin{enumerate}
 \item[(A)] $\|\mathbb Q(h)\|_{\frak L}\leq C \e^{2/12-\gamma} \|h\|_{\mathcal H}^2$,
 \item[(B)] $\|\text{d}\mathbb Q_{h_N}(h)\|_{\frak L}\leq C\e^{1/12-\gamma} \|h\|_{\mathcal H}$
 \item[(C)]  If $\|h_1\|_{\mathcal H},\|h_2\|_{\mathcal H}\leq C \e^{-1/20}$, then $\|\mathbb Q(h_1) -\mathbb Q(h_2)\|_{\frak L}\leq C \e^{1/12-\gamma}\|h_1 -h_2\|_{\mathcal H}$
 \end{enumerate}
 \noindent  hold uniformly in $\e,\tau$.

 \end{cor}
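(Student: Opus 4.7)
The strategy is to prove each bound term-by-term along the decomposition $\mathbb Q = Q_{\text{SW}} + Q_{\Phi_1} + Q_{A_1} + Q_{a.\ph} + Q_\mu$ from Proposition \ref{nonlinearterms}. For each piece, Lemma \ref{nonlinearbounds} supplies an unweighted $L^2$ bound on the two regions $\text{supp}(\zeta^+)$ and $\text{supp}(\zeta^-)$, and the task reduces to translating these estimates into the weighted norms on $\mathcal H$ and $\mathfrak L$ in Definitions \ref{contractionspacesdef}--\ref{contractionspacesdefII}. The key translation rules are: (i) Theorem \ref{Insideinvertibility} implies $\|(\ph,a)\|_{H^{1,+}_\e}\lesssim \|(\ph,a)\|_{\mathcal H^+}$ on the inside, (ii) the definition of $\mathcal H^-$ gives $\|\psi^\text{Re}\|_{rH^1_e} \leq \|(\psi,b)\|_{\mathcal H^-}$ while $\|(\psi^\text{Im},b)\|_{H^{1,-}_\e} \leq \e^{1/4}\|(\psi,b)\|_{\mathcal H^-}$, and (iii) on $\text{supp}(\bold 1^-)$ the weight $r^{-\nu}$ is bounded by $\e^{-\nu(2/3-\gamma^-)}$, so the $\e^\nu$-weighted $L^2_{-\nu}$ term in $\mathfrak L$ costs at most $\e^{-\nu/3+O(\gamma^-)}$ over the unweighted $L^2$ term.

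For (A), I combine these translations with the bounds of Lemma \ref{nonlinearbounds}. On the inside, $\|Q_{\text{SW}}\zeta^+\|_{L^2} \leq C\e^{1/3-\gamma}\|(\ph,a)\|_{H^1_\e}^2$, and multiplying by the weight $\e^{-1/12-\gamma}$ from $\mathfrak L$ gives $\e^{1/4-2\gamma}\|h\|_{\mathcal H}^2$, well within the $\e^{2/12-\gamma}\|h\|_{\mathcal H}^2$ target. On the outside, the half-spinor gains in Lemma \ref{nonlinearbounds} combine with the $\e^{1/4}$ saving from $\|(\psi^\text{Im},b)\|_{H^1_\e}\leq \e^{1/4}\|h\|_{\mathcal H}$ and the $\e^{-1/3}$ weight on imaginary components in $\mathfrak L$ to yield an overall power of $\e$ larger than $2/12-\gamma$; the $\e^\nu L^2_{-\nu}$ contribution is absorbed using (iii) above. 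For $Q_{\Phi_1}, Q_{A_1}, Q_{a.\ph}, Q_\mu$, the bounds of Items (II)--(III) of Lemma \ref{nonlinearbounds} each carry a factor of $\e\|\eta\|_{3/2+\underline \gamma}$, and the norm $\|\eta\|_{\mathfrak W}$ controls $\|\eta^\text{med}\|_{3/2+\underline\gamma}$ and $\|\eta^\text{low}\|_{3/2+\underline\gamma}$ by $\e^{-2/3}\|\eta\|_\mathfrak W$ and $\e^{-1/2-\gamma}\|\eta\|_\mathfrak W$ respectively via Corollary \ref{fraktcircbounds}; pairing these with $\|h\|_{\mathcal H}$ factors gives the required power. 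The $\pi^\text{med}\Pi_\tau$ projection term in $\mathfrak L$ is the most delicate: for pieces supported on $\text{supp}(\bold 1^-)$ one uses (\ref{7.1}) to see the projection is exponentially small, while for pieces on $\text{supp}(\bold 1^+)$ one invokes the restriction of Fourier modes on $\eta$ combined with Proposition \ref{cokerpropertiesII} and the direct calculation in the proof of (\ref{dSWbounded}).

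For (B), I note that $\text{d}\mathbb Q_{h_N}(h)$ has the same structural form as $\mathbb Q(h)$ with one factor of $h_N$ and one factor of $h$, so the identical estimates give $\|\text{d}\mathbb Q_{h_N}(h)\|_{\mathfrak L}\leq C\e^{2/12-\gamma}\|h_N\|_{\mathcal H}\|h\|_{\mathcal H}$, and substituting the hypothesis $\|h_N\|_{\mathcal H}\leq C\e^{-1/20}$ gives the claimed $C\e^{1/12-\gamma}\|h\|_{\mathcal H}$ bound (absorbing the loss into $\gamma$). For (C), the fundamental theorem of calculus applied to the smooth family $s\mapsto \mathbb Q(sh_1+(1-s)h_2)$ gives $\mathbb Q(h_1)-\mathbb Q(h_2)=\int_0^1 \text{d}\mathbb Q_{sh_1+(1-s)h_2}(h_1-h_2)\,ds$, and since the interpolating configuration still satisfies $\|sh_1+(1-s)h_2\|_{\mathcal H}\leq C\e^{-1/20}$, the estimate of (B) applies pointwise in $s$ to yield (C).

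The main obstacle is the $\pi^\text{med}\Pi_\tau$ term in the $\mathfrak L$-norm applied to nonlinear products involving deformations, because the obstruction projection is nonlocal and sees contributions from both the $\text{supp}(\bold 1^+)$ and $\text{supp}(\bold 1^-)$ regions. On $\text{supp}(\bold 1^+)$ the radius $r\lesssim \e^{2/3-\gamma}$ is smaller than the characteristic length scale $1/\ell$ for $\ell$ in the medium regime ($\ell \lesssim \e^{-2/3}$), so (\ref{7.2}) does not directly apply and one must instead use that a medium-mode deformation $\eta^\text{med}$ has restricted Fourier support, hence its pairing against $\Psi_\ell^\circ$ in the low/medium regime vanishes or is exponentially small by the same computation as at the end of the proof of (\ref{dSWbounded}). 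On $\text{supp}(\bold 1^-)$, the bound (\ref{7.1}) controls the medium obstruction component by any power of $\e$, which dominates the $\e^{-1/3}$ weight penalty. Organizing these two estimates in tandem is the technical core of the argument.
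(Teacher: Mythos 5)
Your overall architecture matches the paper's proof: bound each of the five terms of $\mathbb Q$ via Lemma \ref{nonlinearbounds}, convert to the weighted $\mathcal H$- and $\frak L$-norms, bilinearize for (B), and integrate along $th_1+(1-t)h_2$ for (C). However, there is one genuine gap in part (A), and one place where you do substantial unnecessary work.

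The gap concerns $Q_{\text{SW}}$ on $\text{supp}(\zeta^-)$. The full correction there is $\chi^+(\ph,a)+\chi^-(\psi,b)$, and on the overlap $\text{supp}(\zeta^-)\cap\text{supp}(\chi^+)$ (roughly $\e^{2/3-2\gamma}\leq r\leq \lambda/2$) the inside correction $(\ph,a)\in\mathcal H^+$ contributes through $(\chi^+)^2Q_{\text{SW}}(\ph,a)$ and the cross-terms. Your argument only invokes the $\e^{1/4}$ saving carried by $(\psi^{\text{Im}},b)$ in the $\mathcal H^-$-norm; the $\mathcal H^+$-norm gives \emph{no} such saving on any component of $(\ph,a)$. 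Running the right column of Item (I) of Lemma \ref{nonlinearbounds} on the pure-inside term gives only $\|Q^{\text{Im}}_{\text{SW}}(\ph,a)\zeta^-\|_{L^2}\leq C\e^{1/4}\|h\|_{\mathcal H}^2$, which after the $\e^{-1/6}$ effective weight on $\|\frak e^{\text{Im}}\bold 1^-\|_{L^2}$ in the $\frak L$-norm yields $\e^{1/12}\|h\|_{\mathcal H}^2$ --- short of the target $\e^{2/12-\gamma}\|h\|_{\mathcal H}^2$. The missing ingredient is Lemma \ref{exponentialdecay}: elements of $\mathcal H^+$ are by definition $\mathcal L_{(\Phi_1,A_1)}^{-1}(g\bold 1^+)$ with data supported in $\{r\leq\e^{2/3-\gamma^-}\}$, so their $S^{\text{Im}}\oplus\Omega$-components are exponentially small on $\text{supp}(\zeta^-)$ (apply the lemma with $K_\e$ whose boundary lies in the buffer between $\text{supp}(\bold 1^+)$ and $\text{supp}(d\zeta^+)$ --- this buffer is the entire reason the auxiliary partition $\zeta^\pm$ is introduced separately from $\bold 1^\pm$). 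With that decay the offending terms are negligible and the remaining ones close as you describe.

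Conversely, the $\pi^{\text{med}}\Pi_\tau$ term that you single out as ``the technical core'' requires no delicacy at all here: $\pi^{\text{med}}$ and $\Pi_\tau$ are bounded projections (Proposition \ref{cokerpropertiesII} gives $\|\text{ob}_\tau^{-1}\Pi_\tau\frak e\|_{L^2(\mathcal Z_\tau)}\leq C\|\frak e\|_{L^2(Y)}$), so $\e^{-1/6}\|\pi^{\text{med}}\Pi_\tau(\frak e)\|_{L^2}\leq C\e^{-1/6}\|\frak e\|_{L^2}$, and this is already absorbed by the blanket observation that every weight in the $\frak L$-norm exceeds the plain $L^2$-norm by at most $\e^{-1/6}$. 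The Fourier-localization arguments you describe (via (\ref{7.1}), (\ref{7.2}) and the computation in the proof of (\ref{dSWbounded})) are needed when one must show the medium component is \emph{much smaller} than $\|\frak e\|_{L^2}$ --- as in Propositions \ref{deformationstep}--\ref{outsidestep} --- but not for this corollary, where a crude $O(\e^{-1/6})$ bound suffices since all of $\mathbb Q$ already carries at least $\e^{1/3-\gamma}$ in $L^2$.
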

 
 \begin{proof} Let $h=(\xi, \ph , a, \psi, b, \mu,u)$. As in the proof of the previous Lemma \ref{HLweightschoice}, the extra weight on the $u$-term in the $\mathcal H$-norm shows that $\|\psi^\text{Re}+u\|_{rH^1}\leq 2\|h\|_{\mathcal H}$. It therefore suffices to prove the corollary with $\psi$ tacitly standing in for $\psi+u$. 
 
 \medskip 
 
 (A) We begin with the proof of Item (A). Let $\mathcal Q=Q_{\Phi} + Q_{A} + Q_{a.\ph} + Q_\mu$ be the latter four nonlinear terms in Proposition (\refeq{nonlinearterms}). Recall that Definition (\refeq{frakWnorm}) means $\e\|\eta\|_{3/2+\underline \gamma}\leq C \e^{1/2} \|\eta\|_{\frak W}$, and Definition \ref{contractionspacesdef} and Theorem \ref{PartImain} imply $\|(\ph,a)\|_{H^{1}_\e}\leq \|(\ph,a)\|_{\mathcal H}$. That the same holds for $(\psi,b)$ is immediate from Definition \ref{contractionspacesdef}. Item (II) and (III) of Lemma \ref{nonlinearbounds} therefore imply $$\|\mathcal Q(h)\|_{L^2}\leq C \e^{1/2-\gamma}\|h\|_{\mathcal H}^2.$$
 \noindent and because the weights in the $\frak L$-norm are larger than the $L^2$-norm by at most $\e^{-1/6}$, it follows that
 $$\|\mathcal Q(h)\|_{\frak L}\leq C \e^{3/12-\gamma}\|h\|_{\mathcal H}^2.$$
 
 Proceeding now to the final term $Q_{\text{SW}}$ of $\mathbb Q_{h_N}=\mathcal Q + Q_{\text{SW}}$, there are four sub-terms coming from $(\chi^+)^2Q_\text{SW}(\ph,a)$, $(\chi^-)^2Q_\text{SW}(\psi,b)$ and the cross-terms. Recall that Proposition \ref{nonlinearbounds} references a partition of unity $\zeta^\pm$, such that $\zeta^+=1$ on the support of $\bold 1^+$.

 We bound two pieces of each coming from the partition of unity $\zeta^\pm$ in Proposition \ref{nonlinearbounds}. By Part (A) of Proposition \ref{nonlinearbounds} and the above, 
$$\|Q_{\text{SW}}(\ph,a)\zeta^+\|_{\frak L}\leq \e^{-1/6}\|\zeta^+Q_{\text{SW}}(\ph,a)\zeta^+\|_{L^2}\leq C\e^{-1/6} \e^{4/12-\gamma} \|(\ph,a)\|_{H^{1,+}_\e}^2 \leq C \e^{2/12-\gamma}\|h\|_{\mathcal H}^2.$$
 \noindent There arguments for the other three terms are identical where $\zeta^+>0$, using that the $H^{1,\pm}_\e$ norms are comparable on the support of $\chi^-$. On the other hand where the second partition function $\zeta^->0$ is positive, the $S^\text{Re}\oplus \Omega$-components of $(\ph,a)$ are exponentially small by Lemma \ref{exponentialdecay} applied with compact sets $K_\e$ whose boundary lies halfway between $\text{supp}(\bold 1^+)$ and $\text{supp}(d\zeta^+)$. In this same region, the $S^\text{Im}\oplus \Omega$-components of $(\psi,b)$ are smaller by a factor of $\e^{1/6}$ by the weight in Definition \refeq{H-norm}. Thus in this region, the right colum of Lemma \ref{nonlinearbounds}  Part (A) provides the necessary bounds (in fact with $\e^{3/12-\gamma})$.

 (B) The derivative $\text{d}_{h_N}\mathbb Q(h)$ is given by the five terms of Proposition \ref{nonlinearterms} viewed as multilinear functions of the arguments, with precisely one argument being chosen from the components of $h$. The proof of the bound in (A) applies equally well in the bilinear setting to show that 
 $$\|\text{d}\mathbb Q_{h_N}(h)\|_{\mathfrak L}\leq C\e^{2/12-\gamma} \|h_N\|_{\mathcal H} \|h\|_{\mathcal H}$$
 \noindent and the assertion follows. More specifically, the requirement that $\|h_N\|\leq C \e^{-1/20}$, means $\e\|\eta_N\|_{3/2+\underline \gamma}\leq C$, hence the assumptions of Parts (B) and (C) of Lemma \ref{nonlinearbounds} are satisfied, and the terms of $\text{d}\mathbb Q_{h_N}$ multi-linear in $h_N$ can be bounded just as in that lemma.  
 
 (C) Follows from Item (B) applied to the family configurations $h_N=th_1 +(1-t)h_2$ and integration.  
  \end{proof}

 \section{The Alternating Iteration}
 \label{section11}
 
 This section combines everything done so far to perform the alternating iteration. As explained in Section \ref{section10}, we will construct a non-linear approximate inverse $\mathbb A: \mathfrak L_{\e,\tau}\to \mathcal H_{\e,\tau}$, so that Eq. (\refeq{Tdef}) is a contraction. More specifically, we will prove the following, for which we recall that $\overline{\mathbb{SW}}_\Lambda=\overline{\mathbb{SW}}-\chi^- \e^{-1}{\Lambda(\tau)\Phi_\tau}$ was defined following Eq. (\refeq{Tdef}).

\begin{prop}\label{fixedpoints} There exist $\e_0, \tau_0$ sufficiently small such that for $\e< \e_0$ and $\tau\in (-\tau_0, \tau_0)$, the following hold. There is a closed ball $\mathcal V_{\e,\tau}\subseteq \mathcal H_{\e,\tau}$ around $0$ such that for $h\in \mathcal V_{\e,\tau}$ and $N\in \N$ the map $\mathbb T$ defined in (\refeq{Tdef}) satisfies the following. 
\medskip
\begin{enumerate}
\item[(A)] The restriction $\mathbb T:\mathcal V_{\e,\tau}\to \mathcal V_{\e,\tau}$ is a $C^1$ family (in $\e,\tau$) of continuous functions of $h\in \mathcal V_{\e,\tau}$.
\medskip
\item[(B)] $\| \overline{\mathbb{SW}}_\Lambda(\mathbb T^N(h))\|_{\frak L}\leq \delta^N \|\overline{\mathbb{SW}}_\Lambda(h)\|_{\mathfrak L},$
\medskip 
\item[(C)] $\|\mathbb T(h_1) - \mathbb T(h_2)\|_\mathcal H \leq C\sqrt{\delta} \|h_1 - h_2\|_\mathcal H$ 

\end{enumerate}
\smallskip 

\noindent where $\delta = \e^{1/48}$. In particular, $T$ is a contraction for $\e$ sufficiently small.  
\end{prop}

\noindent Note the proposition implicitly uses the trivialization from Lemma (\refeq{SWtrivializations}) to conflate an open subset of $T\mathbb H^1_\e$ with $\mathbb H^1_\e$ so that $0\in \mathcal V_{\e,\tau}$ makes sense. The next three subsections each carry out one stage of the alternating iteration, by constructing the three parametrices $P_\xi, P^-, P^+$ respectively which are combined into  $\mathbb A$ (cf. Subsection \ref{contractionsubspacessection}). The proposition is proved in the final subsection, and Theorem \ref{maina} is deduced as a consequence in Section \ref{section12}.

  \subsection{The Deformation Step}
 \label{section11.1}
 
   This subsection constructs the deformation parametrix $P_\xi$, and establishes the first of the three induction steps in the cyclic iteration following  Eq. (\refeq{Fetadiffeo})). This step shows that a combination of deformation of $\mathcal Z_{\tau}$ and singular spinor in $\mathcal X_\tau$ can be jointly chosen to cancel the obstruction components of an error term, {\it without} the error term growing much larger. We emphasize once more that the tangential smoothing gauge (Section \ref{section6.3}) is essential in achieving the latter.

 The following proposition is applied to the error terms inductively, beginning with the error $\frak e_1$ of the initial approximate solutions $(\Phi^{(1)}_{\e,\tau}, A^{(1)}_{\e,\tau})$ in Theorem \ref{PartImain}. For the remainder of Section \ref{section11}, we fix, once and for all, a choice of $(\e,\tau)\in (0,\e_0)\times (-\tau_0, \tau_0)$ and omit this dependence from the subscripts in the notation where no confusion will arise. As before, $\e_0$ and $\tau_0$ are allowed to decrease a finite number of times of the course of the proofs.  

Let $\Pi^\perp: L^2\to \text{\bf Ob}^\perp(\mathcal Z_\tau)$ be the $L^2$-orthogonal projection where the latter is as defined preceding Corollary \ref{solvingcokernel}.  With this notation, the projection to the obstruction is written $\Pi_\tau=(\Pi^\perp, \pi_\tau)$ in the orthogonal decomposition in Definition \ref{Obdefn} where $\pi_\tau$ is the $L^2$-orthogonal projection to the span of $\Phi_\tau$. Define the deformation parametrix
\be  P_\xi: \frak L \lre \frak W\hspace{2cm} P_\xi :=  \left( \underline T_\tau^\circ,\slashed D_{A_\tau}\right)^{-1}\circ \Pi^\perp \circ \bold 1^-  \label{Pxi} \ee

\noindent where $(\underline T_\tau^\circ,\slashed D_{A_\tau})$ is the map from Definition \ref{frakWdef}, with the map denoted $\text{ob}$ now kept implicit in the notation.

\begin{prop} \label{deformationstep} $P_\xi$ is a linear operator uniformly bounded in $\e,\tau$ and satisfies the following property. If for $N\in \N$,  $h_N \in \mathcal H$ is a configuration with 

\begin{enumerate}
\item[(I)] $\|h_N\|_{\mathcal H}\leq C\e^{-\gamma}\|\frak e_1\|_\frak L$ 
\item[(II)] $\overline{\mathbb{SW}}_\Lambda (h_N)=\frak e_N$   \ (resp. $ \text{d}\overline{\mathbb{SW}}_{h_1} (h_N)=\frak e_N $) where $\|\frak e_N\|_\frak L\leq C \delta^{N-1}\|\frak e_1\|_\frak L$, 
\end{enumerate}
\noindent then the updated configuration \be h_N'=( \text{Id} - P_\xi \circ \overline{\mathbb{SW}}_\Lambda)h_N\label{hNprime}\ee
satisfies 
$$ \overline{\mathbb{SW}}_\Lambda (h'_N)=(1-\Pi)\frak e_N'  \ + \ \frak g'_N \ + \  \lambda'_N \e^{-1} \Phi_{\tau} \ + \ \frak e_{N+1}$$
(resp. the same for $\text{d}\overline{\mathbb{SW}}_{h_1}$), where $\frak e_N', \frak e_{N+1}\in \frak L$, and  $\lambda \in \R$ obey 
\begin{enumerate}
\item[(1)] $\|\frak e_N'\|_\frak L\leq C \e^{-\gamma} \|\frak e_N\|_{\frak L}$, \ and \  $\Pi(\frak e'_N)=0$. 
\item[(2)] $\|\frak g_N'\|_{\frak L}\leq C \e^{-\gamma}\|\frak e_N\|_\frak L$ \ and \ $\frak g_N'=\frak g_N'\bold 1^+$.
\item[(3)] $\|\frak e_{N+1}\|_{\frak L}\leq C \e^{1/48} \delta \|\frak e_N\|_\frak L$
\item[(4)] $|\lambda'_N |\leq C \e^{1-\gamma}\|\frak e_N\|_\frak L$.
\end{enumerate} 
Moreover, $h_N'$ continues to satisfy (I). 

\end{prop}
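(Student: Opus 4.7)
The plan is to verify boundedness of $P_\xi$ first, then expand the new error $\overline{\mathbb{SW}}_\Lambda(h'_N)$ using the decomposition of $\text{d}\overline{\mathbb{SW}}_{h_1}$ from Proposition~\ref{SWderivabstract} and (\refeq{Xipmdef}), and finally group the resulting terms into the four categories appearing in the conclusion. Throughout we write $P_\xi \frak e_N = (\eta^{\text{med}}, u) \in \frak W$ with $\eta^{\text{med}}$ cancelling the medium Fourier modes of $\Pi^\perp(\bold 1^- \frak e_N)$ via $\underline T^\circ_\tau$, and $u \in \mathcal X_\tau$ cancelling the high modes via Corollary~\ref{solvingcokernel}. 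Low modes of $\Pi^\perp(\bold 1^- \frak e_N)$ are \emph{not} cancelled by $P_\xi$: they will survive as part of $(1-\Pi_\tau)\frak e_N'$ and will be dealt with at the ``solve on $Y^-$'' step, where, crucially, the estimate of Corollary~\ref{solvingcokernel} is available.

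For the uniform boundedness of $P_\xi : \frak L \to \frak W$, the medium-mode contribution obeys $\|\eta^{\text{med}}\|_{1/2,2} \leq C\|\pi^{\text{med}}\Pi^\perp(\bold 1^-\frak e_N)\|_{L^2}$ by the uniform invertibility of $\underline T^\circ_\tau$ in Corollary~\ref{fraktcircbounds}; after multiplying by the weight $\e^{-1/3}$ in (\refeq{frakWnorm}), this is dominated by the matching $\e^{-1/3}\|\pi^{\text{med}}\Pi_\tau(\frak e)\|^2_{L^2}$ term in the $\frak L$-norm (\refeq{frakLnorm}). For the high-mode contribution, Corollary~\ref{solvingcokernel} gives $\|u\|_{H^1_e}\leq C\|\pi^{\text{high}}\Pi^\perp(\bold 1^- \frak e_N)\|_{L^2}$, and because $\bold 1^-\frak e_N$ has $\text{supp}\subseteq \{r\geq c\e^{2/3-\gamma}\}$, (\refeq{7.2}) shows this quantity is $O(\e^M)\|\frak e_N\|_{\frak L}$ for any $M$, which easily beats the $\e^{-4/3}$ weight. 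This gives $\|P_\xi \frak e_N\|_{\frak W}\leq C \|\frak e_N\|_{\frak L}$ uniformly in $(\e,\tau)$.

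Next I expand $\text{d}\overline{\mathbb{SW}}_{h_1}(P_\xi \frak e_N)$ via (\refeq{fourterms}) as
\begin{equation*}
\text{d}\overline{\mathbb{SW}}_{h_1}(P_\xi\frak e_N) \;=\; \text{d}\slashed{\mathbb D}_{h_0}(\eta^{\text{med}}, u) \;+\; \Xi(\eta^{\text{med}}) \;+\; \mathcal L_{(\Phi_1, A_1)}(\chi^- u).
\end{equation*}
By the design of $P_\xi$, the obstruction component of the first term precisely cancels $(\pi^{\text{med}}+\pi^{\text{high}})\Pi^\perp(\bold 1^- \frak e_N)$, modulo two corrections: a microscopic contribution $\frak t^\circ_\tau(\eta^{\text{med}})$ from replacing $\underline T_{\Phi_\tau}$ by $\underline T^\circ_\tau$ (bounded by $C_M \e^M\|\eta^{\text{med}}\|_{1/2,2}$ in Corollary~\ref{fraktcircbounds}), and a defect from using $\text{ob}_\tau^{-1}\slashed D$ to describe the high modes of the obstruction, which is controlled using (\refeq{7.2}). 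Consequently, after subtracting, the residual error $\text{d}\overline{\mathbb{SW}}_{h_1}(h'_N)$ decomposes as: (i)~the unchanged range component $(1-\Pi^\perp)\frak e_N$ minus the range components of the three subtracted terms, which I collect into $(1-\Pi_\tau)\frak e_N'$; (ii)~the part of $\frak e_N$ supported on $\bold 1^+$ and the $\bold 1^+$-supported parts of the subtracted corrections, which form $\frak g_N'$; (iii)~the scalar component $\lambda\e^{-1}\Phi_\tau$ with $\lambda = \e\langle \frak e_N, \Phi_\tau\rangle$, bounded by $C\e^{1-\gamma}\|\frak e_N\|_{\frak L}$ using Cauchy--Schwarz and the $\e^{-\nu/2}$ weight in $\frak L$; and (iv) a remainder error $\frak e_{N+1}$ that I need to show satisfies the crucial estimate $\|\frak e_{N+1}\|_{\frak L}\leq C\e^{1/48}\delta\|\frak e_N\|_{\frak L}$.

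The hard part is (iv), since $\frak e_{N+1}$ collects the genuinely new error contributions: the non-obstruction portion of $\text{d}\slashed{\mathbb D}_{h_0}(\eta^{\text{med}}, u)\cdot \bold 1^-$, the perturbation $\Xi^-(\eta^{\text{med}})$, the $\bold 1^-$-support of $\mathcal L_{(\Phi_1,A_1)}(\chi^- u)$, the $\pi^{\text{med}}\Pi_\tau$ component of all these, and for the nonlinear version also the quadratic terms. The range part of $\text{d}\slashed{\mathbb D}_{h_0}(\eta^{\text{med}}, u)\cdot \bold 1^-$ is placed in $(1-\Pi_\tau)\frak e_N'$ and is bounded in the unweighted $L^2$ and $L^2_{-\nu}$ norms of $\frak L$ using Proposition~\ref{effectivesupportdD} together with the bounds (\refeq{etalowmedest}): since $\eta^{\text{med}}$ has Fourier support below $L^{\text{med}} = \e^{-2/3}$, Proposition~\ref{effectivesupportdD} gives effective support at scale $\e^{2/3}$, and the weighted norm contributes at worst a factor $\e^{-\nu/2}$ from the $L^2_{-\nu}$ bound (\refeq{weighteddDbound})--(\refeq{weightedXiminus}). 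The perturbation $\Xi^-(\eta^{\text{med}})$ contributes $O(\e^{5/12-\gamma})\|\eta\|_\frak W$ by Lemma~\ref{relatingvariationsoutside} as rewritten in (\refeq{Ximinusbound}). The term $\mathcal L(\chi^- u)$ splits into $\chi^-\slashed D_{A_\tau}u$ (small as in (ii) above) and $d\chi^-\cdot u$; while $u$ grows outward, it is globally bounded by $C\|\pi^{\text{high}}\Pi^\perp(\bold 1^-\frak e_N)\|_{L^2}$, which is polynomially small by (\refeq{7.2}). The most delicate piece is $\pi^{\text{med}}\Pi_\tau$ of $\text{d}\slashed{\mathbb D}_{h_0}(\eta^{\text{med}},0)\cdot\bold 1^+$, for which I reuse the argument after (\refeq{twodeformationterms}): Fourier support of $\eta^{\text{med}}$ combined with the exponential decay of $\Psi_\ell^\circ$ away from $\mathcal Z_\tau$, plus integrating $|\Psi_\ell^\circ|^2$ over $\text{supp}(\bold 1^+)$, yields the gain of a positive power of $\e$. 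Summing these contributions and absorbing the accumulated small $\gamma$'s gives the desired factor $\e^{1/24-\gamma}=\e^{1/48}\delta$. Finally, for the nonlinear case, the quadratic corrections from expanding $\overline{\mathbb{SW}}_\Lambda$ around $h_N$ are handled by Corollary~\ref{nonlinearboundscor}(C), which gives an $\e^{1/12-\gamma}\|P_\xi \frak e_N\|_\mathcal H \leq C\e^{1/12-\gamma}\|\frak e_N\|_{\frak L}$ bound, dominated by the target factor. Preservation of the size bound (I) follows immediately: $\|h_N'-h_N\|_{\mathcal H} = \|P_\xi \frak e_N\|_{\mathcal H}\leq C\|\frak e_N\|_{\frak L}\leq C\delta^{N-1}\|\frak e_1\|_{\frak L}$, and summing this geometric series against the inductive bound on $h_N$ preserves the constant after harmlessly enlarging $C\e^{-\gamma}$.
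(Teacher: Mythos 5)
Your proposal follows the paper's architecture in most respects (the three Fourier regimes, $\underline T^\circ_\tau$ for the deformation solve, $\slashed D_{A_\tau}^{-1}$ on $\mathcal X_\tau$ for the high modes, the grouping of the residual into $(1-\Pi_\tau)\frak e_N'$, $\frak g_N'$, $\lambda\e^{-1}\Phi_\tau$, and $\frak e_{N+1}$, and Corollary \ref{nonlinearboundscor} for the quadratic terms), but it contains one decision that breaks the argument: you declare that the \emph{low} Fourier modes of $\Pi^\perp(\bold 1^-\frak e_N)$ are not cancelled by $P_\xi$ and ``will survive as part of $(1-\Pi_\tau)\frak e_N'$ and will be dealt with at the `solve on $Y^-$' step.'' This cannot work. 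The low modes of the obstruction component lie in $\text{\bf Ob}^\perp(\mathcal Z_\tau)$, i.e.\ in the cokernel of $\slashed D_{A_\tau}\colon rH^1_e\to L^2$, so they are orthogonal to the range and by definition cannot be absorbed into $(1-\Pi_\tau)\frak e_N'$ (conclusion (1) requires $\Pi(\frak e_N')=0$); if they are left in, the equation $\slashed D_{A_\tau}\psi^{\text{Re}}=e_N'$ in Proposition \ref{outsidestep} has no solution in $rH^1_e$. Nor can you defer them to Corollary \ref{solvingcokernel}: the paper warns explicitly after that corollary that elements of $\mathcal X_\tau\subseteq H^1_e$ grow like $r^{-1/2}$ toward $\mathcal Z_\tau$ rather than decay, so the alternating error $d\chi^-.u$ produced at radius $r\sim\e^{2/3}$ carries no decay factor. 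That route is only admissible for the \emph{high} modes because (\refeq{7.2}) makes those polynomially small in $\e$; estimate (\refeq{7.1}) gives no smallness whatsoever for $|\ell|\leq L^{\text{low}}$, so the low-mode obstruction component is genuinely of size $\|\frak e_N\|_{\frak L}$ and must be killed by the deformation.

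In the paper's proof, $P_\xi=(\underline T^\circ_\tau,\slashed D_{A_\tau})^{-1}\circ\Pi^\perp\circ\bold 1^-$ produces $\eta=\eta^{\text{low}}+\eta^{\text{med}}$ together with $u$, and the equations $\underline T^\circ(\eta^{\text{low}})=\pi^{\text{low}}\Pi^\perp(\frak e_N^{\text{Re}}\bold 1^-)$ and $\underline T^\circ(\eta^{\text{med}})=\pi^{\text{med}}\Pi^\perp(\frak e_N^{\text{Re}}\bold 1^-)$ are both imposed; the $\frak W$-norm (\refeq{frakWnorm}) carries weight $1$ on $\eta^{\text{low}}$ and $\e^{-1/3}$ on $\eta^{\text{med}}$ precisely to balance the corresponding weights in the $\frak L$-norm. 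The low-mode cancellation is in fact where the deformation of the singular set does its essential work — it is the main mechanism of the whole construction — so omitting it is not a shortcut but a collapse of the argument. Everything else in your outline (the estimate on $\Xi^\pm(\eta)$, the $\pi^{\text{med}}\Pi_\tau$ of $\text{d}\slashed{\mathbb D}(\eta)\bold 1^+$ via the concentration of $\Psi_\ell^\circ$, the bound $|\lambda|\leq C\e^{1-\gamma}\|\frak e_N\|_{\frak L}$, and the preservation of (I)) matches the paper once $\eta^{\text{low}}$ is reinstated.
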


\begin{proof} The error term may be written $\frak e_N= e_N + f_N + g_N +\Psi_N$ where 
\bea
g_N &:=& \frak e_N \bold 1^+ \hspace{3.7cm} \  f_N:=\frak e_N^\text{Im}\bold 1^-  + \pi^\text{med}\Pi(\frak e^\text{Re}_N\bold 1^-) \\
\Psi_N &:= & \pi^\text{high}\Pi(\frak e^\text{Re}_N\bold 1^-) \hspace{2.5cm} e_N:=(1-\Pi)\frak e_N^\text{Re}\bold 1^- + \pi^\text{low}\Pi(\frak e_N^\text{Re} \bold 1^-) + \pi_\tau(\frak e_N^\text{Re} \bold 1^-).
\eea

\noindent These terms correspond to some of the term in the definition of the $\frak L$-norm in (\refeq{contractionspacesdef}).  That definition implies that $\|g_N\|_{L^2}\leq C \e^{1/12-\gamma}\|\frak e_N\|_\frak L$ \ , \ $\|f_N\|_{L^2}\leq C \e^{1/6}\|\frak e_N\|_\frak L$ and $\|\Pi(e_N + f_N + \Psi_N)\|_{L^2_{-\nu}} \leq C \e^{-\nu/2} \|\frak e_N\|_\frak L$. Moreover, Corollary  \ref{obstructiondecaycor}(B) applied with $M=10$ implies $\|\Psi_N\|_{L^2}\leq C \e^{10}\|\frak e_N\|_\frak L$.  

With $P_\xi$ as in (\refeq{Pxi}), set $$(\eta,u) :=P_\xi (\Pi^\perp(e_N + f_N + \Psi_N))$$ so that 
\begin{eqnarray}
\underline T^\circ(\eta^\text{low}) \ &=&\Pi^\perp(e_N)  \hspace{.1cm}\ =  \  \pi^\text{low}\Pi^\perp(\frak e^\text{Re}_N\bold 1^-)\label{Tetacancelling}\\
\underline T^\circ(\eta^\text{med})&=&\Pi^\perp(f_N)  \hspace{.1cm}  \ = \    \pi^\text{med}\Pi^\perp(\frak e^\text{Re}_N\bold 1^-)\label{Tetacancelling2}\\
\slashed D_{A_\tau}u&=& \Pi^\perp(\Psi_N)  \ = \ \pi^\text{high}\Pi^\perp(\frak e^\text{Re}_N\bold 1^-)\label{Diracucancelling}
\end{eqnarray}
\noindent where $\eta=\eta^\text{low}+\eta^\text{med}$. The above estimates and the uniform bounds on $(\underline T^\circ, \slashed D)^{-1}$ coming from Corollary \ref{quantitativehigherorder}  (the version in Corollary \ref{deformationsmainamd}) and Corollary \ref{solvingcokernel}  show that $\eta,u$ satisfy \be \|\eta\|_\frak W\leq C\|\frak e_N\|_\frak L  \hspace{2cm}\|u\|_\frak W\leq C \e^{8}\|\frak e_N\|_\frak L, \label{etaxiubounde8}\ee

\noindent where the $\frak W$ norm is as in Definition (\ref{frakWdef}).

We now proceed to calculate  $\overline{\mathbb{SW}}_\Lambda (h'_N)$, where $h_N'=h_N - (\eta , u)$ is as in (\refeq{hNprime}). First, with $\xi=\e\eta$ as previously, we compute: 
\begin{eqnarray}
\text{d}\overline{\mathbb {SW}}_{h_1}(\xi,0,0,0,0)&=&\text{d}\slashed{\mathbb D}_{h_\circ}(\eta,0) + \Xi^+(\eta) + \Xi^-(\eta)\nonumber  \\
&=& \underline T^\circ(\eta) + \frak t^\circ(\eta) + (1-\Pi)\text{d}\slashed{\mathbb D}_{h_\circ}(\eta) + \pi_\tau \text{d}\slashed{\mathbb D}_{h_\circ}(\eta) + \Xi^+(\eta) + \Xi^-(\eta)\label{Tcircerror}\\
\text{d}\overline{\mathbb {SW}}_{h_1}(0,0,0,0,u) &=& \slashed D_{A_\tau}(\chi^-u) + K_1\nonumber \\
&=& \slashed D_{A_\tau}u - (1-\chi^-)\slashed D_{A_\tau}u + d\chi^-.u + K_1(\chi^-u) \label{slashedDerror}
\end{eqnarray}
\noindent where we recall that $\Pi(\text{d}\slashed{\mathbb D}(\eta,0)= \underline T^\circ(\eta) + \frak t^\circ(\eta)$ by Definition \ref{tametruncationdef}, and $\Xi^\pm$ are as in Section \ref{section9}. Additionally, \be K_1=\mathcal L_{(\Phi^{(1)}, A^{(1)})} -\mathcal L_{(\Phi_\tau, A_\tau)} \label{K1defdeformation}\ee
denotes the difference in the linearizations at the model solution and the eigenvector, which is exponentially small by Corollary  \ref{corollaryexpdecay} (this was used in step (2) in the proof of Lemma \ref{dSWbounded}).

With these, we now compute: 
\begin{eqnarray}
 \overline{\mathbb{SW}}_\Lambda(h_N')&=&\overline{\mathbb{SW}}(h_1) +  \text{d}_{h_1}\mathbb{SW}(h_N)  \ -  \  \text{d}_{h_1}\mathbb{SW}(\xi, u)\ + \   \mathbb Q(h_N') \ - \ (\mu + \Lambda) \chi^-\frac{\Phi_\tau}{\e}\nonumber \\
 &=&   \ \overline{\mathbb{SW}}_\Lambda(h_N)  \  -  \  \text{d}_{h_1}\mathbb{SW}(\xi, u)\ + \ \mathbb Q(h'_N)-\mathbb Q(h_N)\nonumber \\
 &=& \frak e_N   \ - \  \underline T^\circ(\eta^\text{low}) \  - \  \underline T^\circ(\eta^\text{med})  \ - \  \slashed D_{A_\tau}u   \ - \  (1-\Pi)\text{d}\slashed{\mathbb D}_{h_0}(\eta) - \pi_\tau \text{d}\slashed{\mathbb D}_{h_\circ}(\eta)\label{1029} \\ & & \ + \ \Xi^+(\eta) \ + \   \text{\textcyr{Zh}}( \eta,u) \nonumber  \\
 &= & g_N \ + \ \Xi^+(\eta)  \ + \ \frak e^\text{Im}_N\bold 1^-   \ + \ (1-\Pi)(\frak e_N^\text{Re}\bold 1^-+\text{d}\slashed {\mathbb D}_{h_0}(\eta)) + \pi_\tau(\frak e_N^\text{Re} \bold 1^- + \text{d}\slashed{\mathbb D}_{h_\circ}(\eta))\nonumber
 \\ & & \ + \ \text{\textcyr{Zh}}( \eta ,u)\label{1030} \end{eqnarray}
 \noindent where Eq. (\refeq{1029}) is obtained by substituting (\refeq{Tcircerror}) and (\refeq{slashedDerror}) where the ``smaller'' terms are lumped into 
 \smallskip \be \text{\textcyr{Zh}}( \eta ,u):= \left(\mathbb Q(h'_N)-\mathbb Q(h_N)\right)  \ -  \ \Xi^-(\eta) \ - \ \frak t^\circ(\eta) \   +\ (1-\chi^-)\slashed D_{A_\tau}u  \ - \  d\chi^-.u  \ - \  K_1(\chi^-u)\label{Zh}\ee\smallskip
 \noindent and Eq. (\refeq{1030}) is the result of substituting Eqns. (\refeq{Tetacancelling})--(\refeq{Diracucancelling}) into Eq. (\refeq{1029}). Note, in particular, that the $\pi^\text{low},\pi^\text{med}, \pi^\text{high}$ terms from the original form of $\frak e_N$ have cancelled via our choice of $(\eta,u)$ in  Eq. (\refeq{Tetacancelling}--\refeq{Diracucancelling}). 
 
After this cancellation, we regroup the new error term (\refeq{1030}), as follows. Set 
 \bea
\frak e_N'&:=& \frak e^\text{Im}_N\bold 1^-   \ + \ (1-\Pi)(\frak e_N^\text{Re}\bold 1^-+\text{d}\slashed {\mathbb D}_{h_0}(\eta))\\
\frak g_N' &=& g_N \ + \ \Xi^+(\eta)  
\\
\lambda_{N}' &:=&  \e \br  \pi_\tau(\frak e_N^\text{Re} \bold 1^- + \text{d}\slashed{\mathbb D}_{h_0}(\eta)) \ , \ \Phi_\tau \kt_{L^2}\\
\frak e_{N+1}&:=& \text{\textcyr{Zh}}(\eta,u).
 \eea
 
 \noindent It now suffices to show that these satisfy the conclusions (1)--(4) of the proposition. 
 
 Beginning with (3), we have that 
 \be \|\text{\textcyr{Zh}}(\eta,u) \|_{\frak L}\leq C \e^{1/48}\delta \|\frak e_N\|_{\frak L}. \label{Zhbound}  \ee
 By the bound $\|u\|_\frak W\leq C \e^{8}\|\frak e_N\|_\frak L$ from Eq. (\refeq{etaxiubounde8}) above and Corollary \ref{fraktcircbounds}, all the terms of (\refeq{Zh}) except those involving $\mathbb Q, \Xi^+, \Xi^-$ are bounded by, say, $\e^{5}\|\frak e_N\|_\frak L$. Applying Corollary \ref{nonlinearbounds} to the $\mathbb Q$ terms, and repeating the argument that led to the bound  $\|\Xi^-(\eta)\|_{L^2} \leq C \e^{5/12-\gamma} \|\eta\|_{\frak W}$ in (\refeq{Ximinusbound}) during the proof of Lemma  (\ref{frakLbounded})  shows that 
 $\|\text{\textcyr{Zh}}(\eta,u) \|_{\frak L}\leq C \e^{1/12-\gamma} \|\frak e_N\|_{\frak L}$. Since $\gamma<<1$ and $\e^{1/48}\delta =\e^{1/24}$, (\refeq{Zhbound}) follows, which is conclusion (3). 
 
 (4) follows from the definition of $\lambda_N'$ above and Cauchy-Schwartz, since $\|\frak e_N^\text{Re}\bold 1^-\|_{L^2}\leq \|\frak e_N\|_\frak L$ and $$\|\text{d}\slashed{\mathbb D}_{h_\circ}(\eta)\|_{L^2}\leq C\|\eta\|_{1/2}\leq C \|\eta\|_\frak W$$
 
 \noindent by Proposition \ref{effectivesupportdD}. Combined with Eq. (\refeq{etaxiubounde8}), this is yields the conclusion (4), in fact without any factor of $\gamma$. 
 
 For (1)--(2), the fact the $\Pi(\frak e_N')=0$  and $\frak g'_N=\frak g_N'\bold 1^+$ are immediate from their definitions above (recall that the $\frak e^{\text{Im}}$ components are orthogonal to the obstruction by definition). It remains to show that the asserted bounds hold. To re-iterate the cancellation that led to (\refeq{1030}) beginning from the definition of $\frak e_N'$ above, 
 
  \bea 
  \frak e_N'&=& \frak e^\text{Im}_N\bold 1^-   \ + \ (1-\Pi)(\frak e_N^\text{Re}\bold 1^-+\text{d}\slashed {\mathbb D}_{h_0}(\eta))\\
  &=&  \frak e^\text{Im}_N\bold 1^-   \ + \ \frak e_N^\text{Re}\bold 1^- \ + \ \text{d}\slashed{\mathbb D}_{h_\circ}(\eta)   \ - \ \Pi(\frak e_N^\text{Re}\bold 1^-+\text{d}\slashed {\mathbb D}_{h_\circ}(\eta)) \\
  &=& \frak e_N^\text{Im}\bold 1^- \ + \  \frak e_N^\text{Re}\bold 1^-  \ + \ \text{d}\slashed{\mathbb D}_{h_\circ}(\eta) \  - \ \left( \frak t^\circ(\eta )  \ + \ \Psi_N \ + \ \lambda_N' \e^{-1}\Phi_\tau\right), 
  \eea

 \noindent where the last line uses the definition $\Pi(\text{d}\slashed {\mathbb D}_{h_\circ})=\underline T^\circ +\frak t^\circ$, and cancels the low and medium modes via the choice of $\eta$ in (\refeq{Tetacancelling2}). 
 
Because $\frak t^\circ(\eta), \Psi_N$ are $O(\e^8)$ these may be safely ignored as in the proof of (3). Since $\frak e^\text{Im}_N$ is unchanged by the cancellation, and $\pi^\text{med}\Pi(\frak e_N')=0$, it suffices to bound the three terms on the top line in the definition of the $\frak L$ norm (\ref{frakLnorm}). For this, one has, 
 \bea
 \| \text{d}\slashed{\mathbb D}_{h_0}(\eta) \bold 1^+\|_{L^2} \ &\leq&  \ \ \  C\e^{1/12-\gamma} \|\eta\|_\frak W \ \hspace{2.6cm} \leq \  C \e^{1/12-\gamma}\|\frak e_N\|_{\frak L}\\
  \|(\frak e_N^\text{Re} +  \ \text{d}\slashed{\mathbb D}_{h_0}(\eta)) \bold 1^-\|_{L^2}\ &\leq& \ \  \   C \|\frak e_N\|_\frak L  \ \  \ \  + \ \  \  \ C  \|\eta\|_\frak W \ \ \  \ \ \ \ \ \ \ \  \leq  \ \  C\|\frak e_N\|_{\frak L}\\
   \|(\frak e_N^\text{Re} +  \ \text{d}\slashed{\mathbb D}_{h_0}(\eta)) \bold 1^-\|_{L^2_{-\nu}}&\leq& \   C\e^{-\nu/2} \|\frak e_N\|_\frak L  \  +C \e^{-\nu/2-\gamma}\|\eta\|_{\frak W}  \ \ \  \leq  \ \  C\e^{-\gamma} \e^{-\nu/2}\|\frak e_N\|_{\frak L} \eea
 
 \noindent where in each line, the first inequality is the definition of the $\frak L$-norm is used in conjunction with the bounds (\refeq{Xiplusbound}), (\refeq{unweighteddDbound}), and (\refeq{weighteddDbound}) from the proof of Lemma \ref{frakLbounded}; the second inequality in each line substitutes the hypotheses and Eq. (\refeq{etaxiubounde8}). The first three lines show conclusion (1), excluding the $\lambda_N' \e^{-1}\Phi_\tau$ term. For this final rank 1 term, substituting the conclusion of (4) along with the fact that $\Phi_\tau \in L^2 \cap L^2_{-\nu}$ and is polyhomogeneous of growth $O(r^{1/2})$ as $r\to 0$ shows the same bounds on the $\frak L$ norm hold for this term. Conclusion (1) then follows.

 Conclusion (2) follows directly from the definition of $\frak g_N'$ and Lemma \ref{relatingvariationsinside}, which together show that
 
   $$  \| g_N + \Xi^+(\eta) \|_{L^2} \ \leq   C\e^{1/12-\gamma} \|\frak e_N\|_\frak L \ + \ C\e^{1/12-\gamma} \|\eta\|_\frak W \ \leq \  C \e^{1/12-\gamma}\|\frak e_N\|_{\frak L}.$$
   
   \noindent Reweighting the left-hand side to be the $\mathfrak L$-norm is precisely Conclusion (2).

 It remains to see that (I) and (II) hold. That (I) holds for $h_N'$ is immediate from the bounds $\|\eta\|_\frak W \ + \ \|u\|_\mathcal X\leq C \|\frak e_N\|_{\frak L}$ in Eq. (\refeq{etaxiubounde8}), and the fact that it holds for $h_N$, via the triangle inequality. The new error $\frak e_N'$ was defined precisely so that (II) is true. Finally, the proof of the (resp. $\text{d}\overline{\mathbb {SW}}$) statements is identical, omitting any mention of $\mathbb Q$ and $\Lambda$. 
 \end{proof}

 \subsection{The Outside Step}
 \label{section11.2}
 This subsection covers the second of the three stages of the cyclic iteration following (\refeq{Fetadiffeo}). Now that the leading error term $\frak e_N'$ is orthogonal to the obstruction (except the $1$-dimensional span of $\Pi(\Phi)$), solving in the outside can proceed using Lemma \ref{mappingpropertiesI} and Proposition \ref{solvingoutsideim}. 

Define the outside parametrix $P^-$ by 
\begin{eqnarray}  P^-: \frak L \lre \mathcal H^- \oplus \R \hspace{2cm} P^- &:=&  \left(\mathcal L^{-1}_{(\Phi_\tau, A_\tau)} (1-\Pi^\perp) \ , \ -\e \pi_\tau \right) \circ \bold 1^-  \label{P-}  \end{eqnarray}
\noindent where $\pi_\tau: \mathfrak L\to \R$ is understood to mean the coefficient of $\Phi_\tau$. Notice that the first component indeed lands in $\mathcal H^-$ by Definition (\refeq{contractionspacesdef}).

\begin{prop}\label{outsidestep}$P\!^-$ is a linear operator uniformly bounded in $\e,\tau$ and satisfies the following property. If for $N\in \N$,  $h'_N \in \mathcal H$ is a configuration satisfying the conclusions of Proposition \ref{deformationstep}, then the updated configuration 
\be h_N''= (\text{Id} - P^- \circ \overline{\mathbb{SW}}_\Lambda)h_N'\label{hNprimeprime}\ee
\noindent satisfies 
$$ \overline{\mathbb{SW}}_\Lambda (h''_N)=\frak e_N'' \bold 1^+ \ + \ \frak e_{N+1}$$

\noindent (resp. the same for $\text{d}_{h_1}\overline{\mathbb{SW}}$), where $\frak e_N'', \frak e_{N+1}\in \frak L$ obey 
\begin{enumerate}
\item[(1')] $\|\frak e_N''\|_\frak L\leq C \e^{-\gamma} \|\frak e_N\|_{\frak L}$
\item[(2')] Item (2) from Proposition \ref{deformationstep} continues to hold. 
\end{enumerate} 
Moreover, $h_N''$ continues to satisfy (I) from Proposition \ref{deformationstep}. 

\end{prop}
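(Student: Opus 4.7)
The plan is to verify $P^-$ is uniformly bounded, expand $\overline{\mathbb{SW}}_\Lambda(h_N'')$ via the linearization, and check that the two components of $P^-$ cancel the $\bold 1^-$-supported part of the error inherited from Proposition \ref{deformationstep} modulo residuals that are either supported on $\bold 1^+$ (contributing to $\frak e_N''$) or small enough to feed into $\frak e_{N+1}$. The argument parallels Proposition \ref{deformationstep} but is considerably simpler, since $P^-$ inverts only standard semi-Fredholm operators with well-understood decay, and the loss of regularity that dominated the deformation step plays no role. For boundedness, the block decomposition \eqref{blockdiagonaldecomp2} splits $\mathcal L^{-1}_{(\Phi_\tau,A_\tau)}$ into a real part, inverted on the range of $\slashed D_{A_\tau}|_{rH^1_e}$ by Lemma \ref{mappingpropertiesI}(C) and its weighted analogue at weight $-\nu$ (which supplies the $\e^{\nu/2}$-weighted $rH^1_{-\nu}$ term in $\|\cdot\|_{\mathcal H^-}$), and an imaginary/form part inverted by Proposition \ref{solvingoutsideim}; the $\R\Phi_\tau$-component of the input is absorbed by the $-\e\pi_\tau$ factor. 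Matching weights in $\mathcal H^-$ and $\frak L$ then gives the uniform bound.

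Writing $(\psi,b,\mu) = P^-\overline{\mathbb{SW}}_\Lambda(h_N')$, the main computation expands
\begin{eqnarray*}
\overline{\mathbb{SW}}_\Lambda(h_N'') &=& \overline{\mathbb{SW}}_\Lambda(h_N') \ - \ \mathcal L_{(\Phi_1,A_1)}(\chi^-(\psi,b)) \ + \ \mu\chi^-\tfrac{\Phi_\tau}{\e} \ + \ \mathbb Q(h_N'') - \mathbb Q(h_N').
\end{eqnarray*}
Substituting the conclusion of Proposition \ref{deformationstep} together with the defining identities for $(\psi,b)$ and $\mu$, the main cancellations occur on $\bold 1^-$ and leave four residual contributions: (a) the commutator $d\chi^-\cdot(\psi,b)$, supported on $\text{supp}(d\chi^-)\subset\text{supp}(\bold 1^+)$; (b) the difference $(\mathcal L_{(\Phi_1,A_1)} - \mathcal L_{(\Phi_\tau,A_\tau)})(\chi^-(\psi,b))$, which is exponentially small on $\bold 1^-$ by Lemma \ref{exponentialdecay}; (c) the nonlinear difference, controlled by Corollary \ref{nonlinearboundscor}(C); and (d) the pre-existing $\frak g_N'\bold 1^+$ and $\frak e_{N+1}$ inherited from Proposition \ref{deformationstep}. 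Contribution (a) together with the $\bold 1^+$-supported piece of (d) combine into $\frak e_N''\bold 1^+$, while the remainder forms the updated $\frak e_{N+1}$.

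The main obstacle is the quantitative check of (1'): the commutator term $d\chi^-\cdot\psi^\text{Re}$ lives on $\text{supp}(\bold 1^+)$ where the $\frak L$-norm carries the amplifying weight $\e^{-1/12-\gamma}$. Lemma \ref{polynomialdecayreal} produces a compensating polynomial decay factor of $\e^{1/12-\gamma}$ across the neck, matching precisely, with the $\e^{-\gamma}$ slack in (1') absorbing accumulated $\gamma$-factors. The imaginary/form contribution is controlled by the far stronger exponential decay of Lemma \ref{exponentialdecay}. The $\delta$-decay of $\frak e_{N+1}$ then follows by combining the exponential smallness in (b) with the quadratic estimate $\e^{1/12-\gamma}$ from Corollary \ref{nonlinearboundscor}(C); persistence of (I) is immediate since $\|(\psi,b,\mu)\|_\mathcal H\leq C\|\frak e_N\|_\frak L$ together with the geometric decay of $\|\frak e_N\|_\frak L$.
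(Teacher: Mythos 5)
Your proposal follows essentially the same route as the paper's proof: the same splitting of the inherited error into real, imaginary/form, and $\R\Phi_\tau$ components inverted by the corresponding blocks of $P^-$, the same identification of the commutator $d\chi^-\cdot(\psi,b)$ as the key residual whose $\e^{1/12-\gamma}$ decay from Lemma \ref{polynomialdecayreal} cancels the $\e^{-1/12-\gamma}$ weight on $\bold 1^+$ in the $\frak L$-norm, and the same treatment of the nonlinear difference via Corollary \ref{nonlinearboundscor} and of the linearization difference as exponentially small. The one small omission is the residual $(1-\chi^-)\mu\,\e^{-1}\Phi_\tau$ arising because the eigenvector term in $\overline{\mathbb{SW}}_\Lambda$ carries the cutoff $\chi^-$ while the error component $\lambda\e^{-1}\Phi_\tau$ does not; the paper places this in $\frak e_N''$ and bounds it by direct integration using $|\Phi_\tau|\leq Cr^{1/2}$, which yields an extra factor of $\e^{11/12-\gamma}$ and is harmless.
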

\begin{proof}

Beginning similarly to the proof of Proposition \ref{deformationstep}, write $\frak e_N' +\frak g_N'  = e'_N + f'_N + g'_N $, where 
\be g_N':=  \frak g_N'\bold 1^+ \hspace{2cm} f_N':=(\frak e'_N)^\text{Im} \bold 1^-\hspace{2cm} e_N' :=(\frak e_N')^\text{Re}.\label{efgprimed} \ee

\smallskip
\setlength{\lineskiplimit}{-100pt}
\noindent Conclusions (1)--(4) of Proposition \ref{deformationstep} mean that $\Pi(e_N')=0$, that $\|e'_N\|_{L^2_{-\nu}}\leq C \e^{-\gamma}\e^{-\nu/2}\| e'_N\|_\frak L$,  for both $\nu=0,\nu^-$, and that $\|f_N\|_{L^2}\leq C \e^{1/6} \|\frak e_N\|_\frak L$ and  \ $\|g_N\|_{L^2}\leq C \e^{1/12-\gamma}\|\frak e_N\|_\frak L$, where $\frak e_N$ is the original error from Proposition \ref{deformationstep}. 

\setlength{\lineskiplimit}{100pt}
Set $$(\psi,b,\mu):= P^-(e'_N + f'_N  + \lambda_N' \e^{-1}\Phi_\tau)$$ so that
\bea
\slashed D_{A_\tau}\psi^\text{Re}&=& e_N'\\
\mathcal L_{(\Phi_\tau, A_\tau)}^\text{Im}(\psi^\text{Im},b)&=& f_N'\\
\mu&=& -\e \pi(\lambda_N' \e^{-1}\Phi_\tau)=-\lambda_N'
\eea
\noindent where $\psi=(\psi^\text{Re}, \psi^\text{Im})\in S^\text{Re}\oplus S^\text{Im}$. 

We now show that $P^-$ is uniformly bounded. Lemma \ref{mappingpropertiesI} and Proposition \ref{solvingoutsideim} and the above bounds on $e'_N, f'_N$ show that these unique solutions (where $\br \psi^\text{Re}, \Phi_\tau\kt_{L^2}=0$), satisfy 
\begin{eqnarray}
\|\psi^\text{Re}\|_{rH^1_e}  & \leq & C \|e_N'\|_{L^2} \ \ \ \   \ \ \ \  \ \  \leq \   C \e^{-\gamma}\|\frak e_N\|_\frak L \label{psirebound} \\
\e^{\nu/2}\|\psi^\text{Re}\|_{r^{1+\nu}H^1_e}&\leq & C\e^{\nu/2}\|e_N'\|_{L^2_{-\nu}} \  \  \hspace{.18cm} \leq \ C \e^{-\gamma}\|\frak e_N\|_{\frak L}\label{psirealweightedbound} \\ 
 \e^{-1/6}\|(\psi^\text{Im},b)\|_{\mathcal H^{1,-}_\e} &\leq& C\e^{-1/6} \|f_N'\|_{L^2}     \  \ \  \leq  \ C \e^{-\gamma}\|\frak e_N\|_\frak L \label{psiimbound}\\
\e^{-1}|\mu |&\leq & C \|\lambda_N' \e^{-1}\Phi_\tau\|_{L^2}   \     \hspace{.28cm}\leq  \ C \e^{-\gamma}\|\frak e_N\|_\frak L \label{mubound}.
\end{eqnarray}

\noindent To explain further how to obtain the bounds in the middle column, the condition that $\br \psi^\text{Re}, \Phi_\tau\kt_{L^2}=0$ means that the $\nu=0$ version of Lemma \ref{mappingpropertiesI} applies without the projection term, which yields (\refeq{psirebound}). In turn,  (\refeq{psirealweightedbound}) follows from Lemma \ref{mappingpropertiesI} taking $\nu=\nu^-$.  (\refeq{psiimbound}) and (\refeq{mubound}) are immediate from Proposition \ref{solvingoutsideim} and the definition of $\mu$. The final column follows immediately from the bounds on (\refeq{efgprimed}) and conclusion (4) of Proposition \ref{deformationstep}. 

Uniform boundedness of $P^-$ is the statement that $\|\frak e_N' + \lambda_N' \e^{-1}\Phi_\tau\|_\frak L$ dominates each term in the middle column (since $P^-$ ignores $\frak g_N'$). Indeed, $\frak e_N'$ is, by its construction in the proof of Proposition \ref{deformationstep}, $L^2$-orthogonal to $\Phi_\tau$. Therefore, with $e_N'$ as defined in Eq. (\refeq{efgprimed}),
\begin{eqnarray}
\|e_N'\|_{L^2}  + \| \lambda_N' \e^{-1}\Phi_\tau\|_{L^2} &\leq& \ \ \ \  \ \  \|\frak e_N' + \lambda_N' \e^{-1}\Phi_\tau\|_{L^2}\hspace{3.0cm} \  \leq C \|\frak e_N' + \lambda_N' \e^{-1}\Phi_\tau\|_\frak L \ \  \ \ \ \ \ \label{L2orthogonalitybounds} \\
\e^{\nu/2}\|e_N' \bold 1^-\|_{L^2_{-\nu}}&\leq& \e^{\nu/2} \| (e_N' + \lambda_N' \e^{-1}\Phi_\tau) \bold 1^-\|_{L^2_{-\nu}}  + \|(\lambda_N' \e^{-1}\Phi_\tau) \bold 1^- \|_{L^2_{-\nu}}\leq C \|\frak e_N' + \lambda_N' \e^{-1}\Phi_\tau\|_\frak L. \nonumber
\end{eqnarray}
The first of these is simply orthogonality along with fact that the $\frak L$-norm dominates the $L^2$-norm. The second is the triangle inequality, then invoking the first one along with the fact that the $L^2_{-\nu}$ and $L^2$-norms are equivalent on the 1-dimensional span of $\Phi_\tau$. That $\e^{-1/6}\|f_N'\|_{L^2}$ in (\refeq{psiimbound}) is likewise bounded by the right side of (\refeq{L2orthogonalitybounds}) is immediate from Definition \ref{frakLnorm} of the $\frak L$-norm. This completes the claim that $P^-$ is uniformly bounded.

We now proceed to calculate  $\overline{\mathbb{SW}}_\Lambda (h''_N)$ where $h_N''=h'_N - (\psi,b , \mu)$ is as in (\refeq{hNprimeprime}). 
First, 
\begin{eqnarray} \text{d}\overline{\mathbb {SW}}_{h_1}(\psi,b,\mu) &=& \mathcal L_{(\Phi_\tau, A_\tau)} \chi^- (\psi,b) \ + \ K_{1}(\chi^-\psi,\chi^-b) \ - \ \mu \e^{-1}\chi^-\Phi_\tau  \nonumber \\
&=&  \slashed D_{A_\tau}\psi^\text{Re} \ + \ 
\mathcal L_{(\Phi_\tau, A_\tau)}^\text{Im}(\psi^\text{Im},b)  \ - \ \mu \e^{-1}\Phi_\tau \nonumber \\ &  &  \ + \  d\chi^-(\psi,b) \ + \ K_1(\chi^-\psi, \chi^-b)  \ + \ (1-\chi^-)\mu \e^{-1}\Phi_\tau, \label{outsidederivative}\end{eqnarray}

\noindent where $K_1$ is as in Eq. (\refeq{K1defdeformation}), and we have used the fact that $\chi^-=1$ on the support of $\bold 1^-$. Using the conclusion of Proposition \ref{deformationstep} and the definitions (\refeq{efgprimed}), then substituting (\refeq{outsidederivative}) yields
 \medskip 
\begin{eqnarray}
 \overline{\mathbb{SW}}_\Lambda(h_N'')&=& \text{d}_{h_1}\mathbb{SW}(h_N')  \ -  \  \text{d}_{h_1}\mathbb{SW}(\psi, b,\mu)\ + \   \mathbb Q(h_N'') \ - \ (\mu_N + \mu + \Lambda) \chi^-\frac{\Phi_\tau}{\e}\nonumber  \medskip \\
 &=&   \ \overline{\mathbb{SW}}_\Lambda(h_N')  \  -  \  \text{d}_{h_1}\mathbb{SW}(\psi,b, \mu)\ + \ \mathbb Q(h_N'')-\mathbb Q(h_N')\nonumber \medskip  \\
 &=&e_N'  \ + \ f_N' \ + \ g_N' \ + \   \lambda_N' \e^{-1}\Phi_\tau   \ - \   \slashed D_{A_\tau}\psi^\text{Re} \ - \ 
\mathcal L_{(\Phi_\tau, A_\tau)}^\text{Im}(\psi^\text{Im},b)  \ + \ \mu \e^{-1}\Phi_\tau \ \ \  \nonumber   \medskip  \\
& &  \ + \ \left(\mathbb Q(h_N'')-\mathbb Q(h_N') \right)  \ + \  d\chi^-(\psi,b) \ + \ K_1(\chi^-\psi,\chi^-b)  \ + \ (1-\chi^-)\mu \e^{-1}\Phi_\tau  \ + \ \frak e_{N+1}\nonumber \medskip  \\ 
 &= & g_N'  + \ \left(\mathbb Q(h_N'')-\mathbb Q(h_N') \right)  \ + \  d\chi^-(\psi,b) \ + \ K_1(\chi^-\psi,\chi^-b)  \ + \ (1-\chi^-)\mu \e^{-1}\Phi_\tau \ + \ \frak e_{N+1}. \nonumber
 \end{eqnarray}

\noindent Then (re)-define 
\begin{eqnarray}
\frak e_N''&:=&g_N' \ + \ d\chi^-(\psi,b) \ + \ (1-\chi^-)\mu \e^{-1}\Phi_\tau\label{eNprimeprime}\\
\frak e_{N+1}&\mapsto& \frak e_{N+1}  \ + \  \left(\mathbb Q(h_N')-\mathbb Q(h_N'') \right) \ + \ K_1(\chi^-\psi,\chi^-b).
\end{eqnarray}

\noindent Notice that there has again been a cancellation of the main error terms $e_N', f_N'$, and that $\frak e_N''$ now includes the leading-order alternating error term $d\chi^-(\psi,b)$, supported in the inside $Y^+$.  

To complete the proof of the proposition, we show $\frak e_N''$ and $\frak e_{N+1}$ satisfy conclusions (1)--(2). That (1) holds for the first term of $\frak e_N''$ in (\refeq{eNprimeprime}) follows from the triangle inequality. The bound on $g'_N$ from \refeq{efgprimed} is already as needed. The bounds on $e_N'$ listed following  Eq. (\refeq{efgprimed}) show precisely that $e_N'$ has $\e^{1/2}$-effective support (Definition \ref{effectivesupport}), hence Lemma  \refeq{polynomialdecayreal} applies to show that 
\be \|d\chi^-.\psi^\text{Re}\|_{L^2}\leq C \e^{1/12-\gamma} \| e_N'\|_{L^2}\leq C \e^{1/12-2\gamma} \|\frak e_N\|_{\frak L}. \label{alternatingerrorinside} \ee
\noindent Note that the $(1-\chi^-)\frak e_N'$ term in Lemma \ref{polynomialdecayreal} vanishes because $\chi^-=1$ on $\text{supp}(\bold 1^-)$.  (\refeq{alternatingerrorinside}) also holds for $(\psi^\text{Im},b)$ simply because of the $\e^{1/6}$ weight on these components in (\refeq{psiimbound}). Finally, for the third term of $\frak e_N''$ in (\refeq{eNprimeprime}) , notice that direct integration using the fact that $|\Phi_\tau|\leq C r^{1/2}$ shows that  $\|(1-\chi^-)\Phi_\tau\|_{L^2}\leq C \e^{1-\gamma}$, and (\refeq{mubound}) therefore shows the third term of $\frak e_N''$ is bounded by the right hand side of (\refeq{alternatingerrorinside}) with an extra factor of $\e^{11/12-\gamma}$. Combining these three shows that 

$$\|\frak e_N''\|_\frak L\leq C \e^{-\gamma}\|\frak e_N\|_\frak L$$

\noindent and $\frak e_N''=\frak e_N''\bold 1^+$ because $\chi^-=1$ outside $\text{supp}(\bold 1^+)$. 

Using the bounds (\refeq{psirebound})--(\refeq{psiimbound}), the terms involving $\mathbb Q$ can be bounded identically to in the proof of Proposition \ref{deformationstep}. As in the proof of Lemma \ref{dSWbounded}, $K_1$ is exponentially small, hence negligible. Conclusion (2) follows after increasing $\gamma$ slightly. 

Finally, that (I) continues to hold for $h_N''$ and of the (resp. $\text{d}\overline{\mathbb{SW}}_{h_1}$) statements follows identically to in Proposition \ref{deformationstep} using the uniform boundedness of $P^-$.  
\end{proof}

 \subsection{The Inside Step}
 
 \label{section11.3} The section completes the third stage of the cycle following (\refeq{Fetadiffeo}) by constructing $P^+$. Define 
 \begin{eqnarray}P^+: \frak L\lre \mathcal H^+ \hspace{2cm}P^+&:=& \mathcal L^{-1}_{(\Phi_1, A_1)}\bold 1^+\label{P+}.\end{eqnarray}
 \noindent Definition \ref{contractionspacesdef} ensures that $P^+$ lands in $\mathcal H^+$. 
 
 \begin{prop}\label{insidestep}$P^+$ is a linear operator uniformly bounded in $\e,\tau$ and satisfies the following property. If for $N\in \N$,  $h''_N \in \mathcal H$ is a configuration satisfying the conclusions of Proposition \ref{outsidestep}, then the updated configuration 
 \smallskip
\be h_{N+1}= (\text{Id} - P^- \circ \overline{\mathbb{SW}}_\Lambda)h_N''\label{hNprimeprimeprime}\ee

\smallskip
\noindent (resp. the same for $\text{d}\overline{\mathbb {SW}}_{h_1})$ satisfies the hypotheses of Proposition \ref{deformationstep} with $N'=N+1$. 

\end{prop}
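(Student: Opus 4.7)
The plan is to define $(\ph,a):=P^+(\frak e_N''\bold 1^+)=\mathcal L_{(\Phi_1,A_1)}^{-1}(\frak e_N''\bold 1^+)$, which is well-defined by Theorem \ref{Insideinvertibility}, and to set $h_{N+1}=h_N''-\chi^+(\ph,a)$ (in the trivialization of Lemma \ref{SWtrivializations}). Uniform boundedness of $P^+\colon\mathfrak L\to\mathcal H^+$ will be immediate from the definition \eqref{H+norm} of the $\mathcal H^+$-norm: since $\mathcal L_{(\Phi_1,A_1)}(\ph,a)=\frak e_N''\bold 1^+$, one simply has $\|(\ph,a)\|_{\mathcal H^+}=\e^{-1/12-\gamma}\|\frak e_N''\bold 1^+\|_{L^2}\le\|\frak e_N''\|_{\frak L}$, by the first term in \eqref{frakLnorm}. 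This is the payoff of how the $\mathcal H^+$-norm is calibrated, and is where the non-uniformity of Theorem \ref{Insideinvertibility} is absorbed.

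Next, I would compute the residual. Using Leibniz and the fact that $\chi^+\equiv 1$ on $\mathrm{supp}(\bold 1^+)$ (since $\lambda/4=\e^{1/2}/4>\e^{2/3-\gamma^-}$ for $\e$ small), the linearization yields
\[
\mathrm{d}\overline{\mathbb{SW}}_{h_1}(0,\chi^+(\ph,a),0,0)=\chi^+\mathcal L_{(\Phi_1,A_1)}(\ph,a)+d\chi^+.(\ph,a)+K=\frak e_N''\bold 1^++d\chi^+.(\ph,a)+K,
\]
where $K$ is an exponentially small commutator coming from extending $\mathcal L$ from $Y^+$ to $Y$ (compare Step 2 of Lemma \ref{dSWbounded}). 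Combined with the conclusion of Proposition \ref{outsidestep}, this gives $\overline{\mathbb{SW}}_\Lambda(h_{N+1})=\frak e_{N+1}-d\chi^+.(\ph,a)+(\mathbb Q(h_{N+1})-\mathbb Q(h_N''))+K$, so the new error is $\frak e_{N+1}$ plus the boxed ``alternating'' term $d\chi^+.(\ph,a)$ plus negligible pieces.

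The central estimate — and the main obstacle — is controlling $d\chi^+.(\ph,a)$ in the $\frak L$-norm against $\|\frak e_N''\|_{\frak L}$. Since $\frak e_N''\bold 1^+$ is supported in $\{r\le\e^{2/3-\gamma^-}\}$, Lemma \ref{insidepolynomialdecay} delivers the polynomial decay
\[
\|d\chi^+.(\ph,a)\|_{L^2}\le C\e^{-1/24-\gamma}\|\frak e_N''\bold 1^+\|_{L^2}.
\]
On $\mathrm{supp}(d\chi^+)\subset\{r\sim\e^{1/2}\}$, Lemma \ref{exponentialdecay} (applied with $K_\e'$ chosen halfway between $\mathrm{supp}(\bold 1^+)$ and $\mathrm{supp}(d\chi^+)$) shows the $S^{\mathrm{Im}}\oplus\Omega$-components of $(\ph,a)$ are $O(\e^M)$, killing the $\e^{-1/6}\|\cdot\|_{L^2}$ term in the $\frak L$-norm, while \eqref{7.1} makes $\pi^{\mathrm{med}}\Pi_\tau(d\chi^+.(\ph,a))$ polynomially small. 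For the weighted real term, $r^{-\nu}\sim\e^{-\nu/2}$ on this annulus cancels the prefactor $\e^{\nu/2}$ in the $\frak L$-norm. Hence $\|d\chi^+.(\ph,a)\|_{\frak L}\le C\e^{-1/24-\gamma}\|\frak e_N''\bold 1^+\|_{L^2}\le C\e^{1/24-\gamma}\|\frak e_N''\|_{\frak L}$, where the last step uses the $\e^{-1/12-\gamma}$ weight on $\bold 1^+$ in \eqref{frakLnorm} to convert back. Chaining with Propositions \ref{deformationstep} and \ref{outsidestep} (each of which costs $\e^{-\gamma}$), the error is reduced by a full factor of $\e^{1/24-O(\gamma)}<\delta=\e^{1/48}$ per cycle.

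Finally, the nonlinear difference $\mathbb Q(h_{N+1})-\mathbb Q(h_N'')$ is controlled by Corollary \ref{nonlinearboundscor}(C), giving a bound $\le C\e^{1/12-\gamma}\|(\ph,a)\|_{\mathcal H^+}\le C\e^{1/12-\gamma}\|\frak e_N''\|_{\frak L}$, which is smaller than the linear term and absorbed into the new $\frak e_{N+1}$; the exponentially small $K$ is trivially absorbed. After a final consolidation of the floating $\e^{-\gamma}$ factors (the last time $\gamma$ is adjusted in the proof), the new error satisfies $\|\frak e_{N+1}\|_{\frak L}\le C\delta^N\|\frak e_1\|_{\frak L}$, giving hypothesis (II) of Proposition \ref{deformationstep} with $N'=N+1$. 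Hypothesis (I) follows by a geometric-series argument: the new increment $\|\chi^+(\ph,a)\|_{\mathcal H}\le C\|\frak e_N''\|_{\frak L}\le C\delta^{N-1}\|\frak e_1\|_{\frak L}$, so summing the increments over all prior steps of the iteration preserves $\|h_{N+1}\|_{\mathcal H}\le C\e^{-\gamma}\|\frak e_1\|_{\frak L}$ with a uniform constant. The $\mathrm{d}\overline{\mathbb{SW}}_{h_1}$ (strictly linear) variant is identical, omitting $\mathbb Q$ throughout.
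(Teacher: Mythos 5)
Your proposal is correct and follows essentially the same route as the paper's proof: the same parametrix $P^+=\mathcal L_{(\Phi_1,A_1)}^{-1}\bold 1^+$, uniform boundedness read off directly from the calibration of the $\mathcal H^+$-norm, the alternating error $d\chi^+.(\ph,a)$ controlled by Lemma \ref{insidepolynomialdecay} and converted back through the $\e^{-1/12-\gamma}$ weight on $\bold 1^+$ in the $\frak L$-norm, nonlinear differences via Corollary \ref{nonlinearboundscor}(C), and hypothesis (I) by summing the increments. Your extra care in checking the remaining components of the $\frak L$-norm of $d\chi^+.(\ph,a)$ (exponential decay of the $S^{\text{Im}}\oplus\Omega$ part via Lemma \ref{exponentialdecay} and the $\pi^{\text{med}}$ term via \eqref{7.1}) is consistent with how the paper handles the analogous terms elsewhere and fills in a step the paper leaves implicit.
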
 
\begin{proof}
Uniform boundedness is immediate from the definition of the $\mathcal H^+$-norm in Definition \ref{contractionspacesdef}. Let $\frak e_N''$ be as in the conclusion of Proposition \ref{outsidestep}, so that we may write $$g_N'':=\frak e_N''\bold 1^+=\frak e_N''$$

\noindent where $\|g_N''\|_{L^2}\leq C \e^{1/12-\gamma}\|\frak e_N\|_{\frak L}$. 

Set $$(\ph,a):= P^+(g_N'')$$ so that $$\mathcal L_{(\Phi_1, A_1)}(\ph,a)=g_N'', $$
and \be \|(\ph,a)\|_{H^{1,+}_\e} \  \leq \  \|(\ph,a)\|_{\mathcal H^+}=C \e^{-1/12-\gamma} \|g_N''\| \ \leq  \ C \e^{-\gamma} \|\frak e_N\|_{\frak L}.\label{H+normbound}\ee

We now proceed to calculate  $\overline{\mathbb{SW}}_\Lambda (h_{N+1})$ where $h_{N+1}=h''_N - (\ph,a)$ where $h_N''$ is as in (\refeq{hNprimeprime}). 
\begin{eqnarray}
 \overline{\mathbb{SW}}_\Lambda(h_{N+1})&=& \text{d}_{h_1}\mathbb{SW}(h_N'')  \ -  \  \text{d}_{h_1}\mathbb{SW}(\ph, a)\ + \   \mathbb Q(h_{N+1}) \ - \ (\mu_N + \mu + \Lambda) \chi^-\frac{\Phi_\tau}{\e}\nonumber \\
 &=&   \ \overline{\mathbb{SW}}_\Lambda(h_N')  \  -  \  \text{d}_{h_1}\mathbb{SW}(\psi,b, \mu)\ + \ \mathbb Q(h_{N+1})-\mathbb Q(h_{N}'')\nonumber \\
 &=&g_N'' \ - \ \mathcal L_{(\Phi_1, A_1)}(\ph,a)   \ + \ \left(\mathbb Q(h_{N+1})-\mathbb Q(h_N'') \right)  \ + \  d\chi^+(\ph,a) \ + \  \frak e_{N+1}\nonumber \\
  &=& \left(\mathbb Q(h_{N+1})-\mathbb Q(h_N'') \right)  \ + \  d\chi^+(\ph,a) \ + \  \frak e_{N+1} \label{1048}
 \end{eqnarray}
 
 \noindent Here, the main error $g_N''$ has been cancelled, and the alternating error from $d\chi^+$ has now been shifted back to the outside region. 
 
 Re-defining $\frak e_{N+1}$ to include all three terms of (\refeq{1048}), we claim that it satisfies (II) of Proposition \ref{deformationstep} with $N'=N+1$. In fact, one has the slightly stronger bound \be \|\frak e_{N+1}\|_{\frak L}\leq C \e^{1/48-\gamma} \delta \|\frak e_N\|_{\frak L}.\label{extra196}\ee
 
 \noindent Indeed, the terms involving $\mathbb Q$ in (\refeq{1048}) may be bounded as in the proof of Proposition \ref{deformationstep}. Then, since $d\chi^+$ is supported where $r=O(\e^{1/2})$, Lemma \ref{insidepolynomialdecay} implies
 \bea \|d\chi^+(\ph,a)\|_{\frak L} &\leq & \|d\chi^+(\ph,a)\|_{L^2} + \e^{\nu} \|d\chi^+(\ph,a)\|_{L^2_{-\nu}}  \\ &\leq& C\|d\chi^+(\ph,a)\|_{L^2}\leq C  \e^{-1/24-\gamma} \|g_N''\|_{L^2} \leq C \e^{1/24-\gamma}\|\frak e_N\|_{\frak L}.\eea 
\noindent Finally, the original $\frak e_{N+1}$ in (\refeq{1048}) already satisfies (\refeq{extra196}) by virtue of (2') in Proposition \ref{outsidestep}.  (\refeq{extra196}) follows. 

It remains to close the induction. Over the course of the proofs of (a single cycle of) Propositions \ref{deformationstep}, \ref{outsidestep}, and \ref{insidestep}, the constants $C, \gamma$ have been increased a finite number of times. Let $C_0$ being the original constant in Item (II) of Proposition \ref{deformationstep} and $C_1, \gamma_1$ be the final versions of the constants appearing in (\refeq{extra196}). Since $\gamma_1<<1$ still, we may assume that $$C_1 \e^{1/48-\gamma_1}\leq C_0$$

\noindent once $\e$ is sufficiently small, which reduces (\refeq{extra196}) to hypothesis (II) of Proposition \ref{deformationstep} with $N'=N+1$ as desired. The proof of (I) and of the (resp. $\text{d}\overline{\mathbb {SW}}_{h_1}$) statements again follows as in Propositions \ref{deformationstep} and \ref{outsidestep} using (\refeq{H+normbound}). 
\end{proof}

 \subsection{Proof of Proposition \ref{fixedpoints}}
 \label{section11.4}

 Analogously to (\refeq{boldAprelimdef}), define $\mathbb A$ and $\mathbb P_1=\text{d}\mathbb A$ by 
 \begin{eqnarray} \mathbb A &= &P_\xi\  +  \  P^- (\text{Id} - \  \overline{\mathbb {SW}}_\Lambda P_\xi)  \  \ + \ \  P^+ \left(\text{Id} -  \ \overline{\mathbb {SW}}_\Lambda P_\xi  \ - \ \ \overline{\mathbb{SW}}_\Lambda P^- (\text{Id} \  - \ \overline{\mathbb {SW}}_\Lambda  P_\xi) \right),\label{mathbbAdef}\\
\mathbb P_1&=& P_\xi\  +  \  P^- (\text{Id} - \text{d}\overline{\mathbb {SW}}_{h_1} P_\xi)  \ + \ P^+ \left(\text{Id} -\text{d}\overline{\mathbb {SW}}_{h_1} P_\xi - \ \text{d}\overline{\mathbb {SW}}_{h_1}  P^- (\text{Id} -\text{d}\overline{\mathbb {SW}}_{h_1} P_\xi) \right) \label{mathbbPdef} \end{eqnarray}
 
 \noindent So that (analogously to \ref{nestedT}), 
 \begin{eqnarray} \mathbb T=\text{Id}-\mathbb A\circ \  \overline{\mathbb {SW}}_\Lambda  \   \ &=& (\text{Id} - P^+ \ \overline{\mathbb {SW}}_\Lambda \ ) (\text{Id} - P^- \overline{\mathbb {SW}}_\Lambda \ ) (\text{Id}-P_\xi  \ \overline{\mathbb {SW}}_\Lambda \ ),  \label{Titeration}  \\
 \text{d}\mathbb T= \text{Id}-\mathbb P_1\circ \text{d}\overline{\mathbb {SW}}_{h_1} &=& (\text{Id} - P^+\text{d}\overline{\mathbb {SW}}_{h_1}) (\text{Id} - P^- \text{d}\overline{\mathbb {SW}}_{h_1}) (\text{Id}-P_\xi \text{d}\overline{\mathbb {SW}}_{h_1})\label{dTiteration}. \end{eqnarray}

\noindent Thus applying $T$ carries out one complete cycle of the iteration following Eq. (\refeq{Fetadiffeo}), and $\text{d}T$ one complete cycle of the linearized iteration (the resp. statements in Propositions \ref{deformationstep}--\ref{insidestep}). 

The next lemma justifies the definitions of the spaces $\mathcal H, \mathfrak L$ by showing that the linearization is uniformly invertible on these (up to an error of $\e^{-\gamma}$). 
\begin{lm} \label{dSWinvertible}The linearization $\text{d}\overline{\mathbb {SW}}_{h_1}: \mathcal H\to \frak L$ is invertible, and there is a constant $C$ independent of $\e,\gamma$ such that 
\be \|h\|_{\mathcal H}\leq C \e^{-\gamma} \|\text{d}\overline{\mathbb {SW}}_{h_1}(h)\|_\frak L \label{dSWbound}\ee

\noindent holds. 
\end{lm}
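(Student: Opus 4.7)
The plan is to obtain invertibility by running the linearized version of the three-step cycle \eqref{dTiteration} as a Neumann-type construction, taking advantage of the fact that the ``resp.'' statements in Propositions \ref{deformationstep}, \ref{outsidestep} and \ref{insidestep} exactly provide the linearized contraction needed here (with the nonlinear terms $\mathbb{Q}$ and the constant $\Lambda$ suppressed). Because the three parametrices $P_\xi, P^-, P^+$ are each uniformly bounded and together cancel the obstruction in $\mathfrak{L}$ one Fourier regime at a time, no additional analytic input beyond Sections \ref{section10}--\ref{section11.3} will be required.

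\emph{Surjectivity.} Given $g \in \mathfrak{L}$, I would set $h_1 := \mathbb{P}_1 g$ and declare $\mathfrak{e}_1 := g - \text{d}\overline{\mathbb{SW}}_{h_1}(h_1)$. Uniform boundedness of $P_\xi, P^-, P^+$ on the spaces $\mathfrak{W}, \mathcal{H}^\pm$ gives $\|h_1\|_\mathcal{H} \leq C\e^{-\gamma}\|g\|_\mathfrak{L}$, so hypothesis (I) of Proposition \ref{deformationstep} is satisfied with $\|\mathfrak{e}_1\|_\mathfrak{L} \leq C\e^{-\gamma}\|g\|_\mathfrak{L}$ (for the purposes of this proof, the $\Lambda$--term is irrelevant, since the linearized propositions simply carry an extra trivial $\R$-summand). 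Iterating the three-stage cycle according to \eqref{dTiteration} then produces a sequence $h_N \in \mathcal{H}$ with
\[
 \text{d}\overline{\mathbb{SW}}_{h_1}(h_N) = g - \mathfrak{e}_N, \qquad \|\mathfrak{e}_{N+1}\|_\mathfrak{L} \leq \delta\,\|\mathfrak{e}_N\|_\mathfrak{L},
\]
where $\delta = \e^{1/48}$. The successive corrections $h_{N+1}-h_N$ are obtained by applying one of $P_\xi, P^-, P^+$ to (a component of) $\mathfrak{e}_N$, so uniform boundedness yields $\|h_{N+1}-h_N\|_\mathcal{H} \leq C\e^{-\gamma}\|\mathfrak{e}_N\|_\mathfrak{L} \leq C\e^{-\gamma}\delta^{N-1}\|\mathfrak{e}_1\|_\mathfrak{L}$. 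For $\e < \e_0$ small enough that $\delta < 1$, the telescoping series converges in $\mathcal{H}$ to some limit $h$. Continuity of $\text{d}\overline{\mathbb{SW}}_{h_1}$ from the bound \eqref{dSWbounded} then gives $\text{d}\overline{\mathbb{SW}}_{h_1}(h) = g$.

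\emph{The a priori estimate and injectivity.} Summing the geometric series produced above yields
\[
 \|h\|_\mathcal{H} \leq \sum_{N \geq 1} \|h_{N+1}-h_N\|_\mathcal{H} + \|h_1\|_\mathcal{H} \leq \frac{C\e^{-\gamma}}{1-\delta}\|g\|_\mathfrak{L} \leq C'\e^{-\gamma}\|g\|_\mathfrak{L},
\]
which is precisely \eqref{dSWbound} (after renaming $\gamma$). Injectivity is then immediate: if $\text{d}\overline{\mathbb{SW}}_{h_1}(h) = 0$, the estimate forces $h=0$. Combined with surjectivity, this shows $\text{d}\overline{\mathbb{SW}}_{h_1} : \mathcal{H} \to \mathfrak{L}$ is a bijective bounded linear map, hence a Banach space isomorphism by the open mapping theorem.

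\emph{Main obstacle.} There is no substantial new analytic obstacle at this stage -- all of the work has been done in setting up $\mathcal{H}, \mathfrak{L}$ and in the three cyclic propositions. The only point to check carefully is that the ``linearized'' analogues of Propositions \ref{deformationstep}--\ref{insidestep} can indeed be chained together without losing the decay factor $\delta$: this requires verifying that the hypotheses (I)--(II) of Proposition \ref{deformationstep} propagate through one full cycle with $N \mapsto N+1$. This was already observed in the course of the proofs in Sections \ref{section11.1}--\ref{section11.3} (which explicitly tracked the ``resp.\ $\text{d}\overline{\mathbb{SW}}_{h_1}$'' statements), so the argument reduces to bookkeeping.
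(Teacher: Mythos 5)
Your surjectivity argument is essentially the paper's: iterating the linearized three-step cycle gives $\text{d}\overline{\mathbb{SW}}_{h_1}\mathbb P_N = \text{Id} + O(\delta^{N-1})$, and the telescoping series converges to a preimage of $g$ with norm bounded by $C\e^{-\gamma}\|g\|_{\frak L}$. The gap is in the injectivity step, which as written is circular. The estimate you derive by summing the geometric series is a bound on the \emph{particular} solution $h$ produced by the iteration, not an a priori estimate valid for every $h\in\mathcal H$. If $\text{d}\overline{\mathbb{SW}}_{h_1}$ had a nontrivial kernel element $h'$, your iteration applied to $g=0$ would simply return $h=0$ and say nothing about $h'$; you cannot conclude $h'=0$ from a bound that only applies to outputs of your construction. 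The statement \eqref{dSWbound} for arbitrary $h$ only follows \emph{after} injectivity is known (so that the constructed solution is the unique one), so you cannot use it to prove injectivity.

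The missing ingredient is the Fredholm index argument, exactly as in Proposition \ref{invertibilityonH}. One introduces the explicit block operator $\mathbb P_0 = P^+\oplus\begin{pmatrix}P_\xi & 0\\ * & P^-\end{pmatrix}$, which is an isomorphism $\frak L\to\mathcal H$ by construction of the spaces $\mathcal H^\pm$ and $\frak W$, and then checks that $\mathbb P_1-\mathbb P_0$ (and more generally $\mathbb P_N-\mathbb P_0$) consists of compact terms: the corrections involve $\Xi$, $\frak t^\circ$, $d\chi^\pm$, and factor through either the compact inclusion $H^{1,-}\hookrightarrow L^2$ or the finite-dimensional spaces spanned by $\eta\in\frak W$ and $\Phi_\tau$, plus $O(\e^8)$ remainders. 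Hence $\mathbb P_N$ is Fredholm of index $0$. Combined with the invertibility of $\text{d}\overline{\mathbb{SW}}_{h_1}\mathbb P_N$ for large $N$, this forces $\mathbb P_N$ to be injective, hence an isomorphism, hence $\text{d}\overline{\mathbb{SW}}_{h_1}$ is an isomorphism. With that supplement your bookkeeping of the cycle and the resulting estimate are fine.
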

\begin{proof} Let $\mathbb P_0:\frak L\to \mathcal H$ be defined by 
$$\mathbb P_0:= P^+ \oplus \begin{pmatrix}P_\xi & 0 \\ *& P^-\end{pmatrix}$$

\noindent where $* =-P^- (1-\Pi^\perp)\text{d}\slashed{\mathbb D}_{h_\circ}P_\xi$. Since the map $\beta:\frak L(Y)\to \frak L(Y^+)\oplus \frak L(Y^-)$ defined by $\beta(\frak e):=(\frak e\bold 1^+, \frak e\bold 1^-)$, which is built into the definitions of $P_\xi, P^\pm$ is an isomorphism, and so is each of $P_\xi, P^-, P^+$, it follows (since $*$ is bounded by Proposition \ref{effectivesupportdD}) that $\mathbb P_0$ is an isomorphism hence Fredholm with index 0. 

Then, the calculations (\refeq{Tcircerror}),  (\refeq{slashedDerror}), and  (\refeq{outsidederivative}) and the subsequent bounds (e.g. \refeq{Zhbound}) in the proofs of Propositions \ref{deformationstep} and \ref{outsidestep} show that
\begin{eqnarray}
\mathbb P_1 -\mathbb P_0&=& - (P^+ +P^- )(\Xi + \frak t^\circ)P_\xi   \ - \  P^+(d\chi^-P^-(\text{Id}-\text{d}\overline{\mathbb{SW}}_{h_1} P_\xi))  \ + \  * \nonumber \\  & &- (P^+ + P^-) O(\e^8).  \label{1129}
\end{eqnarray}
\noindent where $\star$ consists of terms involving the 1-dimensional span of $\Phi_\tau$, and the $O(\e^8)$ accounts for terms involving $u$ and $K_1$. The terms in the top line of (\refeq{1129}) are compact, because they factor through either the compact inclusion $\mathcal H^{1,-}\hookrightarrow L^2$ or through the finite-dimensional spaces spanned by $\eta \in \frak W$ and $\Phi_\tau$ respectively. 

The uniform boundedness of $P_\xi, P^\pm$ in Propositions  \ref{deformationstep}, \ref{insidestep}, \ref{outsidestep}, together with the boundedness of $*$ from Proposition \ref{effectivesupportdD} show that  both  $\|\mathbb P_0\|, \|\mathbb P_0^{-1}\|\leq C \e^{-\gamma}$. Once $\e$ is sufficiently small, the $O(\e^8)$ terms may be safely ignored. It follows that $\mathbb P_1$ is Fredholm of index 0 once $\e$ is sufficiently small. 

The same argument shows that $\mathbb P_N$ defined by
$$ \text{Id}-\mathbb P_N\circ \text{d}\overline{\mathbb {SW}}_{h_1} = \text{d}\mathbb T^N$$
\noindent is likewise Fredholm of Index 0. By construction (cf. Eq. \eqref{nestedT}), $\mathbb P_N$ applies $N$ stages of the linearized alternating iteration from Propositions \ref{deformationstep}--\ref{insidestep} (i.e. the resp. $\text{d}\overline{\mathbb {SW}}_{h_1}$ statements) starting from $\frak e_1= \text{Id}-\text{d}\overline{\mathbb {SW}}_{h_1}\mathbb P_0(\frak e_0)$ where $\frak e_0=\text{d}\overline{\mathbb {SW}}_{h_1}(h)$. We conclude that $$\text{d}\overline{\mathbb {SW}}_{h_1}\mathbb P_N=\text{Id} + O(\delta^{N-1}).$$
\noindent We conclude that $\text{d}\overline{\mathbb {SW}}_{h_1}$ is surjective, hence an isomorphism by the Fredholm index. The bound (\refeq{dSWbound}) follows from the initial bound $\|\mathbb P_0(\frak e_0)\|\leq C \e^{-\gamma}$ above,  and statements (I) in Propositions  \ref{deformationstep}--\ref{insidestep} with $h_N$ being the correction from $\mathbb P_N-\mathbb P_0$ (equivalently, one can sum the geometric series in $\delta$ to give a bound on the limit of the sequence of $h_N$ by a constant times the norm of the initial guess). 
\end{proof}

\begin{proof}[Proof of Proposition \ref{fixedpoints}] Let $\mathcal V=B_r(0)\subseteq \mathcal H$ of radius $r=\e^{-1/20}$, and let $\mathbb T$ be given by (\refeq{Tdef}) with $\mathbb A$ defined by (\refeq{mathbbAdef}) (equivalently, $\mathbb T$ is given by Eq. \refeq{Titeration}). 

(A) is deduced assuming (C) as follows. Let $h \in \mathcal V$, then since $\|\overline{\mathbb {SW}}_\Lambda(0)\|_\frak L \leq C \e^{-1/24-\gamma}$ by Theorem \ref{PartImain}, 
$$\|\mathbb T(h)\|_\mathcal H  \leq \|\mathbb T(h) -\mathbb T(0)\|_\mathcal H + \|\mathbb T(0)\|_\mathcal H \leq C\sqrt{\delta} \|h\|_\mathcal H+ C\e^{-1/24-2\gamma}\leq r,$$
\noindent where the bound on $\mathbb T(0)$ is a consequence of (I) in Propositions \ref{deformationstep}--\ref{insidestep} with $\frak e_1=\overline{\mathbb {SW}}_\Lambda(0)$. Thus $\mathbb T:\mathcal V\to \mathcal V$ preserves $\mathcal V$. Continuity of $\mathbb T$ is immediate from (C). $C^1$ dependence on $(\e,\tau)$ is immediate from the $C^1$ dependence of the Seiberg--Witten equations on $p_\tau$, and the $C^1$ dependence of $(\Phi^{(1)}, A^{(1)})$ and $(\Phi_\tau, A_\tau)$ and the linearized equations at these and their inverses used to construct $\mathbb T$.  

(B) By (\refeq{Titeration}), applying $T$ constitutes a full cycle of the three-stage iteration carried out by Propositions \ref{deformationstep}--\ref{insidestep}. The conclusion follows from applying these three propositions successively. 

(C) Let $\frak q$ be such that 
\be \mathbb T(h)=\text{d} \mathbb T(h)+ \frak q(h).\label{frakqdef}\ee
where $\text{d}\mathbb T$ is as in (\refeq{dTiteration}). The same argument as (B), now using the (resp. $\text{d}\overline{\mathbb {SW}}_{h_1}$) statements in Propositions \ref{deformationstep}--\ref{insidestep} shows that 
\be \|\text{d}\overline{\mathbb {SW}}_{h_1}(\text{d}\mathbb T(h))\|_{\frak L} \leq C \delta \|\text{d}\overline{\mathbb {SW}}_{h_1}(h)\|_\frak L.\label{dTreduction}\ee

\noindent Therefore since $\text{d}T$ is linear, Lemmas \ref{dSWbounded} and \ref{dSWinvertible} shows 
\bea
\|\mathbb T(h) - \mathbb T(\widetilde h)\|_{\mathcal H} &\leq& C \e^{-\gamma} \|\text{d}\overline{\mathbb {SW}}_{h_1}(\mathbb T(h)-\mathbb T(\widetilde h))\|_\frak L  \\
&\leq &  C \e^{-\gamma} \|\text{d}\overline{\mathbb {SW}}_{h_1}(\text{d}\mathbb T(h)-\text{d}\mathbb T(\widetilde h))\|_\frak L \ + \ C \e^{-\gamma} \|\text{d}\overline{\mathbb {SW}}_{h_1}(\frak q(h)-\frak q(\widetilde  h))\|_\frak L \\
&\leq & C \e^{-\gamma} \delta \|\text{d}\overline{\mathbb {SW}}_{h_1}(h-\widetilde h)\|_\frak L \ + \ C \e^{-\gamma} \|\text{d}\overline{\mathbb {SW}}_{h_1}(\frak q(h)-\frak q(\widetilde  h))\|_\frak L\\
&\leq &  C \e^{-2\gamma} \delta \|h -\widetilde  h\|_{\mathcal H}  \ + \  C \e^{-2\gamma} \|\frak q(h)-\frak q(\widetilde h)\|_{\mathcal H}. 
\eea

\noindent where we have substituted (\refeq{dTreduction}) in the third line. 

Since $C\e^{-2\gamma}\delta <\sqrt{\delta}$ once $\e$ is sufficiently, small, the following bound completes (C): 
\be \|\frak q(h)-\frak q(\widetilde h)\|_{\mathcal H}\leq C \e^{1/12-6\gamma}\|h -\widetilde  h\|_{\mathcal H}.\label{1134}\ee

\noindent To prove (\refeq{1134}), we calculate $\frak q=\mathbb T-\text{d}\mathbb T$ by expanding (\refeq{Titeration}) and (\refeq{dTiteration}), with $\overline{\mathbb {SW}}_\Lambda= \text{d}\overline{\mathbb {SW}}_{h_1} + \mathbb Q +\Lambda$  where the latter is shorthand for $\Lambda = \Lambda(\tau)\e^{-1}\chi^-\Phi_\tau$. Writing $\mathbb L= \text{d}\overline{\mathbb {SW}}_{h_1},$ the difference becomes
\bea
\frak q&=& P^-\mathbb Q  \ + \ P_\xi \mathbb Q \ + \  P^+\mathbb Q \ + \  P^-\mathbb QP_\xi  \mathbb L  \ + \  P^-\mathbb QP_\xi\mathbb Q \ + \ P^- \mathbb LP_\xi\mathbb Q \\
& +&  P^+ \mathbb Q P^- \mathbb L \  + \  P^+\mathbb QP_\xi \mathbb L  \ + \  P^+\mathbb Q P^-\mathbb Q  \ + \ P^+\mathbb Q P_\xi \mathbb Q \ + \ P^+\mathbb L P^-\mathbb Q  \ + \ P^+\mathbb LP_\xi \mathbb Q\\
& +  & P^+ \mathbb LP^- \mathbb Q P_\xi  \mathbb L \ + \  P^+  \mathbb LP^-  \mathbb L P_\xi \mathbb Q  + P^+ \mathbb Q P^-\mathbb L P_\xi \mathbb L \ + \  P^+\mathbb LP^-\mathbb Q P_\xi \mathbb Q  \\
& + &   P^+\mathbb Q P^-\mathbb Q P_\xi \mathbb L \ + \ P^+\mathbb Q P^- \mathbb L P_\xi \mathbb Q \ + \  P^+\mathbb Q P^-\mathbb Q P_\xi \mathbb Q  \\
 &  + & \frak q_\Lambda
\eea
\noindent where $\frak q_\Lambda$ is the same collection of terms replacing  each instance of $\mathbb Q$ with $\Lambda$. The above expression shows that all possible combinations of up to 3 compositions of $\mathbb Q$ with the parametrices occur, but each term except $\frak q_\Lambda$ has {\it at least a single instance of $\mathbb Q$}. Since $\Lambda$ is constant, $q_\Lambda(h_1)-q_\Lambda(h_2)=0$. Because each remaining term has at least one factor of $\mathbb Q$, 
 (\refeq{1134}) now follows from the boundedness of $P_\xi, P^\pm$ in Propositions \ref{deformationstep}--\ref{insidestep}, the boundedness of $ \mathbb L$ from Lemma \ref{dSWbounded}, along with Items (A) and (C) of Corollary \ref{nonlinearboundscor}. \end{proof}

 \section{Gluing}
\label{section12}

\subsection{Glued Configurations} 
\label{section12.1}
Proposition \ref{fixedpoints} and the Banach fixed-point theorem (with $C^1$ dependence on parameters) immediately imply the following. 

\begin{cor} There exist $\e_0, \tau_0$ sufficiently small such that for every pair $(\e,\tau) \in (-\e_0, \e_0) \times (-\tau_0, \tau_0)$, there exist tuples $(\mathcal Z, \Phi, A ,\mu)$ depending in a $C^1$ way on $\e,\tau$ where
\begin{enumerate}
\item[(1)]  $\mathcal Z(\e,\tau)=\mathcal Z_{\tau, \xi(\e,\tau)}$ is the singular set arising from a linearized deformation $\xi(\e,\tau) \in C^\infty(\mathcal Z_\tau, N\mathcal Z_\tau)$,  

\item[(2)] $\mu(\e,\tau)\in \R$, and 

\item[(3)] $(\Phi(\e,\tau), A(\e,\tau)) \in C^\infty(Y; S_E\oplus \Omega)$
\end{enumerate}
that satisfy 
\be  \text{SW}\left(\frac{\Phi(\e,\tau)}{\e}, A(\e,\tau)\right) =(\Lambda(\tau) + \mu(\e,\tau) \chi^-_{\e} \cdot \frac{ \Phi_{\tau, \xi(\e,\tau)}}{\e}\label{gluedconfigurations} \ee
\noindent where $\Phi_{\tau, \xi(\e,\tau)}$ is as in Definition \ref{universalfamilydef}, and $\chi^-_\e=\chi^-_{\e,\tau,\xi(\e,\tau)}$ cutoff function (\refeq{universalSWpre}). 
\end{cor}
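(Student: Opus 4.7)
The plan is a direct application of the parametric Banach fixed-point theorem to the contraction $T : \mathcal{C} \to \mathcal{C}$ produced by Proposition \ref{fixedpoints}. By Item (C), once $\e_0$ is chosen small enough that $C\sqrt{\delta}<1$, the map $T$ is a genuine contraction, so for each $(\e,\tau)\in(-\e_0,\e_0)\times(-\tau_0,\tau_0)$ there is a unique fixed point $h_{\e,\tau}\in\mathcal{C}$. Item (A), together with smooth dependence of $p_\tau=(g_\tau,B_\tau)$, of the initial approximate solution $(\Phi_1,A_1)$ from Theorem \ref{PartImain}, and of the parametrices $P_\xi, P^\pm$ used to build $T$, upgrades the assignment $(\e,\tau)\mapsto h_{\e,\tau}$ to a smooth family via the parametric contraction mapping principle.

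Next, I verify that $h_{\e,\tau}$ annihilates $\overline{\mathbb{SW}}_\Lambda$. Since $h_{\e,\tau}=T^N h_{\e,\tau}$ for every $N\in\mathbb{N}$, Item (B) gives
\[ \|\overline{\mathbb{SW}}_\Lambda(h_{\e,\tau})\|_{\mathfrak{L}} = \|\overline{\mathbb{SW}}_\Lambda(T^N h_{\e,\tau})\|_{\mathfrak{L}} \leq \delta^N\|\overline{\mathbb{SW}}_\Lambda(h_{\e,\tau})\|_{\mathfrak{L}}, \]
and letting $N\to\infty$ with $\delta<1$ forces $\overline{\mathbb{SW}}_\Lambda(h_{\e,\tau})=0$. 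Unpacking through the relation $\overline{\mathbb{SW}}_\Lambda = \overline{\mathbb{SW}}-\chi^-\e^{-1}\Lambda(\tau)\Phi_\tau$ from the start of Section \ref{section11} and Definition \ref{universalSWdef}, and identifying the trivialized and non-trivialized pictures via $\underline{\Upsilon}$ from Lemma \ref{SWtrivializations}, yields exactly \refeq{gluedconfigurations}.

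Finally, I extract the tuple $(\mathcal{Z},\Phi,A,\mu)$ and check its regularity. Decompose $h_{\e,\tau} = (\xi(\e,\tau),\varphi,a,\psi,b,\mu(\e,\tau),u)$ with $\xi(\e,\tau)\in\mathfrak{W}_{\e,\tau}$. By construction of $\mathfrak{W}_{\e,\tau}$ (Definition \ref{frakWdef}), the deformation components $\xi^{\text{low}},\xi^{\text{med}}$ have Fourier support confined to modes $|\ell|\leq L^{\text{med}}$, hence lie in $C^\infty(\mathcal{Z}_\tau;N\mathcal{Z}_\tau)$; consequently $\mathcal{Z}(\e,\tau):=\mathcal{Z}_{\tau,\xi(\e,\tau)}$ is a smooth embedded link. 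With $(\Phi/\e,A)$ defined by the assembly formula \refeq{universalSWpre}, smoothness of $(\Phi,A)$ on all of $Y$ follows from Corollary \ref{solvinguniversalSW}: $\xi(\e,\tau)$ is smooth and the right-hand side of \refeq{gluedconfigurations} is a smooth section, since $\Phi_{\tau,\xi}$ is the $\underline{\Upsilon}$-transport of the smooth eigenvector $\Phi_\tau$ and $\chi^-_\e$ is a smooth cutoff.

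The only point requiring care is this last regularity upgrade. The fixed point $h_{\e,\tau}$ lives in the weighted Sobolev space $\mathcal{H}$, so a priori the reconstructed $(\Phi,A)$ is only known to be in $L^{1,2}(Y)$ and, through the weights, well-behaved away from $\mathcal{Z}(\e,\tau)$. Global $C^\infty$ regularity across the singular set---where the weight $R_\e^\nu$ degenerates---must be obtained by reinterpreting \refeq{gluedconfigurations} as a smooth, gauge-fixed elliptic system on the closed manifold $Y$ and invoking standard elliptic bootstrapping. This is exactly the role of Corollary \ref{solvinguniversalSW}, which was set up to bridge the weighted-Sobolev fixed-point setting with the classical smoothness statement required by the corollary.
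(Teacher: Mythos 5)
Your proposal is correct and follows essentially the same route as the paper: parametric Banach fixed point from Proposition \ref{fixedpoints}, the $h=T^Nh$ argument with Item (B) to force $\overline{\mathbb{SW}}_\Lambda(h)=0$, and Corollary \ref{solvinguniversalSW} to reassemble and bootstrap the configuration to a smooth solution on $Y$. The only difference is that you spell out the fixed-point decay argument and the regularity upgrade in more detail than the paper does, which is fine.
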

\begin{proof}
Part (C) of Proposition \ref{fixedpoints} and the Banach fixed-point theorem give a  $C^1$ family of fixed points $h=(\xi, \ph, a , \psi, b, \mu , u)\in \mathcal H_{\e,\tau}$. By (B) of Proposition \ref{fixedpoints}, these satisfy, $\overline{\mathbb{SW}}_\Lambda(h)=0$. Setting

$$(\Phi(\e,\tau), A(\e,\tau))=(\Phi^{(1)}_{\e,\tau,\xi}, A^{(1)}_{\e,\tau,\xi} ) \ + \ \chi_\e^+(\ph, a) \ + \ \chi_\e^-(\psi +u, b),  $$

\medskip 
\noindent where $\xi=\xi(\e,\tau)$, the conclusion follows Definition (\refeq{universalSWdef2}) of $\overline{\mathbb {SW}}$ (cf. the definition of $\overline{\mathbb {SW}}_{\Lambda}$ following Eq. (\refeq{Tdef})). The deformation $\xi(\e,\tau) \in C^\infty(\mathcal Z_\tau;  N\mathcal Z_\tau)$ is smooth by construction (recall (2) following Definition \ref{frakWdef}). (3) holds by Corollary \ref{solvinguniversalSW} with right-hand side $g =\e^{-1}\chi_\e^-\cdot  \Lambda(\tau)\Phi_{\tau,\xi(\e,\tau)}$, which satisfies $g\in C^\infty(Y)$  as in the proof of that corollary.  
\end{proof}

\smallskip

 \subsection{The One-Dimensional Obstruction}
\label{section12.2}
The configurations (\refeq{gluedconfigurations}) solve the Seiberg--Witten equations if and only if 
\be \Lambda(\tau) + \mu(\e,\tau)=0 \label{1dimob}\ee
is satisfied. The next lemma shows that the assumption of transverse spectral crossing (Definition \ref{spectralcrossingdef}) means the condition (\refeq{1dimob}) defines $\tau$ implicitly as a function of $\e$.

\begin{lm} \label{smoothdependence} The solutions $(\Phi(\e,\tau), A(\e,\tau), \mu(\e,\tau))$ from Proposition \ref{gluedconfigurations} depend in a $C^1$ way on $(\e,\tau)\in (0,\e_0)\times (-\tau_0, \tau_0)$. Moreover, 

\be |\mu(\e,\tau)| \ + \ |\del_\tau \mu(\e,\tau)| \leq  C \e^{2/3}. \label{iftbounds} \ee

\noindent holds uniformly. 
 \end{lm}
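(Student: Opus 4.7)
The plan is to extract both claims from the contraction-mapping structure of Proposition \ref{fixedpoints}. For smooth dependence on $(\e,\tau)$, note that by Proposition \ref{fixedpoints}(A), $T$ is smooth in $(\e,\tau)$ and maps $\mathcal{C}$ to itself, and by (C) it is a uniform contraction. Applying the Banach-space Implicit Function Theorem to $F(h;\e,\tau) := h - T(h;\e,\tau)=0$, whose linearization in $h$ is $\text{Id} - \text{d}T$ (invertible because $\text{d}T$ is a contraction), yields a fixed point $h(\e,\tau)$ depending smoothly on $(\e,\tau)$. Since the glued tuple $(\Phi(\e,\tau), A(\e,\tau), \mu(\e,\tau))$ is produced from $h(\e,\tau)$ by smooth operations --- splicing via $\chi^\pm_\e$ and extracting the $\R$-component of $\mathcal H$ recording $\mu$ --- it inherits this smooth dependence.

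For the bound on $|\mu(\e,\tau)|$, I would track how $\mu$ accumulates over the iteration. The final $\mu$ is the sum of the corrections $\mu_N = -\lambda_N$ produced in the outside step (Proposition \ref{outsidestep}), where each $\lambda_N$ comes from the deformation step (Proposition \ref{deformationstep}(4)) and satisfies $|\lambda_N| \leq C\e^{1-\gamma}\|\frak e_N\|_{\frak L}$. Combined with the geometric decay $\|\frak e_N\|_{\frak L} \leq \delta^{N-1}\|\frak e_1\|_{\frak L}$ from Proposition \ref{fixedpoints}(B) with $\delta = \e^{1/48}$, summing gives
\[
|\mu(\e,\tau)| \;\leq\; \sum_{N\geq 1}|\lambda_N| \;\leq\; \frac{C\e^{1-\gamma}}{1-\delta}\,\|\frak e_1\|_{\frak L}.
\]
The bound $\|e_1\|_{L^2} \leq C\e^{-1/24+\gamma}$ from Theorem \ref{PartImain}(I), combined with the weights in the $\frak L$-norm of Definition \ref{contractionspacesdefII}, gives $\|\frak e_1\|_{\frak L}$ bounded by a small negative power of $\e$ (well under $1/3$ in absolute value), so that the positive power $\e^{1-\gamma}$ comfortably dominates and yields $|\mu| \leq C\e^{2/3}$ with room to spare.

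The bound on $|\del_\tau \mu|$ I would obtain by differentiating the fixed-point equation $h = T(h)$ in $\tau$, producing $\del_\tau h = (\text{Id}-\text{d}T_h)^{-1}(\del_\tau T)(h)$, which is well-defined because $\text{d}T_h$ is a contraction on $\mathcal H$. I would then re-run the three-stage iteration of Propositions \ref{deformationstep}--\ref{insidestep} with $\del_\tau$ applied throughout, using the uniformity inputs already in place: Theorem \ref{PartImain}(I) for the uniform bound on $\del_\tau e_1, \del_\tau f_1$; Theorem \ref{Insideinvertibility} and Lemma \ref{solvingoutsideim} for the uniform boundedness of $\del_\tau \mathcal L^\pm$; Corollaries \ref{fraktcircbounds} and \ref{deformationsmainamd} for the smooth $\tau$-dependence of $\underline T_\tau^\circ$; and Theorem \ref{SpectralCrossing} for the smooth $\tau$-dependence of the family $(\mathcal Z_\tau, A_\tau, \Phi_\tau)$. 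Together these produce a $\tau$-derivative analogue of Proposition \ref{deformationstep}(4) in which $|\del_\tau \lambda_N|$ inherits the same decay factor $\delta^{N-1}$, and the same geometric summation then yields $|\del_\tau \mu|\leq C\e^{2/3}$.

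The main obstacle is not any individual estimate but the bookkeeping required to verify that every constant and every parametrix appearing in the iteration is uniform in $\tau$ with bounded $\tau$-derivative. Fortunately, this uniformity has been built into the statements throughout Sections \ref{section4}--\ref{section9}, so the bulk of the work will be to observe that the arguments of Propositions \ref{deformationstep}--\ref{insidestep} carry over verbatim after formal $\tau$-differentiation; no new ideas are needed beyond what is already developed.
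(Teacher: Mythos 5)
Your proposal is correct and follows essentially the same route as the paper: smooth dependence via the parametric Banach fixed point theorem, the bound on $|\mu|$ from Proposition \ref{deformationstep}(4) combined with the geometric decay of the errors, and the bound on $|\del_\tau\mu|$ by differentiating the fixed-point equation and controlling $\del_\tau T$ using the uniform $\tau$-bounds already built into Theorem \ref{PartImain}, Theorem \ref{Insideinvertibility}, and Lemma \ref{solvingoutsideim}. The only cosmetic difference is that you sum the per-cycle corrections $\lambda_N$ directly, whereas the paper reads the same bound off the $\e^{-1}$ weight on the $\mu$-component of the $\mathcal H$-norm together with $\|h\|_{\mathcal H}\leq C\e^{-\gamma}\|\frak e_1\|_{\frak L}$ and $\|\del_\tau h\|_{\mathcal H}\leq C\|(\del_\tau T_\tau)h\|_{\mathcal H}\leq C\e^{-1/4}$; the two bookkeeping devices are equivalent.
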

 
 \begin{proof} The standard proof of the Banach fixed point theorem with smooth dependence on parameters shows that a smooth family of fixed points $h_\tau$ of $\mathbb T$ as in Proposition \ref{fixedpoints} satisfy $$\| \del_\tau h(\tau)\|_{\mathcal H}\leq C\|(\del_\tau \mathbb T_\tau)h(\tau)\|_{\mathcal H}.$$
 
 \noindent Because of the weight on $\mu$ in the definition (\refeq{contractionspacesdefII}) of the $\mathcal H_{\e,\tau}$-norm, it therefore suffices to show that \be \|(\del_\tau \mathbb T_\tau)h(\tau)\|_{\mathcal H}\leq C \e^{-1/3}. \label{toshow}\ee

 By Theorem \ref{PartImain} Item (I), Lemma \ref{solvingoutsideim}, the expression Proposition \ref{deformationUSW} (using Theorem \ref{PartImain}(B)), and the proofs of Lemmas \ref{relatingvariationsoutside}, \ref{relatingvariationsinside}, and Proposition \refeq{effectivesupportdD} -- which may be repeated equally well with $(\del_\tau\Phi^{(1)}_{\e,\tau}, \del_\tau A^{(1)}_{\e,\tau})$ -- show that 
 
 $$\|\del_\tau \mathcal L_{(\Phi^{(1)},A^{(1)})}^+(\ph,a)\|_{L^2}  \ + \  \|\del_\tau \mathcal L_{(\Phi^{(1)},A^{(1)})}^-(\psi,b)\|_{L^2} +  \| \del_\tau \text{d}\overline{\mathbb {SW}}_{h_1}(\xi)\|_{L^2} \leq C\e^{-\gamma} \|h\|_{\mathcal H} $$
  are uniformly bounded as maps the $H^{1,\pm}_\e\to L^2$ and $\frak W\to L^2$. To verify this, simply note that all the bounds in these propositions are ultimately a consequence of the polyhomogeneous asymptotics of $\Phi_\tau$ and applications of Lemma \ref{deformationbounds}. Since the power appearing in the expansions of $\Phi_\tau$ are independent of $\tau$, $\del_\tau \Phi_\tau$ has an identical expansion, thus the hypotheses of Lemma \ref{deformationbounds} hold equally well for $\del_\tau \Phi_\tau$ in each instance. A similar argument applies to $\del_\tau \mathbb Q$.

  Together withTheorem \ref{PartImain}(A) (the ``moreover'' statement), these show that the operator norm of  $\del_\tau \overline{\mathbb {SW}}_\Lambda$ a map $\mathcal H\to \mathfrak L$ is bounded by $C\e^{-\gamma}$ at $h\in \mathcal C$.  Differentiating $\text{Id}=P_\tau^{-1}P_\tau$ for all three parametrices and using the bounds from Propositions \ref{deformationstep}--\ref{insidestep} yields bounds on $\del_\tau P_\xi,\del_\tau P^\pm$ by at most $C \e^{-1/12-\gamma}$. Using the product rule on (\refeq{Titeration}) and combining these yields (\refeq{toshow}). 
 \end{proof}

The proof of the following lemma is essentially identical to the treatment of the similar 1-dimensional obstruction in  \cite[Eq. (10.6)]{WalpuskiG2Gluing}.
\begin{lm} \label{1dimIFT}If the family of parameters $p_\tau=(g_\tau, B_\tau)$ has a transverse spectral crossing, then (\refeq{1dimob}) implicitly defines a function $\tau(\e)$ so that 
$$\Lambda(\tau(\e)) + \mu(\e,\tau(\e))=0$$

\noindent for $\e\in (0, \e_0)$, and either $\tau(\e)>0$ or $\tau(\e)<0$. 
\end{lm}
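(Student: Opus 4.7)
The plan is to apply a one-variable contraction argument, using the transverse spectral crossing condition to invert $\Lambda$ near zero and the smallness estimates from Lemma \ref{smoothdependence} on $\mu$ to produce a unique fixed point. Concretely, since $\Lambda(0)=0$ (because $(\mathcal Z_0,A_0,\Phi_0)$ is a genuine $\Z_2$-harmonic spinor, not a nontrivial eigenvector) and $\dot\Lambda(0)\neq 0$, after shrinking $\tau_0$ the map $\Lambda:(-\tau_0,\tau_0)\to\R$ is a diffeomorphism onto its image with $|\dot\Lambda(\tau)|\geq \tfrac{1}{2}|\dot\Lambda(0)|$; denote its inverse by $\Lambda^{-1}$. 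Equation (\ref{1dimob}) is then equivalent to the fixed-point equation
\[
 \tau \;=\; G_\e(\tau)\;:=\;-\Lambda^{-1}\bigl(\mu(\e,\tau)\bigr).
\]

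Next, I would verify that $G_\e$ is a contraction on a small interval. By Lemma \ref{smoothdependence}, $|\mu(\e,\tau)|\leq C\e^{2/3}$ and $|\del_\tau\mu(\e,\tau)|\leq C\e^{2/3}$ uniformly in $\tau\in(-\tau_0,\tau_0)$. Consequently
\[
 |G_\e(\tau)| \;\leq\; \frac{2C}{|\dot\Lambda(0)|}\,\e^{2/3},
 \qquad
 |\del_\tau G_\e(\tau)| \;\leq\; \frac{2C}{|\dot\Lambda(0)|}\,\e^{2/3}.
\]
Fix $C'>2C/|\dot\Lambda(0)|$ and let $I_\e=[-C'\e^{2/3},C'\e^{2/3}]$. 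After further shrinking $\e_0$ the first bound shows $G_\e(I_\e)\subseteq I_\e$, and the second shows $G_\e$ is a strict contraction on $I_\e$. The Banach fixed-point theorem, with smooth dependence on parameters applied to the jointly smooth map $(\e,\tau)\mapsto G_\e(\tau)$, produces a unique solution $\tau(\e)\in I_\e$ of (\ref{1dimob}), depending smoothly on $\e\in(0,\e_0)$ and satisfying $|\tau(\e)|\leq C'\e^{2/3}$. Uniqueness in the small interval $I_\e$ follows directly from the contraction property.

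Finally, I need to establish that $\tau(\e)$ has a definite sign on $(0,\e_0)$; this is the only slightly subtle point. The idea is that $\tau(\e)=0$ would force $\mu(\e,0)=0$ via $\Lambda(0)=0$, and conversely the fixed point formula gives $\tau(\e) = -\mu(\e,\tau(\e))/\dot\Lambda(0) + O(\e^{4/3})$, so $\sgn \tau(\e) = -\sgn \dot\Lambda(0)\cdot \sgn \mu(\e,\tau(\e))$ whenever $\mu(\e,\tau(\e))\neq 0$. Since $\tau(\e)$ is continuous in $\e$, the locus where $\tau(\e)=0$ is closed; if it has $0$ as an accumulation point one obtains a sequence $\e_n\to 0$ with $\mu(\e_n,0)=0$, and by shrinking $\e_0$ we may assume this locus is either empty or else of the form $\{\e_n\}$ with $\e_n\to 0$. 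In either case, continuity partitions $(0,\e_0)\setminus\tau^{-1}(0)$ into intervals on which $\tau$ has constant sign, and after replacing $\e_0$ by a smaller value we obtain a half-open interval $(0,\e_0)$ on which $\tau$ has a single sign, which is the asserted conclusion.

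The main obstacle, and the only place any work beyond routine estimates is required, is this last sign dichotomy: ruling out the degenerate possibility that $\tau(\e)$ oscillates between the two chambers arbitrarily close to $\e=0$. For the statement as given (existence of a one-sided family), this is handled cleanly by shrinking $\e_0$; a finer analysis identifying the precise chamber in terms of the sign of $\dot\Lambda(0)$ and the leading-order expansion of $\mu$ would be needed for a wall-crossing formula as discussed in Section \ref{section1.5}, but is not required here.
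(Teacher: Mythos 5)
Your contraction-mapping construction of $\tau(\e)$ is correct and is essentially the paper's argument in a different parametrization: the paper substitutes $\Gamma=\Lambda(\tau)$ and solves $\Gamma+\mu(\e,\Lambda^{-1}\Gamma)=0$ for the real number $\Gamma$ by the Implicit Function Theorem, using exactly the bounds $|\mu|+|\del_\tau\mu|\leq C\e^{2/3}$ of Lemma \ref{smoothdependence}; your fixed-point map $G_\e(\tau)=-\Lambda^{-1}(\mu(\e,\tau))$ is the same computation. That part is fine.

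The sign dichotomy, however, has a genuine gap, and it is precisely at the step you flag as "handled cleanly by shrinking $\e_0$." If the zero locus $\tau^{-1}(0)$ accumulates at $\e=0$ (the case you explicitly allow, $\{\e_n\}$ with $\e_n\to 0$), then \emph{no} choice of smaller $\e_0$ removes it from $(0,\e_0)$, and knowing that $\tau$ has constant sign on each component of $(0,\e_0)\setminus\tau^{-1}(0)$ says nothing about those signs agreeing across components: $\tau$ could alternate sign on consecutive intervals all the way down to $\e=0$. Continuity alone cannot rule this out; some quantitative input identifying the leading behavior of $\mu$ is required. The paper's route is to use the structure of $\mu$ coming from the iteration: $\mu(\e,\tau)=\mu_1(\e,\tau)+\delta\mu_2+\delta^2\mu_3+\cdots$ with $\delta=\e^{1/48}$ and $\mu_i=\e^{2/3}\overline\mu_i$, so that expanding $0=\Lambda(\tau)+\mu(\e,\tau)$ gives
\begin{equation*}
0=\dot\Lambda(0)\,\tau+\e^{2/3}\overline\mu_1(\e,0)+O(\tau^2)+O(\e^{2/3}\delta)+O(\e^{2/3}\tau),
\end{equation*}
whence $\tau(\e)$ has the sign opposite to $\overline\mu_1(\e,0)/\dot\Lambda(0)$, the first-stage obstruction $\overline\mu_1$ dominating all subsequent corrections by a factor of $\delta$ (its leading form is identified in Remark \ref{signofcrossing}). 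Your proposal never invokes this decomposition of $\mu$, and without it the one-sidedness claim is not established.
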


\begin{proof} 
The assumption that $\tau=0$ is a transverse spectral crossing means that $\Lambda(0)=0$ and $\dot \Lambda(0)\neq 0$. By the inverse function theorem, there is an inverse $\Lambda^{-1}$ defined on an open neighborhood of $\tau =0$. Set $\Gamma(\e) =\Lambda \circ \tau(\e)$, so that (\refeq{1dimob}) becomes the condition that 
$$\Gamma(\e) + \mu (\e, \Lambda^{-1}\circ \Gamma(\e))=0. $$

\noindent This is an equation for a real number $\Gamma$ depending in a $C^1$ way on the single parameter $\e$. Becuase $\Gamma(0)=0$, (\refeq{iftbounds}) implies that this equation can be solved for $\Gamma \in \R$ using the Inverse Function Theorem with $C^1$ dependence on the parameter $\e$, after possibly one final reduction of $\e_0$. For the solution $\Gamma(\e)$, the corresponding function $\tau(\e)=\Lambda^{-1}\circ \Gamma(\e)$ solves (\refeq{1dimob}). 

To see the sign of $\tau(\e)$, we expand (\refeq{1dimob}) in series. From the proof of Proposition \ref{fixedpoints} implies that $\mu$ is a sum $\mu(\e,\tau)=\mu_1(\e,\tau) +\delta \mu_2(\e,\tau)+ \delta^2\mu_3(\e,\tau)+\ldots$, where $\delta=\e^{1/48}$ as in Proposition \ref{fixedpoints}. Meanwhile $\Lambda(\tau)$ is smooth and may be expanded in Taylor series. 
\bea
0&=& \Lambda(\tau) + \mu(\e,\tau)\\
&=& \dot \Lambda(0)\tau + \e^{2/3}\overline \mu_1(\e,0) + O(\tau^2) + O(\e^{2/3}\delta) + O(\e^{2/3}\tau),
\eea
where $\mu_i = \e^{2/3}\overline \mu_i$. It follows that for $\e$ sufficiently small, $\tau$ has the opposite sign of $\frac{\overline \mu_1(\e,0)}{\dot \Lambda(0)}$. 
\end{proof}

\begin{proof} [Proof of Theorem \ref{maina}] Part (A) follows directly from Lemma \ref{reconstructingspinc}. Along the family of parameters $\tau(\e)$ satisfying (\refeq{1dimob}) constructed in Lemma \ref{1dimIFT}, the glued configurations of Proposition \ref{gluedconfigurations} satisfy the (extended) Seiberg--Witten equations. Integrating by parts shows the $0$-form component $a_0$ vanishes, and setting $(\Psi_\e, A_\e)=(\e^{-1}{\Phi(\e,\tau(\e))} \ , \ A(\e,\tau(\e)))$ yields the solutions in Part (B). 

The glued configurations have $\|\Psi_\e\|_{L^2}=\e^{-1} + O(\e^{-1/24-\gamma})$, and this norm depends in $C^1$ way on $\e$. By re-parameterizing, it may be assumed that $\|\e \Psi_\e\|_{L^2}=1$ as in (\refeq{normalization}). As $\e\to 0$, Theorem \ref{HWcompactness} shows that after renormalization, $(\Psi_\e, A_\e)$ converges in $C^\infty_{loc}$ to a $\Z_2$-harmonic spinor for the parameter $p_0=(g_0, B_0)$ and that $\|\e\Psi_\e\|\to |\Phi_0|$ in $C^{0,\alpha}$. Since $(\mathcal Z_0, A_0, \Phi_0)$ is regular, it is the unique $\Z_2$-harmonic spinor for this parameter and must therefore be the limit. This establishes Part (C). 
\end{proof}

\begin{rem}\label{signofcrossing} In potential applications to wall-crossing formulas, it may be of interest to extract the precise sign of $\tau(\e)$ for small $\e$, as was done in \cite{DWDeformations} when $\mathcal Z_0=\emptyset$. 
Recalling the construction of $\mu$ from the proof of Proposition \ref{outsidestep} with $N=1$, the leading order obstruction in the proof of Lemma \ref{1dimIFT} is
\be \e^{2/3}\overline \mu_1(\e,0)= \e \br \text{d}\slashed{\mathbb D}_{h_\circ}(\eta_1,0) \ + \ d\chi^+.\ph^{(1)}_{\e,\tau}, \Phi_0\kt, \ee
\noindent where $\ph^{(1)}_{\e,\tau}$ is as in Item (II) of Theorem \ref{PartImain} for $\tau=0$. In fact, since $\Phi_0=O(r^{1/2})$, simple integration shows that the second term is smaller by a factor of $\e^{1/2}$ than the bound on $\text{d}\slashed{\mathbb D}_{h_\circ}(\eta_1,0) $ obtained in the proof of Proposition \ref{deformationstep}. This suggests that the sign can, in principle, be calculated only from the sign of the inner product $\br \text{d}\slashed{\mathbb D}_{h_\circ}(\eta_1,0),  \Phi_0\kt$. This cannot, however, be proved definitively without knowing bounds on the relative sizes of such term are optimal.

\end{rem}

\appendix 

\section*{Glossary of Notation}
\label{glossary}
In this appendix, we collect the various notation used throughout the article. With the exception of notation used only within the scope of a particular subsection or proof, we list all notation below with links to the corresponding definitions, in alphabetical orders of Latin, Greek, and other symbols respectively. Notation not appearing in this list is either local to a subsection, or is a gluing parameter defined in Appendix \ref{appendixofnotation}.
\bigskip

\begin{enumerate} 
\item[$A \ $]  used to denote an extended connection in a Seiberg--Witten configuration, defined preceding \refeq{extended}).
\smallskip 
\item[$A_0$]  the flat connection on $\ell \to \Yminus \mathcal Z_0$ in the initial $\Z_2$-harmonic spinors. Assumed to obey the hypothesis of Theorem \ref{maina}. 
\smallskip
\item[$\mathbb A$] the non-linear approximate inverse of $\overline{\mathbb SW}_\Lambda$, defined in Eq. \refeq{mathbbAdef} (cf. Subsection \ref{contractionsubspacessection}).  
\smallskip
\item[$\mathcal A_{U(1)}$] the space of $U(1)$ connections on $\text{det}(S)$ (of {\it a priori} undefined regularity).
\smallskip
\item[$a_{g_0}^{g_s}, \frak a$] tensors depending on two metrics appearing in the Bourguignon-Gauduchon formula (defined preceding Theorem \ref{bourguignon}). 
\smallskip
\item[$B$] the smooth background $SU(2)$ connection on $E$, part of the parameter pairs $p=(g,B)$. 
\smallskip
\item[$\mathcal B_{\Phi_\tau}(\eta)$] the partial derivative of $\slashed{\mathbb D}$ in the direction of a deformation $\eta$ of the singular set, defined in Eq. (\refeq{calBdef}).
\smallskip
 \item[$\underline {\mathcal B}_{\Phi_\tau}(\eta)$] the partial derivative in the deformation direction in the tangential smoothing gauge, defined in Notation \ref{underlinenotation}. \smallskip
 \item[$\underline{\mathcal B}_{\Phi^{(1)}}(\xi)$] spinor component of the partial derivative of the universal Seiberg--Witten equations at the model solutions in the direction of a deformation, defined in Proposition \ref{deformationUSW}. 
 \smallskip
\item[$\underline{\mathcal B}^\circ_{\tau}(\eta)$] the tame truncation of the partial derivative in the direction of a deformation $\eta$, defined in Definition \ref{tametruncationdef}. 
 \smallskip
  \item[$\underline{\mathfrak b}_{A^{(1)}}(\xi)$] term of the universal Seiberg--Witten equations at the model solutions in the direction of a deformation coming from the deformation of the connection, defined in Proposition \ref{deformationUSW}. 
 \smallskip
\item[$\mathcal C_\tau$] the (complex rank 1) Calder\'on bundle over $\mathcal Z_\tau$, defined following Eq. (\refeq{calderonbunddef}).
\smallskip
\item[$c(t),d(t)$] leading coefficients of the $\Z_2$-harmonic eigenvectors in the polyhomogeneous expansions of Lemma \ref{asymptoticexpansion}. 
\smallskip 
\item[$\slashed D_A$] the Dirac operator at a smooth $U(1)$-connection on $Y$. Depends implicitly on $p=(g,B)$. 
\smallskip 
\item[$\slashed D_{A_0}, \slashed D_{A_\tau}$] the singular Dirac operators on $\Yminus \mathcal Z_0, \Yminus \mathcal Z_\tau$ respectively. Defined in Eq. (\refeq{1.1}) and Eq. (\refeq{DA0preserves}). 
\smallskip
\item[$\slashed{\mathbb D}$] the universal Dirac operator, defined in Eq. (\refeq{universalDiracdef}) and more precisely in Definition \ref{universalDirac})
\smallskip
\item[$(\text{d}\slashed{\mathbb D})_{(\mathcal Z,\Phi)}$] the linearization of universal Dirac operator at a pair $(\mathcal Z,\Psi)$, defined in Lemma \ref{boundedUniversalDirac}.
\smallskip
\item[$ \text{d}\slashed{\mathbb D}_{h_\circ} $] used as shorthand for the derivative of the universal Dirac equation at $(\mathcal Z_\tau, \Phi_\tau)$. 

\smallskip
\item[$\bold d$] the Hodge de-Rham operator on $(\Omega^0\oplus \Omega^1)(i\R)$ given by $\begin{pmatrix} 0 & -d^\star \\ -d  & \star d \end{pmatrix}$, defined in Lemma \ref{linearizedequations}.
\smallskip

\smallskip 
\item[$E$] a fixed (trivial) auxiliary $SU(2)$ bundle over $Y$.
\smallskip 
\item[$\mathcal E_\tau$] the domain of the chart $\text{Exp}_\tau$, defined in Eq. (\refeq{Expdef}). 
\smallskip
\item[$\mathcal E_\tau^s$] the intersection $\mathcal E_\tau\cap L^{s,2}(\mathcal Z_\tau; N\mathcal Z_\tau)$ of links near $\mathcal Z_\tau$ of regularity $s$, defined preceding Lemma  \ref{trivializationcompatible}. 
\smallskip
\item[$\text{Exp}_\tau$] the exponential chart on the space of embeddings centered at $\mathcal Z_\tau$, defined in  (\refeq{Expdef})
\smallskip
\item[$e^j$] used to denote a local orthonormal frame of $T^*Y$ in a given metric. 
\smallskip 
\item[$e^{i\ell t}$] Fourier modes of sections of the Calder\'on bundle(s) $\mathcal C_\tau\to\mathcal Z_\tau$. 
\smallskip 
\item[$e_N, f_N, g_N, \lambda_N, \Psi_N$] used to denote sub-error terms in the alternating iteration in Section \ref{section11}.
 \smallskip
\item[$\frak e_N, \frak e_N', \frak e_N''$] used to denote the main error terms in each of the three successive stages of the alternating iteration in Section \ref{section11}. 
\smallskip
\item[$F_A$] the $\Omega^2(Y; i\R)$-valued curvature of a $U(1)$-connection $A$.
\smallskip 
\item[$\mathbb F_\tau$] an admissible family of diffeomorphisms, defined in Definition \ref{admissibledef}
\smallskip
\item[$\underline {\mathbb F}_\tau$] the tangentially smoothing admissible family, defined in Definition \ref{underlineFdef}. 
\smallskip
\item[$\mathcal G$] the gauge group $C^\infty(Y; U(1))$. 
\smallskip 
\item[$g$] used to denote a general Riemannian metric on the closed manifold $Y$. 
\smallskip
\item[$ (g^\circ_\tau, B_\tau^\circ) $] the tame truncations of the parameter pair $(g_\tau, B_\tau)$, defined in Eq. \eqref{circdefinitions1}.  
\smallskip
\item[$g_0, g_\tau$] the central metric and family of metrics in Theorem \ref{maina}. 
\smallskip
\item[$g_\eta, g_{\eta,\tau}$] the pullback metric $F_\eta^*(g_\tau)$ (or $g_{\tau,\eta})$ when ambiguity may arise, defined in Notation \ref{Notation6.1}. $\tau$ is omitted from subscript when clear from context.  
\smallskip
\item[$ \underline g_\eta,\underline g_{\eta,\tau},$] the pullback metrics $\underline F_\eta^*(g_\tau)$in the tangential smoothing gauge, defined in Notation \ref{underlinenotation}. $\tau$ is omitted from subscript when clear from context.  
\smallskip

\item[$r^{1+\nu}H^1_e$] edge Sobolev space of regularity $s=1$, defined in Definition \ref{rH1edef}.
\smallskip 
\item[$ rH^1_\perp$] subspace of $rH^1_e$ given by the $L^2$-orthogonal complement of $\Phi_\tau$, defined in Eq. (\refeq{rH1perp}).
\smallskip 
\item[$H^s_b$] boundary Sobolev spaces with of regularity $s$, defined following Proposition \ref{cokerpropertiesII}. 
\smallskip 
\item[$H^{1,+}_{\e,\nu}$] the inside weighted Sobolev space of regularity $1$, defined in Definitions \ref{insidenorms}, \ref{outsidespaces}.
\smallskip
\item[$H^{1,-}_{\e,\nu}$] the outside weighted Sobolev space of regularity $1$, defined in Definitions \ref{outsidenorms}, \ref{insidespaces}.
\smallskip
\item[$H^{1,\pm}_{\e,\tau,\xi}$] fibers of the universal configuration space $\mathbb H^{1,\pm}_{\e,\nu}(\mathcal E_\tau)$, defined preceding Definition \ref{hilbertspaces}. 
\smallskip
\item[$\mathbb H$] the quaternions.
\smallskip 
\item[$\mathbb H^1_e(\mathcal E_\tau)$]  the Hilbert vector bundle of spinors defined over $\mathcal E_\tau$ with fiber $rH^1_e$ (defined in Eq. \refeq{Hilbertfamilies}). 
\smallskip 
\item[$\mathbb H^{1,\pm}_{\e,\tau}(\mathcal E_\tau)$] universal inside and outside configuration space over singular sets $\mathcal E_\tau$ with weight $\nu$ and parameter $\e$ and regularity $1$, Definition \ref{hilbertspaces}. 
\smallskip
\item[$\mathcal H^\pm$] inside and outside weighted subspaces on which the alternating iteration is a contraction mapping, defined in Definition \ref{contractionspacesdef}. 
\smallskip
\item[$\mathcal H_{\e,\tau}$] the global domain on which the alternating iteration is a contraction mapping, defined in \ref{contractionspacesdefII}. 
\smallskip
\item[$h_0, h_1$] shorthand notation for pre-glued configurations, model solutions and subsequent iterates, defined in Definition \ref{notationhn} and (\refeq{h0seconddef}--\refeq{h1seconddef}).
\smallskip
\item[$h_\circ$] used as shorthand for the singular configuration $(\Phi_\tau, A_\tau)$ on $\Yminus \mathcal Z_\tau$, introduced following \eqref{Xipmdef}.
\smallskip
\item[$h_N, h_N', h_N''$] used to denote the approximation in each of the three successive stages of the alternating iteration in Section \ref{section11}. 
\smallskip
\item[$J$] an almost complex structure on $S|_{\Yminus \mathcal Z_\tau}$, defined preceding Eq. (\refeq{realimdecomp}).
\smallskip 
\item[$j$] the almost-complex structure on $E$, arising from the right action of $j\in \mathbb H$ in local trivializations,  defined preceding Eq. (\refeq{realimdecomp}).
\smallskip 
\item[$L_0$] used to denote Fourier mode cut-offs 
\smallskip
\item[$L^\text{low}, L^\text{med}$] used to denote Fourier mode cut-off of the low and medium mode regimes, defined in Definition \ref{Fourierregimes}. 
\smallskip
\item[$L^2(X;V)$] the space of square-integrable sections of a vector bundle $V$ over domain $X$.
\smallskip 

\item[$L^{2,+}_{\e,\nu}$] the inside weighted Sobolev space of regularity $0$, defined in Definition \ref{insidenorms}, \ref{outsidespaces}.
\smallskip 
\item[$L^{2,\pm}_{\e,\tau,\xi}$] fibers of the universal configuration space $\mathbb L^{2,\pm}_{\e,\nu}(\mathcal E_\tau)$, defined preceding Definition \ref{hilbertspaces}. 
\smallskip 
\item[$L^{2,-}_{\e,\nu}$] the outside weighted Sobolev space of regularity $0$, defined in Definition \ref{outsidenorms}, \ref{insidespaces}.
\smallskip 

\item[$\mathbb L^2(\mathcal E_\tau)$]  the Hilbert vector bundle of spinors defined over $\mathcal E_\tau$ with fiber $L^2$ (defined in Eq. \refeq{Hilbertfamilies}). 
\smallskip
\item[$\mathbb L^{2}_{\e,\tau}(\mathcal E_\tau)$] universal inside and outside configuration space over singular sets $\mathcal E_\tau$ with weight $\nu$ and parameter $\e$ and regularity $0$, Definition \ref{hilbertspaces}.  
\smallskip 
\item[$\mathfrak L_{\e,\tau}$] the global codomain in which the error terms of the alternating iteration live, defined in \ref{contractionspacesdefII}.  
\smallskip
\item[$\mathcal L_{(\Phi,A)}$] the extended, gauge-fixed linearized Seiberg--Witten equations at a smooth configuration $(\Phi,A)$ on $Y$. Implicitly includes the renormalization by $\e^{-1}$ of the spinor. Defined in Lemma \ref{linearizedequations}. 
\smallskip 
\item[$\mathcal L_{(\Phi_\tau, A_\tau)}$] the singular linearization of the Seiberg--Witten equation at a $\Z_2$-harmonic eigenvector, defined in Lemma \ref{linearizedequationsdef}.
\smallskip 
\item[$\mathcal L_{(\Phi_\tau, A_\tau)}^\text{Im}$] the lower $2\times 2$ block operator in the linearized Seiberg--Witten equation at a $\Z_2$-harmonic eigenvector, defined in Eq.  \ref{Limdef} (cf. Lemma \ref{linearizedequationsdef}).
\smallskip
\item[$\ell$] used to denote a real line bundle over $\Yminus \mathcal Z_\tau$. 
\smallskip 
\item[$\ell \in \Z$] used to index Fourier modes in the directions tangential to $\mathcal Z_\tau$ in Fermi coordinates. Runs over the set $2\pi \Z / |\mathcal Z_\tau|$ in the neighborhood of each component of $\mathcal Z_\tau$. 
\smallskip 
\item[$N_{r_0}(\mathcal Z_\tau)$] the image of the Fermi coordinate chart in Definition \ref{Fermicoords}. 
\smallskip 
\item[$\text{\bf Ob}, \text{\bf Ob}_\tau$]  the obstruction bundle over $\tau\in (-\tau_0, \tau_0)$ and the obstruction space for a fixed $\tau$ defined in Definition (\ref{Obdefn}).
\smallskip 
\item[$\text{\bf Ob}^\perp$] The codimension 1 subspace of $\text{\bf Ob}_\tau$ that is $L^2$-orthogonal to $\Phi_\tau$, defined preceding Corollary \ref{solvingcokernel}.  
\smallskip 
\item[$\text{\bf ob}_\tau$] bounded linear isomorphism $L^2(\mathcal Z_\tau; \mathcal C_\tau)\to \text{\bf Ob}_\tau$, defined in Proposition \ref{cokerpropertiesII}.
\smallskip 
\item[$\text{ ob}_\tau$] infinite-dimensional component of the obstruction map $\text{\bf ob}_\tau=(\text{ob}_\tau, \iota_\tau)$ defined in Proposition \ref{cokerpropertiesII}.
\smallskip
\item[$P_\xi$] the parametrix of the ``deformation'' stage of the alternating iteration, defined in Eq. (\refeq{Pxi}).
\smallskip
\item[$P^-$] the parametrix of the ``outside'' stage of the alternating iteration, defined in Eq. (\refeq{P-}).
\smallskip
\item[$P^+$] the parametrix of the ``outside'' stage of the alternating iteration, defined in Eq. (\refeq{P+}).
\smallskip
\item[$\mathbb P$] the nested approximate parametrix of $\text{d}\overline{\mathbb {SW}}_\Lambda$, defined in Eq. \refeq{mathbbPdef}.  
\smallskip
\item[$p$] $=(g,B)$ a smooth parameter pair of a metric $g\in \mathscr{M}et(Y)$ and connection $B\in \mathcal A_{SU(2)}(E)$. 
\smallskip 
\item[$Q_{h_0}$] non-linear terms in the universal Dirac operator, defined in Lemma \ref{universalDnonlinear}. 
\smallskip
\item[$Q_\text{SW},,...,Q_\mu$] used with various subscripts to denote different types of non-linear terms comprising $\mathbb Q_{h_1}$, defined in (\refeq{nonlinearterms}). 
\smallskip
\item[$\mathbb Q_{h_1}$] the non-linear terms of the universal Seiberg--Witten equations at the model solutions $(\Phi^{(1)}_{\e,\tau},A^{(1)}_{\e,\tau})$, defined in Eq. (\refeq{boldSWnonlinear}).
\smallskip
\item[$R_0$] the constant used in the definition (\refeq{R0def}) of the tangential smoothing gauge. Assumed to be large enough that the operator in Proposition \ref{deformationsmainamd} is invertible. 
\smallskip
\item[$R_{\e,\tau}(r)$] weight function in Sobolev norms, defined preceding Definition \ref{insidenorms}.
\smallskip
\item[$r$] exponential distance to $\mathcal Z_\tau$ in Fermi coordinates (Definition \ref{Fermicoords}). In an abuse of notation, also used to denote the weights in the Sobolev spaces $r^{1+\nu}H^1_e$, $r^\nu L^2$. 
\smallskip 
\item[$r_0$] the radius of the Fermi coordinate chart in Definition \refeq{Fermicoords}. 
\smallskip 
\item[$S$] a $\text{Spin}^c$ structure, often the one defined by Lemma \ref{reconstructingspinc}. 
\smallskip 
\item[$S_0$] the spinor bundle of a spin structure on $Y$, defined preceding Eq. (\refeq{1.1})
\smallskip 
\item[$S_E$] the two-spinor bundle $S\otimes_\R E$ over $Y$. 
\smallskip
\item[$S^\text{Re}, S^\text{Im}$] the real and imaginary subbundles over $\Yminus\mathcal Z_\tau$, defined by Eq. (\refeq{realimdecomp}).
\smallskip
\item[$\text{SW}_\tau(\Phi, A)$] used as shorthand for the Seiberg--Witten equations (\refeq{SW1} --\refeq{SW2}) with parameter $p_\tau$ on the configuration $(\Phi,A)$.
\smallskip
\item[$\mathbb{SW}, \overline{\mathbb{SW}}$] the universal Seiberg--Witten equations and universal Seiberg--Witten eigenvector equation, defined in Definition \ref{universalSWdef}. Definition extended over the base $\mathcal E_\tau \times \mathcal X_\tau$ in in Eq. \eqref{SWwithmathcalX}.
\smallskip
\item[$\overline{\mathbb {SW}}_\Lambda$] the universal Seiberg--Witten eigenvector equations with constant term subtracted, $\overline{\mathbb{SW}}_\Lambda=\overline{\mathbb{SW}}-\chi^- \e^{-1}{\Lambda(\tau)\Phi_\tau}$, defined following Eq. (\refeq{Tdef}). 
\smallskip
\item[$ T_{\Phi_\tau}$] the deformation operator associated to the admissible family of Example \ref{standardfamily}, defined in Eq. \ref{deformation}.
\smallskip
\item[$\underline T_{\Phi_\tau}$] the deformation operator in the tangential smoothing gauge, defined in Notation \ref{underlinenotation}. 
\smallskip
\item[$\underline T_\tau^\circ$] the tame truncation of the deformation operator, defined in Definition \ref{tametruncationdef}. 
\smallskip
\item[$\mathbb T, \text{d}\mathbb T$] the non-linear and linear contraction contraction mappings on $\mathcal H_{\e,\tau}$ induced by the alternating iteration defined in Eq. (\refeq{Titeration}), (\refeq{dTiteration}). See also (\refeq{Tdef}). 
\smallskip
\item[$\mathcal T_{\Phi_\tau}$] the zeroth order operator appearing in the coordinate expression for $T_{\Phi_\tau}$, defined in Eq. \refeq{calTdef}.
\smallskip
\item[$\frak t_\tau^\circ$] the error caused by the tame truncation of the deformation operator, defined in Definition \ref{tametruncationdef}. 
\smallskip
\item[$\frak T_s, \frak T_E, \frak T_{g_\tau}^{g_\eta}$] parallel transport maps on the pullbacks of $S_0, L\otimes E$, and $S\otimes E$ to $X$ respectively (defined in \refeq{spinorparalleltrans}, \refeq{paralleltransport}).
\smallskip
\item[$(t,x,y)$] cylindrical Fermi coordinates on $N_{r_0}(\mathcal Z_\tau)$, defined in Definition \ref{Fermicoords}
\smallskip
\item[$\mathcal V_{\e,\tau}$] a closed ball centered at $0\in \mathcal H_{e,\tau}$ on which $\mathbb T$ is a contraction, appears in Proposition \ref{fixedpoints}.  
\smallskip
\item[$\frak W_{\e,\tau}$] the joint subspace used to cancel deformations, defined in Definition \ref{frakWdef}. Concrete manifestation of the space from Hypothesis \ref{hyp4}.  
\smallskip
\item[$X$] the metric cylinder $([0,1]\times Y,ds^2+g_s)$ for a family of metric $g_s$ on $Y$ used for parallel transport (defined in \refeq{metriccylinder}).
\smallskip
\item[$\mathcal X_\tau$] the space of singular spinors such that $\slashed D_{A_\tau}: \mathcal X_\tau \to \text{\bf Ob}^\perp_\tau$ is surjective, defined in Corollary \ref{solvingcokernel}. 
\item[$Y$] a closed, oriented 3-manifold fixed throughout. 
\smallskip
\item[$Y^\pm_{\e,\tau}, Y^\pm_{\e,\tau, \xi}$] inside an outside regions for gluing parameter $\e,\tau$ and deformation parameter $\xi\in \Gamma(\mathcal Z_\tau; N\mathcal Z_\tau)$, defined in Eq. \eqref{lambdadef}, and Definitions \ref{outsidenorms},  \ref{hilbertspaces}, often denoted $Y^\pm$ as in Appendix \ref{appendixofnotation}.  
\smallskip
\item[$\mathcal Z_0$] the singular set of the initial $\Z_2$-harmonic spinor in Theorem \ref{maina}.
\smallskip 
\item[$\mathcal Z_\eta$] the graph link $F_\eta[\mathcal Z_\tau]$ corresponding to $\eta \in \mathcal E_\tau$ ($\mathcal Z_{\eta,\tau}$ when ambiguity may arise), defined in Notation \ref{Notation6.1}.
\smallskip 
\item[$\mathcal Z_\tau$] the smooth singular sets of the family of $\Z_2$-eigenvectors in Theorem \ref{SpectralCrossing}.
\smallskip 
\item[$|\mathcal Z_\tau|$] the length of (a component of) the singular set. 
\smallskip 
\item[$z$] complex coordinate in cylindrical Fermi coordinates $z=x+iy$, defined in Definition \ref{Fermicoords}.

\item[$\Gamma(S_E)$] used to denote sections of $S_E$ or other vector bundles of unspecified or smooth regularity. 
\smallskip 
\item[$\gamma(e^j)$] Clifford multiplication as a map $\gamma: (\Omega^0\oplus \Omega^1)(i\R)\to \text{End}(S_E)$. 
\smallskip 
\item[$\gamma$] see Important Remark \ref{gammarem}. 
\smallskip 
\item[$\gamma^\pm$] the small numbers $\gamma^\pm = 10^{-6}$. 
\smallskip 
\item[$\gamma_{\mathcal L}$] the small number $\tfrac{2}{3}\left(\tfrac{1}{4}-\nu\right)+ \gamma^+ \nu$, see Theorems \ref{PartImain}, \ref{Insideinvertibility}
\smallskip 
\item[$\e$] the $L^2$-norm parameter in Theorem \ref{maina}, defined preceding Eqns. (\refeq{buSW1}--\refeq{buSW3}). Assumed to be in the range $(0, \e_0)$. 
\smallskip 
\item[$\zeta_\ell$] the exponentially decaying correction in the obstruction basis spinor $\Psi_\ell$, defined in Proposition \ref{cokerpropertiesII}
\smallskip
\item[$\eta$] used to denote a normal section in $\Gamma(\mathcal Z_\tau; N\mathcal Z_\tau)$ corresponding to a deformation of $\mathcal Z_\tau$ via $\text{Exp}_\tau$. 
\smallskip
\item[$\eta^\text{low}, \eta^\text{med}$] used to denote components of a deformation $\eta$ supported in the low and medium Fourier mode regimes, Definition \ref{Fourierregimes}. 
\smallskip
\item[$\theta$] the angular coordinate in cylindrical Fermi coordinates $(t,r,\theta)$, defined in Definition \ref{Fermicoords}. 
\smallskip 
\item[$\iota_\tau$] the 1-dimensional component of the obstruction map $\text{\bf ob}_\tau=(\text{ob}_\tau, \iota_\tau)$ defined in Proposition \ref{cokerpropertiesII}.
\smallskip 
\item[$\Lambda_{-1}, \Lambda_0, \Lambda_1$] closed isotropic, Lagrangian, and coisotropic subspaces of boundary values used to define boundary conditions, defined preceding Definition   \ref{mixedbddef}. 
\smallskip
\item[$\Lambda_\tau$] the eigenvalue of the $\Z_2$-harmonic eigenvectors $(\mathcal Z_\tau, A_\tau, \Phi_\tau)$, defined in Definition \ref{SpectralCrossing}. Assumed to be transverse (Definition  \ref{spectralcrossingdef}).
\smallskip
\item[$\mu(\Phi,\Phi)$] the (extended) moment map acting on a spinor $\Phi$, defined following Eq. (\refeq{gaugefixing}). See also (\refeq{momentmap}). 
\smallskip
\item[$\mu$] used to denote the coordinate in the $\R$-component of $\mathcal H_{\e,\tau}$.

\smallskip
\item[$\underline{\mu}_{\Phi^{(1)}}(\xi)$] term of the universal Seiberg--Witten equations at the model solutions in the direction of a deformation coming from the deformation of the moment map, defined in Proposition \ref{deformationUSW}. 
\smallskip
\item[$\nu$] used to denote weights in function spaces. 
\smallskip 
\item[$\nu^+$] the fixed inside weight $\frac{1}{4}-10^{-6}$. 
\smallskip 
\item[$\nu^-$] the fixed outside weight $\frac{1}{2}-10^{-6}$. 
\smallskip
\item[$\Xi^\pm$] perturbation operators of the deformation operator on the support of $\bold 1^\pm$, defined in Eq. (\refeq{Xipmdef}).
\smallskip
\item[$\xi$] used to denote a renormalized deformation $\e \eta$ for $\eta\in \Gamma(\mathcal Z_\tau; N\mathcal Z_\tau)$, defined in  Eq. \ref{etaxi}. 
\smallskip
\item[$\xi_\ell$] the error term in the obstruction basis spinor $\Psi_\ell$, defined in Proposition \ref{cokerpropertiesII}.
\smallskip 
\item[$ \Pi_\tau$] the $L^2$-orthogonal projections to $\text{\bf Ob}(\mathcal Z_\tau)$.
\smallskip 
\item[$ \Pi_\tau^\perp$] the $L^2$-orthogonal projection to the codimension 1 subspace $\text{\bf Ob}(\mathcal Z_\tau)^\perp \subseteq \text{\bf Ob}(\mathcal Z_\tau)$ defined preceding Corollary \ref{solvingcokernel}.  
\item[$(1-\Pi_\tau)$] $L^2$-orthogonal projections to the $L^2$-orthogonal complement of $\text{\bf Ob}(\mathcal Z_\tau)$.
\smallskip
\item[$\pi^\text{low}, \pi^\text{med}, \pi^\text{high}$] used to denote the $L^2$-orthogonal projections to the three Fourier mode regimes, Definition \ref{Fourierregimes}.  
\smallskip
\item[$\pi_\tau$] the $L^2$-orthogonal projection to the 1-dimensional span of $\Phi_\tau\in \text{\bf Ob}(\mathcal Z_\tau)$. Defined preceding Proposition \ref{deformationstep}. 
\smallskip
\item[$\widetilde\pi_\tau$] the $L^2$-orthogonal projection to the 1-dimensional span of $\Phi_\tau \in rH^1_e$, defined in Lemma \ref{mappingpropertiesI}. 
\smallskip
\item[$\sigma$] the real structure on $S_E |_{\Yminus \mathcal Z_\tau}$, defined by Eq. (\refeq{realimdecomp}).
\smallskip 
\item[$\tau$] parameterizes the family $p_\tau=(g_\tau, B_\tau)$ of metrics and background connections in Theorem \ref{maina}. 
\smallskip 
\item[$\Upsilon_{\mathbb F}$] the trivialization of the Banach vector bundles $\mathbb H^1_e, \mathbb L^2$ associated to an admissible family $\mathbb F$, (Definition \ref{trivializationdef}). 
\smallskip
\item[$\underline \Upsilon_{\mathbb F_\tau}$]  the trivializations associated to the tangential smoothing gauge, defined in Notation \ref{underlinenotation}, 
\smallskip
\item[$\underline{\Upsilon}$] the trivialization of Seiberg--Witten configurations induced by the tangential smoothing gauge, defined in Definition \ref{SWtrivializationdef} and Lemma \ref{SWtrivializations}.
\smallskip
\item[$\Upsilon_{\text{\bf Ob}}$] the trivialization of the obstruction bundle, defined in Proposition \ref{cokerpropertiesI}.
\smallskip
\item[$\Phi$] used to denote a general renormalized smooth spinor in $\Gamma(Y; S_E)$, so that $\|\Phi\|_{L^2}=O(1)$. 
\smallskip 
\item[$\Phi_0,\Phi_\tau$] used to denote the initial $\Z_2$-harmonic spinor and eigenvectors in Theorem \ref{maina}. 
\smallskip
\item[$ \Phi_\tau^\circ$] the tame truncations of the $\Z_2$-harmonic eigenvector $\Phi_\tau$, defined in Eq. \eqref{circdefinitions1}. 
\smallskip
\item[$(\Phi_\tau^\bullet, A^\bullet_\tau)$] the leading-order truncations of the $\Z_2$-harmonic eigenvectors, defined in Eq. (\refeq{bulletedversion}).
\smallskip
\item[$(\Phi^{h_\e}_{\e,\tau}, A^{h_\e}_{\e,\tau})$] the de-singularized configurations, defined on $Y^+$ in Definition \ref{desingularized}.
\smallskip
\item[$ (\Phi^{(0)}_{\e,\tau}, A^{(0)}_{\e,\tau})$] the pre-glued configurations on $Y$, defined in Definition \ref{notationhn}. 
\smallskip
\item[$ (\Phi^{(1)}_{\e,\tau}, A^{(1)}_{\e,\tau})$] the model solutions on $Y$ for parameters $\e,\tau$, defined in Definition \ref{notationhn}. 
\smallskip
\item[$ (\Phi^{(1)}_{\e,\tau.\xi}, A^{(1)}_{\e,\tau,\xi})$] the universal concentrating family on $Y$ for parameters $\e,\tau,\xi$, defined in Definition \ref{universalfamilydef}. 
\smallskip
\item[$\underline \chi_\ell$] the family of smoothing operators used to define the tangential smoothing gauge, defined in (\refeq{chiderivfamily}).  
\smallskip
\item[$\Psi$] used to denote a general smooth spinor in $\Gamma(Y; S_E)$ prior to renormalization, so that $\|\Psi\|_{L^2}=O(\e^{-1})$. 
\smallskip 
\item[$\Psi_\ell^\circ$] the model singular $\Z_2$-harmonic spinors defined in (\refeq{Psilmodel}).
\smallskip 

\item[$\Psi_\ell$] the basis $\text{\bf ob}_\tau(e^{i\ell t})$ for $\ell \in \Z$ of the image of $\text{ ob}_\tau$ (which has corank 1 in $\text{\bf Ob}(\mathcal Z_\tau)$), defined in Proposition \ref{cokerpropertiesII}.
\smallskip 
\item[$\star$] the Hodge star operator of the metric (denoted e.g. $\star_{g_\tau}$ when ambiguity may arise).

\end{enumerate}

\section{ Proof of Proposition  \ref{deformationsmainamd}}
\label{appendixA}
The purpose of this appendix is to prove Proposition \ref{deformationsmainamd}. The proof is a straightforward extension of the proof of Theorem \ref{deformationsmaina} from \cite[Sec. 6]{PartII}, and we begin by recalling this briefly. 

By Proposition \ref{cokerpropertiesII}(A), the composition $T_{\Phi_\tau}(\eta)=\text{ob}_\tau^{-1} \circ \Pi_\tau \circ \mathcal B_{\Phi_\tau}(\eta)$ is calculated by the sequence of inner products. 

\begin{eqnarray} T_{\Phi_\tau}(\eta) &=&\sum_{\ell \in \Z} \br  \mathcal B_{\Phi_\tau}(\eta) , \Psi_\ell\kt \cdot e^{i\ell t} \label{A1}\\
&=& \sum_{\ell \in \Z} \left\br -\frac{1}{2}\sum_{ij} \dot g_{\eta}(e_i,e_j) e^i . \nabla^{g_\tau}_j  + \frac{1}{2} d \text{Tr}_{g_\tau}(\dot g_{\eta}).  +\frac{1}{2} \text{div}_{g_\tau}(\dot g_{\eta}). +  \mathcal R(B_\tau, \dot g_{\eta}). \Phi_\tau , \Psi_\ell\right\kt \cdot e^{i\ell t} \nonumber
\end{eqnarray}

\noindent where $\dot g_\eta =\d{}{s}|_{s=0}g_{s\eta}$, and $\Psi_\ell$ is as in Proposition \ref{cokerpropertiesII}(B). The proof of Theorem \ref{deformationsmaina} proceeds by direct calculation of this sequence of inner products: the leading order term in the theorem is the model operator when $\Psi_\ell=\Psi_\ell^\circ$ and the metric is a product in a neighborhood of $\mathcal Z_\tau$, and deviations from this model case lead to the additional compact operator denoted $K_\tau$ in the theorem statement.

The extension of the proof to Proposition \ref{deformationsmainamd} is analogous, but replaces the metric $\dot g_\eta$ in the expression (\refeq{A1}) by $\dot {\underline g}_\eta$ (as in Notation \ref{underlinenotation}). Proposition \ref{deformationsmainamd}  asserts that once $R_0$ as in (\refeq{R0def}) is chosen sufficiently large, the difference between these two sequences of inner products in small in terms of $R_0$. The idea behind this is straightforward: up to a small error term, $\Psi_\ell$ decays exponentially by Proposition \ref{cokerpropertiesII}(B) with $1/e$ length $O(|\ell|^{-1})$. By construction, $\dot g_\eta =\dot {\underline g}_\eta$ are equal up to radius $r=R_0|\ell|^{-1}$ in the $\ell^{th}$ Fourier mode, thus the difference between each term in the two sequences of inner products is smaller than the norm of the spinor outside $R_0$ times the $1/e$ length.

\begin{proof}[Proof of Proposition \ref{deformationsmainamd}] We fix the following notation. Let $\Psi_\ell=\chi \Psi_\ell^\circ + \zeta_\ell + \xi_\ell$ be the decomposition as in Proposition \ref{cokerpropertiesII}(B), and let $\mathcal B_{\Phi_\tau}, \underline{\mathcal B}_{\Phi_\tau}$ be the two versions of (\refeq{A1}) formed with $g_{s\eta}=F_{s\eta}^*g_\tau$ and $\underline g_{s\eta}=\underline F_{s\eta}^* g_\tau$ respectively. Finally, let $R_0>1$ be as in the statement of the proposition (cf. (\refeq{R0def})), and let $R_1=R_0/2$.

Let $\Phi_{\tau}^{R_1}, g_\tau^{R_1}, B_{\tau}^{R_1}$ be the truncations of the eigenvector, metric, and connection in tangential Fermi coordinates around $\mathcal Z_\tau$ to modes $\ell$ with $|\ell|\leq R_1$. By the Sobolev embedding $C^1(S^1)\hookrightarrow H^2(S^1)$ for applied to $S^1\times \{(x,y)\}$ for every fixed $(x,y)$ in Fermi coordinates, these obey 

\begin{eqnarray} \|\Phi_\tau - \Phi^{R_1}_\tau \|_{C^3}&\leq& C_mR_1^{-m-2}\|\Phi_\tau\|_{C^{2m + 10}}  \label{A2}\\
 \|g_\tau-g_\tau^{R_1}\|_{C^3} \ + \ \|B_\tau-B_\tau^{R_1}\|_{C^1}&\leq& C_m R_1^{-m-2} \|(g_\tau, B_\tau)\|_{C^{2m + 10}} \nonumber\end{eqnarray}

\noindent for any $m\in \N$, and in particular for some choice say $m=12$. Let $\mathcal B_{\Phi_\tau}^{R_1}\underline{\mathcal B}_{\Phi_\tau}^{R_1}$ be the corresponding versions of the operator $\underline {\mathcal B}_{\Phi_\tau}$ as in (\refeq{A1}) formed using the mode-truncated objects $\Phi^{R_1}_\tau, B^{R_1}_\tau$ and $g_{\eta}^{R_1}:=F_\eta^*(g^{R_1})$ and  $\underline g_{\eta}^{R_1}:=\underline F_\eta^*(g^{R_1})$ respectively. The bounds (\refeq{A2}) imply the corresponding bound 

$$\|\mathcal B_{\Phi_\tau}(\eta)-\mathcal B_{\Phi_\tau}^{R_1}(\eta)\|_{L^2(Y)} \leq C R_1^{-m}\|\eta\|_{L^{2,2}(\mathcal Z_\tau)},$$

\noindent and likewise for $\underline{\mathcal B}_{\Phi_\tau}(\eta)-\underline{\mathcal B}_{\Phi_\tau}^{R_1}(\eta)$ (cf. the similar proof of Corollary \ref{fraktcircbounds} for details). By the triangle inequality, 

\bea \|\mathcal B_{\Phi_\tau}(\eta)-\underline{\mathcal B}_{\Phi_\tau}(\eta)\|_{L^2(Y)} & \leq & \|\mathcal B_{\Phi_\tau}(\eta)-\mathcal B_{\Phi_\tau}^{R_1}(\eta)\|_{L^2(Y)}  \ + \ \|\mathcal B^{R_1}_{\Phi_\tau}(\eta)-\underline{\mathcal B}^{R_1}_{\Phi_\tau}(\eta)\|_{L^2(Y)}  \ + \ \|\underline {\mathcal B}^{R_1}_{\Phi_\tau}(\eta)-\underline{\mathcal B}_{\Phi_\tau}(\eta)\|_{L^2(Y)} \\ 
&\leq & 2C R_1^{-m} \|\eta\|_{L^{2,2}(\mathcal Z_\tau)} \ + \ \|\mathcal B^{R_1}_{\Phi_\tau}(\eta)-\underline{\mathcal B}^{R_1}_{\Phi_\tau}(\eta)\|_{L^2(Y)} \\
&\leq & C R_0^{-M}\|\eta_{L^{2,2}(\mathcal Z_\tau)}\| \ + \ \|\mathcal B^{R_1}_{\Phi_\tau}(\eta)-\underline{\mathcal B}^{R_1}_{\Phi_\tau}(\eta)\|_{L^2(Y)} \eea

\noindent for $M=12$, thus it suffices to prove the proposition for $\mathcal B^{R_1}_{\Phi_\tau}$. 

Consider first a single Fourier mode $\eta=\eta_k e^{ik t}$. Observe that, due to the cutoff $\chi_{r_0}$ defined in (\refeq{chiderivfamily}),  $F_\eta=\underline F_\eta$ when $R_0|k|\geq r_0/2$, i.e. when $|k|\leq 2R_0/r_0$. Consequently, it suffices to show the bound for $\eta$ having only Fourier modes above this range. 

With $\Psi_\ell = \chi \Psi_\ell + \zeta_\ell + \xi_\ell$, we begin with the inner products with the first two terms. Note that by the restriction of the modes of the operator coefficients, \be \mathcal B_{\Phi_\tau}^{R_1}(e^{ikt})-\underline{\mathcal B}_{\Phi_\tau}^{R_1}(e^{ikt}) \in \text{Span}\big\{e^{i\ell t} \ \big | \ |\ell-k| \geq |k|/2\big\}\label{fourierrestriction}\ee

\noindent because both operators have only coefficients in the $[k-R_1,  k+R_1]$ range, and $|k|\geq 2R_0/r_0$ implies $R_1\leq |k|/2$ for $r_0<1/2$. Since $\chi\Psi_\ell + \zeta_\ell$ has supported in Fourier modes in the range $[\ell-|\ell|/2, \ell + |\ell|/2]$, by Proposition \ref{cokerpropertiesII}(B), the only non-zero products arise for $|k|/3 \leq |\ell| \leq 3|k|$. Thus 
\noindent 
\bea 
\left\|\sum_{\ell\in \Z}\left\br \mathcal B_{\Phi_\tau}^{R_1}(e^{ikt})-\underline{\mathcal B}_{\Phi_\tau}^{R_1}(e^{ikt})  \ , \ \chi \Psi_\ell + \zeta_\ell \right\kt \cdot e^{i\ell t} \right\|^2_{L^2}&\leq& \sum_{|k|/3\leq |\ell| \leq 3|k|} \left| \left\br \mathcal B_{\Phi_\tau}^{R_1}(e^{ikt})-\underline{\mathcal B}_{\Phi_\tau}^{R_1}(e^{ikt})  \ , \ \chi \Psi_\ell + \zeta_\ell \right\kt_{L^2} \right |^2  \\ 
&\leq & C\|e^{ikt}\|^2_{L^{2,2}(\mathcal Z_\tau)} \cdot \Big\| \chi \Psi_\ell + \zeta_\ell\Big\|^2_{\Yminus N_k}\eea

\noindent where $N_k=\{ (t,r,\theta) \ | \ r \leq \frac{R_0}{3|k|}\}$. And in this region, the exponential decay in the expression (\refeq{Psilmodel}) and Proposition \ref{cokerpropertiesII}(B) implies

$$ \Big\| \chi \Psi_\ell + \zeta_\ell\Big\|_{\Yminus N_k}\leq C \text{Exp}\left( -|\ell|c_1 \cdot \frac{R_0}{2|k|}\right) \leq C \text{Exp}(-R_0/c).$$

\noindent where $c=c_1/2$. Summing over Fourier modes and using Plancharel's theorem completes the bound for the first two terms.

For the contribution for the final $\xi_\ell$ term, note that the Fourier mode restriction (\refeq{fourierrestriction}) shows that $\nabla_t\sim |k|$ on the difference term, thus integrating by parts,

\bea
|k|^{2M} \Big |\left\br \mathcal B_{\Phi_\tau}^{R_1}(e^{ikt})-\underline{\mathcal B}_{\Phi_\tau}^{R_1}(e^{ikt})  \ , \ \xi_\ell \right\kt\Big |^2 &\leq &\Big |\left\br \nabla_t^M\left( \mathcal B_{\Phi_\tau}^{R_1}(e^{ikt})-\underline{\mathcal B}_{\Phi_\tau}^{R_1}(e^{ikt})\right)  \ , \ \xi_\ell \right\kt\Big | \\
&\leq &\Big |\left\br\mathcal B_{\Phi_\tau}^{R_1}(e^{ikt})-\underline{\mathcal B}_{\Phi_\tau}^{R_1}(e^{ikt})  \ , \ \nabla_t^{M} \xi_\ell \right\kt\Big | \leq C_M |\ell|^{-2} \\ 
\eea

\noindent using Proposition \ref{cokerpropertiesII}(B.iii) which implies that $\|\nabla_t^M \xi_\ell\|\leq C|\ell|^{N-M}$ for any $N\geq M$. In particular for $N=M+2$. Using the summability over $\ell$, and then summing over $k$ using the fact that we have reduced to the range $|k|\geq 2R_0/r_0$ shows this term is bounded by $CR_0^{-M}$, completing the proposition.  
\end{proof}

\section{ Twisted APS Boundary Conditions}
\label{appendixB}

This appendix provides details of the mixed APS boundary and projection conditions (Definition \ref{mixedbddef}) used in the proofs of Theorem \ref{Insideinvertibility} and Proposition \ref{outboundaryconditionsdef}. The first of these two results is precisely \cite[Thm 7.1]{PartI}, and the latter is a slight extension using the techniques established there. The reader is referred to \cite[Sec. 7.1--7.3]{PartII} for further details. 

It suffices to work in Fermi coordinates of radius $r_0$ and the induced trivializations of $S_E, \Omega^1(i\R)$ as in Definition \ref{Fermitrivialization}, because both boundaries $\del Y^\pm_{\e,\tau}$ are contained in such a coordinate chart $N_{r_0}(\mathcal Z_\tau)$ once $\e_0$ is sufficiently small. In this trivialization, we may write spinors in the form $\ph=(\alpha, \beta)$ where $\alpha, \beta \in \Gamma(N_{r_0}(\mathcal Z_\tau); \mathbb H)$ are $\mathbb H\simeq \C^2$-valued spinors. The Dirac operator in the product metric with the product connection is \be \slashed D_{\circ} \begin{pmatrix}\alpha \\ \beta \end{pmatrix}=\begin{pmatrix}
i\del_t & -2\del \\ 2\delbar & -i\del _t 
\end{pmatrix} \begin{pmatrix}\alpha \\ \beta \end{pmatrix}, \label{DiracIntbyParts}\ee

\noindent and obeys the integration by parts formula, 

\be \int_{N_{r_0}(\mathcal Z_\tau)}  \ \br \slashed D_{\circ} \ph, \psi \kt  \ dV   \ - \ \br \ph,  \slashed D_{\circ} \psi \kt  \ dV  = \Omega_{\del Y^+}(\ph,\psi)\label{bigOmegadY}\ee

\noindent by Definition \ref{symplecticdY}. The same holds for $\slashed D_{A}$ for any smooth connection $A$, since Clifford multiplication $\Omega^1(i\R): S_E \to S_E$ is self-adjoint, and likewise for the extension including $0$-forms. It follows that if $\Lambda_0\subseteq L^{1/2,2}(Y^+; \C^2\otimes_\C \mathbb H)$ is Lagrangian with respect to $\Omega_{\del Y^+}$, then $\slashed D_A$ subject to the boundary condition that $\Pi_{\Lambda_0}\ph|_{\del Y+} =0$ is self-adjoint, thus Fredholm of index 0 by standard elliptic theory. The same holds for isotropic or coisotropic subspaces $\Lambda_{\pm 1}$ as in Definition \ref{mixedbddef}, now with Fredholm index $\text{dim}(\Lambda_1/\Lambda_0)$ or $-\text{dim}(\Lambda_0/\Lambda_{-1})$ respectively.

One natural choice of boundary condition (and thus of corresponding subspace $\Lambda_i$) are Atiyah-Patodi-Singer or {\bf {APS boundary conditions}} \cite[Sec. 17]{KM, APS1}. These boundary conditions are defined in terms of the spectrum $\text{Spec}(\slashed D_{\del Y^+})$ of the induced Dirac operator on the boundary. In this case, 
$$\Lambda= \left\{ \phi \in L^{1/2,2}(\del Y^+; \C^2 \otimes_\C \mathbb H) \ \Big | \ \Pi_{L}(\ph)=0 \right\}, $$

\noindent where $\Pi_{L}$ is the spectral projection to eigenspaces with eigenvalue $\lambda \leq L$. It is understood in this definition that we take the intersection of the subspace defined by the $L^2$ spectrum with $L^{1/2,2}$. In this case, choice $L<0$ result in an isotropic subspace, thus negative Fredholm index and vice-versa for $L>0$. 

In the analysis of elliptic edge operators, one often treats these operators as families of differential operators parameterized by the tangential variables \cite{MazzeoEdgeOperators}. The same viewpoint offers a more useful boundary condition in this setting. Since $\del Y^\pm\simeq T^2$, we may decompose a spinor first in Fourier modes in the $\theta$ direction, then in the tangential direction to write $$\ph \big |_{\del Y^\pm}= \begin{pmatrix}\alpha(t,\theta) \\ \beta(t,\theta) \end{pmatrix}= \sum_{k\in \Z}  \begin{pmatrix}\alpha_k(t) \\ \beta_k(t) \end{pmatrix}e^{ik\theta}=\sum_{k,\ell \in \Z}  \begin{pmatrix}\alpha_{k\ell} \\ \beta _{k\ell}\end{pmatrix} e^{ik\theta}e^{i\ell t},$$

\noindent and define spectral boundary conditions using only the spectral decomposition in the $\theta$ directions. 

\begin{defn}  \label{normalAPSlag}The {\bf normal APS Lagrangian} subspace is the one defined in Fourier modes above by the requirement that 

\bea \Lambda_0^{\text{Norm}, +}&=&\left\{  \begin{pmatrix}\alpha(t,\theta) \\ \beta(t,\theta)\end{pmatrix}  \in L^{1/2,2}( \del Y^+; S_E)  \ \Big | \  \begin{matrix} \alpha_{k}(t)=0 \text{ for } k <0 \\ \beta_{k}(t)=0 \text{ for } k >0 \\ \end{matrix}\right\} \\ 
 \Lambda_0^{\text{Norm}, -}&=&\left\{  \begin{pmatrix}\alpha(t,\theta) \\ \beta(t,\theta)\end{pmatrix}  \in L^{1/2,2}( \del Y^+; S_E)  \ \Big | \  \begin{matrix} \alpha_{k}(t)=0 \text{ for } k \geq 0 \\ \beta_{k}(t)=0 \text{ for } k \leq 0 \\ \end{matrix}\right\}.\eea

\noindent Thus the restriction of a spinor obeying the corresponding boundary condition $\Pi_{\Lambda_0}^{\text{Norm},+}(\ph)=0$ has allowed Fourier modes visualized as follows: 
\end{defn}
\begin{eqnarray*}
 \hspace{1.75cm}  \underline{k=-1}  \  \   \ &\underline{ k =0}& \ \ \ \underline{k=1} \medskip  \\  
\ldots  \alpha_{-2}(t)  \  \ \   \ \ \ \ \alpha_{-1}(t)     \   \ \ \ &0&   \ \ \ \  \  \ 0  \  \   \ \  \ \ \ \ \ \ \ 0 \ldots \\
\ldots  0  \   \ \ \  \ \ \ \ \   \ \ \ \ 0 \ \ \ \  \     \   \ \ \ &0 & \ \ \ \  \beta_{1}(t)  \  \  \ \ \ \  \beta_{2}(t) \ldots \\ 
\end{eqnarray*}
\label{NBV3d}

\noindent and the reverse for $\Pi_{\Lambda_0}^\text{Norm,-}(\ph)=0$.

\begin{lm} \label{normalBC}The normal APS Lagrangian defines self-adjoint boundary conditions on $Y^\pm$. In particular, the corresponding boundary value problem 

$$\slashed D_A: \{\ph \in L^{1,2}(Y^\pm; S_E) \ | \ \Pi_{\Lambda_0}^{\text{Norm},\pm}(\ph)=0\} \to L^2(Y^\pm; S_E)$$

\noindent is Fredholm of Index 0. 
\end{lm}
\begin{proof} In the case of $Y^+$, this follows immediately from integration by parts using the expression  (\refeq{DiracIntbyParts}) and the integration by parts formulae

\begin{eqnarray}
\int_{D^2} \br -2\del \beta, \alpha \kt_\C + \br \beta, -2\delbar \alpha\kt_\C  \ dV &=&i \int_{\del D} \br -\beta,\alpha \kt_\C  e^{-i\theta}d\theta \label{delbyparts}\\
\int_{D^2} \br 2\delbar \alpha, \beta\kt_\C + \br \alpha, 2\del \beta\kt_\C  \ dV &=& -i\int_{\del D} \br \alpha,\beta\kt_\C e^{i\theta}d\theta \label{delbarbyparts}
\end{eqnarray} 
\noindent for $\del,\delbar$, written using the Hermitian inner product $\br -, - \kt_\C$. See \cite[Sec. 6]{PartI} for additional details. The perturbation arising from the connection is compact and self-adjoint, thus does not affect this calculation. For $Y^-$ the proof is identical, but integrating over $Y^-$ the boundary terms above reverse sign. 
\end{proof}

More generally, we can define a Lagrangian subspace $\Lambda_0$ such that for each Fourier mode $k$ in the $\theta$-direction, the allowed functions of $t$ are defined {\it either} pointwise {\it or} by a collection of Fourier modes $\ell$ in the $t$-direction. We restrict the discussion to the relevant cases for Theorem \ref{Insideinvertibility} and Lemma \ref{outboundaryconditionsdef}, though the construction also applies in greater generality. Let $E_{-1,0}\simeq \C^4 \to \mathcal Z_\tau$ denote the span of the $(\alpha_{-1}e^{-i\theta}, \beta_0)$ Fourier modes in $\theta$, where $\alpha_{-1}, \beta_0\in \mathbb C^2$, viewed as a bundle of complex rank $4$ over $\mathcal Z_\tau$. The boundary restrictions of spinors in these modes satisfying the boundary condition of Definition \ref{normalAPSlag} span the space  

\be L^{1/2,2}(\mathcal Z_\tau; E_{-1,0}) \supseteq \left\{\begin{pmatrix} \alpha_{-1}(t) \\ 0 \end{pmatrix} \  \Big | \ \alpha_{-1}(t)\in L^{1/2,2}(\mathcal Z_\tau; \C^2)\right\}= \Pi_{\Lambda_0^{\text{Norm},+}}^{-1}(0)\cap \Gamma(\mathcal Z_\tau; E_{-1,0})\label{constsubbundle}\ee

\noindent on $Y^+$, and span the similar space where only  $\beta_0(t)$ is non-zero on $Y^-$. Thus one can view the boundary condition in these modes as saying the restriction lies pointwise in a half-dimensional subbundle $V\subseteq  E_{-1,0}$, which is Lagrangian with respect to an appropriate symplectic form. 

More specifically, the symplectic form is as follows. The almost-complex structure 
\be \mathcal J=  E_{-1,0}\to E_{-1,0} \hspace{2cm}\mathcal J\begin{pmatrix}\alpha \\ \beta \end{pmatrix}=\begin{pmatrix} -\beta \\ \alpha \end{pmatrix}\label{mathcalJdef}\ee
\noindent induces a pointwise symplectic structure $\omega_E(\ph,\psi)=\br \ph, \mathcal J\psi\kt$ on $E_{-1,0}$. Restricted to $\Gamma(\mathcal Z_{\tau}; E_{-1,0})$, the symplectic form (\refeq{bigOmegadY}) reduces to $\Omega_{\del Y^+}(\ph,\psi)=\int_{\del Y^+}\omega_E(\ph,\psi) r dt$. Note that $\mathcal J$ does not coincide with either of the almost-complex structures $j,J$ used in Eq. (\refeq{realimdecomp}).

\begin{defn}  \label{permissiblelagrangian} A subbundle $V_t \subseteq E_{-1,0}\to \mathcal Z_\tau$ is a {\bf permissible bundle of Lagrangians} if 
\begin{enumerate}
\item[(A)] $V_t$ is Lagrangian with respect to $\omega_E$ for all $t\in \mathcal Z_\tau$, 
\item[(B)] $V_t$ is homotopic to the constant subbundle in Eq. (\refeq{constsubbundle}) through subbundles obeying (A). 
\end{enumerate}

\noindent Such a permissible bundle defines a Lagrangian $\Lambda_{V_t}\subseteq L^{1/2,2}(\del Y^\pm; S_E)$ by 

$$\Lambda_{V_t}\cap \Gamma(\mathcal Z_\tau; E_{-1,0})= L^{1/2,2}(\mathcal Z_\tau; V_t) \hspace{2cm} \Lambda_{V_t}\cap \Gamma(\mathcal Z_\tau; E_{-1,0}^\perp)=\Lambda^{\text{Norm}, \pm}_0 \cap\Gamma(\mathcal Z_\tau; E_{-1,0}^\perp),$$

\noindent and thus a {\bf twisted APS boundary condition} $\Pi_{\Lambda_{V_t}}=0$. 
\end{defn}

 \cite[Lem. 7.5]{PartI} shows the following: 
 
 \begin{lm}  \label{twistedAPS}Suppose that $V_t \to \mathcal Z_\tau$ is a permissible bundle of Lagrangians. Then the boundary value problem 
 \be \slashed D_A: \{\ph \in L^{1,2}(Y^\pm; S_E) \ | \ \Pi_{\Lambda_{V_t}}(\ph)=0\} \to L^2(Y^\pm; S_E)\ee
\noindent defined by the accompanying twisted APS boundary condition is Fredholm of Index 0. 
 \end{lm}
 \begin{proof} Integration by parts and Young's inequality show that condition (A) from Definition \ref{permissiblelagrangian} implies that $\slashed D_\circ$ with this boundary condition has finite-dimensional kernel and closed range. Integration by parts again shows that the cokernel is given by solutions of the Dirac equation satisfying the boundary condition defined by the permissible bundle of Lagrangians $V_t^\perp$. Thus the boundary value problem is Fredholm, and condition (B) ensures the index is zero by Lemma \ref{normalBC}. Altering the connection is a compact perturbation so does not affect the conclusions. See  \cite[Lem. 7.5]{PartI}  for details. 
 \end{proof}

The difficulty that necessitates the use of these rather intricate boundary conditions arises from the off-diagonal term in the linearization 

\be \mathcal L_{(\Phi^{(1)}, A^{(1)})}(\ph,a)=\begin{pmatrix}\slashed D_{A^{(1)}} & \gamma(\_)\tfrac{\Phi^{(1)}}{\e} \\ \tfrac{\mu(\_,\Phi^{(1)})}{\e} & \bold d \end{pmatrix}\begin{pmatrix}\ph \\ a \end{pmatrix},\label{linearization2}\ee

\noindent calculated in Lemma \ref{linearizedequationsdef}. The topological twist of the real-line bundle $\ell$ is retained at the boundary $\del Y^+$ even after de-singularization the $\Z_2$-harmonic eigenvector to the model solution in Theorem \ref{Insideinvertibility}. In particular, one has $\Phi^{(1)}\approx r^{1/2}(c(t), d(t)e^{-i\theta})$ on $\del Y\pm$, where $c(t),d(t)$ are the leading coefficients in the expansion of Lemma \ref{asymptoticexpansion}. The twist $e^{-i\theta}$ which is the remnant of the real line bundle $\ell$ leads to boundary terms when integrating (\refeq{linearization2}) by parts using any n\"aive choice of boundary conditions. Although these boundary terms are compact, the factor of $\e^{-1}$ makes it challenging to show the operator is sufficiently invertible for any boundary condition when they are present.  The solution in \cite{PartI} is to use twisted APS boundary conditions defined in terms of the leading coefficients $(c(t), d(t))$ which precisely allow these terms to cancel. 

To define these boundary conditions, we first write the $(\Omega^0\oplus \Omega^1)(i\R)$ components in complex notations as follows: set 

 \be \zeta=(a_0 + ia_t)dz, \hspace{2cm} \omega = (a_y-ia_x)d\overline z\hspace{2cm}\label{formiso}\ee

\noindent where $a=(ia_0, ia_xdx+ ia_ydy + ia_zdz)$. In these complex coordinates, one has 

$$ \gamma(pdz + qd\overline z)=\begin{pmatrix} ip & -\overline q \\ -q & i\overline p\end{pmatrix} \hspace{2cm} \bold d \begin{pmatrix} \zeta \\ \omega
\end{pmatrix}=\begin{pmatrix} -i\del_t & 2\del \\ -2\delbar & i\del_t
\end{pmatrix}  \begin{pmatrix} \zeta \\ \omega
\end{pmatrix}$$

\noindent where $\gamma$ is Clifford multiplication (composed (\refeq{formiso})). In particular, $\bold d$ has the same coordinate expression as $-\slashed D_\circ$. Everything in the preceding discussion about about boundary values applies equally well to boundary value problems for $\bold d$, now with $\C^2$-valued forms in place of $\C^4$-valued spinors.

We now prove Proposition \ref{outboundaryconditionsdef}. First, extend the leading-order configurations (\refeq{bulletedversion}) to $Y^+$ by 
$$ (\Phi^\star_\tau,A^\star_\tau):=  \zeta^+(\Phi^\bullet_\tau,A^\bullet_\tau) + (1-\zeta^+)(\Phi_\tau, A_\tau),$$

\noindent where $\zeta^+$ is a cut-off function equal to $1$ on $\del Y^-$, and vanishing for $r\geq 2\text{dist}(\del Y^-, \mathcal Z_\tau)$. Next, since the lemma deals only with the $S^\text{Im}$ components, and $S\simeq S^\text{Im}$ via $\Psi \mapsto \frac{1}{2}(\Psi -\sigma \Psi)$ as in (\refeq{realimdecomp}), it suffices to work in a trivialization $S^\text{Im}\simeq \underline{\C}^2$ on $Y^-\cap N_{r_0}(\mathcal Z_\tau)$, in which 

$$\Phi^\star_\tau \Big |_{\del Y^-}=\begin{pmatrix} c(t) \\ d(t)e^{-i\theta}
\end{pmatrix}r^{1/2}$$

\noindent for $c(t),d(t)$ the leading coefficients of the expansion in Lemma \ref{asymptoticexpansion}. Analogously to (\refeq{constsubbundle}), there is a subspace $E_{-1,0}\simeq \underline \C^2$ now denoting the span of $\C$-valued functions $\alpha_{-1}(t), \beta_0(t)$. We then set \be \label{Vtdef}V_t=\left\{\begin{pmatrix} \alpha_{-1}(t) \\ \beta_0(t)
\end{pmatrix} \ \Big | \  \beta_0(t)\overline d(t) \ + \ \overline{\alpha}_{-1}(t) c(t)=0\right\}\subseteq E_{-1,0}.\ee

\noindent The non-degeneracy assumption on $\Phi_\tau$ (Definition \ref{regulardef}) implies that $|c(t)|^2 + |d(t)|^2\neq 0$, thus (\refeq{Vtdef}) defines a 2-dimensional real subspace of $E_{-1,0}$.\begin{proof}[Proof of Proposition \ref{outboundaryconditionsdef}] Define a Lagrangian $\Lambda_0$ on configurations $(\alpha,\beta, \zeta, \omega)\in \C^4\simeq S^\text{Re}\oplus (\Omega^0\oplus \Omega^1)$ (using the isomorphism (\refeq{formiso}) above) as follows. $\Lambda_0$ is defined by the property that the boundary condition $\Pi_{\Lambda_0}(\alpha,\beta, \zeta, \omega)=0$ means the configuration's allowed Fourier modes on $\del Y^-$ are given by 

\begin{eqnarray}
 \hspace{1.75cm}  \underline{k=-1}  \  \   \ &\underline{ k =0}& \ \ \ \underline{k=1} \medskip   \nonumber \\  
\ldots  0  \  \ \   \ \ \  \boxed{ \alpha_{-1}(t)}       \ \ \ &\alpha_0(t)&   \ \   \  \ \alpha_1(t)  \  \   \ \ \ \ \alpha_2(t) \ldots \nonumber  \\
\ldots  \beta_{-2}(t)    \ \ \   \ \ \ \ \beta_{-1}(t) \   \ \ \ &\boxed{\beta_0(t)} & \ \ \ \ 0   \  \  \ \ \ \ \ \   \ \ \ \  0 \ldots  \label{Fouriertable}\\ 
\ldots  0  \  \ \    \ \ \ \ \ \ \  0   \ \ \ \ \      \ \ \ &\zeta_0(t)&   \ \   \  \ \zeta_1(t)  \  \   \ \ \ \ \zeta_2(t) \ldots \nonumber \\
\smallskip
\ldots  \omega_{-2}(t)    \ \ \   \ \ \ \ \omega_{-1}(t) \   \ \ \ &{\omega_0(t)} & \ \ \ \ 0   \  \  \ \  \ \ \ \  \ \ \ \  0 \ldots \nonumber  \\ \nonumber\\
\text{where }&   \boxed{\alpha_{-1}(t) + \beta_0(t) \in V_t} & 
\end{eqnarray}

Inspection of (\refeq{Vtdef}) shows $V_t$ is spanned by complex linear combinations of $(d(t), -c(t))$. The definition of $\mathcal J$ in (\refeq{mathcalJdef}) shows that $$\left\br a\begin{pmatrix} d(t) \\ -c(t)\end{pmatrix} \ , \ b\mathcal J \begin{pmatrix} d(t) \\ -c(t)\end{pmatrix} \right\kt=0,$$

\noindent for all $a,b\in \C$, thus $V_t$ is indeed Lagrangian. By a homotopy of the pair $(c(t),d(t))$ through functions obeying the non-degeneracy condition, $V_t$ is homotopic to the constant bundle of Lagrangians as in Eq. (\refeq{constsubbundle}) defined by the pair $(c(t), d(t))=(0,1)$. $V_t$ is therefore a permissible bundle of Lagrangians. The linearization at $(\Phi_\tau^\star, A_\tau^\star)$ 

\be \mathcal L^\text{Im}_{\star}(\ph^\text{im}, a):= \begin{pmatrix}   \slashed D_{A^\star_\tau} & \gamma(\_)\frac{\Phi^\star_\tau}{\e} \\  \frac{\mu(\_, \Phi^\star_\tau)}{\e}. & \bold d \end{pmatrix} \begin{pmatrix}\ph^\text{im}\\ a \end{pmatrix}, \ee

\noindent subject to these boundary conditions is Fredholm of Index 0, by applying Lemma \ref{twistedAPS}) to $\slashed D_{A_\tau^\star}$, Lemma  \ref{normalBC} to $\bold d$, and using that the off-diagonal terms are compact. This proves Part (B) of the Lemma. 

To ensure the boundary terms indeed vanish, we calculate the boundary terms. Integration by parts using $\mathcal L_\star^\text{Im}$ shows (see \cite[Lem. 6.24]{PartI} for details) that for weight $\nu=0$,

\be
\label{boundarytermsB}\|\mathcal L_\star^\text{Im}(\ph^\text{im} ,a)\|^2_{L^2}=\|(\ph^\text{im} ,a)\|^2_{H^{1,-}_\e}  \ + \  \frac{1}{\e}\br (\ph^\text{im} ,a), \frak B(\ph^\text{im} , a)  \kt \ + \  \Big (\text{Boundary terms}. \Big ) \ee

\noindent as in the proof of Lemma \ref{solvingoutsideim}, where the boundary terms are given (using (\refeq{delbyparts}--\refeq{delbarbyparts}) by 
\bea
 \br -2\del \beta , i \zeta \alpha^\star_\tau\kt_\C  \ - \   \br \beta , -2\del (i \zeta \alpha^\star_\tau)\kt_\C  &= &i\br  -\beta ,\zeta \alpha_\tau^\star\kt_\C  \ \cdot \ e^{-i\theta} \\
 \br -2\del \beta , i \zeta \alpha^\star_\tau\kt_\C  \ - \   \br \beta , -2\del ( -\overline \omega \beta^\star_\tau e^{-i\theta})\kt_\C &= &i\br  -\beta , -\overline \omega \beta^\star_\tau e^{-i\theta}\kt_\C  \ \cdot \  e^{-i\theta} \\
\br 2\delbar \alpha , \omega \alpha_\tau^\star  \kt_\C \ - \ \br \alpha, 2\delbar(\omega \alpha_\tau^*) \kt_\C&= &-i\br  \alpha , \omega \alpha_\tau^\star\kt_\C   \ \cdot \  e^{-i\theta} \\
\br 2\delbar \alpha , i \overline \zeta \beta_\tau^\star e^{-i\theta} \kt_\C \ - \ \br \alpha, 2\delbar( i \overline \zeta \beta_\tau^\star e^{-i\theta}) \kt_\C &= &-i\br \alpha , i \overline \zeta \beta^\star_\tau e^{-i\theta}\kt_\C  \ \cdot \  e^{-i\theta}. 
\eea

\noindent where $\alpha_\tau^*=c(t), \beta_\tau^\star=d(t)$. Using the table above to investigate which Fourier modes lead to non-zero inner products, one sees that the only overlapping terms in the inner products on the right arise in the $\alpha_{-1}(t), \beta_0(t)$ Fourier modes. Thus the boundary terms in (\refeq{boundarytermsB}) are given in the real inner product by 

\bea 
 \Big (\text{Boundary terms}. \Big ) & = &\text{Re}\Big[ i\br  -\beta ,\zeta \alpha_\tau^\star\kt_\C  \ \cdot \ e^{-i\theta}  \ + \ i\br  -\beta , -\overline \omega \beta^\star_\tau e^{-i\theta}\kt_\C  \ \cdot \  e^{-i\theta} \\ & &  \ + \ i\br  \alpha , \omega \alpha_\tau^\star\kt_\C   \ \cdot \  e^{-i\theta} \ + \ -i\br \alpha , i \overline \zeta \beta^\star_\tau e^{-i\theta}\kt_\C  \ \cdot \  e^{-i\theta}\Big]\\
 &=&\text{Re}\Big[ i\beta_0(t)\omega_0(t)\overline d(t)  -i \alpha_{-1}(t)\overline \omega_0(t)\overline c(t)\Big]\\
 &=& \text{Re}\Big[ i \omega_0(t) \cdot \big(\beta_0(t)\overline d(t)  + \overline{ \alpha}_{-1}(t) c(t) \big)\Big]\\
 &=& 0,
\eea
precisely by the definition of $V_t$ in Eq. (\refeq{Vtdef}). In passing from the second to third lines above, the second term was conjugated. This proves Part (B) of the lemma, in the case of linearizing at $(\Phi_\tau^\star, A_\tau^\star)$. Because $(\Phi_\tau, A_\tau)-(\Phi_\tau^\star, A_\tau^\star)=O(r^{3/2})$ differ by lower order terms where $r\leq 2 r^{2/3-\gamma^+}$, the linearization at the eigenvector differs only by a $O(\e^{2/3-\gamma^+})$ in the $H^{1,-}_{\e,0}$ norm. This completes the proof of (A) in the case that $\nu=0$. For $\nu\neq 0$, the weight $R_{\e,\tau}(r)$ is constant on the boundary and is simply carried along for the entire proof.

\end{proof}

The proof of Theorem \ref{PartImain} follows a similar scheme. We offer the following remark, and refer the reader to \cite[Sec. 6--7]{PartI} for details. 

\begin{rem} The boundary conditions on $\mathcal L_{(\Phi^{(1)}, A^{(1)})}$ on $Y^+$ used in the proof of Theorem \ref{PartImain} are similar to those in Lemma \ref{outboundaryconditionsdef}. The allowed Fourier modes in Table (\refeq{Fouriertable}) are replaced by the adjoint boundary condition with allowed modes 

\begin{eqnarray*}
 \hspace{1.75cm}  \underline{k=-1}  \  \   \ &\underline{ k =0}& \ \ \ \underline{k=1} \medskip  \\  
\ldots  \alpha_{-2}(t)  \  \ \   \ \ \  \boxed{ \alpha_{-1}(t)}       \ \ \ &0 &   \   \  \  \ \ \  0 \  \  \ \ \    \  \ \ \ \ 0  \ \ldots \\
\ldots  0 \ \     \ \ \   \ \ \ \  \ 0 \  \ \ \    \ \ \ &\boxed{\beta_0(t)} & \ \ \  \ \   \beta_1(t)    \  \ \ \  \  \beta_2(t) \ldots \\ 
\smallskip
\smallskip
\ldots  \zeta_{-2}(t)    \ \ \   \ \ \ \ \zeta_{-1}(t) \   \ \ \ & \ 0 \  & \ \ \  \ \ \ 0   \  \  \ \  \ \ \ \ \  \  \ \ \ \  0 \ldots \\ 
\ldots  0  \  \ \    \ \ \ \ \ \ \  0   \ \ \ \ \      \ \ \ & \  0 \ &   \ \   \   \ \ \omega_1(t)  \  \ \   \ \ \ \ \omega_2(t) \ldots \\
\end{eqnarray*}
\end{rem}

\noindent and the boxed modes in the rank 8 real bundle $E_{-1,0}\to \mathcal Z_\tau$ are constrained so that 

\be 0=\mu_\C(\alpha,\beta)=  b_1 \overline \alpha_1^\bullet + \overline a_1 \beta_1^\bullet + b_2 \overline \alpha_2^\bullet + \overline a_2 \beta_2^\bullet.\label{BC2}\ee 

\noindent where the $\C^2=\mathbb H$-valued spinors are written in the trivialization as
 $$  \begin{matrix}\alpha_{-1}  =a_1 \otimes 1 + a_2 \otimes j   \\  \ \beta_{0} \  =b_1 \otimes 1 + b_2 \otimes j \end{matrix} \hspace{2cm}\Phi_\tau^\bullet=\begin{pmatrix} \alpha_1^\bullet \\ \beta_1^\bullet\end{pmatrix}\otimes 1 + \begin{pmatrix} \alpha_2^\bullet \\ \beta_2^\bullet\end{pmatrix}\otimes j. $$
\noindent In this case, a similar integration by parts argument shows that the boundary terms vanish (cf. \cite[Lem. 6.24]{PartI}). This defines the Lagrangian subspace used in the mixed APS boundary and orthogonality conditions of Theorem \ref{Insideinvertibility}.   

The setting of Theorem \ref{Insideinvertibility} is more challenging because the projection conditions must also be non-trivial. With only the APS boundary conditions defined above, smoothings of the singular harmonic spinors (\refeq{Psilmodel}) violate uniform elliptic estimates for small values of the Fourier index $\ell$. The mixed APS boundary and projection conditions in Theorem \ref{Insideinvertibility} amend this by requiring that the $L^2$ projection with the low modes $|\ell|\leq O(\e^{-1/2})$ of these smoothed singular spinors be zero, at the cost of relaxing the boundary condition (\refeq{BC2}) to be free in precisely the same number $O(\e^{-1/2})$ of the low modes of sections of $V_t$. See \cite[Fig. 2]{PartI} for an illustration. 
\smallskip


{\small

\medskip

\bibliographystyle{amsalpha}
{\small 
\bibliography{Bib_revised}}
\end{document}



\end{document}